\documentclass[11pt, a4paper]{report}
\PassOptionsToPackage{usenames,dvipsnames}{xcolor}
\usepackage[margin=1in]{geometry}



\usepackage[%
  backend=biber,
  defernumbers=true,
  style=alphabetic,
  maxnames=20,
]{biblatex}
\addbibresource{zigzag.bib} 

\usepackage{tcolorbox}
\renewcommand{\comment}[1]{}

\usepackage{float}
\usepackage{quiver}
\usepackage{appendix}
\usepackage{aliascnt}
\usepackage[colorlinks=true,linkcolor=blue]{hyperref}
\usepackage{amsmath, amsthm,  amsfonts, etoolbox, changepage, bigints, relsize, mathtools, tikz-cd, amssymb,   tikz, multirow ,adjustbox, tabularx, rotating, booktabs, lscape,colortbl}
\usepackage{aliascnt}
\usepackage{cleveref}
\usepackage{url}
\usepackage[english]{babel}
\usepackage[utf8]{inputenc}
\input xypic
\xyoption{all}
\usepackage[all,arc]{xy}
\usepackage{enumerate}
\usepackage{mathrsfs}
\usepackage{dsfont}

\usepackage{caption}
\usepackage{subcaption}

\usepackage{scalerel,stackengine}
\stackMath
\newcommand\reallywidehat[1]{%
\savestack{\tmpbox}{\stretchto{%
  \scaleto{%
    \scalerel*[\widthof{\ensuremath{#1}}]{\kern-.6pt\bigwedge\kern-.6pt}%
    {\rule[-\textheight/2]{1ex}{\textheight}}
  }{\textheight}%
}{0.5ex}}%
\stackon[1pt]{#1}{\tmpbox}%
}
\parskip 1ex

\tikzset{node distance=2cm, auto}

\usetikzlibrary{arrows}


\usepackage[usenames,dvipsnames]{pstricks}
\usepackage{epsfig}
\usepackage{pst-grad} 
\usepackage{pst-plot} 
\usepackage[space]{grffile} 
\usepackage{etoolbox} 
\makeatletter 
\patchcmd\Gread@eps{\@inputcheck#1 }{\@inputcheck"#1"\relax}{}{}
\makeatother

  \theoremstyle{plain}
  \newtheorem{theorem}{Theorem}[section]
  \newtheorem*{theorem*}{Theorem}
  \newtheorem{lemma}[theorem]{Lemma}
  \newtheorem{proposition}[theorem]{Proposition}
  
  \newtheorem{corollary}{Corollary}[section]
  
  \theoremstyle{definition}
  \newtheorem{definition}[theorem]{Definition}
  \newtheorem{example}{Example}[section]

  \theoremstyle{remark}
  \newtheorem{remark}[theorem]{Remark}
  \newtheorem{note}{Note}[section]
  
  \newtheorem{notation}{Notation}[section]

  \numberwithin{equation}{section}
  \numberwithin{figure}{section}
\crefname{lemma}{Lemma}{Lemma}
  \crefname{corollary}{Corollary}{Corollary}
  \crefname{theorem}{Theorem}{Theorem}
  \crefname{definition}{Definition}{Definition}
   \crefname{proposition}{Proposition}{Proposition}
 \crefname{section}{Section}{Section} 
   \crefname{construction}{Construction}{Construction}
   \crefname{generalization}{Generalization}{Generalization}
  \crefname{construction}{Construction}{Construction}
  \crefname{notation}{Notation}{Notation}
   \crefname{example}{Example}{Example}
  \crefname{remark}{Remark}{Remark}
  \crefname{fact}{Fact}{Fact}
  \crefname{conjecture}{Conjecture}{Conjecture}
  \crefname{motivation}{Motivation}{Motivation}  


  \newcommand{\cone}{\text{cone}}

  
  \newcommand{\D}{\mathbb{D}}
  \newcommand{\X}{\mathbb{X}}
  \newcommand{\Y}{\mathbb{Y}}
  \renewcommand{\cH}{{\mathcal H}}
  
  \newcommand{\cA}{{\mathcal A}}
  \newcommand{\cB}{\mathcal{B}}
  
  \newcommand{\cM}{{\mathcal M}}
  
  \renewcommand{\cD}{{\mathcal D}}
  \newcommand{\cP}{{\mathcal P}}
  \newcommand{\cC}{{\mathcal C}}
  \newcommand{\cG}{{\mathcal G }}
  
  \renewcommand{\cR}{\mathcal{R}}

  \newcommand{\cK}{{\mathcal K }}
  \newcommand{\cF}{\mathcal{F}}
  \newcommand{\cS}{\mathcal{S}}
  \newcommand{\cV}{\mathcal{V}}
  \newcommand{\cX}{\mathcal{X}}
  \newcommand{\cZ}{\mathcal{Z}}  
  \newcommand{\cEnd}{\mathcal{E}nd}

  \newcommand{\id}{\text{id}}
  \newcommand{\Kom}{\text{Kom}}
  
  \newcommand{\Hom}{\text{Hom}}
  \newcommand{\HOM}{\mathcal{HOM}}
  \newcommand{\End}{\text{End}}

  \newcommand{\xra}{\xrightarrow}
  \newcommand{\ra}{\rightarrow}
  
  \newcommand{\Aut}{\text{Aut}}



  \newcommand{\C}{{\mathbb C}}
  \newcommand{\R}{{\mathbb R}}
  \newcommand{\Z}{{\mathbb Z}}
   



  \newcommand{\<}{\langle}
  \renewcommand{\>}{\rangle}

\newcommand\restr[2]{{
  \left.\kern-\nulldelimiterspace 
  #1 
  \vphantom{\big|} 
  \right|_{#2} 
  }}

\newcommand{\gTLJ}{g\mathcal{TLJ}}
\newcommand{\TLJ}{\mathcal{TLJ}}
\newcommand{\TLJe}{\mathcal{TLJ}^\text{even}}
\newcommand{\gTLJe}{g\mathcal{TLJ}^\text{even}}
\newcommand{\I}{\mathscr{I}}
\renewcommand{\1}{\mathds{1}}

\newcommand{\wt}{\widetilde}
\newcommand{\B}{\mathbb{B}}
\newcommand{\W}{\mathbb{W}}
\newcommand{\Com}{\text{Com}}
\newcommand{\Ob}{\text{Ob}}
\newcommand{\Stab}{\text{Stab}}

\newcommand{\GL}{\text{GL}}

\DeclarePairedDelimiter\ceil{\lceil}{\rceil}
\DeclarePairedDelimiter\floor{\lfloor}{\rfloor}

\usepackage{listings}

\newcommand{\supp}{\text{supp}}

\newcommand{\tri}{%
\mathrel{
\begin{tikz}[line cap=round, line join=round]%
  \draw[-] (0, 1ex) -- (0,2ex)
(0, 1ex) -- (1ex,0ex)
(0, 1ex) -- (-1ex,0ex);
\end{tikz}%
}
}

\newcommand{\itri}{%
\begin{tikz}[line cap=round, line join=round]%
  \draw[-] (0, 1ex) -- (0,0ex)
(0, 1ex) -- (1ex,2ex)
(0, 1ex) -- (-1ex,2ex);
\end{tikz}%
}




\tikzcdset{scale cd/.style={every label/.append style={scale=#1},
    cells={nodes={scale=#1}}}}

\usetikzlibrary{positioning, angles, quotes, decorations.pathreplacing}

\usepackage{freetikz}

\definecolor{CBF_blue}{HTML}{648FFF}
\definecolor{CBF_purple}{HTML}{492CCC}
\definecolor{CBF_pink}{HTML}{DC267F}
\definecolor{CBF_orange}{HTML}{FE6100}
\definecolor{CBF_yellow}{HTML}{FFC441}

\title{Algebras in Fusion Categories and Categorical Dynamics of Braid Groups}
\author{Edmund Heng}

\begin{document}
\begin{titlepage}

	\centering
	{\scshape A thesis submitted for the degree of \par Doctor of Philosophy in Mathematics of \par the Australian National University\par}
	\vspace{1.5cm}
	\includegraphics[scale=1.2]{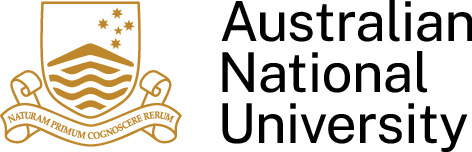}\par
	\vspace{1.5cm}
	{\Large\bfseries Categorification \par and \par Dynamics in Generalised Braid Groups \par}
	\vspace{2cm}
	{\large\itshape Edmund Xian Chen Heng\par}
	\vspace{2cm}
 	{Supervisor: \itshape Assoc. Prof. Anthony Licata}
	\vfill

	{ 
		\copyright Copyright by {\itshape Edmund Xian Chen Heng}\par
		January 23, 2022 (updated March 08, 2023) \par
		All Rights Reserved	\par
		}
\end{titlepage}

\chapter*{Abstract}
Recent developments in the theory of stability conditions and its relation to Teichmuller theory have revealed a deep connection between triangulated categories and surfaces.
Motivated by this, we prove a categorical analogue of the Nielsen-Thurston classification theorem for the rank two generalised braid groups by viewing them as (sub)groups of autoequivalences of certain triangulated categories.
This can be seen as a categorical generalisation of the classification known for the type $A$ braid groups when viewed as mapping class groups of the punctured discs.

Firstly, we realise the generalised braid groups as groups of autoequivalences through categorical actions that categorify the corresponding Burau representations.
These categorifications are achieved by constructing certain algebra objects in the tensor categories associated to the quantum group $U_q(\mathfrak{sl}_2)$, generalising the construction of zigzag algebras used in the categorical actions of simply-laced-type braid groups to include the non-simply-laced-types.

By viewing the elements of the generalised braid groups as autoequivalences of triangulated categories, we study their dynamics through mass growth (categorical entropy), as introduced by Dimitrov-Haiden-Katzarkov-Kontsevich.
Our classification is then achieved in a similar fashion to Bestvina-Handel's approach to the Nielsen-Thurston classification for mapping class groups.
Namely, our classification can be effectively decided through a given algorithm that also computes the mass growth of the group elements.
Moreover, it shows that the mass growth of the pseudo-Anosov elements are computable from certain rank two matrices.

\comment{
We construct algebra objects in some monoidal categories, which allows one to categorify the Burau representations of (non-simply laced type) generalised braid groups. 
These categorifications lead to the dynamical study of generalised braid groups through categorical dynamics, analogous to the dynamical study of classical braid groups viewed as mapping class groups of the punctured disks. 
Parallel to the theory of train-tracks by Bestvina-Handel, we show that the categorical entropies can be computed from the Perron-Frobenius eigenvalues of certain matrices obtained through stability conditions which are closely related to the root systems. \\

Talk title: A new (categorical) look on generalised braid groups \\
Abstract: 
The generalised braid groups (also known as Artin-Tits groups) are a family of groups which can be viewed as lifts of Coxeter groups.
The general approaches in understanding these groups usually involve generalising some known structure of the usual braid groups of (n+1) strands.
In particular, since the braid group of (n+1) strands are mapping class group of the n punctured disk, one could try and study the generalised braid groups as mapping class groups of certain surfaces.
Sadly, it was shown that not all generalised braid groups are ``nice'' subgroups of mapping class groups.
Not all hope is lost, however, as a striking analogy between the theory of surfaces and the theory of triangulated categories was established through a series paper by Bridgeland, Kontsevich, Dmitriov and many more.
Building on this analogy, we aim to offer a new perspective on generalised braid groups -- instead of viewing them as mapping class groups, we view them as autoequivalences of certain triangulated categories.

The first half of this talk will be an overview of this topic: we will introduce the generalised braid groups and look at some approaches used in studying them.
I will then briefly lay out the analogy between surfaces and triangulated categories, with the $n$ punctured disk as an explicit example.
Finally, I will talk about how categorifying certain representations of the generalised braid groups provide us with nice actions on triangulated categories, and state some results (about dynamics) obtained through these categorical actions.

The second half of the talk will be more interactive, where we can dive into the specific ideas from the first half, following the crowd's interest. This could range from categorifying representations through algebras in fusion categories, to understanding stability conditions and its relation to the dynamical study of the generalised braid groups.
}


\chapter*{Acknowledgements}
First and foremost, I would like thank my supervisor Tony Licata for his utmost guidance and his patience through out.
Thanks to his seemingly unbounded knowledge in many parts of mathematics, I have (hopefully) gained a much better overview of the interaction between different fields of mathematics.
I thank you for the amazing project you've offered and I could not have asked for a better supervisor.
I would also like to offer my gratitude to Anand Deopurkar and Asilata Bapat, whom -- despite all the craziness caused by the pandemic -- willingly spent so much time with me through Zoom to discuss mathematics.
Without them, the progress of this thesis would have come to a total halt during those tough times.
I would also like to specially thank Daniel Tubbenhauer, whom I met during the Kiola conference in 2019 and was the reason why I started thinking about the possibility of ``zigzag algebras for non-simply-laced graphs''.
I hope this thesis manage to provide you with a satisfying answer to your inspiring question.

I would like to take this opportunity to thank all my teachers, lecturers  and mentors alike, who have imparted me with knowledge and supported me in many different ways.
Especially to those who have taught me so much throughout my 8 years at the ANU,
I would like to give my thanks, in no particular order, to: Joan Licata for igniting my interest in algebra, Bregje Pauwells for teaching me Galois theory, Jim Borger for teaching me the functorial point-of-view in mathematics, Uri Onn for teaching me representation theory, Griff Ware for making analysis bearable and Amnon Neeman for providing me his wisdom in triangulated categories and algebraic geometry.
I would also like to thank Oded Yacobi for taking interest in my work and making the connection of my work to others possible.

To the whole PhD cohort, I thank you all for the company and laughter throughout, of which I can now only wish for more due to the untimely arrival of COVID-19.
I'd like to specially thank KieSeng Nge, with whom I frequently discuss mathematics; Sinead Wilson, who always willingly listen to me babbling on about  pieces of mathematics I find interesting; Ian Xiao for all things automaton that we discuss; Ivo Vekemans, for our random -- yet always interesting -- discussions on category theory down the corridor; Dominic Weiller, for teaching me fusion categories; Jack Brand, for all things geometry and coffee (related or not); and finally, Cale Rankin and Jaklyn Crilly, for all the great times during Friday beers.

On a more personal note, I would like to thank my parents who showered me with their love and never questioned my decision to go down the academic path. 
I'd like to thank my love Alanna Wu, for being the reason I stayed sane during this period and the reason I look forward to home every single day.
Finally I'd also like to extend my gratitude to all of my other friends and family for their support throughout.

Last but not least, I'd like to specially thank all who have offered to proof-read my thesis: Shirly Lee, Oded Yacobi, Anand Deopurkar, Asilata Bapat, Amnon Neeman, Cale Rankin, Wenqi Zhang and Noah White.
Without them this thesis would have been even more of a mess.

\chapter*{Declaration}
This thesis contains no material which has been accepted for the award of any other degree or diploma in any university. To the best of the author’s knowledge, it contains no material previously published or written by another person, except where due reference is made in the text.

\vspace{4cm}
\begin{flushright}
\noindent
Edmund Xian Chen Heng
\end{flushright}

\chapter*{Introduction}
In this thesis, we study the interplay between representation theory and Teichmuller theory from low-dimensional topology.
This interplay does not occur at the classical level, but rather after \emph{categorification}; so to be more precise, we study the interplay between \emph{categorical representation theory} and \emph{categorical ``Teichmuller theory''}.

Although the main focus of this thesis will be on the generalised braid groups (also known as Artin-Tits groups), the general idea should be applicable to a wider class of mathematical objects, which we lay out in the next section.

\comment{
One of the (many) far-reaching theorem of Thurston is the following dynamical classification of elements in the mapping class group:
\begin{theorem*}[Nielsen-Thurston classification \cite{FLP12}]
Every element in the mapping class group $\mathscr{M}CG(S)$ (for a compact, orientable surface $S$) is either
\begin{enumerate}
\item periodic;
\item reducible; or
\item pseudo-Anosov.
\end{enumerate}
\end{theorem*}

It is evident that ``categorification'', a philosophical idea coined by Crane and Frenkel \cite{Crane_1994}, is a powerful tool in recovering hidden symmetries and revealing surprising links between different mathematical fields.
The study of \emph{higher representation theory} is the incarnation of such an idea in the world of representation theory, which often involves categorifying known actions (of groups, rings, algebras etc.) on vector spaces into actions on categories.
Categorical actions have indeed proven to be fruitful, as seen from the result of \cite{chuang_rouquier_2008} proving the long standing Broue's abelian defect group conjecture, and the proof of the Kahzdan-Lusztig conjecture by \cite{EW_2014}, just to name a few.

Motivated by the recent developments on Bridgeland stability conditions, we explore yet another useful tool obtained from higher representations: a categorical ``Teichmuller theory''.
We show that having a group acting on a triangulated category can lead to a dynamical classification of its elements, analogous to the Nielsen-Thurston classification of surface diffeomorphisms.

The main focus of this thesis will be on the rank two generalised braid groups $\B(I_2(n))$, namely we will propose a (faithful) categorical action of $\B(I_2(n))$ for each $n \geq 3$, and prove a classification theorem for the elements in $\B(I_2(n))$ in terms of the dynamics of the categorical actions.
}

\section*{Generalities: the big picture}
It is now evident that ``categorification'', a philosophical idea coined by Crane and Frenkel \cite{Crane_1994}, is a powerful abstraction that recovers hidden symmetries and reveals surprising links between different mathematical fields.
The study of \emph{categorical group actions} is the incarnation of this idea in the world of representation theory, which often involves categorifying known group actions on vector spaces into actions on categories (see \cref{appen: categorification} for the definition of a weak categorical action).
Classically, the consideration of actions on vector spaces, which are sets with extra structures, allows one to access tools from linear algebra.
More often than not, the categories acted upon also come with extra structures -- such as being triangulated, where tools from triangulated categories can be used to analyse the group of interest.

One such tool we exploit here is the theory of Bridgeland's stability conditions \cite{bridgeland_2007}, which is the basic building block for a categorical version of Teichmuller theory.
\comment{
Categorical actions have indeed proven to be fruitful, as seen from the result of \cite{chuang_rouquier_2008} proving the long standing Broue's abelian defect group conjecture, and the proof of the Kahzdan-Lusztig conjecture by \cite{EW_2014}, just to name a few.

To explain why such a categorical Teichmuller theory exists, we will need to go back to Kontsevich's homological mirror symmetry.
}
The striking connection between Teichmuller theory and the theory of stability conditions was established through a series of papers \cite{gaiotto_moore_neitzke_2012}, \cite{bridgeland_smith_2014} and \cite{haiden_katzarkov_kontsevich_2017}.
These analogies, some of which can be explicitly realised, are summarised in \Cref{fig: analogy table}.
\begin{figure}[h]
\begin{center}
\begin{tabular}{ |c|c| } 
 \hline
 Surface & Triangulated category  \\
 \hline
 \hline
 mapping class group & group of autoequivalences \\
 simple curves $c$ & spherical objects $C$  \\ 
 intersection number of $c_1$ and $c_2$ & $\dim \Hom^*(C_1,C_2)$  \\ 
 flat metric $\mu$ & stability condition $\tau$ \\
 geodesics wrt. $\mu$ & $\tau$-semistable objects \\
 length${}_\mu (c)$ & mass${}_\tau (C)$ \\
 \hline
\end{tabular}
\end{center}
\caption{Analogy between theory of surfaces and theory of triangulated categories} 
\label{fig: analogy table}
\end{figure}

Since then, many have built on this connection to transfer ideas from Teichmuller theory to the context of triangulated categories.
For example,
\begin{enumerate}[-]
\item \cite{DHKK13} introduced the notion of categorical entropy and mass growth for endofunctors, which are the categorical analogues of entropy for diffeomorphisms;
\item \cite{bapat2020thurston} proposes a compactification of Bridgeland stability space, analogous to Thurston's compactification of Teichmuller space; and 
\item \cite{fan2020pseudoanosov} proposes a definition of pseudo-Anosov autoequivalence in terms of a filtration by triangulated subcategories.
\end{enumerate}
One of the unexplored territories is to ask questions about interesting autoequivalence groups of triangulated categories, given what we know about about mapping class groups, namely:
\begin{tcolorbox}
\begin{center}
``Which properties of mapping class groups can be extended to autoequivalence groups of triangulated categories?''
\end{center}
\end{tcolorbox}
A particular property that we explore in this thesis is the following far-reaching classification theorem due to Thurston:
\begin{theorem*}[Nielsen-Thurston classification \cite{FLP12}]
Every element in the mapping class group $\mathscr{M}CG(S)$ (for a compact, orientable surface $S$) can be classified into
\begin{enumerate}
\item periodic;
\item reducible; or
\item pseudo-Anosov.
\end{enumerate}
\end{theorem*}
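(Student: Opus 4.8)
The plan is to follow Thurston's original strategy: compactify Teichmuller space by projective measured foliations and run a fixed-point argument. First I would recall that for $S$ of negative Euler characteristic the Teichmuller space $\mathcal{T}(S)$ is homeomorphic to an open ball, and that the space $\mathcal{PMF}(S)$ of projective measured foliations compactifies it to a closed ball $\overline{\mathcal{T}(S)}$ on which $\mathscr{M}CG(S)$ acts by homeomorphisms. The key input making this work is the density of projectivised simple closed curves in $\mathcal{PMF}(S)$ together with the continuity of geometric intersection number, which realises everything inside $\mathbb{P}(\mathbb{R}^{\mathcal{S}})$, where $\mathcal{S}$ is the set of isotopy classes of essential simple closed curves; these are precisely the facts that a ``categorical Teichmuller theory'' built from stability conditions is meant to imitate.

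Second, given $f \in \mathscr{M}CG(S)$, Brouwer's fixed point theorem applied to the induced self-homeomorphism of the closed ball $\overline{\mathcal{T}(S)}$ yields a fixed point $x_0$. If $x_0$ lies in the interior $\mathcal{T}(S)$, then $f$ preserves a marked hyperbolic structure, so $f$ is realised by an isometry of a closed hyperbolic surface; as that isometry group is finite, $f$ has finite order and is periodic. The remaining case is $x_0 \in \mathcal{PMF}(S)$: here $f$ fixes a projective class, so $f_* \mathcal{F} = \lambda \mathcal{F}$ for some measured foliation $\mathcal{F}$ and scalar $\lambda > 0$.

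Third I would analyse this invariant foliation using the structure theory of measured foliations. Put $\mathcal{F}$ in normal form and split $S$ along the canonical decomposition into subsurfaces on which $\mathcal{F}$ is either minimal and filling or a foliation by closed leaves. If some piece is a genuine annulus --- equivalently $\mathcal{F}$ has a closed leaf or fails to fill $S$ --- then the core curves of those annuli (or the boundary of the filled subsurface) form an essential multicurve carried to an isotopic one by $f$, so $f$ is reducible. Otherwise $\mathcal{F}$ is minimal and filling; a second fixed-point argument, using that the $\mathscr{M}CG(S)$-action then has source--sink dynamics at $[\mathcal{F}]$, produces a transverse invariant foliation $\mathcal{G}$ with $f_* \mathcal{G} = \lambda^{-1} \mathcal{G}$. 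The pair $(\mathcal{F}, \mathcal{G})$ endows $S$ with a singular flat structure in which $f$ has an affine representative with linear part $\mathrm{diag}(\lambda, \lambda^{-1})$: if $\lambda = 1$ this linear part is trivial, the representative is a translation automorphism of a flat surface with cone points, hence of finite order, and $f$ is periodic; if $\lambda \neq 1$ the representative stretches $\mathcal{F}$ and contracts $\mathcal{G}$, so $f$ is pseudo-Anosov.

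I expect the main obstacle to be the third step. The fixed-point input is soft once the compactification is in place, but extracting the clean trichotomy requires the full structure theory of measured foliations --- Whitehead equivalence, the canonical minimal/periodic decomposition, existence and uniqueness of the transverse foliation --- and one must verify that the reducing multicurve produced is honestly essential and $f$-invariant up to isotopy, and that the filling case genuinely forces the affine picture. An alternative route, closer to the algorithmic flavour of the rest of this thesis, avoids the compactification entirely: represent $f$ by a map of a graph (spine) of $S$, tighten it in the style of Bestvina--Handel, and read off the outcome --- an efficient map with Perron--Frobenius transition matrix (pseudo-Anosov), an invariant subgraph or reduction (reducible), or a finite-order graph map (periodic); this demands considerably heavier combinatorial setup but yields the classification by a terminating algorithm.
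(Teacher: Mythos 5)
This theorem is not proved in the thesis: it is quoted as a classical result from low-dimensional topology with a citation to \cite{FLP12} (Fathi--Laudenbach--Po\'enaru), and serves purely as motivation for the categorical analogue the thesis actually establishes. So there is no ``paper's own proof'' against which to measure your sketch; you have reconstructed an argument the author deliberately treats as a black box.

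That said, your sketch is a reasonable outline of Thurston's original argument --- compactify $\mathcal{T}(S)$ by $\mathcal{PMF}(S)$, apply Brouwer, and trichotomise according to where the fixed point lands. The one place I would push back concretely is the way you invoke ``source--sink dynamics at $[\mathcal{F}]$'' to produce the transverse invariant foliation $\mathcal{G}$. North--south dynamics on $\mathcal{PMF}(S)$ is a \emph{consequence} of $f$ being pseudo-Anosov, so invoking it to build $\mathcal{G}$ is circular at the exact point where the work lives. The actual FLP argument in the arational case runs differently: one shows the fixed foliation $\mathcal{F}$ is uniquely ergodic, applies the compactness argument to $f^{-1}$ to get a second fixed projective class, and then shows it is transverse to $\mathcal{F}$ by a geometric intersection estimate --- or, in the $\lambda = 1$ subcase, uses the flat structure to force finite order. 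This is the technical heart you flag as ``the main obstacle,'' and the fix is not a small patch; it is most of the content of Expos\'es 9--12 of \cite{FLP12}. Your second route --- Bestvina--Handel train-track tightening --- is logically independent, avoids the compactification entirely, and is in fact the one the thesis is modelled on: the mass automaton and mass matrices of Chapter 4 are designed as the categorical analogue of the train-track transition matrix, and the algorithm on page \pageref{sec: algorithm} plays the role of the Bestvina--Handel tightening procedure.
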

\noindent
At the point of writing this thesis, the extent to which this theorem generalise in the context of triangulated categories remains unclear. 
Nonetheless, in working examples one can still use the analogies available as a guide and give a categorical interpretation of the above classification theorem -- even better, one might discover new analogies in the process!
Such is the goal of this thesis: a \emph{categorical} Nielsen-Thurston classification of the elements in the rank two generalised braid groups through categorical dynamics.

\section*{Categorical actions on triangulated module categories}
Let us now bring our focus back to the objects of interest in this thesis: the generalised braid groups.
Categorical representations of these groups have been studied by many and we shall list some of them here, all of which involve (faithful) actions on triangulated categories.
\begin{enumerate}
\item In \cite{khovanov_seidel_2001}, the authors categorified the Burau representation for the type $A_m$ (classical) braid groups, which led to several generalisations: type $ADE$ (simply laced) in \cite{licata2017braid}, extended affine type $\hat{A}$ in \cite{anno_logvinenko_2017} and type $B$ in the author's work with Nge \cite{heng2019curves}.
Note that the (first three) analogies given in \Cref{fig: analogy table} can be explicitly realised in these cases.
\item A more general result of Brav-Thomas \cite{brav_thomas_2010} proves that any triangulated category (with a dg enhancement) equipped with an $ADE$ configuration of spherical objects has a faithful $ADE$ braid group action on it via spherical twists.
\item Motivated by the result of Brav-Thomas, Jensen \cite{jensen_2016} proceeded to show that the categorical actions of spherical braid groups using Rouquier complexes (introduced in \cite{rouquier_2006}) are faithful, proving Rouquier's conjecture on the faithfulness of the 2-braid groups for all spherical types.
\end{enumerate}
The first main result in this thesis is the construction of new, faithful categorical actions of the rank two braid groups $\B(I_2(n))$ -- including the non-simply laced types when $n \geq 4$ -- which are categorifications of the corresponding Burau representations.

It is important to note that categorifying the Burau representation will inevitably involve categorifying a crucial (symmetric) bilinear pairing given by $\<\alpha_i, \alpha_j\> = 2 \cos \left( \frac{\pi}{m_{i,j}} \right)$.
In the simply laced cases, only $ m_{i,j} = 1, 2$ and $3$ are involved, hence $\<\alpha_i, \alpha_j\>$ will always be an integer.
As such, it is sufficient to lift the output of the bilinear pairing to vector spaces -- as was achieved in the construction given in \cite{khovanov_seidel_2001}.
For $m_{i,j} \geq 4$, it is only natural to consider objects with ``dimension'' $2 \cos \left( \frac{\pi}{m_{i,j}} \right)$ instead.
This can be attained through working in certain fusion categories instead, known as the Temperley-Lieb-Jones category $\TLJ_n$ of level $n$ (equivalent to the semi-simplified category of tilting modules of $U_q(\mathfrak{s}\mathfrak{l}_2)$ with $q= e^{i\frac{\pi}{n}}$).
To be more precise, instead of considering usual (graded) algebras, which are algebra objects (or monoid objects) in the category of (graded) vector spaces, we will be constructing algebra objects in the (graded) category $\TLJ_n$.

Moreover, our construction results in an action of $\B(I_2(n))$ on a category $\cD$ that is not only triangulated, but also a \emph{module category over $\TLJ_n$}.
This is a categorical analogue of the fact that the rank two Burau representations are defined as actions on rank two free modules over $\Z \left[ 2\cos \left( \frac{\pi}{n} \right) \right]$:
\[
\begin{tikzcd}[column sep=16ex, row sep=6ex]
	\text{Categorical: } 
		\ar[r, phantom, "\B(I_2(n)) \text{ acts on }\curvearrowright { }"] & 
	\cD
		\ar[r, phantom, "\text{module category over}"]
		\ar[d, dashed, "K_0"]& 
	\TLJ_n 
		\ar[d, dashed, "PF\dim \circ K_0"] \\
	\text{Burau: } 
		\ar[r, phantom, "\B(I_2(n)) \text{ acts on } \curvearrowright { }"] &
	K_0(\cD)
		\ar[r, phantom, "\text{module over}"] & 
	\Z \left[ 2\cos \left( \frac{\pi}{n} \right) \right]
\end{tikzcd}.
\]

\section*{Stability conditions on triangulated module category}
The consideration of such categorification has also led us to the study of a stricter class of stability conditions.
Since we will be looking at triangulated module categories over a fusion category $\cC$, we will be interested in stability conditions that interact with the $\cC$-module structure in a compatible way, which we call \emph{stability conditions respecting $\cC$} (see \cref{defn: q stab respect C}).
These stability conditions satisfy some nice properties, and in particular are also obtainable through stability functions on hearts which satisfy some extra requirements.

In our working example where the triangulated module category $\cD$ (acted upon by $\B(I_2(n))$) is considered, there is a specific stability condition $\tau_R$ (respecting $\TLJ_n$) that we will be particularly interested in, called the \emph{root stability condition of $I_2(n)$} (see \cref{sect: stab cond root}).
This naming convention comes from the fact that $\tau_R$ is reminiscent of the root system associated to $I_2(n)$.
For instance, certain $\tau_R$-semistable objects correspond to positive roots of the root system (see \cref{fig: root and central charge n=5} for an example corresponding to $I_2(5)$).
Moreover, the Coxeter element $s_2s_1$ which acts on the root system by a $\frac{2\pi}{n}$ rotation now has its positive lift $\sigma_2\sigma_1$ sending $\tau_R$-semistable objects of phase $\phi$ to $\tau_R$-semistable objects of phase $\phi+\frac{2}{n}$ (see \cref{gamma phase reducing}).
These extra structures will become useful when analysing the categorical dynamics of $\B(I_2(n))$.

\comment{
Our construction is largely motivated by the one in \cite{khovanov_seidel_2001}, so let us briefly recall the construction given there .
The crucial ingredient of the categorification is a finite dimensional (graded) algebra $\mathscr{A}_m$ associated to the $A_m$ graph, called the $A_m$ zigzag algebra.
For latter purposes, it will be beneficial to think of a finite dimensional algebra as an algebra object (also known as monoid object) in the category of finite dimensional vector spaces.
This algebra object $\mathscr{A}_m$ has exactly $m$ indecomposable projective (left) modules, which we will denote as $P_i$ for $1 \leq i \leq m$.
The categorical action is then given by certain complexes of $\mathscr{A}_m$-bimodules acting on the homotopy category of (graded) projective modules.
What is important to note here is that the dimension of the vector subspace ${}_iP\otimes_{\mathscr{A}_m} P_j \subset \mathscr{A}_m$ is what categorifies the crucial bilinear pairing $\<\alpha_i, \alpha_j\> = 2 \cos \left( \frac{\pi}{m_{i,j}} \right)$ in defining the Burau (or more precisely, the symmetrical geometric) representation of the associated braid group $\B(A_m)$.
Note that $2 \cos \left( \frac{\pi}{m_{i,j}} \right)$ is only integral when $m_{i,j} = 2$ and $3$, which was all that was needed for the simply laced types.
As such, working in the category of vector spaces (which categorifies $\Z$) turns out to be sufficient.
}

\section*{Towards a categorical Nielsen-Thurston classification}
Once we have obtained the faithful categorical action, we shall use it to work towards a categorical Nielsen-Thurston classification of the braid elements in $\B(I_2(n))$.
To understand our approach, it will be beneficial to explain our motivation from low-dimensional topology: the proof of the Nielsen-Thurston classification given by Bestvina-Handel in \cite{bestvina_handel_1995}.

Bestvina-Handel first introduced the theory of train tracks when they were studying automorphisms of free groups in \cite{bestvina_handel_free}, which was later on (due to Thurston's suggestion) used to provide an algorithmic classification of the Nielsen-Thurston classification.
This classification method is combinatorial in nature and essentially boils down to checking the irreducibility of some matrices (with integral entries) induced from the train track.
A rather crude version of their theorem can be stated as follows: 
\begin{tcolorbox}
\begin{center}
``A diffeomorphism is pseudo-Anosov if and only if the associated (train track) matrix is irreducible.''
\end{center}
\end{tcolorbox}
\noindent
Our idea is that if one replaces the above statement with an appropriate categorical analogue, one should obtain a categorical version of Bestvina-Handel result.
This turns out to be fruitful, whereby the matrices obtained from train tracks will be replaced by matrices obtained from \emph{mass automaton}, a combinatorial gadget introduced in \cite{bapat2020thurston} that can be viewed as a categorical analogue of a train track automaton.

A brief overview of our approach to the classification is as follows.
We start with a Coxeter-theoretic definition of periodic, reducible and pseudo-Anosov braids (see \cref{defn: braid types}).
This definition will probably seem meaningless at first, as a pseudo-Anosov braid is -- by definition -- a braid that is not periodic nor reducible.
However, the inclusion of categorical dynamics shall complement this definition.
More precisely, we show that the pseudo-Anosov braids are the only ones with positive mass growth (at $t=0$), whereas all the periodic and reducible braids have zero mass growth.
Moreover, similar to the result of Bestvina-Handel, the braids can be classified through an algorithm which assigns a certain matrix called \emph{mass matrix} to each braid.
A summary of the three equivalent definitions is provided in \Cref{fig: equiv defn braid types}.
We also conjecture that our definition(s) of pseudo-Anosov agrees with the definitions given in \cite{fan2020pseudoanosov} and \cite{bapat2020thurston}.
\begin{figure}[h]
\centering 
\resizebox{\columnwidth}{!}{
\begin{tabular}{ |c|c|c|c| } 
 \hline
 & Coxeter theoretic & Mass growth function $h_{\tau_R,t}$ & Mass matrix $\cM$  \\
 \hline 
 & & & \\
 $\beta$ is periodic 
 & $\beta^k$ is central
 & linear over $t$; 
 &	$\begin{bmatrix}
 	* & 0 \\
 	0 & *
	\end{bmatrix}$  \\
 & & $h_{\tau_R,0}(\beta) = 0$ & \\
 & & & \\
 \hline
 & & & \\
 $\beta$ is reducible 
 & $\beta$ is conjugate to $\sigma_i^k \Delta$ 
 & \emph{piece-wise} linear over $t$;  
 & $\begin{bmatrix}
 	* & * \\
 	0 & *
	\end{bmatrix}$ or 
	$\begin{bmatrix}
 	* & 0 \\
 	* & *
	\end{bmatrix}$ \\
 & & $h_{\tau_R,0}(\beta) = 0$ & \\
 & & & \\
 \hline 
 & & & \\
 $\beta$ is pseudo-Anosov & $\beta$ is not periodic nor reducible 
 & $h_{\tau_R,0}(\beta) > 0$ 
 & $\begin{bmatrix}
 	* & * \\
 	* & *
	\end{bmatrix}$ \\
 & & & \\
 \hline
\end{tabular}
}
\caption{Three equivalent definitions of the types of braids. The label $*$ represents a (strictly) positive real number.} 
\label{fig: equiv defn braid types}
\end{figure}

\section*{The main results}
This thesis starts with the construction of the (non-commutative) algebra object  $\I := \I_2(n)$ in the Temperley-Lieb-Jones category $\TLJ_n$ for each $3 \leq n < \infty$, which we call the \emph{zigzag algebra object} associated to the rank two Coxeter graph $I_2(n)$.
This then leads to the following theorem:
\begin{theorem*}[see \cref{weak braid action} and \cref{cor: faithful action}]
There is a (weak) faithful $\B(I_2(n))$-action on $\Kom^b(\I \text{-prmod})$, where each standard generator $\sigma_i$ of $\B(I_2(n))$ acts on a complex $M \in \Kom^b(\I \text{-prmod})$ via tensoring $M$ over $\I$ with a particular complex of $\I$-bimodules $\sigma_{P_i}$.
\end{theorem*}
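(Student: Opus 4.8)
The plan is to split the statement into its two essentially independent halves --- that $M\mapsto M\otimes_\I\sigma_{P_i}$ assembles into a \emph{weak} $\B(I_2(n))$-action on $\Kom^b(\I\text{-prmod})$, and that this action is \emph{faithful} --- and to prove them by rather different means.

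For the weak action I would first isolate the structural input provided by the construction of $\I=\I_2(n)$: orthogonal idempotents $e_1,e_2$ with $e_1+e_2=1$, indecomposable projectives $P_i:=\I e_i$ and their transposes ${}_iP:=e_i\I$, the sphere-like identity $e_i\I e_i\cong\1\oplus\1\{2\}$ (a single copy of the monoidal unit of $\TLJ_n$ in internal degrees $0$ and $2$), and $e_i\I e_j\cong$ (generating object of $\TLJ_n$) in internal degree $1$ for $i\neq j$, so that $\dim_{\TLJ_n}e_i\I e_j=2\cos(\pi/n)$. With this I would define the complexes of $\I$-bimodule objects in $\TLJ_n$
\[
\sigma_{P_i}:=\bigl(\, P_i\otimes{}_iP \xrightarrow{\ \mu\ } \I \,\bigr),
\qquad
\sigma_{P_i}^{-1}:=\bigl(\, \I \xrightarrow{\ \mu^\vee\ } P_i\otimes{}_iP \,\bigr),
\]
where the external tensor $P_i\otimes{}_iP$ is formed over $\1$ (so that it is naturally a bimodule object), $\I$ sits in cohomological degree $0$ with suitable internal shifts making the structure maps degree-preserving, and $\mu$ is the restriction of the multiplication. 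Three points then need checking: (i) $-\otimes_\I\sigma_{P_i}^{\pm 1}$ preserves $\Kom^b(\I\text{-prmod})$ and descends to the homotopy category --- immediate, since one tensors with a bounded complex of bimodule objects projective on the relevant side; (ii) $\sigma_{P_i}\otimes_\I\sigma_{P_i}^{-1}\simeq\I\simeq\sigma_{P_i}^{-1}\otimes_\I\sigma_{P_i}$ --- expand the tensor product, rewrite ${}_iP\otimes_\I P_i\cong e_i\I e_i\cong\1\oplus\1\{2\}$, use unitality of $\mu$, and Gaussian-eliminate the resulting contractible summand; and (iii) the braid relation
\[
\underbrace{\sigma_{P_1}\otimes_\I\sigma_{P_2}\otimes_\I\sigma_{P_1}\otimes_\I\cdots}_{n}
\ \simeq\
\underbrace{\sigma_{P_2}\otimes_\I\sigma_{P_1}\otimes_\I\sigma_{P_2}\otimes_\I\cdots}_{n}
\]
as complexes of $\I$-bimodules up to homotopy. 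This last point is the crux, and I expect it to be the main obstacle. I would attack it by expanding both $n$-fold tensor products into explicit complexes whose terms are iterated external products $P_{i_1}\otimes{}_{i_1}P\otimes P_{i_2}\otimes\cdots$; these collapse via the two identities above, with all internal-degree bookkeeping governed by the Temperley--Lieb combinatorics of $\TLJ_n$ (the vanishing of the $n$-th Jones--Wenzl projector being exactly what forces the relation to close at length $n$ and no sooner), after which repeated Gaussian elimination should reduce each side to a common canonical complex. As a conceptual cross-check one can instead verify that $P_1,P_2$ form an ``$I_2(n)$-configuration'' of spherical objects in the $\TLJ_n$-module category $\Kom^b(\I\text{-prmod})$ --- i.e. $\End^\bullet(P_i)\cong\1\oplus\1\{2\}$ and $\Hom^\bullet(P_i,P_j)$ is a single shifted copy of the generator of $\TLJ_n$ --- and appeal to the $\TLJ_n$-enriched analogue of the Seidel--Thomas/Brav--Thomas principle that twists around such a configuration satisfy the $I_2(n)$ braid relations.

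Faithfulness I would deduce from the root stability condition $\tau_R$ respecting $\TLJ_n$; it is not formal, since on $K_0$ (after $PF\dim$) the action is merely the rank-two Burau representation over $\Z[2\cos(\pi/n)]$. Suppose $\beta\in\B(I_2(n))$ acts by a functor homotopic to the identity. Then $\beta$ fixes $\tau_R$ exactly, hence fixes its semistable objects together with their phases; tracking the $\tau_R$-semistable objects realising the positive roots of $I_2(n)$ and using that the positive lift $\sigma_2\sigma_1$ of the Coxeter element advances phases by $2/n$ (\cref{gamma phase reducing}), one reads off that $\beta$ must lie in the centre of $\B(I_2(n))$; since the central generator acts by a nonzero cohomological shift composed with an internal shift --- an autoequivalence of infinite order --- triviality of the action forces $\beta=1$. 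An alternative route, specialising Jensen's and Brav--Thomas' faithfulness arguments to rank two, is Garside-theoretic: write $\beta=\Delta^{-m}\beta^+$ with $\beta^+$ a positive braid, observe that positive braids act by complexes of $\I$-bimodules concentrated in a controlled band of cohomological degrees, and recover $m$ and $\beta^+$ from that band together with the $K_0$-action.
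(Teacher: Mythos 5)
Your proposal decomposes the theorem the same way the paper does, but the two halves deserve separate comment.

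\emph{The weak action.} Your primary route for the $n$-braid relation --- expanding the $n$-fold tensor product $\sigma_{P_1}\otimes_\I\sigma_{P_2}\otimes_\I\cdots$ into iterated external products and reducing by repeated Gaussian elimination --- would in principle terminate, but it is a much heavier computation than the paper undertakes, and it is your parenthetical ``cross-check'' that is actually the proof. The paper's argument is the conjugation reduction for spherical-type twists: with $\Sigma$ denoting the $(n{-}1)$-fold alternating tensor, one has $\Sigma\otimes_\I\sigma_{P_2}\otimes_\I\Sigma^{-1}\cong\sigma_{\Sigma\otimes_\I P_2}$ (\cref{spherical twist relation}), so the braid relation collapses to showing that $\Sigma\otimes_\I P_2$ is, up to shifts, $P_j\otimes\Pi_{n-2}$ for the appropriate $j$, which follows from $n{-}1$ applications of the key computational input \cref{lemma for braid relation}, together with the observation that $\sigma_{P_j\otimes\Pi_{n-2}}\cong\sigma_{P_j}$ (\cref{P_i involution}). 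In particular the ``closes at length $n$ and no sooner'' phenomenon you attribute to the vanishing Jones--Wenzl projector enters not through a global Gaussian-elimination cascade but through the terminal case $a=n{-}2$ of \cref{lemma for braid relation}. Your treatment of the inverse relation (\cref{inverse relation}) matches the paper's.

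\emph{Faithfulness.} Here there is a genuine misconception, not merely a choice of route. You assert that faithfulness ``is not formal, since on $K_0$ (after $PF\dim$) the action is merely the rank-two Burau representation over $\Z[2\cos(\pi/n)]$,'' and then build an argument through the root stability condition $\tau_R$. But the categorical action retains both the internal grading $\<1\>$ and the $\TLJ_n$-module structure, so $K_0(\cK)$ is a rank-two free module over $\Omega_n[q,q^{-1}]$ and the induced $K_0$-action is exactly the $q$-deformed Burau representation $\rho$ of \cref{defn: Burau rep} (after identifying $\omega\mapsto 2\cos(\pi/n)$). That representation --- valued in $\GL_2(\Z[2\cos(\pi/n),q^{\pm1}])$ with $q$ still a formal variable --- is faithful by \cite{lehrer_xi_2001}. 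Hence the paper's argument \emph{is} purely formal: a braid acting as the identity functor acts as the identity on $K_0$, so it lies in the kernel of $\rho$, so it is trivial. Your stability-condition alternative is not only unnecessary but contains a gap: the step ``fixes $\tau_R$ $\Rightarrow$ $\beta$ central'' is unjustified, and in fact controlling the stabiliser of $\tau_R$ in $\B(I_2(n))$ is essentially a restatement of the faithfulness problem rather than a reduction of it. Drop that argument and invoke faithfulness of the Burau representation directly.
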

\noindent
Note that the faithfulness of the categorical actions comes for free, since the induced actions on the Grothendieck groups (the Burau representations) are known to be faithful \cite{lehrer_xi_2001}.

We would like to mention here that although we only focus on the rank two braid groups, the construction of these zigzag algebra objects can be easily generalised to \emph{any} Coxeter graphs $\Gamma$, with a similarly defined categorical action of $\B(\Gamma)$.
Faithfulness however, if true, will no longer be deducible from the Grothendieck level -- the Burau representations of large ranks are known to be non-faithful, even in type $A$; see \cite{Bige_99} and \cite{LONG1993439}.

\comment{
The reason that we insisted $3 \leq n < \infty$ is because the cases where $n = 2, \infty$ (whose braid groups are $\Z\times \Z$ and the free group of two elements respectively) do not fit nicely into our dynamical study later on and are better treated separately.
For the free group of two elements case, we refer the reader to \cite{bapat2020thurston} (studied as $\widehat{A_1}$).
}

\comment{
Before we move on to the dynamical study, it is worth mentioning here that our categorical representation actually has more structure; the triangulated category that we act on is actually also a \emph{module category over a fusion category}.
This extra structure will be crucial in the dynamical study of this action, as we can impose more structure on the stability conditions that we consider (see \cref{defn: q stab respect C}).
In particular, for each $I_2(n)$, we will be considering the stability condition $\tau_R$, which is what we call the \emph{root stability condition associated to $I_2(n)$}.
This naming convention comes from the fact that $\tau_R$ is a reminiscent of the root system associated to $I_2(n)$.
For example, the $\tau_R$-semistable objects (in some sense) corresponds to positive roots of the root system.
Moreover, the coxeter element $s_2s_1$ which acts on the root system by a $\frac{2\pi}{n}$ rotation now has its positive lift $\sigma_2\sigma_1$ sending $\tau_R$-semistable objects of phase $\phi$ to semistable objects of phase $\phi+\frac{2}{n}$.
These extra structures will become useful in simplifying the analysis required in the dynamical study of the braid group.
See \cref{sect: root and semistable} for more details.
}

The next main theorem of this thesis is the categorical Nielsen-Thurston classification for the rank two braid groups:
\begin{theorem*}[see \cref{theorem: classification}, \cref{cor: categorical entropy t=0}]
There is an algorithm (pg.\pageref{sec: algorithm}) that decides whether a braid $\beta \in \B(I_2(n))$ is periodic, reducible or pseudo-Anosov.
Moreover, the mass growth function $h_t(\beta)$ completely determines the type of $\beta$, namely
\begin{enumerate}[(i)]
\item $\beta$ is periodic if and only if $h_0(\beta) = 0$ and $h_t(\beta)$ is linear over $t$;
\item $\beta$ is reducible if and only if $h_0(\beta) = 0$ and $h_t(\beta)$ is strictly piece-wise linear over $t$;
\item $\beta$ is pseudo-Anosov if and only if $h_0(\beta) = \log(PF(M))> 0$, where $M$ is a rank two matrix with non-negative entries in $\Z\left[2\cos\left(\frac{\pi}{n}\right)\right]$ and $PF(M)$ is its Perron-Frobenius eigenvalue.
\end{enumerate}
\end{theorem*}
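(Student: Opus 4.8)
The plan is to build the classification on the three structures established above: the faithful $\B(I_2(n))$-action on $\cD := \Kom^b(\I\text{-prmod})$ (\cref{weak braid action}, \cref{cor: faithful action}), the root stability condition $\tau_R$ respecting $\TLJ_n$, and a mass automaton in the sense of \cite{bapat2020thurston} adapted to $\cD$. I would begin with a purely group-theoretic reduction: using the Garside/dihedral normal form of $\B(I_2(n))$, show that the Coxeter-theoretic trichotomy of \cref{defn: braid types} is \emph{exhaustive and mutually exclusive} --- every $\beta$ is conjugate to either a power of $\delta = \sigma_1\sigma_2$ or of the half-twist $\Delta$ (periodic), or to $\sigma_i^k\Delta$ with $k \neq 0$ (reducible), or to neither (pseudo-Anosov) --- and that one can decide which case occurs effectively from a word in $\sigma_1^{\pm 1},\sigma_2^{\pm 1}$; this provides the algorithm. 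Since the three predicates ``$h_0 = 0$ and $h_t$ linear'', ``$h_0 = 0$ and $h_t$ strictly piecewise linear'', and ``$h_0 > 0$'' are pairwise incompatible, the three ``if and only if'' statements then reduce to the three \emph{forward} implications, which the rest of the proof supplies.

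For the periodic case, a power $\beta^\kappa$ is central, and I would first show --- from the explicit bimodule complexes $\sigma_{P_i}$ --- that central elements of $\B(I_2(n))$ act on $\cD$ by a shift $[N]$, possibly composed with an internal $\TLJ_n$-grading twist; either way this translates all Harder--Narasimhan phases of $\beta^{\kappa m}E$ uniformly by a constant multiple of $m$ while leaving the number of factors fixed, so $m_{\tau_R,t}(\beta^{\kappa m}E) = e^{cmt}\,m_{\tau_R,t}(E)$ and hence $h_{\tau_R,t}(\beta) = (c/\kappa)t$: linear, vanishing at $t = 0$. For the reducible case, assume $\beta$ conjugate to $\sigma_1^k\Delta^{2\ell}$ with $k\neq 0$. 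The twist $\sigma_{P_1}$ fixes $P_1$ up to shift, so $\langle P_1\rangle \subseteq \cD$ is $\beta$-invariant; iterating $\sigma_{P_1}$ on an object transverse to it produces HN filtrations whose number of semistable factors grows only \emph{linearly} in the exponent and whose phases spread over an interval of linearly-growing length. Consequently $m_{\tau_R,t}(\beta^m E)$ is dominated by its extreme-phase HN factor for $t \neq 0$ and grows only polynomially at $t = 0$, so $h_{\tau_R,0}(\beta) = 0$ while the one-sided slopes of $h_{\tau_R,t}$ at $t = 0$ differ by the nonzero spreading rate of $\sigma_{P_1}^k$: $h_{\tau_R,t}$ is strictly piecewise linear. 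The $\beta$-invariant flag $\langle P_1\rangle$ is also what forces the associated mass matrix to be triangular, as recorded in \cref{fig: equiv defn braid types}.

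The pseudo-Anosov case is the heart of the argument and the step I expect to be the main obstacle. The essential input is the rigidity of $\tau_R$ from \cref{gamma phase reducing}: its semistable objects, up to shift, are exactly the $n$ objects corresponding to the positive roots of $I_2(n)$, and the lift $\sigma_2\sigma_1$ of the Coxeter element cycles their phases by $+2/n$. Hence for any split generator $E$ the HN factors of $\beta^m E$ range over a \emph{finite} set of semistable objects modulo the action of $\delta$ and shift --- precisely the data needed to assemble a finite mass automaton --- and running $\beta$ through it expresses $m_{\tau_R,t}(\beta^m E)$ as an entry of a product of $2\times 2$ matrices with non-negative coefficients in $\Z[2\cos(\pi/n)]$ (these coefficients being dimensions of $\Hom$-spaces in $\TLJ_n$). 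Specialising at $t = 0$ gives the mass matrix $\cM$, and $h_{\tau_R,0}(\beta) = \log PF(\cM)$ follows from the Perron--Frobenius theorem once one checks that the limit defining mass growth picks out the spectral radius of $\cM$.

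It then remains to show that $\cM$ is genuinely $2\times 2$ irreducible with $PF(\cM) > 1$ exactly in the pseudo-Anosov case. Reducibility of $\cM$ corresponds to a $\beta$-invariant flag of semistable objects, which by faithfulness of the action forces $\beta$ to preserve some $\langle P_i\rangle$ and hence (via the normal form from the first step) to be conjugate into $\sigma_i^k\Delta^{2\ell}$, i.e.\ to be reducible or periodic; so $\cM$ is irreducible precisely when $\beta$ is pseudo-Anosov. For the strict inequality I would argue that an irreducible non-negative matrix over $\Z[2\cos(\pi/n)]$ with Perron--Frobenius eigenvalue $1$ must be conjugate to a permutation matrix, which would make some $\beta^\kappa$ act by a shift and hence $\beta$ periodic, contradicting irreducibility of $\cM$; therefore $PF(\cM) > 1$ and $h_{\tau_R,0}(\beta) = \log PF(\cM) > 0$. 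Finally, identifying $h_{\tau_R,0}$ with the categorical entropy at $t = 0$ via the comparison of \cite{DHKK13} yields \cref{cor: categorical entropy t=0}.
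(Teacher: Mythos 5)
Your outline correctly identifies the key tools --- the root stability condition $\tau_R$, the rotational action of the Coxeter lift $\gamma = \sigma_2\sigma_1$ on semistable objects (\cref{gamma phase reducing}), and a mass automaton of the Bapat--Deopurkar--Licata type --- and your overall architecture (periodic and reducible braids handled by explicit shift/spread computations, pseudo-Anosov via Perron--Frobenius on a mass matrix, trichotomy exhausted group-theoretically) matches the paper at a coarse level. But there are two substantive gaps.

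The first and larger gap is that you assert, but do not establish, that the mass automaton correctly tracks $m_{\tau_R,t}$ under iterated braid twists. When you write that ``the HN factors of $\beta^m E$ range over a finite set of semistable objects modulo $\delta$ and shift --- precisely the data needed to assemble a finite mass automaton,'' you are implicitly assuming that applying a generator $\sigma_i$ to an object $A$ with HN pieces $E_j$ yields an object whose HN support agrees with $\bigoplus_j \supp_{\tau_R}(\sigma_i(E_j))$. This is false for a generic autoequivalence and a generic stability condition, and proving it in this setting is the entire technical core of the paper: one has to show that the filtration obtained by concatenating HN polygons of $\sigma_i(E_j)$ is \emph{geodesic}, which requires the phase-comparison estimates in \cref{lemma: phases of object odd} and \cref{sigma1 phase non-overlapping}, the 2-Calabi--Yau-type symmetry \cref{lemma: 2CY variant}, and the rectifiability machinery of Appendix B (the specific claim is \cref{sigma1 HN preserving} and its consequence \cref{cor: conditions of automaton odd}). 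Moreover, this ``HN-preserving'' property fails at exactly one source vertex for each twist (the excluded $j = \frac{n-1}{2}$ case for odd $n$), which is what dictates the shape of the automaton $\Lambda_{\tau_R}(n)$ and hence which braid words are recognisable; your Garside-normal-form algorithm sidesteps this and therefore does not obviously produce words recognisable by the automaton. The paper's algorithm instead works directly with an automaton-adapted normal form (\cref{normal form}) and iteratively conjugates by the outermost letter (\cref{conjugate odd}, \cref{reduce word length odd}) until a closed path is found or periodicity is certified.

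The second gap is your argument that $PF(\cM) > 1$. You claim an irreducible non-negative matrix over $\Z[2\cos(\pi/n)]$ with Perron--Frobenius eigenvalue $1$ must be conjugate to a permutation matrix; that is not a standard result in this ring (it is only classical over $\Z$), and the subsequent step --- permutation matrix $\Rightarrow$ some $\beta^\kappa$ acts by shift --- is not justified either, since the mass matrix does not remember enough of the functor to run that argument. The paper's argument in the proof of \cref{reducible matrix iff reducible or periodic} is both correct and much simpler: every entry of an irreducible $\cM(p)|_{t=0}$ is a non-negative integer combination of Perron--Frobenius dimensions $PF\dim(\Pi_a) \geq 1$, so all four entries are $\geq 1$; hence every row sum is $\geq 2$, and by the standard Perron--Frobenius bound $PF(\cM) \geq \min_i \sum_j \cM_{ij} \geq 2$. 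You should replace your argument with this one.
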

Once again, the algorithm mentioned greatly depends on the existence of a mass automaton for each rank two braid group.
The existence of a ``good'' automaton allows one to prove, in addition to our result, a few other structural results about the associated space of stability conditions (see \cite{bapat2020thurston}); the problem lies in finding one.
However, the construction of good automata for other (higher rank) generalised braid groups is still a work in progress.

\section*{Outline of the thesis}
The first chapter of this thesis gives a brief overview of generalised braid groups and covers the necessary background on algebras and modules in general monoidal categories, fusion categories, Temperley-Lieb-Jones category $\TLJ_n$ and (triangulated) module categories.

The rest of this thesis is broken down into two parts: part I is mostly representation theoretic, dealing with categorification of the Burau representations; part II focuses on the categorical dynamics.

Part I contains a single chapter: Chapter 2, where we start by constructing the zigzag algebra object $\I:= \I_2(n)$ in the Temperley-Lieb-Jones category $\TLJ_n$ for each $I_2(n)$ and study some of the categories of modules associated to the algebra $\I$.
We show that certain complexes of bimodules satisfy the required $n$-braiding relation, and that they induce the action of the braid group $\B(I_2(n))$ on the homotopy category of modules.
At the end of this chapter, we relate our categorification to some other known categorifications.
This concludes part I of the thesis.

Part II contains two chapters.
The first chapter, Chapter 3, lays the foundation required to study categorical dynamics in a triangulated module ($\X$-)category. 
We recall the definition of stability conditions in triangulated categories, and generalise it to ($q$-)stability conditions on triangulated module ($\X$-)categories.
We then relate mass growth (with respect to the generalised stability condition above) to a generalised version of categorical entropy.
Finally we recall the definition of a mass automaton, which will be our main tool in the next chapter.

Chapter 4 contains the heart of this thesis (no pun intended): the dynamical classification of the braid elements in $\B(I_2(n))$.
We start with the construction of the stability condition $\tau_R$ associated to the root system of $I_2(n)$ using the heart of linear complexes.
We give the naive Coxeter-theoretic definition of periodic, reducible and pseudo-Anosov braids; and we compute the mass growth of periodic and reducible braids.
To deal with the pseudo-Anosov braids, we first construct a mass automaton for each braid group $\B(I_2(n))$.
We then provide the algorithm which classifies the braid elements and also computes the mass growth for any braids.
Two working examples ($n = 5$ and $4$) illustrating the algorithm can be found at the end of the chapter.

We end this thesis with an afterword comparing the mapping class representations and categorical representations of the generalised braid groups.

\section*{Notation and conventions}
We will fix $n$ to be an integer $\geq 3$ once and for all.

Categories we consider will always be (essentially) small.
Given a category $\cC$ we will use ob$(\cC)$ to denote its set of isomorphism classes of objects.
Subcategories will always mean strictly full subcategories, and when we say linear category, we always mean $\C$-linear unless stated otherwise.
When $\cC$ and $\cC'$ are both additive subcategories of an additive category $\cA$ such that $\cC \cap \cC' = 0$, we will write $\cC \oplus \cC'$ to denote the additive subcategory of $\cA$ generated by the object $C \oplus C'$ with $C \in \cC$ and $C' \in \cC'$.

We will reserve the letters $\X$ and $\Y$ to denote distinguished autoequivalences of a category $\cC$.
We will always assume that the distinguished autoequivalence has infinite order and that it respects any extra structure that $\cC$ comes equipped with: additive, monoidal, abelian, triangulated etc.
When $\cC$ is equipped with a fixed distinguished autoequivalence $\X$, we call $\cC$ an $\X$-category, denoted as the pair $(\cC, \X)$.
In our main examples, $\X = \<1\>$ will be some internal $\Z$-grading shift of the category (which will separate from the suspension functor of a triangulated category).
To distinguish $\X$ from the other endofunctors, we shall write $\X$ on the right hand side instead:
\[
E\cdot \X^k := \X^k(E), \quad \text{for all } k \in \Z.
\]
When considering endofunctors $\cF: (\cC, \X) \ra (\cC, \X)$, we will always require them to commute trivially with $\X$, i.e.
\[
\cF(A)\cdot \X = \cF(A\cdot \X).
\]

This thesis will involve two different, yet similarly named categories: a ``module category'' and a ``category of modules''.
The first one will always mean a module category over some monoidal category $\cC$, whereas the latter will always mean a category of modules over some algebra object $A$ in some monoidal category $\cC$.
The author takes no responsibility for any confusion caused by these naming conventions.

We will use $[1]$ to denote the triangulated shift (or suspension) functor of a triangulated category, where in our specific construction it will be the cohomological degree shift functor of (cochain) complexes.
For simplicity, the exact functors $\cF: \cD \ra \cD'$ on triangulated categories considered in this thesis will always commute trivially with the suspension functor, i.e. 
\[
\cF(A[1]_\cD) = (\cF(A))[1]_{\cD'}.
\]
As mentioned before, the triangulated categories we consider are usually $\X$-categories, where the distinguished autoequivalence $\X$ is given by some internal $\Z$-grading shift functor $\X = \<1\>$.
We warn the reader that this is not to be confused with the triangulated shift functor, which is denoted by a square bracket $[1]$.
Note that by construction, the functors $\<1\>$ and $[1]$ will always commute trivially.

The names ``Harder-Narasimhan'' will be used frequently throughout the categorical dynamics part of the thesis, which we shall often abbreviate as ``HN''.
With a stability condition fixed, we will use $\floor*{A}$ and $\ceil*{A}$ to denote the lowest and highest phase of the HN semistable pieces of $A$ respectively.

We use color-codes in our instructive examples (\cref{example n=5} and \cref{sect: example n=4}) to avoid unnecessary clutters in labelling, where we adopt the IBM colour pallete standard in hopes of being colour-blind-friendly.
The following colours will be used: {\color{CBF_pink} pink}, {\color{CBF_blue} blue}, {\color{CBF_purple} purple}, {\color{CBF_orange} orange} and {\color{CBF_yellow} yellow}.

\tableofcontents

\chapter{Background} \label{sect: prelim}
\comment{
In this section, we briefly describe the fusion categories (refer to \cref{subsection: TLJ} for definition) that we will be working with.
These fusion categories will serve as the categorifications of the quantum numbers $[k]_q$ with $q$ evaluated at the $2n$th root of unity -- in particular $ [2]_q = 2\cos \left( \frac{\pi}{n}\right)$.

We will also recall the definition of module categories (not to be confused with category of modules) as introduced in \cite{ostrik_2003}.
}

\section{Overview of Coxeter groups and generalised braid groups} \label{overview}
\begin{definition}
Let $I$ be a finite set.
A \emph{Coxeter matrix} is a symmetric square matrix $M = (m_{i,j})_{i,j \in I}$ with diagonal $m_{i,i} = 1$ for all $i\in I$ and $m_{i,j} =m_{j,i} \in \{2,3,..., \infty\}$ for $i \neq j$. 
The corresponding \emph{Coxeter graph} $\Gamma$ of a Coxeter matrix $M$ is the graph with set of vertices given by $I$ and labelled edges between each pair of vertices $i$ and $j$ whenever $m_{i,j} \geq 3$.
The edges will be left unlabelled whenever $m_{i,j} = 3$, and is labelled by the corresponding value $m_{i,j}$ otherwise.
The \emph{Coxeter group} $\W(\Gamma)$ of a Coxeter graph $\Gamma$ is the group with the group presentation:
\[
\W( \Gamma ) = \< s_i \text{ for each } i \in I  | (s_is_j)^{m_{i,j}} = 1\>
\]
%
The \emph{generalised braid group} (also known as \emph{Artin-Tits group}) $\B(\Gamma)$ of a Coxeter graph $\Gamma$ is the group with the group presentation:
\[
\B( \Gamma ) = \< \sigma_i \text{ for each } i \in I  | \underbrace{\sigma_i \sigma_j \sigma_i ...}_{m_{i,j} \text{ times}} = \underbrace{\sigma_j \sigma_i \sigma_j ...}_{m_{i,j} \text{ times}} \>.
\]
Note that by convention, $m_{i,j} = \infty$ means there are no relation between $s_i$ and $s_j$; similarly for $\sigma_i$ and $\sigma_j$.
\end{definition}
The Coxeter groups were first introduced by Coxeter in 1934 as abstractions of reflection groups, which were later on popularised by Tits in 1961.
They appear in many areas of mathematics -- for example they contain the Weyl groups, which served as a basis to the classification of semisimple Lie algebras.
These groups are fairly well understood and we refer the interested reader to \cite{bjorner_anders_brenti_2010} for the (vast) theory on them. 
What will be relevant to us is the following linear representation introduced by Tits (also known as the symmetrical geometric representation):
\begin{definition}\label{defn: sym geo rep}
Given a Coxeter group $\W(\Gamma)$ with $n$ generators $\{s_1, s_2, ..., s_k\}$, consider the symmetric bilinear form on $\R^k$ with basis $\{\alpha_1, \alpha_2, ..., \alpha_k\}$ given by
\[
\<\alpha_i, \alpha_j\> = 2\cos \left(\pi - \frac{\pi}{m_{i,j}} \right),
\]
where $\frac{\pi}{\infty} := 0$ by convention. \\
The faithful representation $\bar{\rho}: \W(\Gamma) \ra  \GL_n(\R)$ uniquely defined by
\[
\bar{\rho}(s_i)(v) = v - \<\alpha_i, v\>\alpha_i
\]
is called the \emph{symmetrical geometric representation} of $\W(\Gamma)$.
\end{definition}

The groups that we are interested in are actually the generalised braid groups.
These groups were introduced by Tits as ``lifts'' of Coxeter groups -- the presentation of $\B(\Gamma)$ can be obtained from the presentation of $\W(\Gamma)$ by deleting the self inverse relation.
Unlike their well understood cousins Coxeter groups, many questions about generalised braid groups remain unresolved -- we refer the interested reader to \cite{McCammond2017TheMG} and \cite{Charney2008PROBLEMSRT} for a survey on their developments and a wide range of open problems related to them.

We shall list a few examples of generalised braid groups and their corresponding Coxeter groups here:
\begin{example}
The type $A_m$ generalised braid groups are the classical braid groups of $m+1$ strands introduced by Artin. 
These groups appear in low dimensional topology in a few different forms, and have served as a starting point to some of the famous approaches to understanding generalised braid groups.
Their corresponding Coxeter groups are the symmetric groups on $m+1$ elements, which are also the Weyl groups of the Lie algebras $\mathfrak{s}\mathfrak{l}_{m+1}$.
\end{example}
\begin{example}
More generally, the \emph{spherical (or finite)} generalised braid groups are those whose corresponding Coxeter groups are finite (which has a complete classification in terms of Coxeter graphs).
These were studied extensively in \cite{deligne_1972} and \cite{EgbertKyoji_1972}.
As a matter of fact, most of the open problems about arbitrary generalised braid groups are generalisations of the results shown in these two papers.
\end{example}

The particular class of generalised braid groups that we focus on in this thesis are the rank two (spherical, connected) generalised braid groups.
They correspond to the Coxeter graph $I_2(n)$ for $3 \leq n < \infty$:
\begin{center}
\begin{tikzpicture}
\draw[thick] (0,0) -- (1,0) ;

\filldraw[color=black!, fill=white!]  (0,0) circle [radius=0.1];
\filldraw[color=black!, fill=white!]  (1,0) circle [radius=0.1];

\node at (0,-0.3) {$1$};
\node at (1, -0.3) {$2$};
\node at (0.5, 0.2) {$n$};
\end{tikzpicture}
\end{center}
and the generalised braid group $\B(I_2(n))$ can be presented as:
\[
\B( I_2(n) ) = \< \sigma_1, \sigma_2 | \underbrace{\sigma_1 \sigma_2 \sigma_1 ...}_{n \text{ times}} = \underbrace{\sigma_2 \sigma_1 \sigma_2 ...}_{n \text{ times}} \>.
\]
Their corresponding Coxeter groups are the dihedral groups, i.e. the isometry group of regular $n$-gons.
\comment{
\begin{remark}
Note that we have purposely left out two specific rank two cases: the spherical, right-angled one $I_2(2)$ and the non-spherical one $I_2(\infty)$.
These two edge cases do not fit nicely into the set up that we will be using in this thesis, and their braid groups are rather simple: $\B(I_2(2))$ is just the abelian group $\Z \times \Z$ and $B(I_2(\infty))$ is just the free group of two elements.
The latter case is also studied in \cite{bapat2020thurston} (as affine $\widehat{A_1}$) and (faithful) categorical actions for free groups in general can be found in \cite{licata_2017_free}.
\end{remark}
}

The representation of $\B(I_2(n)$ that we aim to categorify is the following, which can be seen as a $q$-deformed symmetrical geometric representation of $\W(I_2(n))$.
\begin{definition} \label{defn: Burau rep}
Let $\Z \left[2\cos \left(\frac{\pi}{n} \right) \right] \subset \R$ be the smallest subring containing $\Z$ and $2\cos \left(\frac{\pi}{n} \right)$.
The \emph{Burau representation} of $\B(I_2(n))$ 
\[
\rho: \B(I_2(n)) \ra \GL_2 \left( \Z\left[2\cos\left( \frac{\pi}{n}\right) , q^{\pm 1} \right] \right)
\]
is uniquely defined on the generators by
\[
\rho(\sigma_1) = 
	\begin{bmatrix}
	-q^2 & -2 \cos (\frac{\pi}{n}) q \\
	0      &  1
	\end{bmatrix}, \quad
\rho(\sigma_2) = 
	\begin{bmatrix}
	1                                   &  0  \\
	-2 \cos (\frac{\pi}{n}) q  &  -q^2
	\end{bmatrix}.
\]
\end{definition}
This representation is shown to be faithful in \cite{lehrer_xi_2001} (note that faithfulness of the Burau representations is a special phenomenon of the small ranks braid groups).
Note that if one evaluates $q=-1$ in the Burau representation, one recovers the symmetrical geometric representation of the corresponding Coxeter group $\W(I_2(n))$.

\section{Algebras and modules in monoidal categories} \label{sect: algebras and modules}
In this section we briefly recall the definitions of an algebra in a monoidal category and modules over an algebra.
For the readers who are not familiar with algebra objects in a monoidal category and modules over an algebra object, see \cite{pareigis_1980} or \cite{EGNO15}.

Throughout this section, $(\cC, \otimes, \1)$ will be a monoidal category.

\begin{definition}
An \emph{algebra} in $\cC$ is an object $A$ in $\cC$ equipped with two maps:
\begin{align*}
\mu &: A \otimes A \ra A \quad \text{(multiplication)} \\
\eta &: \1 \ra A \quad \text{(unit)}
\end{align*}
such that the following diagrams commute:
\begin{center}
\begin{tikzcd}
A\otimes A \otimes A 
	\ar[r, "\mu \otimes \id"] 
	\ar[d, "\id \otimes \mu"] 
& A\otimes A 
	\ar[d, "\mu"] \\
A \otimes A 
	\ar[r, "\mu"] 
& A
\end{tikzcd}
(associativity), \quad
\begin{tikzcd}
\1 \otimes A 
	\ar[r, "\eta \otimes \id"] 
	\ar[dr, "\cong", swap] 
& A\otimes A  
	\ar[d, "\mu"]
& A \otimes \1 
	\ar[l, "\id \otimes \eta", swap] 
	\ar[dl, "\cong"] \\
& A  
\end{tikzcd}
(unital).
\end{center}
\end{definition}

\begin{definition}
Let $(A, \mu, \eta)$ be an algebra in $\cC$.
A \emph{left module over $A$} is by definition an object $M$ in $\cC$ equipped with a map 
\[
\alpha : A \otimes M \ra M \quad \text{(action)}
\]
such that the following diagrams commute:
\begin{center}
\begin{tikzcd}
A\otimes A \otimes M 
	\ar[r, "\mu \otimes \id"] 
	\ar[d, "\id \otimes \alpha"] 
& A\otimes M 
	\ar[d, "\alpha"] \\
A \otimes M 
	\ar[r, "\alpha"] 
& M
\end{tikzcd}
(associativity), \quad
\begin{tikzcd}
\1 \otimes M 
	\ar[r, "\eta \otimes \id"] 
	\ar[dr, "\cong", swap] 
& A\otimes M  
	\ar[d, "\alpha"]
 \\
& M  
\end{tikzcd}
(left unital).
\end{center}
A \emph{morphism between two left $A$-modules} $(M,\alpha_M)$ and $(N, \alpha_N)$ is by definition a morphism in $\cC$ 
\[
\varphi: M \ra N
\]
such that the following diagram commutes:
\begin{center}
\begin{tikzcd}
A \otimes M 
	\ar[r, "\alpha_M"] 
	\ar[d, "\id \otimes \varphi", swap] 
& M
	\ar[d, "\varphi"] \\
A \otimes N
	\ar[r, "\alpha_N"] 
& N
\end{tikzcd}.
\end{center}
The notions of a right module over $A$ and a morphism between two right modules can be defined similarly.
\end{definition}

\begin{definition}
Let $(A, \mu, \eta)$ and $(B, \mu', \eta')$ be two algebras in $\cC$.
An \emph{$(A,B)$-bimodule} is an object $M$ in $\cC$ that is both a left $A$-module $(M,\alpha)$ and a right $B$-module $(M,\alpha')$, such that the following diagram commutes:
\begin{center}
\begin{tikzcd}
A\otimes M \otimes B 
	\ar[r, "\alpha \otimes \id"] 
	\ar[d, "\id \otimes \alpha'", swap] 
& M\otimes B 
	\ar[d, "\alpha'"] \\
A \otimes M 
	\ar[r, "\alpha"] 
& M
\end{tikzcd} (compatible actions).
\end{center}
A \emph{morphism of $(A,B)$-bimodules} is a morphism in $\cC$ which is both a morphism of left $A$-modules and right $B$-modules.
\end{definition}

\begin{definition} \label{defn: tensoring over an algebra}
Let $A,B$ and $C$ be algebra objects in a monoidal category $\cC$.
Recall that if $(M, \alpha, \theta)$ and $(N, \theta', \alpha')$ are $A$-modules-$B$ and $B$-modules-$C$ respectively, their \emph{tensor over $B$}, $M\otimes_B N$ is defined to be the coequaliser in $\cC$, if it exists:
\begin{center}
\begin{tikzcd}
M\otimes B \otimes N \ar[r, shift left=0.75ex, "\theta \otimes \id"] \ar[r, shift right=0.75ex, swap, "\id \otimes \theta'"] & M\otimes N \ar[r, "\Theta"] & M\otimes_B N.
\end{tikzcd}
\end{center}
\end{definition}

Note that if $\cC = \mathbb{K}$-vec is the category of (finite dimensional) vector spaces equipped with the usual notion of tensor product, all of the above said notions: algebras, modules and tensoring two modules over an algebra, coincide with the usual standard notions. 

The following simple general fact about modules over algebras will be needed later on, whose proof we leave to the reader:
\begin{proposition} \label{absorb I}
Let $(A, \mu, \eta)$ be an algebra object in a monoidal category $\cC$ and let $(M, \alpha)$ and $(N, \alpha')$ be left and right modules over $A$ respectively.
By viewing $A$ as a bimodule over itself, we have that $M$ and $N$ together with the maps $\alpha: A \otimes M \ra M$ and $\alpha': N\otimes A \ra N$ respectively, satisfy the coequaliser properties defining $A\otimes_A M$ and $N \otimes_A A$ respectively.
\end{proposition}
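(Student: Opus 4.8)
The plan is to verify the universal property of the coequaliser directly for the pair $(M,\alpha)$ (the case $N\otimes_A A$ being entirely symmetric). Recall that $A\otimes_A M$, when it exists, is the coequaliser of the two maps $\mu\otimes\id,\ \id\otimes\alpha : A\otimes A\otimes M \rightrightarrows A\otimes M$. I claim that $(M,\alpha)$, i.e. the object $M$ together with the morphism $\alpha : A\otimes M\to M$, \emph{is} such a coequaliser. So I must check two things: (1) $\alpha$ coequalises the two parallel maps, and (2) $\alpha$ is universal among all morphisms out of $A\otimes M$ that do so.

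For (1), the equality $\alpha\circ(\mu\otimes\id) = \alpha\circ(\id\otimes\alpha)$ as maps $A\otimes A\otimes M\to M$ is precisely the associativity axiom in the definition of a left $A$-module. For (2), suppose $f : A\otimes M\to T$ satisfies $f\circ(\mu\otimes\id) = f\circ(\id\otimes\alpha)$. I define the comparison morphism $g : M\to T$ by $g := f\circ(\eta\otimes\id_M)$ after identifying $\1\otimes M\cong M$ via the left unitor. First I would check $g\circ\alpha = f$: precompose $g\circ\alpha$ with the coequalising hypothesis and the unit axiom. Concretely, $g\circ\alpha = f\circ(\eta\otimes\id_M)\circ(\text{unitor})^{-1}\circ\alpha$; using naturality of the left unitor and that $\alpha\circ(\eta\otimes\id_M) = \id_M$ (the left-unital axiom for the module), together with the hypothesis $f\circ(\mu\otimes\id) = f\circ(\id\otimes\alpha)$ applied to the element factoring through $\eta\otimes A\otimes M$, one obtains $g\circ\alpha = f$. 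Uniqueness of $g$ is then immediate: any $g'$ with $g'\circ\alpha = f$ must satisfy $g'\circ\alpha\circ(\eta\otimes\id_M) = f\circ(\eta\otimes\id_M) = g$, and $\alpha\circ(\eta\otimes\id_M)$ is the identity on $M$, forcing $g' = g$.

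The only slightly delicate point — and the one I would be most careful about — is the bookkeeping with the unitor isomorphisms and naturality squares when showing $g\circ\alpha = f$; in a general (not necessarily strict) monoidal category one must insert the coherence isomorphisms in the right places, but MacLane coherence guarantees there is no obstruction. Everything else is a direct unwinding of the module axioms, which is why the statement is left to the reader. The argument for $N\otimes_A A$ is obtained by reflecting all diagrams, using the right-unital axiom and viewing $A$ as a right module over itself.
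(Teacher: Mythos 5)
Your proof is correct and is the standard direct verification; the paper does not give a proof of this proposition (it is explicitly left to the reader), so there is no approach to compare against. One small bookkeeping note: in the existence step the equality $g\circ\alpha = f$ is obtained from the \emph{algebra} unit axiom $\mu\circ(\eta\otimes\id_A)=\id_A$ (after sliding $\eta$ past $\alpha$ and applying the coequalising hypothesis), not from the module unit axiom $\alpha\circ(\eta\otimes\id_M)=\id_M$ as you wrote; the module unit axiom is what you correctly use in the uniqueness step. This is a trivial relabelling and the argument is otherwise sound.
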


\section{Tensor and fusion categories} \label{sect: fusion}
We shall quickly review some properties of fusion categories and refer the reader to \cite{etingof_nikshych_ostrik_2005} for more details.

A (strict) \emph{tensor category} $\cC$ is a linear, abelian and (strictly) monoidal category with a bi-exact monoidal structure, satisfying the following properties:
\begin{enumerate}
\item (simplicity of unit) the monoidal unit is simple;
\item (semi-simple) every object is isomorphic to a finite direct sum of simple objects; and
\item (rigid) every object has left and right duals.
\end{enumerate}
If $\cC$ has only finitely many isomorphism classes of simple objects, we say that $\cC$ is a \emph{fusion category}.

It follows that the split Grothendieck group $K_0(\cC)$ of $\cC$ (which coincides with its exact Grothendieck group) is a free abelian group of finite rank, with basis given by the classes of the simple objects.
Furthermore, $K_0(\cC)$ can be endowed with a unital ring structure, with the class of the monoidal unit $[\1]$ as the multiplicative unit 1, and multiplication induced by the tensor product:
\[
[A] \cdot [B] = [A \otimes B].
\]
We call $K_0(\cC)$ (together with the ring structure above) the \emph{fusion ring} of $\cC$.
Moreover, $K_0(\cC)$ is a unital $\mathbb{N}$-ring, namely it has a finite $\mathbb{N}$-basis $\{[X_i]\}$ given by the isomorphism classes of simple objects such that
\[
[X_i]\cdot [X_j] = \sum_k c_{i,j,k} [X_k] \text{, with } c_{i,j,k} \in \mathbb{N}.
\]
As such, each matrix $C_i$ defined by $(C_i)_{j,k} = c_{i,j,k}$ has a unique maximal positive eigenvalue, called the \emph{Perron-Frobenius eigenvalue}, which we denote by $PF(C_i)$.
The \emph{Perron-Frobenius dimension of} $[X_i]$ (equivalently $X_i$) is defined by
\[
PF\dim([X_i]) := PF(C_i) \in \R_{>0}.
\]
One can check that this extends linearly to a ring homomorphism  (see \cite[Theorem 8.6]{etingof_nikshych_ostrik_2005}):
\[
PF\dim: K_0(\cC) \ra \R.
\]

\subsection{Chebyshev polynomials and quantum numbers}
We denote the second (normalised) $k$th Chebyshev polynomial by $\Delta_k(d)$, which can be defined recursively as follows:
\[
\Delta_0(d) = 1, \quad \Delta_1(d) = d, \quad \Delta_{k+1}(d) = d \Delta_k(d) - \Delta_{k-1}(d).
\]
For $k \in \Z_{\geq 0}$, denote the $k$th quantum number as $[k]_q := \frac{q^{k} - q^{-k}}{q - q^{-1}}$.
These numbers are known to satisfy the recurrence relation of the Chebyshev polynomial; namely
\[
[k+1]_q = \Delta_k(q+q^{-1}).
\]

Note that when $q$ is evaluated at the $2n$th root of unity, i.e. $d = q + q^{-1} = 2\cos(\frac{\pi}{n})$, we have that $[n]_q = \Delta_{n-1}(d) = 0$.
Furthermore, $\Delta_k(d) = \Delta_{n-2-k}(d)$ for all $0 \leq k \leq n-2$.

\subsection{Temperley-Lieb-Jones category at root of unity} \label{subsection: TLJ}
For each $n \geq 3$, we will be considering a strict, braided fusion category which categorifies the ring $\Omega_n := \Z[\omega]/\< \Delta_{n-1}(\omega) \>$.
Namely, we will consider a fusion category whose fusion ring is isomorphic to $\Omega_n$.
These are known as the fusion categories with type $A_{n-1}$ fusion rules (see \cite{ediemichell2017field}), which can be realised in several ways.

The explicit category that we will be using in this thesis is known as the \emph{Temperley-Lieb-Jones category evaluated at} $q = e^{i\pi/n}$, which we denote as $\TLJ_n$.
This is a diagramatic category and it is the additive completion of the category of Jones-Wenzl projectors that is semi-simplified by killing the negligible $(n-1)$th Jones-Wenzl projector.
We will only provide the minimal description of what is needed for the purpose of this thesis, where we follow the construction in \cite{chen2014temperleylieb} closely.
The interested readers could also refer to \cite{wang_2010} and \cite{turaev_2016} for further details.

\begin{remark}
For the representation-theoretic-inclined reader, the fusion category of type $A_{n-1}$ can also be realised as the semi-simplified category of tilting modules $Tilt(U_q(\mathfrak{s}\mathfrak{l}_2))$ with $q = e^{i\frac{\pi}{n}}$; see \cite[Chapter XII, Section 7]{turaev_2016}.
\end{remark}

The Temperley-Lieb-Jones category $\TLJ_n$ at $q = e^{i\pi/n}$ \cite[Definition 5.4.1]{chen2014temperleylieb} is generated additively and monoidally by the $n-1$ simple objects (the $k$th Jones-Wenzl projectors), denoted as
\[
\Pi_0, \Pi_1, ..., \Pi_{n-2},
\]
with $\Pi_0$ being the monoidal unit.
Every object in $\TLJ_n$ is self-dual.
In particular, we will use a cap (resp. cup) to denote the counit (resp. unit) of the self-dual structure on on each $\Pi_a$ for $0 \leq a \leq n-2$:
\begin{equation} \label{dual counit}
\begin{tikzpicture}[scale = 0.4]
	\node at (0,-0.5) {$\Pi_a$};
	\node at (2,-0.5) {$\Pi_a$};
	\node at (1, 2) {$\Pi_0$};
  \draw (0, 0) arc[start angle=180,end angle=0,radius=1];
\end{tikzpicture}, \quad
\begin{tikzpicture}[scale = 0.4]
	\node at (0,2) {$\Pi_a$};
	\node at (2,2) {$\Pi_a$};
	\node at (1, -0.5) {$\Pi_0$};
  \draw (0, 1.5) arc[start angle=-180,end angle=0,radius=1];
\end{tikzpicture},
\end{equation}
satisfying the usual isotopy of strings relation:
\[
\begin{tikzpicture}[scale = 0.4]
	\node at (4, 2.5) {$\Pi_a$};
  \draw (0, .5) arc[start angle=180,end angle=0,radius=1];
  \draw (4,.5) -- (4,2);
	\node at (0, 0) {$\Pi_a$};
	\node at (2, 0) {$\Pi_a$};
	\node at (4,0) {$\Pi_a$};
  \draw (0,-2) -- (0,-.5);
  \draw (2, -.5) arc[start angle=-180,end angle=0,radius=1];
	\node at (0, -2.5) {$\Pi_a$};
	\node at (6, 0) {$=$};
	\node at (8, 2.5) {$\Pi_a$};
  \draw (8,-2) -- (8,2);
  	\node at (8.5, 0) {id};
	\node at (8, -2.5) {$\Pi_a$};
	\node at (10, 0) {$=$};
	\node at (12, 2.5) {$\Pi_a$};
  \draw (12,.5) -- (12,2);
  \draw (14, .5) arc[start angle=180,end angle=0,radius=1];
	\node at (12, 0) {$\Pi_a$};
	\node at (14, 0) {$\Pi_a$};
	\node at (16,0) {$\Pi_a$};
  \draw (12, -.5) arc[start angle=-180,end angle=0,radius=1];
  \draw (16,-2) -- (16,-.5);
	\node at (16, -2.5) {$\Pi_a$};
\end{tikzpicture}.
\]
We describe here some of the morphisms needed in our calculation, together with the required relations.

\begin{definition}[\protect{\cite[Definition 5.3.4]{chen2014temperleylieb}}] \label{defn: q-admissible}
Let $a,b,c \in \{0, ..., n-2\}$. 
A triple $(a,b,c)$ is \emph{$q$-admissible} if:
	\begin{enumerate}
	\item $a+b+c \leq 2(n-2)$ and is even, and
	\item $a+b \geq c, b+c \geq a, c+a \geq b$.
	\end{enumerate}
\end{definition}
We have that
\[
\Hom_{\TLJ_n}(\Pi_a \otimes \Pi_b, \Pi_c) \cong 
\begin{cases}
\C, &\text{ for } (a,b,c) \text{ q-admissible}; \\
0,  &\text{ otherwise}.
\end{cases}
\]
When non-zero, the maps in $\Hom_{\TLJ_n}(\Pi_a \otimes \Pi_b, \Pi_c)$ will be scalar multiples of the \emph{$q$-admissible map}, which we will denote as the trivalent graph:
\begin{equation} \label{admissible triple}
\begin{tikzpicture}[scale = 0.4] 
  \node at (0,1.5) {$\Pi_c$};
  \node at (1,-1.5) {$\Pi_b$};
  \node at (-1,-1.5) {$\Pi_a$};
  \draw (0, 0) to (0,1);
  \draw (0, 0) to (1,-1);
  \draw (0, 0) to (-1,-1);
\end{tikzpicture}.
\end{equation}
Note that when $c=0$, it must be that $a=b$ for the triple $(a,b,c)$ to be $q$-admissible, and the $q$-admissible map in this case coincides with the cap map in \cref{dual counit}.

These $q$-admissible maps represented by the trivalent graphs will be ``rotational invariant'', namely under the dualisations
\[
\Hom_{\TLJ_n}(\Pi_a \otimes \Pi_b, \Pi_c) \xra{\cong} \Hom_{\TLJ_n}(\Pi_a,  \Pi_c \otimes \Pi_b) \xra{\cong} \Hom_{\TLJ_n}(\Pi_c \otimes \Pi_a, \Pi_b),
\]
the map agrees with the corresponding $q$-admissible map:
\[
\begin{tikzpicture}[scale = 0.4] 
  \node at (0,1.5) {$\Pi_b$};
  \node at (1,-1.5) {$\Pi_a$};
  \node at (-1,-1.5) {$\Pi_c$};
  \draw (0, 0) to (0,1);
  \draw (0, 0) to (1,-1);
  \draw (0, 0) to (-1,-1);
  \node at (2, 0) {$=$};
	\node at (10, 5) {$\Pi_b$};
  \draw (4, 3) arc[start angle=180,end angle=0,radius=1.5];
  \draw (10,3) -- (10,4.5);
	\node at (4, 2.5) {$\Pi_c$};
	\node at (7, 2.5) {$\Pi_c$};
	\node at (10, 2.5) {$\Pi_b$};
  \draw (4,.5) -- (4,2);
  \draw (7, 1.2) to (7,2);
  \draw (7, 1.2) to (8,0.5);
  \draw (7, 1.2) to (6,0.5);
  \draw (10,.5) -- (10,2);
	\node at (4, 0) {$\Pi_c$};
	\node at (6, 0) {$\Pi_a$};
	\node at (8, 0) {$\Pi_b$};
	\node at (10,0) {$\Pi_b$};
  \draw (4,-2) -- (4,-.5);
  \draw (6,-2) -- (6,-.5);
  \draw (8, -.5) arc[start angle=-180,end angle=0,radius=1];
	\node at (4, -2.5) {$\Pi_c$};
	\node at (6, -2.5) {$\Pi_a$};
\end{tikzpicture}.
\]
This is also true for the other rotation.

We shall use the inverted trivalent graphs:
\[
\begin{tikzpicture}[scale = 0.4] 
  \node at (0,-1.5) {$\Pi_a$};
  \node at (1,1.5) {$\Pi_b$};
  \node at (-1,1.5) {$\Pi_c$};
  \draw (0, 0) to (0,-1);
  \draw (0, 0) to (1,1);
  \draw (0, 0) to (-1,1);
  \node at (2, 0) {$:=$};
	\node at (5, 2.5) {$\Pi_c$};
	\node at (8, 2.5) {$\Pi_b$};
  \draw (5, 1.2) to (5,2);
  \draw (5, 1.2) to (6,0.5);
  \draw (5, 1.2) to (4,0.5);
  \draw (8,.5) -- (8,2);
	\node at (4, 0) {$\Pi_a$};
	\node at (6, 0) {$\Pi_b$};
	\node at (8,0) {$\Pi_b$};
  \draw (4,-2) -- (4,-.5);
  \draw (6, -.5) arc[start angle=-180,end angle=0,radius=1];
	\node at (4, -2.5) {$\Pi_a$};
\end{tikzpicture}
\qquad,
\begin{tikzpicture}[scale = 0.4] 
  \node at (0,-1.5) {$\Pi_b$};
  \node at (1,1.5) {$\Pi_c$};
  \node at (-1,1.5) {$\Pi_a$};
  \draw (0, 0) to (0,-1);
  \draw (0, 0) to (1,1);
  \draw (0, 0) to (-1,1);
  \node at (2, 0) {$:=$};
	\node at (4, 2.5) {$\Pi_a$};
	\node at (7, 2.5) {$\Pi_c$};
  \draw (7, 1.2) to (7,2);
  \draw (7, 1.2) to (8,0.5);
  \draw (7, 1.2) to (6,0.5);
  \draw (4,.5) -- (4,2);
	\node at (4, 0) {$\Pi_a$};
	\node at (6, 0) {$\Pi_a$};
	\node at (8,0) {$\Pi_b$};
  \draw (8,-2) -- (8,-.5);
  \draw (4, -.5) arc[start angle=-180,end angle=0,radius=1];
	\node at (8, -2.5) {$\Pi_b$};
\end{tikzpicture}
\]
to denote the two dualisations of the $q$-admissible map in $\Hom_{\TLJ_n}(\Pi_a \otimes \Pi_b,  \Pi_c)$ to the $q$-admissible maps in $\Hom_{\TLJ_n}(\Pi_a,  \Pi_c \otimes \Pi_b)$ and $\Hom_{\TLJ_n}(\Pi_b,  \Pi_a \otimes \Pi_c)$ respectively.
It follows that for any two $q$-admissible triples $(a,b,c)$ and $(c,d,e)$, the rotational invariant property implies that we have the following relation between the trivalent maps:
\begin{equation} \label{frobenius rel}
\begin{tikzpicture}[scale = 0.4] 
	\node at (-2, 2.5) {$\Pi_b$};
	\node at (1, 2.5) {$\Pi_d$};
  \draw (-2, 1.2) to (-2,2);
  \draw (-2, 1.2) to (-1,0.5);
  \draw (-2, 1.2) to (-3,0.5);
  \draw (1,.5) -- (1,2);
	\node at (-3, 0) {$\Pi_a$};
	\node at (-1, 0) {$\Pi_c$};
	\node at (1,0) {$\Pi_d$};
  \draw (-3,-2) -- (-3,-.5);
  \draw (0, -1.3) to (-1,-.5);
  \draw (0, -1.3) to (1,-.5);
  \draw (0, -1.3) to (0,-2);
	\node at (-3, -2.5) {$\Pi_a$};
	\node at (0, -2.5) {$\Pi_e$};
  \node at (2.5, 0) {$=$};
	\node at (4, 2.5) {$\Pi_b$};
	\node at (7, 2.5) {$\Pi_d$};
  \draw (4,.5) -- (4,2);
  \draw (7, 1.2) to (7,2);
  \draw (7, 1.2) to (8,0.5);
  \draw (7, 1.2) to (6,0.5);
	\node at (4, 0) {$\Pi_b$};
	\node at (6, 0) {$\Pi_c$};
	\node at (8,0) {$\Pi_e$};
  \draw (5, -1.3) to (4,-.5);
  \draw (5, -1.3) to (6,-.5);
  \draw (5, -1.3) to (5,-2);
  \draw (8,-2) -- (8,-.5);
	\node at (5, -2.5) {$\Pi_a$};
	\node at (8, -2.5) {$\Pi_e$};
\end{tikzpicture}
\end{equation}
Finally, given two $q$-admissible triples $(a,b,c)$ and $(d,b,c)$, we have that
\begin{equation} \label{constant multiple identity}
\begin{tikzpicture}[scale = 0.4] 
  \node at (0,1.5) {$\Pi_d$};
  \node at (-1,-1.5) {$\Pi_b$};
  \node at (1,-1.5) {$\Pi_c$};
  \draw (0, 0) to (0,1);
  \draw (0, 0) to (1,-1);
  \draw (0, 0) to (-1,-1);
  
  \node at (0,-4.5) {$\Pi_a$};
  \draw (0, -3) to (0,-4);
  \draw (0, -3) to (1,-2);
  \draw (0, -3) to (-1,-2);
  
  \node at (3, -1.5) {$=$};
  
  \node at (8, -1.5) {$
  	\begin{cases}
  	0, &\text{ if } a\neq d; \\
  	\frac{\theta(a,b,c)}{[a+1]_q} \id, &\text{ if } a = d
  	\end{cases}
  	$};
\end{tikzpicture}
\end{equation}
with $\theta(a,b,c) \in \C$ some non-zero constant (see \cite[Definition 3.2.1]{chen2014temperleylieb}), whose actual value will not matter to us.

Throughout this thesis, we will also use the symbols `` $\cap$ '', `` $\cup$ '', `` $\tri{}$ '' and `` $\itri{}$ '' to represent these $q$-admissible maps during calculations whenever the domain and codomain are clear from context.

\comment{
\begin{definition}
The $n$-sum is defined as:
\[
a +_n b = \begin{cases}
	a+b, &\text{if } a+b < n-1; \\
	2n - (a+b) - 4, &\text{if } a+b \geq n-1
	\end{cases}.
\]
\end{definition}
}
The category $\TLJ_n$ satisfies the following fusion rule:
\begin{equation} \label{fusion rule}
\Pi_a \otimes \Pi_b \cong \Pi_b \otimes \Pi_a \cong 
	\begin{cases}
	\Pi_{|a-b|} \oplus \Pi_{|a-b| + 2} \oplus \cdots \oplus \Pi_{a+b}, &a+b \leq n-2 \\
	 \Pi_{|a-b|} \oplus \Pi_{|a-b| + 2} \oplus \cdots \oplus \Pi_{2n-(a+b)-4}, &a+b > n-2,
	\end{cases}
\end{equation}
where the isomorphism (from LHS to RHS) is given by the respective $q$-admissible maps into each summand.
In particular, the isomorphism
\[
\Pi_{n-2} \otimes \Pi_{n-2} \xra{\cong} \Pi_0
\]
is the counit map given in \cref{dual counit}.
Note that if we set $b=1$, one sees that the fusion ring of $\TLJ_n$ is isomorphic to $\Omega_n := \Z[\omega]/\< \Delta_{n-1}(\omega) \>$, given by the isomorphism $[\Pi_a] \mapsto \Delta_a(\omega)$.

Each simple object $\Pi_a$ for $0 \leq a \leq n-2$ has Perron-Frobenius dimension (which agrees with the categorical trace):
\[
PF\dim(\Pi_a) = \Delta_a \left( 2\cos \left(\frac{\pi}{n} \right) \right) \geq 1.
\]
In other words, $\Delta_{n-1}(2\cos\left( \frac{\pi}{n} \right)) = 0$ and the composition
\[
K_0(\TLJ_n) \xra{\cong} \Omega_n \xra{\text{evaluate } \omega = 2\cos\left( \frac{\pi}{n} \right)} \R
\]
agrees with taking the Perron-Frobenius dimension.
It follows that for each $0 \leq k \leq n-2$,  
\[
PF\dim(\Pi_k) = PF\dim(\Pi_{n-2-k}); 
\]
in particular $PF\dim(\Pi_0)= 1 = PF\dim(\Pi_{n-2})$.
Using \cref{fusion rule}, we see that
\[
\Pi_{n-2} \otimes \Pi_k \cong \Pi_{n-2-k},
\]
so one can think of $\Pi_{n-2}$ as the simple object that switches the pair of objects $\{ \Pi_k, \Pi_{n-2-k} \}$ that have the same Perron-Frobenius dimension.
\begin{remark}
Note that a ``middle object'' exists when $n$ is even; namely for $k = \frac{n-2}{2}$, we have $\Pi_k = \Pi_{n-2-k}$ and it is the only simple object with Perron-Frobenius dimension $PF\dim\left(\Pi_{\frac{n-2}{2}} \right)$.
\end{remark}

\begin{remark}
Using the fusion rules in \cref{fusion rule}, one can see that the subcategory $\TLJ_n^{\text{even}}$ generated by the evenly labelled objects forms a fusion category.
In particular, when $n = 3$, the subcategory obtained is equivalent to the category of finite dimensional vector spaces.
\end{remark}

\comment{
\begin{proposition} \label{fusion ring identification}
The fusion ring of $\TLJ_n$ is isomorphic to the ring $\Omega_n$. 
\end{proposition}
\begin{proof}
Firstly note that $\Delta_{n-1}(\omega)$ is by definition a degree $n-1$ monic polynomial, thus $\Z[\omega]/\<\Delta_{n-1}(\omega) \>$ is a free abelian group with basis $\{1, \omega, ..., \omega^{n-2} \}$.
Let $\varphi: \Z[\omega] \ra K_0(\TLJ_n)$ be the ring homomorphism uniquely determined by $\varphi(\omega) = [\Pi_1]$.
Using \cref{fusion rule}, we get
\[
\Pi_a \otimes \Pi_1 \cong 
	\begin{cases}
	\Pi_1, &\text{ for } a = 0; \\
	\Pi_{a-1} \oplus \Pi_{a+1}, &\text{ for } 1 \leq a \leq n-3; \\
	\Pi_{n-3}, &\text{ for } a = n-2
	\end{cases}.
\]
Thus every class $[\Pi_a]$ (except for $[\Pi_0]$, which is already an image of $1 \in \Z[\omega]/ \< \Delta_{n-1} (\omega) \>$ anyway) is a polynomial over $[\Pi_1]$.
In particular, $\varphi(\Delta_{a}(\omega)) = [\Pi_a]$. 
This gives us the surjectivity of $\varphi$.
Since $\Pi_1 \otimes \Pi_{n-2} \cong \Pi_{n-3}$, we get that
\[
\varphi(\Delta_{n-1}(\omega)) = \varphi(\omega \Delta_{n-2}(\omega) - \Delta_{n-3}(\omega)) = [\Pi_1] \cdot [\Pi_{n-2}] - [\Pi_{n-3}] = 0
\]
and thus $\varphi$ factors through $\Z[\omega]/\<\Delta_{n-1}(\omega) \>$.
Since both $\Z[\omega]/\<\Delta_{n-1}(\omega) \>$ and $K_0(\TLJ_n)$ are free abelian groups of rank $n-1$, it follows that they are isomorphic.
\end{proof}
}

\section{Module categories and $\Z$-graded categories}
Throughout this subsection, $\cA$ will be an additive category and $\cC$ will be some additive monoidal category with its split Grothendieck group $K_0(\cC)$ a (unital, associative) ring induced by the monoidal structure.
We will use $\cC^{\otimes^\text{op}}$ to denote the category $\cC$ with the order of the monoidal tensor reversed; this is \emph{not} the opposite category which reverses all morphisms.
With $\cA$ as before, $\cEnd(\cA)$ will denote the category of additive endofunctors on $\cA$.
When $\cA$ has more structure: abelian or triangulated, we will require $\cEnd(\cA)$ to be the category of exact endofunctors on $\cA$ instead.
Note that $\cEnd(\cA)$ is always an additive monoidal category, with the monoidal structure given by the composition of functors.

\subsection{Module categories}\label{subsect: module categories}
\begin{definition}
A (right) module category over $\cC$ is an additive category $\cA$ together with an additive monoidal functor $\Psi: \cC^{\otimes^\text{op}} \ra \cEnd(\cA)$.
When $\cA$ has more structure, such as abelian or triangulated, we require $\Psi$ to map into the category of exact endofunctors $\cEnd(\cA)$.
Similarly when considering $\X$-categories, $\Psi$ will map to endofunctors which commutes trivially with the distinguished autoequivalence $\X$ (see the ``Notation and conventions'' section in the Introduction).
\end{definition}
\begin{remark}
Note that the module categories $\cA$ we consider in the main body of this thesis will not be semi-simple; moreover $\cA = \cD$ will be triangulated and not abelian.
As such, they will not be part of the classification studied in \cite{ostrik_2003}.
\end{remark}

\begin{example}
As a simple example, one can check that every additive category $\cA$ is naturally a module category over the category of finite dimensional vector spaces $\mathbb{K}$-vec, sending each (non-zero) vector space $V$ to the endofunctor of $\cA$ which takes $A\in \cA$ to 
$
V(A) := \bigoplus_{i=1}^{\dim (V)} A,
$
and every morphism $A \xra{f} A'$ in $\cA$ to 
$
V(A) \xra{\bigoplus_{i=1}^{\dim V} f} V(A').
$
\end{example}

Note that given a module category $(\cA, \Psi)$ over $\cC$, the functor $\Psi$ descends into a ring homomorphism on the Grothendieck groups (which are rings):
\[
K_0(\Psi): K_0(\cC^{\otimes^\text{op}}) \ra K_0(\cEnd(\cA)).
\]
It is easy to check that the map $K_0(\cEnd(\cA)) \ra \End(K_0(\cA))$ defined by $[\cF] \mapsto K_0(\cF)$ is a well-defined ring homomorphism.
Composing these two maps, we obtain a ring homomorphism $K_0(\cC^{\otimes^\text{op}}) \ra \End(K_0(\cA))$ defined by
\[
[C] \mapsto [\Psi(C)] \mapsto K_0(\Psi(C)),
\]
therefore giving $K_0(\cA)$ a (right) module structure over $K_0(\cC)$.
Given $[X] \in K_0(\cA)$, we will denote the (right) action of $K_0(\cC)$ by
\[
[X] \cdot [C] := [\Psi(C)(X)]
\] 
for all $C \in \cC$.
It follows that any endofunctor $\cG: \cA \ra \cA$  which respects the $\cC$-module structure on $\cA$ descends to a $K_0(\cC)$-module endomorphism on $K_0(\cA)$.
Note that when $\cA$ is also an $\X$-category, i.e. it is equipped with a distinguished autoequivalence $\X$, we have insisted that the endofunctors of $\cA$ in the image of $\Psi$ commute trivially with $\X$.
As such, given a module category $(\cA, \X, \Psi)$ equipped with a distinguished autoequivalence $\X$, we have that $K_0(\cA)$ is naturally a module over $K_0(\cC)[q^{\pm 1}]$, given by $q\cdot [X] = [X\cdot \X]$.

The same story follows for $\cA$ being abelian (resp. triangulated), where the Grothendieck group $K_0(\cA)$ is instead the exact Grothendieck group and all functors involved are required to be exact.

\subsection{Category of $\Z$-graded objects} \label{sect: graded categories}
Let $\cC$ be an additive monoidal category as before.
We can construct $g\cC$, the \emph{category of $\Z$-graded objects of $\cC$}, similar to how one constructs the category $\mathbb{K}$-gvec of $\Z$-graded finite dimensional vector spaces (with grading preserving morphisms) from the category $\mathbb{K}$-vec of finite dimensional vector spaces:
\begin{enumerate}
\item the objects are given by formal external sums of objects in $\cC$:
\[
C = \bigoplus_{i \in \Z} C_i,
\]
with $C_i \in \cC$ and all but finitely many $C_i$ are $0$; 
\item the morphisms $\Hom_{g\cC}(C, D)$ are $\bigoplus_{i \in \Z} \Hom_\cC(C_i, D_i)$.
\end{enumerate}
For each object $C$, we call the summand $C_i$ the \emph{homogeneous $i$-graded summand of} $C$.
Note that there are no morphisms between any of the homogeneous summands with different gradings by construction.
The category $g\cC$ is still additive and monoidal, with monoidal structure given by
\[
(C \otimes D)_i = \bigoplus_{a + b = i} C_a \otimes D_b
\]
and monoidal unit $\1_{g\cC}$ given by the object with $C_0 = \1_\cC$ and $C_i = 0$ for all other $i$.
Note that $\<1\>$ is naturally an (additive, monoidal) autoequivalence of $g\cC$, where it acts on the object $C \in g\cC$ by
\[
(C\<k\>)_i = C_{i - k}.
\]
If $E \in \cC$, we will abuse notation and use $E\<0\>$ to denote the object in $g\cC$ with $C_0 = E$ and $C_i=0$ otherwise.
As such, every homogeneous object of degree $k$ in $g\cC$ can be viewed as an object in $\cC\<k\>$.

\comment{
The following follows from the definition:
\begin{proposition}
A $\Z$-graded module category over $\cC$ is equivalent to a module category over $g\cC$.
\end{proposition}
Extending the example in the previous section, one can check that a $\Z$-graded additive category is naturally a module category over $gVec$.

As such, we shall not distinguish between $\Z$-graded module category over $\cC$ and module category over $g\cC$.
Given $(\cA, \Psi)$ a module category over $g\cC$, we shall abuse notation and use $\<1\>$ to denote the autoequivalence $\Psi(\1_{g\cC} \<1\>)$ of $\cA$.
}

\comment{
The Fibonacci category (or the golden category) is the Temperley-Lieb-Jones category with $n=5$, which we denote as $\TLJ_5^{even}$.
To simplify notation, we will denote the two Jones-Wenzl projectors $\Pi_0$ and $\Pi_2$ as $\1$ and $\tau$ respectively.
We shall construct the type $I(5)$ zigzag algebra, denoted as $\I$, as a graded algebra object in the Fibonacci category as follows:
as a graded object in $\TLJ_5$,
\[
\I = \1 \oplus \1\<2\> \oplus \tau\<1\> \oplus \1 \oplus \1\<2\> \oplus \tau\<1\> =: e_1 \oplus X_1\<2\> \oplus (2|1)\<1\> \oplus e_2 \oplus X_2\<2\> \oplus (1|2)\<1\>,
\]
We will mostly be using the latter notation so that it is clear which simple summand of $\I$ we are referring to.
As the notation suggests, we will think of these the simple summands as paths, with the the usual zigzag algebra in the back of our minds; namely $e_i$ is the constant path at $i$, $X_i$ is the loop based at $i$ and $(i|i\pm 1)$ is the one step path.
We will refer to these simple objects as the \emph{simple paths}.

Recall that the morphism space between $\Pi_a \otimes \Pi_b$ and $\Pi_c$ is a one dimensional $\C$ vector space, with the canonical basis given by the $q$-admissible map between them.
We shall define the (graded) multiplication map $\mu : \I \otimes \I \ra \I$ using path concatenation, where all length 2 paths except the loops $X_i$ are zero.
To spell it out, if the end points don't match or the concatenation is a path with length $\geq$ 2 that is not a loop, then the multiplication map is zero; else, it is given by the $q$-admissible map into the simple path in $\I$ representing the concatenation.
For example, $(1|2)\<1\> \otimes (2|1)\<1\> \xra{\mu} X_1\<2\>$ is given by the $q$-admissible map of $\tau \otimes \tau \ra \1$, which we draw as the arc \begin{center}
\begin{tikzpicture}[scale = 0.4]
	\node at (0,-0.4) {$\tau$};
	\node at (2,-0.4) {$\tau$};
  \draw (0, 0) arc[start angle=180,end angle=0,radius=1];
\end{tikzpicture} ,
\end{center}
whereas the multiplication map on $(1|2) \otimes X_2$ is given by 0.
It is easy to check the associativity of $\mu$, since any concatenation of three simple paths is zero unless one of them is the constant path.

The unit $\eta: \1 \ra \I$ is given by the following (grading preserving) morphism:
\[
\1 \xra{\begin{bmatrix}
	\id \\
	\id
	\end{bmatrix}} e_1 \oplus e_2 \hookrightarrow \I.
\]
For the unital condition, note that for each $L$ a simple path in $\I$, $\mu|_{e_i \otimes L}$ is non-zero if and only if $L$ is a path starting at $i$.
Moreover, being non-zero implies that $\mu|_{e_i \otimes L} : e_i \otimes L = L \ra L$ is the identity map by definition. 
Since every path have exactly one starting point, $\mu|_{e_i \otimes L}$ is the identity map for exactly one $i \in \{1,2\}$.
Thus $\mu|_{(e_1 \oplus e_2)\otimes \I} : (e_1 \oplus e_2)\otimes \I = \I \oplus \I \ra \I$ is the identity map.
The case for $L\otimes e_i$ follows similarly.

Now that we have a graded algebra object $(\I, \mu, \eta)$, we can consider left and right $\I$-modules.
In particular, we can define the following left $\I$-modules: as objects in $\TLJ_5^\text{even}$, we have
\[
P_1 := e_1 \oplus X_1\<2\> \oplus (2|1)\<1\> , \quad P_2:= e_2 \oplus X_2\<2\> \oplus (1|2)\<1\>.
\]
Note that $P_1$ and $P_2$ are subobjects of $\I$, and furthermore, $I = P_1 \oplus P_2$.
The left $\I$-module action $\I \otimes P_i \ra P_i$ for $i = 1,2$ are both given by the restriction of $\mu$, namely $\mu|_{I \otimes P_i}$, where it again follows from the path concatenation definition of $\mu$ that they have the correct codomain.
The left module conditions also follows easily from the structure of the algebra object $\I$.
Similarly, we can also define the right $\I$-modules below in a similar fashion:
\[
{}_1P := e_1 \oplus X_1\<2\> \oplus (1|2) \<1\>, \quad {}_2P := e_2 \oplus X_2 \<2\> \oplus (2|1) \<1\>.
\]

Recall that if $(M, \theta)$ and $(N, \theta')$ are right and left $\I$-modules respectively, we define $M\otimes_\I N$ as the coequaliser in the category $\TLJ_5^\text{even}$:
\begin{center}
\begin{tikzcd}
M\otimes \I \otimes N \ar[r, shift left=0.75ex, "\theta \otimes \id"] \ar[r, shift right=0.75ex, swap, "\id \otimes \theta'"] & M\otimes N \ar[r, "\Theta"] & M\otimes_\I N.
\end{tikzcd}
\end{center}
\begin{notation}
For $A$ and $B$ both subobjects of $\I$, we will use $\restr{\mu}{A\otimes B}$ to denote that map $\mu$ with its domain restricted to $A\otimes B$ and its codomain restricted to the image of the restricted domain.
\end{notation}
\begin{proposition}
Denote ${}_iP_j$ as the subobject of $\I$ consisting of all the simple paths in $\I$ starting with $i$ and ending with $j$. 
Together with the morphism $\mu|_{{}_i P \otimes P_j}: {}_iP \otimes P_j \ra {}_iP_j$, we have that ${}_iP_j \cong {}_iP \otimes_I P_j$.
\end{proposition}
\begin{proof}
We shall prove for the cases ${}_1P\otimes_\I P_2$ and ${}_1P \otimes_\I P_1$, where the other two cases follows from a symmetrical argument.
Let's start with ${}_1P\otimes_\I P_2$.
We shall organise ${}_1P\otimes P_2$ in the following order:
\begin{align*}
{}_1P\otimes P_2 =& (e_1 \otimes (1|2))\<1\> \oplus ((1|2) \otimes e_2)\<1\> \oplus \\
	& (e_1 \otimes e_2) \oplus (e_1 \otimes X_2)\<2\> \oplus \\
	& ((1|2) \otimes X_2)\<3\> \oplus ((1|2) \otimes (1|2))\<2\> \oplus \\
	& (X_1 \otimes e_2)\<2\> \oplus (X_1 \otimes X_2)\<4\> \oplus (X_1 \otimes (1|2))\<3\>.
\end{align*}
Note that only the first two direct summand of ${}_1P\otimes P_2$ from above are non-zero under $\restr{\mu}{{}_1P\otimes P_2}: {}_1P \otimes P_2 \ra {}_1P_2 = (1|2)\<1\>$, i.e.
\[
\restr{\mu}{{}_1P \otimes P_2} = \begin{bmatrix}
	\id_{\Pi_0} & \id_{\Pi_0} & 0 & \cdots & 0
	\end{bmatrix}
\]

On the other hand, we shall organise the direct sumamnds of ${}_1P \otimes \I \otimes P_2$ in the following order:
\begin{align*}
{}_1P \otimes \I \otimes P_2 = 
	&((1|2)\otimes e_2 \otimes e_2)\<1\> \oplus (e_1 \otimes (1|2) \otimes e_2)\<1\> \oplus (e_1 \otimes e_1 \otimes (1|2))\<1\> \oplus \\
	&(e_1 \otimes e_1 \otimes e_2) \oplus (e_1 \otimes e_1 \otimes X_2)\<2\> \oplus \\
	&(e_1 \otimes (1|2) \otimes X_2)\<3\> \oplus (e_1 \otimes (1|2) \otimes (1|2))\<2\> \oplus \\
	&(e_1 \otimes X_1 \otimes e_2)\<3\> \oplus (e_1 \otimes X_1 \otimes X_2)\<4\> \oplus \\
	&(e_1 \otimes X_1 \otimes (1|2))\<3\> \oplus U,
\end{align*}
where $U$ represents the rest of the summands in ${}_1P\otimes \I \otimes P_2$.
Note that the first three summands of ${}_1P \otimes \I \otimes P_2$ are the only summands in ${}_1P \otimes \I \otimes P_2$ such that under both of the following maps:
\[
\mu|_{ {}_1P \otimes \I} \otimes \id, \quad \id \otimes \mu|_{\I \otimes P_2} : {}_1P \otimes \I \otimes P_2 \ra {}_1P \otimes P_2
\]
they are non-zero into the first two summands of ${}_1P \otimes P_2$.
A simple computation shows that
\[
\restr{\mu}{ {}_1P \otimes \I } \otimes \id - \id \otimes \restr{\mu}{ \I \otimes P_2} = 
	\begin{bmatrix}
	0 & -1 & 0 & 0   & 0 \\
	0 &  1 & 0 & 0   & 0 \\
	0 &  0 & 0 & I_7 & * 
	\end{bmatrix},
\]
where ``1'' denotes the identity map of the respective componenent in  $\TLJ_3^\text{even}$ and $I_7$ is the $7\times 7$ identity matrix.

Let $\mathscr{E}$ be an object in $\TLJ_3^\text{even}$ with $\Theta: {}_1P \otimes P_2 \ra \mathscr{E}$ satisfying 
\begin{equation} \label{coeq 1P2}
\Theta \circ \restr{\mu}{ {}_1P \otimes \I } \otimes \id = \Theta\circ \id \otimes \restr{\mu}{ \I \otimes P_2}.
\end{equation}
Given the ordering of the summands of ${}_1P\otimes P_2$ as before, we may write $\Theta$ as
\[
\Theta = 
	\begin{bmatrix}
	f & g & h_1 & h_2 & \cdots & h_7
	\end{bmatrix}.
\]
We wish to find the unique map such that the following diagram commutes:
\begin{center}
\begin{tikzcd}[column sep = 1.5cm]
{}_1P \otimes \I \otimes P_2 
	\ar[r, shift left=0.75ex, "\restr{\mu}{ {}_1P \otimes \I } \otimes \id"] 
	\ar[r, shift right=0.75ex, swap, "\id \otimes \restr{\mu}{ \I \otimes P_2}"] 
& 
{}_1P\otimes P_2 
	\ar[d, "\Theta"]  \ar[r, "\restr{\mu}{{}_1P \otimes P_2}"]
& 
{}_1P_2 = (1|2)\<1\>. \ar[dashed, dl, "\exists !"] \\

&
\mathscr{E}
\end{tikzcd}
\end{center}
To do so, let us see what \cref{coeq 1P2} says about $\Theta$:
\begin{align*}
&\Theta \circ \restr{\mu}{ {}_1P \otimes \I } \otimes \id = \Theta\circ \id \otimes \restr{\mu}{ \I \otimes P_2} \\
\iff &\Theta \circ (\restr{\mu}{ {}_1P \otimes \I } \otimes \id - \id \otimes \restr{\mu}{ \I \otimes P_2}) = 0 \\
\iff
	&\begin{bmatrix}
	f & g & h_1 & h_2 & \cdots & h_7
	\end{bmatrix}
	\begin{bmatrix}
	0 & -1 & 0 & 0   & 0 \\
	0 &  1 & 0 & 0   & 0 \\
	0 &  0 & 0 & I_7 & * 
	\end{bmatrix} = 
	\begin{bmatrix}
	0 & \cdots & 0
	\end{bmatrix} \\
\iff
	&\begin{cases}
	f = g; \\
	h_j = 0 \text{ for all } j.
	\end{cases}
\end{align*}
Thus $\Theta = 
	\begin{bmatrix}
	f & f & 0 & \cdots & 0
	\end{bmatrix}$.
\\
Since $f = g$ are maps from $(e_1 \otimes (1|2)) \<1\> = ((1|2) \otimes e_2)\<1\> = (1|2)\<1\>$ to $\mathscr{E}$, it follows that the map $f: (1|2)\<1\> \ra \mathscr{E}$ is the unique map that makes the diagram commutative.

Now consider ${}_1P \otimes_\I P_1$. 
We shall organise the summands of ${}_1P \otimes P_1$ in the following order:
\begin{align*}
{}_1P \otimes P_1 = & 
(e_1 \otimes e_1) \oplus (e_1 \otimes X_1) \oplus (X_1 \otimes e_1) \oplus ((1|2)\otimes (2|1)) \oplus \\
& ((1|2) \otimes e_1) \oplus (e_1 \otimes (1|2)) \oplus ( (1|2) \otimes X_1) \oplus (X_1 \otimes (2|1)) \oplus (X_1 \otimes X_1).
\end{align*}
Note that the first summand is the only non-zero maps into $e_1$ and the next three following summands are the only non-zero maps into $X_1$ under the map $\restr{\mu}{{}_1P\otimes P_1}: {}_1P \otimes P_1 \ra {}_1P_1 = e_1 \oplus X_1\<2\>$, i.e.
\[
\restr{\mu}{{}_1P \otimes P_1} = 
	\begin{bmatrix}
	\id_\1 & 0      & 0      & 0    & 0 & \cdots & 0 \\
	0      & \id_\1 & \id_\1 & \cap & 0 & \cdots & 0 
	\end{bmatrix}
\]
On the other hand, we shall organise the direct sumamnds of ${}_1P \otimes \I \otimes P_1$ in the following order:
\begin{align*}
{}_1P \otimes \I \otimes P_1 = 
	&(e_1 \otimes X_1 \otimes e_1)\<2\> \oplus (e_1 \otimes (1|2) \otimes (2|1))\<2\> \oplus ((1|2) \otimes \otimes (2|1) \otimes e_1)\<2\> \oplus \\
	&(e_1 \otimes (1|2) \otimes e_1)\<1\> \oplus (e_1 \otimes e_2 \otimes (2|1))\<1\> \oplus \\
	&((1|2) \otimes X_1 \otimes e_1)\<3\> \oplus (X_1 \otimes (2|1) \otimes e_1)\<3\> \oplus \\
	&(X_1 \otimes X_1 \otimes e_1)\<4\> \oplus U,
\end{align*}
where $U$ represents the rest of the summands in ${}_1P\otimes \I \otimes P_1$ not listed out.
Note that the first three summands of ${}_1P \otimes \I \otimes P_1$ are the only summands in ${}_1P \otimes \I \otimes P_1$ that have non-zero maps under the map
\[
\mu|_{ {}_1P \otimes \I} \otimes \id - \id \otimes \mu|_{\I \otimes P_1} : {}_1P \otimes \I \otimes P_1 \ra {}_1P \otimes P_1
\]
into the second to fourth summands of ${}_1P \otimes P_1$ (under the map above there is no non-zero map into the first summand of ${}_1P \otimes P_1$).
A simple computation shows that
\[
\restr{\mu}{ {}_1P \otimes \I } \otimes \id - \id \otimes \restr{\mu}{ \I \otimes P_2} = 
	\begin{bmatrix}
	0  &  0    & 0    & 0   & 0    & 0 \\
	-1 & -\cap & 0    & 0   & 0    & 0 \\
	1  &  0    & \cap & 0   & 0    & 0 \\
	0  &  1    & -1   & 0   & 0    & 0 \\
	0  &  0    & 0    & 1   & 0    & * \\
	0  &  0    & 0    & 0   & -I_4 & * \\
	\end{bmatrix},
\]
where $I_4$ is the $4\times 4$ identity matrix and ``1'' denotes the identity map of the respective componenent in  $\TLJ_3^\text{even}$.

Let $\mathscr{E}$ be an object in $\TLJ_3^\text{even}$ with $\Theta: {}_1P \otimes P_1 \ra \mathscr{E}$ satisfying 
\begin{equation} \label{coeq 1P1}
\Theta \circ \restr{\mu}{ {}_1P \otimes \I } \otimes \id = \Theta\circ \id \otimes \restr{\mu}{ \I \otimes P_1}.
\end{equation}
Given the ordering of the summands of ${}_1P\otimes P_1$ as before, we may write $\Theta$ as
\[
\Theta = 
	\begin{bmatrix}
	f & g & h & k & l_1 & \cdots & l_5
	\end{bmatrix}.
\]
We wish to find the unique map such that the following diagram commutes:
\begin{center}
\begin{tikzcd}[column sep = 1.5cm]
{}_1P \otimes \I \otimes P_1 
	\ar[r, shift left=0.75ex, "\restr{\mu}{ {}_1P \otimes \I } \otimes \id"] 
	\ar[r, shift right=0.75ex, swap, "\id \otimes \restr{\mu}{ \I \otimes P_1}"] 
& 
{}_1P\otimes P_1 
	\ar[d, "\Theta"]  \ar[r, "\restr{\mu}{{}_1P \otimes P_1}"]
& 
{}_1P_1 = e_1 \oplus X_1 \<2\>. \ar[dashed, dl, "\exists !"] \\

&
\mathscr{E}
\end{tikzcd}
\end{center}
To do so, let us see what \cref{coeq 1P1} says about $\Theta$:
\begin{align*}
&\Theta \circ (\restr{\mu}{ {}_1P \otimes \I } \otimes \id - \id \otimes \restr{\mu}{ \I \otimes P_1}) = 0 \\
\iff
	&\begin{bmatrix}
	f & g & h & k & l_1 & \cdots & l_5
	\end{bmatrix}
	\begin{bmatrix}
	0  &  0    & 0    & 0   & 0    & 0 \\
	-1 & -\cap & 0    & 0   & 0    & 0 \\
	1  &  0    & \cap & 0   & 0    & 0 \\
	0  &  1    & -1   & 0   & 0    & 0 \\
	0  &  0    & 0    & 1   & 0    & * \\
	0  &  0    & 0    & 0   & -I_4 & * \\
	\end{bmatrix} = 
	\begin{bmatrix}
	0 & \cdots & 0
	\end{bmatrix} \\
\iff
	&\begin{cases}
	h = g; \\
	k = g\circ \cap \\
	l_j = 0 \text{ for all } j.
	\end{cases}
\end{align*}
Thus $\Theta = 
	\begin{bmatrix}
	f & g & g & g\circ \cap & 0 & \cdots & 0
	\end{bmatrix}$.
It now follows that the unique map from $e_1 \oplus X_1 \<2\>$ to $\mathscr{E}$ making the diagram commutative is given by 
$\begin{bmatrix}
f & g
\end{bmatrix}$.
\end{proof}

\begin{note}
{} The above proposition shows objects that satisfy the universal property that is required when tensoring certain right $\I$-modules with left $\I$-modules over $\I$, i.e. ${}_iP \otimes_\I P_j$. But because tensoring over $\I$ was defined as some object satisfying a universal property instead of an explicit construction as in the usual modules, it is only unique up to isomorphism.
So speaking of functors that is tensoring over $\I$, ${}_i P \otimes_\I -$ for example, requires a bit more care.
On the object level, I should really be specifying where each object is mapped to under the functor.
In particular, to make things ``nice'', I should construct it in a way that that the tensor product will satisfy the associativity property, for example $(P_i \otimes {}_i P) \otimes_\I P_i$ should be \emph{equal} to $P_i \otimes ({}_i P \otimes_\I P_i)$.
\end{note}

\begin{proposition}
The following maps are both grading preserving $(\I, \I)$-bimodules maps: 
\begin{enumerate}
\item $\beta_i : P_i \otimes {}_iP \ra \I$ given the the restriction of $\mu$, and 
\item $\gamma_i : \I \ra P_i \otimes {}_iP \<-2\>$ uniquely defined by $\1 \xra{\eta} e_i \oplus e_{i \pm 1} \xra{\varphi} (e_i \otimes X_i) \oplus (X_i \otimes e_i) \oplus (i \pm 1 | i) \otimes (i | i \pm 1))$, where $\varphi =
\begin{bmatrix}
\id_\1 & 0 \\
\id_\1 & 0 \\
0      & \cup 
\end{bmatrix}$.
\end{enumerate}
\end{proposition}
\begin{proof}
The fact that $\beta_i$ is a grading preserving $(\I,\I)$-bimodule map follows from $\beta_i$ being a restriction of the multiplication map $\mu$, the module structures on $P_i$ and ${}_iP$ were inherit from multiplication on $\I$ and that the multiplication is associative. 
{\color{red} CONTINUE HERE}
\end{proof}
\begin{proposition} \label{biadjoint pair}
The pair of objects $(P_i, {}_iP)$ is a Frobenius pair, i.e. the functors $P_i\otimes -$ and ${}_i P \otimes_\I -$ is a biadjoint pair.
\end{proposition}
\begin{proof}
{\color{red} CONTINUE HERE}
\end{proof}
Define the following complexes of $(\I,\I)$-bimodules:
\[
R_i := 0 \ra P_i \otimes {}_i P \xra{\beta_i} \I \ra 0
\]
and
\[
R_i' := 0 \ra \I \xra{\gamma_i} P_i\otimes {}_i P \ra 0,
\]
where both are normalised so that $\I$ is in cohomological degree 0.
We wish to check the following:
\begin{proposition}\label{n=5 relations}
We have the following isomorphisms in $\Kom^b((\I,\I)\text{-bimod})$:
\[
R_i \otimes_\I R_i' \cong \I \cong R_i' \otimes_\I R_i,
\]
and
\[
\sigma_{P_1} \otimes_\I \sigma_{P_2} \otimes_\I \sigma_{P_1} \otimes_\I \sigma_{P_2} \otimes_\I \sigma_{P_1} \cong \sigma_{P_2} \otimes_\I \sigma_{P_1} \otimes_\I \sigma_{P_2} \otimes_\I \sigma_{P_1} \otimes_\I \sigma_{P_2}.
\]
\end{proposition}
The fact that $R_i$ and $R_i'$ are inverses of each other follows easily from a direct calculation similar to \cref{biadjoint pair}. {\color{red} Say more here.}
We shall show the second statement in greater generality in the next section.
}
\comment{
The aim of all these was to categorify the number $2\cos(\frac{\pi}{n})$ by looking for an object with quantum dimension $2\cos(\frac{\pi}{n}) = q + q^{-1}$ for $q = e^{\frac{i\pi}{n}}$.
With our construction, there are exactly two objects in $\TLJ_n$ that has dimension $2\cos(\frac{\pi}{n})$, namely $\Pi_1$ and $\Pi_{n-3}$.
So one of these two objects could be a good guess for the summand in the zigzag algebra representing $(1|2)$ and $(2|1)$.
When $n$ is odd, we chose to consider only the even labelled objects and so $\Pi_1$ is already out of the question. 
In a sense, this is great because by only considering the even labelled objects, we leave ourselves with a single simple object from each pair of objects with the same dimension, so there is no more ambiguity in saying ``the simple object with quantum dimension $1, q+ q^{-1}$'' etc.

The main obstruction is in showing the braiding relation.
If we were to use the same construction and proof method as in the case for $n$ is odd, we can verify the braid relation by showing that the following holds:
\[
\underbrace{\sigma_{P_1} \otimes_\I R_2 \otimes_\I \sigma_{P_1} \otimes_\I \cdots \otimes_\I \sigma_{P_1}}_{n-1 \text{ times}}\otimes_\I P_2 \cong P_2.
\]
Sadly this isn't true anymore. 
Whether we have chosen $\Pi_1$ or $\Pi_{n-3}$ for $(1|2)$ and $(2|1)$ (when $n=4$ they are the same), we get instead
\[
\underbrace{\sigma_{P_1} \otimes_\I R_2 \otimes_\I \sigma_{P_1} \otimes_\I \cdots \otimes_\I \sigma_{P_1}}_{n-1 \text{ times}}\otimes_\I P_2 \cong P_2 \otimes {\color{red}\Pi_{n-2}}.
\]
Recall that in section 1, we said $\Pi_{n-2}$ is the object other than the monoidal unit $\Pi_0$ with dimension 1, and that when tensored with, it switches between the two pairs of objects that have the same quantum dimension.
So the decategorified version of this equation the right one, we just somehow ended up with the other simple object of dimension 1 tagging along instead.

Note that I have yet to verify the braiding relation directly even for the minimally interesting case $n=4$ without going through the biadjoint trick that we have been using, so the braiding relation could still very well be true.
But even if it true by a direct verification for $n=4$ (I will sit down and calculate it some time this week), generalising it to arbitrary even $n$ might be hard.
}

\newpage
\part{Categorification}
\chapter{Categorifying Burau representations}
Throughout, we fix $n$ to be an integer $\geq 3$.
In this chapter we shall define the zigzag algebra associated to $I_2(n)$ as a (non-commutative) algebra (also known as monoid) object in the category of $\Z$-graded objects $\gTLJ_n$ of the Temperley-Lieb-Jones category.
Following the idea in \cite{khovanov_seidel_2001}, we will construct a (weak) faithful action of $\B(I_2(n))$ on the homotopy category of (an additive full subcategory of) modules over the zigzag algebra, with the action given by tensoring with complexes of bimodules.
Finally we show that this action categorifies the corresponding Burau representation.

\section{The $I_2(n)$ zigzag algebra and its modules}
Recall from \cref{subsection: TLJ} that the Temperley-Lieb-Jones category at $2n$th root of unity $\TLJ_n$ is a fusion category with set of simple objects $\{ \Pi_0, \Pi_1, ... , \Pi_{n-2} \}$ and its fusion ring $K_0(\TLJ_n)$ is isomorphic to the ring $\Omega_n := \Z[\omega]/\< \Delta_{n-1}(\omega) \>$.
We shall construct our algebra object in $\gTLJ_n$, the category of $\Z$-graded objects of $\TLJ_n$ (see \cref{sect: graded categories}).
Note that $\gTLJ_n$ is no longer a fusion category (it is a tensor category); it satisfies all the  properties of a fusion category except having finitely many classes of simple objects.

\subsection{Constructing $\I$, the $I_2(n)$ zigzag algebra}
This subsection is devoted to the construction of the zigzag algebra associated to $I_2(n)$.
It will be constructed as an algebra object in the tensor category $\gTLJ_n$.

Before we start the construction, let us introduce some suggestive notations.
\begin{enumerate}[-]
\item Define $e_i := \Pi_0 = \1_{\gTLJ_n}$ for each $i \in \{1,2\}$. We call this the \emph{constant path object on vertex $i$}.
\item Define $X_i := \Pi_0 \<2\> = \1_{\gTLJ_n} \<2\>$ for each $i \in \{1,2\}$. We call this the \emph{length two loop object on vertex $i$ (starts and ends at $i$)}.
\item Define $(i|j) := \Pi_1 \<1\>$ for each $|i-j|=1$.
We call this the \emph{length one path object that starts at $i$ and ends at $j$}.
\end{enumerate}
We construct the $I_2(n)$ zigzag algebra, denoted as $\I := \I_2(n)$, as an algebra object in $\gTLJ_n$ as follows.
As an object in $\gTLJ_n$, it is given by
\begin{align*}
\I &:= e_1 \oplus X_1 \oplus (2|1) \oplus e_2 \oplus X_2 \oplus (1|2).
\end{align*}
As before, we should think of these simple summands as paths (objects) graded by their path lengths, which we call them \emph{simple path summands}.

We define the multiplication map $\mu : \I \otimes \I \ra \I$ on $\I$ as if we were multiplying paths, where we kill all the paths of length $\geq 3$.
More precisely, we define it on each tensor of two simple path summands $A \otimes A'$ as follows.
\begin{enumerate}
\item If the ending vertex of the simple path summand $A$ does not match with the starting vertex of $A'$, or the grading of $A\otimes A'$ is greater than or equal to 3, then $\mu|_{A\otimes A'} = 0$.
\item Otherwise, the restriction that the (homogeneous) grading of $A\otimes A'$ is less than or equal to 2 tells us that either:
\begin{enumerate}
\item  at least one of $A$ or $A'$ must be $e_i$ for some $i$, or 
\item $A$ and $A'$ are both length one path objects.
\end{enumerate}
\item Suppose first that $A$ or $A'$ is equal to $e_i = \Pi_0$.
By definition of the tensor product we have
\[
A'' := A \otimes A' =
	\begin{cases}
	A, &\text{ if } A' = e_i; \\
	A', &\text{ if } A = e_i
	\end{cases}.
\]
In this case we just define $\mu|_{A\otimes A'}: A\otimes A' \ra A''$ to be the identity map on $A''$.
\item Now suppose instead that $A$ and $A'$ are both length one path objects, i.e. $A = (i|j)$ and $A' = (j|i)$ as the end points must match.
Then $\mu|_{(i|j)\otimes (j|i)}: (i|j)\otimes (j|i) \ra X_i$  is given by the admissible map:
\begin{center}
\begin{tikzpicture}[scale = 0.5]
	\node at (1.5, 3)    {$X_i = \Pi_0 \<2\>$};
	\node at (-0.5,-0.4) {$(i|j) = \Pi_1 \<1\>$};
	\node at (5.5,-0.4) {$\Pi_1 \<1\> = (j|i)$};
  \draw (0, 0) arc[start angle=180,end angle=0,radius=2];
\end{tikzpicture}
\end{center}
Note that this is the map $\cap \<2\>$, where $\cap$ is the counit map that makes $\Pi_1$ self-dual.
\end{enumerate}
It is clear that the only ``interesting'' morphism within the multiplication map is the multiplication of two length one paths, where the rest of them are either the corresponding identity map or the zero map.
As such, it is easy to check the associativity of $\mu$ due to the path length restriction, since the multiplication of three simple path objects is zero except when at least one of them is the constant path $e_i$ and moreover, multiplication with $e_i$ only ends up being either the identity map (when the endpoints match) or the zero map otherwise.

Let $A$ be a simple path summand in $\I$.
Note that $\mu|_{e_i \otimes A}$ is non-zero if and only if $A$ is a path object with starting vertex $i$.
Moreover, when $\mu|_{e_i \otimes A} : e_i \otimes A = A \ra A$ is non-zero, it must be the identity map on $A$ by definition.
Similarly $\mu|_{A \otimes e_i}$ is non-zero if and only if $A$ is a path object with ending vertex $i$, and when it is non-zero it must be the identity map.
Let us denote $P_i$ (resp. ${}_iP$) to be the direct sum of all the simple path summands of $\I$ ending (resp. starting) with vertex $i$:
\[
P_i = e_i \oplus X_i \oplus (j|i), \quad {}_iP = e_i \oplus X_i \oplus (i|j).
\]
Note that as an object in $\gTLJ_n$, we can decompose $\I$ as follows:
\[
\I = P_1 \oplus P_2 = {}_1 P \oplus {}_2 P.
\]
It follows that $\mu|_{e_i \otimes \I}$ only maps into the summand ${}_iP$ of $\I$; in particular 
\[
\mu|_{e_i \otimes {}_jP} = 
	\begin{cases}
	\id_{{}_iP}, &\text{ if i = j}; \\
	0          , &\text{ otherwise}.
	\end{cases}
\]
Similarly $\mu|_{\I \otimes e_i}$ only maps into the summand $P_i$ of $\I$:
\[
\mu|_{P_j \otimes e_i} = 
	\begin{cases}
	\id_{P_i}, &\text{ if j = i}; \\
	0          , &\text{ otherwise}.
	\end{cases}
\]

As in usual path algebras, we define the unit map $\eta: \1_{\gTLJ_n} \ra \I$ as the composition
\[
\Pi_0 \xra{ \left[\id \quad  \id \right] }  e_1 \oplus e_2 \hookrightarrow \I,
\]
where the last map is the canonical inclusion of the summands.
We now use the decomposition $\I = {}_1P \oplus {}_2P$ to obtain the commutative diagram
\[
\begin{tikzcd}[row sep = large]
\Pi_0 \otimes \left( {}_1P \oplus {}_2P \right)
	\ar{rr}{\left[\id \quad  \id \right] \otimes \id_{\I}} &
{}
	{} &
\left( e_1 \oplus e_2 \right) \otimes \left( {}_1P \oplus {}_2P \right)
	 \ar[equal]{d} 
	 \ar[hook]{dr}\\
\Pi_0 \otimes \I
	\ar[equal]{u} &
{}
	{} &
\bigoplus_{i,j \in \{1,2\}} e_i \otimes {}_jP
	\ar{d}{\left[\mu|_{e_i \otimes {}_jP } \right]_{i,j\in \{1,2\}}}  &
\I \otimes \I
	\ar{d}{\mu} &\\
\I
	\ar[equal]{u} 
	\ar[equal]{rr}&
{}
	{} &
{}_1P \oplus {}_2P &
\I,
	\ar[equal]{l} &
\end{tikzcd}
\]
which shows the left unital condition.
The right unital condition follows similarly.
Therefore we conclude the following:
\begin{proposition}
$(\I, \mu, \eta)$ as defined above defines a (non-commutative) algebra object in $\gTLJ_n$.
\end{proposition}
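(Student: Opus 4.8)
The plan is to verify the requirements in the definition of an algebra object applied to the tensor category $(\gTLJ_n, \otimes, \1_{\gTLJ_n})$: namely that $\mu$ and $\eta$ are morphisms in $\gTLJ_n$, that $\mu$ is associative, and that $\eta$ is a two-sided unit for $\mu$. As the construction above already signals, each of these collapses to a finite check once one uses the path-length truncation together with the decompositions $\I = P_1 \oplus P_2 = {}_1P \oplus {}_2P$.

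First I would confirm that the component-wise recipe for $\mu$ assembles into an honest morphism in $\gTLJ_n$. Since morphisms there are exactly the grading-preserving ones, it suffices to note that on each simple-path summand $A \otimes B$ of $\I \otimes \I$ the prescribed target of $\mu|_{A \otimes B}$ lies in homogeneous degree $|A| + |B|$ of $\I$ (if one of $A, B$ equals $e_i$ the target is literally $A \otimes B$; if $A$ and $B$ are both length-one paths the degree is $1 + 1 = 2$, matching the loop object $X_i$), and that each such component is a genuine morphism of $\TLJ_n$ — the zero map, the canonical identification $\Pi_0 \otimes \Pi_a \cong \Pi_a$, or the cap $\cap\<2\> \colon \Pi_1 \otimes \Pi_1 \to \Pi_0\<2\>$, which exists because $(1,1,0)$ is $q$-admissible and $\Pi_1$ is self-dual. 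The map $\eta$ is grading-preserving since $\Pi_0$ and $e_1 \oplus e_2$ both lie in degree $0$.

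Next, associativity. I would decompose $\I^{\otimes 3}$ into simple-path summands $A \otimes B \otimes C$ and compare $\mu \circ (\mu \otimes \id)$ with $\mu \circ (\id \otimes \mu)$ summand by summand. The only real content is bookkeeping of endpoints and path lengths. If none of $A, B, C$ is a constant path then, there being only two vertices, the chain is forced to be $(1|2)(2|1)(1|2)$ or its mirror; any adjacent product is then a degree-$2$ loop, and multiplying that loop by the remaining length-one path lands in degree $3$ and is killed, so both composites vanish. If at least one factor equals some $e_i$, that factor multiplies as an identity on whichever side it is applied first — using that $\mu|_{e_i \otimes D}$ is $\id_D$ when $D$ starts at vertex $i$ and $0$ otherwise, and the mirror statement for $\mu|_{D \otimes e_i}$ — so both composites reduce to the same twofold product of the surviving factors, or to $0$ simultaneously. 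Hence the associativity square commutes.

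Finally the unit axioms. The left unital condition is precisely the large commutative diagram displayed just above the statement: it factors $\mu \circ (\eta \otimes \id)$ through $\bigoplus_{i,j} e_i \otimes {}_jP$ and uses that $\mu$ restricts there to the identity on the diagonal summands $e_i \otimes {}_iP$ and to zero off-diagonal, yielding the canonical isomorphism $\1_{\gTLJ_n} \otimes \I \cong \I$; the right unital condition is the identical argument with $\I = {}_1P \oplus {}_2P$ replaced by $\I = P_1 \oplus P_2$ and $\mu|_{e_i \otimes {}_jP}$ replaced by $\mu|_{P_j \otimes e_i}$. I expect the only real obstacle — a mild one — to be the associativity case analysis, in particular checking that the one potentially troublesome configuration (three length-one path objects in a row) gives the zero map on both sides rather than a discrepancy; the structural reason this works, which I would state explicitly, is that once a degree-$2$ loop $X_i$ is formed it cannot be extended without leaving degree $2$, so no surviving path of length $\geq 3$ can appear on one side of the square but not the other.
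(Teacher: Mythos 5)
Your proof is correct and follows the same route as the paper: associativity is reduced, via the path-length truncation, to the observation that any triple product of simple path summands dies unless a constant path is involved, and the unit axioms are verified by the very diagram displayed above the proposition together with its mirror under $\I = P_1 \oplus P_2$. One small overstatement: a triple $A \otimes B \otimes C$ of non-constant summands need not consist of alternating length-one paths as you claim, since a factor could be a loop $X_i$; however, in that case any adjacent product involving $X_i$ already exceeds length $2$ and both composites vanish immediately, so the argument (and in particular your closing structural remark) still goes through.
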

From now on, we call $(\I, \mu, \eta)$ \emph{the zigzag algebra of $I_2(n)$}, where for notation simplicity will mostly be denoted as $\I$.

\begin{remark}
Note the emphasis on $\I$ being non-commutative, which in particular says that $\I$ does not fall into classification of the \emph{commutative} algebra objects in $\TLJ_n \cong \cR ep\left(U_{e^{i \pi/n}}(\mathfrak{s}\mathfrak{l}_2)\right)$, developed as the $q$-analogue of the McKay correspondence in \cite{kirillov_ostrik_2002}.
\end{remark}
\begin{remark}
Recall that 
\[
PF\dim(\Pi_1) = PF\dim(\Pi_{n-3}),
\]
and so $\Pi_{n-3}\<1\>$ is another natural candidate for the length one simple path object $(i|j)$.
The choice $(i|j) := \Pi_1\<1\>$ made here was just to simplify the calculations and the interested reader can check that a similar construction using $(i|j) := \Pi_{n-3}\<1\>$ would equally work.
In the specific case for $n=3$, this variant of $\I$, denoted as $\wt{\I}$, which uses
\[
(i|j) := \Pi_{n-3}\<1\> = \Pi_0\<1\>
\]
instead, coincides with the usual (path length graded) zigzag algebra associated to $A_2$ in the following sense:
the algebra $\wt{\I}$ for $n=3$ is the image of the $A_2$ zigzag algebra under the (additive, monoidal, $\Z$-graded) functor
\[
\C\text{-vec} \ra \gTLJ_3
\]
which sends $\C$ to $\Pi_0$.
\end{remark}

\comment{
We shall define the (graded) multiplication map $\mu : \I \otimes \I \ra \I$ using path concatenation, where we kill all length 2 paths except the loops $X_i$.
To spell it out, if the end points don't match or the concatenation is a path with length $\geq$ 2 that is not a loop, then the multiplication map is zero; else, it is given by the $q$-admissible map into the simple path in $\I$ representing the concatenation.
For example, $(1|2)\<1\> \otimes (2|1)\<1\> \xra{\mu} X_1\<2\>$ is given by the $q$-admissible map of $\Pi_1 \otimes \Pi_1 \ra \Pi_0$, which we draw as the arc \begin{center}
\begin{tikzpicture}[scale = 0.5]
	\node at (0,-0.4) {$\Pi_1$};
	\node at (2,-0.4) {$\Pi_1$};
  \draw (0, 0) arc[start angle=180,end angle=0,radius=1];
\end{tikzpicture} ,
\end{center}
whereas the multiplication map on $(1|2) \otimes X_2$ is the zero map.
It is easy to check the associativity of $\mu$, since any concatenation of three simple paths is zero unless one of them is the constant path, and multiplication with $e_i$ or $X_i$ only end up being either the identity map or the 0 map.

The unit $\eta: \Pi_0 \ra \I$ is given by the following (grading preserving) morphism:
\[
\Pi_0 \xra{\begin{bmatrix}
	\id \\
	\id
	\end{bmatrix}} e_1 \oplus e_2 \hookrightarrow \I.
\]
For the unital condition, note that for each $L$ a simple path in $\I$, $\mu|_{e_i \otimes L}$ is non-zero if and only if $L$ is a path starting at $i$.
Moreover, being non-zero implies that $\mu|_{e_i \otimes L} : e_i \otimes L = L \ra L$ is the identity map by definition. 
Since every path have exactly one starting point, $\mu|_{e_i \otimes L}$ is the identity map for exactly one $i \in \{1,2\}$.
Thus
\[
\Pi_0 \otimes \I \xra{\begin{bmatrix}
	\id \\
	\id
	\end{bmatrix} \otimes \id} 
	(e_1 \oplus e_2)\otimes \I = \I \oplus \I \xra{\mu|_{(e_1 \oplus e_2)\otimes \I}} \I
\]
is the identity map.
The case for $L\otimes e_i$ follows similarly.
}

\subsection{Modules over $\I$}
Now that we have an algebra object $(\I, \mu, \eta)$, we can consider the different types of modules over $\I$.
\begin{definition}
We denote the category of \emph{left (resp. right) modules over $\I$} by $\I$-mod (resp. mod-$\I$).
Similarly we denote the category of bimodules over $\I$ by $\I$-mod-$\I$.
\end{definition}
We remind the reader (see \cref{sect: algebras and modules}) that an object in $\I$-mod is by definition a pair $(M, \alpha)$, with $M$ an object in $\gTLJ_n$ and $\alpha : \I \otimes M \ra M$ a morphism in $\gTLJ_n$ satisfying the usual associativity and unital commutative diagram.
The objects in mod-$\I$ are defined similarly.
The morphisms in $\I$-mod and mod-$\I$ are morphisms in $\gTLJ_n$ that preserves the module structure.
Note that this implies that the morphisms are required to preserve the internal $\Z$-grading.
In other words, $\I$-mod and mod-$\I$ are naturally (abelian) subcategories of $\gTLJ_n$.
As an easy example, one can check that ($\I, \mu$) defines both left and right $\I$-module structure on $\I$.

A bimodule over $\I$ will be an object $M$ which is both a left module $(M, \alpha)$ and a right module $(M, \alpha')$ such that the left and right module structures are compatible.
We usually denote a bimodule as a triple $(M, \alpha, \alpha')$.
Once again, it is easy to see that $(\I, \mu, \mu)$ defines a $\I$-bimodule structure on $\I$.

Recall the objects $P_i$ and ${}_iP$ as defined in the previous section.
There are natural left $\I$-module actions induced by the restriction of $\mu$ to $\mu|_{\I \otimes P_i} : \I \otimes P_i \ra P_i$, where it again follows from the ``path concatenation'' definition of $\mu$ that the map does indeed map into $P_i$.
The left module conditions also follow easily from the structure of the algebra object $\I$.
Dually, ${}_iP$ can be endowed with a right $\I$-module structure using $\mu|_{{}_iP \otimes \I} : {}_iP \otimes \I \ra {}_iP$.

\comment{
Now that we have a graded algebra object $(\I, \mu, \eta)$, we can consider graded $\I$-modules.
In particular, consider the following ``projective'' left $\I$-modules:
\[
P_1 := e_1 \oplus X_1\<2\> \oplus (2|1)\<1\> , \quad P_2:= e_2 \oplus X_2\<2\> \oplus (1|2)\<1\>,
\]
where $\I = P_1 \oplus P_2$.
There are a natural left $\I$-module actions induced by the restriction of $\mu$, namely $\mu|_{\I \otimes P_i} : \I \otimes P_i \ra P_i$, where it again follows from the path concatenation definition of $\mu$ that the map does indeed map into $P_i$.
The left module conditions also follows easily from the structure of the algebra object $\I$.
Similarly, we can define the right $\I$-modules below in a similar fashion:
\[
{}_1P := e_1 \oplus X_1\<2\> \oplus (1|2) \<1\>, \quad {}_2P := e_2 \oplus X_2 \<2\> \oplus (2|1) \<1\>.
\]
}

Recall from \cref{defn: tensoring over an algebra} the definition of tensoring of two modules over $\I$.
We will be particularly interested in the following tensors:
\begin{proposition} \label{tensor iPj}
Denote ${}_iP_j$ as the subobject of $\I$ consisting of all the simple path summands in $\I$ starting with $i$ and ending with $j$. 
Together with the morphism $\mu|_{{}_i P \otimes P_j}: {}_iP \otimes P_j \ra {}_iP_j$, we have that ${}_iP_j$ is the coequaliser of 
\begin{tikzcd}[column sep=large]
{}_iP \otimes \I \otimes P_j \ar[r, shift left=0.75ex, "\mu|_{{}_iP\otimes \I} \otimes \id"] \ar[r, shift right=0.75ex, swap, "\id \otimes \mu|_{\I\otimes P_j}'"] & {}_iP\otimes P_j
\end{tikzcd}
in $\gTLJ_n$.
\end{proposition}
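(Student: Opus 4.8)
The plan is to verify directly that the pair $({}_iP_j,\ \mu|_{{}_iP\otimes P_j})$ satisfies the universal property of the coequaliser, rather than decomposing everything into simple summands and solving linear systems. The mechanism is the standard one behind "tensoring over an algebra" identities: the constant path $e_i$ acts as a \emph{local} unit, and inserting it supplies simultaneously a section of $\mu|_{{}_iP\otimes P_j}$ and a contracting homotopy for the coequaliser diagram; this makes the argument work in $\gTLJ_n$ without using semisimplicity or even the existence of the coequaliser (which drops out as a by-product).

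First I would check that $\mu|_{{}_iP\otimes P_j}$ genuinely maps into ${}_iP_j$: on simple path summands $\mu$ is path concatenation (or zero), and concatenating a path starting at $i$ with a path ending at $j$ gives a path starting at $i$ and ending at $j$. Write $m := \mu|_{{}_iP\otimes P_j}:{}_iP\otimes P_j\to{}_iP_j$ and set
\[
d := \big(\mu|_{{}_iP\otimes\I}\otimes\id_{P_j}\big) - \big(\id_{{}_iP}\otimes\mu|_{\I\otimes P_j}\big) : {}_iP\otimes\I\otimes P_j \longrightarrow {}_iP\otimes P_j .
\]
That $m$ forks the parallel pair in the statement, i.e. $m\circ d = 0$, is exactly associativity of $\mu$ restricted to the relevant summands (using that ${}_iP_j\hookrightarrow\I$ is a monomorphism). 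Next I would introduce the two "insert $e_i$" morphisms of $\gTLJ_n$: the section $s:{}_iP_j\to{}_iP\otimes P_j$ obtained from the inverse left unitor together with the summand inclusions $e_i=\1_{\gTLJ_n}\hookrightarrow{}_iP$ and ${}_iP_j\hookrightarrow P_j$, and the homotopy $r:=\delta_i\otimes\id_{P_j}:{}_iP\otimes P_j\to{}_iP\otimes\I\otimes P_j$, where $\delta_i:{}_iP\to{}_iP\otimes\I$ is built analogously from $e_i\hookrightarrow{}_iP$ and ${}_iP\hookrightarrow\I$. Because $\mu$ restricted to $e_i\otimes(-)$ is the identity on any path summand that starts at $i$ (immediate from the definition of $\mu$, equivalently from the unital axioms for the modules ${}_iP$ and $P_j$), a short computation yields
\[
m\circ s = \id_{{}_iP_j}, \qquad d\circ r = \id_{{}_iP\otimes P_j} - s\circ m .
\]

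Granting these two identities the universal property is automatic. If $\Theta:{}_iP\otimes P_j\to\cE$ satisfies $\Theta\circ(\mu|_{{}_iP\otimes\I}\otimes\id) = \Theta\circ(\id\otimes\mu|_{\I\otimes P_j})$, i.e. $\Theta\circ d = 0$, then $\Theta\circ(\id - s\circ m) = \Theta\circ d\circ r = 0$, so $\Theta = (\Theta\circ s)\circ m$ factors through $m$; and this factorisation is the unique one, since $m$ is a split epimorphism (it has the section $s$), hence an epimorphism. Therefore $({}_iP_j, m)$ is the coequaliser, and in particular the coequaliser exists.

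The step I expect to require the most care is the bookkeeping of the maps $s$ and $r$ and the verification of $d\circ r = \id - s\circ m$ purely at the level of morphisms of $\gTLJ_n$ (rather than on "elements" of path objects): one must track which tensor factor carries the copy of $e_i$ and use that $e_i$ is a unit only for paths starting at $i$, not a global unit. Everything else is formal. For a reader who prefers a more structural route, the same statement can instead be deduced from \cref{absorb I}: that proposition identifies $({}_iP,\ \mu|_{{}_iP\otimes\I})$ with ${}_iP\otimes_\I\I$, and decomposing $\I = P_1\oplus P_2$ as left $\I$-modules — together with the fact that $(-)\otimes_\I(-)$ distributes over finite direct sums — exhibits ${}_iP\otimes_\I P_j$ as precisely the summand ${}_iP_j$ of ${}_iP = {}_iP_1\oplus{}_iP_2$, compatibly with $\mu$.
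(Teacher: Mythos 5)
Your proof is correct, but it takes a genuinely different route from the paper's. The paper proves the universal property indirectly: it embeds the diagram for ${}_iP\otimes P_j$ into the coequaliser diagram for $\I\otimes\I\rightrightarrows\I\otimes\I\to\I$, uses the projection $\I\otimes\I\twoheadrightarrow{}_iP\otimes P_j$ to push a given fork $\varphi$ up to a fork on $\I\otimes\I$, invokes $\I\otimes_\I\I\cong\I$ to factor it, and then restricts back down the summand inclusion ${}_iP_j\hookrightarrow\I$. Your argument instead exhibits $({}_iP_j,m)$ as a \emph{split} coequaliser in Beck's sense: with $f=\mu|_{{}_iP\otimes\I}\otimes\id$ and $g=\id\otimes\mu|_{\I\otimes P_j}$, your $s$ and $r$ satisfy precisely $ms=\id$, $fr=\id$, and $gr=sm$, which is the split-coequaliser data and immediately yields both the factorisation $\Theta=(\Theta s)\circ m$ and uniqueness ($m$ is split epi). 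I checked the three identities on simple path summands and they hold for exactly the reason you give: $e_i$ is a local unit for paths starting at $i$, and $\mu(x\otimes y)\in{}_iP_j$ whenever $x$ starts at $i$ and $y$ ends at $j$, so $(\id\otimes\mu|_{\I\otimes P_j})\circ r = s\circ m$.

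Compared with the paper's proof, yours buys two things. First, it is self-contained and does not presuppose the existence of the bigger coequaliser $\I\otimes_\I\I$ (nor does it lean on the somewhat terse "it is easy to see that the induced map is unique" step in the paper). Second, a split coequaliser is \emph{absolute} — preserved by every functor — which is strictly stronger than what the paper establishes and is exactly the robustness one wants when these tensor products are later promoted to functors on (homotopy categories of) complexes. Your closing remark that the statement also follows from \cref{absorb I} together with the left-module decomposition $\I=P_1\oplus P_2$ and additivity of $\otimes_\I$ in the second variable is a clean third route, and is in fact closer in spirit to what the paper's diagram is gesturing at; it does rely on $\otimes_\I$ distributing over the finite direct sum, which holds here because the two parallel maps are both compatible with the decomposition of the $P_j$-factor, so it is fine to mention as an alternative but your split-coequaliser argument is the cleanest of the three.
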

\begin{proof}
Throughout this proof, $\hookrightarrow$ and $\twoheadrightarrow$ will be used to denote the obvious inclusion and projection map of the summands respectively.
Let $\varphi: {}_iP \otimes P_j \ra \xi$ be a map satisfying 
\[
\varphi\circ \left( \mu|_{{}_iP \otimes \I} \otimes \id_{P_j} \right) =
\varphi\circ \left( \id_{{}_iP} \otimes \mu|_{\I \otimes P_j} \right).
\]
Consider the commutative diagram
\[
\begin{tikzcd}[column sep = 2.5cm, row sep = large]
{}_iP \otimes \I \otimes P_j 
	\ar[r, shift left=0.75ex, "\mu|_{{}_iP \otimes \I} \otimes \id_{P_j}"] 
	\ar[r, shift right=0.75ex, swap, "\id_{{}_iP} \otimes \mu|_{\I \otimes P_j}"] 
	\ar[d, hook] & 
{}_iP \otimes P_j 
	\ar[r, "\mu|_{{}_iP \otimes P_j}"] 
	\ar[d, hook] &
{}_iP_j 
	\ar[d, hook]\\
\I \otimes \I \otimes \I 
	\ar[r, shift left=0.75ex, "\mu \otimes \id_{\I}"] 
	\ar[r, shift right=0.75ex, swap, "\id_{\I} \otimes \mu"] & 
\I \otimes \I 
	\ar[r, "\mu"]
	\ar[d, two heads]
& \I 
	\ar[ddl, dashed, "\exists !"] \\
{}
	{} &
{}_iP \otimes P_j
	\ar[d, "\varphi"] &
{}
	{} \\
{}
	{} &
\xi
\end{tikzcd}
\]
where the existence of the unique (dashed) map follows from $\I \otimes_{\I} \I \cong \I$.
It is easy to see that the induced map from ${}_sP_t$ to $\xi$ is indeed unique and so $({}_iP_j, \mu|_{{}_iP \otimes P_j})$ does satisfy the required universal property.
\end{proof}
\comment{
\begin{proof}
We shall prove for the cases ${}_1P\otimes_\I P_2$ and ${}_1P \otimes_\I P_1$, where the other two cases follows from a symmetrical argument.
Let's start with ${}_1P\otimes_\I P_2$.
We shall organise ${}_1P\otimes P_2$ in the following order:
\begin{align*}
{}_1P\otimes P_2 =& (e_1 \otimes (1|2))\<1\> \oplus ((1|2) \otimes e_2)\<1\> \oplus \\
	& (e_1 \otimes e_2) \oplus (e_1 \otimes X_2)\<2\> \oplus \\
	& ((1|2) \otimes X_2)\<3\> \oplus ((1|2) \otimes (1|2))\<2\> \oplus \\
	& (X_1 \otimes e_2)\<2\> \oplus (X_1 \otimes X_2)\<4\> \oplus (X_1 \otimes (1|2))\<3\>.
\end{align*}
Note that only the first two direct summand of ${}_1P\otimes P_2$ from above are non-zero under $\restr{\mu}{{}_1P\otimes P_2}: {}_1P \otimes P_2 \ra {}_1P_2 = (1|2)\<1\>$, i.e.
\[
\restr{\mu}{{}_1P \otimes P_2} = \begin{bmatrix}
	\id_{\Pi_0} & \id_{\Pi_0} & 0 & \cdots & 0
	\end{bmatrix}
\]

On the other hand, we shall organise the direct sumamnds of ${}_1P \otimes \I \otimes P_2$ in the following order:
\begin{align*}
{}_1P \otimes \I \otimes P_2 = 
	&((1|2)\otimes e_2 \otimes e_2)\<1\> \oplus (e_1 \otimes (1|2) \otimes e_2)\<1\> \oplus (e_1 \otimes e_1 \otimes (1|2))\<1\> \oplus \\
	&(e_1 \otimes e_1 \otimes e_2) \oplus (e_1 \otimes e_1 \otimes X_2)\<2\> \oplus \\
	&(e_1 \otimes (1|2) \otimes X_2)\<3\> \oplus (e_1 \otimes (1|2) \otimes (1|2))\<2\> \oplus \\
	&(e_1 \otimes X_1 \otimes e_2)\<3\> \oplus (e_1 \otimes X_1 \otimes X_2)\<4\> \oplus \\
	&(e_1 \otimes X_1 \otimes (1|2))\<3\> \oplus U,
\end{align*}
where $U$ represents the rest of the summands in ${}_1P\otimes \I \otimes P_2$.
Note that the first three summands of ${}_1P \otimes \I \otimes P_2$ are the only summands in ${}_1P \otimes \I \otimes P_2$ such that under both of the following maps:
\[
\mu|_{ {}_1P \otimes \I} \otimes \id, \quad \id \otimes \mu|_{\I \otimes P_2} : {}_1P \otimes \I \otimes P_2 \ra {}_1P \otimes P_2
\]
they are non-zero into the first two summands of ${}_1P \otimes P_2$.
A simple computation shows that
\[
\restr{\mu}{ {}_1P \otimes \I } \otimes \id - \id \otimes \restr{\mu}{ \I \otimes P_2} = 
	\begin{bmatrix}
	0 & -1 & 0 & 0   & 0 \\
	0 &  1 & 0 & 0   & 0 \\
	0 &  0 & 0 & I_7 & * 
	\end{bmatrix},
\]
where ``1'' denotes the identity map of the respective objects in  $g\TLJ_n$ and $I_7$ is the $7\times 7$ identity matrix.

Let $\mathscr{E}$ be an object in $\TLJ_3^\text{even}$ with $\Theta: {}_1P \otimes P_2 \ra \mathscr{E}$ satisfying 
\begin{equation} \label{coeq 1P2}
\Theta \circ \restr{\mu}{ {}_1P \otimes \I } \otimes \id = \Theta\circ \id \otimes \restr{\mu}{ \I \otimes P_2}.
\end{equation}
Given the ordering of the summands of ${}_1P\otimes P_2$ as before, we may write $\Theta$ as
\[
\Theta = 
	\begin{bmatrix}
	f & g & h_1 & h_2 & \cdots & h_7
	\end{bmatrix}.
\]
We wish to find the unique map such that the following diagram commutes:
\begin{center}
\begin{tikzcd}[column sep = 1.5cm]
{}_1P \otimes \I \otimes P_2 
	\ar[r, shift left=0.75ex, "\restr{\mu}{ {}_1P \otimes \I } \otimes \id"] 
	\ar[r, shift right=0.75ex, swap, "\id \otimes \restr{\mu}{ \I \otimes P_2}"] 
& 
{}_1P\otimes P_2 
	\ar[d, "\Theta"]  \ar[r, "\restr{\mu}{{}_1P \otimes P_2}"]
& 
{}_1P_2 = (1|2)\<1\>. \ar[dashed, dl, "\exists !"] \\

&
\mathscr{E}
\end{tikzcd}
\end{center}
To do so, let us see what \cref{coeq 1P2} says about $\Theta$:
\begin{align*}
&\Theta \circ \restr{\mu}{ {}_1P \otimes \I } \otimes \id = \Theta\circ \id \otimes \restr{\mu}{ \I \otimes P_2} \\
\iff &\Theta \circ (\restr{\mu}{ {}_1P \otimes \I } \otimes \id - \id \otimes \restr{\mu}{ \I \otimes P_2}) = 0 \\
\iff
	&\begin{bmatrix}
	f & g & h_1 & h_2 & \cdots & h_7
	\end{bmatrix}
	\begin{bmatrix}
	0 & -1 & 0 & 0   & 0 \\
	0 &  1 & 0 & 0   & 0 \\
	0 &  0 & 0 & I_7 & * 
	\end{bmatrix} = 
	\begin{bmatrix}
	0 & \cdots & 0
	\end{bmatrix} \\
\iff
	&\begin{cases}
	f = g; \\
	h_j = 0 \text{ for all } j.
	\end{cases}
\end{align*}
Thus $\Theta = 
	\begin{bmatrix}
	f & f & 0 & \cdots & 0
	\end{bmatrix}$.
\\
Since $f = g$ are maps from $(e_1 \otimes (1|2)) \<1\> = ((1|2) \otimes e_2)\<1\> = (1|2)\<1\>$ to $\mathscr{E}$, it follows that the map $f: (1|2)\<1\> \ra \mathscr{E}$ is the unique map that makes the diagram commutative.

Now consider ${}_1P \otimes_\I P_1$. 
We shall organise the summands of ${}_1P \otimes P_1$ in the following order:
\begin{align*}
{}_1P \otimes P_1 = & 
(e_1 \otimes e_1) \oplus (e_1 \otimes X_1) \oplus (X_1 \otimes e_1) \oplus ((1|2)\otimes (2|1)) \oplus \\
& ((1|2) \otimes e_1) \oplus (e_1 \otimes (1|2)) \oplus ( (1|2) \otimes X_1) \oplus (X_1 \otimes (2|1)) \oplus (X_1 \otimes X_1).
\end{align*}
Note that the first summand is the only non-zero maps into $e_1$ and the next three following summands are the only non-zero maps into $X_1$ under the map $\restr{\mu}{{}_1P\otimes P_1}: {}_1P \otimes P_1 \ra {}_1P_1 = e_1 \oplus X_1\<2\>$, i.e.
\[
\restr{\mu}{{}_1P \otimes P_1} = 
	\begin{bmatrix}
	\id_{\Pi_0} & 0      & 0      & 0    & 0 & \cdots & 0 \\
	0      & \id_{\Pi_0} & \id_{\Pi_0} & \cap & 0 & \cdots & 0 
	\end{bmatrix}
\]
On the other hand, we shall organise the direct sumamnds of ${}_1P \otimes \I \otimes P_1$ in the following order:
\begin{align*}
{}_1P \otimes \I \otimes P_1 = 
	&(e_1 \otimes X_1 \otimes e_1)\<2\> \oplus (e_1 \otimes (1|2) \otimes (2|1))\<2\> \oplus ((1|2) \otimes \otimes (2|1) \otimes e_1)\<2\> \oplus \\
	&(e_1 \otimes (1|2) \otimes e_1)\<1\> \oplus (e_1 \otimes e_2 \otimes (2|1))\<1\> \oplus \\
	&((1|2) \otimes X_1 \otimes e_1)\<3\> \oplus (X_1 \otimes (2|1) \otimes e_1)\<3\> \oplus \\
	&(X_1 \otimes X_1 \otimes e_1)\<4\> \oplus U,
\end{align*}
where $U$ represents the rest of the summands in ${}_1P\otimes \I \otimes P_1$.
Note that the first three summands of ${}_1P \otimes \I \otimes P_1$ are the only summands in ${}_1P \otimes \I \otimes P_1$ that have non-zero maps under the map
\[
\mu|_{ {}_1P \otimes \I} \otimes \id - \id \otimes \mu|_{\I \otimes P_1} : {}_1P \otimes \I \otimes P_1 \ra {}_1P \otimes P_1
\]
into the second, third and fourth summands of ${}_1P \otimes P_1$ (under the map above there is no non-zero map into the first summand of ${}_1P \otimes P_1$).
A simple computation shows that
\[
\restr{\mu}{ {}_1P \otimes \I } \otimes \id - \id \otimes \restr{\mu}{ \I \otimes P_2} = 
	\begin{bmatrix}
	0  &  0    & 0    & 0   & 0    & 0 \\
	-1 & -\cap & 0    & 0   & 0    & 0 \\
	1  &  0    & \cap & 0   & 0    & 0 \\
	0  &  1    & -1   & 0   & 0    & 0 \\
	0  &  0    & 0    & 1   & 0    & * \\
	0  &  0    & 0    & 0   & -I_4 & * \\
	\end{bmatrix},
\]
where ``1'' denotes the identity map of the respective objects in  $\gTLJ_n$ and $I_4$ is the $4\times 4$ identity matrix.

Let $\mathscr{E}$ be an object in $\TLJ_3^\text{even}$ with $\Theta: {}_1P \otimes P_1 \ra \mathscr{E}$ satisfying 
\begin{equation} \label{coeq 1P1}
\Theta \circ \restr{\mu}{ {}_1P \otimes \I } \otimes \id = \Theta\circ \id \otimes \restr{\mu}{ \I \otimes P_1}.
\end{equation}
Given the ordering of the summands of ${}_1P\otimes P_1$ as before, we may write $\Theta$ as
\[
\Theta = 
	\begin{bmatrix}
	f & g & h & k & l_1 & \cdots & l_5
	\end{bmatrix}.
\]
We wish to find the unique map such that the following diagram commutes:
\begin{center}
\begin{tikzcd}[column sep = 1.5cm]
{}_1P \otimes \I \otimes P_1 
	\ar[r, shift left=0.75ex, "\restr{\mu}{ {}_1P \otimes \I } \otimes \id"] 
	\ar[r, shift right=0.75ex, swap, "\id \otimes \restr{\mu}{ \I \otimes P_1}"] 
& 
{}_1P\otimes P_1 
	\ar[d, "\Theta"]  \ar[r, "\restr{\mu}{{}_1P \otimes P_1}"]
& 
{}_1P_1 = e_1 \oplus X_1 \<2\>. \ar[dashed, dl, "\exists !"] \\

&
\mathscr{E}
\end{tikzcd}
\end{center}
To do so, let us see what \cref{coeq 1P1} says about $\Theta$:
\begin{align*}
&\Theta \circ (\restr{\mu}{ {}_1P \otimes \I } \otimes \id - \id \otimes \restr{\mu}{ \I \otimes P_1}) = 0 \\
\iff
	&\begin{bmatrix}
	f & g & h & k & l_1 & \cdots & l_5
	\end{bmatrix}
	\begin{bmatrix}
	0  &  0    & 0    & 0   & 0    & 0 \\
	-1 & -\cap & 0    & 0   & 0    & 0 \\
	1  &  0    & \cap & 0   & 0    & 0 \\
	0  &  1    & -1   & 0   & 0    & 0 \\
	0  &  0    & 0    & 1   & 0    & * \\
	0  &  0    & 0    & 0   & -I_4 & * \\
	\end{bmatrix} = 
	\begin{bmatrix}
	0 & \cdots & 0
	\end{bmatrix} \\
\iff
	&\begin{cases}
	h = g; \\
	k = g\circ \cap \\
	l_j = 0 \text{ for all } j.
	\end{cases}
\end{align*}
Thus $\Theta = 
	\begin{bmatrix}
	f & g & g & g\circ \cap & 0 & \cdots & 0
	\end{bmatrix}$.
It now follows that the unique map from $e_1 \oplus X_1 \<2\>$ to $\mathscr{E}$ making the diagram commutative is given by 
$\begin{bmatrix}
f & g
\end{bmatrix}$.
\end{proof}
}

\comment{
\begin{note}
{} The above proposition shows objects that satisfy the universal property that is required when tensoring certain right $\I$-modules with left $\I$-modules over $\I$, i.e. ${}_iP \otimes_\I P_j$. But because tensoring over $\I$ was defined as some object satisfying a universal property instead of an explicit construction as in the usual modules, it is only unique up to isomorphism.
So speaking of functors that is tensoring over $\I$, ${}_i P \otimes_\I -$ for example, requires a bit more care.
On the object level, I should really be specifying where each object is mapped to under the functor.
In particular, to make things ``nice'', I should construct it in a way that that the tensor product will satisfy the associativity property, for example $(P_i \otimes {}_i P) \otimes_\I P_i$ should be \emph{equal} to $P_i \otimes ({}_i P \otimes_\I P_i)$.
\end{note}
}

Now let $X$ and $Y$ be objects in $\I$-mod and mod-$\I$ respectively, and let $L$ be an object in $\TLJ_n$.
It is easy to see that $X \otimes L$ is again a left $\I$-module; similarly $L \otimes Y$ is again a right $\I$-module.
Moreover, $\I$-mod and mod-$\I$ are closed under the grading shift functor $\<1\>$.
Therefore, ($\I$-mod, $\<1\>, -\otimes ?)$ and (mod-$\I$, $\<1\>, ?\otimes -)$ are both module categories over $\TLJ_n$, equipped with the distinguished autoequivalence $\X = \<1\>$.

\begin{definition}
We define $\I$-prmod (resp. prmod-$\I$) as the \emph{smallest additive submodule category} of $\I$-mod (resp. mod-$\I$) over $\TLJ_n$, which contains $P_1$ and $P_2$ (resp. ${}_1P$ and ${}_2P$) and is closed under the grading shift $\<1\>$.
\end{definition}
\comment{
\begin{remark}
Note that both categories above are naturally module categories over $\TLJ_n$ and are equipped with the distinguished autoequivalence $\X = \<1\>$.
\end{remark}
}
By definition, all objects in $\I$-prmod and prmod-$\I$ are direct sums of the objects in 
\[
\{P_i \otimes \Pi_a \<k\>:
					i \in \{1,2\},
					0 \leq a \leq n-2,
					k \in \Z
					\}
\]
and
\[
\{\Pi_a\<k\> \otimes {}_iP :
					i \in \{1,2\},
					0 \leq a \leq n-2,
					k \in \Z
					\}
\]
respectively.
As we shall see in \cref{graded hom space}, these will be indecomposable as well.

Every object $X$ in $\I$-prmod induces an additive functor 
\[
X \otimes - : \gTLJ_n \ra \I \text{-prmod}.
\]
On the other hand, we shall fix 
\begin{equation} \label{iP as functor}
(\Pi_a \otimes {}_iP) \otimes_\I (P_j \otimes \Pi_b) := \Pi_a \otimes ({}_iP \otimes_\I P_j) \otimes \Pi_b = \Pi_a \otimes {}_iP_j \otimes \Pi_b
\end{equation}
using \cref{tensor iPj}, so that every object $Y$ in prmod-$\I$ also induces an additive, graded functor 
\[
Y \otimes_\I - : \I \text{-prmod} \ra \gTLJ_n.
\]
Furthermore, with $X$ and $Y$ as above, we have that $X \otimes Y$ is naturally an $\I$-bimodule, with left and right module structure that of $X$ and $Y$ respectively.
Using \cref{iP as functor}, with $X, X' \in \I$-prmod and $Y, Y' \in $ prmod-$\I$ we shall fix 
\begin{equation} \label{U_i tensor defn}
(X \otimes Y) \otimes_\I (X' \otimes Y') := X \otimes (Y \otimes_\I X') \otimes Y'.
\end{equation}
\comment{
Given $\I$-bimodules $P_i \otimes L \otimes {}_jP$ and $P_k \otimes L' \otimes {}_lP$, we can take their tensor product over $\I$, giving us
\[
P_i \otimes L \otimes {}_jP \otimes_\I P_k \otimes L' \otimes {}_lP = P_i \otimes L \otimes {}_jP_k \otimes L' \otimes {}_lP,
\]
which is again an $\I$-bimodule.
}
\begin{definition} \label{defn: bimodule category}
With $- \otimes_\I -$ defined as in \cref{U_i tensor defn}, the $\I$-bimodules 
\[
\{ X \otimes Y : 
	X \in \I\text{-prmod}, Y \in \text{ prmod-}\I
	\}
			\cup \{ \I \<k\> :  k \in \Z\}
\]
generates an additive, strictly monoidal subcategory within $\I$-mod-$\I$, with $\I$ as the monoidal unit (cf. \cref{absorb I}).
We denote this category as $\mathbb{U}_n$.
\end{definition}

Using \cref{U_i tensor defn}, every object $U = X \otimes Y$ in $\mathbb{U}_n$ induces a functor $U \otimes_\I - : \I$-prmod $\ra \I$-prmod defined by 
\[
U \otimes_\I X' := (X \otimes Y) \otimes_\I X' := X \otimes (Y \otimes_\I X').
\]
where the functor $\I \otimes_\I - : \I$-prmod $\ra \I$-prmod is by definition equal to the identity functor.
%
All of these can be easily extended to the homotopy categories of complexes using the standard tensor structure of complexes: every object $U^\bullet$ in $\Kom^b(\mathbb{U}_n)$ induces an exact endofunctor
\[
U^\bullet \otimes_\I -: \Kom^b(\I\text{-prmod}) \ra \Kom^b(\I\text{-prmod}).
\]
\comment{
\begin{remark}
Using the tensor product $- \otimes -$ in $\gTLJ_n$, we have a bifunctor $- \otimes - : \I$-prmod $\times$ prmod-$\I \ra \mathbb{U}_n$.
Furthermore, this functor is surjective, namely every object in $\mathbb{U}_n$ is an image (not just isomorphic) of an object in $\I$-prmod $\times$ prmod-$\I$.
In particular, any object $U$ in $\mathbb{U}_n$ induces a functor $X\otimes_\I - : \I$-prmod $\ra \I$-prmod since $U = X \otimes Y$ for $X \in \I$-prmod and $Y \in $ prmod-$\I$.
\end{remark}
}
\subsection{Morphisms in $\I$-prmod and some adjunctions}
Let us now understand some of the important morphisms in $\I$-prmod.
Let $(a,b, 1)$ be a $q$-admissible triple (cf. \cref{defn: q-admissible}), or equivalently, $|a-b|=1$. 
We shall abuse notation and use 
\[
(i | i \pm 1): P_i \otimes \Pi_a  \ra P_{i \pm 1} \otimes \Pi_b \<-1\>
\] 
to denote the map defined as follows:
it is the zero map on the simple summand $X_i \otimes \Pi_a$, and on the other two simple summand it is defined by
\begin{center}
\begin{tikzpicture}[scale = 0.8]
  \node (tl) at (-1,1)  {$(i | i \pm 1) \otimes$};
  \node (tr) at (1,1)   {$\Pi_b\<-1\>$};
  \coordinate (m)  at (0,0);
  \node (d)  at (0,-1)  {$\Pi_a$};
  \node[left = 0.5cm of d] {$e_i \otimes$};
  \draw[thick,black] (d)--(m);
  \draw[thick,black] (m)--(tr);
  \draw[thick,black] (m)--(tl);
\end{tikzpicture} \quad and  \quad
\begin{tikzpicture}[scale = 0.8]
  \node (tl) at (-1.5,1)  {$X_j \otimes$};
  \node (tr) at (0,1)   {$\Pi_b\<-1\>$};
  \coordinate (m)  at (0,0);
  \node (dr)  at (1,-1)  {$\Pi_a$};
  \node (dl) at (-1,-1){$(i \pm 1|i)\otimes$};
  \draw[thick,black] (tr)--(m);
  \draw[thick,black] (m)--(dl);
  \draw[thick,black] (m)--(dr);
\end{tikzpicture}.
\end{center}
It is easy to check that the map $(i|i \pm 1)$ is indeed a map of left $\I$-modules.

On the other hand, let $a = b$.
We shall again abuse notation and use 
\[
X_i: P_i \otimes \Pi_a \ra P_i \otimes \Pi_b \<-2\>
\]
to denote the map defined as follows: it is given by the $0$ map on both the summands $(i \pm 1 | i) \otimes \Pi_a$ and $X_i \otimes \Pi_a$, and the summand $e_i \otimes \Pi_a$ is mapped to the summand $X_i \otimes \Pi_b \<-2\>$ via the identity map $\id_{\Pi_a \<2\>}$.

\begin{remark}
These maps should be thought of as the generalised version of the right multiplication map by $(i|i \pm 1)$ (resp. $X_i$) in the usual zigzag algebra defined in the simply-laced cases, but requires appropriate simple objects attached on the side.
\end{remark}

Before we fully describe the morphism spaces in $\I$-prmod, we shall prove some adjunctions between $\I$-prmod and $\gTLJ_n$:
\begin{proposition} \label{biadjoint pair}
Consider the additive functors 
\[
(P_i \otimes \Pi_a) \otimes - : \gTLJ \ra \I\text{-prmod}
\]
and
\[
(\Pi_a \otimes {}_i P \<-2\>) \otimes_\I -, (\Pi_a \otimes {}_i P) \otimes_\I - : \I\text{-prmod} \ra \gTLJ_n.
\]
Then
\begin{enumerate}
\item  $(P_i \otimes \Pi_a) \otimes - $ is left adjoint to $(\Pi_a \otimes {}_i P) \otimes_\I - $, and 
\item $(P_i \otimes \Pi_a) \otimes - $ is right adjoint to $(\Pi_a \otimes {}_i P \<-2\>) \otimes_\I -$.
\end{enumerate}
\end{proposition}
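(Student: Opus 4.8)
The plan is to reduce the statement to its un-decorated version --- in which $\Pi_a = \Pi_0$ is trivial --- and then to show that $(P_i,{}_iP)$ forms a Frobenius pair: $P_i\otimes-:\gTLJ_n\ra\I\text{-prmod}$ is left adjoint to ${}_iP\otimes_\I-$ and right adjoint to ${}_iP\<-2\>\otimes_\I-$, the degree-$2$ shift reflecting that the loop object $X_i$ lives in internal degree $2$. For the reduction, recall that every object of $\TLJ_n$ is self-dual, so the cup and cap of \cref{dual counit} make $\Pi_a\otimes-:\gTLJ_n\ra\gTLJ_n$ biadjoint to itself; and since $\Pi_a$ carries no $\I$-module structure and $\Pi_a\otimes-$ is exact, the conventions \cref{iP as functor} and \cref{U_i tensor defn} give strict factorisations $(P_i\otimes\Pi_a)\otimes- = (P_i\otimes-)\circ(\Pi_a\otimes-)$, $(\Pi_a\otimes{}_iP)\otimes_\I- = (\Pi_a\otimes-)\circ({}_iP\otimes_\I-)$, and $(\Pi_a\otimes{}_iP\<-2\>)\otimes_\I- = (\Pi_a\otimes-)\circ({}_iP\<-2\>\otimes_\I-)$. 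Part (1) then follows because the right adjoint of $(P_i\otimes-)\circ(\Pi_a\otimes-)$ is $(\Pi_a\otimes-)\circ({}_iP\otimes_\I-)$, and part (2) follows because the right adjoint of $(\Pi_a\otimes-)\circ({}_iP\<-2\>\otimes_\I-)$ is $(P_i\otimes-)\circ(\Pi_a\otimes-)$.

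For the un-decorated biadjunction I would write down explicit units and counits. The restriction of $\mu$ is an $(\I,\I)$-bimodule map $\beta_i:P_i\otimes{}_iP\ra\I$ (concatenate a path ending at $i$ with one starting at $i$, killing length $\geq 3$), and there is an $(\I,\I)$-bimodule map $\gamma_i:\I\ra P_i\otimes{}_iP\<-2\>$, the categorical coevaluation, built from the unit $\eta$ of $\I$ and the self-duality cups of $\Pi_0$ and $\Pi_1$ on the three simple path summands of $P_i$. On the other side, \cref{tensor iPj} gives ${}_iP\otimes_\I P_i = {}_iP_i = e_i\oplus X_i\cong\Pi_0\oplus\Pi_0\<2\>$, with canonical summand inclusion $\iota:\Pi_0\hookrightarrow{}_iP_i$ and projection $\varpi:{}_iP_i\twoheadrightarrow\Pi_0\<2\>$. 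For $P_i\otimes-\dashv{}_iP\otimes_\I-$ I would take the counit $(P_i\otimes{}_iP)\otimes_\I N\xra{\beta_i\otimes_\I\id}\I\otimes_\I N\cong N$ (using \cref{absorb I}) and the unit $M\cong\Pi_0\otimes M\xra{\iota\otimes\id}{}_iP_i\otimes M = {}_iP\otimes_\I(P_i\otimes M)$; for ${}_iP\<-2\>\otimes_\I-\dashv P_i\otimes-$ I would take the counit ${}_iP_i\<-2\>\otimes M\xra{\varpi\<-2\>\otimes\id}\Pi_0\otimes M\cong M$ and the unit $N\cong\I\otimes_\I N\xra{\gamma_i\otimes_\I\id}(P_i\otimes{}_iP\<-2\>)\otimes_\I N$. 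All of these maps preserve the internal grading, so each adjunction is automatically compatible with the distinguished autoequivalence $\<1\>$ (and with the $\TLJ_n$-module structure).

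It then remains to check the two triangle identities in each of the two un-decorated adjunctions. After unwinding the conventions that make $-\otimes_\I-$ strictly associative against $-\otimes-$, each reduces to an identity among morphisms of the simple summands of $\I$, which I would verify directly from the path-concatenation formula for $\mu$ together with the isotopy relations for the cups and caps. The main obstacle is the map $\gamma_i$: whereas $\beta_i$ is simply a restriction of $\mu$, one must check that the prescribed coevaluation genuinely descends to a well-defined $(\I,\I)$-bimodule map landing in the $\<-2\>$-shifted bimodule, and then keep track of this shift through both triangle identities for the right adjunction; everything else is routine bookkeeping with the explicit structure of $\I$.
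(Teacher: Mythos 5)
Your proposal is correct and takes essentially the same approach as the paper: reduce to $a=0$ by the self-duality of $\Pi_a$ (the paper states this reduction more tersely, you unpack it via the strict factorisations of the three functors through $\Pi_a\otimes-$), then exhibit the four (co)units $\beta_i$, $\gamma_i$, and the canonical inclusion/projection between $\Pi_0$ and ${}_iP_i\cong\Pi_0\oplus\Pi_0\langle 2\rangle$ (which the paper calls $\alpha_i$ and $\omega_i$), and verify the triangle identities by direct computation on the simple path summands. You also correctly single out the only non-routine point — that $\gamma_i$ is a genuine $(\I,\I)$-bimodule map into the $\langle-2\rangle$-shifted bimodule — which the paper likewise isolates in the preceding lemma and leaves as an elementary verification.
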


We will need the following lemma:
\begin{lemma}
Both of the following morphisms live in $\mathbb{U}_n$: 
\begin{enumerate}
\item $\beta_i : P_i \otimes {}_iP \ra \I$ given by $\mu|_{P_i \otimes {}_iP}$, and 
\item $\gamma_i : \I \ra P_i \otimes {}_iP \<-2\>$ uniquely defined by
\[
e_i \oplus e_{i \pm 1} \xra{\varphi} (e_i \otimes X_i \<-2\>) \oplus (X_i \otimes e_i \<-2\>) \oplus \left( (i \pm 1 | i) \otimes (i | i \pm 1) \<-2\>\right),
\]
where $\varphi =
\begin{bmatrix}
\id_{\Pi_0} & 0 \\
\id_{\Pi_0} & 0 \\
0      & \cup 
\end{bmatrix}$.
\end{enumerate}
In particular, these two maps induce natural transformations between the two endofunctors of $\I$-prmod: ${(P_i\otimes {}_iP) \otimes_\I -}$ and $\I \otimes_\I - = \id_{\I\text{-prmod}}$.
\end{lemma}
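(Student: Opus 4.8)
The plan is to separate the statement into an object part and a morphism part. Since the conventions of this thesis take subcategories to be strictly full, $\mathbb{U}_n$ is the full subcategory of $\I$-mod-$\I$ on the finite direct sums of the generators listed in \cref{defn: bimodule category}; hence, to place $\beta_i$ and $\gamma_i$ in $\mathbb{U}_n$ it is enough to check (a) their sources and targets lie in $\mathbb{U}_n$, and (b) they are morphisms of $\I$-bimodules. Part (a) is immediate: $P_i \otimes {}_iP$ is of the generating form $X \otimes Y$ with $X = P_i \in \I$-prmod and $Y = {}_iP \in$ prmod-$\I$; $\I = \I\langle 0\rangle$ is the monoidal unit; and $P_i \otimes {}_iP\langle -2\rangle = (P_i\langle -2\rangle)\otimes {}_iP$ is again of the generating form because $\I$-prmod is closed under $\langle 1\rangle$. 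So the real work is (b).

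For $\beta_i = \mu|_{P_i \otimes {}_iP}$: recall that $P_i$ (resp.\ ${}_iP$) is the left (resp.\ right) $\I$-submodule of $\I$ cut out by restricting $\mu$, and that $P_i \otimes {}_iP$ carries the bimodule structure with left action $\mu|_{\I \otimes P_i}\otimes \id_{{}_iP}$ and right action $\id_{P_i}\otimes \mu|_{{}_iP\otimes \I}$. The left-module diagram for $\beta_i$ asserts $\beta_i\circ(\mu|_{\I\otimes P_i}\otimes \id_{{}_iP}) = \mu\circ(\id_\I \otimes \beta_i)$; after unravelling every arrow as a restriction of $\mu$, this is exactly the associativity constraint $\mu\circ(\mu\otimes\id_\I) = \mu\circ(\id_\I\otimes\mu)$ restricted to the summand $\I\otimes P_i \otimes {}_iP$. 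The right-module diagram is the same constraint restricted to $P_i \otimes {}_iP \otimes \I$, and the compatibility of the two actions is automatic since the left action only touches the $P_i$-tensorand and the right action only the ${}_iP$-tensorand. Hence $\beta_i$ is a bimodule morphism.

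For $\gamma_i$ (write $j$ for the vertex $\ne i$): first note that $\I$ is generated as an $\I$-bimodule by $e_1\oplus e_2$ — any sub-bimodule containing $e_1,e_2$ contains $(k|l) = e_k\cdot(k|l)$ and then $X_k = (k|l)\cdot(l|k)$, hence all of $\I$ — so there is at most one bimodule morphism $\I \to P_i\otimes{}_iP\langle -2\rangle$ restricting to the prescribed $\varphi$ on $e_1\oplus e_2$ (and $\varphi$ does land in $e_k\bigl(P_i\otimes{}_iP\langle -2\rangle\bigr)e_k$ in degree $0$, as a bimodule morphism out of $\I$ must). The content of this part of the lemma is that such a morphism \emph{exists}, i.e.\ that $\varphi$ is compatible with all the bimodule relations in $\I$. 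I would verify this by writing down the forced value of $\gamma_i$ on every simple path summand of $\I$ (for instance $\gamma_i(X_i)$ from $X_i = (i|j)\cdot e_j\cdot(j|i)$, and $\gamma_i((j|i))$ from $(j|i) = e_j\cdot(j|i)\cdot e_i$) and then checking the left- and right-module diagrams on each tensor of two simple summands. All but a handful of the resulting entries are zero or identity maps; the genuinely nontrivial checks reduce to the identities $\mu|_{(i|j)\otimes(j|i)} = \cap\langle 2\rangle = \mu|_{(j|i)\otimes(i|j)}$ together with the isotopy-of-strings (``snake'') relation for the self-duality of $\Pi_1$, namely $(\id_{\Pi_1}\otimes\cap)\circ(\cup\otimes\id_{\Pi_1}) = \id_{\Pi_1} = (\cap\otimes\id_{\Pi_1})\circ(\id_{\Pi_1}\otimes\cup)$, which is the $a=1$ instance of the relation recorded in \cref{subsection: TLJ}; the remaining checks are a routine ``centrality of $X_k$'' bookkeeping.

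Finally, the ``in particular'' is formal: $U\mapsto U\otimes_\I -$ is an additive monoidal functor $\mathbb{U}_n \to \cEnd(\I\text{-prmod})$ (via \cref{U_i tensor defn}), so it carries the bimodule morphisms $\beta_i$ and $\gamma_i$ to natural transformations; combined with the identification $\I\otimes_\I - = \id_{\I\text{-prmod}}$ of \cref{absorb I}, $\beta_i$ yields a natural transformation $(P_i\otimes{}_iP)\otimes_\I - \Rightarrow \id_{\I\text{-prmod}}$ and $\gamma_i$ yields $\id_{\I\text{-prmod}} \Rightarrow \bigl((P_i\otimes{}_iP)\otimes_\I -\bigr)\langle -2\rangle$. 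The main obstacle throughout is the well-definedness of $\gamma_i$ in the third paragraph: the honest content of the lemma is keeping track of the nine simple summands of $P_i\otimes{}_iP$ with their grading shifts and invoking the snake relation at the correct spots; everything else is the associativity and unit axioms of $\I$ together with the full-subcategory observation.
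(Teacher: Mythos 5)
Your proof is correct and takes the same approach as the paper: the paper asserts that $\beta_i$ being a bimodule map follows from associativity of $\mu$, and that $\gamma_i$ is ``an elementary but tedious verification on each of the summands of $\I$'' which it explicitly leaves to the reader. Your write-up fills in exactly that omission, and your identification of the two snake relations $(\id\otimes\cap)\circ(\cup\otimes\id)=\id=(\cap\otimes\id)\circ(\id\otimes\cup)$ as the only genuinely nontrivial identities in the verification (arising from the two independent ways of propagating $\gamma_i$ from $e_i$ and from $e_j$ to the summand $(j|i)$, resp.\ $(i|j)$) is accurate.
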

\begin{proof}
The fact that $\beta_i$ is a grading preserving $\I$-bimodules map follows from the associativity of the multiplication map $\mu$. 
The proof that $\gamma_i$ is a $\I$-bimodule map is just an elementary but tedious verification on each of the summand of $\I$, which we shall omit and leave to the reader.
\end{proof}
\begin{proof}[Proof of \cref{biadjoint pair}]
Since all $\Pi_a$ are self dual and $\Pi_a \otimes \Pi_0 = \Pi_0 \otimes \Pi_a = \Pi_a$ by definition, we are reduced to proving the case where $a=0$.

Define the following maps in $\gTLJ_n$:
\[
\alpha_i : \Pi_0 \xra{
	\begin{bmatrix}
	\id \\
	0
	\end{bmatrix}
	}
	(e_i \oplus X_i) = {}_iP \otimes_\I P_i
\]
\[
\omega_i : {}_iP \otimes_\I P_i \<-2\> = (e_i \<-2\> \oplus X_i \<-2\>) \xra{
	\begin{bmatrix}
	0 & \id
	\end{bmatrix}		 
	}
	\Pi_0
\]
To show that $P_i \otimes \Pi_0 = P_i$ is left adjoint to $\Pi_0 \otimes {}_iP= {}_iP$, consider the following counit and unit maps:
\[
P_i \otimes {}_iP \xra{\beta_i} \I, \quad
\Pi_0 \xra{\alpha_i} e_i \oplus X_i = {}_iP \otimes_\I P_i.
\]
Let us first show that the following composition:
\[
P_i = P_i \otimes \Pi_0 
	\xra{ \id \otimes \alpha_i }
P_i \otimes {}_iP \otimes_\I P_i 
	\xra{ \beta_i \otimes_\I \id} 
\I \otimes_\I P_i = P_i
\]
is the identity map.
To do so, let us first compute the map $\beta_i \otimes_\I \id$, which is the unique map rendering the following diagram commutative:
\[
\begin{tikzcd}[column sep = 2.5cm]
P_i \otimes {}_iP \otimes P_i  
	\ar[r, "\beta_i \otimes \id"] 
	\ar[d, "\id \otimes \mu|_{ {}_iP\otimes P_i }"] & 
\I \otimes P_i \ar[d, "\mu|_{\I \otimes P_i}"] 
\\
P_i \otimes (e_i \oplus X_i) = P_i \otimes P_i\<2\> 
	\ar[r, dashed] & 
P_i
\end{tikzcd}
\]
However, choosing $\mu|_{P_i \otimes (e_i \oplus X_i)} = \left[ \id_{P_i} \quad 0 \right]$ works since the commutativity just follows from the commutativity of the multiplication map.
Thus $\beta_i \otimes_\I \id = [\id_{P_i} 0]$ and it follows that $(\beta_i \otimes_\I \id)\circ (\id \otimes \alpha_i) = [\id_{P_i} \quad 0] \begin{bmatrix}
	\id_{P_i} \\
	0
	\end{bmatrix} = \id_{P_i}$ as required.

The other required composition
\[
{}_iP = \Pi_0 \otimes {}_iP 
	\xra{ \alpha_i \otimes\id }
{}_iP \otimes_\I P_i \otimes {}_iP 
	\xra{ \id \otimes_\I \beta_i}
{}_iP \otimes_\I \I  = {}_iP
\]
is also the identity, following a similar argument as before.
This completes the proof that $P_i \otimes \Pi_a \otimes -$ is left adjoint to $\Pi_a \otimes {}_iP \otimes_\I -$.

Now to show that $P_i \otimes - $ is right adjoint to $ {}_i P \<-2\> \otimes_\I - $, we define the following maps as the counit and unit respectively:
\[
{}_iP \otimes_\I P_i \<-2\>\ = (e_i\<-2\> \oplus X_i\<-2\>)
	\xra{\omega_i} \Pi_0, \quad
\I \xra{\gamma_i} P_i\otimes {}_iP \<-2\> .
\]
Let us show that the composition
\[
{}_iP \<-2\> = {}_iP \<-2\> \otimes_\I \I 
	\xra{\id \otimes_\I \gamma_i}
{}_iP \<-2\> \otimes_\I P_i \otimes {}_iP \<-2\>  
	\xra{\omega_i \otimes \id}
\Pi_0 \otimes {}_iP \<-2\> = {}_iP \<-2\>
\]
is the identity map.
To do so we will need to compute the map $\id \otimes_\I \gamma_i$, which by definition is the unique map rendering the follow diagram commutative:
\begin{equation} \label{id otimes over I cup gamma}
\begin{tikzcd}[column sep = large]
{}_iP \<-2\> \otimes \I 
	\ar[d, "\mu |_{{}_iP \otimes \I} \<-2\>"] 
	\ar[r, "\id \otimes \gamma_i"]& 
{}_iP \<-2\> \otimes P_i \otimes {}_iP  \<-2\>
	\ar[d,"\mu |_{{}_iP \otimes P_i} \<-2\>\otimes \id"] \\
{}_iP \<-2\>
	\ar[r, dashed] & 
(e_i \<-2\> \oplus X_i\<-2\>) \otimes {}_iP \<-2\> = {}_iP\<-4\> \oplus {}_iP\<-2\>
\end{tikzcd}.
\end{equation}
Recall that $\gamma_i$ is a map of $\I$-bimodules, so in particular, it makes the following diagram commutative,
\[
\begin{tikzcd}[column sep = 1.5cm]
\I \otimes \I
	\ar[d, "\mu"] 
	\ar[r, "\id \otimes \gamma_i"]& 
\I \otimes P_i \otimes {}_iP \<-2\>
	\ar[d,"\mu|_{\I \otimes P_i} \otimes \id"] \\
\I 
	\ar[r, "\gamma_i"] & 
P_i \otimes {}_iP \<-2\>
\end{tikzcd}.
\]
Restricting to the proper domain and codomains, we have the following induced commutative diagram:
\[
\begin{tikzcd}[column sep = 1.5cm]
{}_iP \otimes \I
	\ar[d, "\mu|_{ {}_iP \otimes \I}"] 
	\ar[r, "\id \otimes \gamma_i"]& 
{}_iP \otimes P_i \otimes {}_iP \<-2\>
	\ar[d,"\mu|_{ {}_iP \otimes P_i} \otimes \id"] \\
{}_iP 
	\ar[r, "\gamma_i|_{ {}_iP }"] & 
(e_i \oplus X_i) \otimes {}_iP \<-2\> = {}_iP \<-2\> \oplus {}_iP
\end{tikzcd}.
\]
It follows that $\gamma_i|_{{}_iP} \<-2\>$ is the map that renders the diagram in \cref{id otimes over I cup gamma} commutative.
Note that $\gamma_i|_{{}_iP}$ is defined on each summand by:
\begin{align*}
\gamma_i|_{e_i} = 
	\begin{bmatrix}
	\id_{\Pi_0} \\
	\id_{\Pi_0} 
	\end{bmatrix}
 &: e_i \ra (e_i \otimes X_i)\<-2\> \oplus (X_i \otimes e_i ) \<-2\> = \Pi_0 \oplus \Pi_0; \\
\gamma_i|_{X_i} = \id_{\Pi_0\<2\>}
 &: X_i \ra (X_i \otimes X_i ) \<-2\> = \Pi_0 \<2\>; \\
\gamma_i|_{(i|i\pm 1)} = \id_{\Pi_1 \<1\>}
 &: (i|i\pm 1) \ra (X_i \otimes (i|i\pm 1))\<-2\> = \Pi_1 \<1\>.
\end{align*}
A simple computation shows that 
\[
(\omega_i \otimes \id ) \circ ( \id \otimes_\I \gamma_i) = (\omega_i \otimes \id ) \circ \gamma_i|_{{}_iP}\<-2\> = \id|_{{}_iP\<-2\>}.
\]
The fact that the other composition $(\id \otimes \omega_i) \circ (\gamma_i \otimes_\I \id)$ is the identity on $P_i$ follows from a symmetric argument.
This completes the proof that $(P_i \otimes \Pi_a) \otimes - $ is right adjoint to $(\Pi_a \otimes {}_i P \<-2\> ) \otimes_\I - $.
\end{proof}

\comment{
\begin{definition}
We define the graded morphism space as the graded vector space given by
\[
\HOM_{\I\text{-prmod}}(P_i \otimes \Pi_a , P_j \otimes \Pi_b) := \bigoplus_{k \in \Z} \Hom_{\I\text{-prmod}}(P_i \otimes \Pi_a , P_j \otimes \Pi_b \<-k\>)\<k\>
\]
Similarly, the bigraded morphism space is defined by
\[
\HOM_{\Kom^b( \I\text{-prmod} )}(X , Y) :=
\bigoplus_{k \in \Z, \ell \in \Z} \Hom_{\Kom^b( \I\text{-prmod} )}(X , Y \<-k\>[-\ell]) \<k\>[\ell]
\]
as a bigraded vector space.
\end{definition}
}

\begin{proposition} \label{graded hom space}
The morphism spaces between $P_i \otimes \Pi_a$ and $P_j \otimes \Pi_b\<k\>$ in $\I$-prmod are given by:
\[
\Hom_{\I\text{-prmod}}(P_i \otimes \Pi_a , P_j \otimes \Pi_b\<k\>) =
\begin{cases}
\C\{\id\} , &\text{for } a = b, i = j, k = 0; \\
\C\{X_i\}, &\text{for } a = b, i = j, k = -2; \\
\C\{(i|j)\}, &\text{for } |a-b|=1, |i - j| = 1, k = -1; \\
0, &\text{otherwise}.
\end{cases}
\]
\end{proposition}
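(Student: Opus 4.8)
The plan is to reduce the problem to a morphism computation in $\gTLJ_n$ by means of the adjunctions established in \cref{biadjoint pair}, after which the answer can be read off from the fusion rules.

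Since $P_i \otimes \Pi_a = (P_i \otimes \Pi_a) \otimes \Pi_0$ and, by the first statement of \cref{biadjoint pair}, the functor $(P_i \otimes \Pi_a) \otimes -$ is left adjoint to $(\Pi_a \otimes {}_iP) \otimes_\I -$, for every object $P_j \otimes \Pi_b\langle k \rangle$ of $\I$-prmod we have
\[
\Hom_{\I\text{-prmod}}\big(P_i \otimes \Pi_a,\, P_j \otimes \Pi_b\langle k \rangle\big) \;\cong\; \Hom_{\gTLJ_n}\big(\Pi_0,\, (\Pi_a \otimes {}_iP) \otimes_\I (P_j \otimes \Pi_b\langle k \rangle)\big).
\]
Unwinding the functorial convention \cref{iP as functor} together with \cref{tensor iPj}, the second argument equals $\Pi_a \otimes {}_iP_j \otimes \Pi_b\langle k \rangle$, where ${}_iP_j$ is the direct sum of the simple path summands of $\I$ that start at $i$ and end at $j$. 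Reading this off from $\I = e_1 \oplus X_1 \oplus (2|1) \oplus e_2 \oplus X_2 \oplus (1|2)$: when $i = j$ the only relevant summands are $e_i = \Pi_0$ and $X_i = \Pi_0\langle 2 \rangle$, so ${}_iP_i = \Pi_0 \oplus \Pi_0\langle 2 \rangle$; when $|i - j| = 1$ the only one is $(i|j) = \Pi_1\langle 1 \rangle$, so ${}_iP_j = \Pi_1\langle 1 \rangle$. Substituting, and using that $\Pi_0$ is the monoidal unit, the Hom-space above becomes
\[
\Hom_{\gTLJ_n}\big(\Pi_0, (\Pi_a\otimes\Pi_b)\langle k \rangle\big) \,\oplus\, \Hom_{\gTLJ_n}\big(\Pi_0, (\Pi_a\otimes\Pi_b)\langle k+2 \rangle\big)
\]
when $i = j$, and $\Hom_{\gTLJ_n}\big(\Pi_0, (\Pi_a\otimes\Pi_1\otimes\Pi_b)\langle k+1 \rangle\big)$ when $|i-j| = 1$.

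It then remains to do the fusion-rule bookkeeping. As $\Pi_0$ sits in internal degree $0$ and there are no morphisms between homogeneous summands of distinct degrees in $\gTLJ_n$, for $M$ an object of $\TLJ_n$ (viewed in degree $0$) we have $\Hom_{\gTLJ_n}(\Pi_0, M\langle m \rangle) = \Hom_{\TLJ_n}(\Pi_0, M)$ if $m = 0$ and $0$ otherwise. By \cref{fusion rule}, $\Pi_0$ is a summand of $\Pi_a \otimes \Pi_b$ (with multiplicity one) precisely when $a = b$; and by self-duality of the $\Pi_c$ together with \cref{defn: q-admissible}, $\Hom_{\TLJ_n}(\Pi_0, \Pi_a\otimes\Pi_1\otimes\Pi_b) \cong \Hom_{\TLJ_n}(\Pi_a\otimes\Pi_b, \Pi_1)$ is one-dimensional precisely when $(a,b,1)$ is $q$-admissible, i.e.\ $|a-b| = 1$, and zero otherwise. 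Feeding these facts in yields $\C$ when $a = b$, $i = j$ and $k \in \{0, -2\}$, and when $|a-b| = 1$, $|i-j| = 1$ and $k = -1$, and $0$ in every other case. Finally, one observes that the morphisms $\id$, $X_i$ and $(i|j)$ are nonzero elements of their respective one-dimensional spaces, hence form bases, which matches the claimed description.

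I expect the only points requiring care to be: tracking the internal grading shifts $\langle - \rangle$ faithfully through the adjunction isomorphism and the conventions of \cref{iP as functor}; and the self-duality manipulation that turns $\Hom_{\TLJ_n}(\Pi_0, \Pi_a\otimes\Pi_1\otimes\Pi_b)$ into a $q$-admissibility statement (one should also check that the side condition $a + b + 1 \le 2(n-2)$ is automatic when $|a-b| = 1$ and $0 \le a,b \le n-2$). A more pedestrian alternative would be to write an arbitrary morphism $P_i \otimes \Pi_a \to P_j \otimes \Pi_b\langle k \rangle$ in $\gTLJ_n$ as a matrix against the explicit simple decompositions and impose commutation with the $\I$-action by hand; the adjunction argument above is shorter and avoids the resulting case analysis.
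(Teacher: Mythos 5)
Your proof is correct and takes essentially the same approach as the paper: reduce to a morphism space in $\gTLJ_n$ via the adjunctions of \cref{biadjoint pair}, then read the answer off the fusion rules and the grading. The only difference is cosmetic — the paper specializes the adjunction with $a = 0$ and $L = \Pi_a$, arriving directly at $\Hom_{\gTLJ_n}(\Pi_a, {}_iP_j \otimes \Pi_b\langle k\rangle)$, whereas you specialize with $L = \Pi_0$ and then need an extra self-duality manipulation to convert $\Hom_{\gTLJ_n}(\Pi_0, \Pi_a\otimes\Pi_1\otimes\Pi_b)$ into a $q$-admissibility statement for $(a,b,1)$; both routes close the same computation.
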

\begin{proof}
Denote $H_k := \Hom_{\I\text{-prmod}}(P_i \otimes \Pi_a , P_j \otimes \Pi_b \<k\>)$.
Using the adjunction between $P_i$ and ${}_iP$, we get that 
\[
H_k \cong \Hom_{\gTLJ_n}(\Pi_a , {}_iP_j \otimes \Pi_b \<k\>).
\]
We now proceed by looking at the possible cases of $i$ and $j$:
\begin{enumerate}
\item when $i = j$, we have ${}_iP_j = e_i \oplus X_i = \Pi_0 \oplus (\Pi_0\<2\>)$, hence
\[
{}_iP_j \otimes \Pi_b = \Pi_b \oplus \left(\Pi_b\<2\> \right).
\]
Since all $\Pi_s$ are simple objects, we deduce that for all $k, \ell \in \Z$, $H_k \cong 0$ if $a \neq b$.
On the other hand, if $a=b$, then $H_0 \cong H_{-2} \cong \C$ and is 0 for all other $k$.
Since the only non-zero cases are one-dimensional vector spaces, together with the non-zero maps $\id: P_i \otimes \Pi_a \ra P_i \otimes \Pi_a$ and $X_i: P_i \otimes \Pi_a\ra P_i \otimes \Pi_a \<-2\>$ as defined before, we must have that
\begin{align*}
H_{k} =
\begin{cases}
\C\{\id\}, &\text{for } a=b, k = 0; \\
\C\{X_i\}, &\text{for } a=b, k = -2; and\\
0, &\text{otherwise}.
\end{cases}
\end{align*}
\item When $i\neq j$, i.e. $|i-j|=1$, we have ${}_iP_j = (i|j) = \Pi_1 \<1\>$, and so
\[
{}_iP_j \otimes \Pi_b =  \Pi_1 \otimes \Pi_b \<1\>.
\]
By \cref{admissible triple}, we see that $\Hom_{\gTLJ_n}(\Pi_a , \Pi_1 \otimes \Pi_b) = \C \left\{ \itri \right\}$ when $(a,1,b)$ is an admissible triple (namely $|a-b| = 1$), and is $0$ otherwise.
Taking the $\Z$-grading into account we have that $H_{-1} \cong \C$ if $|a-b|=1$, and $H_k \cong 0$ for all $k \neq -1$.
Since we already have the non-zero $\I$-module morphism $(i|j): P_i \otimes \Pi_a \ra P_j \otimes \Pi_b \<-1\>$ defined before, we can conclude that
\begin{align*}
H_{k} =
\begin{cases}
\C \{ (i|j) \}, &\text{for } |a-b|=1, k = -1; \text{ and}\\
0, &\text{otherwise}.
\end{cases}
\end{align*}
\end{enumerate}
This concludes the proof.
\end{proof}
As a result of \cref{graded hom space}, we have the following:
\begin{proposition}\label{prop: indecomposable Krull-Sch}
The set of indecomposable objects in $\I$-prmod is given by
\[
\{P_i \otimes \Pi_a \<k\>:
					i \in \{1,2\},
					0 \leq a \leq n-2,
					k \in \Z
					\}.
\]
Moreover, $\I$-prmod is Krull-Schmidt, and so is $\Kom^b(\I$-prmod).
\end{proposition}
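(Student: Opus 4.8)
The plan is to deduce everything formally from \cref{graded hom space}, which is the only non-routine input. First I would compute endomorphism rings. Since $\<1\>$ is an autoequivalence of $\I$-prmod, for every $i,a,k$ we have $\End_{\I\text{-prmod}}(P_i\otimes\Pi_a\<k\>)\cong\End_{\I\text{-prmod}}(P_i\otimes\Pi_a)$, and specialising \cref{graded hom space} to $i=j$, $a=b$ and shift $0$ gives $\End_{\I\text{-prmod}}(P_i\otimes\Pi_a)=\C\{\id\}$. A one-dimensional $\C$-algebra is local (it has no idempotents besides $0$ and $1$), so each $P_i\otimes\Pi_a\<k\>$ is a nonzero indecomposable object. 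For the converse, by the defining description of $\I$-prmod every object is a finite direct sum $M\cong\bigoplus_{r=1}^m P_{i_r}\otimes\Pi_{a_r}\<k_r\>$ of objects from the list, with each summand nonzero; if $M$ is indecomposable this forces $m=1$, so $M$ is isomorphic to one of the listed objects. Moreover these objects are pairwise non-isomorphic, since an isomorphism $P_i\otimes\Pi_a\<k\>\cong P_j\otimes\Pi_b\<\ell\>$ would be a nonzero element of $\Hom_{\I\text{-prmod}}(P_i\otimes\Pi_a,P_j\otimes\Pi_b\<\ell-k\>)$, which by \cref{graded hom space} forces $i=j$, $a=b$, $\ell=k$. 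Hence the set of isomorphism classes of indecomposables is exactly the one stated.

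For the Krull-Schmidt property of $\I$-prmod, note that it is an additive category in which every object is a finite direct sum of objects with local endomorphism rings; by the Krull--Remak--Schmidt--Azumaya theorem any two such decompositions agree up to a permutation of isomorphic summands, so $\I$-prmod is Krull-Schmidt, and in particular idempotent-complete.

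It remains to treat $\Kom^b(\I\text{-prmod})$, the bounded homotopy category. By \cref{graded hom space} the morphism spaces between indecomposables of $\I$-prmod are at most one-dimensional, so all Hom-spaces in $\I$-prmod are finite-dimensional over $\C$; since complexes here are bounded, the Hom-spaces between bounded complexes are finite-dimensional, and those in $\Kom^b(\I\text{-prmod})$, being subquotients, are finite-dimensional as well. Since $\I$-prmod is idempotent-complete, so is its bounded homotopy category $\Kom^b(\I\text{-prmod})$. Finally, a $\C$-linear, Hom-finite, idempotent-complete additive category is automatically Krull-Schmidt: the endomorphism ring of any object is a finite-dimensional, hence Artinian, hence semiperfect $\C$-algebra, and semiperfectness together with idempotent-completeness splits the object into summands with local endomorphism rings. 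Therefore $\Kom^b(\I\text{-prmod})$ is Krull-Schmidt.

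The one place that genuinely needs care is the idempotent-completeness of $\Kom^b(\I\text{-prmod})$: one must either invoke the theorem that the bounded homotopy category of an idempotent-complete additive category is idempotent-complete, or argue directly that a homotopy idempotent on a bounded complex can be split after replacing the complex by a homotopy-equivalent one. Everything else follows immediately from \cref{graded hom space}, the defining description of $\I$-prmod, and the standard Krull--Remak--Schmidt machinery.
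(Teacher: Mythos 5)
Your argument is correct and fills in the details that the paper leaves implicit: the paper states this proposition as an immediate consequence of \cref{graded hom space} without supplying a proof. Your identification of the indecomposables, the pairwise non-isomorphism, and the Krull--Schmidt property of $\I$-prmod via local endomorphism rings and Krull--Remak--Schmidt--Azumaya are all exactly as one would expect.

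For the final step --- Krull--Schmidt of $\Kom^b(\I\text{-prmod})$ --- you correctly flag idempotent-completeness of the bounded homotopy category as the one nontrivial ingredient. The theorem you invoke is indeed true: applying Balmer--Schlichting to $\I$-prmod with its split exact structure (for which $D^b = \Kom^b$) shows that the bounded homotopy category of an idempotent-complete additive category is idempotent-complete, and together with $\C$-linearity and Hom-finiteness this gives Krull--Schmidt. An alternative, and more hands-on, route --- and the one the paper implicitly relies on, since it sets up exactly this machinery later in Chapter~4 --- is via \emph{minimal complexes}: over a Krull--Schmidt additive category every bounded complex is homotopy-equivalent to a minimal complex (one with no contractible direct summands), minimal representatives are unique up to isomorphism in $\Com^b$, and direct-sum decompositions of minimal complexes are inherited from $\Com^b$, which is itself Krull--Schmidt because endomorphism rings of bounded complexes over a Hom-finite $\C$-linear category are finite-dimensional and hence semiperfect. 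This argument avoids invoking idempotent-completeness of $\Kom^b$ as a black box, and dovetails with the paper's subsequent use of minimal representatives when constructing the linear heart. Either route is valid; yours is terser but depends on the cited theorem, while the minimal-complex route is self-contained and closer to the paper's own toolkit.
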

\section{Complexes of bimodules and spherical twists}
Following \cite{khovanov_seidel_2001}, the categorical action of $\B(I_2(n))$ on the homotopy category of (cochain) complexes $\Kom^b(\I \text{-prmod})$ will be obtained through an assignment of braid elements in $\B(I_2(n))$ to a compatible set of complexes of $\I$-bimodules, which then acts on $\Kom^b(\I \text{-prmod})$ through tensoring over $\I$.
With this said, however, we shall introduce the complexes required through the notion of spherical twists, which will simplify the proof of the $n$-braiding relation greatly.

\subsection{Spherical twists}
\begin{definition}[cf. \cite{anno_logvinenko_2017}]
Let $X\in \Kom^b(\I$-prmod) with left and right adjoints $X^\ell, X^r \in \Kom^b($prmod-$\I$) respectively.
We define:
\begin{enumerate}
\item the \emph{twist} of $X$ as the complex of $\I$-bimodules:
\[
\sigma_X:= \cone \left( X\otimes X^r \xrightarrow{\varepsilon} \I[0] \right) \in \Kom^b(\mathbb{U}_n),
\]
with $\varepsilon$ the counit of the adjunction $X \dashv X^r$; and
\item the \emph{dual twist} of $X$ as the complex of $\I$-bimodules:
\[
\sigma_X' := \cone \left(  \I[0] \xrightarrow{\nu} X\otimes X^\ell  \right)[-1] \in \Kom^b(\mathbb{U}_n),
\]
with $\nu$ the unit of the adjunction $X^\ell \dashv X$.
\end{enumerate}
The twist of $X$ is said to be \emph{spherical} if the twist and dual twist are inverses of each other (tensoring over $\I$).
\end{definition}

\begin{proposition}\label{isomorphic biadjoint}
Let $X$ and $Y$ be isomorphic objects in $\Kom^b(\I$-prmod) equipped with left and right adjoints.
Then $\sigma_X \cong \sigma_Y$ and $\sigma'_X \cong \sigma'_Y$ in $\Kom^b(\mathbb{U}_n)$.
\end{proposition}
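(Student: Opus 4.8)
The plan is to observe that every ingredient in the construction of $\sigma_X$ and $\sigma'_X$ — the adjoint objects $X^r$ and $X^\ell$, the counit $\varepsilon$ and the unit $\nu$, and the mapping cone — depends on $X$ only up to canonical isomorphism, so that an isomorphism $f \colon X \xrightarrow{\cong} Y$ in $\Kom^b(\I\text{-prmod})$ can be transported through the whole construction. Concretely, I would first promote $f$ to isomorphisms of adjoints. Since $f$ exhibits a natural isomorphism of endofunctors $X \otimes - \cong Y \otimes -$ on $\Kom^b(\I\text{-prmod})$, and since right (resp. left) adjoints of a functor are unique up to a unique natural isomorphism, we obtain an isomorphism $f^r \colon X^r \xrightarrow{\cong} Y^r$ in $\Kom^b(\text{prmod-}\I)$ — namely (the object underlying) the mate of $f^{-1}$ — and likewise an isomorphism $f^\ell \colon X^\ell \xrightarrow{\cong} Y^\ell$. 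The essential feature of the mate construction is that these isomorphisms intertwine the (co)units, i.e.
\[
\varepsilon_Y \circ (f \otimes f^r) = \varepsilon_X \qquad\text{and}\qquad (f \otimes f^\ell) \circ \nu_X = \nu_Y,
\]
as morphisms $X \otimes X^r \to \I[0]$ and $\I[0] \to X \otimes X^\ell$ in $\Kom^b(\mathbb{U}_n)$ respectively, where $\varepsilon_{(-)}$ (resp. $\nu_{(-)}$) denotes the counit (resp. unit) used to define $\sigma_{(-)}$ (resp. $\sigma'_{(-)}$).

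Granting these identities, the first one exhibits $(f \otimes f^r, \id_{\I[0]})$ as a morphism in $\Kom^b(\mathbb{U}_n)$ from the arrow $X \otimes X^r \xrightarrow{\varepsilon_X} \I[0]$ to the arrow $Y \otimes Y^r \xrightarrow{\varepsilon_Y} \I[0]$, both of whose components are isomorphisms. Completing each arrow to a distinguished triangle by its mapping cone (using the axioms of a triangulated category) yields a morphism of distinguished triangles whose first two vertical maps are isomorphisms; the standard lemma that a morphism of distinguished triangles which is an isomorphism on two of the three vertices is an isomorphism on the third then forces the induced map on cones to be an isomorphism, i.e. $\sigma_X = \cone(\varepsilon_X) \cong \cone(\varepsilon_Y) = \sigma_Y$ in $\Kom^b(\mathbb{U}_n)$. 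The assertion $\sigma'_X \cong \sigma'_Y$ follows verbatim, now applied to the morphism $(f \otimes f^\ell, \id_{\I[0]})$ of arrows $\I[0] \to X \otimes X^\ell$ and $\I[0] \to Y \otimes Y^\ell$, together with $\sigma'_{(-)} = \cone(\nu_{(-)})[-1]$ and the second displayed identity.

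The one step that is not purely formal is the verification of the two displayed identities: that the induced isomorphism of adjoint objects is compatible with the counit/unit. I expect to settle this by the usual mate calculus — unwinding $f^r$ as the mate of $f^{-1}$ and chasing the triangle identities of the two adjunctions — or, if a more hands-on argument is preferred, by a direct diagram chase using the explicit units and counits of \cref{biadjoint pair}, after reducing $X$ and $Y$ to finite direct sums of (grading shifts of) the generators $P_i \otimes \Pi_a$ by virtue of \cref{prop: indecomposable Krull-Sch}. Everything else — uniqueness of adjoints, naturality, and the behaviour of cones under morphisms of triangles — is standard, so this is the only place where genuine (if routine) work is required.
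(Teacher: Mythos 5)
Your proof is correct and takes essentially the same route as the paper: both transport the isomorphism $f$ to an isomorphism of adjoints (the paper writes $(\varphi^{-1})^*$ for what you call the mate $f^r$), verify compatibility with the (co)unit, and conclude via the standard two-out-of-three lemma for morphisms of distinguished triangles. Your write-up is somewhat more explicit than the paper's in spelling out why the square with $\varepsilon_X$, $\varepsilon_Y$ commutes, but the underlying argument is the same.
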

\begin{proof}
We will prove that the twists $\sigma_X$ and $\sigma_Y$ are isomorphic; the argument for their dual twists follows in a similar fashion.
Let $\varphi: X \rightarrow Y$ be an isomorphism of $X$ and $Y$ in $\Kom^b(\I$-prmod) and $\varphi^{-1}: Y \rightarrow X$ its inverse.
Using the adjunctions $X \dashv X^r$ and $Y \dashv Y^r$, we obtain two maps $(\varphi^{-1})^* : X^r \rightarrow Y^r$ and $\varphi^*: Y^r \rightarrow X^r$, which are inverses of each other and also render the first square in the following diagram commutative:
\[
\begin{tikzcd}[row sep = large, column sep = large, ampersand replacement=\&]
X\otimes X^r \ar{r}{\epsilon_X} \ar{d}{\varphi \otimes (\varphi^{-1})^*}\& 
\I \ar{r}{} \ar[equal]{d}\& 
\sigma_X \ar{r}{} \ar[dashed]{d}{\cong} \& {}\\
Y\otimes Y^r \ar{r}{\epsilon_Y} \ar[bend left = 60]{u}{\varphi^{-1}\otimes \varphi^*} \& 
\I \ar{r}{} \& 
\sigma_Y \ar{r}{} \& {}.
\end{tikzcd}
\]
It follows from the property of distinguished triangles that we have an induced isomorphism between $\sigma_X$ and $\sigma_Y$.
\end{proof}

\begin{remark} \label{biadjoint shifts}
Note that the converse of this statement is not true.
In particular, for any object $X$ with internal grading shift $\<n\>$ or cohomological grading shift $[m]$, we have that 
\[
X\<n\> \otimes X^r\<-n\> = X\otimes X^r = X[m]\otimes X^r[-m]
\]
and similarly
\[
X\<n\> \otimes X^\ell \<-n\> = X\otimes X^\ell = X[m] \otimes X^\ell [-m]
\]
for any $m, n  \in\Z$.
Thus $X^r[-m]\<-n\>$ and $X^\ell[-m]\<-n\>$ are still right and left adjoints of $X[m]\<n\>$, with the same unit and counit maps as the adjunctions of $X$, and so $\sigma_{X[m]\<n\>} = \sigma_X$ and $\sigma'_{X[m]\<n\>} = \sigma'_X$.
But clearly $X[m]\<n\> \not\cong X$ for $m \neq 0 \neq n$.
\end{remark}

\begin{proposition}\label{spherical twist relation}
Let $\Sigma, \Sigma^{-1} \in \Kom^b(\mathbb{U}_n)$ such that 
\[
\Sigma \otimes_\I \Sigma^{-1} \cong \I[0] \cong \Sigma^{-1} \otimes_\I \Sigma,
\]
and let $X$ be an object equipped with right and left adjoints $X^r$ and $X^\ell$ respectively.
Then $\Sigma \otimes_\I X \in \Kom^b(\I\text{-prmod})$ also has  right and left adjoints  $X^r \otimes_\I \Sigma^{-1}$ and $X^\ell \otimes_\I \Sigma^{-1}$ respectively.
Moreover, the twist $\sigma_{\Sigma \otimes_\I X}$ is isomorphic to $\Sigma \otimes_\I \sigma_X \otimes_\I \Sigma^{-1}$ and  similarly, the dual twist $\sigma'_{\Sigma \otimes_\I X}$ is isomorphic to $\Sigma \otimes_\I \sigma'_X \otimes_\I \Sigma^{-1}$.
\end{proposition}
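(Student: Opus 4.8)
The plan is to reduce both assertions to three facts: (a) $\Sigma\otimes_\I -$ is an autoequivalence of $\Kom^b(\I\text{-prmod})$ and $\Theta:=\Sigma\otimes_\I-\otimes_\I\Sigma^{-1}$ an exact autoequivalence of $\Kom^b(\mathbb{U}_n)$; (b) adjoints of composites are composites of adjoints, in reverse order; and (c) an exact functor commutes with mapping cones.

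First I would set the stage. Because $\Sigma\otimes_\I\Sigma^{-1}\cong\I[0]\cong\Sigma^{-1}\otimes_\I\Sigma$ and $\I\otimes_\I-$ is the identity (cf. \cref{absorb I}), the endofunctor $\Sigma\otimes_\I-$ of $\Kom^b(\I\text{-prmod})$ is an equivalence with quasi-inverse $\Sigma^{-1}\otimes_\I-$, and likewise $\Theta$ is an exact autoequivalence of $\Kom^b(\mathbb{U}_n)$ with quasi-inverse $\Sigma^{-1}\otimes_\I-\otimes_\I\Sigma$; I fix, once and for all, adjoint-equivalence data whose counit/unit isomorphisms are induced by the given bimodule isomorphisms $\Sigma\otimes_\I\Sigma^{-1}\cong\I$ and $\Sigma^{-1}\otimes_\I\Sigma\cong\I$. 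With the associativity conventions of \cref{iP as functor} and \cref{U_i tensor defn}, one has an equality of functors $\gTLJ_n\to\I\text{-prmod}$,
\[
(\Sigma\otimes_\I X)\otimes-\;=\;(\Sigma\otimes_\I-)\circ(X\otimes-).
\]
Since $X\otimes-$ has right adjoint $X^r\otimes_\I-$ and left adjoint $X^\ell\otimes_\I-$ by hypothesis (this is the content of \cref{biadjoint pair} on generators, extended additively and to complexes), and $\Sigma\otimes_\I-$ has $\Sigma^{-1}\otimes_\I-$ as both adjoints, we conclude by (b) that $(\Sigma\otimes_\I X)\otimes-$ has right adjoint $(X^r\otimes_\I-)\circ(\Sigma^{-1}\otimes_\I-)=(X^r\otimes_\I\Sigma^{-1})\otimes_\I-$ and left adjoint $(X^\ell\otimes_\I\Sigma^{-1})\otimes_\I-$; this proves the first assertion.

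For the twist, I would compute the counit of the composite adjunction $(\Sigma\otimes_\I X)\dashv(X^r\otimes_\I\Sigma^{-1})$. By the standard interchange formula for counits of composite adjunctions, together with the identification of natural transformations between these ``tensor with a bimodule'' endofunctors with bimodule morphisms (via \cref{U_i tensor defn}), this counit is the composite
\[
\Sigma\otimes_\I(X\otimes X^r)\otimes_\I\Sigma^{-1}\;\xrightarrow{\,\id_\Sigma\otimes_\I\varepsilon\otimes_\I\id_{\Sigma^{-1}}\,}\;\Sigma\otimes_\I\Sigma^{-1}\;\xrightarrow{\,\sim\,}\;\I,
\]
where $\varepsilon\colon X\otimes X^r\to\I$ is the counit for $X$ and the last arrow is the fixed isomorphism. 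Now apply $\Theta$ to the defining triangle $X\otimes X^r\xrightarrow{\varepsilon}\I[0]\to\sigma_X$: since $\Theta$ is exact and, on complexes of bimodules, carries a mapping cone to the mapping cone of the tensored map, and since $\Theta(\I)=\Sigma\otimes_\I\Sigma^{-1}\cong\I$ and $\Theta(X\otimes X^r)\cong(\Sigma\otimes_\I X)\otimes(X^r\otimes_\I\Sigma^{-1})$ (associativity), we get
\[
\Sigma\otimes_\I\sigma_X\otimes_\I\Sigma^{-1}=\Theta(\sigma_X)\cong\cone\big((\Sigma\otimes_\I X)\otimes(X^r\otimes_\I\Sigma^{-1})\xrightarrow{\Theta(\varepsilon)}\I[0]\big).
\]
Finally, $\Theta(\varepsilon)$ agrees, up to the isomorphism $\Sigma\otimes_\I\Sigma^{-1}\cong\I$, with $\id_\Sigma\otimes_\I\varepsilon\otimes_\I\id_{\Sigma^{-1}}$, hence with the counit computed above, and replacing the map inside a cone by its composition with an isomorphism does not change the cone up to isomorphism; therefore $\Theta(\sigma_X)\cong\sigma_{\Sigma\otimes_\I X}$. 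The dual twist is handled identically with units in place of counits: the unit of $(X^\ell\otimes_\I\Sigma^{-1})\dashv(\Sigma\otimes_\I X)$ is $\I\xrightarrow{\,\sim\,}\Sigma\otimes_\I\Sigma^{-1}\xrightarrow{\,\id_\Sigma\otimes_\I\nu\otimes_\I\id_{\Sigma^{-1}}\,}\Sigma\otimes_\I(X\otimes X^\ell)\otimes_\I\Sigma^{-1}$, and applying $\Theta$ to the defining triangle of $\sigma'_X$ (and the shift $[-1]$) yields $\sigma'_{\Sigma\otimes_\I X}$.

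The genuinely non-formal step is the middle one: pinning down that the counit (resp.\ unit) of the composite adjunction, read as a bimodule morphism through \cref{U_i tensor defn}, is exactly $\id_\Sigma\otimes_\I\varepsilon\otimes_\I\id_{\Sigma^{-1}}$ followed by (resp.\ preceded by) the structural isomorphism $\Sigma\otimes_\I\Sigma^{-1}\cong\I$. This amounts to (i) tracking all the associativity identifications of \cref{iP as functor} and \cref{U_i tensor defn} so that the composites of functors in sight genuinely are ``tensoring with the expected bimodule'', and (ii) the interchange law for (co)units of composite adjunctions, with the $\Sigma\otimes_\I-$ factor contributing precisely the chosen equivalence data. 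Everything else --- exactness of tensoring with a bimodule complex, cones commuting with such tensoring, and $\cone(g\circ f)\cong\cone(f)$ for $g$ an isomorphism --- is routine.
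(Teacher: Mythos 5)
Your proof is correct and takes essentially the same approach as the paper: identify the right adjoint of $\Sigma\otimes_\I X$ as $X^r\otimes_\I\Sigma^{-1}$ with counit $\phi\circ(\id\otimes\varepsilon\otimes\id)$, tensor the defining triangle of $\sigma_X$ by $\Sigma\otimes_\I-\otimes_\I\Sigma^{-1}$, and match it up with the defining triangle of $\sigma_{\Sigma\otimes_\I X}$. You are merely more explicit than the paper about invoking the composite-adjunction principle and about introducing the exact autoequivalence $\Theta$ by name, but the decomposition and the key identification are identical.
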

\begin{proof}
Let $\varepsilon$ be the counit of the adjunction $X \dashv X^r$ and $\phi : \Sigma \otimes_\I \Sigma \ra \I [0]$ be an isomorphism defining the invertibility of $\Sigma$.
It is to see that $\Sigma\otimes_\I X$ is left adjoint to $X^r \otimes_\I \Sigma^{-1}$, with the counit of the adjunction given by $\phi\circ \left(\id \otimes \varepsilon \otimes \id \right)$.
We start with the distinguished triangle in $\Kom^b(\mathbb{U}_n)$ defining the twist of $X$:
\begin{equation}\label{sigma_i}
(X \otimes X^r) \xrightarrow{\varepsilon} \I[0] \rightarrow \sigma_{X} \rightarrow .
\end{equation}
Tensoring with $\Sigma\otimes_\I -$ and $ - \otimes_\I \Sigma^{-1}$, we get the distinguished triangle
\[
\Sigma\otimes_\I (X \otimes X^r) \otimes_\I \Sigma^{-1}
\xrightarrow{\id \otimes \varepsilon \otimes \id} \Sigma \otimes_\I \I[0] \otimes_\I \Sigma^{-1}
\rightarrow \Sigma \otimes_\I \sigma_{P_i} \otimes_\I \Sigma^{-1} \rightarrow .
\]
By the definition of the tensor product in $\mathbb{U}_n$, we have
\[
\Sigma\otimes_\I (X \otimes X^r) \otimes_\I \Sigma^{-1} = (\Sigma\otimes_\I X) \otimes (X^r \otimes_\I \Sigma^{-1})
\]
and
\[
\Sigma \otimes_\I \I[0] \otimes_\I \Sigma^{-1} = \Sigma \otimes_\I \Sigma^{-1} \xra[\cong]{\phi} \I[0].
\]
Thus, this distinguished triangle is isomorphic to
\[
(\Sigma\otimes_\I X) \otimes (X^r \otimes_\I \Sigma^{-1})
	\xrightarrow{\phi\circ \left(\id \otimes \varepsilon \otimes \id \right)} 
\I[0]
	\rightarrow 
\Sigma \otimes_\I \sigma_X \otimes_\I \Sigma^{-1} \rightarrow,
\]
By definition of the twist $\sigma_{\Sigma\otimes_\I X}$, we get that
$\sigma_{\Sigma\otimes_\I X} \cong  \Sigma\otimes_\I \sigma_{X} \otimes_\I \Sigma^{-1}$ as required.

The dual statements for left adjointness and dual twist follow similarly.
\end{proof}

\subsection{The braiding relations}
Recall that $P_i \otimes \Pi_a$ have right and left adjoints $\Pi_a \otimes {}_i P $ and $\Pi_a \otimes {}_i P \<-2\>$ respectively.
We are particularly interested in the twist and dual twist of the objects $P_i$ :
\[
\sigma_{P_i} = 0 \ra P_i \otimes {}_i P \xra{\beta_i} \I \ra 0
\]
and
\[
\sigma'_{P_i} = 0 \ra \I \xra{\gamma_i} P_i\otimes {}_i P\<-2\> \ra 0,
\]
with $\I$ in cohomological degree 0.
In what follows, we shall show that the corresponding twists and dual twists are inverses of each other (therefore spherical) and moreover, the twists satisfy the $n$-braiding relation.

\begin{proposition}\label{inverse relation}
The twist and dual twist of $P_i$ are inverses of each other, i.e. we have the following isomorphisms in $\Kom^b(\mathbb{U}_n)$:
\[
\sigma_{P_i} \otimes_\I \sigma_{P_i}' \cong \I[0] \cong \sigma_{P_i}' \otimes_\I \sigma_{P_i},
\]
In particular, $\sigma_{P_i}$ are spherical twists.
\end{proposition}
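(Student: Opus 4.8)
The plan is to compute the total complexes $\sigma_{P_i}\otimes_\I\sigma'_{P_i}$ and $\sigma'_{P_i}\otimes_\I\sigma_{P_i}$ explicitly and then simplify them by cancelling contractible two-term subcomplexes, exactly as in the analogous argument of \cite{khovanov_seidel_2001}. Write $A := P_i\otimes{}_iP$ and $B := P_i\otimes({}_iP\<-2\>)$, so that $\sigma_{P_i}$ is the two-term complex $A\xra{\beta_i}\I$ in cohomological degrees $-1,0$ and $\sigma'_{P_i}$ is $\I\xra{\gamma_i}B$ in degrees $0,1$. Tensoring these complexes over $\I$ and using \cref{tensor iPj} together with the fusion rule ${}_iP\otimes_\I P_i = {}_iP_i = e_i\oplus X_i$ and the identity $X_i = \Pi_0\<2\>$, one identifies $A\otimes_\I B = P_i\otimes({}_iP\otimes_\I P_i)\otimes({}_iP\<-2\>)\cong B\oplus A$, where the $B$-summand comes from $e_i$ and the $A$-summand from $X_i$. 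Hence $\sigma_{P_i}\otimes_\I\sigma'_{P_i}$ is the three-term complex
\[
0\to A\xra{d^{-1}}(B\oplus A)\oplus\I\xra{d^0}B\to 0
\]
in degrees $-1,0,1$, and $\sigma'_{P_i}\otimes_\I\sigma_{P_i}$ has the same shape.

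The next step is to read off the differentials from the bicomplex: $d^{-1}$ has components $\id_A\otimes_\I\gamma_i\colon A\to A\otimes_\I B=B\oplus A$ and (up to sign) $\beta_i\colon A\to\I$, while $d^0$ has components (up to sign) $\beta_i\otimes_\I\id_B\colon A\otimes_\I B=B\oplus A\to B$ and $\gamma_i\colon\I\to B$. The two facts that make the argument go through are already in the proof of \cref{biadjoint pair}: first, $\beta_i\otimes_\I\id = [\,\id\ \ 0\,]$ with respect to the decomposition $P_i\otimes(e_i\oplus X_i)$, so the $B\to B$ component of $d^0$ (coming from the $e_i$-summand) is $\pm\id_B$ and its $A\to B$ component (from $X_i$) vanishes; second, the triangle identity $(\omega_i\otimes\id)\circ(\id\otimes_\I\gamma_i)=\id$, where $\omega_i$ projects onto the $X_i$-summand, says precisely that the $A\to A$ component of $d^{-1}$ is $\pm\id_A$. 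Since $\pm\id$ is an isomorphism, I would apply the Gaussian-elimination lemma for complexes (cancellation of a contractible two-term subcomplex): first cancel the degree $-1$ copy of $A$ against the $A$-summand of the degree $0$ term; because degree $-1$ has no other summands this introduces no correction term, and because the $A\to B$ part of $d^0$ is zero the surviving differential is unchanged, leaving $0\to B\oplus\I\xra{(\pm\id_B\ \ \pm\gamma_i)}B\to 0$ in degrees $0,1$. A second cancellation of the two copies of $B$ leaves exactly $\I$ in degree $0$, so $\sigma_{P_i}\otimes_\I\sigma'_{P_i}\cong\I[0]$ in $\Kom^b(\mathbb{U}_n)$.

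The isomorphism $\sigma'_{P_i}\otimes_\I\sigma_{P_i}\cong\I[0]$ is obtained by the mirror computation, now using the other two triangle identities for the adjunctions $(P_i,{}_iP)$ and $({}_iP\<-2\>,P_i)$ --- the ones flagged in the proof of \cref{biadjoint pair} as following from a symmetric argument, namely $(\id\otimes_\I\beta_i)\circ(\alpha_i\otimes\id)=\id$ and $(\id\otimes_\I\omega_i)\circ(\gamma_i\otimes_\I\id)=\id$. Together the two isomorphisms exhibit $\sigma_{P_i}$ and $\sigma'_{P_i}$ as mutually inverse objects of $\Kom^b(\mathbb{U}_n)$, i.e. $\sigma_{P_i}$ is a spherical twist.

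The main obstacle I expect is bookkeeping rather than conceptual: one must track the internal grading shifts $\<-2\>$ as they pass across the $\otimes_\I$'s (this is exactly what makes $A\otimes_\I B$ isomorphic to $B\oplus A$ rather than $A\oplus A$), and control the Koszul signs coming from the total complex, so as to be sure that the relevant components of $d^{-1}$ and $d^0$ really are $\pm\id$ on the nose and that the correction terms in the two cancellation steps genuinely vanish. Since cancellation only needs these components to be isomorphisms the signs are ultimately harmless, but the grading match is essential and should be verified summand by summand using \cref{graded hom space}.
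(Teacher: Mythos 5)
Your overall route — forming the total complex, identifying $A\otimes_\I B\cong B\oplus A$, and reducing by two applications of Gaussian elimination — is precisely what the paper does, and the conclusion $\sigma_{P_i}\otimes_\I\sigma'_{P_i}\cong\I[0]$ is correct.

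One of your intermediate claims, however, is false. You assert (citing the proof of \cref{biadjoint pair}) that $\beta_i\otimes_\I\id=[\,\id\ \ 0\,]$ on $P_i\otimes(e_i\oplus X_i)$, so that the $A\to B$ component of $d^0$ coming from the $X_i$-summand vanishes. That component does \emph{not} vanish: restricting the multiplication to $P_i\otimes X_i$ sends $e_i\otimes X_i$ to $X_i$ (right multiplication by $X_i$), which is a nonzero map $P_i\<2\>\to P_i$; only $X_i\cdot X_i$ and $(j|i)\cdot X_i$ die. Indeed, the paper's own reduction of the square in this very proposition displays the right-hand vertical map as $\begin{bmatrix}\id & X_i\otimes\id\end{bmatrix}$, with a nonzero second entry, in tension with the $[\,\id\ \ 0\,]$ that appears earlier in the biadjoint proof (the biadjoint conclusion is unharmed, since that component is killed by $\alpha_i$). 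You have inherited the earlier slip.

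Fortunately, this error is harmless for your argument, but for the wrong reason than the one you give. In the Gaussian-elimination lemma as stated in the paper, the surviving differential $E\to F$ is always $\nu$, unchanged, regardless of the map $\mu\colon b_2\to F$ being cancelled; the only correction term appears in $D\to E$, and since your degree $-1$ term has $D=0$ there is nothing to correct. So you should not justify the unchanged $B\oplus\I\to B$ differential by appealing to the (false) vanishing of the $A\to B$ part of $d^0$ — it is automatic from the shape of the lemma. With that justification amended, your two-step cancellation is exactly the paper's argument and gives $\I[0]$.
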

\begin{proof}
Note that $\sigma_{P_i} \otimes_\I \sigma_{P_i}'$ is given by the cone of the following map of complexes:
\[
\begin{tikzcd}[row sep = large, column sep = 2cm, ampersand replacement=\&]
P_i \otimes {}_iP \otimes_\I \I 
	\ar{r}{\id\otimes \id \otimes_\I \gamma_i} 
	\ar{d}{\beta_i \otimes_\I \id} \&
P_i \otimes {}_iP \otimes_\I P_i \otimes {}_iP \<-2\>
	\ar{d}{\beta_i \otimes_\I \id \otimes \id}
\\
\I \otimes_\I \I
	\ar{r}{\id \otimes_\I \gamma_i} \& 
\I \otimes_\I P_i \otimes {}_iP \<-2\>,
\end{tikzcd}
\]
with $\I \otimes_\I \I$ in cohomological degree 0.
Using the definition of the tensor $-\otimes_\I-$, the square reduces to
\[
\begin{tikzcd}[row sep = large, column sep = 2cm, ampersand replacement=\&]
P_i \otimes {}_iP 
	\ar{r}{\id\otimes \phi} 
	\ar{d}{\beta_i} \&
P_i \otimes ( e_i \oplus X_i ) \otimes {}_iP \<-2\>
	\ar{d}{\beta_i|_{(e_i \oplus X_i)} \otimes \id}
\\
\I
	\ar{r}{\gamma_i} \& 
P_i \otimes {}_iP \<-2\>,
\end{tikzcd}
\]
where $\phi: {}_iP \ra (e_i \oplus X_i) \otimes {}_iP \<-2\> = \left( e_i \otimes {}_iP \<-2\>\right) \oplus  \left( X_i \otimes {}_iP \<-2\>\right)$ is defined on each simple summand of ${}_iP$ by:
\begin{align*}
e_i = \Pi_0 
	&\xra{
		\begin{bmatrix}
		\id_{\Pi_0} \\
		\id_{\Pi_0}
		\end{bmatrix} 
		} 
	\Pi_0 \oplus \Pi_0 = 
	\left( e_i \otimes X_i\<-2\> \right) \oplus 
	\left( X_i \otimes e_i \<-2\> \right) \\
X_i = \Pi_0\<2\> 
	&\xra{ \id_{\Pi_0} } 
	\Pi_0 \<2\> = X_i \otimes X_i \<-2\> \\
(i|i\pm 1) = \Pi_1 \<1\> 
	&\xra{ \id_{\Pi_1} } 
	\Pi_1 \<1\> = X_i \otimes (i|i\pm 1) \<-2\>.
\end{align*}
In particular, we have that $\phi = 
	\begin{bmatrix}
	X_i \\
	\id
	\end{bmatrix}
$ as a map from ${}_iP$ to $ \left( e_i \otimes {}_iP \<-2\>\right) \oplus  \left( X_i \otimes {}_iP \<-2\>\right) = {}_iP\<-2\>\oplus {}_iP$.
Thus the square further reduces to
\[
\begin{tikzcd}[row sep = large, column sep = 2cm, ampersand replacement=\&]
P_i \otimes {}_iP 
	\ar{r}{ 
		\begin{bmatrix}
		\id\otimes X_i \\
		\id
		\end{bmatrix}
	} 
	\ar{d}{\beta_i} \&
\left( P_i \otimes {}_iP \<-2\>\right) \oplus  \left( P_i \otimes {}_iP \right)
	\ar{d}{
		\begin{bmatrix}
		\id 
			& X_i \otimes \id
		\end{bmatrix} 
	}
\\
\I
	\ar{r}{\gamma_i} \& 
P_i  \<-2\>\otimes {}_iP.
\end{tikzcd}
\]
The result now follows from applying gaussian elimination (\cref{gaussian elimination}).
\end{proof}

\begin{remark} \label{P_i involution}
Since the cap and cup mappings between $\Pi_{n-2} \otimes \Pi_{n-2}$ and $\Pi_0$ are isomorphisms, it is  easy to see that $\sigma_{P_i} \cong \sigma_{P_i \otimes \Pi_{n-2}}$.
Furthermore, it turns out that the twists $\sigma_{P_i \otimes \Pi_a}$ are \emph{not} autoequivalences for $a \neq 0, n-2$.
In particular, this means that the twists of $P_i \otimes \Pi_a$ are not spherical twists, even though the endomorphism space of $P_i \otimes \Pi_a$ are sphere-like. 
\end{remark}

\begin{proposition}\label{braid relation}
The $n$-braiding relation holds, i.e. we have the following isomorphism in $\Kom^b(\mathbb{U}_n)$:
\[
\underbrace{\cdots \otimes_\I \sigma_{P_1} \otimes_\I \sigma_{P_2} \otimes_\I \sigma_{P_1}  }_{n \text{ times}}
	\cong 
	\underbrace{\cdots \otimes_\I \sigma_{P_2} \otimes_\I \sigma_{P_1} \otimes_\I \sigma_{P_2} }_{n \text{ times}}.
\]
\end{proposition}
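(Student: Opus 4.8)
The plan is to reduce the $n$-braiding relation to a single computation of an iterated twist, exploiting the spherical-twist formalism already set up. Write $\Delta_1$ and $\Delta_2$ for the left- and right-hand complexes of the asserted isomorphism. By \cref{inverse relation} each $\sigma_{P_i}$ is a spherical twist, hence invertible in $\Kom^b(\mathbb{U}_n)$, so every alternating product of the $\sigma_{P_i}$'s is invertible; and for any invertible $\Sigma\in\Kom^b(\mathbb{U}_n)$ and any $X$ with adjoints, \cref{spherical twist relation} gives the conjugation identity $\Sigma\otimes_\I\sigma_X\otimes_\I\Sigma^{-1}\cong\sigma_{\Sigma\otimes_\I X}$. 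Set $m:=n$ if $n$ is even and $m:=n-1$ if $n$ is odd, and let $W:=\underbrace{\sigma_{P_1}\otimes_\I\sigma_{P_2}\otimes_\I\cdots}_{m}$. I claim it suffices to prove
\[
W\otimes_\I P_1 \;\cong\; P_{j}\otimes\Pi_{n-2}^{\,\epsilon}\langle a\rangle[b] \quad\text{in } \Kom^b(\I\text{-prmod})
\]
for some $\epsilon\in\{0,1\}$, $a,b\in\Z$, and $j\in\{1,2\}$ determined by the parity of $n$ ($j=2$ for $n$ odd, $j=1$ for $n$ even). Granting this, \cref{biadjoint shifts} (twists ignore internal and homological shifts) together with \cref{P_i involution} ($\sigma_{P_j}\cong\sigma_{P_j\otimes\Pi_{n-2}}$) gives $\sigma_{W\otimes_\I P_1}\cong\sigma_{P_j}$, whence the conjugation identity yields $W\otimes_\I\sigma_{P_1}\cong\sigma_{P_j}\otimes_\I W$. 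For $n$ odd this is literally $\Delta_1\cong\Delta_2$, since $W\otimes_\I\sigma_{P_1}$ and $\sigma_{P_2}\otimes_\I W$ are by construction $\Delta_1$ and $\Delta_2$. For $n$ even, $W=\Delta_1$ and $j=1$, so the conclusion says $\Delta_1$ commutes with $\sigma_{P_1}$; as $\Delta_2\cong\sigma_{P_1}^{-1}\otimes_\I\Delta_1\otimes_\I\sigma_{P_1}$ is a free-group identity in $\sigma_{P_1},\sigma_{P_2}$, this again gives $\Delta_1\cong\Delta_2$.

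The core of the proof is therefore the explicit, inductive computation of the iterated twists $Q_k:=\underbrace{\sigma_{P_1}\otimes_\I\sigma_{P_2}\otimes_\I\cdots}_{k}\otimes_\I P_1$ for $0\le k\le m$; their classes in the Grothendieck group are forced to be $\rho(\underbrace{\sigma_1\sigma_2\cdots}_{k})e_1$, which serves as a guide. By \cref{graded hom space} and \cref{prop: indecomposable Krull-Sch} each $Q_k$ is a minimal complex with terms among the indecomposables $P_i\otimes\Pi_c\langle\ell\rangle$, and the step $Q_{k+1}=\sigma_{P_i}\otimes_\I Q_k$ is carried out by: writing out the cone defining $\sigma_{P_i}\otimes_\I-$; using $\sigma_{P_i}=[P_i\otimes{}_iP\to\I]$ and \cref{tensor iPj} to rewrite the $\I$-module terms $P_i\otimes{}_iP_{i'}\otimes(\cdots)$; expanding the resulting tensors of simples via the $\TLJ_n$ fusion rule \eqref{fusion rule}, with care at the boundary cases $\Pi_1\otimes\Pi_0\cong\Pi_1$ and $\Pi_1\otimes\Pi_{n-2}\cong\Pi_{n-3}$; and cancelling the identity components of the differential by Gaussian elimination (\cref{gaussian elimination}), the $q$-admissibility identities \eqref{frobenius rel} and \eqref{constant multiple identity} (with the scalar $\theta(a,b,c)/[a+1]_q\neq 0$) ensuring the surviving differential is the expected one. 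The decisive feature is that $\Delta_{n-1}(2\cos(\frac{\pi}{n}))=0$, so $\TLJ_n$ has no simple $\Pi_{n-1}$ and the fusion rule truncates at $\Pi_{n-2}$; this prevents the chain $Q_0,Q_1,\dots$ from growing without bound and forces it to collapse, at step $m$, to the single indecomposable $P_j\otimes\Pi_{n-2}^\epsilon\langle a\rangle[b]$ needed above. For $n=3$ this is the classical Seidel--Thomas computation $\sigma_{P_1}\otimes_\I\sigma_{P_2}\otimes_\I P_1\cong P_2\otimes\Pi_1\langle 1\rangle[1]$.

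The hard part is precisely this inductive computation: tracking, uniformly in $n$, the zig-zag shape and the internal and homological gradings of the minimal complexes $Q_k$, and in particular handling the boundary fusion $\Pi_1\otimes\Pi_{n-2}\cong\Pi_{n-3}$, which is what makes the chain close after exactly $m$ steps. At each Gaussian-elimination step one must check that the relevant component of the differential is an isomorphism of $\I$-modules (not merely of summands of the underlying $\gTLJ_n$-objects) and that the induced differential on the reduced complex is the one dictated by the recursion; here the $\Hom$-computations of \cref{graded hom space} and the $q$-admissibility relations do the real work. The even/odd bookkeeping in the reduction above is routine. One could instead try to compute $\Delta_1$ and $\Delta_2$ directly as complexes of bimodules and match them, but routing everything through the twist--conjugation formalism keeps the objects small and is markedly cleaner.
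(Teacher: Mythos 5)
Your argument follows essentially the same route as the paper's own proof: reduce, via the conjugation identity of \cref{spherical twist relation} together with \cref{isomorphic biadjoint}, \cref{biadjoint shifts}, and \cref{P_i involution}, to showing that an alternating product of spherical twists applied to a projective collapses, after the appropriate number of steps, to a single indecomposable of the form $P_j\otimes\Pi_{n-2}$ up to grading shifts. The iterated computation you sketch (fusion rules, $q$-admissibility identities, Gaussian elimination, and the crucial truncation $\Pi_1\otimes\Pi_{n-2}\cong\Pi_{n-3}$) is precisely the content of \cref{lemma for braid relation}, which the paper isolates, proves immediately beforehand, and then simply cites; invoking that lemma would replace your inline sketch with a closed reference. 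One small bookkeeping slip: for $n$ even your $W$ (with $m=n$ factors, leftmost $\sigma_{P_1}$ and rightmost $\sigma_{P_2}$) is $\Delta_2$ rather than $\Delta_1$, but the free-group manipulation that follows still yields $\Delta_1\cong\Delta_2$ once relabeled, since $\sigma_{P_1}^{-1}\otimes_\I\Delta_2\otimes_\I\sigma_{P_1}=\Delta_1$.
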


The following key lemma will come in handy throughout this whole thesis:
\begin{lemma}
\label{lemma for braid relation}
Let $a \geq 1$ and $k,\ell \in \Z$ be arbitrary, and denote the complex
\[
C := P_{i\pm 1} \otimes \Pi_a\<k\>[\ell] 
		\xra{ (i \pm 1| i) } 
	P_i \otimes \Pi_{a-1} \<k-1\>[\ell-1].
\]
We have that
\begin{align*}
\sigma_{P_i} \otimes_\I  C 
\cong 
\begin{cases}
P_i \otimes \Pi_{a+1}\<k+1\>[\ell+1] 
	\xra{ (i | i \pm 1) } 
	P_{i\pm 1} \Pi_a\<k\>[\ell]; &\text{for } a\neq n-2, \\
P_{i\pm 1} \otimes \Pi_a\<k\>[\ell]; &\text{for } a=n-2.
\end{cases}
\end{align*}
\end{lemma}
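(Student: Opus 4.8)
The plan is to leverage that $\sigma_{P_i}\otimes_\I-$ is an autoequivalence of $\Kom^b(\I\text{-prmod})$ (by \cref{inverse relation}, with inverse $\sigma'_{P_i}\otimes_\I-$, since $\sigma_{P_i}$ is spherical) together with two rounds of Gaussian elimination on an explicitly written bicomplex. First I would observe that $C$ is indecomposable: by \cref{graded hom space} a chain endomorphism of $C$ is a pair of scalars, and these must agree because the differential $(i\pm1|i)$ is nonzero, while $\Hom_{\I\text{-prmod}}\bigl(P_i\otimes\Pi_{a-1},\,P_{i\pm1}\otimes\Pi_a\<1\>\bigr)=0$ rules out nonzero null-homotopies; hence $\End_{\Kom^b(\I\text{-prmod})}(C)=\C$, so $C$, and therefore $\sigma_{P_i}\otimes_\I C$, is indecomposable (using that $\Kom^b(\I\text{-prmod})$ is Krull--Schmidt, \cref{prop: indecomposable Krull-Sch}).

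Next, writing $\sigma_{P_i}=\bigl(P_i\otimes{}_iP\xrightarrow{\beta_i}\I\bigr)$ with $\I$ in degree $0$ and $C=\bigl(C^0\xrightarrow{(i\pm1|i)}C^1\bigr)$, the complex $\sigma_{P_i}\otimes_\I C$ is the total complex of the $2\times2$ bicomplex obtained by applying $\sigma_{P_i}\otimes_\I-$ to $C$. I would identify its corners using \cref{tensor iPj}: ${}_iP\otimes_\I P_i={}_iP_i=e_i\oplus X_i=\Pi_0\oplus\Pi_0\<2\>$ and ${}_iP\otimes_\I P_{i\pm1}={}_iP_{i\pm1}=(i|i\pm1)=\Pi_1\<1\>$. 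Thus the $C^0$-column is $P_i\otimes(\Pi_1\otimes\Pi_a)\<k+1\>\to P_{i\pm1}\otimes\Pi_a\<k\>$ and the $C^1$-column is $P_i\otimes(\Pi_0\oplus\Pi_0\<2\>)\otimes\Pi_{a-1}\<k-1\>\to P_i\otimes\Pi_{a-1}\<k-1\>$, with the vertical maps $\beta_i\otimes_\I\id$; by the fusion rule \cref{fusion rule}, $\Pi_1\otimes\Pi_a\cong\Pi_{a-1}\oplus\Pi_{a+1}$ when $a\le n-3$ and $\cong\Pi_{n-3}$ when $a=n-2$. Every morphism occurring between these indecomposables is pinned down up to a scalar by \cref{graded hom space} and the internal grading; in particular the $e_i$-component of $\beta_i\otimes_\I\id$ is the identity (this is exactly what is computed in the proof of \cref{biadjoint pair}).

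Now I would Gaussian-eliminate (\cref{gaussian elimination}) twice. The first round cancels the identity map $P_i\otimes\Pi_{a-1}\<k-1\>\xrightarrow{\id}P_i\otimes\Pi_{a-1}\<k-1\>$ coming from the $e_i$-summand of $\beta_i\otimes_\I\id$, collapsing the total complex to the two-term complex $P_i\otimes(\Pi_1\otimes\Pi_a)\<k+1\>[\ell+1]\to\bigl(P_i\otimes\Pi_{a-1}\<k+1\>\ \oplus\ P_{i\pm1}\otimes\Pi_a\<k\>\bigr)[\ell]$, where the first target summand is the one coming from the $X_i$-summand of ${}_iP_i$. If $a=n-2$ the source is $P_i\otimes\Pi_{n-3}\<k+1\>$ and the component into $P_i\otimes\Pi_{n-3}\<k+1\>$ is a scalar times $\id$; this scalar is nonzero, since otherwise $P_i\otimes\Pi_{n-3}\<k+1\>[\ell]$ would split off, contradicting indecomposability, so a second Gaussian elimination leaves precisely $P_{i\pm1}\otimes\Pi_{n-2}\<k\>[\ell]$. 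If $a\le n-3$ the source splits as $P_i\otimes\Pi_{a-1}\<k+1\>\oplus P_i\otimes\Pi_{a+1}\<k+1\>$; by \cref{graded hom space} the $\Pi_{a+1}$-summand maps to the $X_i$-part by $0$, while the $\Pi_{a-1}$-summand maps to it by a scalar times $\id$, again nonzero by indecomposability; a second Gaussian elimination then leaves $P_i\otimes\Pi_{a+1}\<k+1\>[\ell+1]\xrightarrow{\psi}P_{i\pm1}\otimes\Pi_a\<k\>[\ell]$ with $\psi\in\C\{(i|i\pm1)\}$ by \cref{graded hom space}, and indecomposability forces $\psi\neq0$, so after rescaling the isomorphism $\psi=(i|i\pm1)$. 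This gives the two cases of the statement.

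The main obstacle is the bookkeeping in the middle step: decomposing each corner of the bicomplex via the fusion rule, carefully keeping the internal grading shifts $\<\cdot\>$ separate from the cohomological shifts $[\cdot]$, and locating the identity summands against which to eliminate. The trick that keeps this clean is to let the indecomposability of $\sigma_{P_i}\otimes_\I C$ do the work of forcing the two structural scalars to be nonzero, so that none of the trivalent-vertex evaluations (e.g. \cref{constant multiple identity}) actually need to be computed.
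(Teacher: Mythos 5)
Your proof is correct, and while it shares the paper's overall skeleton — identifying the $2\times 2$ bicomplex using \cref{tensor iPj}, decomposing $\Pi_1\otimes\Pi_a$ via the fusion rule \cref{fusion rule}, and running Gaussian elimination \cref{gaussian elimination} — it replaces the paper's final ``hard'' step with a ``soft'' structural one. Where the paper invokes \cref{constant multiple identity} to see that the surviving scalar is nonzero and then verifies summand-by-summand, using the Frobenius relation \cref{frobenius rel} and rotational invariance of trivalent vertices, that the residual differential equals $(i|i\pm 1)$, you instead observe that $\End_{\Kom^b(\I\text{-prmod})}(C)\cong\C$ (so $C$ is indecomposable), that $\sigma_{P_i}\otimes_\I -$ is an autoequivalence by \cref{inverse relation} (whose proof is logically prior and independent of the present lemma, so there is no circularity), and hence that $\sigma_{P_i}\otimes_\I C$ is indecomposable; a vanishing structural scalar would split off a one-term complex, forcing the scalars to be nonzero, and \cref{graded hom space} together with a rescaling isomorphism then pins the residual differential to $(i|i\pm 1)$. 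What your route buys is independence from the $\TLJ_n$ diagram calculus beyond the fusion rule and the morphism-space dimensions of \cref{graded hom space}, which is cleaner; what the paper's explicit computation buys is the actual constant and a style of direct diagrammatic argument that gets reused later in, e.g., \cref{gamma phase reducing} and \cref{lemma: phases of object odd}.
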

\begin{proof}
Since $\sigma_{P_i}$ commutes with both shift functors $\<-\>$ and $[-]$, we may assume that $k = \ell = 0$.
We have that $\sigma_{P_i} \otimes_\I C$ is the cone of the following square
\[
\begin{tikzcd}[row sep = large, column sep = 2cm, ampersand replacement=\&]
P_i \otimes (i|i\pm 1) \otimes \Pi_a
	\ar{r}{ 
		\begin{bmatrix}
		0 \\
		\id \otimes \tri{}
		\end{bmatrix}
	} 
	\ar{d}{(i|i\pm 1) \otimes \id} \&
\left( P_i \otimes e_i \otimes \Pi_{a-1} \<-1\> \right) \oplus
	\left( P_i \otimes X_i \otimes \Pi_{a-1} \<-1\> \right)
	\ar{d}{
		\begin{bmatrix}
		\beta_i|_{P_i \otimes e_i} \otimes \id 
			& \beta_i|_{P_i \otimes X_i} \otimes \id
		\end{bmatrix} 
	}
\\
P_{i\pm 1} \otimes \Pi_a
	\ar{r}{(i\pm 1|i)} \& 
P_i \otimes \Pi_{a-1} \<-1\>,
\end{tikzcd}
\]
with $P_{i\pm 1} \otimes \Pi_a$ sitting in cohomological degree 0.

When $a \neq n-2$, we have the isomorphism
\[
\Pi_{a-1}\<1\> \oplus \Pi_{a+1}\<1\> \underset{\cong}{
	\xra{
	\begin{bmatrix}
	\itri & \itri
	\end{bmatrix}
	}
}
\Pi_1 \otimes \Pi_a \<1\> = 
(i|i+1) \otimes \Pi_a.
\]
Note that the composition
\[
\Pi_{a+1}\<1\> \xra{\itri} \Pi_1 \otimes \Pi_a\<1\> \xra{\tri} \Pi_{a-1}
\]
is 0, whereas the composition
\[
\Pi_{a-1}\<1\> \xra{\itri} \Pi_1 \otimes \Pi_a\<1\> \xra{\tri} \Pi_{a-1}
\]
is given by $c \cdot \id$ for some $c \neq 0 \in \C$ (see \cref{constant multiple identity}).
Together with $\beta_i|_{P_i \otimes e_i}$ being the identity map on $P_i$, we reduce to square to
\[
\begin{tikzcd}[row sep = large, column sep = 2cm, ampersand replacement=\&]
\left( P_i \otimes \Pi_{a+1} \<1\> \right) \oplus 
	\left( P_i \otimes \Pi_{a-1} \<1\> \right)
	\ar{r}{ 
		\begin{bmatrix}
		0 & 0 \\
		0 & \id \otimes c\cdot \id 
		\end{bmatrix}
	} 
	\ar{d}{\left( (i|i\pm 1) \otimes \id\right) \circ 	
		\begin{bmatrix} 
		\id \otimes \itri & 
		\id \otimes \itri   
		\end{bmatrix} 
	} \&
\left( P_i \otimes e_i \otimes \Pi_{a-1} \<-1\> \right) \oplus
	\left( P_i \otimes X_i \otimes \Pi_{a-1} \<-1\> \right)
	\ar{d}{
		\begin{bmatrix}
		\id 
			& \beta_i|_{P_i \otimes X_i} \otimes \id
		\end{bmatrix} 
	}
\\
P_{i\pm 1} \otimes \Pi_a
	\ar{r}{(i\pm 1|i)} \& 
P_i \otimes \Pi_{a-1} \<-1\>.
\end{tikzcd}
\]
Taking the cone and then applying gaussian elimination (\cref{gaussian elimination}), we obtain the complex
\[
P_i \otimes \Pi_{a+1}[1]\<1\> 
	\xra{\left( (i|i\pm 1) \otimes \id\right) \circ 
		\left(\id \otimes \itri \right)} 
P_{i\pm 1} \otimes \Pi_a[0].
\]
One can show that the composition $\left( (i|i\pm 1) \otimes \id\right) \circ 
		\left(\id \otimes \itri \right)$
is just the map $(i|i\pm 1)$ by computing the resulting map on each summand of $P_i \otimes \Pi_{a+1}\<1\>$ as follows:
\begin{enumerate}
\item for the summand $e_i \otimes \Pi_{a+1}\<1\>$, the composition is:
\begin{center}
\begin{tikzpicture}[scale = 0.8]
  \node (tl) at (-2.5,1) {$e_i$};
  \node (t) at (-1,1)  {$(i | i \pm 1)$};
  \node (tr) at (1,1)   {$\Pi_a$};
  \node (d)  at (0,-1)  {$\Pi_{a+1} \<1\>$};
  \coordinate (m1)  at (0,0);
  \node (dl) at (-2.5,-1) {$e_i$};
  \draw[thick,black] (d)--(m);
  \draw[thick,black] (m1)--(tr);
  \draw[thick,black] (m1)--(t);
  \node (t2) at (-1,3) {$(i|i\pm 1)$};
  \node (tr2) at (1,3)   {$\Pi_a$};
  \draw[thick,black] (t2)--(t);
  \draw[thick,black] (tr2)--(tr);
\end{tikzpicture}
\end{center}
giving us the map required on this summand;
\item for the summand $(i\pm 1|i) \otimes \Pi_{a+1}\<1\>$, the composition is: 
\begin{center}
\begin{tikzpicture}[scale = 0.8]
  \node (tl) at (-3,1) {$(i\pm 1|i)$};
  \node (t) at (-1,1)  {$(i|i\pm 1)$};
  \node (tr) at (1,1)   {$\Pi_a$};
  \node (d)  at (0,-1)  {$\Pi_{a+1} \<1\>$};
  \coordinate (m1)  at (0,0);
  \node (dl) at (-3,-1) {$(i\pm 1|i)$};
  \draw[thick,black] (d)--(m);
  \draw[thick,black] (m1)--(tr);
  \draw[thick,black] (m1)--(t);
  \draw[thick,black] (dl)--(tl);
  \node (t2) at (-2,3) {$X_{i\pm 1}$};
  \node (tr2) at (1,3)   {$\Pi_a$};
  \draw (-3, 1.2) arc[start angle=180,end angle=0,radius=1];
  \draw[thick,black] (tr2)--(tr);
\end{tikzpicture}
\end{center}
where applying the relation \cref{frobenius rel} gives us the required map on this summand;
\item the map is 0 on the summand $X_i \otimes \Pi_{a+1}\<1\>$ as required, since $(i|i\pm 1)\otimes \id$ is 0 on the summand $X_i \otimes (i|i\pm 1) \otimes \Pi_a$.
\end{enumerate}

The case where $a = n-2$ is left to the reader since it follows from a simpler computation, applying instead the isomorphism
\[
\Pi_{n-1}\<1\> \underset{\cong}{
	\xra{
	\itri
	}
}
\Pi_1 \otimes \Pi_{n-2} \<1\> = 
(i|i+1) \otimes \Pi_{n-2}.
\]
\end{proof}
We are now ready to prove \cref{braid relation}:
\begin{proof}[Proof of \cref{braid relation}]
Denote $\Sigma$ as the object $\underbrace{ \cdots \otimes_\I \sigma_{P_1} \otimes_\I \sigma_{P_2} \otimes_\I \sigma_{P_1}}_{n-1 \text{ times}}$ in $\Kom^b(\mathbb{U}_n)$ with $\Sigma^{-1}$ its respective inverse object, whose existence follows from \cref{inverse relation}.
We aim to show the relation:
\[
\Sigma \otimes_\I \sigma_{P_2} \cong 
	\begin{cases}
	\sigma_{P_2} \otimes_\I \Sigma.; \text{ when $n$ is even} \\
	\sigma_{P_1} \otimes_\I \Sigma.; \text{ when $n$ is odd}.
	\end{cases}
\]
\cref{spherical twist relation} tells us that
$
\Sigma \otimes_\I \sigma_{P_2} \otimes_\I \Sigma^{-1} \cong \sigma_{\Sigma \otimes_\I P_2}
$.
Since $\sigma_{P_i \otimes \Pi_{n-2}} \cong \sigma_{P_i}$ (see \cref{P_i involution}), together with \cref{isomorphic biadjoint} it is sufficient to show that 
\[
\Sigma \otimes_\I P_2 \cong 
	\begin{cases}
	P_2 \otimes \Pi_{n-2}; \text{ when $n$ is even} \\
	P_1 \otimes \Pi_{n-2}; \text{ when $n$ is odd}
	\end{cases}
\]
up to grading shifts $\<-\>$ and $[-]$ (see \cref{biadjoint shifts}).
This follows from a repeated application of \cref{lemma for braid relation}.
\end{proof}

\comment{
\begin{remark} {\color{red} maybe this can be a subsection itself as a miscellaneous subsection}
Note that this construction is slightly different from the usual zigzag algebra when $n=3$, since $(i|i \pm 1)$ and $e_i, X_i$ are non-isomorphic simple objects in $\TLJ_3$ -- the usual zigzag algebra is constructed in the category of $\Z$-graded vector space (which is equivalent to $\gTLJ_2$), where there is only one class of simple object.
Nevertheless, there is a different zigzag algebra construction specifically for odd $n$, where it will coincides with the usual zigzag algebra in the category of graded vector spaces when $n=3$.
We shall roughly outline the construction here.
Firstly, one can construct the zigzag algebras using $\Pi_{n-3}$ for the length one path objects $(1|2)$ and $(2|1)$ instead of $\Pi_1$, which will still induce a braid group action in a similar way.
One of the main difference happens when $n$ is odd; the zigzag algebra will actually be an algebra object in the subcategory of $\gTLJ$ consisting of only the even label objects.
In particular, when $n=3$, the subcategory of even labelled objects in $\gTLJ_3$ is just the fusion category generated by the monoidal unit, which is equivalent to the category of graded vector space.
In this sense, this zigzag algebra construction when $n=3$ coincides with the usual zigzag algebra construction in the category of graded vector space.
\end{remark}
}

\section{Categorification of the Burau representation}
Here we collect our results from previous sections to show that there is a faithful categorical action of $\B(I_2(n))$ on $\Kom^b(\I \text{-prmod})$ and it categorifies the corresponding Burau representation.
\subsection{Categorical action of $\B(I_2(n))$ on $\Kom^b(\I \text{-prmod})$}
Having proven the required relations as in \cref{inverse relation} and \cref{braid relation}, all that is left is to define the action of $\B(I_2(n))$ on $\Kom^b(\I \text{-prmod})$.

Before we do that, let us state the salient properties of $\Kom^b(\I \text{-prmod})$.
Firstly, it is a triangulated $\X$-category, with the triangulated shift given by the cohomological degree shift functor $[1]$ and the distinguished autoequivalence $\X$ given by the internal grading shift $\<1\>$.
Moreover, the $\TLJ_n$-module structure on $\I$-prmod extends to a $\TLJ_n$-module structure on $\Kom^b(\I \text{-prmod})$, where each $A \in \TLJ_n$ acts on the complex $X^\bullet \in \Kom^b(\I \text{-prmod})$ by tensoring with $-\otimes A$ on each component, namely
\[
(X^\bullet \otimes A)^i = X^i \otimes A
\]
for each $i \in \Z$.
Hence, $(\Kom^b(\I \text{-prmod}), \<1\>, -\otimes ?)$ is a triangulated module $\X$-category over $\TLJ_n$ with distinguished autoequivalence $\X:= \<1\>$.

Now let $U^\bullet \in \Kom^b(\mathbb{U}_n)$.
By the definition of our tensor product $- \otimes_\I -$, it follows that the exact endofunctors $U^\bullet \otimes_\I - : \Kom^b(\I \text{-prmod}) \ra \Kom^b(\I \text{-prmod})$ commute trivially with the grading shift functor $\<1\>$ and the functors 
\[
-\otimes A: \Kom^b(\I\text{-prmod}) \ra \Kom^b(\I\text{-prmod})
\]
for all $A$ objects in $\TLJ_n$.
As such, we have that:
\begin{proposition}
Every object $U^\bullet$ in $\Kom^b(\mathbb{U}_n)$ induces an endofunctor $U^\bullet \otimes_\I -$ which commutes with $\<1\>$ and respects the $\TLJ_n$-module structure on $\Kom^b(\I$-prmod), namely $U^\bullet \otimes_\I -$ is an endofunctor on ($\Kom^b(\I$-prmod), $\<1\>$, $-\otimes ?$).
\end{proposition}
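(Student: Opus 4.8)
The plan is to reduce the statement to the generators of $\mathbb{U}_n$ and then read off the three required properties (stability of $\I$-prmod, trivial commutation with $\<1\>$, and compatibility with $-\otimes ?$) from the way the tensor product $-\otimes_\I-$ was rigidified in \cref{iP as functor} and \cref{U_i tensor defn}. First I would observe that $U^\bullet\otimes_\I-$ is computed via the standard tensor product of complexes: componentwise one applies the bifunctor $-\otimes_\I-$, and the differential is assembled by the usual sign rule. Hence it suffices to treat a single object $U\in\mathbb{U}_n$ placed in cohomological degree $0$; then exactness of the resulting endofunctor of $\Kom^b(\I\text{-prmod})$ and its trivial commutation with $[1]$ are the generic fact that an additive endofunctor of an additive category induces a triangulated endofunctor of its bounded homotopy category commuting with the suspension. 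By additivity it further suffices to treat the generators of $\mathbb{U}_n$: the case $U=\I\<k\>$, where $U\otimes_\I-=\<k\>$ by \cref{absorb I} and all three claims are immediate, and the case $U=X\otimes Y$ with $X\in\I\text{-prmod}$ and $Y\in\text{prmod-}\I$, where we may further take $X=P_i\otimes\Pi_c$ and $Y=\Pi_a\otimes{}_iP$.

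For such a generator and an arbitrary object $P_j\otimes L$ of $\I\text{-prmod}$ with $L\in\gTLJ_n$, \cref{U_i tensor defn} together with \cref{iP as functor} and \cref{tensor iPj} pin down the closed form
\[
U\otimes_\I(P_j\otimes L)\ =\ P_i\otimes\bigl(\Pi_c\otimes\Pi_a\otimes{}_iP_j\otimes L\bigr).
\]
Since ${}_iP_j$ is one of $\Pi_0\oplus\Pi_0\<2\>$ or $\Pi_1\<1\>$, the object in parentheses lies in $\gTLJ_n$, and tensoring $P_i$ with any object of $\gTLJ_n$ stays in $\I\text{-prmod}$ because $\I\text{-prmod}$ is closed under $-\otimes M$ for $M\in\TLJ_n$ and under $\<1\>$; thus $U\otimes_\I-$ is an endofunctor of $\I\text{-prmod}$ (functoriality and additivity on morphisms of the factor $Y\otimes_\I-$, hence of $U\otimes_\I-$, were already recorded after \cref{tensor iPj}). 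The two commutation properties then follow purely from associativity of $\otimes$ in $\gTLJ_n$ and the fact that $\<1\>$ is a monoidal autoequivalence, so that $(P_j\otimes L)\<1\>=P_j\otimes(L\<1\>)$: on one hand
\[
U\otimes_\I\bigl((P_j\otimes L)\<1\>\bigr)=P_i\otimes\bigl(\Pi_c\otimes\Pi_a\otimes{}_iP_j\otimes L\<1\>\bigr)=\bigl(U\otimes_\I(P_j\otimes L)\bigr)\<1\>,
\]
and on the other hand, using $(P_j\otimes L)\otimes M=P_j\otimes(L\otimes M)$ with $L\otimes M\in\gTLJ_n$,
\[
U\otimes_\I\bigl((P_j\otimes L)\otimes M\bigr)=P_i\otimes\bigl(\Pi_c\otimes\Pi_a\otimes{}_iP_j\otimes L\otimes M\bigr)=\bigl(U\otimes_\I(P_j\otimes L)\bigr)\otimes M
\]
for every $M\in\TLJ_n$. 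Transporting these identities componentwise over the complex $U^\bullet$ (the differentials being carried along the same way) yields the proposition.

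The content is bookkeeping rather than a real obstacle; the only point demanding care is that \cref{iP as functor} and \cref{U_i tensor defn} fix $-\otimes_\I-$ by a specific choice of representative for the coequalisers in \cref{tensor iPj}, so the displayed identities must be verified as literal equalities — which is precisely why those conventions were set up — and not merely up to coherent isomorphism. Once that is granted, nothing deeper than associativity of $\otimes$ and the monoidality of $\<1\>$ is used, and the same argument shows that $U^\bullet\otimes_\I-$ depends functorially on $U^\bullet$, giving an action of $\Kom^b(\mathbb{U}_n)$ on $(\Kom^b(\I\text{-prmod}),\<1\>,-\otimes ?)$.
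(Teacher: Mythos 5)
Your proof is correct and amounts to writing out the details of what the paper treats as immediate from the strict definitions in \cref{iP as functor} and \cref{U_i tensor defn}: the paper itself offers no argument beyond ``by the definition of our tensor product $-\otimes_\I-$,'' which is exactly the observation you unpack via reduction to generators and strict associativity of $\otimes$ in $\gTLJ_n$. One small notational point: the two subscripts in a generator $X=P_k\otimes\Pi_c$, $Y=\Pi_a\otimes{}_lP$ of $\mathbb{U}_n$ need not agree (you wrote $i$ for both), but nothing in your computation uses that they do, so the argument is unaffected.
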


We can then use the spherical twists $\sigma_{P_i} \in \Kom^b(\mathbb{U}_n)$ defined in the previous section to obtain a $\B(I_2(n))$-action on $\Kom^b(\I \text{-prmod})$.
Together with the relations showns in \cref{inverse relation} and \cref{braid relation}, we arrive at our first main theorem:
\begin{theorem} \label{weak braid action}
There is a (weak) $\B(I_2(n))$-action on ($\Kom^b(\I$-prmod), $\<1\>$, $-\otimes ?$), where the action of each standard generator $\sigma_i$ of $\B(I_2(n))$ is given by the spherical twist $\sigma_{P_i}$:
\[
\sigma_i(C^\bullet) := \sigma_{P_i} \otimes_\I C^\bullet
\]
for each complex $C^\bullet$ in $\Kom^b(\I \text{-prmod})$.
\end{theorem}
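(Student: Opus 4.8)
The plan is to deduce the theorem by collecting the structural and computational results established above, so that essentially no new argument is needed beyond some bookkeeping about what a \emph{weak} action requires. First I would recall from \cref{defn: bimodule category} that $\Kom^b(\mathbb{U}_n)$ is an additive, strictly monoidal category with monoidal unit $\I[0]$, and that tensoring over $\I$ defines a monoidal functor
\[
\Phi\colon \Kom^b(\mathbb{U}_n) \ra \cEnd\big(\Kom^b(\I\text{-prmod})\big), \qquad U^\bullet \mapsto \big(U^\bullet \otimes_\I - \big),
\]
which, by the proposition immediately preceding the theorem, lands in the exact endofunctors that commute trivially with $\<1\>$ and respect the $\TLJ_n$-module structure. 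Since $\Phi$ is monoidal, it suffices to produce, inside $\Kom^b(\mathbb{U}_n)$, a collection of invertible objects indexed by the standard generators of $\B(I_2(n))$ that satisfy the defining relations up to isomorphism; applying $\Phi$ then transports this to the desired weak action on $\Kom^b(\I\text{-prmod})$.

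The two computational inputs are then exactly \cref{inverse relation} and \cref{braid relation}. The former shows that each $\sigma_{P_i}$ is invertible in $\Kom^b(\mathbb{U}_n)$ with inverse $\sigma'_{P_i}$, so in particular $\sigma_{P_i}\otimes_\I -$ is an autoequivalence of $\Kom^b(\I\text{-prmod})$ with quasi-inverse $\sigma'_{P_i}\otimes_\I -$, and the assignment $\sigma_i \mapsto \sigma_{P_i}$ is compatible with inversion. The latter shows that the two $n$-fold alternating tensor products of $\sigma_{P_1}$ and $\sigma_{P_2}$ are isomorphic in $\Kom^b(\mathbb{U}_n)$, which is precisely the single defining relation of
\[
\B(I_2(n)) = \big\langle\, \sigma_1, \sigma_2 \ \big|\ \underbrace{\sigma_1\sigma_2\sigma_1\cdots}_{n} = \underbrace{\sigma_2\sigma_1\sigma_2\cdots}_{n}\, \big\rangle.
\]
Invoking the universal property of this group presentation — any assignment of invertible objects of a monoidal category to the generators, satisfying the relations up to isomorphism, extends to the whole group — then yields a well-defined weak $\B(I_2(n))$-action in $\Kom^b(\mathbb{U}_n)$, and composing with $\Phi$ gives the statement, with the generator $\sigma_i$ acting by $\sigma_{P_i}\otimes_\I -$.

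I do not expect any serious obstacle here: the genuine content has already been absorbed into \cref{inverse relation} and \cref{braid relation}. The one point that needs to be stated carefully is that a weak categorical action in the sense of \cref{appen: categorification} asks only for the group relations to hold up to natural isomorphism, with no coherence imposed on those isomorphisms — so that having the braid relation and invertibility \emph{up to isomorphism} in $\Kom^b(\mathbb{U}_n)$ is all that is required, and no pentagon-type condition must be verified. It then remains only to note, again from the proposition preceding the theorem, that each resulting autoequivalence commutes with $\<1\>$ and with $-\otimes A$ for every $A\in\TLJ_n$, so that the action is indeed by autoequivalences of the triangulated module $\X$-category $\big(\Kom^b(\I\text{-prmod}), \<1\>, -\otimes ?\big)$.
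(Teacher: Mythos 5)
Your proposal is correct and takes essentially the same route as the paper: the paper presents the theorem as following immediately from \cref{inverse relation}, \cref{braid relation}, and the preceding proposition that tensoring by objects of $\Kom^b(\mathbb{U}_n)$ gives endofunctors compatible with $\<1\>$ and the $\TLJ_n$-action. Your extra care in spelling out that the monoidal functor $\Phi$ transports invertibility and relations-up-to-isomorphism, and that a \emph{weak} action imposes no coherence, fills in precisely the standard bookkeeping the paper leaves implicit.
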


\subsection{Categorification and faithfulness} \label{sect: categorification}
Recall the Burau representation of $\B(I_2(n))$ in \cref{defn: Burau rep}, which is defined on the generators as follows:
\[
\rho(\sigma_1) = 
	\begin{bmatrix}
	-q^2 & -2 \cos (\frac{\pi}{n}) q \\
	0      &  1
	\end{bmatrix}, \quad
\rho(\sigma_2) = 
	\begin{bmatrix}
	1                                   &  0  \\
	-2 \cos (\frac{\pi}{n}) q  &  -q^2
	\end{bmatrix}.
\]

Denote $\cK := \Kom^b(\I$-prmod) and let $K_0(\cK)$ be the triangulated Grothendieck group of $\cK$.
Since $\cK$ is a triangulated module category over $\TLJ_n$ equipped with a distinguished autoequivalence, $K_0(\cK)$ is a module over $\Omega_n[q,q^{-1}]:= \Z[\omega]/\< \Delta_{n-1}(\omega)\> \cong K_0(\TLJ_n)[q,q^{-1}]$ (see \cref{subsection: TLJ} and \cref{subsect: module categories}), where the module structure can be described explicitly as:
\[
\omega\cdot[C] = [C \otimes \Pi_1], \quad q\cdot[C] = [C\<1\>]. 
\]
It is easy to see that the two Grothendieck classes $[P_1]$ and $[P_2]$ generates $K_0(\cK)$ as a $\Omega_n[q,q^{-1}]$-module, and moreover $\{ [P_1], [P_2] \}$ forms a $\Omega_n[q,q^{-1}]$-basis.
Hence $K_0(\cK)$ is a rank two free module over $\Omega_n[q,q^{-1}]$.

Since the autoequivalences $\sigma_{P_i} \otimes_\I - $ respect the $\TLJ_n$-module structure on $\cK$ and commute with $\<1\>$, they descend to $\Omega_n[q,q^{-1}]$-module automorphisms on $K_0(\cK)$.
A simple calculation on the basis elements $[P_i]$ shows that 
\[
K_0(\sigma_{P_i} \otimes_\I -)|_{\omega = 2\cos \left(\frac{\pi}{n} \right)} = \rho(\sigma_i).
\] 
Hence we have the following theorem:
\begin{theorem}
The $\B(I_2(n))$-action on $\cK$ categorifies the Burau representation.
\end{theorem}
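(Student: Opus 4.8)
The plan is to compute the induced action of $\B(I_2(n))$ on $K_0(\cK)$ directly and match it, after a ring specialisation, with the Burau representation of \cref{defn: Burau rep}. Recall from the discussion preceding the statement that $K_0(\cK)$ is a free $\Omega_n[q,q^{-1}]$-module with basis $\{[P_1],[P_2]\}$, with module structure $\omega\cdot[C] = [C\otimes\Pi_1]$ and $q\cdot[C] = [C\<1\>]$, and that each $\sigma_{P_i}\otimes_\I -$ descends to an $\Omega_n[q,q^{-1}]$-module automorphism of $K_0(\cK)$. Since a representation on a free module over a commutative ring is determined by the matrices of the generators in a fixed basis, it suffices to compute the four classes $[\sigma_{P_i}\otimes_\I P_j]$ for $i,j\in\{1,2\}$ and to check that, after substituting $\omega\mapsto 2\cos(\pi/n)$, the resulting matrices equal $\rho(\sigma_1)$ and $\rho(\sigma_2)$.

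First I would invoke the defining distinguished triangle $P_i\otimes{}_iP\xra{\beta_i}\I[0]\rightarrow\sigma_{P_i}\rightarrow$ in $\Kom^b(\mathbb{U}_n)$. Tensoring over $\I$ with $P_j$ and passing to $K_0$ — using $\I\otimes_\I P_j\cong P_j$ (\cref{absorb I}) and the identification $(P_i\otimes{}_iP)\otimes_\I P_j = P_i\otimes{}_iP_j$ from \cref{tensor iPj} — gives $[\sigma_{P_i}\otimes_\I P_j] = [P_j] - [P_i\otimes{}_iP_j]$. It then remains to evaluate ${}_iP_j$: when $i=j$ we have ${}_iP_i = e_i\oplus X_i = \Pi_0\oplus\Pi_0\<2\>$, so $[P_i\otimes{}_iP_i] = (1+q^2)[P_i]$ and hence $[\sigma_{P_i}\otimes_\I P_i] = -q^2[P_i]$; when $i\neq j$ we have ${}_iP_j = (i|j) = \Pi_1\<1\>$, so $[P_i\otimes{}_iP_j] = q\omega[P_i]$ and hence $[\sigma_{P_i}\otimes_\I P_j] = [P_j] - q\omega[P_i]$. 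Collecting these in the basis $\{[P_1],[P_2]\}$ gives
\[
K_0(\sigma_{P_1}\otimes_\I -) = \begin{bmatrix} -q^2 & -q\omega \\ 0 & 1 \end{bmatrix}, \qquad K_0(\sigma_{P_2}\otimes_\I -) = \begin{bmatrix} 1 & 0 \\ -q\omega & -q^2 \end{bmatrix},
\]
and substituting $\omega = 2\cos(\pi/n)$ reproduces $\rho(\sigma_1)$ and $\rho(\sigma_2)$ verbatim; combined with the invertibility and braid relations (\cref{inverse relation}, \cref{braid relation}) this pins down the entire representation, and the theorem follows.

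I do not expect a genuine obstacle here: the substance of the statement is already contained in the bimodule computations carried out earlier, and what remains is Grothendieck-group bookkeeping. The only points that require care are the identifications of the internal shift $\<1\>$ with $q$ and of $-\otimes\Pi_1$ with multiplication by $\omega$ — so that the $\Omega_n[q,q^{-1}]$-module structure being used on $K_0(\cK)$ is exactly the one coming from the $\TLJ_n$-module-category structure of \cref{subsect: module categories} — together with tracking the sign contributed by the cone in the Grothendieck group. One may optionally rephrase the conclusion as the commutativity of the $K_0$-square appearing in the Introduction, but given the above computation that is purely formal.
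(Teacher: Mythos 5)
Your proof is correct and follows the same route as the paper, which simply states that ``a simple calculation on the basis elements $[P_i]$ shows $K_0(\sigma_{P_i}\otimes_\I -)|_{\omega=2\cos(\pi/n)}=\rho(\sigma_i)$'' without spelling it out. You have just supplied the details of that calculation — the triangle for $\sigma_{P_i}$, the identifications ${}_iP_i=\Pi_0\oplus\Pi_0\<2\>$ and ${}_iP_j=\Pi_1\<1\>$, and the resulting matrices — all of which check out.
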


It is known that the Burau representation of $\B(\I_2(n))$ is faithful \cite{lehrer_xi_2001}, which implies that the categorical action of $\B(\I_2(n))$ on $\cK$ must be faithful.
\begin{corollary} \label{cor: faithful action}
The $\B(I_2(n))$-action on $\cK$ defined in \cref{weak braid action} is faithful.
\end{corollary}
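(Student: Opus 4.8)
The plan is to deduce faithfulness of the categorical action from faithfulness of the Burau representation --- already recorded above as a consequence of \cite{lehrer_xi_2001} --- by passing to the Grothendieck group. Let $\beta \in \B(I_2(n))$ and write $\sigma_\beta \otimes_\I -$ for the autoequivalence of $\cK = \Kom^b(\I\text{-prmod})$ assigned to $\beta$ by \cref{weak braid action}; this is well-defined up to natural isomorphism since the defining relations of $\B(I_2(n))$ hold by \cref{inverse relation} and \cref{braid relation}. Faithfulness of the weak action is the statement that $\sigma_\beta \otimes_\I - \cong \id_\cK$ forces $\beta = e$, so this is the implication I would establish.

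First I would observe that $\sigma_\beta \otimes_\I -$ is exact, commutes with the grading shift $\<1\>$, and respects the $\TLJ_n$-module structure on $\cK$ (all shown in the preceding subsection), hence induces an automorphism $K_0(\sigma_\beta \otimes_\I -)$ of $K_0(\cK)$ as a module over $\Omega_n[q^{\pm 1}] \cong K_0(\TLJ_n)[q^{\pm 1}]$; a natural isomorphism to $\id_\cK$ forces this automorphism to be $\id_{K_0(\cK)}$. Then I would invoke the decategorification computation just above: $K_0(\cK)$ is free of rank two over $\Omega_n[q^{\pm 1}]$ on $[P_1], [P_2]$, and specializing along the ring map $\omega \mapsto 2\cos(\pi/n)$ identifies the resulting $\B(I_2(n))$-representation with the Burau representation $\rho$ of \cref{defn: Burau rep}, because $K_0(\sigma_{P_i} \otimes_\I -)|_{\omega = 2\cos(\pi/n)} = \rho(\sigma_i)$ extends multiplicatively over $\B(I_2(n))$. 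Therefore $\rho(\beta) = \id$, and faithfulness of $\rho$ gives $\beta = e$.

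I do not expect a genuine obstacle here, since the substantive work --- constructing the spherical twists, proving the $n$-braiding relation, and computing the induced action on $K_0$ --- is already done, and the faithfulness of the rank-two Burau representation is imported as a black box. The only points needing a moment's care are bookkeeping: that $\beta \mapsto [\sigma_\beta \otimes_\I -]$ is genuinely a homomorphism into the group of isomorphism classes of autoequivalences of $\cK$, which is immediate from \cref{weak braid action}, and that the surjective specialization $\Omega_n[q^{\pm 1}] \to \Z[2\cos(\pi/n), q^{\pm 1}]$ is compatible with the two module structures --- precisely what ``categorifies the Burau representation'' encodes, so that a trivial action upstairs descends to triviality of $\rho(\beta)$ downstairs.
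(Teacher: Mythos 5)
Your proposal follows the paper's own argument exactly: the categorical action descends on $K_0(\cK)$ to the Burau representation, which is faithful by \cite{lehrer_xi_2001}, so a braid acting trivially on $\cK$ must be trivial. You have merely spelled out the bookkeeping that the paper leaves implicit; there is no substantive difference.
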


\section{Relation to other categorical actions}
We end this chapter with this section, which relates our categorification of the Burau representation to some other known categorical action.

\subsection{A double $A_{n-1}$ configuration} \label{sect: double A configuration}
Let us denote the category of spherical, graded, projective  $\mathscr{A}_{n-1}$-modules defined by Khovanov and Seidel in \cite{khovanov_seidel_2001} as $\mathscr{A}_{n-1}\text{-prgrmod}^{>0}$ -- it is the additive category of all projectives that do not contain the projective $\mathscr{A}_{n-1}(e_0)$ over the $(0)$ vertex as a summand.
Our construction of the additive category $\I$-prmod can be viewed as two disjoint copies of $\mathscr{A}_{n-1}\text{-prgrmod}^{>0}$.
To spell it out, note that there are two (disjoint) $A_{n-1}$-configuration within $\I$-prmod, namely:
\begin{enumerate}[(i)]
\item the objects $P_i\otimes \Pi_a$ are $\<2\>$-spherical:
\[
\Hom(P_i\otimes \Pi_a \<k\>, P_i\otimes \Pi_a) 
\cong \begin{cases}
\C, &\text{when } k =0, 2 \\
0, &\text{otherwise}.
\end{cases}
\]
\item the morphism space $\Hom(P_{i}\otimes \Pi_{a} \<k\>, P_{j}\otimes \Pi_{b})$ for $i\neq j$ is one-dimensional and concentrated in degree $\<1\>$ whenever $|a-b| = 1$ or 0 otherwise:
\[
\Hom(P_{i}\otimes \Pi_{a} \<k\>, P_{j}\otimes \Pi_{b}) 
\cong \begin{cases}
\C, &\text{when } k =1 \text{ and } |a-b| = 1 \\
0, &\text{otherwise}.
\end{cases}
\]
\end{enumerate}
See \cref{fig: a double A configuration} for a graphical representation of this two disjoint copies of $A_{n-1}$ configuration, where each vertex represents a $\<2\>$-spherical object and an edge is drawn between two distinct vertices whenever the total morphism space between them is non-zero (which as shown above, must be one dimensional and concentrated in degree $\<1\>$, regardless of the order).
\begin{figure}[H]
\centering
\begin{tikzpicture}
\draw[thick] (0,0) -- (1,1) ;
\draw[thick] (1,1) -- (0,2) ;
\draw[thick] (0,2) -- (1,3) ;

\filldraw[color=black!, fill=white!]  (0,0) circle [radius=0.1];
\filldraw[color=black!, fill=white!]  (1,1) circle [radius=0.1];
\filldraw[color=black!, fill=white!]  (0,2) circle [radius=0.1];
\filldraw[color=black!, fill=white!]  (1,3) circle [radius=0.1];

\node[left] at (0,-0.1) {$P_1 \otimes \Pi_0$};
\node[right] at (1, 1-0.1) {$P_2 \otimes \Pi_1$};
\node[left] at (0, 2-0.1) {$P_1 \otimes \Pi_2$};
\node[right] at (1, 3-0.1) {$P_2 \otimes \Pi_3$};
\node[above] at (0.5, 3.5) {$\vdots$};

\draw[thick] (6,0) -- (5,1) ;
\draw[thick] (5,1) -- (6,2) ;
\draw[thick] (6,2) -- (5,3) ;

\filldraw[color=black!, fill=white!]  (6,0) circle [radius=0.1];
\filldraw[color=black!, fill=white!]  (5,1) circle [radius=0.1];
\filldraw[color=black!, fill=white!]  (6,2) circle [radius=0.1];
\filldraw[color=black!, fill=white!]  (5,3) circle [radius=0.1];

\node[right] at (6, -0.1) {$P_2 \otimes \Pi_0$};
\node[left] at (5,1-0.1) {$P_1 \otimes \Pi_1$};
\node[right] at (6, 2-0.1) {$P_2 \otimes \Pi_2$};
\node[left] at (5, 3-0.1) {$P_1 \otimes \Pi_3$};
\node[above] at (5.5, 3.5) {$\vdots$};
\end{tikzpicture}
\caption{The two disjoint $A_{n-1}$-configurations in $\I$-prmod.}
 \label{fig: a double A configuration}
\end{figure}
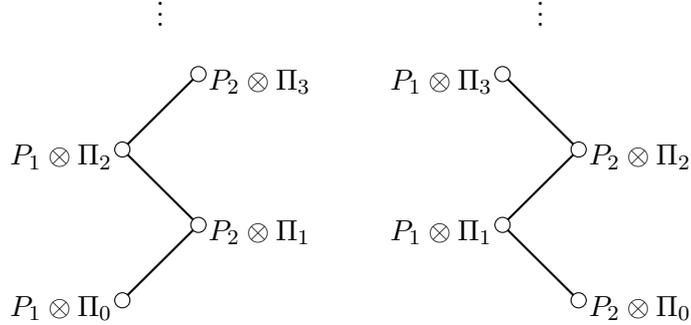
\begin{remark}
We note here that our definition of $A_{n-1}$-configuration is slightly different from the one given in \cite{brav_thomas_2010}.
This is mainly because of the fact that we have two separate $\Z$-gradings, one for the internal grading and one for the cohomological grading.
These two gradings can be identified if one considers the appropriate versions of dg algebras and dg modules instead.
\end{remark}
In particular, $\I$-prmod decomposes (as an additive category) into a direct sum of two additive subcategories:
\[
\I\text{-prmod} = \I\text{-prmod}^+ \oplus \I\text{-prmod}^-,
\]
each containing a separate $A_{n-1}$ configuration.
Given the configuration it is easy to see that each $\I$-prmod${}^\pm$ is equivalent (as additive categories) to $\mathscr{A}_{n-1}\text{-prgrmod}^{>0}$.

Similarly on the homotopy category, $\Kom^b(\I$-prmod) decomposes (as a triangulated category) into a direct sum of two triangulated subcategories:
\[
\Kom^b(\I\text{-prmod}) = \Kom^b(\I\text{-prmod}^+) \oplus \Kom^b(\I\text{-prmod}^-),
\]
and each $\Kom^b(\I\text{-prmod}^\pm)$ is equivalent to $\Kom^b(\mathscr{A}_{n-1}\text{-prgrmod}^{>0})$ (as triangulated categories).
Moreover, it is easy to see that the action of $\B(I_2(n))$ on $\Kom^b(\I$-prmod) restricts to actions on both $\Kom^b(\I$-prmod${}^+$) and $\Kom^b(\I$-prmod${}^-$) as triangulated categories.
Note however that the action is no longer on a triangulated module category -- neither of $\Kom^b(\I$-prmod${}^\pm$) are preserved under the $\TLJ_n$ action.

We can also relate our action of $\B(I_2(n))$ on $\Kom^b(\I$-prmod${}^\pm$) to the action of $\B(A_{n-1})$ on $\Kom^b(\mathscr{A}_{n-1}\text{-prgrmod}^{>0}) \subset D^b(\mathscr{A}_{n-1}\text{-mod})$ defined in \cite{khovanov_seidel_2001}.
First, we will need to recall an injection of $\B(I_2(n))$ into $\B(A_{n-1})$ introduced in \cite{crisp_1997}.
Let us use $d_i$ to denote the standard generators of $\B(A_{n-1})$ to avoid confusion (think of ``$d$'' as Dehn twist).
It was shown in \cite{crisp_1997} that the homomorphism $\varphi: \B(I_2(n)) \ra \B(A_{n-1})$ defined by
\[
\sigma_1 \mapsto d_1d_3..., \quad \sigma_2 \mapsto d_2d_4...
\]
is an injection.
Under this injection, one can check that the categorical action of $\B(I_2(n)) \overset{\varphi}{\subset} \B(A_{n-1})$ on $\Kom^b(\mathscr{A}_{n-1}\text{-prgrmod}^{>0})$ as defined in \cite{khovanov_seidel_2001} intertwines our $\B(I_2(n))$ action on $\Kom^b(\I$-prmod${}^\pm$) under the equivalence $\Kom^b(\mathscr{A}_{n-1}\text{-prgrmod}^{>0}) \cong \Kom^b(\I\text{-prmod}^\pm)$.

\subsection{Generalised Temperley-Lieb algebra and a quotient of Soergel bimodules}
The (generalised) Temperley-Lieb algebra TL$(\Gamma)$ of a (finite-type) Coxeter group $\W(\Gamma)$ is defined to be the quotient of the Hecke algebra of $\W(\Gamma)$ by the Kazhdan-Lusztig basis element $b_{w_0}$ corresponding to the longest element $w_0 \in \W(\Gamma)$.
In particular, TL$(\Gamma)$ is by definition a free $\Z[q,q^{-1}]$-module with basis $b_w$ for each $w \in \W(\Gamma) \setminus \{w_0\}$.

Recall the category of $\I$-bimodules $\mathbb{U}_n$ defined in \cref{defn: bimodule category}.
We shall consider the monoidal, thick additive subcategory of $\mathbb{U}_n$ that is closed under grading shift $\<1\>$ and is generated by the objects $U_i := P_i \otimes {}_iP \<-1\> \in \mathbb{U}_n$.
We claim that this category categorifies TL$(I_2(n))$ and shall therefore denote it as \textbf{TL}$(I_2(n))$.
We will only briefly spell out why \textbf{TL}$(I_2(n))$ categorifies TL$(I_2(n))$ here and leave the details to the reader.
Note that all the indecomposables (up to shifts) are given by $P_i \otimes \Pi_{2a} \otimes {}_iP \<-1\>$ and $P_i \otimes \Pi_{2a-1} \otimes {}_{i\pm 1}P \<-1\>$, where each of them first appears as the unique indecomposable summand in the expression below and not in any shorter alternating tensor product:
\begin{align*}
P_i \otimes \Pi_{2a} \otimes {}_iP \<-1\> &\overset{\oplus}{\subset} \underbrace{U_i \otimes_{\I} U_{i \pm 1} \otimes_{\I} ...}_{2a+1 \text{ times}} \\
P_i \otimes \Pi_{2a-1} \otimes {}_{i\pm 1}P \<-1\> &\overset{\oplus}{\subset} \underbrace{U_i \otimes_{\I} U_{i \pm 1} \otimes_{\I} ...}_{2a \text{ times}}.
\end{align*}
Moreover, the indecomposable listed above are all pairwise non-isomorphic (under all possible grading shifts) -- this can be checked on the morphism space by applying the adjunctions between (shifts of) $P_i \otimes \Pi_a$ and $\Pi_a \otimes {}_iP$, and the self-duality of $\Pi_a$.
As such, their Grothendieck class forms a $\Z[q,q^{-1}]$ basis of the Grothendieck ring $K_0(\textbf{TL}(I_2(n)))$.
We leave it to the reader to check that the required categorified relations hold, so that $K_0(\textbf{TL}(I_2(n)))$ is isomorphic to TL$(I_2(n))$ under the map:
\begin{align*}
[P_i \otimes \Pi_{2a} \otimes {}_iP \<-1\>] &\mapsto b_{{}_{s_i}\widehat{2a+1}} \\
[P_i \otimes \Pi_{2a-1} \otimes {}_{i\pm 1}P \<-1\>] &\mapsto b_{{}_{s_i}\widehat{2a}}
\end{align*}
where
\[
{}_{s_1}\widehat{k}: = \underbrace{s_1s_2s_1...}_{k \text{ times}}, \quad {}_{s_2}\widehat{k}: = \underbrace{s_2s_1s_2...}_{k \text{ times}} \in \W(I_2(n)).
\]
\begin{remark}
The fact that this category of bimodules categorifies the generalised Temperley-Lieb algebra is a rank two only feature -- the similar category of bimodules in \cite{khovanov_seidel_2001} only categorifies a (non-trivial) quotient of the Temperley-Lieb algebra for higher ranks.
\end{remark}

Just as the Temperley-Lieb algebra is a quotient of the Hecke algebra, this relation can also be expressed on the categorical level.
Namely our category \textbf{TL}$(I_2(n))$ is some quotient of the category of Soergel bimodules.
To spell it out, one can emulate the functor $G_0$ defined in \cite[Theorem 5.18]{jensen_master} (which maps from the category of Soergel bimodules for type $A$ to category of bimodules considered in \cite{khovanov_seidel_2001}) and define a similar functor from the category of Soergel bimodules for type $I_2(n)$ (the diagrammatic category $\cD_{n}$ defined in \cite{elias_2015}) to \textbf{TL}$(I_2(n))$.
We note in particular that this functor maps the $2n$-valent map to zero, where the ``dotting the vertex'' relation is trivial to check in our case since the ($n-1$)th (negligible) Jones-Wenzl projector is already killed in $\TLJ_n$.
Note however that this functor does not induce an equivalence between Elias' categorification of TL$(\Gamma)$ in \cite{elias_2015} -- defined as the quotient of $\cD_{n}$ by the $2n-$valent map, and our category \textbf{TL}$(I_2(n))$.
This can be seen on the monoidal unit, since in Elias' construction the total endomorphism space of the monoidal unit is infinite dimensional, whereas the total endomorphism space of our monoidal unit $\I$ is finite dimensional (one dimensional in degree zero and two dimensional in degree two).

\comment{
Throughout this section, we fix $n \geq 3$ odd.
Recall that when $n$ is odd, we can consider the sub fusion category of $\TLJ_n$ generated by just the evenly labelled simple objects, which we denote as $\TLJe_n$.
Note that by considering $\TLJe_n \subseteq \TLJ_n$, no two simple objects have the same quantum dimensions anymore.
As in the previous section, we impose a $\Z$-grading on $\TLJe_n$, so that the objects are now $\Z$-graded and morphisms are grading preserving; this category will be denoted as $\gTLJe_n$ and the grading shifts will be denoted by the bracket $\< - \>$.

The zigzag algebra object $\I$ will now live in $\gTLJe_n$ instead.
The construction of $\I$ is the same everywhere except we replace every occurence of $\Pi_1$ (which has quantum dimension $2\cos(\frac{\pi}{n})$) with $\Pi_{n-3}$, which is now the only object with quantum dimension $2\cos(\frac{\pi}{n})$ in the category $\gTLJe_n$.

The definitions of $P_i$ and ${}_iP$ are the same, and the fact that they form a biadjoint pair follows similarly.
Thus, we may define $\sigma_{P_i}$ and $\sigma_{P_i}'$ in the same way.
We wish to verify the following relations:
\begin{proposition}
We have the following isomorphism in $\Kom^b((\I,\I)\text{-bimod})$:
\[
\sigma_{P_i} \otimes_\I \sigma_{P_i}' \cong \I \cong \sigma_{P_i}' \otimes_\I \sigma_{P_i}
\]
\[
\underbrace{\sigma_{P_1} \otimes_\I \sigma_{P_2} \otimes_\I \sigma_{P_1} \otimes_\I \cdots \otimes_\I \sigma_{P_1}}_{n \text{ times}} \cong \underbrace{\sigma_{P_2} \otimes_\I  \otimes_\I \sigma_{P_2} \otimes_\I \cdots \otimes_\I \sigma_{P_2}}_{n \text{ times}}
\]
\end{proposition}

We will need the following slightly more complicated version of \cref{lemma for braid relation}:
\begin{lemma}
Let $0 \leq k \leq \frac{n-3}{2}$. For $k \neq \frac{n-3}{2}$,
\[
\sigma_i (P_{i\pm 1} \otimes \Pi_{n-3-2k} \xra{(i\pm 1|i)} P_i \otimes \Pi_{2k}) \cong P_i \otimes \Pi_{2k+2} \xra{(i|i\pm 1)} P_{i\pm 1} \otimes \Pi_{n-3-2k}
\] 
and for $k = \frac{n-3}{2}$,
\[
\sigma_i (P_{i\pm 1} \otimes \Pi_{n-3-2k} \xra{(i\pm 1|i)} P_i \otimes \Pi_{2k}) \cong P_{i\pm1}.
\] 
Similarly, for $1 \leq j \leq \frac{n-3}{2}$,
\[
\sigma_i (P_{i\pm 1} \otimes \Pi_{2j} \xra{(i\pm 1|i)} P_i \otimes \Pi_{n-1-2j}) \cong P_i \otimes \Pi_{n-3-2j} \xra{(i|i\pm 1)} P_{i\pm 1} \otimes \Pi_{2j}.
\] 
\end{lemma}
}

\newpage
\part{Categorical Dynamics}
\chapter{Dynamical systems in triangulated module categories}

In this chapter, we introduce the appropriate notion of dynamics and stability conditions required to study triangulated module $\X$-category over a fusion category.
We also give the definition of a mass automaton as introduced in \cite{bapat2020thurston}, which will be essential in organising the computation of mass into (partially) linear transformation in the main body of the thesis.
Throughout this chapter, we fix $\mathcal{D}$ to be a triangulated category.

\section{Stability conditions and mass growth}
In this section, we recall the notion of stability conditions and mass growth as introduced in \cite{bridgeland_2007}, \cite{DHKK13} and \cite{ikeda_2020}.

\subsection{$t$-structure and stability conditions}
We start with a quick review of $t$-structures on triangulated categories and its relation to Bridgeland stability conditions.
We refer the unfamiliar readers to \cite{neeman01} and \cite{arapura10} for an introduction to triangulated categories and $t$-structures; \cite{bayer11} and \cite{bridgeland_2007} for details on stability conditions.
\begin{definition}
A \emph{$t$-structure} on $\mathcal{D}$ consists of a pair of subcategories $(\cF, \cF^\perp)$ of $\mathcal{D}$ such that
\begin{enumerate}
\item $\cF \subset \cF[-1], \quad \cF^\perp[-1] \subset \cF^\perp$;
\item $\Hom(\cF, \cF^\perp) = 0$;
\item for all objects $A$ in $\mathcal{D}$, there exists $X \in \cF$ and $Y \in \cF^\perp$ such that 
\[
X \ra A \ra Y \ra X[1]
\]
is a distinguished triangle. 
\end{enumerate}
When $\mathcal{D} = \bigcup_{i,j \in \mathbb{Z}} \cF^\perp[i] \cap \cF[j]$, we say that $(\cF, \cF^\perp)$ is \emph{bounded}.
\end{definition}

\begin{definition}
Given a $t$-structure $(\cF, \cF^\perp)$, the \emph{heart} $\mathcal{H}$ of $(\cF,\cF^\perp)$ is the subcategory defined by $\mathcal{H}:= \cF \cap \cF^\perp[1]$. 
The heart of a $t$-structure is always an abelian category.
\end{definition}

\begin{definition} \label{defn: abelian stability}
A \emph{stability function} on an abelian category $\cA$ is a group homomorphism $Z: K_0(\cA) \ra \C$ such that for all non-zero objects $E$ in $\cA$, we have $Z(E) \neq 0$ lying in the strict upper-half plane
\[
\mathbb{H} = \{re^{i\pi\phi} : r \in \R_{> 0} \text{ and } 0 \leq \phi < 1\}.
\]
Writing $Z(E) = m(E)e^{i\pi\phi}$, we call $\phi$ the \emph{phase} of $E$, which we will denote as $\phi(E)$.
We say that $E$ is \emph{semi-stable} (resp. \emph{stable}) if every subobject $A \hookrightarrow E$ has $\phi(A) \leq$ (resp. $<$) $\phi(E)$ . \\
A stability function is said to have the \emph{Harder-Narasimhan (HN) property} if every object $A$ in $\cA$ has a filtration
\[
\begin{tikzcd}[column sep = 3mm]
0
	\ar[rr] &
{}
	{} &
A_1
	\ar[rr] \ar[dl]&
{}
	{} &	
A_2
	\ar[rr] \ar[dl]&
{}
	{} &	
\cdots
	\ar[rr] &
{}
	{} &
A_{m-1}
	\ar[rr] &
{}
	{} &
A_m = A
	\ar[dl] \\
{}
	{} &
E_1
	\ar[lu, dashed] &
{}
	{} &
E_2
	\ar[lu, dashed] &
{}
	{} &
{}
	{} &
{}
	{} &
{}
	{} &
{}
	{} &
E_m
	\ar[lu, dashed] &
\end{tikzcd}
\]
such that all $E_i$ are semi-stable and $\phi(E_i) > \phi(E_{i+1})$.
Note that this filtration, if it exists, is unique up to isomorphism.
Hence we shall denote $\phi_+(A) := \phi(E_1)$, $\phi_-(A) := \phi(E_m)$ and $\lceil A \rceil := E_1, \lfloor A \rfloor := E_m$.
\end{definition}

\begin{definition}
A \emph{slicing} on $\cD$ is a collection of full additive subcategories $\cP(\phi)$ for each $\phi \in \R$ such that
\begin{enumerate}[(1)]
\item $\cP(\phi + 1) = \cP(\phi)[1]$;
\item $\Hom( \cP(\phi_1), \cP(\phi_2)) = 0$ for all $\phi_1 > \phi_2$; and
\item for all objects $A$ in $\cD$, there exists $\phi_1 > \phi_2 > \cdots > \phi_m$ such that
\[
\begin{tikzcd}[column sep = 3mm]
0
	\ar[rr] &
{}
	{} &
A_1
	\ar[rr] \ar[dl]&
{}
	{} &	
A_2
	\ar[rr] \ar[dl]&
{}
	{} &	
\cdots
	\ar[rr] &
{}
	{} &
A_{m-1}
	\ar[rr] &
{}
	{} &
A_m = A
	\ar[dl] \\
{}
	{} &
E_1
	\ar[lu, dashed] &
{}
	{} &
E_2
	\ar[lu, dashed] &
{}
	{} &
{}
	{} &
{}
	{} &
{}
	{} &
{}
	{} &
E_m
	\ar[lu, dashed] &
\end{tikzcd}
\]
with each $E_i$ object in $\cP(\phi_i)$.
\end{enumerate}
Objects in $\cP(\phi)$ are said to be \emph{semistable} with \emph{phase} $\phi$ and the (unique upto isomorphism) filtration in (3) is called the \emph{Harder-Narasimhan (HN) filtration} of $A$.
We will also call the $E_i$ pieces the \emph{HN semistable pieces of} $A$.
\end{definition}

Given a slicing $\cP$, denote $\cP[\phi, \phi+1)$ as the smallest additive subcategory containing all objects $A$ such that the phases of the semistable pieces all live in $[\phi, \phi+1)$.
Then $\cP[\phi, \phi+1)$ is the heart of a bounded $t$-structure.
In particular, we call $\cP[0,1)$ the \emph{standard heart} of the slicing $\cP$.
As such, a slicing should be thought of as a refinement of a t-structure.

\begin{definition}[\cite{bridgeland_2007}]
A \emph{stability condition} $(\cP, \cZ)$ on $\cD$ is a slicing $\cP$ and a group homomorphism $\cZ: K_0(\cD) \ra \C$, called the \emph{central charge}, satisfying the following condition: for all non-zero semistable objects $E\in \cP(\phi)$,
\[
\cZ(E) = m(E)e^{i\pi\phi}, \text{ with } m(E) \in \R_{>0}.
\]
\end{definition}

\begin{theorem}[Proposition 5.3, \cite{bridgeland_2007}] \label{thm: stab function and stab cond}
Giving a stability condition on $\cD$ is equivalent to giving a bounded $t$-structure on $\cD$ with a stability function which satisfies the Harder-Narasimhan property.
\end{theorem}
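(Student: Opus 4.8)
The statement to be proved is Bridgeland's equivalence (Proposition 5.3 of \cite{bridgeland_2007}): a stability condition $(\cP, \cZ)$ on $\cD$ is the same data as a bounded $t$-structure on $\cD$ together with a stability function on its heart satisfying the Harder--Narasimhan property. I would prove this by constructing the two assignments explicitly and checking they are mutually inverse.

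\textbf{From slicing to $t$-structure.} Given a stability condition $(\cP, \cZ)$, set $\cH := \cP(0,1]$ (or $\cP[0,1)$, matching the excerpt's convention for the standard heart), and take the bounded $t$-structure whose aisle is $\cF := \cP(0,\infty)$, i.e. the extension-closure of all $\cP(\phi)$ with $\phi > 0$. I would verify the three $t$-structure axioms: the shift inclusions are immediate from $\cP(\phi+1) = \cP(\phi)[1]$; the $\Hom$-vanishing $\Hom(\cF, \cF^\perp) = 0$ follows from axiom (2) of a slicing together with a dévissage along HN filtrations; and the existence of the truncation triangle for an object $A$ is obtained by grouping its HN filtration into the semistable pieces of positive phase and those of non-positive phase. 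Boundedness follows because every object has a finite HN filtration. Then define $Z := \cZ$ on $K_0(\cH) = K_0(\cD)$; the defining inequality of a stability condition says exactly that each semistable $E \in \cP(\phi)$ with $\phi \in (0,1]$ has $\cZ(E) \in \mathbb{H}$, which gives the stability function condition, and the HN filtrations of the slicing restricted to $\cH$ give the HN property for $Z$. One must also check that semistability in the abelian sense (every subobject has $\phi(A) \le \phi(E)$) coincides with membership in $\cP(\phi)$; this is the standard argument that a destabilizing subobject in $\cH$ produces a nonzero map from a higher slice.

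\textbf{From $t$-structure to slicing.} Conversely, given a bounded heart $\cH$ with stability function $Z$ having the HN property, define $\cP(\phi)$ for $\phi \in (0,1]$ to be the full subcategory of $Z$-semistable objects of phase $\phi$ inside $\cH$, and extend to all $\phi \in \R$ by the rule $\cP(\phi+1) = \cP(\phi)[1]$ (so $\cP(\phi) = \cP(\phi - \lceil \phi \rceil + 1)[\lceil\phi\rceil - 1]$, using that every object of $\cD$ lies in some $\cH[n]$ by boundedness). I would then check the three slicing axioms. Axiom (1) holds by construction. For axiom (2), $\Hom(\cP(\phi_1), \cP(\phi_2)) = 0$ for $\phi_1 > \phi_2$: when $\phi_1 - \phi_2 \ge 1$ this is the $t$-structure $\Hom$-vanishing after shifting; when $0 < \phi_1 - \phi_2 < 1$ it reduces to the fact that a nonzero map between semistable objects of $\cH$ is impossible if the source has strictly larger phase — proved via the image short exact sequence and the seesaw inequality on phases. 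For axiom (3), the HN filtration of an arbitrary $A \in \cD$ is assembled by writing $A$'s cohomology objects (with respect to the $t$-structure) in $\cH[n]$, each of which has a $Z$-HN filtration, and then splicing these across degrees and refining; finiteness uses boundedness. Finally set $\cZ := Z$ on $K_0(\cD) = K_0(\cH)$ and note the stability condition axiom holds because the phase and mass of a semistable object are read off from $Z$. Checking that the two constructions are mutually inverse is then essentially formal.

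\textbf{Main obstacle.} The routine parts are the shift and $\Hom$-vanishing axioms; the genuine work is the existence and finiteness of HN filtrations in the slicing sense for an \emph{arbitrary} object of $\cD$, starting only from the HN property of $Z$ on the heart $\cH$. This requires combining the cohomological filtration coming from the bounded $t$-structure with the $Z$-HN filtrations inside each $\cH[n]$, then checking that the composite filtration can be refined to one with strictly decreasing phases and that no infinite regress occurs. Since this is precisely the content of \cite[Proposition 5.3]{bridgeland_2007}, I would follow that argument, taking care that the phase convention ($\phi \in (0,1]$ versus $[0,1)$) is used consistently throughout; everything else is bookkeeping with short exact sequences and the additivity of $Z$.
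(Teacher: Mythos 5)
The paper does not prove this statement: it is quoted verbatim from Bridgeland's work and cited as \cite[Proposition~5.3]{bridgeland_2007}, serving purely as recalled background. Your reconstruction follows Bridgeland's proof faithfully — the two constructions (slicing $\Rightarrow$ heart-plus-stability-function, and the reverse) are the right ones, and you correctly identify the genuine content as the existence and finiteness of HN filtrations in the slicing sense, obtained by splicing the $t$-structure's cohomological filtration with $Z$-HN filtrations in each shifted heart. The only thing to tidy is the boundary convention: if you take $\cH = \cP[0,1)$ as the paper does, the aisle should be $\cF = \cP[0,\infty)$ rather than $\cP(0,\infty)$, so that $\cH = \cF \cap \cF^\perp[1]$ comes out right; this is just bookkeeping once you fix the half-open interval consistently.
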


It follows that if the bounded $t$-structure has a finite-length heart, then any stability function will satisfy the Harder-Narasimhan property (see \cite[Proposition 2.4]{bridgeland_2007}).
One of the surprising result about stability conditions is that they form a $\C$-manifold:
\begin{theorem}[Theorem 1.2, \cite{bridgeland_2007}]
The space of stability conditions $\Stab(\cD)$ is a complex manifold, given by the local homeomorphism 
\begin{align*}
\Stab(\cD) &\ra \Hom(K_0(\cD),\C) \\
(\cP, \cZ) &\mapsto \cZ.
\end{align*}
\end{theorem}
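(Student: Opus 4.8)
The plan is to follow Bridgeland's original deformation argument. First I would topologise $\Stab(\cD)$ by equipping it with the generalised metric
\[
d(\sigma_1,\sigma_2) = \sup_{0\neq E\in\cD}\left\{ |\phi^-_{\sigma_1}(E)-\phi^-_{\sigma_2}(E)|,\ |\phi^+_{\sigma_1}(E)-\phi^+_{\sigma_2}(E)|,\ \left|\log\tfrac{m_{\sigma_2}(E)}{m_{\sigma_1}(E)}\right| \right\}\in[0,\infty],
\]
where $\phi^\pm_\sigma$ and $m_\sigma$ denote the extremal phases and the mass of $E$ with respect to $\sigma$. With the induced topology, the forgetful map $\cZ\colon(\cP,\cZ)\mapsto\cZ$ is readily seen to be continuous into $\Hom(K_0(\cD),\C)$, once the latter is given the topology induced locally around each $\cZ_0$ by the norm $\|\theta\|_{\sigma_0} = \sup\{|\theta(E)|/|\cZ_0(E)| : E\ \sigma_0\text{-semistable}\}$: continuity is immediate from the triangle inequality applied to HN pieces.

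The heart of the argument is a local lifting statement: for a fixed $\sigma_0 = (\cP_0,\cZ_0)$ there is $\epsilon_0 > 0$ such that whenever $0 < \epsilon < \epsilon_0$ and $\cW\in\Hom(K_0(\cD),\C)$ satisfies $|\cW(E)-\cZ_0(E)| < \sin(\pi\epsilon)\,|\cZ_0(E)|$ for every $\sigma_0$-semistable $E$, there is a \emph{unique} stability condition $\tau = (\cQ,\cW)$ with $d(\sigma_0,\tau) < \epsilon$. To build $\tau$ I would work interval by interval: for each $\phi$ and each interval $I$ of length $<1$, the subcategory $\cP_0(I)$ is quasi-abelian with a well-behaved notion of strict short exact sequence, so one can re-run the Harder--Narasimhan construction in the spirit of \cref{defn: abelian stability} using $\cW$ in place of $\cZ_0$. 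The smallness hypothesis guarantees that $\cW$ still maps the relevant objects into a half-plane of opening $<\pi$, so phases are defined and the would-be HN filtrations make sense; one then checks that the refined slices $\cQ(\psi)$ for $\psi$ near $\phi$ are compatible as $\phi$ varies, that they assemble into a genuine slicing, that $(\cQ,\cW)$ satisfies the central-charge compatibility required of a stability condition, and that $d(\sigma_0,\tau) < \epsilon$. Uniqueness follows because two stability conditions within distance $<\tfrac12$ with the same central charge must have identical slicings: the bound on $d$ forces the semistable subcategories, and hence the phases, to coincide.

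Granting the lifting statement, the theorem is formal. Injectivity of $\cZ$ on the $\epsilon_0$-ball around $\sigma_0$ is the uniqueness clause; the image of this ball contains the open set $\{\cW : \|\cW-\cZ_0\|_{\sigma_0} < \sin(\pi\epsilon_0)\}$ by the existence clause, and the inverse $\cW\mapsto\tau$ is continuous because $d(\sigma_0,\tau)$ is controlled linearly by $\epsilon$, which is in turn controlled by $\|\cW-\cZ_0\|_{\sigma_0}$. Thus $\cZ$ is open, locally injective, and admits continuous local inverses, i.e. it is a local homeomorphism; pulling back the complex-linear charts of $\Hom(K_0(\cD),\C)$ through $\cZ$ endows $\Stab(\cD)$ with a complex-manifold structure for which $\cZ$ is holomorphic.

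I expect the main obstacle to be the lifting statement, and within it the verification that the perturbed HN filtrations exist and terminate. This is precisely where a finiteness hypothesis on $\sigma_0$ (local finiteness of the slicing) enters, and where the quasi-abelian bookkeeping — strictness of the relevant exact sequences and the monotone "see-saw" behaviour of phases under perturbation of $\cZ$ — has to be carried out by hand, rather than by the softer arguments available for genuine abelian categories.
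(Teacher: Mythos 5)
The paper does not prove this statement; it is quoted verbatim as background and attributed to Bridgeland's paper \cite{bridgeland_2007}, so there is no in-text proof to compare against. Your sketch is nonetheless a faithful and correct outline of Bridgeland's original deformation argument: the generalised metric, the continuity of $\cZ$ via the operator-type norm $\|\cdot\|_{\sigma_0}$, the $\sin(\pi\epsilon)$-lifting lemma (Bridgeland's Theorem~7.1) proved by re-running the Harder--Narasimhan construction in the quasi-abelian slices $\cP_0(I)$, uniqueness from $d < \tfrac12$ (his Lemma~6.4), and pulling back charts. You are also right to flag local finiteness as the hypothesis doing the real work in the lifting step; in fact the statement as quoted in the thesis suppresses this: Bridgeland's Theorem~1.2 concerns the space of \emph{locally finite} stability conditions, and in full generality the map lands in a linear subspace $V(\Sigma) \subseteq \Hom(K_0(\cD),\C)$ equipped with the norm topology, rather than all of $\Hom(K_0(\cD),\C)$. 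Your proposal, by restricting attention to a neighbourhood of $\cZ_0$ with the $\|\cdot\|_{\sigma_0}$-topology, implicitly builds in exactly that restriction, so the argument is sound; only the top-level phrasing of the theorem as stated in the thesis is looser than what is actually proved.
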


\subsection{Mass and mass growth}
We now give a quick review of mass and mass growth with respect to a stability condition.

\begin{definition}
Let $\tau = (\cZ, \cP)$ be a stability condition on $\cD$.
Let $A$ be a non-zero object in $\cD$ with Harder-Narasimhan filtration
\[
\begin{tikzcd}[column sep = 3mm]
0
	\ar[rr] &
{}
	{} &
A_1
	\ar[rr] \ar[dl]&
{}
	{} &	
A_2
	\ar[rr] \ar[dl]&
{}
	{} &	
\cdots
	\ar[rr] &
{}
	{} &
A_{m-1}
	\ar[rr] &
{}
	{} &
A_m = A
	\ar[dl] \\
{}
	{} &
E_1
	\ar[lu, dashed] &
{}
	{} &
E_2
	\ar[lu, dashed] &
{}
	{} &
{}
	{} &
{}
	{} &
{}
	{} &
{}
	{} &
E_m
	\ar[lu, dashed] &
\end{tikzcd}
\]
where $E_i \in \cP(\phi_i)$.
We define the \emph{mass of $A$} (with parameter $t$) as the (non-negative) real-valued function over $t$ given by
\[
m_{\tau,t}(A) := \sum_{i=1}^m |\cZ(E_i)|e^{\phi_i t} \in \R^\R.
\]
By convention, we set $m_{\tau,t}(A) = 0$ if $A \cong 0$.
\end{definition}
\begin{remark}
In this thesis, mass of $A$ will always mean the whole function with parameter $t$; we will specify $t=0$ when we want to refer to its specific value at $t=0$.
This convention differs from some literature, where ``mass of $A$'' sometimes refers to the mass of $A$ at $t=0$.
\end{remark}

The mass of an object satisfies the following ``triangle inequality'':
\begin{proposition}[Proposition 3.3, \cite{ikeda_2020}]
For each distinguished triangle $A\ra B \ra C \ra A[1]$ in $\cD$, we have
\[
m_{\tau,t}(B) \leq m_{\tau,t}(A) + m_{\tau,t}(C)
\]
for all $t\in \R$.
\end{proposition}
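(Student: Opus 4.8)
The plan is to deduce the estimate from one convexity fact about the central charge, together with the octahedral axiom. The convexity fact is: the positively homogeneous function $w_t\colon \mathbb{H} \to \R_{\ge 0}$, $w_t(z) := |z|\, e^{\phi(z)t}$ (with $\phi(z)\in[0,1)$ the phase), is subadditive, $w_t(z_1+z_2)\le w_t(z_1)+w_t(z_2)$. Indeed, in the coordinate $\zeta=\log z$ the sublevel set $\{w_t\le 1\}$ is the half-strip $\{0\le \mathrm{Im}\,\zeta<\pi,\ \mathrm{Re}\,\zeta\le -(\mathrm{Im}\,\zeta)\,t/\pi\}$, whose image under $\exp$ is bounded by a logarithmic spiral over a half-turn and hence is convex (the curvature condition reduces to $(1+c^2)e^{-c\theta}\ge 0$ with $c=-t/\pi$); a positively homogeneous nonnegative function with convex sublevel set is subadditive, by taking the convex combination $\lambda\tfrac{z_1}{w_t(z_1)}+(1-\lambda)\tfrac{z_2}{w_t(z_2)}$ with $\lambda = \tfrac{w_t(z_1)}{w_t(z_1)+w_t(z_2)}$. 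One can also see this from the sine rule. Note that $w_t(\cZ(E))=m_{\tau,t}(E)$ whenever $E$ is $\tau$-semistable, and that, after a shift by $[1]$, any two semistable objects whose phases differ by at most $1$ may be assumed to have phases in $[0,1)$.

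First I would settle the base case: the stated inequality for a triangle $A'\to B'\to C'\to A'[1]$ with $A',C'$ both semistable, of phases $a:=\phi(A')$ and $c:=\phi(C')$. If $a\ge c$ the triangle, read in the right order, is itself a Harder--Narasimhan filtration of $B'$ (semistable of phase $a$ when $a=c$), so equality holds. If $c>a+1$, then $\Hom(C',A'[1])=0$ by the slicing axiom, the triangle splits, and again equality holds. If $a<c<a+1$ and the triangle is non-split, then $B'$ is semistable, so $m_{\tau,t}(B')=w_t(\cZ(B'))=w_t(\cZ(A')+\cZ(C'))\le w_t(\cZ(A'))+w_t(\cZ(C'))$, which is exactly the bound. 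Finally, if $c=a+1$, rotating the triangle exhibits $B'\cong\cone(C'[-1]\xrightarrow{g} A')$ with $C'[-1],A'$ semistable of phase $a$; the HN pieces of $B'$ are then $(\ker g)[1]$ and $\mathrm{coker}\,g$, and $|\cZ(\ker g)|\le|\cZ(C')|$, $|\cZ(\mathrm{coker}\,g)|\le|\cZ(A')|$ give the bound.

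Next comes the engine: for any object $E$ and any tower $0=E_0\to E_1\to\cdots\to E_k=E$ whose successive cones $F_i$ are semistable of phase $\psi_i$, one has $m_{\tau,t}(E)\le \sum_{i=1}^k |\cZ(F_i)|\,e^{\psi_i t}$ (call the right side the weight of the tower). I would prove this by a sorting argument: whenever consecutive phases fail to strictly decrease, $\psi_i\le\psi_{i+1}$, replace the length-two subtower $F_i,F_{i+1}$ (through the subquotient $E_{i+1}/E_{i-1}$) by the HN filtration of that subquotient. By the base case the contribution $|\cZ(F_i)|e^{\psi_i t}+|\cZ(F_{i+1})|e^{\psi_{i+1}t}$ does not increase, and the new pieces have phases in $[\psi_i,\psi_{i+1}]$, so the rest of the tower is untouched. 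A suitable complexity (the number of pieces, refined by the number of non-descents) strictly decreases, so the process terminates at the unique HN filtration of $E$, whose weight equals $m_{\tau,t}(E)$; as the weight only weakly decreased, the inequality follows. Equivalently, $m_{\tau,t}(E)$ is the infimum of the weight over all semistable towers of $E$.

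The triangle inequality then follows at once: given $A\to B\to C\to A[1]$, splice the HN filtrations of $A$ and of $C$ by repeated use of the octahedral axiom into a tower of $B$ whose successive cones are the HN pieces of $A$ followed by those of $C$; its weight is $m_{\tau,t}(A)+m_{\tau,t}(C)$, so the previous paragraph gives $m_{\tau,t}(B)\le m_{\tau,t}(A)+m_{\tau,t}(C)$ for all $t$. The main obstacle is the sorting lemma: one must check that the base-case estimate is genuinely available at each step (the $c=a+1$ subcase really uses the abelian structure of the heart), that intermediate phases stay confined so that distant parts of the tower never interact, and above all that the procedure terminates rather than cycles — which is why the complexity measure must be chosen with care. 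The convexity lemma and the octahedral splicing are comparatively routine.
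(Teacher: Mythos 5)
Your overall architecture — a convexity lemma for the weight $w_t$, a reduction to the case of both $A$ and $C$ semistable, and a splicing/sorting mechanism to bootstrap — is sound in outline and is close in spirit to the argument Ikeda gives for this proposition (which the thesis cites without reproving). The convexity lemma is correct, and your treatment of the subcases $a\ge c$, $c>a+1$, and $c=a+1$ in the base case is fine. The problem is the remaining subcase.

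In the subcase $a<c<a+1$ you assert that for a non-split triangle $A'\to B'\to C'$ with $A',C'$ semistable, the middle term $B'$ must be semistable. This is false. Take $\cD = D^b(\mathrm{Coh}\,\mathbb{P}^1)$ with the standard stability condition on the heart $\mathrm{Coh}\,\mathbb{P}^1$, $Z(E) = -\deg E + i\,\mathrm{rk}\,E$. Choosing a section of $\cO(1)$ vanishing at a point $p$ and a section of $\cO(2)$ with no zero at $p$ gives a subbundle inclusion $\cO(-2)\hookrightarrow\cO(-1)\oplus\cO$ whose cokernel is a line bundle of degree $1$, i.e.\ a non-split short exact sequence $0\to\cO(-2)\to\cO(-1)\oplus\cO\to\cO(1)\to 0$. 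Here $\cO(-2)$ and $\cO(1)$ are stable, their phases $a=\tfrac1\pi\arg(2+i)$ and $c=\tfrac34$ satisfy $a<c<a+1$, and yet $\cO(-1)\oplus\cO$ is not semistable (its HN pieces are $\cO$ and $\cO(-1)$). Since your ``sorting lemma'' swaps are powered by exactly this subcase, the gap propagates into it as well.

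The inequality in this subcase is nonetheless true and admits a short direct proof that could replace your step. Work in the heart $\cP(c-1,c]$, so $0\to A'\to B'\to C'\to 0$ is exact. Comparing the top (resp.\ bottom) HN piece of $B'$ against $C'$ (resp.\ $A'$) shows $a\le\phi_-(B')\le\phi_+(B')\le c$, so every HN piece $E_i$ of $B'$ has $\phi(E_i)\in[a,c]$. Since $0<c-a<1$, the rays of phase $a$ and $c$ span $\R^2$ as a cone, and each $\cZ(E_i)$ decomposes uniquely as $\alpha_i+\gamma_i$ with $\alpha_i$ a non-negative multiple of $e^{i\pi a}$ and $\gamma_i$ of $e^{i\pi c}$. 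Summing and using $\sum_i\cZ(E_i)=\cZ(B')=\cZ(A')+\cZ(C')$ forces $\sum_i\alpha_i=\cZ(A')$ and $\sum_i\gamma_i=\cZ(C')$. Hence
\[
m_{\tau,t}(B')=\sum_i w_t(\cZ(E_i))=\sum_i w_t(\alpha_i+\gamma_i)\le \sum_i\bigl(w_t(\alpha_i)+w_t(\gamma_i)\bigr)=w_t(\cZ(A'))+w_t(\cZ(C')),
\]
using your subadditivity of $w_t$ in the middle and the fact that $w_t$ is additive on each fixed ray at the end. Finally, even with the base case repaired you should be more careful about termination of the local sorting: a single swap can leave both the number of pieces and the number of non-descents unchanged (e.g.\ a tower with phases $(0.5,\,0,\,1)$ swapping to $(0.5,\,0.7,\,0.3)$), so the complexity measure needs refinement. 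The double induction on the HN lengths of $A$ and $C$, peeling one semistable piece at a time via the octahedral axiom, sidesteps this entirely and is the cleaner route.
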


\begin{definition}
Let $\tau$ be a stability condition on $\cD$ and $\cF : \cD \ra \cD$ be an endofunctor.
The \emph{$\tau$-mass growth of $F$} (with parameter $t$) is the function $h_{\tau, t}(\cF) : \R \ra [-\infty, \infty]$ given by
\[
h_{\tau, t}(\cF) := \sup_{A \in \text{Obj}(\cD)} \left\{
	\limsup_{N\ra \infty} \frac{1}{N} \log  m_{\tau,t}(\cF^N(A))
	\right\}.
\]
\end{definition}

The following proposition shows that mass growths with respect to different stability conditions within the same connected component coincide:
\begin{proposition}[\cite{ikeda_2020}]
Let $\tau$ and $\tau'$ be two stability conditions on $\cD$ lying in the same connected component of $\Stab(\cD)$ and $\cF : \cD \ra \cD$ be an endofunctor.
Then
\[
h_{\tau, t}(\cF) = h_{\tau', t}(\cF).
\]
\end{proposition}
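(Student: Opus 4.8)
The plan is to follow the strategy of \cite{ikeda_2020}: first I would prove the statement in the \emph{local} case, where $\tau$ and $\tau'$ are close, and then bootstrap to the general case using connectedness of the component.

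For the local step, I would work with Bridgeland's generalised metric on $\Stab(\cD)$,
\[
d(\tau, \tau') = \sup_{0 \neq E \in \cD} \left\{ |\phi^+_\tau(E) - \phi^+_{\tau'}(E)|, \ |\phi^-_\tau(E) - \phi^-_{\tau'}(E)|, \ \left| \log \frac{m_{\tau,0}(E)}{m_{\tau',0}(E)} \right| \right\},
\]
which restricts to an honest metric inducing the manifold topology on each connected component. Assuming $d(\tau, \tau') < \eps$, I would show the uniform two-sided bound $e^{-\eps(|t|+1)} m_{\tau,t}(A) \leq m_{\tau',t}(A) \leq e^{\eps(|t|+1)} m_{\tau,t}(A)$ for all objects $A$ and all $t \in \R$. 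For the upper bound: take the $\tau$-HN pieces $E_1, \ldots, E_m$ of $A$ with phases $\phi_1 > \cdots > \phi_m$; each $E_i$ being $\tau$-semistable, all of its $\tau'$-HN pieces have phase within $\eps$ of $\phi_i$ and total $\tau'$-mass at most $e^\eps m_{\tau,0}(E_i) = e^\eps |\cZ(E_i)|$, so $m_{\tau',t}(E_i) \leq e^{\eps(|t|+1)} |\cZ(E_i)| e^{\phi_i t}$; then I would sum over $i$ and apply subadditivity of mass along the triangles of the HN filtration of $A$ (the triangle inequality, Proposition 3.3 of \cite{ikeda_2020}). The lower bound follows by symmetry. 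The crucial feature is that the constant $C_t := e^{\eps(|t|+1)}$ depends only on $\eps$ and $t$, not on $A$, and in particular not on the length of its HN filtration.

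With this in hand, fix $t$. For every object $A$ and every $N$ I have $m_{\tau',t}(\cF^N(A)) \leq C_t \, m_{\tau,t}(\cF^N(A))$, hence
\[
\limsup_{N \ra \infty} \frac{1}{N} \log m_{\tau',t}(\cF^N(A)) \leq \limsup_{N \ra \infty} \frac{1}{N} \bigl( \log C_t + \log m_{\tau,t}(\cF^N(A)) \bigr) = \limsup_{N \ra \infty} \frac{1}{N} \log m_{\tau,t}(\cF^N(A)),
\]
reading both sides as $-\infty$ when $\cF^N(A) \cong 0$ for $N \gg 0$. Taking the supremum over $A$ gives $h_{\tau',t}(\cF) \leq h_{\tau,t}(\cF)$, and the reverse inequality follows symmetrically, so $h_{\tau,t}(\cF) = h_{\tau',t}(\cF)$ whenever $d(\tau,\tau') < \eps$.

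To remove the closeness hypothesis, I would use that $\tau$ and $\tau'$ lie in the same connected component of the manifold $\Stab(\cD)$, hence can be joined by a continuous path; its image is compact, so it is covered by finitely many $d$-balls of radius $\eps/2$, and extracting a finite chain $\tau = \tau_0, \tau_1, \ldots, \tau_k = \tau'$ with $d(\tau_i, \tau_{i+1}) < \eps$ lets me iterate the local statement to conclude $h_{\tau,t}(\cF) = h_{\tau_1,t}(\cF) = \cdots = h_{\tau',t}(\cF)$. The main obstacle is the local mass comparison: carefully tracking how the $\tau$-semistable pieces refine under $\tau'$ and combining this with subadditivity of mass so that the multiplicative error is uniform in $A$; everything else is formal.
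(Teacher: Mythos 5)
The paper cites this result to \cite{ikeda_2020} without giving its own proof, and your argument correctly reproduces the strategy behind Ikeda's Proposition 3.10: a uniform two-sided mass comparison derived from Bridgeland's generalised metric, followed by passage to mass growth. One small simplification worth noting: your local estimate never actually uses that $\eps$ is small, only that it is finite, and Bridgeland's generalised metric $d$ is finite on each connected component of $\Stab(\cD)$ (the ``metric components'' coincide with the topological connected components), so you can take $\eps = d(\tau, \tau')$ directly and omit the path-compactness and chaining argument entirely.
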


\section{Level-complexity, $q$-stability conditions and mass growth for $\X$-categories}
In the main body of this thesis, we will be mainly interested in mass growth of endofunctors on triangulated categories with more structures: a module structure over fusion categories and the existence of a distinguished autoequivalence $\X$.
We begin our study with the inclusion of a distinguished autoequivalence $\X$ in this section.

We start by introducing the notion of level-complexity and level-entropy, which are slight generalisations of complexity and entropy introduced in \cite{DHKK13}.
In particular, for a triangulated $\X$-category $(\cD, \X)$, we will only require $\cD$ to contain the more relaxed notion of a split generator called an $\X$-split generator and use level-complexity to show that we can compute mass growth of endofunctors on triangulated $\X$-categories from just the $\X$-split generator.
Most of the results in this section are a rederivation of the results in \cite{ikeda_2020} applied to this more general setting, so we will only provide a sketch to the proof whenever we think is necessary.

Throughout this section, $(\cD, \X)$ will denote a triangulated $\X$-category.

\subsection{Level entropy for $\X$-categories}
When dealing with triangulated $\X$-categories it is natural to ask for a split generator up to the action of $\X$ instead:
\begin{definition}
We say that an object $G$ is a \emph{$\X$-split generator} of $(\cD, \X)$ if for all objects $F \in \cD$, we have some object $F' \in \cD$ and a finite length filtration:
\[
\begin{tikzcd}[scale cd = .9, column sep = .5mm]
0= F_0
	\ar[rr] &
{}
	{} &
F_1
	\ar[rr] \ar[dl]&
{}
	{} &	
F_2
	\ar[rr] \ar[dl]&
{}
	{} &	
\cdots
	\ar[rr] &
{}
	{} &
F_{m-1}
	\ar[rr] &
{}
	{} &
F_m = F \oplus F'
	\ar[dl] \\
{}
	{} &
G\cdot \X^{k_1}[\ell_1]
	\ar[lu, dashed] &
{}
	{} &
G\cdot \X^{k_2}[\ell_2]
	\ar[lu, dashed] &
{}
	{} &
{}
	{} &
{}
	{} &
{}
	{} &
{}
	{} &
G\cdot \X^{k_m}[\ell_m]
	\ar[lu, dashed] &
\end{tikzcd}
\]
with $k_i, \ell_i \in \Z$.
\end{definition}

\comment{
Given some fixed object $E$, we will define the \emph{level grading} $\{-\}$ of $E\<k\>[\ell]$ as the difference of the triangulated grading and internal grading:
\[
\{ - \} := [-] - \< - \>.
\]
}
We shall define a variant of complexity introduced in \cite{DHKK13}, in which the coefficient of $t$ measures the difference between $\X$ and $[1]$ instead:
\begin{definition}
Let $E, F \in \Ob(\cD, \X)$.
The \emph{level complexity} of $F$ relative to $E$ is the function $\delta_t(E,F) : \R \ra \R_{\geq 0} \cup \{ \infty \}$ defined by
\[
\delta_t(E,F) := 
	\underset{F'}{\inf} \left\{ 
		\sum_{i=1}^{k} e^{\alpha_i t} : 
			\begin{tikzcd}[scale cd = .8, column sep = .5mm, row sep = 2mm]
			0
				\ar[rr] &
			{}
				{} &
			F_1
				\ar[dl]&
			\cdots
				{} &
			F_{m-1}
				\ar[rr] &
			{}
				{} &
			F \oplus F'
				\ar[dl] \\
			{}
				{} &
			E\cdot \X^{k_1}[\ell_1]
				\ar[lu, dashed] &
			{}
				{} &
			{}
				{} &
			{}
				{} &
			E\cdot \X^{k_m}[\ell_m]
				\ar[lu, dashed] &
			\end{tikzcd}
		\right\},
\] 
with $\alpha_i := \ell_i - k_i$, where $\delta_t(E,F) := 0$ if $F \cong 0$ and $\delta_t(E,F) := \infty$ if $F$ does not live in the smallest thick subcategory of $(\cD, \X)$ with $E$ as a $\X$-split generator.
\end{definition}

Level-complexity satisfies the following properties that are similar to the usual complexity:
\begin{proposition} \label{complexity lemma}
For $D, E_1,E_2,E_3 \in \Ob(\cD, \X)$,
\begin{enumerate}
\item $\delta_{t}(E_1, E_3) \leq \delta_{t}(E_1,E_2)\delta_{t}(E_2,E_3)$;
\item if $E_1 \ra E_2 \ra E_3 \ra$ is a distinguished triangle, then 
\[
\delta_{t}(D, E_2) \leq \delta_{t}(D,E_1) + \delta_{t}(D, E_3);
\]
\item for any endofunctor $\cF: (\cD,\X) \ra (\cD,\X)$,
\[
\delta_{t}(\cF(E_1), \cF(E_2)) \leq \delta_{t}(E_1,E_2).
\]
\end{enumerate}
\end{proposition}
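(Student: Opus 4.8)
The plan is to follow the classical arguments of Dimitrov--Haiden--Katzarkov--Kontsevich (\cite{DHKK13}) and Ikeda (\cite{ikeda_2020}) for ordinary complexity, while carrying along the extra bookkeeping forced by the twist $\X$ and by the ``up to a direct summand'' clause in the definition of $\delta_t$. Throughout I would use two elementary facts about $\cD$: (a) the direct sum of distinguished triangles is distinguished, so for any triangle $X\to Y\to Z\to X[1]$ and any object $W$ the object $Y\oplus W$ sits in a distinguished triangle $X\to Y\oplus W\to Z\oplus W\to$; and (b) the octahedral axiom in the form that a two-step filtration $A\to B\to C$ may be refined by any filtration of $C$, yielding a filtration of $B$ starting at $A$. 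The key invariant is that the level weight $\alpha=\ell-k$ of a piece $E\cdot\X^{k}[\ell]$ is unchanged by the operations below, and the convention that endofunctors commute trivially with both $\X$ and $[1]$ gives $\cF(E\cdot\X^{k}[\ell])=\cF(E)\cdot\X^{k}[\ell]$.

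For (1), fix filtrations of $E_3\oplus E_3'$ with pieces $E_2\cdot\X^{k_i}[\ell_i]$ ($1\le i\le p$) and of $E_2\oplus E_2'$ with pieces $E_1\cdot\X^{m_j}[n_j]$ ($1\le j\le q$) nearly realising the two infima. Shifting the second filtration by $\X^{k_i}[\ell_i]$ exhibits $(E_2\oplus E_2')\cdot\X^{k_i}[\ell_i]$ as filtered by $E_1\cdot\X^{m_j+k_i}[n_j+\ell_i]$. I would then process the first filtration one step at a time: at the step $F_{i-1}\to F_i\to E_2\cdot\X^{k_i}[\ell_i]\to$, use (a) to replace $F_i$ by $F_i\oplus E_2'\cdot\X^{k_i}[\ell_i]$, whose cone over $F_{i-1}$ is now $(E_2\oplus E_2')\cdot\X^{k_i}[\ell_i]$, then splice in the shifted $E_1$-filtration via (b); the accumulated extra summands $E_2'\cdot\X^{k_i}[\ell_i]$ are carried along through the remaining steps, again using (a). The result is a filtration of $E_3\oplus E_3''$ by $E_1$-pieces of total weight $\bigl(\sum_{i}e^{(\ell_i-k_i)t}\bigr)\bigl(\sum_{j}e^{(n_j-m_j)t}\bigr)$, and passing to infima gives submultiplicativity; the cases where either side is $0$ or $\infty$ are immediate from the definition.

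Claim (2) is the same splicing run once: given the triangle $E_1\to E_2\to E_3\to E_1[1]$ and near-optimal $D$-filtrations of $E_1\oplus E_1'$ and $E_3\oplus E_3'$, applying (a) twice produces a distinguished triangle $E_1\oplus E_1'\to E_2\oplus E_1'\oplus E_3'\to E_3\oplus E_3'\to$; concatenating the $E_1$-filtration with the $E_3$-filtration on top of it via (b) filters $E_2\oplus(E_1'\oplus E_3')$ by $D$-pieces of weight equal to the sum of the two weights, so $\delta_t(D,E_2)\le\delta_t(D,E_1)+\delta_t(D,E_3)$. For (3), apply the exact functor $\cF$ to a near-optimal filtration of $E_2\oplus E_2'$: since $\cF$ preserves distinguished triangles and commutes trivially with $\X$ and $[1]$, one gets a filtration of $\cF E_2\oplus\cF E_2'$ by pieces $(\cF E_1)\cdot\X^{k_i}[\ell_i]$ with the same weights $\ell_i-k_i$, so $\delta_t(\cF E_1,\cF E_2)\le\delta_t(E_1,E_2)$ (and if the right-hand side is $\infty$, or $E_2\cong0$, the inequality is trivial).

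The only genuine obstacle I anticipate is in part (1): one must check that the summand-correction objects introduced when refining a single step of the $E_3$-filtration do not disturb the subsequent steps, and that they enlarge only the ``$E_3'$'' slot and never the actual target $E_3$, so that the final inequality really bounds $\delta_t(E_1,E_3)$ rather than $\delta_t(E_1,E_3\oplus\text{junk})$. Everything else is formal manipulation of distinguished triangles and sums of exponentials, so I would keep those verifications terse and emphasise the octahedral bookkeeping.
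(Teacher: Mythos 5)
Your proof is correct, and in fact the paper states \cref{complexity lemma} without proof, citing \cite{DHKK13} and \cite{ikeda_2020} as the source of the arguments; what you have written is a faithful reconstruction of that standard argument with the level-grading bookkeeping made explicit. The one point you flagged as a potential obstacle in (1) — that the correction summands $E_2'\cdot\X^{k_i}[\ell_i]$ accumulate only in the $E_3'$ slot and never contaminate $E_3$ itself — is handled exactly as you indicate: each step of the outer filtration is padded by a direct sum that is then carried through all subsequent steps via fact (a), so the terminal object of the refined filtration is $E_3\oplus\bigl(E_3'\oplus\bigoplus_i E_2'\cdot\X^{k_i}[\ell_i]\bigr)$, which is of the required form $E_3\oplus E_3''$. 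Parts (2) and (3) are as routine as you suggest. Nothing is missing.
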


\begin{definition}
Let $G$ be a $\X$-split generator for $(\cD, \X)$ and $\cF: (\cD, \X) \ra (\cD, \X) $ be an endofunctor.
The \emph{level entropy} of $\cF$ is the function $h_t(\cF): \R \ra [-\infty, \infty)$ defined by
\[
h_t(\cF) := \lim_{N \ra \infty} \frac{1}{N} \log \delta_t \left(G, \cF^N(G) \right).
\]
\end{definition}
As is the case for the usual categorical entropy, one can show that the limit exists and that the definition is independent of the choice of $\X$-split generator.
Once again we shall state some simple properties of level entropy:
\begin{proposition}\label{prop: entropy properties}
Let $\cF: (\cD, \X) \ra (\cD, \X)$ be an endofunctor.
Then
\begin{enumerate}
\item $h_t(\cF^k) = k h_t(\cF)$ for $k \in \mathbb{N}$; 
\item $h_t(\cF) = h_t(\cG\cF\cG^{-1})$ for $\cG: (\cD, \X) \ra (\cD, \X)$ an autoequivalence. 
\end{enumerate}
\end{proposition}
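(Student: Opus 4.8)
The plan is to follow the standard argument for categorical entropy, adapted to the level-complexity setting, using \Cref{complexity lemma} as the analogue of the usual complexity inequalities. The key observation is that level-entropy is defined via a limit of $\frac{1}{N}\log\delta_t(G,\cF^N(G))$, and that this quantity behaves subadditively in $N$, which is exactly what one needs for both claims.

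First I would establish (1). For an endofunctor $\cF$ and $k \in \mathbb{N}$, the iterates of $\cF^k$ are the $N$-fold compositions $(\cF^k)^N = \cF^{kN}$. Using \Cref{complexity lemma}(1) and (3) in the usual way, one shows that $N \mapsto \log\delta_t(G,\cF^N(G))$ is subadditive (up to the constant $\delta_t(G,\cF(G))$-type terms): concretely $\delta_t(G,\cF^{M+N}(G)) \le \delta_t(G,\cF^M(G))\cdot\delta_t(\cF^M(G),\cF^{M+N}(G)) \le \delta_t(G,\cF^M(G))\cdot\delta_t(G,\cF^N(G))$, where the last step uses \Cref{complexity lemma}(3) applied to the endofunctor $\cF^M$. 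Fekete's lemma then gives existence of $h_t(\cF) = \lim_N \frac{1}{N}\log\delta_t(G,\cF^N(G)) = \inf_N \frac1N \log \delta_t(G, \cF^N(G))$, and passing to the subsequence $N = kM$ gives $h_t(\cF^k) = \lim_M \frac{1}{M}\log\delta_t(G,\cF^{kM}(G)) = k\lim_M\frac{1}{kM}\log\delta_t(G,\cF^{kM}(G)) = k\,h_t(\cF)$, since the limit over the subsequence agrees with the full limit. One should also remark (or assume, as the text does) that the limit is independent of the choice of $\X$-split generator $G$, which is needed for this identification.

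Next I would prove (2), invariance under conjugation. Let $\cG$ be an autoequivalence of $(\cD,\X)$. The point is that $(\cG\cF\cG^{-1})^N = \cG\cF^N\cG^{-1}$, so $\delta_t(G, \cG\cF^N\cG^{-1}(G))$ should be comparable to $\delta_t(G,\cF^N(G))$ up to multiplicative constants independent of $N$. Using \Cref{complexity lemma}(1): $\delta_t(G,\cG\cF^N\cG^{-1}(G)) \le \delta_t(G,\cG(G))\cdot\delta_t(\cG(G),\cG\cF^N\cG^{-1}(G))$, and by \Cref{complexity lemma}(3) applied to the endofunctor $\cG$ the second factor is $\le \delta_t(G,\cF^N\cG^{-1}(G)) \le \delta_t(G,\cG^{-1}(G))\cdot\delta_t(\cG^{-1}(G),\cF^N\cG^{-1}(G)) \le \delta_t(G,\cG^{-1}(G))\cdot\delta_t(G,\cF^N(G))$, again by \Cref{complexity lemma}(3) for $\cG^{-1}$. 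Hence $\delta_t(G,(\cG\cF\cG^{-1})^N(G)) \le C\cdot\delta_t(G,\cF^N(G))$ with $C := \delta_t(G,\cG(G))\,\delta_t(G,\cG^{-1}(G))$ independent of $N$. Taking $\frac1N\log$ and letting $N\to\infty$ gives $h_t(\cG\cF\cG^{-1}) \le h_t(\cF)$; applying the same inequality with $\cF$ replaced by $\cG\cF\cG^{-1}$ and $\cG$ by $\cG^{-1}$ gives the reverse inequality, so $h_t(\cG\cF\cG^{-1}) = h_t(\cF)$.

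I do not expect a serious obstacle here — everything reduces to the three formal properties of level-complexity collected in \Cref{complexity lemma}, which are exactly the same properties the usual complexity enjoys. The only mild subtlety is making sure the constants appearing (like $\delta_t(G,\cG(G))$ and $\delta_t(G,\cF(G))$) are genuinely finite and $N$-independent: this holds precisely because $G$ is an $\X$-split generator, so every object in the relevant thick subcategory, in particular $\cG(G)$ and $\cF(G)$, has finite level-complexity relative to $G$ — and one should note that $\cF$ and $\cG$ must preserve the thick subcategory $\X$-split-generated by $G$, which for the applications in this thesis (where $\cD$ is $\X$-split-generated by $G$) is automatic. The argument is otherwise a routine transcription of the Dimitrov–Haiden–Katzarkov–Kontsevich / Ikeda entropy estimates, so I would keep the written proof short, citing \Cref{complexity lemma} at each step.
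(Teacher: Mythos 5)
Your argument for part (1) is correct: two applications of \cref{complexity lemma} give $\delta_t(G,\cF^{M+N}(G)) \leq \delta_t(G,\cF^M(G))\,\delta_t(G,\cF^N(G))$, so $a_N := \log\delta_t(G,\cF^N(G))$ is subadditive, Fekete's lemma gives existence of the limit defining $h_t(\cF)$, and passing along the subsequence $N=kM$ yields $h_t(\cF^k) = k\,h_t(\cF)$.

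In part (2) the strategy is the right one, but the last inequality in your chain does not follow from \cref{complexity lemma} as cited. You claim $\delta_t(\cG^{-1}(G),\cF^N\cG^{-1}(G)) \leq \delta_t(G,\cF^N(G))$ ``by \cref{complexity lemma}(3) for $\cG^{-1}$''; however, \cref{complexity lemma}(3) applied with the endofunctor $\cG^{-1}$ and $E_1 = G$, $E_2 = \cF^N(G)$ gives $\delta_t(\cG^{-1}(G),\cG^{-1}\cF^N(G)) \leq \delta_t(G,\cF^N(G))$, and $\cG^{-1}\cF^N(G)$ is not $\cF^N\cG^{-1}(G)$ since $\cF$ and $\cG$ need not commute (indeed, if they did, conjugation invariance would be vacuous). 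Choosing $E_2 = \cG\cF^N\cG^{-1}(G)$ instead produces the desired left-hand side, but then the right-hand side is $\delta_t(G,\cG\cF^N\cG^{-1}(G))$, the quantity you started from, so the step is circular. The inequality you want is a comparison between the generators $\cG^{-1}(G)$ and $G$, which is not what \cref{complexity lemma}(3) provides. The fix is to reorder the decomposition: after your first two steps, bound
\[
\delta_t(G,\cF^N\cG^{-1}(G)) \leq \delta_t(G,\cF^N(G))\cdot\delta_t(\cF^N(G),\cF^N\cG^{-1}(G)) \leq \delta_t(G,\cF^N(G))\cdot\delta_t(G,\cG^{-1}(G)),
\]
using \cref{complexity lemma}(1) and then \cref{complexity lemma}(3) with the endofunctor $\cF^N$ rather than $\cG^{-1}$. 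This recovers your $N$-independent constant $C = \delta_t(G,\cG(G))\,\delta_t(G,\cG^{-1}(G))$ by a valid chain, and the rest of your argument (taking $\tfrac1N\log$, then swapping the roles of $\cF$ and $\cG\cF\cG^{-1}$, and of $\cG$ and $\cG^{-1}$) concludes exactly as you wrote.
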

\begin{remark} \label{rem: entropy of inverse}
Note that even when $\cF$ is assumed to be an autoequivalence, in general it is \emph{not true} that $h_t(\cF^k) = k h_t(\cF)$ for non-positive $k$ -- we shall see later (\cref{rem: entropy sum of itself and inverse}) that there are autoequivalences $\cF$ such that
\[
h_t(\cF^{-1}) \neq -h_t(\cF).
\]
However, it is indeed true that for any commuting endofunctors $\cF$ and $\cG$, 
\[
h_t(\cF\circ\cG) \leq h_t(\cF) + h_t(\cG).
\]
In particular, we have that
\[
h_t(\id_{\cD}) \leq h_t(\cF) + h_t(\cF^{-1}).
\]
\end{remark}

\subsection{$q$-Stability conditions on $\X$-categories}
Given a triangulated $\X$-category $(\cD, \X)$, its triangulated Grothendieck group $K_0(\cD, \X)$ is naturally a module over $\Z[q,q^{-1}]$ with $q\cdot [X] = [X \cdot \X]$.
We shall equip $\C$ with a module structure over $\Z[q,q^{-1}]$ as well, by evaluating $q = -1 \in \C$; we denote this $\Z[q,q^{-1}]$-module structure on $\C$ by $\C_{-1}$.

\begin{definition}[cf. \cite{ikeda2020qstability}] \label{defn: module q stab}
A \emph{$q$-stability condition} (with $s=-1$) on $(\cD, \X)$ consists of a stability condition $(\cP, \cZ)$ satisfying:
\begin{enumerate}
\item $
\cP(\phi)\cdot \X = \cP(\phi -1)
$; and
\item $\cZ \in \Hom_{\Z[q,q^{-1}]}(K_0(\cD, \X), \C_{-1})$.
\end{enumerate}
\end{definition}

\comment{
\begin{remark}
A $q$-stability condition as defined in \cite{ikeda2020qstability} is just a $q$-stability condition over $g\cC$ with $\cC$ the (fusion) category of finite dimensional vector space, and $\<1\>$ as the distinguish autoequivalence.
\end{remark}
}
\begin{remark}
In \cite{ikeda2020qstability}, they allow the more general setting of evaluating $q$ at any choice of $s \in \C$, where the first condition is then replaced with
\[
\cP(\phi) \cdot \X = \cP(\phi + \mathfrak{R}\mathfrak{e}(s)).
\]
Here we have fixed the choice of $s=-1$, and we will not go into the details regarding the different choices of $s$ and how it affects the space of $q$-stability conditions.
The interested reader may refer to \cite{ikeda2020qstability} for more details on this.
\end{remark}
One should think of (our) $q$-stability condition as a compatibility requirement between the two autoequivalences $\X$ and $[1]$ of $(\cD, \X)$.

\subsection{Level-entropy and mass growth with respect to $q$-stability conditions}
This subsection will be a reformulation of many of the results in \cite{ikeda_2020} for a $\X$-category $(\cD, \X)$.
As such we will only state the results, referencing the analogous statement in \cite{ikeda_2020} and sketch the proof whenever we think is necessary.

First, we show that for $(\cD, \X)$ with a $\X$-split generator $G$, the mass growth with respect to a $q$-stability condition (with $s = -1$) can be achieved by $G$ and it is a lower bound for level entropy.
\begin{theorem}[\protect{compare with \cite[Theorem 3.5]{ikeda_2020}} ] \label{mass growth split generator}
Let $\tau = (\cP, \cZ)$ be a $q$-stability condition (with $s=-1$) on $(\cD, \X)$ and $\cF : (\cD, \X) \ra (\cD, \X)$ be an endofunctor.
Suppose $(\cD, \X)$ has a $\X$-split generator $G$.
Then the $\tau$-mass growth satisfies the following:
\begin{enumerate}
\item $h_{\tau,t}(\cF)$ is achieved by $G$:
\[
h_{\tau,t}(\cF) = \limsup_{N\ra \infty} \frac{1}{N} \log  m_{\tau,t}(\cF^N(G)).
\]
\item The $\tau$-mass growth is a lower bound for level entropy:
\[
h_{\tau,t}(\cF) \leq h_t(\cF) < \infty.
\]
\end{enumerate}

\end{theorem}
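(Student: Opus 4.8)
The plan is to adapt the argument of \cite[Theorem 3.5]{ikeda_2020} to the $\X$-graded setting; the only genuinely new ingredient is the compatibility of mass with the distinguished autoequivalence $\X$ that is forced by the $q$-stability hypothesis, and everything else is bookkeeping with the two gradings. First I would record the two ``scaling'' identities for mass. Since $[1]$ raises every semistable phase by $1$ and negates classes in $K_0$, the standard computation gives $m_{\tau,t}(A[1]) = e^{t}\,m_{\tau,t}(A)$ for all $A$. For $\X$: condition (1) of \cref{defn: module q stab} gives that $A$ semistable of phase $\phi$ implies $A\cdot\X$ semistable of phase $\phi-1$, while condition (2) with $q\mapsto -1$ gives $\cZ(A\cdot\X) = -\cZ(A)$, hence $|\cZ(A\cdot\X)| = |\cZ(A)|$; applying this to each Harder--Narasimhan piece yields $m_{\tau,t}(A\cdot\X) = e^{-t}\,m_{\tau,t}(A)$. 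Combining,
\[
m_{\tau,t}\bigl(E\cdot\X^{k}[\ell]\bigr) \;=\; e^{(\ell-k)t}\, m_{\tau,t}(E)\qquad\text{for all }E,\ k,\ell\in\Z.
\]

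Together with the triangle inequality $m_{\tau,t}(B)\leq m_{\tau,t}(A)+m_{\tau,t}(C)$ on distinguished triangles, this gives the key estimate: for any object $F$ and any finite filtration of $F\oplus F'$ with subquotients $G\cdot\X^{k_i}[\ell_i]$ one has $m_{\tau,t}(F)\leq \sum_i e^{(\ell_i-k_i)t}\,m_{\tau,t}(G)$, so taking the infimum over all such filtrations yields $m_{\tau,t}(F)\leq m_{\tau,t}(G)\,\delta_t(G,F)$. Note $\delta_t(G,F)<\infty$ here, since $G$ is an $\X$-split generator of all of $\cD$, so the relevant thick subcategory is $\cD$ itself.

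For part (1), ``$\geq$'' in the equation is immediate as $G$ is one object over which the supremum defining $h_{\tau,t}(\cF)$ ranges. For ``$\leq$'', fix an arbitrary object $A$ and a filtration of $A\oplus A'$ with subquotients $G\cdot\X^{k_i}[\ell_i]$; applying the exact endofunctor $\cF^N$ (which commutes with $[1]$ and, by hypothesis on $\X$-categories, with $\X$) produces a filtration of $\cF^N(A)\oplus\cF^N(A')$ with subquotients $\cF^N(G)\cdot\X^{k_i}[\ell_i]$, so by the triangle inequality and the scaling identity
\[
m_{\tau,t}(\cF^N(A)) \;\leq\; \Bigl(\textstyle\sum_i e^{(\ell_i-k_i)t}\Bigr)\, m_{\tau,t}(\cF^N(G)),
\]
with the bracketed constant independent of $N$. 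Taking $\tfrac1N\log(-)$ and $\limsup_{N\to\infty}$, then the supremum over $A$, gives the claimed equality.

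For part (2), combine part (1) with $m_{\tau,t}(\cF^N(G))\leq m_{\tau,t}(G)\,\delta_t(G,\cF^N(G))$ from the second paragraph: applying $\tfrac1N\log$ and $\limsup$, the $\tfrac1N\log m_{\tau,t}(G)$ term vanishes and $h_{\tau,t}(\cF)\leq \lim_N \tfrac1N\log\delta_t(G,\cF^N(G)) = h_t(\cF)$. Finiteness of $h_t(\cF)$ follows from \cref{complexity lemma}: parts (1) and (3) give $\delta_t(G,\cF^{N+M}(G)) \leq \delta_t(G,\cF^N(G))\,\delta_t(G,\cF^M(G))$, so $N\mapsto \log\delta_t(G,\cF^N(G))$ is subadditive, whence by Fekete's lemma the limit exists and is bounded above by $\log\delta_t(G,\cF(G))<\infty$. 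The routine points I am glossing over are the well-definedness of $m_{\tau,t}$ (independence of the HN filtration) and the existence of the limit defining $h_t$, already noted in the text; I do not expect a serious obstacle, the scaling identity for $\X$ being the one place requiring care.
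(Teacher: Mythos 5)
Your proposal is correct and follows essentially the same route as the paper's own (terse) proof: derive the scaling identity $m_{\tau,t}(E\cdot\X^k[\ell]) = e^{(\ell-k)t}m_{\tau,t}(E)$ from the two defining conditions of a $q$-stability condition, deduce the key bound $m_{\tau,t}(F)\leq m_{\tau,t}(G)\,\delta_t(G,F)$, and then run Ikeda's argument verbatim with level complexity in place of complexity. The details you fill in (the Fekete/subadditivity step for finiteness, the commutation of $\cF$ with $\X$) are exactly what the paper delegates to the reference.
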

\begin{proof}[Proof sketch]
The assumption that $\tau$ is a $q$-stability condition gives us:
\[
m_{\tau,t}(A\<k\>[\ell]) = m_{\tau,t}(A\<k\>[k][\ell-k]) = m_{\tau,t}(A) e^{(\ell - k)t}.
\]
Using this, we obtain a bound on mass
\[
m_{\tau,t}(A) \leq m_{\tau,t}(B) \delta_{t}(B,A)
\]
as in \cite[Proposition 3.4]{ikeda_2020} with $\delta_{t}(B,A)$ denoting the level complexity instead.
The rest of the proof follows exactly as in \cite[Proposition 3.4]{ikeda_2020}.
\end{proof}

In \cite{ikeda_2020}, it was shown that the mass growth and entropy coincide for triangulated categories with algebraic hearts.
Since we are working with $\X$-categories, it will be sufficient to have the following type of heart:
\begin{definition}\label{defn: X1-algebraic heart}
Let $\cH$ be the heart of a bounded $t$-structure on $(\cD, \X)$.
We say $\cH$ is \emph{$\X[1]$-algebraic} if 
\begin{enumerate}[(i)]
\item $\cH$ is closed under the autoequivalence $\X[1]$, 
\item $\cH$ is finite length and
\item there is a finite set of simple objects $\{S_1, S_2, ..., S_m\}$ such that all simple objects in $\cH$ are isomorphic to $S_i\cdot \X^k[k]$ for some $1\leq i \leq m$ and some $k\in \Z$.
\end{enumerate}
\end{definition}

Note that if $(\cD, \X)$ has a $\X[1]$-algebraic heart $\cH$, then $G:= \oplus_{i=1}^m S_i$ is always a $\X$-split generator of $(\cD, \X)$.
Moreover, $K_0(\cH)$ is a free module over $\Z[x,x^{-1}]$, where the action is defined by
\[
x \cdot [A] = [A\cdot \X[1]]
\]
with $\{[S_1], [S_2], ..., [S_m] \}$ as a basis.
As such, any stability function $Z$ on $\cH$ obtained by specifying $Z([S_j]) = s_j \in \C$ and extended linearly, defines a $q$-stability condition (with $s=-1)$ on $(\cD, \X)$.
Note that the Harder-Narasimhan property comes for free since $\cH$ is finite length by definition.
As in \cite{ikeda_2020}, we consider the $q$-stability condition $\tau_0$ obtained by specifying
\[
Z([S_j]) = i \in \C
\]
for all $j$, which gives us the following theorem:
\begin{theorem}[\protect{compare with \cite[Theorem 3.14]{ikeda_2020}} ] \label{thm: entropy = mass growth}
Suppose $(\cD, \X)$ contains a $\X[1]$-algebraic heart $\cH$.
Let $\Stab^\circ(\cD) \subseteq \Stab(\cD)$ be the connected component containing the $q$-stability condition $\tau_0 = (\cP,\Z)$ as defined above.
Then for all $\tau \in \Stab^\circ(\cD)$ lying in the same connected component (not necessarily a $q$-stability condition),
\[
h_{\tau,t}(\cF) = h_{\tau_0, t}(\cF) = \lim_{N \ra \infty} \frac{1}{N} \log m_{\tau_0,t}(\cF^N(G)) = h_t(\cF).
\]
for any $\X$-split generator $G$.
\end{theorem}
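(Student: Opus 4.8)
The plan is to follow Ikeda's strategy for algebraic hearts \cite{ikeda_2020}, exploiting the fact that level complexity $\delta_t$ was engineered precisely so that the autoequivalence $\X[1]$ is invisible to it. Half of the statement is already in hand: $\tau_0$ is a $q$-stability condition (with $s=-1$) and $G := \bigoplus_{i=1}^m S_i$ is a $\X$-split generator of $(\cD,\X)$, so \cref{mass growth split generator} gives both $h_{\tau_0,t}(\cF) = \limsup_{N}\tfrac1N\log m_{\tau_0,t}(\cF^N(G))$ and $h_{\tau_0,t}(\cF)\le h_t(\cF)$; and by the invariance of mass growth within a connected component of $\Stab(\cD)$ \cite{ikeda_2020}, $h_{\tau,t}(\cF)=h_{\tau_0,t}(\cF)$ for all $\tau\in\Stab^\circ(\cD)$. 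So the whole statement reduces to the reverse inequality $h_t(\cF)\le h_{\tau_0,t}(\cF)$, which I would deduce from the single uniform comparison
\[
\delta_t(G,A) \;\le\; e^{-t/2}\, m_{\tau_0,t}(A)\qquad\text{for all } A\in\Ob(\cD).
\]

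First I would pin down the slicing of $\tau_0$. Since $Z$ is additive on $K_0(\cH)$ and $Z([S_j])=i$ for every simple $S_j$ of $\cH$, every non-zero $H\in\cH$ has $Z([H])=\ell(H)\,i$, where $\ell(H)$ is the (finite) length of $H$ in $\cH$; hence every non-zero object of $\cH$ is $\tau_0$-semistable of phase $1/2$, so that $\cP(1/2)=\cH$ and $\cP(\phi)=0$ for $\phi\not\equiv 1/2\pmod 1$. Consequently the Harder-Narasimhan filtration of any $A\in\cD$ with respect to $\tau_0$ has semistable pieces of the form $H_i[n_i]$ with $H_i\in\cH$ and $n_i\in\Z$ (these are just the shifted cohomology objects of $A$ for the underlying $t$-structure), with phases $\phi_i=\tfrac12+n_i$. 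Using that $|Z(\cdot)|$ is unchanged by $[1]$, this gives $m_{\tau_0,t}(A)=\sum_i|Z([H_i])|\,e^{(1/2+n_i)t}=e^{t/2}\sum_i\ell(H_i)\,e^{n_i t}$.

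Next I would bound $\delta_t(G,A)$ in matching terms. Subadditivity of level complexity over triangles (\cref{complexity lemma}(2)) together with the scaling rule $\delta_t(G,F[\ell])=e^{\ell t}\delta_t(G,F)$ — immediate from the definition, since shifting by $[\ell]$ adds $\ell$ to every exponent $\alpha_i=\ell_i-k_i$ — yields $\delta_t(G,A)\le\sum_i e^{n_i t}\,\delta_t(G,H_i)$. For $H\in\cH$, a Jordan-H\"older filtration realises $H$ as an iterated extension of $\ell(H)$ simple objects of $\cH$, each isomorphic to some $S_j\cdot\X^k[k]=S_j\cdot(\X[1])^k$, so repeated use of \cref{complexity lemma}(2) bounds $\delta_t(G,H)$ by a sum of $\ell(H)$ terms $\delta_t(G,S_j\X^k[k])$. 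Presenting $S_j\X^k[k]$ via the length-one filtration $0\to G\cdot\X^k[k]$ (taking $F'=(\bigoplus_{i\ne j}S_i)\cdot\X^k[k]$) shows $\delta_t(G,S_j\X^k[k])\le e^{(k-k)t}=1$ — the key point that the level grading $[-]-\langle-\rangle$ kills the $(\X[1])^k$-twist. Hence $\delta_t(G,H)\le\ell(H)$, and therefore $\delta_t(G,A)\le\sum_i\ell(H_i)\,e^{n_i t}=e^{-t/2}\,m_{\tau_0,t}(A)$, which is the claimed comparison.

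Finally, applying this to $A=\cF^N(G)$, taking $\tfrac1N\log$ and letting $N\to\infty$, the left side converges to $h_t(\cF)$ (the limit defining level entropy exists by subadditivity of $N\mapsto\log\delta_t(G,\cF^N(G))$, which follows from \cref{complexity lemma}), while the right side converges to $h_{\tau_0,t}(\cF)$ by \cref{mass growth split generator}(1); this gives $h_t(\cF)\le h_{\tau_0,t}(\cF)$, and with the already-established reverse inequality all four quantities coincide. I expect the only genuinely delicate point to be the identification $\cP(1/2)=\cH$ together with the bookkeeping that both $m_{\tau_0,t}$ and $\delta_t$ decompose along the cohomology filtration in compatible ways; beyond that no new idea over \cite{ikeda_2020} is required, since level complexity was defined exactly so that this $\X$-twisted statement goes through by the same argument.
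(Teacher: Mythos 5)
Your proof is correct and follows the same route the paper intends: the key comparison $\delta_t(G,A)\le e^{-t/2}\,m_{\tau_0,t}(A)$ is exactly the $\X[1]$-twisted analogue of Propositions 3.12–3.13 in \cite{ikeda_2020}, with the observation that the level grading cancels the $\X^k[k]$-twist playing the role that the single $\Z$-grading plays there. One small point you gesture at but could spell out: combining your pointwise bound with \cref{mass growth split generator}(2) gives $h_t(\cF)\le\liminf_N\tfrac1N\log m_{\tau_0,t}(\cF^N(G))\le\limsup_N\tfrac1N\log m_{\tau_0,t}(\cF^N(G))=h_{\tau_0,t}(\cF)\le h_t(\cF)$, which both closes the chain of equalities and upgrades the $\limsup$ to the genuine $\lim$ asserted in the statement.
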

\begin{remark}
One of the main purpose of relating mass growth and level-entropy in this thesis is to obtain the fact that ``mass growth is invariant under conjugation'' whenever mass growth and level-entropy agree (cf. \cref{prop: entropy properties}).
If one shows that the orbit of $\tau$ in $\Stab(\cD)$ under $\Aut(\cD)$ are all in the same connected component, then $h_{\tau,t}$ will always be invariant under conjugation. 
\end{remark}

\section{Stability conditions on triangulated module $\X$-categories over fusion categories}
In this section, we include the module category structure into our analysis.
We start with the introduction of stability conditions on triangulated module categories, and then combine it with the $q$-stability conditions defined in the previous section.
We then prove an equivalent definition of these stability conditions in terms of stability functions on the heart (equipped with some extra structure), similar to \cref{thm: stab function and stab cond}.

Throughout this section, $\cC$ denotes a fusion category,
$(\cD,\Psi)$ is a triangulated module category over $\cC$ and $(\cD,\X, \Psi)$ is a triangulated module $\X$-category over $\cC$.

\subsection{Stability conditions for module categories over fusion categories}
Recall that the Grothendieck group $K_0(\cD)$ of a triangulated module category $(\cD,\Psi)$ over $\cC$ has a natural module structure over $K_0(\cC)$.
On the other hand, we have a ring homomorphism from $K_0(\cC)$ into $\C$ induced by the Perron-Frobenius dimension (see \cref{sect: fusion}):
\[
PF\dim: K_0(\cC) \ra \R \subset \C,
\]
which provide $\C$ with a natural module structure over $K_0(\cC)$.
We shall use $\C_\cC$ to denote this module structure on $\C$.
\begin{definition}\label{defn: stab respects C}
Let $(\cD, \Psi)$ be a triangulated module category over $\cC$ and let $\tau = (\cP, \cZ)$ be a stability condition on $\cD$.
We say that $\tau$ \emph{respects $\cC$} if it satisfies:
\begin{enumerate}
\item $
\Psi(C)\cP(\phi) \subseteq \cP(\phi)
$ for all $C \in \cC$; and
\item $\cZ \in \Hom_{K_0(\cC)}(K_0(\cD), \C_\cC)$.
\end{enumerate}
\end{definition}
\begin{remark}
We expect the (sub)space of stability conditions respecting $\cC$ to form a $\C$-(sub)manifold as well.
\end{remark}

\begin{example}
Recall that every additive category is trivially a module category over $\mathbb{K}$-vec.
With $\cD$ a triangulated category equipped with this canonical module category structure over $\mathbb{K}$-vec, all stability conditions on $\cD$ respect $\mathbb{K}$-vec.
\end{example}

\begin{example} \label{eg: TLJ_5 stab respect C}
Let $\cC:= \TLJ_5$ be the Temperley-Lieb-Jones category evaluated at the 10th root of unity, which is a fusion category with four simple objects $\{\Pi_0, \Pi_1, \Pi_2, \Pi_3\}$.
Consider its (bounded) derived category $D^b(\cC)$, which is naturally a triangulated module category over $\cC$ given by tensoring with the objects in $\cC$.
Let $Z$ be the stability function $Z: K_0(\cC) \ra \C$ on the standard heart $\cC \subset D^b(\cC)$ defined by
\[
Z([\Pi_a]) = \begin{cases}
	1, &\text{ for } a=0,3; \\
	2\cos\left( \frac{\pi}{5} \right), &\text{ for } a=1,2.
	\end{cases}
\]
The stability condition on $\cD^b(\cC)$ induced by $Z$ is a stability condition that respects $\cC$.
\end{example}

Note that given a stability condition $\tau$ respecting $\cC$, the standard heart of $\tau$ given by $\cH:= \cP[0,1)$ must be an abelian category closed under endofunctors $\Psi(C)$ for all $C$ in $\cC$.
Namely, $(\cH, \Psi)$ is a $\cC$-module subcategory of $(\cD, \Psi)$.
Hence its (exact) Grothendieck group $K_0(\cH)$ has a module structure over $K_0(\cC)$.
With the $K_0(\cC)$-module structure on $\C$ as before, we see that the induced stability function $Z: K_0(\cH) \ra \C$ on $\cH$ is a $K_0(\cC)$-module homomorphism.

Motivated by this, we make the following definition:
\begin{definition} \label{defn: stab function over C}
Let $\cA$ be an abelian category and $(\cA, \Phi)$ be an (abelian) module category over $\cC$.
A \emph{stability function over $\cC$} on $(\cA, \Phi)$ is a stability function $Z: K_0(\cA) \ra \C$  satisfying the extra condition that $Z$ is a $K_0(\cC)$-module homomorphism, where 
\begin{enumerate}
\item the (exact) Grothendieck group $K_0(\cA)$ is a $K_0(\cC)$-module induced by the $\cC$-module structure on $(\cA, \Phi)$; and
\item $\C$ is a $K_0(\cC)$-module induced by $PF\dim: K_0(\cC) \ra \R \subset \C$.
\end{enumerate}
\end{definition}

Even though $\Phi(C)$ may not be invertible, it turns out that $\Phi(C)$ still sends semistable objects to semistable objects (in the abelian category sense):
\begin{proposition} \label{cC action preserve semistable}
Let $(\cA, \Phi)$ be a $\cC$-module abelian category equipped with a stability function $Z: K_0(\cA) \ra \C$ over $\cC$ satisfying the Harder--Narasimhan property.
Then for any object $A$ in $\cA$, the following are equivalent:
\begin{enumerate}
\item $A$ is semistable;
\item $\Phi(C)A$ is semistable for all $C \neq 0 \in \cC$; and
\item $\Phi(C)A$ is semistable for some $C \neq 0 \in \cC$.
\end{enumerate}
\end{proposition}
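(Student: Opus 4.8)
The plan is to prove the three conditions equivalent by the cycle $(2)\Rightarrow(3)\Rightarrow(1)\Rightarrow(2)$, with only the last implication carrying real content. The single fact that drives everything is that $Z$ is $K_0(\cC)$-linear while $PF\dim$ is strictly positive on every nonzero object of $\cC$ (\cref{sect: fusion}): for all objects $A$ of $\cA$ and all $C\in\cC$,
\[
Z([\Phi(C)A]) = PF\dim(C)\cdot Z([A]).
\]
Hence, for $C\neq 0$, the object $\Phi(C)A$ is nonzero exactly when $A$ is, and it has the \emph{same phase} as $A$. I would record this at the outset, together with two standing facts used throughout: each $\Phi(C)$ is an exact endofunctor of $\cA$, and (semisimplicity of $\cC$) $\Phi(C)A$ is a finite direct sum of objects $\Phi(S)A$ with $S$ simple.

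The implication $(2)\Rightarrow(3)$ is immediate (take $C=\1$). For $(3)\Rightarrow(1)$ I would argue the contrapositive: if $A$ is not semistable, its Harder--Narasimhan filtration provides a subobject $\lceil A\rceil\hookrightarrow A$ with $\phi(\lceil A\rceil)=\phi_+(A)>\phi(A)$; applying the exact functor $\Phi(C)$ yields a monomorphism $\Phi(C)\lceil A\rceil\hookrightarrow\Phi(C)A$ whose source is nonzero and of phase $\phi_+(A)>\phi(A)=\phi(\Phi(C)A)$, so $\Phi(C)A$ is not semistable, for \emph{every} nonzero $C$. In particular this already proves the a priori stronger statement that $\Phi(C)A$ semistable for \emph{some} nonzero $C$ forces $A$ to be semistable.

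For $(1)\Rightarrow(2)$, let $A$ be semistable of phase $\phi$. Since a finite direct sum of semistable objects all of phase $\phi$ is again semistable, and $\Phi(C)A=\bigoplus_S\Phi(S)A$ over the simple summands $S$ of $C$, it suffices to take $C=S$ simple. Using rigidity, $S$ has a simple dual $S^{*}$ with $\Phi(S^{*})$ exact, and since $\1$ occurs in $S\otimes S^{*}$ with multiplicity one we may write $S\otimes S^{*}\cong\1\oplus W$, whence $\Phi(S^{*})\Phi(S)A\cong A\oplus\Phi(W)A$; if $W=0$ (i.e.\ $S$ invertible) this is just $A$ and $(3)\Rightarrow(1)$ already finishes, so assume $W\neq 0$. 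Now suppose, for contradiction, that $\Phi(S)A$ is not semistable, and set $B:=\lceil\Phi(S)A\rceil$, semistable of phase $\psi:=\phi_+(\Phi(S)A)>\phi$. Applying the exact functor $\Phi(S^{*})$ to $B\hookrightarrow\Phi(S)A$ and passing to the top HN piece $B':=\lceil\Phi(S^{*})B\rceil$ produces a semistable subobject $B'\hookrightarrow A\oplus\Phi(W)A$ of phase $\ge\psi>\phi$; its composite with the projection onto $A$ must vanish, since a nonzero map out of a semistable object of phase $>\phi$ into $A$ would have image a subobject of $A$ of phase $>\phi$, contradicting semistability of $A$. Hence $B'\hookrightarrow\Phi(W)A$, so $\phi_+(\Phi(W)A)\ge\psi>\phi=\phi(\Phi(W)A)$, and decomposing $W$ into simples, $\Phi(S')A$ is not semistable for some simple summand $S'$ of $S\otimes S^{*}$ with $S'\neq\1$.

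The delicate point is turning this into a contradiction. The reduction just obtained --- ``$\Phi(S)A$ non-semistable $\Rightarrow$ $\Phi(S')A$ non-semistable for some simple $S'\subseteq S\otimes S^{*}$'' --- is not a well-founded recursion: $S'$ may have \emph{larger} Perron--Frobenius dimension than $S$, and the naive extremal argument (pick $T$ maximising $\phi_+(\Phi(T)A)$ over the finitely many simple $T$ with $\Phi(T)A$ non-semistable) only shows the maximum is propagated, not that it cannot occur. A correct proof must track the destabilizing subobjects more carefully; the route I would pursue is to work directly inside the abelian category of semistable objects of phase $\phi$ and show it is preserved by every $\Phi(C)$, combining the exactness of $\Phi(C)$, the splitting $\Phi(S^{*})\Phi(S)\cong\id_{\cA}\oplus\Phi(W)$, and the already-proven $(3)\Rightarrow(1)$ (in the form ``$\Phi(S^{*})\Phi(S)A$ semistable $\Rightarrow\Phi(S)A$ semistable'') to set up an induction that genuinely terminates. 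Everything outside this termination step is formal, so I expect it to be the main obstacle.
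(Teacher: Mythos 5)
Your implications $(2)\Rightarrow(3)$ and $(3)\Rightarrow(1)$ are fine (your contrapositive for the latter is marginally different from the paper's direct argument, but both work), and your setup for $(1)\Rightarrow(2)$ tracks the paper's closely up to the point where you honestly flag a gap. The gap is real, and you have correctly diagnosed \emph{why} your recursion fails to terminate: you only extract $\phi_+(\Phi(W)A)\ge\psi$, so when $S$ is chosen extremally you merely see the maximum propagated to another simple, never contradicted.

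The paper's fix is precisely the strictness you are missing, and it comes from the adjunction rather than the splitting $\Phi(S^{*})\Phi(S)\cong\id\oplus\Phi(W)$ that you use. Set $r:=\max_i\phi_+(\Phi(S_i)A)$ over the (finitely many) simples, and suppose $r>\phi(A)$, achieved by $S$, with $B:=\lceil\Phi(S)A\rceil$ of phase $r$. Because $\Phi(S^{\vee})$ is adjoint to $\Phi(S)$, the nonzero inclusion $B\hookrightarrow\Phi(S)A$ corresponds under $\Hom_{\cA}(B,\Phi(S)A)\cong\Hom_{\cA}(\Phi(S^{\vee})B,A)$ to a nonzero map $\Phi(S^{\vee})B\to A$. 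Now $\Phi(S^{\vee})B$ has phase $r>\phi(A)$; if it were semistable, the standard Hom-vanishing for a stability function would force that map to be zero. So $\Phi(S^{\vee})B$ is \emph{not} semistable, and hence $\phi_+(\Phi(S^{\vee})B)>\phi(\Phi(S^{\vee})B)=r$ — a \emph{strict} inequality. Its top HN piece $B'$ is then a nonzero semistable subobject of $\Phi(S^{\vee})\Phi(S)A\cong\Phi(S\otimes S^{\vee})A\cong\bigoplus_i\Phi(S_i)^{\oplus m_i}A$, but that direct sum has $\phi_+\le r<\phi(B')$, so $\Hom(B',\Phi(S\otimes S^{\vee})A)=0$, contradiction. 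So the extremal argument does close once you replace ``project to $\Phi(W)A$'' with ``use the adjunction to produce a nonzero $\Hom(\Phi(S^{\vee})B,A)$, then invoke semistability of $A$ to deduce $\Phi(S^{\vee})B$ is not semistable''; that is the one idea your proposal is missing, and it is exactly what turns $\ge r$ into $>r$.
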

\begin{proof}
Note that $Z$ being an $K_0(\cC)$-module map dictates that 
\[
\phi \left(\Psi(C)A \right) = \phi(A)
\]
for any object $A \in \cA$ and any $C \in \cC$.

\comment{
Furthermore, $\cdot \Y^m$ is an autoequivalence and $-\otimes \Pi_a$ is a self biadjoint functor, hence
\begin{equation}
\Hom_\cH(U,X\otimes \Pi_a \cdot \Y^m) \cong \Hom_\cH(U\otimes \Pi_a \<-m\>[m], X)
\end{equation}
and similarly
\begin{equation}
\Hom_\cH(U,X\otimes \Pi_a \cdot \Y^m[m]) \cong \Hom_\cH(U\otimes \Pi_a \<-m\>[m], X)
\end{equation}
for any object $U\in \cH$.
}

Let us first prove $(1) \Rightarrow (2)$.
Let $A$ be a semistable object and let $\{[S_i]\}_{i = 1}^n$ be the finite set of isomorphism classes of simple objects in $\cC$, with $S_1 = \1$ the monoidal unit; this set is finite as $\cC$ is a fusion category.
Since every non-zero object in $\cC$ is (upto isomorphism) a direct sum of $S_i$'s, it is sufficient to prove that $\Psi(S_i) A$ is semistable for all $i$.

Denote $r:= \max \{\phi_+(\Psi(S_i) X) \}_{i=1}^n$, so that $r \geq \phi_+(\Psi(S_0) X) = \phi(X)$ (see \cref{defn: abelian stability} for the notations).
If indeed $r = \phi(X)$, then all $\Psi(S_i) X$ must be semistable and so we are done.
Assume to the contrary that $r > \phi(X)$, so there exists some $S:= S_k$ with $\phi_+(\Psi(S) X) = r$.
In particular, $\Psi(S) X$ is not semistable, so we have a semistable object $E:= \lceil \Psi(S) X \rceil \subseteq \Psi(S) X$ with $\phi(E) = \phi_+(\Psi(S) X) = r$.
With $S^\vee$ denoting the (right) dual of $S$, $\Psi(S)$ is then a left adjoint of $\Psi(S^\vee)$, hence we have the isomorphism  
\[   
\Hom_{\cA}(E, \Psi(S) X) \cong \Hom_{\cA}(\Psi(S^\vee) E, X).  
\]
Since $E$ is a (non-zero) subobject of $\Psi(S)X$, it must be that $\Hom_{\cA}(E, \Psi(S) X) \neq 0$, which implies that $\Hom_{\cA}(\Psi(S^\vee) E, X) \neq 0$.
Combining this with the fact that $\phi(\Psi(S^\vee)E) = \phi(E) = r > \phi(X)$, it must be the case that $\Psi(S^\vee)E$ is not semistable.
So we have a semistable (non-zero) subobject $E':= \lceil \Psi(S^\vee)E \rceil \subseteq \Psi(S^\vee)E$ with 
\[
\phi(E') = \phi_+(\Psi(S^\vee)E) > \phi(\Psi(S^\vee)E) = r.
\]
Since $\Psi(C)$ are all exact endofunctors, we have that
\[
E' \subseteq \Psi(S^\vee)E \subseteq \Psi(S \otimes S^\vee) X.
\]
Note that $\Psi(S \otimes S^\vee) X \cong \bigoplus_{i=1}^n \Psi(S_i^{\oplus m_i}) X$, and so $r \geq \phi_+(\Psi(S \otimes S^\vee) X)$.
Since $E'$ is semistable and $\phi(E') >r \geq \phi_+(\Psi(S \otimes S^\vee) X)$, we must have $\Hom_\cA(E', \Psi(S \otimes S^\vee) X) = 0$.
This contradicts the fact that $E'$ is a non-zero subobject of $\Psi(S \otimes S^\vee) X$, hence the initial assumption $r> \phi(X)$ must be false.

$(2) \Rightarrow (3)$ is immediate.

We now show $(3) \Rightarrow (1)$.
Suppose $\Psi(C)A$ is semistable for some $C \neq 0 \in \cC$.
Consider an arbitrary exact sequence in $\cA$:
\[
0 \ra U \ra A \ra V \ra 0.
\]
Applying $\Psi(C)$, we get the exact sequence
\[
0 \ra \Psi(C)U \ra \Psi(C)A \ra \Psi(C)V \ra 0.
\]
Once again the semistable property of $\Psi(C)A$ tells us that
\[
\phi(\Psi(C)U) \leq \phi(\Psi(C)A),
\]
and therefore
$
\phi(U) \leq \phi(A).
$
\end{proof}

This allows us to obtain the following variant of \cref{thm: stab function and stab cond}:
\begin{theorem} \label{thm: stab function over C and stab cond respecting C}
Giving a stability condition respecting $\cC$ on $(\cD, \Psi)$ is equivalent to giving a heart $\cH$ of $\cD$ such that $(\cH, \Psi)$ is a $\cC$-module subcategory, together with a stability function over $\cC$ on $(\cH, \Psi)$ which satisfies the Harder-Narasimhan property.
\end{theorem}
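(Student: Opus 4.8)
The statement is a $\cC$-equivariant refinement of Bridgeland's correspondence (\cref{thm: stab function and stab cond}), so the plan is to bootstrap off that theorem and then check that the two extra conditions --- $\Psi(C)$-invariance of the slicing/heart and $K_0(\cC)$-linearity of the central charge --- correspond to each other under the bijection. First I would set up the two directions of the correspondence. Given a stability condition $\tau = (\cP, \cZ)$ respecting $\cC$, \cref{thm: stab function and stab cond} already hands us a bounded $t$-structure with heart $\cH = \cP[0,1)$ and a stability function $Z$ on $\cH$ satisfying the Harder--Narasimhan property; what remains is to observe that condition (1) of \cref{defn: stab respects C} forces $\Psi(C)\cH \subseteq \cH$ (since each $\Psi(C)$ is exact and preserves every slice $\cP(\phi)$ with $\phi \in [0,1)$), so $(\cH, \Psi)$ is a $\cC$-module subcategory, and that condition (2) restricts to give $Z \in \Hom_{K_0(\cC)}(K_0(\cH), \C_\cC)$, i.e. a stability function over $\cC$ in the sense of \cref{defn: stab function over C}. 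The identification $K_0(\cH) \cong K_0(\cD)$ coming from boundedness is compatible with the $K_0(\cC)$-actions on both sides, which is the only slightly fiddly bookkeeping point here.

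For the converse, suppose we are given a heart $\cH$ with $(\cH, \Psi)$ a $\cC$-module subcategory and a stability function $Z$ over $\cC$ on $(\cH,\Psi)$ with the HN property. Applying \cref{thm: stab function and stab cond} produces a stability condition $\tau = (\cP, \cZ)$ with $\cP[0,1) = \cH$ and $\cZ$ extending $Z$ under $K_0(\cH)\cong K_0(\cD)$; condition (2) of \cref{defn: stab respects C} is then immediate since $K_0(\cC)$-linearity of $Z$ transports to $\cZ$ via the same isomorphism. The real content is condition (1): I must show $\Psi(C)\cP(\phi) \subseteq \cP(\phi)$ for \emph{all} real $\phi$, not just $\phi \in [0,1)$. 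Here is where \cref{cC action preserve semistable} does the heavy lifting. For $\phi \in [0,1)$, the slice $\cP(\phi)$ consists exactly of the $Z$-semistable objects of $\cH$ of phase $\phi$, and \cref{cC action preserve semistable} (applied to the abelian category $\cH$ with its stability function over $\cC$) tells us $\Psi(C)$ sends semistables to semistables; combined with the fact (noted in the proof of that proposition) that $Z$ being a $K_0(\cC)$-module map forces $\phi(\Psi(C)A) = \phi(A)$, we get $\Psi(C)\cP(\phi)\subseteq \cP(\phi)$ on $[0,1)$. To extend to all $\phi\in\R$, I would use $\cP(\phi+1) = \cP(\phi)[1]$ together with the standing assumption (from the ``Notation and conventions'' section) that the endofunctors $\Psi(C)$ commute trivially with $[1]$, so $\Psi(C)\cP(\phi)[1] = (\Psi(C)\cP(\phi))[1] \subseteq \cP(\phi)[1] = \cP(\phi+1)$; induction in both directions covers all integers, hence all reals.

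Finally I would remark that the two assignments are mutually inverse, which follows because the underlying (non-equivariant) assignments of \cref{thm: stab function and stab cond} already are, and we have added no new data --- only verified extra compatibility --- in each direction. The main obstacle is genuinely \cref{cC action preserve semistable}: without the non-obvious fact that the (possibly non-invertible) functors $\Psi(C)$ preserve semistability, one could not promote slice-invariance from the heart to the whole slicing, and the adjunction argument in that proposition (using that $\Psi(S)$ is left adjoint to $\Psi(S^\vee)$ for $S^\vee$ the dual) is the crux of the whole correspondence. Everything else is checking that the isomorphism $K_0(\cH)\cong K_0(\cD)$ is $K_0(\cC)$-linear and that $[1]$-equivariance lets one pass between slices, both of which are routine given the conventions already fixed in the paper.
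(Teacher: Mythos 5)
Your proof is correct and follows the same approach as the paper: the forward direction is exactly the discussion preceding \cref{defn: stab function over C}, and the backward direction hinges on \cref{cC action preserve semistable} precisely as you identify. The only place you go beyond the paper's terse ``follows from'' is in spelling out the extension from $\phi\in[0,1)$ to all $\phi\in\R$ via $[1]$-equivariance, which is a correct and worthwhile detail to make explicit.
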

\begin{proof}
The proof for the implication $(\Rightarrow)$ follows from the discussion before \cref{defn: stab function over C}, whereas the proof for $(\Leftarrow)$ follows from \cref{cC action preserve semistable}.
\end{proof}

\subsection{$q$-Stability conditions respecting $\cC$}
The main focus of our thesis will be on triangulated module $\X$-categories $(\cD, \X, \Psi)$ over $\cC$.
Hence $K_0(\cD)$ will be a module over $K_0(\cC)[q,q^{-1}]$ and we again extend $\C$ to a $K_0(\cC)[q,q^{-1}]$-module by evaluating $q = -1$.
As such, we can consider stability conditions $\tau$ which are both $q$-stability conditions and also respect $\cC$.
Collecting the two definitions, we have the following:
\begin{definition}[cf. \cref{defn: module q stab}, \cref{defn: stab respects C}]\label{defn: q stab respect C}
A \emph{$q$-stability condition $\tau = (\cZ, \cP)$ (with $s=-1$) that respects $\cC$} is a stability condition that satisfies:
\begin{enumerate}
\item $\cP(\phi)\cdot \X[1] = \cP(\phi)$;
\item $
\Psi(C)\cP(\phi) \subseteq \cP(\phi)
$ for all $C \in \cC$; and
\item $\cZ \in \Hom_{K_0(\cC)[q,q^{-1}]}(K_0(\cD), \C_{\cC,-1})$.
\end{enumerate}
\end{definition}

Note that given a $q$-stability condition $\tau$ respecting $\cC$, the standard heart of $\tau$ given by $\cH:= \cP[0,1)$ is now an abelian category closed under endofunctors $\Psi(C)$ for all $C$ in $\cC$ and also under the autoequivalence $\X[1]$.
Namely, $(\cH, \X[1], \Psi)$ is a $\cC$-module subcategory of $(\cD, \X, \Psi)$ equipped with the distinguished autoequivalence $\X[1]$.
Hence its Grothendieck group $K_0(\cH)$ has a module structure over $K_0(\cC)[x,x^{-1}]$ defined by 
\[
x \cdot [A] = [A\cdot \X[1]] \in K_0(\cH).
\]
Viewing $\C$ as a module over $K_0(\cC)[x,x^{-1}]$ by sending $x \mapsto 1$, we see that the induced stability function $Z: K_0(\cH) \ra \C$ on $\cH$ is a $K_0(\cC)[x,x^{-1}]$-module homomorphism.
Similar to \cref{defn: stab function over C}, we make the following definition:
\begin{definition} \label{defn: x[1] stab function}
Let $\cA$ be an abelian category and $(\cA, \Y, \Phi)$ be an (abelian) module $\Y$-category over $\cC$.
An \emph{$\Y$-stability function over $\cC$} on $(\cA, \Y, \Phi)$ is a stability function $Z: K_0(\cA) \ra \C$  satisfying the extra condition that $Z$ is a $K_0(\cC)[x,x^{-1}]$-module homomorphism, where 
\begin{enumerate}
\item the abelian Grothendieck group $K_0(\cA)$ is a $K_0(\cC)[x,x^{-1}]$-module induced by the $\cC$-module structure on $(\cA, \Y, \Phi)$ together with $x\cdot [A] = [A\cdot \Y] \in K_0(\cA)$; and
\item $\C$ is a $K_0(\cC)[x,x^{-1}]$-module induced by $PF\dim: K_0(\cC) \ra \R \subset \C$ together with the evaluation $x \mapsto 1$.
\end{enumerate}
\end{definition}

\comment{
\begin{definition}
Let $\cH$ be a heart of some $t$-structure on $(\cD_\X, \Psi)$, such that $(\cH|_{\X[1]}, \Psi|_\cH)$ is a $\cC$-submodule category equipped with the distinguished autoequivalence $\X[1]$.
An \emph{$\X[1]$-stability function over $\cC$} on $(\cH|_{\X[1]}, \Psi|_\cH)$ is a stability function $Z: K_0(\cH) \ra \C$ on $\cH$ satisfying the extra condition that $Z$ is a $K_0(\cC)[x,x^{-1}]$-module homomorphism, where 
\begin{enumerate}
\item $K_0(\cH)$ is a $K_0(\cC)[x,x^{-1}]$-module induced by the $\cC$-module structure on $\cH_{\X[1]}$ together with $x\cdot [A] = [A\cdot \X[1]] \in K_0(\cH)$; and
\item $\C$ is a $K_0(\cC)[x,x^{-1}]$-module induced by $PF\dim: K_0(\cC) \ra \R \subset \C$ together with the evaluation $x = 1$.
\end{enumerate}
\end{definition}
}

It follows from the definition of a $\Y$-stability function over $\cC$ that $\Y$ and $\Phi(C)$ preserve the phase of objects in $\cA$ for all $C \in \cC$.
Since $\Y$ is an autoequivalence, it follows that $\Y$ must preserve (and reflect) semistable objects.
As such, we can easily extend \cref{thm: stab function over C and stab cond respecting C} to obtain the following variant of \cref{thm: stab function and stab cond}:
\comment{
On the other hand, $\Phi(C)$ is just an endofunctor, so one might not expect this property to hold.
However, it turns out that this property for $\Phi(C)$ is an implication of the fact that every object in $\cC$, a fusion category, is dualisable (as shown below).
\begin{proposition} \label{linear shift cC action semistable}
Let $(\cA, \Y, \Phi)$ be a $\cC$-module abelian $\Y$-category equipped with $Z: K_0(\cA) \ra \C$ a $\Y$-stability function over $\cC$.
Then for any object $A$ in $\cA$, the following are equivalent:
\begin{enumerate}
\item $A$ is semi-stable;
\item $\Phi(C)A \cdot \Y^m$ is semistable for all $C \neq 0 \in \cC$ and all $m\in \Z$; and
\item $\Phi(C)A \cdot \Y^m$ is semistable for some $C \neq 0 \in \cC$ and some $m\in \Z$.
\end{enumerate}
\end{proposition}
\begin{proof}
Note that $Z$ being an $K_0(\cC)[x,x^{-1}]$-module map dictates that 
\[
\phi \left(\Psi(C)A\cdot \Y^m \right) = \phi(A)
\]
for any object $A \in \cA$, any $m\in \Z$, and any $C \in \cC$.

\comment{
Furthermore, $\cdot \Y^m$ is an autoequivalence and $-\otimes \Pi_a$ is a self biadjoint functor, hence
\begin{equation}
\Hom_\cH(U,X\otimes \Pi_a \cdot \Y^m) \cong \Hom_\cH(U\otimes \Pi_a \<-m\>[m], X)
\end{equation}
and similarly
\begin{equation}
\Hom_\cH(U,X\otimes \Pi_a \cdot \Y^m[m]) \cong \Hom_\cH(U\otimes \Pi_a \<-m\>[m], X)
\end{equation}
for any object $U\in \cH$.
}

Let us first prove $(1) \Rightarrow (2)$.
Suppose $A$ is semistable and let $C \neq 0$ and $m$ be arbitrary.
Let
\[
0 \ra U \ra \Psi(C)A\cdot \Y^m \ra V \ra 0
\]
be an arbitrary exact sequence in $\cA$.
Recall that since $\cC$ is a fusion category, all objects in $\cC$ have duals.
So let $C^\vee$ be the (left) dual of $C$, so that there is a non-zero map $C^\vee \otimes C \ra \1_\cC$ as the counit of the adjunction.
As such, since $\cC$ is semisimple, this means that $C^\vee \otimes C$ contains $\1_\cC$ as a summand, namely
\[
C\otimes C^\vee \cong \1_\cC \oplus D
\]
for some $D \in \cC$.
Applying the exact functors $\Psi(C^\vee)$ and $\Y^{-m}$, we get the exact sequence
\[
0 \ra \Psi(C^\vee)U \cdot \Y^{-m} \ra \Psi(C^\vee \otimes C) A \ra \Psi(C)V \cdot \Y^{-m} \ra 0.
\]
Thus, we have an injection 
\[
\Psi(C^\vee)U\cdot \Y^{-m} \hookrightarrow \Psi(\1_\cC)A = A.
\]
By the assumption that $A$ is semistable, $\phi(\Psi(C^\vee)U\cdot \Y^{-m}) \leq \phi(A)$.
But  since
\[
\phi(U) = \phi(\Psi(C^\vee)U\cdot \Y^{-m}), \quad \phi(A) = \phi(\Psi(C)A\cdot \Y^m),
\]
we get
\[
\phi(U) \leq \phi(\Psi(C)A\cdot \Y^m)
\]
as required.

$(2) \Rightarrow (3)$ is immediate.

We shall show $(3) \Rightarrow (1)$.
Suppose $\Psi(C)A\cdot \Y^m[m]$ is semistable for some $C \neq 0 \in \cC$ and some $m\in \Z$.
Consider an arbitrary exact sequence in $\cA$:
\[
0 \ra U \ra A \ra V \ra 0.
\]
Applying $\Psi(C)$ and $\Y^m$, we get the exact sequence
\[
0 \ra \Psi(C)U\cdot \Y^m \ra \Psi(C)A\cdot \Y^m \ra \Psi(C)V\cdot \Y^m \ra 0.
\]
Once again the semi-stability of $\Psi(C)A\cdot \Y^m$ tells us that
\[
\phi(\Psi(C)U\cdot \Y^m) \leq \phi(\Psi(C)A\cdot \Y^m),
\]
and therefore
$
\phi(U) \leq \phi(A).
$
\end{proof}
}
\begin{theorem} \label{thm: x[1] stab function and q stab cond respecting C}
Giving a $q$-stability condition (with $s=-1$) respecting $\cC$ on $(\cD, \X, \Psi)$ is equivalent to giving a heart $\cH$ of $\cD$ such that $(\cH, \X[1], \Psi)$ is a $\cC$-module subcategory equipped with the distinguished autoequivalence $\Y:= \X[1]$, together with a $(\X[1])$-stability function over $\cC$ on $(\cH, \X[1], \Psi)$ which satisfies the Harder-Narasimhan property.
\end{theorem}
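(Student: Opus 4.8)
The plan is to combine \Cref{thm: stab function over C and stab cond respecting C} with the $q$-structure bookkeeping of \Cref{defn: module q stab}, checking that the two extra axioms (compatibility of $\cP$ with $\X[1]$ on the triangulated side, $q$-linearity of $\cZ$ on the central-charge side) match up exactly with the single extra requirement that the stability function on the heart be $K_0(\cC)[x,x^{-1}]$-linear rather than merely $K_0(\cC)$-linear. So the structure of the argument is: start from a $q$-stability condition $\tau = (\cP, \cZ)$ respecting $\cC$; by \Cref{thm: stab function over C and stab cond respecting C} (forgetting the $q$-structure for a moment) we already get a heart $\cH = \cP[0,1)$ that is a $\cC$-module subcategory together with a stability function over $\cC$ on it, $Z := \cZ|_{K_0(\cH)}$, satisfying Harder--Narasimhan. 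It remains only to observe that axiom (1) of \Cref{defn: q stab respect C}, namely $\cP(\phi)\cdot\X[1] = \cP(\phi)$, specialises to $\cH\cdot\X[1]=\cH$, so that $(\cH, \X[1], \Psi)$ is indeed a $\cC$-module $\Y$-category with $\Y := \X[1]$; and axiom (3), $\cZ \in \Hom_{K_0(\cC)[q,q^{-1}]}(K_0(\cD), \C_{\cC,-1})$, together with the fact that on $K_0(\cH)$ the action of $q$ is identified with the action of $x$ (both being $[A]\mapsto[A\cdot\X[1]]$, since $\<1\>$ and $[1]$ commute and $\tau$ being a $q$-stability condition forces $\cZ(A\cdot\X) = \cZ(A\cdot[-1]) = -\cZ(A) = \cZ(A)|_{q=-1}$), restricts to the statement that $Z$ is a $K_0(\cC)[x,x^{-1}]$-module homomorphism. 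This is precisely the definition (\Cref{defn: x[1] stab function}) of an $(\X[1])$-stability function over $\cC$.

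Conversely, given a heart $\cH$ with $(\cH, \X[1], \Psi)$ a $\cC$-module $\Y$-category and an $(\X[1])$-stability function $Z$ over $\cC$ on it satisfying Harder--Narasimhan, I would first apply \Cref{thm: stab function over C and stab cond respecting C} to the underlying $K_0(\cC)$-module homomorphism $Z$ to obtain a stability condition $\tau = (\cP,\cZ)$ on $(\cD,\Psi)$ respecting $\cC$, with $\cH = \cP[0,1)$. Then I must verify the two remaining $q$-axioms. Axiom (1), $\cP(\phi)\cdot\X[1]=\cP(\phi)$, follows because $\X[1]$ preserves the heart $\cH$ by hypothesis, hence preserves the induced $t$-structure, hence commutes with the HN filtrations of the slicing, and since $Z([A\cdot\X[1]]) = Z([A])$ (the $x$-action on $\C$ is trivial), phases are preserved, so $\X[1]$ maps $\cP(\phi)$ into itself; being an autoequivalence it does so bijectively. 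Axiom (3) follows since a $\Z[q,q^{-1}]$-linear extension of a map is determined by its values, and on $K_0(\cH)$ — which generates $K_0(\cD)$ as a $\Z[q,q^{-1}]$-module because $\cH$ is a bounded heart and $q$ acts via $\X[1]$ — the value of $\cZ$ is forced to agree with the $q$-linear extension of $Z$ by the $K_0(\cC)[x,x^{-1}]$-linearity of $Z$ and the identification $x\leftrightarrow q|_{q=-1}$.

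The routine work is the bookkeeping in the previous two paragraphs; the one genuine point that needs care — and the step I expect to be the main obstacle — is the claim that $\cP(\phi)\cdot\X[1] = \cP(\phi)$ is equivalent, given everything else, to $\cH \cdot \X[1] = \cH$. One direction is trivial ($\cH = \cP[0,1)$), but for the converse one needs to know that an autoequivalence preserving a bounded heart and acting trivially on phases (via $Z$) necessarily preserves every slice $\cP(\phi)$, not just the standard heart. This is where I would invoke that the slicing is reconstructed from the heart by the standard procedure (tilting / iterated suspension of the heart's HN pieces), so that an autoequivalence commuting with $[1]$ and fixing $\cH$ automatically commutes with the reconstruction and hence with each $\cP(\phi)$; combined with the phase-preservation from $Z([A\cdot\X[1]])=Z([A])$ this pins down $\cP(\phi)\cdot\X[1]=\cP(\phi+0)=\cP(\phi)$, noting the shift is $0$ because the $x$-action on $\C$ sends $x\mapsto 1$ (equivalently $s = -1$ forces the phase shift $\mathfrak{Re}(s)+1 = 0$). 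Once this is isolated, the proof is essentially a two-line appeal to \Cref{thm: stab function over C and stab cond respecting C}.
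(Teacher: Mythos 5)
Your argument is correct and matches the paper's (implicit) proof: the paper reduces to \Cref{thm: stab function over C and stab cond respecting C} and adds the single observation that the autoequivalence $\Y := \X[1]$, which preserves phases because $Z$ is $x$-linear, must preserve and reflect semistable objects --- exactly the point you isolate and resolve via the slicing-reconstruction argument. One small bookkeeping slip worth fixing: the $q$-action on $K_0(\cD)$ is $q\cdot[A] = [A\cdot\X]$, not $[A\cdot\X[1]]$, and since $[A\cdot\X[1]] = -[A\cdot\X]$ in the triangulated Grothendieck group the correct identification under $K_0(\cD) \cong K_0(\cH)$ is $x \leftrightarrow -q$ (so $x\mapsto 1$ matches $-q\mapsto -(-1) = 1$); the conclusion you draw from it is nevertheless the right one.
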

\comment{
\begin{proof}
The proof for the implication $(\Rightarrow)$ is as discussed before \cref{defn: x[1] stab function}, whereas the proof for $(\Leftarrow)$ follows from \cref{linear shift cC action semistable}.
\end{proof}
\begin{remark}
A similar result of \cref{thm: x[1] stab function and q stab cond respecting C} for stability conditions respecting $\cC$ on triangulated module category $(\cD, \Psi)$ over $\cC$ also holds, where one drops the ``$\X[1]$'' condition.
\end{remark}
}

\comment{
Moreover, the associated stability function on $\cH$ m

As with the usual stability conditions, we would like to construct $q$-stability conditions respecting $\cC$ by specifying stability functions on hearts (which also satisfy the Harder-Narasimhan property).

Let $\cH \subseteq \cD$ be a heart of some $t$-structure.
Suppose $\cH$ is a submodule category over $\cC$ and is closed under the shifts $\<1\>[1]$. 
Then $K_0(\cH)$ is a module over $K_0(\cC)[x,x^{-1}]$ with $x\cdot[A] = [A\<1\>[1]]$.
Recall that we can endow $\C$ with a module structure over $K_0(\cC)$ using $PF\dim: K_0(\cC) \ra \R \subset \C$.
We shall now extend $\C$ to a module over $K_0(\cC)[x,x^{-1}]$ by evaluating $x = 1$.
As for $\cD_\X$, one obvious requirement for the stability function 
\[
Z: K_0(\cH) \ra \C
\]
is that $Z$ must be a $K_0(\cC)[x,x^{-1}]$-module homomorphism, so that the induced central charge $\cZ$ on $\cD$ will be a $K_0(\cC)[q,q^{-1}]$-module homomorphism (exercise).
We are now left to show that the functors $\<1\>[1]$ and $\Psi(C)$ for all $C \in \cC$ sends semistable objects to semistable objects (of the same phase).
This fact follows easily for exact autoequivalences (hence for $\<1\>[1]$), but $\Psi(C)$ are in general \emph{not} autoequivalences.
Nevertheless, the following lemma gives us a stronger statement:
}

\section{$\tau$-Mass automaton} \label{sect: mass automaton}
We end this chapter by introducing the notion of a $\tau$-mass automaton in this section.

\comment{
In the main body of the thesis, we concern ourselves with the dynamics of endofunctors and in particular, their mass growth.
This means that for an endofunctor $\cF: \cD \ra \cD$ with a stability condition $\tau$ on $\cD$, we would like to compute the following limit:
\[
\lim_{N \ra \infty} m_{\tau, t}( \cF^N(X))
\]
for some given object $X \in \cD$.
}
Let us start with a short discussion.
Let $G$ be a monoid (this will be a submonoid of $\End(\cD)$ or a subgroup of $\Aut(\cD)$).
We shall denote the category with one object $*$ and morphism given by $G$ (a monoid-oid!) as $\underline{G}$.
Let $\Theta$ be a quiver (directed graph) with arrows labelled by elements in $G$.
With $\widehat{\Theta}$ denoting the free category generated by the quiver $\Theta$, we have a functor $\widehat{\Theta} \xra{\Theta} \underline{G}$, sending each labelled arrow to its corresponding element in $G$.
As such, we shall not differentiate between ``a functor from a category $\Theta$ to $\underline{G}$'' and ``a $G$-labelled quiver $\Theta$'', which we all call \emph{$G$-automata}.
We will also abuse notation and use $\Theta$ to denote both the underlying $G$-labelled quiver and the functor from the category to $\underline{G}$.

Given a $G$-automaton $\Theta$, we say that an expression (or word) $\overline{g} = w_k w_{k-1} ... w_1$ of an element $g \in G$ is \emph{recognised by $\Theta$} if there is a path $p = (e_1, e_2, ..., e_k)$ in $\Theta$ such that each arrow $e_i$ is labelled by $w_i \in G$ (note that our path reads from left to right).

A $G$-set is equivalent to a functor $\underline{G}\ra $Sets, and we similarly define a $\Theta$-set as a functor $\Theta \ra $Sets, and a $\Theta$-representation as a functor $\Theta \ra R$-mod for some ring $R$.
With a fix $G$-automaton, every $G$-set can be considered as a $\Theta$-set through the composition $\Theta \ra \underline{G} \ra $Sets.
When $G \subset \End(\cD)$, then ob$(\cD)$ is naturally a $G$-set, and hence a $\Theta$-set.

We are now ready to give the definition of a mass automaton:
\begin{definition}
Fix some $G \subset \End(\cD)$.
A \emph{$\tau$-mass automaton} of a triangulated category $\cD$ over $G$ is a $G$-automaton $\Theta$ together with:
\begin{enumerate}
\item a $\Theta$-subset $\cS$ of ob$(\cD)\circ \Theta$,
\item a $\Theta$-representation $\cM : \Theta \ra R$-mod,
\item a natural transformation $\iota : \cS \ra \cM$ with $\cM$ viewed as a $\Theta$-set, and
\item a group homomorphism $\mathfrak{m}_v : \cM(v) \ra \R^\R$ for each vertex $v$,
\end{enumerate} 
satisfying the condition
\[
m_{\tau,t}(X) = \mathfrak{m}_v(\iota_v(X))
\]
for all vertices $v$ and all $X \in \cS(v)$.
\end{definition} 

\begin{remark}
The closely related notion of a $\tau$-HN automaton is defined in a similar way, where for each vertex $v$ the homomorphism $\mathfrak{m}_v: \cM(v) \ra \R^\R$ is replaced with a homomorphism $HN_v : \cM(v) \ra \Z^\Sigma$  which satisfies
\[
HN_\tau(X) = HN_v(\iota_v(X)
\]
for all $X \in \cS(v)$, with $\Sigma \subset $ ob$(\cD)$ denoting the set of indecomposable semistable objects and $HN_\tau(X)$ is the multiplicity vector of indecomposable semistable objects in the $\tau$-HN filtration of $X$.
\end{remark}

Given a path $p = (e_1, e_2, ..., e_k)$ in a $\tau$-mass automaton, we call 
\[
\cM(p) := \cM(e_k)\cM(e_{k-1})...\cM(e_1)
\]
\emph{the $\tau$-mass matrix associated to $p$} and $\cS(p) := \cS(e_k)\cS(e_{k-1})...\cS(e_1)$ \emph{the endofunctor associated to $p$}.

Unpacking the definition, a $\tau$-mass automaton over $G$ can be thought of as a quiver with ``a $G$-labelling'' and a ``$R$-mod labelling'', namely
\begin{enumerate}
\item each vertex $v$ is labelled by a tuple $(\cS(v), \cM(v))$, with $\cS(v)$ a subset ob($\cD$) and $\cM(v)$ some $R$-module;
\item each arrow $a: v \ra v'$ is labelled by a tuple $(\cS(a), \cM(a))$, with $\cS(a) \in G$ such that $\cS(a) \cdot \cS(v) \subset \cS(v')$, and $\cM(a)$ is a $R$-linear homomorphism from $\cM(v)$ to $\cM(v')$.
\end{enumerate}
Each vertex $v$ comes with a map $\iota_v : \cS(v) \ra \cM(v)$ which captures a compatibility condition between the two labellings:
to every arrow $a: v \ra v'$, we require the following diagram to commute:
\begin{equation} \label{eqn: S M compatible}
\begin{tikzcd}
\cS(v) \ar[r, "\iota_{v}"] \ar[d, swap, "\cS(a)"] 
	& \cM(v) \ar[d, "\cM(a)"] \\
\cS(v') \ar[r, "\iota_{v'}"] 
	&\cM(v').
\end{tikzcd}
\end{equation}
Each vertex $v$ also comes with a homomorphism $\mathfrak{m}_v: \cM(v) \ra \R^\R$ which ``recovers'' the mass of the objects in  $\cS(v)$:
\begin{equation} \label{eqn: recovers mass}
m_{\tau, t}(X) = \mathfrak{m}_v(\iota_v(X))
, \quad \text{for all } X \in \cS(v).
\end{equation}

The definition of a $\tau$-mass automaton is undoubtedly complicated; on the flip side it will prove itself as a very powerful tool, where the existence of ``good'' $\tau$-mass automata provide means of studying categorical dynamics for a large class of endofunctors.
In particular, notice that \cref{eqn: S M compatible} and \cref{eqn: recovers mass} allow us to deduce the following: if $p$ is a closed path based at a vertex $v$ in the automaton, the mass growth of the endofunctor $\cS(p)$ with respect to any object $X$ in $\cS(v)$ can be understood via the linear map $\cM(p)$:
\begin{equation} \label{eqn: closed path transform linearly}
m_{\tau, t}\left( \cS(p)^N(X) \right) = \mathfrak{m}_v\left( \cM(p)^N(\iota_v(X)) \right).
\end{equation}
This will be crucial in the main theorem of this thesis in Chapter 4.
We shall provide some examples of mass automata below, where we will construct more in \cref{sect: mass automaton construction}.
\begin{example}
Some examples of mass automata associated to $A_2$ and $\hat{A_1}$ can be found in \cite{bapat2020thurston}.
One of the mass automaton associated to $A_2$ in said paper will be similar to the mass automaton $\Lambda_{\tau_R}(n)$ for $n=3$ that we will be constructing in \cref{sect: mass automaton construction}.
\end{example}

\begin{example}
Let $\cC:= \TLJ_5$ and let $\tau$ denote the stability condition on $D^b(\cC)$ defined in \cref{eg: TLJ_5 stab respect C}.
Let $H^*: D^b(\cC) \ra g\cC$ denote the cohomological functor (of the standard heart) defined by
\[
H^*(X^\bullet) = \bigoplus_{i \in \Z} H^{-i}(X^\bullet),
\] 
where each $H^{-i}(X^\bullet)$ is of homogeneous grading $-i$.
Note that the (abelian) Grothendieck group $K_0(g\cC)$ is isomorphic to $K_0(\cC)[q^{\pm 1}]$, where $q\cdot [A] = [A\<1\>]$.
Let $G \subset \End(D^b(\cC))$ be the monoid of endofunctors induced by the $\cC$-module structure on $D^b(\cC)$.
We shall construct a $\tau$-mass automaton of $D^b(\cC)$ over $G$ as follows. \\
\noindent
The underlying $G$-automaton $\Theta$ is given by the $G$-labelled quiver with one vertex $v$ and four loop arrows $a_i: v \ra v$ labelled by $\Pi_i$ for each $0 \leq i \leq 3$.
The $\Theta$-subset $\cS$ is exactly the $\Theta$-set ob$(D^b(\cC))\circ \Theta$ itself.
The $\Theta$-representation $\cM: \Theta \ra \Z[q^{\pm 1}]$-mod is given by:
\begin{enumerate}[(i)]
\item $\cM(v) = K_0(g\cC)$, which is a free $\Z[q^{\pm 1}]$-module with basis set $\{[\Pi_0], [\Pi_1], [\Pi_2], [\Pi_3]\}$.
\item $\cM(a_i) = K_0(\Pi_i \otimes -)$ for each $i$, which can be expressed as a matrix with respect to the basis above:
\begin{align*}
\cM(a_0) = I_4 ,
\cM(a_1) = \begin{bmatrix}
	0 & 1 & 0 & 0 \\
	1 & 0 & 1 & 0 \\
	0 & 1 & 0 & 1 \\
	0 & 0 & 1 & 0
	\end{bmatrix},
\cM(a_2) = \begin{bmatrix}
	0 & 0 & 1 & 0 \\
	0 & 1 & 0 & 1 \\
	1 & 0 & 1 & 0 \\
	0 & 1 & 0 & 0
	\end{bmatrix},
\cM(a_3) = \begin{bmatrix}
	0 & 0 & 0 & 1 \\
	0 & 0 & 1 & 0 \\
	0 & 1 & 0 & 0 \\
	1 & 0 & 0 & 0
	\end{bmatrix}.
\end{align*}
\end{enumerate}
The natural transformation $\iota: \cS \ra \cM$ is defined by $\iota_v(X^\bullet) = [H^*(X^\bullet)] \in K_0(g\cC)$ for each $X^\bullet \in \cS(v) =$ ob$(D^b(\cC))$.
The group homomorphism $\mathfrak{m}_v: K_0(g\cC) \ra \R^\R$ is defined by
\[
q \mapsto e^t, \quad [\Pi_i] \mapsto PF\dim(\Pi_i).
\]
The condition that
\[
m_{\tau, t}(X^\bullet) = \mathfrak{m}_v(\iota_v(X^\bullet))
, \quad \text{for all } X \in \cS(v) = \text{ob}(D^b(\cC))
\]
is easy to check.
\comment{
By construction, every path in this automaton is a closed path.
Although this is a bit of an overkill, using \cref{eqn: closed path transform linearly} we see that
\[
h_{\tau,t}(C \otimes -) = PF\dim(C)
\]
for each $C \in \cC$.
}
\end{example}

\comment{
We will introduce the notion of supports here, which will be our main tool in constructing mass automaton later on.
\begin{definition}
We denote $\cS\cS^\oplus_\tau$ as the smallest additive subcategory of $\cD$ containing all the $\tau$-semistable objects.
We call $\cS\cS^\oplus_{\tau}$ \emph{the category of $\tau$-supports of $\cD$}. \\
For $A$ an object in $\cD$ with $\tau$-HN filtration consisting of $\tau$-semistable pieces $E_i$, we define the \emph{$\tau$-support of} $A$ as
\[
\supp_\tau(A) := \bigoplus_i E_i \in ob(\cS\cS^\oplus_\tau).
\]
\end{definition}
\begin{remark}
We remind the reader that the $\tau$-HN filtration of an object is only unique up to isomorphism, and so the $\tau$-support of an object is defined as an element in the set of isomorphism classes.
\end{remark}

The following proposition showcases a few important properties of $\tau$-support
\begin{proposition} \label{support properties}
Let $\tau$ be a stability condition on $\cD$.
Then the $\tau$-support satisfies the following properties:
\begin{enumerate}
\item (additivity) $\supp_\tau(A\oplus B) = \supp_\tau(A) \oplus \supp_\tau(B)$;
\item (triangulated shift commuting) $\supp_\tau(A[m]) = \supp_\tau(A)[m]$ for all $m \in \Z$; \\
\end{enumerate}
If $\tau$ is a $q$-stability condition on $(\cD, \X,\Psi)$ respecting $\cC$, we have in addition to the above:
\begin{enumerate}
\item ($\X$ commuting) $\supp_\tau(A\cdot \X^m) = \supp_\tau(A)\cdot \X^m$ for all $m \in \Z$; and
\item ($\cC$-action commuting) $\supp_\tau(\Psi(C)(A)) = \Psi(C)\supp_\tau(A)$ for all $C \in \cC$.
\end{enumerate} 
\end{proposition}
\begin{proof}
The additivity property follows from the fact that the HN filtration of $A\oplus B$ is just the direct sum of the HN filtrations of $A$ and $B$.

For the triangulated shift commuting property, let us start by taking the HN filtration of $A$:
\[
\begin{tikzcd}[column sep = 3mm]
0
	\ar[rr] &
{}
	{} &
A_1
	\ar[rr] \ar[dl]&
{}
	{} &	
A_2
	\ar[rr] \ar[dl]&
{}
	{} &	
\cdots
	\ar[rr] &
{}
	{} &
A_{m-1}
	\ar[rr] &
{}
	{} &
A_m = A
	\ar[dl] \\
{}
	{} &
E_1
	\ar[lu, dashed] &
{}
	{} &
E_2
	\ar[lu, dashed] &
{}
	{} &
{}
	{} &
{}
	{} &
{}
	{} &
{}
	{} &
E_m
	\ar[lu, dashed] &
\end{tikzcd}.
\]
By definition of stability conditions, we see that $E_i[m]$ are still semistable and 
\[
\phi(E_i[m]) = \phi(E_i)+m > \phi(E_{i+1}) +m = \phi(E_{i+1}[m]).
\]
Hence applying $[m]$ to the HN filtration of $A$ results in the HN filtration of $A[m]$.
In particular, 
\[
\supp_\tau(A[m]) = \bigoplus_i E_i[m] = \supp_\tau(A)[m]
\]
as required.

The $\X$ and $\cC$ commuting property follows similarly, where we use the fact that $\tau$ is a $q$-stability condition respecting $\cC$.
\end{proof}

Using a similar proof to \cref{support properties} and looking at how the the image of the central charge $\cZ$ changes, we can deduce the following  similar properties of mass:
\begin{proposition}\label{mass properties}
Let $\tau$ be a stability condition on $\cD$.
Then the $\tau$-mass satisfies the following properties:
\begin{enumerate}
\item (additivity) $m_{\tau,t}(A\oplus B) = m_{\tau,t}(A) + m_{\tau,t}(B)$;
\item (triangulated shift commuting) $m_{\tau,t}(A[m]) = m_{\tau,t}(A)\cdot e^{mt}$ for all $m \in \Z$.
\end{enumerate}
If  $\tau$ is a $q$-stability condition on $(\cD, \X,\Psi)$ respecting $\cC$, then we have in addition to the above:
\begin{enumerate}
\item ($\X$ commuting) $m_{\tau,t}(A\cdot \X^m) = m_{\tau,t}(A)\cdot e^{-mt}$ for all $m \in \Z$; and
\item ($\cC$-action commuting) $m_{\tau,t}(\Psi(C)(A)) = PF\dim(C)m_{\tau,t}(A)$ for all $C \in \cC$.
\end{enumerate} 
\end{proposition}

It should come as no surprise that $\tau$-support and mass are related.
In particular, we have that by definition of $\tau$-mass,
\[
m_{\tau,t}(A) = m_{\tau,t}(\supp_\tau(A)).
\]
Moreover, any two isomorphic objects have the same mass, so we can deduce that
\[
\supp_\tau(A) = \supp_\tau(B) \Rightarrow m_{\tau,t}(A) = m_{\tau,t}(B).
\]
This means that computing mass of an object amounts to computing mass of its support.
As such, we can expect that computing the mass of an object will be sufficient at the level of the split Grothendieck group $K_0(\cS\cS^\oplus_\tau)$.

Before we do that, let us understand the category of supports a little more.
Firstly, it is clear from definition that $\cS\cS^\oplus_\tau$ decomposes as $\bigoplus_{\phi \in \R} \cP(\phi)$.
The uniqueness of HN filtrations implies that each $\cP(\phi)$ is actually a thick subcategory (closed under taking summands), and hence $\cS\cS^\oplus_\tau$ must also be a thick subcategory.

By definition of stability conditions, $\cS\cS^\oplus_\tau$ is closed under the triangulated shifts $[-]$.
As such, the split Grothendieck group $K_0(\cS\cS^\oplus_\tau)$ is naturally a $\Z[s,s^{-1}]$-module, with $s$-action 
\[
s\cdot[A] = [A[1]]\in K_0(\cS\cS_\tau).
\]
We warn the reader that $[A[1]] \neq -[A]$ in the split Grothendieck group $K_0(\cS\cS_\tau)$.

When $\tau$ is a $q$-stability condition on $(\cD, \X, \Psi)$ respecting $\cC$, we see that $(\cS\cS^\oplus_\tau, \Psi, \X)$ is also an additive $\cC$-module $\X$-category.
As such, the split Grothendieck group $K_0(\cS\cS^\oplus_\tau)$ is instead a module over $K_0(\cC)[q^{\pm 1}, s^{\pm 1}]$, with 
\[
[C] \cdot [A] = [\Psi(C)(A)]\in K_0(\cS\cS_\tau)
\]
for all $C \in g\cC$, with $q$-action
\[
q \cdot [A] = [A \cdot \X]\in K_0(\cS\cS_\tau).
\]

Recall that the mass of object is defined to be an element in $\R^\R$.
Motivated by \cref{mass properties}, let us view $\R^\R$ as a module over $\Z[s^{\pm 1}]$ by evaluating $s = e^t$.
Similarly, if one works over $q$-stability conditions respecting $\cC$, we define the ring homomorphism
\[
\overline{PF\dim}: K_0(\cC)[q^{\pm 1}, s^{\pm 1}] \ra \R^\R
\]
as the extension of $PF\dim: K_0(\cC) \ra \R$ by sending $q \mapsto e^{-t}$ and $s \mapsto e^t$, which provides $\R^\R$ with a $K_0(\cC)[q^{\pm 1}, s^{\pm 1}]$-module structure.
With this, we have the following result:
\begin{proposition} \label{prop: mass from support}
The map $\wt{m}_{\tau, t}: K_0(\cS\cS^\oplus_\tau) \ra \\R^\R$ defined by
\[
[A] \mapsto m_{\tau,t}(A)
\]
is a well-defined module homomorphism over $\Z[s,s^{-1}]$ (over $K_0(\cC)[q^{\pm 1}, s^{\pm 1}]$ when working with $q$-stability condition respecting $\cC$).
Moreover, we have that
\[
m_{\tau,t}(X) = \wt{m}_{\tau, t}([\supp_\tau(X)])
\]
for any object $X$ in $\cD$.
\end{proposition}

Finally, let $\cF$ be an endofunctor on $\cD$.
In general, we can not expect $\cF$ to restrict to an endofunctor on $\cS\cS^\oplus_\tau$, but on the split Grothendieck group $K_0(\cS\cS^\oplus_\tau)$ this is possible by taking the support:
\begin{proposition} \label{prop: induced support map on Grothendieck}
The map $[\supp_\tau \circ \cF] : K_0(\cS\cS^\oplus_\tau) \ra K_0(\cS\cS^\oplus_\tau)$ defined by
\[
[A] \mapsto [\supp_\tau(\cF|_{\cS\cS^\oplus_\tau} (A))]
\]
is a well-defined $\Z[s^{\pm 1}]$-module homomorphism.
When working over $(\cD, \X, \Psi)$ with $\tau$ a $q$-stability condition respecting $\cC$, the map is a $K_0(\cC)[q^{\pm 1}, s^{\pm 1}]$-module homomorphism.
\end{proposition}
\begin{remark}
Note that we are not claiming that there is a endofunctor $\supp_\tau\circ \cF$ defined on $\cS\cS^\oplus_\tau$ (the support does not take morphism to morphism), but merely a well-defined map on the split Grothedieck group.
\end{remark}
{\color{red}
Is the explanation below useful?
}

Although this is jumping ahead of ourself, it might be worth briefly describing how these notions allow us to construct mass automata.
Let $G \subset \End(\cD)$, $\Theta$ be some $G$-automaton and $\cS$ be some well-defined $\Theta$-subset.
To each vertex $v$, we let $\cM(v)$ be some $\Z[s^{\pm 1}]$-submodule of $K_0(\cS\cS^\oplus_\tau)$ (or submodule over $K_0(\cC)[q^{\pm 1}, s^{\pm 1}]$ when working with triangulated $\X$-categories), which contains $[\supp(X)]$ for all $X \in \cS(v)$.
This ensures that we have a well-defined map 
\[
\iota_v = [\supp_\tau(-)] : \cS(v) \ra \cM(v)
\]
for each vertex $v$.
Following \cref{prop: induced support map on Grothendieck}, for each arrow $a: v \ra v'$ we can define $\cM(a): \cM(v) \ra \cM(v')$ by 
\[
\cM(a) = [\supp_\tau\circ \cS(a)],
\]
which does map into $M(v')$ by the definition of $\cM(v')$ (and by the fact that $\cS(a)$ is a $\Theta$-subset).
This gives a well-defined functor $\cM$ from the category $\Theta$ to the category of modules.
Defining $\mathfrak{m}_v = \wt{m}_{\tau,t}$, the condition $\mathfrak{m}_v(\iota_v(X)) = m_{\tau,t}(X)$ comes for free following \cref{prop: mass from support}.
Up to now everything seems to work in general, with the exception that we need $\iota_v$ to be a natural transformation!
This means that for every arrow $a : v \ra v'$, we need
\[
\cM(a)\circ \iota_v = \iota_{v'} \circ \cS(a),
\]
which amounts to checking
\[
[(\supp_\tau \circ \cS(a) \circ \supp_\tau)(X)] = [(\supp_\tau \circ \cS(a))(X)].
\]
for each $X \in \cS(v)$.
This is not true in general and is the main non-trivial condition that needs to be check in this whole construction.
}

\comment{
Let $\cV \subseteq $ ob$(\cS\cS_\tau)$.
We shall use $\<\cV \>_{\cS\cS_\tau}$ to denote the smallest thick additive $\cC$-module subcategory of $\cS\cS_\tau$, closed under the two autoequivalences $\X$ and $[1]$. 
In particular, the split Grothendieck group $K_0(\<\cV \>_{\cS\cS_\tau})$ will be a $K_0(\cC)[q^{\pm 1},s^{\pm 1}]$-submodule of $K_0(\cS\cS_\tau)$.
A particularly nice choice of $\cV$ is the following:
\begin{definition}
Let $\cV \subseteq $ ob$(\cS\cS_\tau)$.
We say that $\cV$ is \emph{free over $(\cC,\X,[1])$} if $K_0(\<\cV \>_{\cS\cS_\tau})$ is a free $K_0(\cC)[q^{\pm 1}, s^{\pm 1}]$-submodule of $K_0(\cS\cS_\tau)$, with basis set given by the classes of the objects in $\cV$.
\end{definition}
{\color{red}
NOT TRUE:\\
Note that $\cV$ being free over $(\cC, \X, [1])$ implies that each object $V \in \cV$ is indecomposable.
}

Recall the ring homomorphism $PF\dim: K_0(\cC) \ra \R$ induced by the Perron-Frobenius dimension (see \cref{sect: fusion}).
We shall extend it to a ring homomorphism
\[
\overline{PF\dim}: K_0(\cC)[q^{\pm 1},s^{\pm 1}] \ra \R[e^t, e^{-t}]
\]
by sending $q \mapsto e^{-t}, s \mapsto e^t$.
As such, we make the following definition:
\begin{definition}
Let $\cV = \{V_j \}_{j=1}^m \subseteq $ ob$(\cS\cS_\tau)$ be free over $(\cC, \X, [1])$.
We say that $A \in ob(\cD)$ is \emph{$\tau$-supported} by $\cV$ if we have $\supp_\tau(A) \in ob(\<\cV \>_{\cS\cS_\tau})$.
When so, we have a unique expression
\[
[\supp_\tau(A)] = \sum_j a_j[V_j] \in  K_0(\<\cV \>_{\cS\cS_\tau}),
\]
with $a_j \in K_0(\cC)[q^{\pm 1},s^{\pm 1}]$ and we define the \emph{$\tau$-HN multiplicity vector of $A$ with respect to $\cV$} to be the vector given by
\[
HN^\cV_\tau(A) := 
\begin{bmatrix}
\overline{PF\dim}(a_1) \\
\overline{PF\dim}(a_2) \\
\vdots \\
\overline{PF\dim}(a_m)
\end{bmatrix} \in \R[e^t, e^{-t}]^{\oplus m}.
\]
\end{definition}
In particular, we have that by definition,
\[
HN^\cV_\tau(V_j) = e_j
\]
with $e_j$ denoting the vector that has 1 on the $j$th entry and 0 elsewhere.

Note that the mass of an object can be easily computed from its HN multiplicity vector as follows:
let $u = (u_j)$ be the vector defined by by $u_j =  m_{\tau, t}(V_j)$ for each $j$.
Then
\[
m_{\tau,t}(A) = u^T HN^\cV_\tau(A).
\]

We summarise some simple properties of HN multiplicity vector below, which follows directly from the properties of $\tau$-support:
\begin{proposition} \label{HN multiplicity properties}
The HN-multiplicity vector satisfies the following properties:
\begin{enumerate}
\item $HN_\tau^\cV(A\oplus B) = HN_\tau^\cV(A) + HN_\tau^\cV(B)$;
\item $HN_\tau^\cV(A[m]) = e^{mt}HN_\tau^\cV(A)$ for all $m \in \Z$;
\item $HN_\tau^\cV(A\cdot \X^m) = e^{-mt}HN_\tau^\cV(A)$ for all $m \in \Z$; and
\item $HN_\tau^\cV(\Psi(C)(A)) = PF\dim(C)(HN_\tau^\cV(A))$ for all $C \in \cC$.
\end{enumerate} 
\end{proposition}

\begin{definition} \label{defn: HN matrix}
Let $\cV = \{V_j\}$ and $\cV'=\{V'_{j'}\}$ be finite subsets of ob$(\cS\cS_\tau)$ that are free over $(\cC, \X, [1])$.
Suppose $\cF: (\cD, \X, \Psi) \ra (\cD, \X, \Psi)$ is an endofunctor such that 
\[
\supp(\cF(V_j)) \in ob(\<\cV'\>^\oplus_\tau)
\]
for all $V_j \in \cV$.
We define the \emph{HN matrix of $\cF$ with respect to $\cV \ra \cV'$} as the $\R[e^t, e^{-t}]$-linear matrix 
\[
M_\cF^{\cV \ra \cV'}: K_0(\< \cV \>^\oplus_\tau) \ra K_0(\< \cV' \>^\oplus_\tau),
\] 
uniquely defined by
\[
M_\cF^{\cV \ra \cV'}(HN^\cV_\tau(V_j)) = M_\cV^{\cV \ra \cV'}(e_j) = HN^{\cV'}_\tau(\cF(V_j)).
\]
\end{definition}
}

\comment{
{\color{red}
The rest of this might not be necessary
}
Given an object $A$ with $\supp(A) \in ob(\<\cV\>^\oplus_\tau)$, it is then natural to ask whether it is possible to compute the HN multiplicity vector of $\cF(A)$ linearly with respect the HN matrix of $\cF$, namely is it true that the following equation holds:
\[
HN^{\cV'}_\tau(\cF(A)) = M_\cF^{\cV \ra \cV'}(HN^\cV_\tau(A)).
\]
The following property on $\cF$ and $A$ provides a simple criterion for this to hold (see \cref{HN matrix computes}):
\begin{definition}
Let $\cF: (\cD, \X,\Psi) \ra (\cD, \X, \Psi)$ be an endofunctor.
We say that $\cF$ is \emph{$\tau$-HN preserving on} $A$ if
\[
\supp_\tau(\cF(A)) = \supp_\tau \left(\cF \left( \supp_\tau(A) \right) \right).
\]
\end{definition}

Note that by definition, all endofunctors are $\tau$-HN preserving on any object in $\cS\cS_\tau$, in particular for any $\tau$-semistable object.

\begin{proposition} \label{HN matrix computes}
Let $\cV = \{V_j\}$ and $\cV'=\{V'_{j'}\}$ be finite subsets of ob$(\cS\cS_\tau)$ which are free over $(\cC, \X, [1])$.
Let $\cF: (\cD, \X,\Psi) \ra (\cD, \X, \Psi)$ be an endofunctor such that
\[
\supp_\tau(\cF(V_j)) \in ob(\<V'\>^\oplus_\tau)
\]
for all $V_j \in \cV$.
Let $A$ be an object in $\cD, \X$ with
\[
\supp_\tau(A) \in ob(\<V\>^\oplus_\tau).
\]
If $\cF$ is HN preserving on $A$, then
\[
HN^{\cV'}_\tau(\cF(A)) = M^{\cV \ra \cV'}_\cF (HN^\cV_\tau(A)).
\]
\end{proposition}
\begin{proof}
By the assumptiont that $\cF$ is HN preserving on $A$, we get
\[
\supp_\tau(\cF(A)) = \supp_\tau \left(\cF \left( \supp_\tau(A) \right) \right).
\]
Using the definition of HN multiplicity vector, we see that
\[
HN^{\cV'}_\tau(\cF(A)) = HN^{\cV'}_\tau( \cF(\supp(A))).
\]
Since $\supp(A)$ is an object in $\<\cV\>^\oplus_\tau$, the result follows from
the definition of $M^{\cV \ra \cV'}_\cF$.
\end{proof}
}

\comment{
\begin{proposition} \label{composition of HN matrix}
Let $\cV_1, \cV_2$ and $\cV_3$ be $\tau$-semistable basis categories over $g\cC$, and let $\cF_1, \cF_2$ be endofunctors of $\cD$ such that $\cF_i(V)$ is $\tau$ supported by $\cV_{i+1}$ for all $V \in \cV_i$.
Suppose $\cF_2$ is HN preserving on $\cF_1(B)$ for all $B \in \cV_1$. 
Then
\[
M^{\cV_1 \ra \cV_3}_{\cF_2\cF_1} = M^{\cV_2 \ra \cV_3}_{\cF_2} M^{\cV_1 \ra \cV_2}_{\cF_1}.
\]
\end{proposition}
\begin{proof}
Let $B \in \cB_{\cV_1}$ be a basis element.
Then
\[
M^{\cV_1 \ra \cV_3}_{\cF_2\cF_1}[B] = [\cF_2\cF_1(B)]_{\cV_3}
\]
by definition.
Since $\cF_1(B)$ is $\tau$ supported by $\cV_2$, we have that
\[
[\cF_2\cF_1(B)]_{\cV_3} = M^{\cV_2 \ra \cV_3}_{\cF_2} [\cF_1(B)]_{\cV_2}
\]
using \cref{HN matrix computes}.
The result now follows from the definition of $M^{\cV_1 \ra \cV_2}_{\cF_1}$.
\end{proof}
}

\comment{
Let $\cF: \cD \ra \cD$ be an endofunctor such that $\cF$ is HN preserving on all objects $\tau$ supported by $\cV$ and $\cF(V)$ is again $\tau$ supported by $\cV$ for all $V \in \cV$.
By \cref{HN matrix computes}, we get that
\[
[\cF(A)]_{\cV} = M^{\cV \ra \cV}_\cF[A]_\cV.
\]
for any object $A$ $\tau$ supported by $\cV$.
In particular, since $\cF(A)$ is again $\tau$ supported by $\cV$, we get that
\[
[\cF^N(A)]_{\cV} = \left( M^{\cV \ra \cV}_\cF \right)^N [A]_\cV
\]
for all $N \in \mathbb{N}$.
As before, we shall use $\overline{M_\cF^{\cV \ra \cV'}}$ to denote $M_\cF^{\cV \ra \cV'}$ with $\wt{PF\dim}$ applied to each entry.
This combined with \cref{vector to mass} gives us
\begin{equation} \label{eqn: mass and matrix}
m_{\tau,t}\left( \cF^N(A) \right)
	= u^T \left( \overline{M^{\cV \ra \cV}_\cF} \right)^N \overline{[A]_\cV}.
\end{equation}
}

\comment{
\begin{proposition} \label{eqn: mass and matrix}
Let $w$ be an accepted word in a $\tau$-HN automaton which starts and ends at the same vertex $\cV = \{V_j\}$.
Let $u = (u_j)$ be the vector given by $u_j =  m_{\tau, t}(V_j)$ for each $j$.
Then for all object $A$ $\tau$-supported by $\cV$, we have
\[
m_{\tau,t}\left( \sigma_w^N(A) \right)
	= u^T \overline{M_w} ^N \left(\overline{HN_\tau(A)} \right),
\]
where $\overline{M_w}$ and $\overline{HN_\tau(A)}$ denote $M_w$ and $HN_\tau(A)$ with $\overline{PF\dim}$ applied to all the entries.
\end{proposition}
\begin{proof}
Let $w$ be an accepted word that starts and ends at the same vertex $\cV$.
By repeated application of \cref{HN matrix computes}, we get that for all objects $A$ with $\supp_\tau(A) \in ob(\<\cV\>^\oplus_{\tau})$, 
\[
HN_\tau(\sigma_w(A)) = M_{w} (HN_\tau(A)) \in \<[\cV]\>_{K_0(g\cC)[s,s^{-1}]}.
\]
In particular, since $\supp_\tau(\sigma_w(A)) \in ob(\<\cV\>^\oplus_\tau)$, by induction we have that 
\[
HN_\tau(\sigma_w^N(A)) = M_{w}^N (HN_\tau(A)).
\]
This combined with \cref{vector to mass} gives us the required result.
\end{proof}
}

\comment{
\begin{definition}
Let $\cD$ be a triangulated category.
A mass function with parameter $t$ on $\cD$ is a non-negative real valued function
\[
\mu: \Ob(\cD) \ra \R_{\geq 0}
\]
such that 
\begin{enumerate}
\item $\mu_t(B) \leq \mu_t(A) + \mu_t(C)$ for any distinguished triangle $A \ra B \ra C \ra $ in $\cD$ (triangle inequality).
\item $\mu_t(A[1]) = \mu_t(A)\cdot e^{t}$
\end{enumerate}
\end{definition}
}

\chapter{Stability conditions, root systems and the classification theorem}
In this chapter, we bring our focus back to the categorical action of $\B(I_2(n))$ on the triangulated $\TLJ_n$-module $\<1\>$-category $(\Kom^b(\I$-prmod)${}, \<1\>, -\otimes ?)$ constructed in Part I of this thesis.
We start with constructing a $q$-stability condition respecting $\TLJ_n$ on $\cK$, which we call the root stability condition, and we study its semistable objects.
We then use the tools set up in Chapter 3 to study the group $\B(I_2(n))$ through the categorical action defined in Chapter 2.
We proceed by first giving a naive Coxeter-theoretic classification of braid elements  in $\B(I_2(n))$ into three types: periodic, reducible and pseudo-Anosov.
We then compute the mass growth of braids with respect to the root stability condition, starting with the periodic and reducible braids.
To deal with the pseudo-Anosov braids, we construct a mass automaton over each $\B(I_2(n))$ and provide an algorithm which effectively decides the type of any given braids.
This algorithm also computes the mass growths and leads to the equivalent definitions of the three types of braids in terms of categorical dynamics.

Throughout this chapter, we will use $\cK:= \Kom^b(\I$-prmod) defined in Chapter 2 and treat $\cK$ as a triangulated module $\<1\>$-category over the fusion category $\TLJ_n$.

\section{Stability conditions associated to root systems}
The main aim of this section is to construct and study the root stability condition $\tau_R$ associated to each root system $I_2(n)$.
We start by recalling the standard heart $\cH$ of $\cK$, known as the heart of \emph{linear complexes}, and define the root stability condition by defining a $\X[1]$-stability function respecting $\TLJ_n$ on $\cH$.

We then study the relationship between the $\tau_R$-semistable objects and the (positive) roots of the associated root system.
We also show that the braid $\sigma_2\sigma_1$ restricts to an autoequivalence on the subcategory consisting of these $\tau_R$-semistable objects, which can be seen as a categorical version of the $\frac{2\pi}{n}$ rotation on the root system when acted upon by the corresponding Coxeter element $s_2s_1$.

\subsection{The heart of linear complexes}
To define a stability condition on $\cK$, we shall first construct a heart which is a finite-length abelian category, where defining a stability function on this heart induces a stability condition on $\cK$.

Before we define such a heart, let us quickly recall the definition of minimal complex:
\begin{definition}
Let $X$ be a complex in $\Com(\I$-prmod$)$.
We say $X$ is a \emph{minimal complex} if $X$ is indecomposable in the category $\Com(\I$-prmod$)$.
Similarly, a complex $Y$ in $\cK$ is \emph{minimal} if it is a minimal complex viewed as a complex in $\Com(\I$-prmod$)$.
\end{definition}
We note some important properties about minimal complexes:
\begin{enumerate}
\item every indecomposable complex $X$ in $\cK$ is isomorphic (in $\cK$) to a complex $X'$ which is minimal, i.e. it always has a minimal representative $X'$ where $X'$ has no summands which are homotopy equivalent to the 0 complex;
\item all minimal representatives of a complex are isomorphic in $\Com(\I-$prgrmod$)$, where in particular their underlying (graded) $\I$-modules are isomorphic.
\end{enumerate}

We shall view each complex in $\cK$ as a direct sum of $\Z$-graded (cohomological degree) $\I$-modules $P_i\otimes \Pi_a \<k\>[\ell]$, equipped with a differential map $\partial$, namely $\partial$ has degree +1 with respect the the cohomological grading and $\partial^2 = 0$.
Each object $P_i\otimes \Pi_a \< k \>[\ell]$ has a \emph{level grading} defined by $m := \ell-k$.
A minimal complex in $\cK$ is of \emph{homogeneous level} $m$ if all of its underlying summands $P_i\otimes \Pi_a \<k\> [\ell]$ have level $m$.
In general, a complex is of homogeneous level $m$ if it is isomorphic to a minimal complex of level $m$.
We define $\cK^{=m} \subset \cK$ to be the additive subcategory of $\cK$ that consists of complexes with homogeneous level $m$.
Similarly, we define $\cK^{\leq m}$ and $\cK^{\geq m}$ to be the (strict) additive subcategories of $\cK$ that consist of minimal complexes with summands that have level $\leq m$ and $\geq m$ respectively.
Note that 
\[
\cK^{\geq m} \subset \cK^{\geq m} [-1] = \cK^{\geq m -1}, \quad \text{and } \cK^{\leq m} \subset \cK^{\leq m} [1] = \cK^{\leq m +1}.
\]
Given a minimal complex $C$, denote $\cK^{\geq m}(C)$ as the complex consisting of all the $\I$-module summands $P_i\otimes \Pi_a \<k\>[\ell]$ of $C$ with level $\geq m$ equipped with the differential given by the restriction of the differential on $C$ -- one can check that this is indeed a complex since the degrees of the maps in the differential with respect to the level grading are always non-decreasing in a minimal complex.
Hence, we have an exact triangle
\[
\cK^{\geq m}(C) \xra{\iota} C \ra \cone(\iota) \ra
\]
for each minimal complex $C$, with $\iota$ the obvious inclusion map, where it follows from the definition that $\cone(\iota)$ is an object in $\cK^{\leq m-1}$.
Since every complex $X$ is isomorphic to a minimal one, we have a distinguished triangle with $E \in \cK^{\geq m} $ and $F \in \cK^{\leq m-1}$:
\[
E \ra X \ra F \ra
\]
for every $X \in \cK$.
Moreover, it is clear that there are no maps from $\cK^{\geq m}$ to $\cK^{\leq m'}$ for $m > m'$.
It follows that the pair $(\cK^{\geq 0}, \cK^{\leq -1})$ defines a $t$-structure on $\cK$, with heart 
\[
\cH = \cK^{\geq 0} \cap \cK^{\leq -1}[1] = \cK^{=0}
\]
consisting of complexes with homogeneous level $0$. 
We will call $\cH$ the \emph{linear heart} of $\cK$.
Note that complexes in $\cH$ are isomorphic to minimal complexes whose non-zero differentials consist of the map $(1|2)$ or $(2|1)$ (up to scalar multiples).

\subsection{The root stability condition $\tau_R$} \label{sect: stab cond root}
It is easy to see that the heart of linear complexes $\cH$ is by definition an (abelian) $\TLJ_n$-module subcategory of $\cK$, and is closed under the autoequivalence $\<1\>[1]$; in other words, $(\cH, {\<1\>[1]}, \Psi)$ is a $\TLJ_n$-module $\<1\>[1]$-category.
Moreover,  $\cH$ is a $\<1\>[1]$-algebraic heart (see \cref{defn: X1-algebraic heart}) with $\{ P_i \otimes \Pi_a : i \in \{1,2\}, 0 \leq a \leq n-2\}$ as the set of (isomorphism classes of) simple objects.
In particular, $\cH$ is a finite-length abelian category, and therefore any stability function on $\cH$ will satisfy the Harder-Narasimhan property.

Let us denote again the ring $\Omega := \Z[\omega]/ \< \Delta_{n-1}(\omega)\> \cong K_0(\TLJ_n)$.
Given the structure $(\cH, {\<1\>[1]}, \Psi)$ on $\cH$, recall that its abelian Grothendieck group $K_0(\cH)$ is a module over $\Omega[x^{\pm 1}]$, with action defined by
\[
\omega\cdot[A] = [A\otimes \Pi_1], \quad x\cdot [A] = [A\<1\>[1]].
\]
Similarly recall that $\C$ is a $\Omega[x^{\pm 1}]$-module induced by the ring homomorphism $\Omega[x^{\pm 1}] \ra \C$ defined by
\[
\omega \mapsto 2\cos\left( \frac{\pi}{n} \right), \quad x \mapsto 1.
\]
To obtain a $q$-stability condition respecting $\TLJ_n$ on $\cK$, \cref{thm: x[1] stab function and q stab cond respecting C} (together with the fact that $\cH$ is $\<1\>[1]$-algebraic) tells us that it is sufficient to specify a stability function $Z : K_0(\cH) \ra \C$ which is a $\<1\>[1]$-stability function over $\TLJ_n$; namely $Z$ is required to be a $\Omega[x^{\pm 1}]$-module homomorphism.
Since the Grothendieck group $K_0(\cH)$ is a rank two free $\Omega[x^{\pm 1}]$-module with basis $\{[P_1], [P_2]\}$, such a stability function is uniquely defined by specifying the image of each $[P_1]$ and $[P_2]$ under $Z$ in the half-open upper-half plane:
\[
Z([P_j]) \in \mathbb{H} \cup \R^{\geq 0} \subset \C.
\]
This will be our method of constructing a $q$-stability condition respecting $\TLJ_n$ on $\cK$.

Recall that with $\{\alpha_1,\alpha_2\} \subset \R^2$ as basis (simple roots), the symmetric geometric representation $\overline{\rho}$ of $\mathbb{W}(I_2(n))$ is uniquely defined by the bilinear pairing $(\alpha_1 | \alpha_2) = (\alpha_2 | \alpha_1) = 2\cos\left( \frac{\pi}{n} \right)$, where the action can be described explicitly on the generators by the following matrices:
\[
\overline{\rho}(s_1) = 
	\begin{bmatrix}
	-1 & 2 \cos (\frac{\pi}{n}) \\
	0  &  1
	\end{bmatrix}, \quad
\overline{\rho}(s_2) = 
	\begin{bmatrix}
	1                       &  0  \\
	2 \cos (\frac{\pi}{n})  &  -1
	\end{bmatrix}.
\]
Once again this is the Burau representation with $q = -1$.
With $\alpha_i$ denoting the simple roots, one can visualise the positive roots of the associated root system (not necessarily crystallographic) as in \Cref{fig: root system general}.
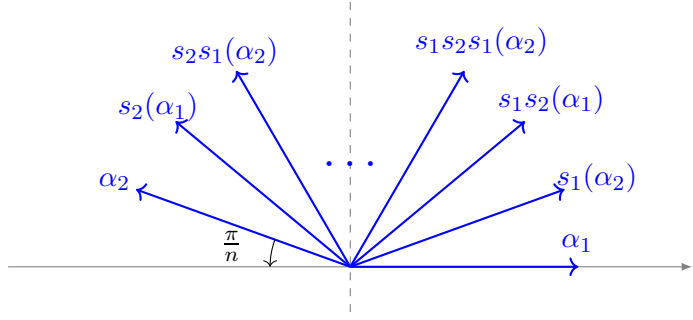
\begin{figure}[h]
  \centering
  \begin{tikzpicture}[scale = 3]
    \coordinate (Origin)   at (0,0);
    \coordinate (XAxisMin) at (-1.5,0);
    \coordinate (XAxisMax) at (1.5,0);
    \coordinate (YAxisMin) at (0,-.2);
    \coordinate (YAxisMax) at (0,1.2);
    \coordinate (na1)      at (-1,0);

    \draw [thin, gray,-latex] (XAxisMin) -- (XAxisMax); 
    \draw [thin, gray, dashed] (YAxisMin) -- (YAxisMax);
    \node[thick, blue] (cdots) at ($(Origin) + (0.015,0.45)$) {\LARGE $\cdots$};
	
	\draw[blue, thick, ->] (0,0) -- ++(0:1);
	\node[blue] (a1) at ($(Origin) + (6:1)$) {$\alpha_1$};
	
	\draw[blue, thick, ->] (0,0) -- ++(20:1);
	\node[blue] (s1sa2) at ($(Origin) + (20:1.15)$) {$s_1(\alpha_2)$};
	
	\draw[blue, thick, ->] (0,0) -- ++(40:1);
	\node[blue] (s1s2a1) at ($(Origin) + (40:1.15)$) {$s_1s_2(\alpha_1)$};
	
	\draw[blue, thick, ->] (0,0) -- ++(60:1);
	\node[blue] (s1s2s1a2) at ($(Origin) + (60:1.15)$) {$s_1s_2s_1(\alpha_2)$};
	
	\draw[blue, thick, ->] (0,0) -- ++(120:1);
	\node[blue] (s2s1a2) at ($(Origin) + (120:1.1)$) {$s_2s_1(\alpha_2)$};
	
	\draw[blue, thick, ->] (0,0) -- ++(140:1);
	\node[blue] (s2a1) at ($(Origin) + (140:1.1)$) {$s_2(\alpha_1)$};
	
	\draw[blue, thick, ->] (0,0) -- ++(160:1);
	\node[blue] (a2) at ($(Origin) + (160:1.1)$) {$\alpha_2$};

    \pic [draw, ->, "$\frac{\pi}{n}$", angle radius = 30, angle eccentricity=1.5] {angle=a2--Origin--na1};
  \end{tikzpicture}
  \caption{The positive roots of the $I_2(n)$ root system.}
  \label{fig: root system general}
\end{figure}
Under this representation, the Coxeter generators $s_i$ of $\mathbb{W}(I_2(n))$ act on the roots by reflecting along the perpendicular line of $\alpha_i$, and in particular $s_2s_1$ acts by a $\frac{2\pi}{n}$ clockwise rotation along the origin.

Note that all the positive roots can be obtained by rotating the two positive roots $\alpha_2$ and $s_2(\alpha_1)$ using the action of $s_2s_1$:
\begin{enumerate}
\item When $n$ is odd, the positive roots are
\begin{equation} \label{eqn: positive roots odd}
\begin{cases}
(s_2s_1)^{k}(\alpha_2), 
	& \text{for } 0 \leq k \leq \frac{n-1}{2}; \\
(s_2s_1)^{k'}s_2(\alpha_1), 
	& \text{for } 0 \leq k' \leq \frac{n-3}{2},
 \end{cases}
\end{equation}
where $\alpha_1 = (s_2s_1)^{\frac{n-1}{2}}(\alpha_2)$ and $s_1(\alpha_2) = (s_2s_1)^{\frac{n-3}{2}}s_2(\alpha_1)$ etc.
\item When $n$ is even, the positive roots are
\begin{equation} \label{eqn: positive roots even}
\begin{cases}
(s_2s_1)^{k}(\alpha_2), 
	& \text{for } 0 \leq k \leq \frac{n}{2}-1; \\
(s_2s_1)^{k'}s_2(\alpha_1), 
	& \text{for } 0 \leq k' \leq \frac{n}{2}-1,
 \end{cases}
\end{equation}
where $\alpha_1 = (s_2s_1)^{\frac{n}{2}-1}s_2(\alpha_1)$ and $s_1(\alpha_2) = (s_2s_1)^{\frac{n}{2}-1}(\alpha_2)$ etc.
\end{enumerate}
In particular, the total of $n$ positive roots $\Phi^+$ can be enumerated by the alternating expressions
\[
\Phi^+ = \{ \alpha_2, s_2(\alpha_1), s_2s_1(\alpha_2), ..., \underbrace{(s_2s_1s_2...s_j)}_{n-1 \text{ times}}(\alpha_i) \}, \quad \text{ where } i \neq j.
\]
Note that by definition $\underbrace{(s_2s_1s_2...s_j)}_{n-1 \text{ times}}(\alpha_i) = \alpha_1$.

For each $n$, we shall define a stability condition that reflects this particular geometry of the root system.
\begin{definition}\label{defn: root stability condition}
Consider the stability function $ Z : K_0(\cH) \ra \C$ uniquely defined as a $\Omega[x,x^{-1}]$-module homomorphism by
\[
Z([P_1]) = 1, \quad 
Z([P_2]) = e^{i(\pi - \frac{\pi}{n})}.
\]
This determines a $q$-stability condition on $\cK$ respecting $\TLJ_n$.
We denote this stability condition by $\tau_R$, and call it \emph{the root stability condition associated to $I_2(n)$}.
\end{definition}

Note that the central charge $\cZ: K_0(\cK) \ra \C$ of $\tau_R$ is then a $\Omega[q^{\pm 1}]$-module homomorphism, where $K_0(\cK)$ is a $\Omega[q^{\pm 1}]$-module with action defined by
\[
\omega\cdot [X] = [X\otimes \Pi_1], \quad q\cdot [X] = [X\<1\>];
\]
whereas $\C$ is a $\Omega[q^{\pm 1}]$-module induced by the homomorphism $\Omega[q^{\pm 1}] \ra \C$ defined by
\[
\omega \mapsto 2\cos\left( \frac{\pi}{n} \right), \quad q \mapsto -1.
\]

Let us view $\C \cong \R^2$ as $\R$-vector space naturally, where the positive roots in \Cref{fig: root system general} are now elements in $\C$, namely 
\[
\alpha_1 = 1 \in \C, \alpha_2 = e^{i\frac{\pi}{n}} \in \C \text{ etc.}
\]
As such, $\W(I_2(n))$ acts on $\C \cong \R^2$ through the symmetrical geometric representation.
Under this definition, the central charge $\cZ: K_0(\cK) \ra \C$ intertwines the actions of $\B(I_2(n))$ and $\W(I_2(n))$:
\[
\cZ( \sigma_k ([P_j]) ) = s_k \cdot \cZ([P_j]) = s_k(\alpha_j).
\]
In particular, we see that for any object $X \in \cK$, the central charge of $\sigma_k(X)$ is given by reflecting $\cZ([X])$ along the perpendicular line of $\cZ([P_k]) = \alpha_k$, and the central charge of $\sigma_2\sigma_1(X)$ is given by rotating $\cZ([X])$ by $\frac{2\pi}{n}$ clockwise along the origin.

\comment{
In this subsection, we collect some results relating semi-stable objects and roots.
All the results here are slight generalisations of the results in \cite{bdl_root}, so we will only state them without proofs.
The main result that we need is \cref{root lift semistable}.

As before, denote the ring $\Omega := \Z[\omega]/ \< \Delta_{n-1}(\omega)\> \cong K_0(\TLJ_n)$.
Throughout this subsection, we fix $\tau$ to be a $q$-stability condition  respecting $\TLJ_n$ defined by some $\<1\>[1]$-stability function $Z: K_0(\cH) \ra \C$ over $\TLJ_n$ on the linear heart $\cH$ (see \cref{sect: stab cond root}).
We shall study the relation between the semistable objects of the linear heart $\cH$, and the roots in the root system of $I_2(n)$ given by the (symmetric) geometric representation.

Recall that  $\R$ can be viewed as a module over $\Omega[q,q^{-1}]$ through the extension of
\[
PF\dim: \Omega \ra \R,
\]
where we send $\omega \mapsto 2\cos(\frac{\pi}{n})$ and $q \mapsto -1$.
This way, we may extend scalars on $K_0(\cK)$ to get $\mathbb{K} := K_0(\cK) \otimes_{\Omega[q,q^{-1}]} \R \cong \R^{2}$, which is a two dimensional real vector space.

Note that in this case, $[-\otimes \Pi_a]$ acts on $\mathbb{K}$ through multiplication by $\Delta_a(2\cos(\frac{\pi}{n})) \geq 1$, where $\Delta_a(2\cos(\frac{\pi}{n})) = \Delta_{n-2 - a}(2\cos(\frac{\pi}{n}))$ for all $0 \leq a \leq n-2$.
Moreover, the braid action on $\cK$ given by $\sigma_i$ descends to an action on $\mathbb{K}$ given by the corresponding Coxeter reflection $s_i$.

\begin{proposition}
Let $C$ be an object in $\cH$. 
Then $\sigma_i(C)$ (resp. $\sigma^{-1}_i(C)$) is in $\cH$ if and only if $P_i \otimes \Pi_a \<k\> [k]$ is not a quotient (resp. subobject) of $C$ for all $a \in \{0,1, ..., {n-2} \}$ and all ${k\in \mathbb{Z}}$. 
\end{proposition}
\begin{definition}
Let $S \subseteq K_0$ and $C$ an object in $\cH$.
We say that $S$ \emph{envelops} the quotients (resp. subobjects) of $C$ if the class in $K_0$ of every quotient (resp. subobject) of $C$ can be expressed as a non-negative $\R$-linear combination of elements in $S$. 
We say that $S$ \emph{weakly envelops} the quotients (resp. subobjects) of $C$ if this property holds modulo the class of $C$.
\end{definition}

\begin{proposition}
Let $S \subseteq K_0$ and $C$ an object in $\cH$.
Then $S$ weakly envelops the quotients of $C$ if and only if $-S$ weakly envelops the subobjects of $C$.
\end{proposition}

\begin{proposition}
Let $C$ be an object in $\cH$ and $S \subseteq K_0$ such that $S$ envelops the quotients (resp. subobjects) of $C \otimes \Pi_a$ for all $0 \leq a \leq n-2$. 
If $\sigma_i(C)$ lies in $\cH$, then $s_i(S) \cup \{ [P_i] \}$ envelops the quotients (resp. subobjects) of $\sigma_i(C) \otimes \Pi_a$ for all $0 \leq a \leq n-2$.
The analogous statement for weakly envelop holds true as well.
\end{proposition}

\begin{definition}
Let $C := \sigma_{v_m}^{\epsilon_m}\sigma_{v_{m-1}}^{\epsilon_{m-1}} ... \sigma_{v_1}^{\epsilon_1}(P_{v_0})$, with $\epsilon_{i} \in \{-1, 1\}$.
The \emph{root sequence} of $C$ is a sequence $R = (R_m, ..., R_0)$ of elements in $\mathbb{K}$ given by
\[
R_i := s_{v_m} s_{v_{m-1}} ... s_{v_{i+1}} ( [ P_{v_i} ] ).
\]
We call $R_0 = [C]$ the neutral root.
The \emph{positive subsequence} $R_+$ of $R$ is the subsequence $(R_i : \epsilon_{v_i} = 1)$.
Similarly the \emph{negative subsequence} $R_-$ of $R$ is the subsequence $(R_i : \epsilon_{v_i} = -1)$.
\end{definition}


\begin{proposition}
Let $C := \sigma_{v_m}^{\epsilon_m}\sigma_{v_{m-1}}^{\epsilon_{m-1}} ... \sigma_{v_1}^{\epsilon_1}(P_{v_0})$, with $\epsilon_{i} \in \{-1, 1\}$ and $R$ its corresponding root sequence. 
\begin{enumerate}
\item If the images of $R_+$ and $R_-$ in $\mathbb{K}/\< [C] \>$ can be separated by a hyperplane, then $C \otimes \Pi_a$ lies in $\cH$ for all $0 \leq a \leq n-2$.
\item If (1) holds, then the set $R_+ \cup -R_-$ (resp. $-R_+ \cup R_-$) weakly envelops the quotients (resp. subobjects) of all $C \otimes \Pi_a$.
\end{enumerate}
\end{proposition}

Let $(\cP, \cZ)$ be a stability condition on $\cK$ defined by a stability function $Z: K_0(\cH) \ra \C$ on the linear heart $\cH$.
Let $C$ be a non-zero object in $\cH$.
Set $L := \C/\R\cdot \cZ([C])$ and decompose $L$ into positive ray $L_+$ and negative ray $L_-$:
\begin{align*}
L_+ &:= \text{ Image of } \left\{ me^{i\theta} | \theta \in [0,\pi), \theta \geq \arg( \cZ([C]) ) \right\}, \\
L_-  &:= \text{ Image of } \left\{ me^{i\theta} | \theta \in [0,\pi), \theta \leq \arg( \cZ([C]) ) \right\}.
\end{align*}

\begin{proposition}\label{root lift semistable}
Let $C := \sigma_{v_m}^{\epsilon_m}\sigma_{v_{m-1}}^{\epsilon_{m-1}} ... \sigma_{v_1}^{\epsilon_1}(P_{v_0})$, with $\epsilon_{i} \in \{-1, 1\}$ and $R$ its corresponding root sequence. 
If the image of $\cZ(R_+)$ lies in $L_+$ and the image of $\cZ(R_-)$ lies in $L_-$, then $C \otimes \Pi_a$ is semi-stable for all $0 \leq a \leq n-2$.
\end{proposition} 
}

\comment{
Using \cref{lemma for braid relation}, we have some alternating composition of positive braid $\sigma_2 \sigma_1 ... $ such that $\sigma_2 \sigma_1 ... \sigma_{j}(P_i) = P_1 \otimes \Pi_{n-2}$ for some $i$ and $j$ such that $j \neq i$.
In particular, we have
\begin{align*}
(\sigma_2 \sigma_1)^{\frac{n-1}{2}} (P_2)                &= P_1\otimes \Pi_{n-2}, \text{ when $n$ is odd}; \\
(\sigma_2 \sigma_1)^{\frac{n-2}{2}}\sigma_2 (P_1) &= P_1\otimes \Pi_{n-2}, \text{ when $n$ is even}.
\end{align*}
Consider the sequence of objects 
\[
\mathfrak{R} = (P_2, \sigma_2(P_1), \sigma_2 \sigma_1 (P_2), \sigma_2 \sigma_1 \sigma_2 (P_1), ..., \sigma_2 \sigma_1 ... (P_i) = P_1\otimes \Pi_{n-2}).
\]
This sequence $\mathfrak{R}$ can be thought of as the positive lift of the root sequence of $P_1\otimes \Pi_{n-2} = \sigma_2 \sigma_1 ... (P_i)$.

\begin{proposition}
With the stability condition above, each of the object in the sequence $\mathfrak{R}$ lies in $\cH$ and is semi-stable.
\end{proposition}

\noindent
Let $\mathfrak{R}^{tot} = \left\{ A[k]  | A \in \mathfrak{R}, k \in \mathbb{Z} \right\}$, the set of objects in $\mathfrak{R}$ up to shifts.
Note that there is a strict total order on $\mathfrak{R}^{tot}$ according to their phases, i.e.
\[
... > P_1\otimes \Pi_{n-2}[1] > 
P_2 > \sigma_2(P_1) >
\sigma_2 \sigma_1 (P_2) >  \sigma_2 \sigma_1 \sigma_2 (P_1) >
 ... >  P_1 \otimes \Pi_{n-2} > 
 P_2[-1] > ...
\]
Thus we also have a bijection between $\mathfrak{R}^{tot}$ and the set of phases of objects in $\mathfrak{R}^{tot}$, given by $\{ \frac{k}{n} | k \in \mathbb{Z}\}$.

\begin{corollary}
Let $A$ be an object in $\mathfrak{R}^{tot}$ with phase $\phi$.
Then $\sigma_2 \sigma_1 (A)$ is the object in $\mathfrak{R}^{tot}$ two steps down from $A$ according to the ordering above, i.e. $\sigma_2 \sigma_1(A)$ is the unique object in $\mathfrak{R}^{tot}$ with phase $\phi - \frac{2}{n}$.
\end{corollary}
}

\subsection{The $\tau_R$-semistable objects}\label{sect: root and semistable}
In this subsection, we study the semistable objects for our category $\cK$ with respect to the root stability condition $\tau_R = (\cP, \cZ)$ constructed in \cref{sect: stab cond root}.
Throughout this subsection, $\cH = \cP[0,1)$ will denote the heart of linear complexes.

We will use a result from \cite{bdl_root} to identify some of the $\tau_R$-semistable objects.
Before we state it, we will need the following definition:
\begin{definition}
Let $C$ be an object in $\cH$ given by
\[
C := \sigma_{v_m}^{\epsilon_m}\sigma_{v_{m-1}}^{\epsilon_{m-1}} ... \sigma_{v_1}^{\epsilon_1}(P_{v_0}),
\]
with $\epsilon_{i} \in \{-1, 1\}, v_0 \in \{1,2\}$.
The \emph{root sequence} of $C$ is a sequence $R = (R_m, ..., R_0)$ of elements in $\C \cong \R^2$ given by
\[
R_i := s_{v_m} s_{v_{m-1}} ... s_{v_{i+1}} ( \alpha_{v_i} ) \in \C,
\]
where $\alpha_{v_0} = \cZ([P_{v_0}])$.
The \emph{positive subsequence} $R_+$ of $R$ is the subsequence $(R_i : \epsilon_{v_i} = 1)$ and
similarly the \emph{negative subsequence} $R_-$ of $R$ is the subsequence $(R_i : \epsilon_{v_i} = -1)$.
\end{definition}

Let $C$ be a non-zero object in $\cH$.
Set $L := \C/\R\cdot \cZ([C])$ and decompose $L$ into positive ray $L_+$ and negative ray $L_-$:
\begin{align*}
L_+ &:= \text{ Image of } \left\{ me^{i\theta} : \theta \in [0,\pi), \theta \geq \arg( \cZ([C]) ) \right\}, \\
L_-  &:= \text{ Image of } \left\{ me^{i\theta} : \theta \in [0,\pi), \theta \leq \arg( \cZ([C]) ) \right\}.
\end{align*}

\begin{proposition}[\cite{bdl_root}] \label{root lift semistable}
Let $C$ be an object in $\cH$ with 
\[
C := \sigma_{v_m}^{\epsilon_m}\sigma_{v_{m-1}}^{\epsilon_{m-1}} ... \sigma_{v_1}^{\epsilon_1}(P_{v_0}),
\]
where $\epsilon_{i} \in \{-1, 1\}, v_0 \in \{1,2\}$ and let $R$ be its corresponding root sequence. 
If the image of $R_+$ lies in $L_+$ and the image of $R_-$ lies in $L_-$, then $C$ is $\tau_R$-semistable.
\end{proposition}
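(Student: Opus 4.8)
The plan is to prove \cref{root lift semistable} by reducing $\tau_R$-semistability of $C$ to a statement about the Grothendieck classes of its quotients in the finite-length heart $\cH$, and then tracking those classes inductively along the braid word $\sigma_{v_m}^{\epsilon_m}\cdots\sigma_{v_1}^{\epsilon_1}$ defining $C$. First I would recall that, since $\cH$ is finite length, $C \in \cH$ is $\tau_R$-semistable if and only if every quotient $C \twoheadrightarrow B$ in $\cH$ has $\phi(B) \geq \phi(C)$; and since $\cZ$ is the $\Omega[q^{\pm 1}]$-linear extension of $\cZ([P_j]) = \alpha_j \in \C$, the inequality $\phi(B) \geq \phi(C)$ says precisely that the image of $\cZ([B])$ in $L = \C/\R\cdot\cZ([C])$ lies in the ray $L_+$.

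The technical heart, following \cite{bdl_root}, is the notion that a subset $S \subseteq K_0(\cH)$ \emph{weakly envelops} the quotients of an object $D \in \cH$ if the class of every quotient of $D$ lies, modulo $[D]$, in the cone of non-negative real combinations of $S$; together with a transport lemma: if $S$ weakly envelops the quotients of $D \otimes \Pi_a$ for all $0 \leq a \leq n-2$ and $\sigma_i^{\epsilon}(D) \in \cH$, then $s_i(S) \cup \{\epsilon\,[P_i]\}$ weakly envelops the quotients of $\sigma_i^{\epsilon}(D) \otimes \Pi_a$ for all $a$. I would then induct on the word length $m$: the base object $P_{v_0}$ and all its twists $P_{v_0} \otimes \Pi_a$ are simple in $\cH$, so $\{[P_{v_0}]\} = \{\alpha_{v_0}\}$ trivially weakly envelops; applying the transport lemma $m$ times inserts exactly the roots $R_i = s_{v_m}\cdots s_{v_{i+1}}(\alpha_{v_i})$, with a sign matching $\epsilon_i$, so that the quotients of $C$ (and of $C \otimes \Pi_a$) are weakly enveloped by $R_+ \cup (-R_-)$. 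For the induction to be legitimate one needs each truncation $\sigma_{v_j}^{\epsilon_j}\cdots\sigma_{v_1}^{\epsilon_1}(P_{v_0})$ to lie in $\cH$; this is where the hypothesis $R_+ \subseteq L_+$, $R_- \subseteq L_-$ is used again — it forces $R_+ \cup (-R_-)$, hence the partial root data, to lie in a common open half-plane, which by the criterion governing when $\sigma_i^{\pm 1}$ preserves $\cH$ (see \cite{bdl_root}) keeps all truncations inside $\cH$.

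To conclude, apply $\cZ$ to the enveloping statement: for every quotient $B$ of $C$ we get $\cZ([B]) \equiv \sum_i c_i\, \cZ(R_i^+) - \sum_j d_j\, \cZ(R_j^-) \pmod{\R\cdot\cZ([C])}$ with all $c_i, d_j \geq 0$. By hypothesis the image of each $\cZ(R_i^+)$ in $L$ lies in $L_+$ and that of each $\cZ(R_j^-)$ lies in $L_-$; since $L$ is one-dimensional over $\R$, negation swaps $L_+$ and $L_-$, so each $-\cZ(R_j^-)$ also maps into $L_+$. Hence the image of $\cZ([B])$ in $L$ is a non-negative combination of vectors in the ray $L_+$, so it lies in $L_+$, i.e. $\phi(B) \geq \phi(C)$. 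As $B$ was an arbitrary quotient, $C$ is $\tau_R$-semistable.

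The main obstacle is the transport lemma: proving that ``weakly envelops the quotients'' is preserved by the spherical twist $\sigma_i^{\pm 1}$ whenever the twist stays in $\cH$. This requires analysing how quotients and subobjects in the linear heart interact with the twist functors, using the two-term structure $\sigma_{P_i} = \big(P_i \otimes {}_iP \xra{\beta_i} \I\big)$, the biadjunctions of \cref{biadjoint pair}, and the $\TLJ_n$-fusion rules (\cref{fusion rule}) — which is exactly why the enveloping statement must be carried simultaneously for all $C \otimes \Pi_a$. I would cite \cite{bdl_root} for this lemma rather than reproduce it, and supply only the specialisation to $\tau_R$ above, together with the verification that the root stability condition of \cref{defn: root stability condition} indeed satisfies the hypotheses needed to invoke it.
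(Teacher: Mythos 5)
The paper does not actually prove this proposition; it cites \cite{bdl_root} and moves on. Your proposal reconstructs the argument that reference contains, and your reconstruction agrees with the intended route (one can in fact trace it in an unused draft passage of the paper's source): reduce semistability of $C$ in the finite-length heart to the condition that every quotient $B$ has $\phi(B) \geq \phi(C)$; introduce the ``weakly envelops'' cone condition in $K_0(\cH)/\R\cdot[C]$; propagate it along the braid word via a transport lemma for $\sigma_i^{\pm1}$, carrying the statement simultaneously for all $C\otimes \Pi_a$ because the twist mixes the $\TLJ_n$-components; then push forward along $\cZ$ and observe that a non-negative combination of vectors mapping into the ray $L_+$ stays in $L_+$. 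This is correct, and deferring the transport lemma to the same reference is a legitimate move, matching what the paper itself does.

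Two small points to tighten if you expanded the sketch. First, the truncation $\sigma_{v_j}^{\epsilon_j}\cdots\sigma_{v_1}^{\epsilon_1}(P_{v_0})$ does not have a root sequence which is literally a subsequence of $R$ --- its roots differ from $(R_j,\dots,R_0)$ by the linear action of $s_{v_m}\cdots s_{v_{j+1}}$ --- so the claim that the hypothesis ``keeps all truncations inside $\cH$'' needs the separating line to be transported along with the roots rather than reusing $L_\pm$ verbatim; the hyperplane criterion in \cite{bdl_root} is stated invariantly precisely to allow this. Second, the sign $\epsilon\,[P_i]$ you insert in the transport lemma is a derived formulation: the reference treats $\sigma_i$ acting on the quotient- and subobject-enveloping sets in parallel and then passes between them by the duality $S \leftrightarrow -S$, so make sure your signs match that bookkeeping before relying on $R_+\cup(-R_-)$ enveloping the quotients. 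Neither issue is a gap in your argument as written, since both are pushed into the cited lemma.
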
 

We shall use this proposition to identify the $\tau_R$-semistable objects, starting with the ones lying in the linear heart $\cH$.
Fix $\gamma := \sigma_2 \sigma_1$.
By applying \cref{lemma for braid relation} repeatedly, we see that the following set of objects are (indecomposable) linear complexes
\begin{enumerate}
\item for $n$ odd,
\begin{equation} \label{semi-stable forms odd}
\mathfrak{R}^+_n :=
	\left\{
		\gamma^{k}(P_2): \text{ for } 0 \leq k \leq \frac{n-1}{2}
	\right\}
\cup
	\left\{
		\gamma^{k'}\sigma_2(P_1) : \text{ for } 0 \leq k' \leq \frac{n-3}{2}
	\right\} \subset \cH
\end{equation}
\item for $n$ even,
\begin{equation} \label{semi-stable forms even}
\mathfrak{R}^+_n =
	\left\{
		\gamma^{k}(P_2) :  \text{for } 0 \leq k \leq \frac{n}{2}-1
	\right\}
\cup
	\left\{
		\gamma^{k'}\sigma_2(P_1) : \text{for } 0 \leq k' \leq \frac{n}{2}-1
	\right\} \subset \cH.
\end{equation}
\end{enumerate}
Using \cref{root lift semistable}, we see that the root sequences of the objects in $\mathfrak{R}_n^+$ and their corresponding positions of roots (as depicted in \cref{fig: root system general}) show that all the objects in $\mathfrak{R}_n^+$ are $\tau_R$-semistable.
Moreover, the stability function $Z$ (equivalently central charge $\cZ$) induced by $\tau_R$ gives a bijection between the set $\mathfrak{R}^+_n$ and the set of positive roots $\Phi^+$.
In particular,
\begin{align*}
P_2 &\in \cP\left(1-\frac{\pi}{n} \right), \\
\sigma_2 \left(P_1 \right) &\in \cP\left(1-\frac{2\pi}{n}\right), \\
\sigma_2\sigma_1(P_2) &\in \cP\left(1-\frac{3\pi}{n}\right), \\
&\vdots \\
P_1\otimes \Pi_{n-2} \<n-2\>[n-2] &\cong \sigma_2\sigma_1... \sigma_j(P_i) \in \cP\left( 0 \right).
\end{align*}
See \Cref{fig: root and central charge n=5} for a side-by-side comparison of the positive roots and the image of the corresponding $\tau_R$-semistable objects under the stability function for $n=5$.

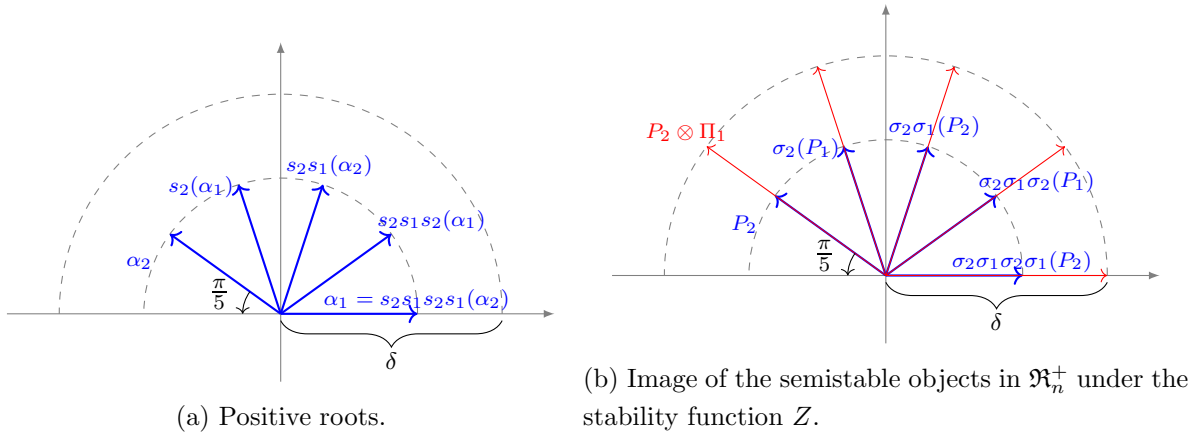
\begin{figure}[H]
\centering
\begin{subfigure}{.5\textwidth}
  \centering
  \begin{tikzpicture}[scale = 1.8]
    \coordinate (Origin)   at (0,0);
    \coordinate (XAxisMin) at (-2,0);
    \coordinate (XAxisMax) at (2,0);
    \coordinate (YAxisMin) at (0,-.5);
    \coordinate (YAxisMax) at (0,2);
    \coordinate (na1)      at (-1,0);
    \coordinate (xa2) at ($(Origin) + (144:1.8)$);

    \draw [thin, gray,-latex] (XAxisMin) -- (XAxisMax);
    \draw [thin, gray,-latex] (YAxisMin) -- (YAxisMax);
    \draw[gray, dashed] (1,0) arc (0:180:1);
    \draw[gray, dashed] (1.618,0) arc (0:180:1.618);
    	
	\draw [decorate,decoration={brace,amplitude=10pt,mirror}]
		($(Origin) + (0,-0.05)$) -- (1.618, -0.05) node [black,midway,yshift=-22] 
		{\footnotesize $\delta$};	
	
	\draw[blue, thick, ->] (0,0) -- ++(0:1);
	\node[blue] (a1) at ($(Origin) + (6:1)$) 
		{\scriptsize $\alpha_1= s_2s_1s_2s_1(\alpha_2)$};
	
	\draw[blue, thick, ->] (0,0) -- ++(36:1);
	\node[blue] (s2s1s2a1) at ($(Origin) + (32:1.3)$) 
		{\scriptsize $s_2 s_1 s_2(\alpha_1)$};
	
	\draw[blue, thick, ->] (0,0) -- ++(72:1);
	\node[blue] (s2s1a2) at ($(Origin) + (72:1.15)$) 
		{\scriptsize $s_2 s_1(\alpha_2)$};
	
	\draw[blue, thick, ->] (0,0) -- ++(108:1);
	\node[blue] (s2a1) at ($(Origin) + (122:1.1)$) 
		{\scriptsize $s_2(\alpha_1)$};
	
	\draw[blue, thick, ->] (0,0) -- ++(144:1);
	\node[blue] (a2) at ($(Origin) + (160:1.1)$) 
		{\scriptsize $\alpha_2$};
	
	
	
	
	
    
    \pic [draw, ->, "$\frac{\pi}{5}$", angle eccentricity=1.7] {angle=xa2--Origin--na1};
  \end{tikzpicture}
  \caption{
  		Positive roots.
  }
  \label{subfig: roots n=5}
\end{subfigure}%
\begin{subfigure}{.5\textwidth}
  \centering
\begin{tikzpicture}[scale = 1.8]
    \coordinate (Origin)   at (0,0);
    \coordinate (XAxisMin) at (-2,0);
    \coordinate (XAxisMax) at (2,0);
    \coordinate (YAxisMin) at (0,-.5);
    \coordinate (YAxisMax) at (0,2);
    \coordinate (na1)      at (-1,0);

    \draw [thin, gray,-latex] (XAxisMin) -- (XAxisMax);
    \draw [thin, gray,-latex] (YAxisMin) -- (YAxisMax);
    \draw[gray, dashed] (1,0) arc (0:180:1);
    \draw[gray, dashed] (1.618,0) arc (0:180:1.618);
    	
	\draw [decorate,decoration={brace,amplitude=10pt,mirror}]
		($(Origin) + (0,-0.05)$) -- (1.618, -0.05) node [black,midway,yshift=-22] 
		{\footnotesize $\delta$};	
	
	\draw[blue, thick, ->] (0,0) -- ++(0:1);
	\node[blue] (a1) at ($(Origin) + (6:1)$) 
		{\scriptsize $\sigma_2\sigma_1\sigma_2\sigma_1(P_2)$};
	
	\draw[blue, thick, ->] (0,0) -- ++(36:1);
	\node[blue] (s2s1s2a1) at ($(Origin) + (32:1.3)$) 
		{\scriptsize $\sigma_2\sigma_1\sigma_2(P_1)$};
	
	\draw[blue, thick, ->] (0,0) -- ++(72:1);
	\node[blue] (s2s1a2) at ($(Origin) + (72:1.15)$) 
		{\scriptsize $\sigma_2\sigma_1(P_2)$};
	
	\draw[blue, thick, ->] (0,0) -- ++(108:1);
	\node[blue] (s2a1) at ($(Origin) + (122:1.1)$) 
		{\scriptsize $\sigma_2(P_1)$};
	
	\draw[blue, thick, ->] (0,0) -- ++(144:1);
	\node[blue] (a2) at ($(Origin) + (160:1.1)$) 
		{\scriptsize $P_2$};
	
	\draw[red, ->] (0,0) -- ++(0  :1.618);
	
	\draw[red, ->] (0,0) -- ++(36 :1.618);
	
	\draw[red, ->] (0,0) -- ++(72 :1.618);
	
	\draw[red, ->] (0,0) -- ++(108:1.618);
	
	\draw[red, ->] (0,0) -- ++(144:1.618);
	\node[red] (xa2) at ($(Origin) + (144:1.8)$) {\scriptsize $P_2 \otimes \Pi_1$};
    
    \pic [draw, ->, "$\frac{\pi}{5}$", angle eccentricity=1.7] {angle=xa2--Origin--na1};
  \end{tikzpicture}
  \caption{
  		Image of the semistable objects in $\mathfrak{R}^+_n$ under the stability function $Z$.
  }
  \label{subfig: central charge n=5}
\end{subfigure}   
 \caption{
  		An example for $n = 5$. Compare the positive roots with their corresponding (positive) lifts to $\tau_R$-semistable objects.
  		Tensoring with $\Pi_1$ corresponds to multiplication by the scalar $\delta:= 2\cos(\frac{\pi}{5})$.
  }
  \label{fig: root and central charge n=5}
\end{figure}

Once we know that the objects in $\mathfrak{R}^+_n$ are $\tau_R$-semistable, the fact that $\tau_R$ is a $q$-stability condition respecting $\TLJ_n$ allows us to obtain other $\tau_R$-semistable objects by tensoring with $\Pi_a$ for any $0 \leq a \leq n-2$ and applying the shift functors $\<-\>$ and $[-]$.
As such, we introduce the following notation:
\begin{definition}\label{defn: notation for generating category}
Let $\mathfrak{S}$ be a set of $\tau_R$-semistable objects in $\cK$.
We use $\< \mathfrak{S} \>^\oplus_{\tau_R}$ to denote the additive subcategory of $\cK$ generated by the objects in $\mathfrak{S}$ which is closed under the $\TLJ_n$-action, the triangulated shifts $[-]$ and the internal grading shifts $\<-\>$.
\end{definition}
Using this notation, we have the following:
\begin{align*}
\< P_2 \>^\oplus_{\tau_R} &\subset \bigoplus_{k \in \Z} \cP\left(1-\frac{\pi}{n} +k\right),  \\
\<\sigma_2(P_1)\>^\oplus_{\tau_R} &\subset  \bigoplus_{k \in \Z}  \cP\left(1-\frac{2\pi}{n}+k\right),  \\
\<\sigma_2\sigma_1(P_2)\>^\oplus_{\tau_R} &\subset  \bigoplus_{k \in \Z} \cP\left(1-\frac{3\pi}{n}+k \right), \\
&\vdots  \\
\< \underbrace{\sigma_2\sigma_1... \sigma_j}_{n-1 \text{ times}}(P_i) \>^\oplus_{\tau_R} = \<P_1\>^\oplus_{\tau_R} &\subset \bigoplus_{k \in \Z} \cP\left(k \right), \quad j\neq i.
\end{align*}
\begin{remark}
Note that the last statement is not claiming that $\sigma_2\sigma_1... \sigma_j(P_i) \cong P_1$, but rather the categories they generate  through $\< -\>^\oplus_{\tau_R}$ are equal.
\end{remark}
Realise that none of the phases intersect, and so $\< S \>^\oplus_{\tau_R} \cap \< S' \>^\oplus_{\tau_R} = 0$ for any distinct $S$ and $S'$ in $\mathfrak{R}^+_n$.
In particular, we get that
\[
\< \mathfrak{R}^+_n \>^\oplus_{\tau_R} = \bigoplus_{S \in \mathfrak{R}^+_n} \<S\>^\oplus_{ \tau_R} \subset \bigoplus_{\phi \in \R} \cP(\phi).
\]
\comment{
\begin{remark}
Since $\tau_R$ is a $q$-stability condition respecting $\TLJ_n$, every object in $\<\mathfrak{S}\>_{\tau_R}^\oplus$ is by definition a direct sum of semistable objects (possibly with different phases).
In particular, since $\cK$ is Krull-Schmidt, $\<\mathfrak{S}\>_{\tau_R}^\oplus$ is also Krull-Schmidt and every indecomposable object in $\<\mathfrak{S}\>_{\tau_R}^\oplus$ is a $\tau_R$-semistable object.
\end{remark}
\noindent
It is easy to check the following:
\begin{enumerate}[(i)]
\item for each $S \in \mathfrak{R}^+_n$, the object $S \otimes \Pi_a \<k\>[\ell]$ is indecomposable (and semistable) for any $0\leq a \leq n-2$ and $k, \ell \in \Z$; and
\item $
S \otimes \Pi_a \<k\>[\ell] \cong S' \otimes \Pi_{a'} \<k'\>[\ell'] 
\iff
S\cong S', a=a', k=k' \text{ and } \ell = \ell'
$.
\end{enumerate}
Therefore, we see that $\< S \>^\oplus_{\tau_R} \cap \< S' \>^\oplus_{\tau_R} = 0$ for any distinct $S$ and $S'$ in $\mathfrak{R}^+_n$.

Since the set $\mathfrak{R}^+_n$ is in bijection with set of positive roots $\Phi^+$, let us denote $S_x \in \mathfrak{R}^+_n$ as the corresponding object of each positive root $x \in \Phi^+$ and denote $\cR_x := \<S_x\>^\oplus_{\tau_R}$.
As such we have an additive thick subcategory $\cR_{\Phi^+}$ of $\cK$ defined by
\begin{align*}
\cR_{\Phi^+} &:= \< \mathfrak{R}^+_n \>^\oplus_{\tau_R} \\
&=  \cR_{\alpha_2} \oplus \cR_{s_2(\alpha_1)} \oplus \cR_{s_2s_1(\alpha_2)} \oplus \cdots \oplus\cR_{s_2s_1s_2...s_j(\alpha_i)}, \quad \text{where } i \neq j.
\end{align*}
Note that the split Grothendieck group of each $\cR_x$ (and hence $\cR_{\Phi^+}$) has a natural $\Omega_n[q^{\pm 1}, s^{\pm 1}]$-module structure given by
\[
\omega \cdot [S] = [S \otimes \Pi_1], \quad q\cdot [S] = [S\<1\>], \quad s \cdot [S] = [S[1]].
\]
Moreover, properties (i) and (ii) above guarantee that each $K_0(\cR_x)$ free of rank one generated by the Grothendieck classes $[S_x]$ and $K_0(\cR_{\Phi^+})$ is free of rank $n$ with the following decomposition:
\[
K_0(\cR_{\Phi^+}) =K_0( \cR_{\alpha_2}) \oplus K_0(\cR_{s_2(\alpha_1)}) \oplus  \cdots \oplus K_0(\cR_{s_2s_1s_2...s_j(\alpha_i)}), \quad \text{where } i \neq j.
\]
Motivated by these, we shall make the following definition:
\begin{definition}\label{defn: category of root supports}
We define the \emph{category of root $\tau_R$-supports} as the additive subcategory of $\cK$ given by
\[
\cR_{\Phi^+} := \bigoplus_{x \in \Phi^+} \cR_x \subset \bigoplus_{\phi \in \R} \cP(\phi),
\]
where $\cP$ is the slicing with respect to $\tau_R$.
If $T$ is an object in $\cR_x$ such that $\cR_x = \< T \>^\oplus_{\tau_R}$ and , the Grothendieck class $[T]$ forms a $\Omega[q^{\pm 1}, s^{\pm 1}]$-basis of $K_0(\cR_x)$, we call $T$ a \emph{$\tau_R$-generator} for $\cR_x$.
\end{definition}
}
\begin{remark}\label{rmk: root semistable are all}
Although a priori we only have $\bigoplus_{S \in \mathfrak{R}^+_n} \<S\>^\oplus_{ \tau_R} \subset \bigoplus_{\phi \in \R} \cP(\phi)$, we expect one could show that they are equal, following the methods in \cite{thomas_2006} and \cite{bridgeland_2009}.
Nonetheless, we will not need this result for the purpose of computing entropy of braids in $\B(I_2(n))$ later on as all objects considered will be shown to have their $\tau_R$-HN semistable pieces given by objects in $\bigoplus_{S \in \mathfrak{R}^+_n} \<S\>^\oplus_{ \tau_R}$.
\end{remark}


Clearly, the autoequivalences of $\cK$ need not restrict to autoequivalences on $\bigoplus_{S \in \mathfrak{R}^+_n} \<S\>^\oplus_{ \tau_R}$; they need not send semistable objects to semistable objects!
However, just as $s_2s_1$ acts on the root system by a $\frac{2\pi}{n}$ (clockwise) rotation, we have the following categorified analogue:
\begin{proposition} \label{gamma phase reducing}
The autoequivalence $\gamma^{\pm 1}:= (\sigma_{P_2}\sigma_{P_1})^{\pm 1} : \cK \ra \cK$ restricts to an (additive) autoequivalence on $\bigoplus_{S \in \mathfrak{R}^+_n} \<S\>^\oplus_{ \tau_R}$.
Moreover, $\gamma$ (resp $\gamma^{-1}$) lowers (resp. increases) the phase of each $\tau_R$-semistable object in $\bigoplus_{S \in \mathfrak{R}^+_n} \<S\>^\oplus_{ \tau_R}$ by $\frac{2}{n}$.
\end{proposition}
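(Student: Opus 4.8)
The strategy is to verify the two claims directly from the explicit list $\mathfrak{R}^+_n$ in \cref{semi-stable forms odd} and \cref{semi-stable forms even}, using \cref{lemma for braid relation} as the computational engine and \cref{root lift semistable} to keep track of semistability. First I would recall that every object in $\bigoplus_{S \in \mathfrak{R}^+_n} \<S\>^\oplus_{\tau_R}$ is (by \cref{defn: notation for generating category}) a direct sum of objects of the form $S \otimes \Pi_a \<k\>[\ell]$ with $S \in \mathfrak{R}^+_n$, $0 \leq a \leq n-2$, $k,\ell \in \Z$; since $\gamma$ is additive and commutes with $\<-\>$, $[-]$ and the $\TLJ_n$-action, it suffices to show that $\gamma^{\pm 1}\left( S \otimes \Pi_a \right)$ again lies in $\bigoplus_{S' \in \mathfrak{R}^+_n} \<S'\>^\oplus_{\tau_R}$, and to track the phase shift on each such generator.

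The key step is a case analysis on where $S$ sits in the cyclic list $\mathfrak{R}^+_n$. For all but the ``last'' generators, $\gamma(S) = \sigma_{P_2}\sigma_{P_1}(S)$ is literally the next object in the list by definition (reading $\gamma^k(P_2)$ and $\gamma^{k'}\sigma_2(P_1)$ off the list), so for these $\gamma\left( S \otimes \Pi_a\right) \cong \gamma(S) \otimes \Pi_a$ and the phase drops by $\frac{2}{n}$ because of the explicit phase computation $P_2 \in \cP(1-\tfrac{\pi}{n})$, $\sigma_2(P_1)\in \cP(1-\tfrac{2\pi}{n})$, \dots\ recorded just before \cref{fig: root and central charge n=5} (each successive generator has phase lower by $\tfrac{1}{n}$, and $\gamma$ advances by two steps). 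The subtle cases are the top of the list, where $\gamma$ applied to (a $\TLJ_n$-twist of) $P_1 \otimes \Pi_{n-2}\<n-2\>[n-2] \cong \sigma_2\sigma_1\cdots\sigma_j(P_i)$ must wrap around: here I would invoke \cref{lemma for braid relation} (specifically its $a = n-2$ branch) together with \cref{braid relation}, \cref{P_i involution} and \cref{isomorphic biadjoint} to show that $\gamma(P_1 \otimes \Pi_{n-2}) \cong P_2 \<?\>[?]$ up to the appropriate grading shifts, landing back at the bottom generator $P_2$ with phase reduced by $\tfrac{2}{n}$ (now crossing into $\cP(\cdot - 1 + \cdot)$, consistent with the $\bigoplus_{k\in\Z}$ ranges listed). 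The fusion rule $\Pi_{n-2}\otimes \Pi_a \cong \Pi_{n-2-a}$ from \cref{fusion rule} handles the reshuffling of the $\Pi_a$ label in this wrap-around. That $\gamma$ is an \emph{auto}equivalence of the subcategory (not merely an endofunctor) follows because $\gamma^{-1} = \sigma'_{P_1}\otimes_\I \sigma'_{P_2}$ by \cref{inverse relation}, and the same case analysis run in reverse shows $\gamma^{-1}$ also preserves the subcategory and increases phases by $\tfrac{2}{n}$; hence $\gamma$ and $\gamma^{-1}$ restrict to mutually inverse additive endofunctors.

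I expect the main obstacle to be the careful bookkeeping of internal-grading shifts $\<-\>$ and cohomological shifts $[-]$ in the wrap-around case: \cref{lemma for braid relation} only pins down objects up to $\<-\>$ and $[-]$, and \cref{biadjoint shifts} warns that these shifts are genuinely not detected by the twist construction, so one has to re-derive the precise shifts by a direct computation of $\gamma$ on a chosen minimal complex representative of $P_1 \otimes \Pi_{n-2}$. Everything else is a routine unwinding of definitions, but this shift-tracking is where an error could creep in, and it is worth doing the $n$ odd and $n$ even cases separately since the identity of $\sigma_2\sigma_1\cdots\sigma_j(P_i)$ (whether $i=1$ or $i=2$) differs between them as recorded in \cref{semi-stable forms odd} and \cref{semi-stable forms even}.
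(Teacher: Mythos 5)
Your proposal takes essentially the same approach as the paper: reduce to the generators $S \in \mathfrak{R}^+_n$ using that $\gamma$ commutes with $\<-\>$, $[-]$ and the $\TLJ_n$-action, then treat the wrap-around cases by direct computation with \cref{lemma for braid relation} and read off the phase drop $\tfrac{2}{n}$ from the $q$-stability compatibility $\phi(X\otimes\Pi_a\<k\>[\ell]) = \phi(X) + \ell - k$. One inaccuracy worth correcting: \cref{lemma for braid relation} does pin down the internal and cohomological shifts \emph{exactly} (its statement carries $k$ and $\ell$ through explicitly, so repeated application already gives $\gamma^{(n-1)/2}(P_2) = P_1\<n-2\>[n-2]$ and then $\gamma(P_1\<n-2\>[n-2]) = \sigma_2(P_1)\otimes\Pi_{n-2}\<n\>[n-1]$ on the nose), so the shift-tracking obstacle you flag does not require a separate computation on a minimal representative, nor the detour through \cref{braid relation}, \cref{P_i involution}, \cref{isomorphic biadjoint}; \cref{root lift semistable} is also not needed here since semistability of the objects in $\<\mathfrak{R}^+_n\>^\oplus_{\tau_R}$ was already established.
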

\begin{proof}
We will only lay out the proof for $\gamma$; the argument for $\gamma^{-1}$ follows similarly.

First consider the case where $n$ is odd.
Note that every indecomposable object in $\bigoplus_{S \in \mathfrak{R}^+_n} \<S\>^\oplus_{ \tau_R}$ is obtainable by applying the functors $[-],\<-\>$ and $-\otimes \Pi_a$ to the objects $S \in \mathfrak{R}^+_n$ (cf. \cref{semi-stable forms odd}).
Since $\gamma$ commutes with all three functors $[-],\<-\>$ and $-\otimes \Pi_a$, to that show $\gamma$ restricted to $\bigoplus_{S \in \mathfrak{R}^+_n} \<S\>^\oplus_{ \tau_R}$ maps into $\bigoplus_{S \in \mathfrak{R}^+_n} \<S\>^\oplus_{ \tau_R}$, it is sufficient to check that $\gamma(S)$ is in $\bigoplus_{S \in \mathfrak{R}^+_n} \<S\>^\oplus_{ \tau_R}$ for each $S \in \mathfrak{R}^+_n$.
Moreover, using the fact that $\tau_R$ is a $q$-stability condition respecting $\TLJ_n$, we have that 
\[
\phi(X\otimes \Pi_a \<k\>[\ell]) = \phi(X) + \ell - k
\]
for any semistable object $X$.
Thus, to show that $\gamma$ reduces the phase of all semistable objects in $\bigoplus_{S \in \mathfrak{R}^+_n} \<S\>^\oplus_{ \tau_R}$ by $\frac{2}{n}$, it is sufficient to show $\phi(\gamma(S)) = \phi(S) - \frac{2}{n}$ for each $S$ in \cref{semi-stable forms odd}.

Suppose $S=\gamma^k(P_2)$.
When $ k \neq \frac{n-1}{2}$, $\gamma(S)$ is in $\mathfrak{R}^+_n$ by definition and $\phi(\gamma(S)) = \phi(S) - \frac{2}{n}$.
When $k = \frac{n-1}{2}$, a simple computation using \cref{lemma for braid relation} shows that
\begin{align*}
S &= P_1\<n-2\>[n-2] \text{ and} \\
\gamma(S) &= \sigma_2(P_1) \otimes \Pi_{n-2} \<n\>[n-1],
\end{align*}
which is in $\bigoplus_{S \in \mathfrak{R}^+_n} \<S\>^\oplus_{ \tau_R}$.
Furthermore, $\phi(S) = \phi(P_1\<n-2\>[n-2]) = 0$, and
\[
\phi(\gamma(S)) = \phi(\sigma_2(P_1)) - 1 = (1 - \frac{2}{n}) - 1 = -\frac{2}{n}
\]
as required.

Now suppose instead $S = \gamma^{k'}\sigma_2(P_1)$.
Once again $\gamma(S)$ is clearly in $\mathfrak{R}^+_n$ when $ k' \neq \frac{n-3}{2}$ and $\phi(\gamma(S)) = \phi(S) - \frac{2}{n}$.
For $k' = \frac{n-3}{2}$, we apply \cref{lemma for braid relation} again to show that
\[
\gamma(S) = P_2 \otimes \Pi_{n-2}\<n\>[n-1],
\]
which is in $\bigoplus_{S \in \mathfrak{R}^+_n} \<S\>^\oplus_{ \tau_R}$.
Moreover, 
\[
\phi(S) = \phi(\gamma^{\frac{n-3}{2}}\sigma_2(P_1)) = \phi(\sigma_2(P_1)) - \frac{n-3}{2}\cdot \frac{2}{n} = \frac{1}{n}
\]
and hence
\[
\phi(\gamma(S)) = \phi(P_2) - 1 = (1- \frac{1}{n}) - 1 =  - \frac{1}{n}
\]
as required.

The case where $n$ is even follows similarly, with $\mathfrak{R}^+_n$ as defined in \cref{semi-stable forms even}.
\comment{
\[
\gamma(\gamma^{\frac{n}{2}-1}(P_2)) = P_2 \otimes \Pi_{n-2}\<n\>[n-1],
\]
and
\[
\gamma(\gamma^{\frac{n}{2}-1}\sigma_2(P_1)) = \sigma_2(P_1) \otimes \Pi_{n-2}\<n\>[n-1].
\]
}
\end{proof}

\section{A (naive) Coxeter-theoretic classification}
Before we begin our journey on the dynamical study of $\B(I_2(n))$ and its classification, we shall first provide a Coxeter-theoretic definition of the three possible types of braids appearing in the classification.

Let $\gamma := \sigma_2 \sigma_1$.
Recall that the center of $\B(I_2(n))$ is generated by 
\[
\chi = \chi_n := \begin{cases}
\gamma^n; &n \text{ is odd}; \\
\gamma^{\frac{n}{2}}; &n \text{ is even},
\end{cases}
\]
see \cite[Section 3]{michel96}.
Consider the following classification of braid elements:
\begin{definition} \label{defn: braid types}
Let $\beta \in \B(I_2(n))$.
We say that $\beta$ is \emph{periodic} if some power of $\beta$ is central.
We say $\beta$ is \emph{reducible} if $\beta$ is conjugate to a braid of the form $\sigma_i^k \chi^\ell$ for some $i \in \{1,2\}, \ell \in \Z$ and $k \in \Z \setminus \{0\}$.
If $\beta$ is not any of the above, we say that $\beta$ is \emph{pseudo-Anosov}.
\end{definition}

\begin{remark}
When $n$ is odd, $\sigma_1$ and $\sigma_2$ live in the same conjugacy class, so it is sufficient to say that $\beta$ is reducible if and only if $\beta$ is conjugate to a braid of the form $\sigma_1^k \chi^\ell$ (or $\sigma_2^k \chi^\ell$).
This is \emph{not true} when $n$ is even.
\end{remark}

Note that by definition, the types of braid elements are invariant under conjugation and the three types are mutually exclusive.
As of now, this may seem like a meaningless definition for the pseudo-Anosov braids -- being pseudo-Anosov just means that it is neither periodic nor reducible.
As we will see in the later section, the pseudo-Anosov braids will turn out to be the only braids with interesting categorical dynamics: they are the only braids with positive mass growth at $t=0$.

\begin{remark}
The fact that the three types are mutually exclusive is a special feature of the rank two generalised braid groups.
In general, we do not expect the types in the classification to be mutually exclusive (between the periodic and reducible braids), as is the case in the (classical) Nielsen-Thurston classification for mapping class groups).
\end{remark}

\section{Computing mass growth for periodic and reducible braids}
In this section, we first deal with the mass growths of braids that are simple to compute: the periodic braids and the reducible braids.

Before we start, let us recall some salient properties of mass growth and level-entropy in our case.
Firstly, recall that the heart of linear complexes $\cH$ is a $\<1\>[1]$-algebraic heart of $\cK$ (cf. \cref{defn: X1-algebraic heart}) and $\tau_R$ lies in the same connected component of Stab$(\cK)$ that contains the stability condition $\tau_0$ given in \cref{thm: entropy = mass growth}.
As such, \cref{thm: entropy = mass growth} tells us that the level-entropy and the mass growth with respect to $\tau_R$ agree, and can be computed as
\[
h_{\tau_R, t}(\cF) = \lim_{N\ra \infty} \frac{1}{N} \log m_{\tau_R, t}(\cF^N(G)) = h_t(\cF)
\]
for any $\X$-split generator $G$.
It turns out that we can actually compute the mass growth (equivalently level-entropy) from even simpler objects:
\begin{proposition} \label{prop: compute entropy from simpler object}
Let $X$ be an object in $\cK$ such that $X \otimes A$ is a $\X$-split generator of $\cK$ for some object $A$ in $\TLJ_n$.
Then 
\[
h_{\tau_R, t}(\cF) = \lim_{N\ra \infty} \frac{1}{N} \log m_{\tau_R, t}(\cF^N(X)) = h_t(\cF).
\]
In particular, we can choose $X := \beta(P_1 \oplus P_2)$, where $A:= \bigoplus_{i=0}^{n-2} \Pi_i$ and $\beta$ is any braid in $\B(I_2(n))$.
\end{proposition}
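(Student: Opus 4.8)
The plan is to deduce this from \cref{thm: entropy = mass growth} by showing that tensoring an object with $A$ only multiplies its $\tau_R$-mass by a fixed positive constant, which disappears after taking $\tfrac1N\log$ and the limit.

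First I would prove the key identity: for every object $Y\in\cK$,
\[
m_{\tau_R,t}(Y\otimes A)=PF\dim(A)\cdot m_{\tau_R,t}(Y),\qquad PF\dim(A):=\sum_{i=0}^{n-2}PF\dim(\Pi_i)\in\R_{>0}.
\]
To see this, take the $\tau_R$-HN filtration of $Y$, with tower of triangles whose cofibers are the semistable pieces $E_1,\dots,E_m$ of strictly decreasing phases $\phi_1>\cdots>\phi_m$, and apply the exact functor $-\otimes A$ to it. Since $\tau_R$ is a $q$-stability condition respecting $\TLJ_n$, condition~(2) of \cref{defn: q stab respect C} gives $\Psi(\Pi_i)\cP(\phi)\subseteq\cP(\phi)$, so each $E_j\otimes A=\bigoplus_{i=0}^{n-2}E_j\otimes\Pi_i$ lies in the additive category $\cP(\phi_j)$, i.e.\ is semistable of phase $\phi_j$; it is nonzero because $E_j\otimes\Pi_0=E_j$ is a summand. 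By uniqueness of HN filtrations the resulting tower is therefore the $\tau_R$-HN filtration of $Y\otimes A$. Condition~(3) of \cref{defn: q stab respect C} — that $\cZ$ is $K_0(\TLJ_n)$-linear for the module structure on $\C$ coming from $PF\dim$ — then gives $\cZ(E_j\otimes A)=\big(\sum_i PF\dim(\Pi_i)\big)\cZ(E_j)=PF\dim(A)\,\cZ(E_j)$, and summing $|\cZ(E_j\otimes A)|e^{\phi_jt}$ over $j$ (using $PF\dim(A)=\sum_i\Delta_i(2\cos(\pi/n))>0$) yields the displayed equality.

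Next I would use that the endofunctors of interest commute with $-\otimes A$: this holds for every braid $\beta\in\B(I_2(n))$, which acts on $\cK$ by tensoring over $\I$ with a complex of bimodules in $\mathbb{U}_n$, and more generally for any endofunctor respecting the $\TLJ_n$-module structure. Hence $\cF^N(X\otimes A)\cong\cF^N(X)\otimes A$, and the identity above gives $m_{\tau_R,t}(\cF^N(X\otimes A))=PF\dim(A)\,m_{\tau_R,t}(\cF^N(X))$ for all $N$. Since $X\otimes A$ is a $\X$-split generator, $\cH$ is $\<1\>[1]$-algebraic, and $\tau_R$ lies in $\Stab^\circ(\cK)$, \cref{thm: entropy = mass growth} applied to the $\X$-split generator $X\otimes A$ gives
\[
h_t(\cF)=h_{\tau_R,t}(\cF)=\lim_{N\to\infty}\frac1N\log m_{\tau_R,t}(\cF^N(X\otimes A)).
\]
Substituting the previous line and observing $\tfrac1N\log PF\dim(A)\to0$ proves the equality with $X$ in place of $X\otimes A$.

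Finally, for the ``in particular'' clause, with $A=\bigoplus_{i=0}^{n-2}\Pi_i$ we have $(P_1\oplus P_2)\otimes A=\bigoplus_{j\in\{1,2\},\,0\le i\le n-2}P_j\otimes\Pi_i$, which contains, up to the internal shift $\<k\>$, every indecomposable of $\I$-prmod by \cref{prop: indecomposable Krull-Sch}; via brutal truncation of bounded complexes this makes it a $\X$-split generator of $\cK=\Kom^b(\I\text{-prmod})$. Applying the autoequivalence $\beta$, which commutes with $-\otimes A$, shows $\beta(P_1\oplus P_2)\otimes A=\beta\big((P_1\oplus P_2)\otimes A\big)$ is again a $\X$-split generator, so $X:=\beta(P_1\oplus P_2)$ satisfies the hypothesis. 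The only step that is not pure bookkeeping is the first one — that $-\otimes A$ carries HN filtrations to HN filtrations — and even that is immediate once one invokes the slicing-compatibility $\Psi(C)\cP(\phi)\subseteq\cP(\phi)$ built into the definition of a $q$-stability condition respecting $\TLJ_n$; so I do not expect a serious obstacle here.
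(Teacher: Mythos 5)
Your proof is correct and takes essentially the same approach as the paper's: both rely on the identity $m_{\tau_R,t}(Y\otimes A)=PF\dim(A)\,m_{\tau_R,t}(Y)$, the compatibility of $\cF$ with $-\otimes A$, and \cref{thm: entropy = mass growth} applied to the $\X$-split generator $X\otimes A$. You simply spell out in full detail what the paper compresses into "by definition of $\tau_R$" (namely, that $-\otimes A$ preserves the HN filtration because $\tau_R$ respects $\TLJ_n$) and you give the explicit verification of the "in particular" clause, which the paper leaves implicit.
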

\begin{proof}
Let $X$ and $A$ be as given.
By definition of $\tau_R$, we see that
\[
PF\dim(A) \cdot m_{\tau_R, t}(X) = m_{\tau_R, t}(X \otimes A).
\]
Hence we can compute the level entropy by taking
\[
h_t(\cF) = \lim_{N\ra \infty} \frac{1}{N} \log PF\dim(A) m_{\tau_R, t}(\cF^N(X))
\]
for any $\cF$ respecting the $\TLJ_n$-module structure of $\cK$.
Since the limit that defines $h_t$ exists, the limit above exists and it follows that
\[
\lim_{N\ra \infty} \frac{1}{N} \log m_{\tau_R, t}(\cF^N(X)) = h_t(\cF).
\]
\end{proof}
Lastly, recall from \cref{prop: entropy properties} that level-entropy is invariant under conjugation. 
Since level-entropy and mass growth agree in our case, the mass growth of a braid $\beta$ can be computed from the mass growth of any braid within the same conjugacy class of $\beta$.

We are now prepared to take on the computations.
For this section, we will be computing mass growth using $X = P_1\oplus P_2$:
\[
h_{\tau_R,t}(\cF) = \lim_{N\ra \infty} \frac{1}{N} \log m_{\tau_R, t}(\cF^N(P_1 \oplus P_2)) = h_t(\cF) .
\]
Since mass growth and level-entropy agree, we will sometimes use $h_t$ to denote mass growth with respect to $\tau_R$ to reduce clutter.

\subsection{Mass growth of periodic braids}
We shall start with the periodic braids.
Firstly, note that
\begin{align*}
h_t(\<\ell k\>[\ell (k-2)])
&= \lim_{N\ra \infty} \frac{1}{N} \log m_{\tau_R, t}((P_1 \oplus P_2)\<\ell kN\>[\ell N(k-2)]) \\
&= \lim_{N\ra \infty} \frac{1}{N} \log e^{2\ell N t} m_{\tau_R, t}(P_1 \oplus P_2) \\
&= -2 \ell t
\end{align*}
for all $\ell \in \Z$.
On the other hand, one can show that 
\begin{equation} \label{eqn: gamma and shift}
\gamma^n(P_i) = P_i\<2n\>[2n-2]
\end{equation}
by applying \cref{lemma for braid relation} repeatedly.
Note that \cref{eqn: gamma and shift} also tells us that
\[
\gamma^{-n}(P_i) = P_i\<-2n\>[-(2n-2)].
\]

Hence for all $\ell \in \Z$, the mass growth of $\gamma^{\ell n}$ can be computed as follows:
\begin{align*}
h_t(\gamma^{\ell n}) 
&= \lim_{N\ra \infty} \frac{1}{N} \log m_{\tau_R, t}((\gamma^{\ell n})^N(P_1 \oplus P_2)) \\
&= \lim_{N\ra \infty} \frac{1}{N} \log m_{\tau_R, t}((P_1 \oplus P_2)\<\ell (2n)N\>[\ell N(2n-2)]) \\
&= h_t(\<\ell(2n)\>[\ell(2n-2)]),
\end{align*}
which gives us
\[
h_t(\gamma^{\ell n}) = -2\ell t.
\]

Let $\beta \in \B(I_2(n))$ be a periodic braid.
This means that $\beta^k = \gamma^{\ell n}$ for some $k \in \mathbb{N}, \ell \in \Z$.
Combining the above calculation with property (1) in \cref{prop: entropy properties}, we get
\begin{align*}
h_t(\beta) 
&= \frac{1}{k} h_t(\beta^k) \\
&= \frac{1}{k} h_t(\gamma^{\ell n}) \\
&= \frac{\ell}{k} h_t(\gamma^{n}) \\
&= -\frac{\ell}{k} 2t.
\end{align*}
Thus we conclude the following:
\begin{proposition} \label{prop: entropy of periodic}
Let $\beta \in \B(I_2(n))$ be a periodic braid, such that $\beta^k = \gamma^{\ell n}$ for some $k \in \mathbb{N}$ and $\ell \in \Z$.
Then the mass growth is given by
\[
h_{\tau_R,t} = h_t(\beta) = -\frac{\ell}{k} 2t.
\]
\end{proposition}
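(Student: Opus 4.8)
The plan is to reduce the statement to a single mass-growth computation for the autoequivalence $\gamma^{n}$ and then invoke multiplicativity on powers. The two background facts I would lean on are: (a) by \cref{thm: entropy = mass growth}, since $\cH$ is $\<1\>[1]$-algebraic and $\tau_R$ lies in the relevant connected component, mass growth with respect to $\tau_R$ equals level-entropy and may be evaluated on any object whose $\TLJ_n$-orbit contains an $\X$-split generator — in particular on $P_1 \oplus P_2$ by \cref{prop: compute entropy from simpler object}; and (b) property (1) of \cref{prop: entropy properties}, $h_t(\cF^k) = k\,h_t(\cF)$ for $k \in \mathbb{N}$.

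First I would record the identity $\gamma^{n}(P_i) = P_i\<2n\>[2n-2]$ for $i \in \{1,2\}$. This follows from applying \cref{lemma for braid relation} repeatedly while bookkeeping the internal and cohomological shifts: the $n-1$ alternating spherical twists move $P_j$ across the root sequence and the final twist closes the $n$-fold product, which is exactly the computation underlying \cref{braid relation}. The same argument (or inversion) gives $\gamma^{-n}(P_i) = P_i\<-2n\>[-(2n-2)]$, hence $\gamma^{\ell n}(P_i) = P_i\<2\ell n\>[\ell(2n-2)]$ for every $\ell \in \Z$. I expect this to be the only step with genuine content; everything else is formal.

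Next I would compute the mass growth of the resulting pure shift. Since $\tau_R$ is a $q$-stability condition (with $s=-1$) respecting $\TLJ_n$, the identity $m_{\tau_R,t}(A\<a\>[b]) = m_{\tau_R,t}(A)\,e^{(b-a)t}$ from \cref{mass growth split generator} applies; with $A = P_1 \oplus P_2$ and $(a,b) = (2\ell nN,\, \ell N(2n-2))$ this gives $m_{\tau_R,t}\big((\gamma^{\ell n})^{N}(P_1 \oplus P_2)\big) = m_{\tau_R,t}(P_1 \oplus P_2)\,e^{-2\ell N t}$. Dividing by $N$, taking $\log$, and letting $N \to \infty$ yields $h_{\tau_R,t}(\gamma^{\ell n}) = h_t(\gamma^{\ell n}) = -2\ell t$.

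Finally, for a periodic braid $\beta$ with $\beta^{k} = \gamma^{\ell n}$, $k \in \mathbb{N}$, I would apply property (1) of \cref{prop: entropy properties} directly: $h_t(\beta) = \tfrac1k h_t(\beta^{k}) = \tfrac1k h_t(\gamma^{\ell n}) = -\tfrac{\ell}{k}\,2t$. Since mass growth coincides with level-entropy here, this gives $h_{\tau_R,t}(\beta) = -\tfrac{\ell}{k}\,2t$, as claimed. Note that no appeal to conjugation-invariance is needed, because the hypothesis is that $\beta^k$ equals $\gamma^{\ell n}$ on the nose, not merely up to conjugacy.
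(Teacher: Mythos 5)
Your proof is correct and follows essentially the same route as the paper's: establish $\gamma^{n}(P_i)=P_i\<2n\>[2n-2]$ via repeated application of \cref{lemma for braid relation}, compute the mass growth of the resulting pure shift using the $q$-stability condition identity for $m_{\tau_R,t}$, and conclude via $h_t(\cF^k)=k\,h_t(\cF)$. The only cosmetic difference is that you write out $\gamma^{\ell n}(P_i)$ directly rather than factoring the computation through the intermediate quantity $h_t(\<\ell k\>[\ell(k-2)])$ as the paper does, and your remark that conjugation-invariance is not needed here correctly reflects what the paper also implicitly relies on.
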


\subsection{Mass growth of reducible braids} 
We begin with computing the mass growth of $\sigma_i^{\pm 1}$. 

\begin{lemma}\label{lemma: entropy of spherical twist}
The mass growth of $\sigma_i$ is given by
\[
h_t(\sigma_i) = \begin{cases}
0, & \text{for } t \geq 0; \\
-t, &\text{for } t < 0.
\end{cases}
\]
The mass growth of $\sigma_i^{-1}$ is given by
\[
h_t(\sigma_i^{-1}) = \begin{cases}
t, & \text{for } t \geq 0; \\
0, &\text{for } t < 0.
\end{cases}
\]

\end{lemma}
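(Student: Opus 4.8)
The strategy is to compute $m_{\tau_R,t}(\sigma_i^N(P_1\oplus P_2))$ explicitly for all $N$, using the fact that we have a very precise description of how $\sigma_i$ acts on the relevant $\tau_R$-semistable objects. By \cref{prop: compute entropy from simpler object}, the mass growth $h_t(\sigma_i)$ equals $\lim_{N\to\infty}\frac{1}{N}\log m_{\tau_R,t}(\sigma_i^N(P_1\oplus P_2))$. Since $h_t$ is a conjugacy invariant (\cref{prop: entropy properties}) and $\sigma_1,\sigma_2$ are related by the diagram automorphism when $n$ is odd but in any case symmetric, it suffices to treat one fixed $i$, and I may as well compute directly.

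\textbf{Key steps.} First I would identify $P_i$ and $P_{i\pm1}$ among the $\tau_R$-semistable objects: both lie in the linear heart $\cH$, are $\tau_R$-semistable (being in $\mathfrak{R}^+_n$ up to the reindexing in \cref{semi-stable forms odd,semi-stable forms even}), and $\phi(P_i)$, $\phi(P_{i\pm1})$ differ by $\tfrac1n$. Next, I would analyze $\sigma_i(P_i)$ and $\sigma_i(P_{i\pm1})$. By the definition of the twist, $\sigma_{P_i}\otimes_\I P_i \cong \sigma'_{P_i}{}^{-1}\otimes_\I P_i$; concretely, tensoring $P_i$ with the complex $\sigma_{P_i} = (P_i\otimes{}_iP \xra{\beta_i}\I)$ and using \cref{absorb I} and \cref{tensor iPj} gives the two-term complex $(P_i\otimes{}_iP_i \to P_i) = (P_i\oplus P_i\<-2\> \to P_i)$, which after Gaussian elimination (\cref{gaussian elimination}) reduces to $P_i\<-2\>[1]$ — i.e. $\sigma_i(P_i)\cong P_i\<-2\>[1]$, the expected ``spherical twist on its own object is a shift''. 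For the other generator, $\sigma_i(P_{i\pm1})$ is the cone of $P_{i\pm1}\otimes{}_iP_i\to P_{i\pm1}$, and a short computation (the same flavour as in the proof of \cref{lemma for braid relation}, or directly: ${}_iP_{i\pm1}=(i|i\pm1)=\Pi_1\<1\>$, so $P_{i\pm1}\otimes{}_iP\otimes_\I P_i$ hits $P_{i\pm1}\otimes\Pi_1\<1\>$, whence $\sigma_i(P_{i\pm1})$ is a two-term complex in $\cH$). In fact one gets $\sigma_i(P_{i\pm1})\cong \bigl(P_i\otimes\Pi_1\<1\> \to P_{i\pm1}\bigr)$ — or more cleanly, one observes $\sigma_i(P_{i\pm1})\in\cH$ and is $\tau_R$-semistable with phase $\phi(P_{i\pm1})-\tfrac1n$ (this follows from \cref{root lift semistable} applied to the root sequence of $\sigma_i(P_{i\pm1})$, or directly from the picture: $s_i$ reflects $\alpha_{i\pm1}$ to a positive root). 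The upshot of these two computations: $\sigma_i$ sends $\{P_1,P_2\}$ to $\tau_R$-semistable objects, one of which is a shift of $P_i$ by $[1]\<-2\>$ (hence mass multiplied by $e^{t}\cdot e^{2t}=e^{3t}$? — no: $[1]$ gives $e^t$, $\<-2\>$ gives $e^{2t}$ by \cref{prop: entropy properties}'s underlying mass rules, careful here) and the other of which stays in $\cH$.

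\textbf{Iterating.} The clean way to finish: since $\sigma_i(P_i)\cong P_i\<-2\>[1]$, we get $\sigma_i^N(P_i)\cong P_i\<-2N\>[N]$, so $m_{\tau_R,t}(\sigma_i^N(P_i)) = e^{(N+2N)\cdot?}\cdots$ — I need to track the mass scaling: $m_{\tau_R,t}(A[1]) = e^{t}m_{\tau_R,t}(A)$ and $m_{\tau_R,t}(A\<-2\>) = e^{2t}m_{\tau_R,t}(A)$ (recorded in the mass-property discussion preceding \cref{sect: mass automaton}), giving $m_{\tau_R,t}(\sigma_i^N(P_i)) = e^{3Nt}m_{\tau_R,t}(P_i)$. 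Hmm, but that would give growth rate $3t$, not matching. Let me instead trust that the correct relation is $\gamma^n(P_i)=P_i\<2n\>[2n-2]$ from \cref{eqn: gamma and shift} and $\sigma_i^2$ relates to something mild; more robustly, I would track $\sigma_i^N(P_{i\pm1})$, which is where the genuine growth lives. One shows by induction, using \cref{lemma for braid relation}-type reductions, that $\sigma_i^N(P_{i\pm1})$ is (homotopy equivalent to) a linear complex whose HN pieces are copies of $P_i\<k\>[\ell]$ and $P_{i\pm1}$ with multiplicities growing linearly in $N$ (the off-diagonal entry of the $2\times2$ mass matrix), so that $m_{\tau_R,t}(\sigma_i^N(P_{i\pm1}))$ grows like a polynomial in $N$ times $\max(1,e^{\pm t})^N$-type factors. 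The key sign analysis: for $t\ge0$ the shifts $\<-2\>$ and $[1]$ conspire so that all contributions are bounded by a polynomial in $N$, giving growth rate $0$; for $t<0$ the $[1]$-shift dominates and one gets rate $-t$. The mirror computation for $\sigma_i^{-1}$ (where $\sigma_i^{-1}(P_i)\cong P_i\<2\>[-1]$) swaps the roles, giving $t$ for $t\ge0$ and $0$ for $t<0$.

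\textbf{Main obstacle.} The delicate point is getting the bookkeeping of internal-grading shifts $\<-\>$ versus cohomological shifts $[-]$ exactly right in $\sigma_i(P_i)$ and $\sigma_i(P_{i\pm1})$, and then making the inductive claim about $\sigma_i^N(P_{i\pm1})$ precise — in particular confirming that the HN multiplicities grow only \emph{polynomially} (linearly) rather than exponentially. I expect this follows cleanly once one sets up the $2\times2$ ``mass matrix'' for the submonoid generated by $\sigma_i$ acting on the span of $[P_1],[P_2]$ in $K_0$ (this is exactly the $\tau_R$-mass automaton machinery of \cref{sect: mass automaton}, restricted to a single loop labelled $\sigma_i$): the matrix is upper- (or lower-) triangular with unit diagonal up to a shift factor, so its $N$-th power has polynomially-growing entries, and $\frac1N\log$ of the resulting mass is controlled entirely by the diagonal shift factor, which is $1$ for one sign of $t$ and $e^{\mp t}$ for the other. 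Once the matrix is written down the rest is immediate.
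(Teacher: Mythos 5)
Your overall strategy — compute $\sigma_i^N(P_1\oplus P_2)$ and read off the growth rate from the induced $2\times 2$ linear action on the mass vector — is correct, and it is precisely the computation the paper later packages into the $\tau_R$-mass automaton: the loop at $v_0$ labelled $\sigma_{P_1}$ has mass matrix
\[
\cM = \begin{bmatrix}e^{-t} & 2\cos(\pi/n)\\ 0 & 1\end{bmatrix},
\]
whose $N$-th power has polynomially bounded entries for $t\geq 0$ but grows like $e^{-Nt}$ for $t<0$; the inverse generator gives the mirror matrix. The paper's own proof is logically prior to the automaton machinery and does the same arithmetic by hand: it exhibits the distinguished triangle
$P_2\otimes\Pi_1\<-2k-1\>[-k-1]\to\sigma_2^{-k-1}(P_1)\to\sigma_2^{-k}(P_1)\to{}$,
shows inductively that it is the $\tau_R$-HN filtration, sums the resulting geometric series in closed form, and analyses the sum for $t\geq0$ and $t<0$.

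There is, however, a genuine gap which you flagged but did not resolve: the internal shift on $\sigma_i(P_i)$ has the wrong sign. Tensoring $P_i$ with $\sigma_{P_i}=(P_i\otimes{}_iP\to\I)$ and using ${}_iP_i = e_i\oplus X_i = \Pi_0\oplus\Pi_0\<2\>$ gives the complex $\bigl(P_i\oplus P_i\<2\> \to P_i\bigr)$ with differential $[\id \ \ X_i]$, and Gaussian elimination (\cref{gaussian elimination}) cancels the $P_i\otimes e_i$ summand, leaving $\sigma_i(P_i)\cong P_i\<2\>[1]$ — not $P_i\<-2\>[1]$. With the correct sign, $m_{\tau_R,t}(\sigma_i^N(P_i)) = e^{-Nt}m_{\tau_R,t}(P_i)$, which is precisely the diagonal entry $e^{-t}$ of the matrix above; the spurious $3t$ that puzzled you disappears, and the stated rates ($0$ for $t\geq0$, $-t$ for $t<0$) come out. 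Similarly $\sigma_i^{-1}(P_i)\cong P_i\<-2\>[-1]$, not $P_i\<2\>[-1]$. The second soft spot is that the inductive claim — that the $\tau_R$-HN support of $\sigma_i^N(P_{i\pm1})$ is computed by $\cM^N$ — is exactly the content of \cref{sigma1 HN preserving} and \cref{cor: conditions of automaton odd}, which appear only in \cref{sect: mass automaton construction}, \emph{after} this lemma. To keep the argument in order you should either verify the HN-preservation for powers of $\sigma_i$ directly (as the paper does, via the inductive bound $\lceil\sigma_2^{-k}(P_1)\rceil\leq\phi(P_2)+k$), or forward-reference those results and note explicitly that no circularity results.
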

\begin{proof}
We will only present the proof for the case $\sigma_2^{-1}$; the other cases can be computed in a similar fashion and are left as exercises for the reader.

A simple calculation shows that
\[
\sigma_2(P_1 \ra P_2 \otimes \Pi_1 \<-1\>[-1]) = P_1.
\]
Using this, we can fit $\sigma_2^{-1}(P_1)$ into the following distinguished triangle:
\[
P_2 \otimes \Pi_1 \<-1\>[-1] \ra \sigma_2^{-1}(P_1) \ra P_1 \ra \quad,
\]
which is actually the $\tau_R$-HN filtration of $\sigma_2^{-1}(P_1)$.
Note that $\sigma_2^{-k}(P_2) = P_2\<-2k\>[-k]$ is always a $\tau_R$-semistable object.
Applying $\sigma_2^{-k}$ to the distinguished triangle above, we get
\[
P_2 \otimes \Pi_1 \<-2k-1\>[-k-1] \ra \sigma_2^{-k-1}(P_1) \ra \sigma_2^{-k}(P_1) \ra \quad.
\]
By induction, one can show that
\[
\ceil*{\sigma_2^{-k}(P_1)} \leq \phi(P_2) + k = \phi(P_2 \otimes \Pi_1 \<-2k-1\>[-k-1]).
\]
Hence we know that
\[
m_{\tau_R,t}(\sigma_2^{-k-1}(P_1)) = m_{\tau_R,t}(P_2 \otimes \Pi_1 \<-2k-1\>[-k-1]) + m_{\tau_R,t}(\sigma_2^{-k}(P_1)).
\]
Using induction again, we get that
\begin{equation} \label{eqn: reducible lemma sigma 2 inv}
m_{\tau_R,t}(\sigma_2^{-k-1}(P_1)) = 1+ \sum_{i=0}^k w e^{\frac{n-1}{n}\pi t} \cdot e^{(k-i)t}
\end{equation}
with $w := 2\cos(\frac{\pi}{n})$.
On the other hand, we have that
\begin{equation} \label{eqn: reducible lemma sigma 2 inv p2}
m_{\tau_R, t}(\sigma_2^{-k-1}(P_2)) = e^{\frac{n-1}{n}\pi t} \cdot e^{(k+1)t}.
\end{equation}

Let us first fix $t \leq 0$. 
It is easy to see that the sum of the RHS of \cref{eqn: reducible lemma sigma 2 inv} and \cref{eqn: reducible lemma sigma 2 inv p2} approaches a constant as $k \ra \infty$.
Hence
\begin{align*}
h_t(\sigma_2^{-1}) 
&= 
\lim_{k\ra \infty} \frac{1}{k+1} \log m_{\tau_R, t}(\sigma_2^{-k-1}(P_1 \oplus P_2)) \\
&= \lim_{k\ra \infty} \frac{1}{k+1} \log \left( m_{\tau_R, t}(\sigma_2^{-k-1}(P_1)) + m_{\tau_R, t}(\sigma_2^{-k-1}(P_2)) \right) \\
&= 0.
\end{align*}
Let us now consider the case where $t > 0$.
Adding \cref{eqn: reducible lemma sigma 2 inv} and \cref{eqn: reducible lemma sigma 2 inv p2} we get that
\begin{align*}
&m_{\tau_R, t}(\sigma_2^{-k-1}(P_1)) + m_{\tau_R, t}(\sigma_2^{-k-1}(P_2)) \\
&= e^{kt} 
	\left( 
	e^{-kt} + e^{(\frac{n-1}{n}\pi +1)t} + 
	\sum_{i=0}^k w e^{(\frac{n-1}{n}\pi -i)t}
	\right).
\end{align*}
Once again we see that $e^{-kt} + e^{(\frac{n-1}{n}\pi +1)t} + 
	\sum_{i=0}^k w e^{(\frac{n-1}{n}\pi -i)t}$ approaches a constant as $k \ra \infty$, hence 
\[
h_t(\sigma_2^{-1}) = \lim_{k\ra \infty}  \frac{1}{k+1} \log e^{kt} = t.
\]
\end{proof}
\begin{remark}
The entropy of spherical twists (in the sense of Seidel-Thomas \cite{thomas_seidel_2001}, \emph{not} our generalised version as defined in \cite{anno_logvinenko_2017}) have been computed in \cite{ikeda_2020} using mass growth with respect to the stability condition $\tau_0$ as in \cref{thm: entropy = mass growth}.
However, we note here that the method used there can indeed be adapted to our case as well.
\end{remark}

\begin{remark} \label{rem: entropy sum of itself and inverse}
We see that $h_t(\sigma_i) + h_t(\sigma_i^{-1}) = |t|$, which is non-zero for $t\neq 0$.
Hence it is \emph{not true} in general that $h_t(\cF^{-1}) = - h_t(\cF)$.
\end{remark}

\begin{lemma}
If $\cF$ is either the functor $\<1\>, [1]$ or $- \otimes C$, then
\[
\frac{m_{\tau_R, t}(\cF^N(A))}{m_{\tau_R, t}(A)} = \frac{m_{\tau_R, t}(\cF^N(B))}{m_{\tau_R, t}(B)}
\]
for any $N \in \mathbb{N}$ and any objects $A$ and $B$.
\end{lemma}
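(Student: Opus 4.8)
The plan is to show that each of the three functors multiplies mass by a scalar depending only on the functor and on $t$, not on the object. Concretely, I would first establish the three scaling identities
\[
m_{\tau_R,t}(A\<1\>) = e^{-t}\, m_{\tau_R,t}(A),\qquad
m_{\tau_R,t}(A[1]) = e^{t}\, m_{\tau_R,t}(A),\qquad
m_{\tau_R,t}(A\otimes C) = PF\dim(C)\, m_{\tau_R,t}(A)
\]
for every object $A$ in $\cK$ and every $C$ in $\TLJ_n$. Granting these, write $\lambda = \lambda(\cF,t)$ for the relevant scalar ($e^{-t}$, $e^{t}$, or $PF\dim(C)$ respectively); iterating gives $m_{\tau_R,t}(\cF^N(A)) = \lambda^N m_{\tau_R,t}(A)$, so for any nonzero $A$ the quotient $m_{\tau_R,t}(\cF^N(A))/m_{\tau_R,t}(A)$ equals $\lambda^N$, which is manifestly independent of $A$; in particular it agrees for $A$ and $B$. (Throughout one tacitly assumes $A,B\not\cong 0$ so the quotients make sense; otherwise the statement is vacuous.)

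To prove the three identities I would reduce to a $\tau_R$-semistable object by additivity of mass over Harder--Narasimhan pieces: if $A$ has HN filtration with semistable pieces $E_i$ of phases $\phi_i$, then $m_{\tau_R,t}(A) = \sum_i |\cZ(E_i)| e^{\phi_i t}$ by definition, so it suffices to check that $\cF$ carries this filtration to the HN filtration of $\cF(A)$ and to track the effect on each summand $|\cZ(E_i)|e^{\phi_i t}$. For $\cF = \<1\>$ and $\cF = [1]$ this is immediate, both being exact autoequivalences: since $\tau_R$ is a $q$-stability condition, $\cP(\phi)\cdot\<1\> = \cP(\phi-1)$ and $\cP(\phi)[1] = \cP(\phi+1)$, while the central charge (a $\Z[q,q^{-1}]$-module map evaluated at $q=-1$) satisfies $\cZ(X\<1\>) = \cZ(X[1]) = -\cZ(X)$, so $|\cZ(E_i\<1\>)| = |\cZ(E_i[1])| = |\cZ(E_i)|$ and only the phase shifts by $\mp 1$; summing yields the first two identities. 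For $\cF = -\otimes C = \Psi(C)$ I would use that $\tau_R$ respects $\TLJ_n$ (see \cref{defn: q stab respect C}, condition (2)), so $\Psi(C)$ is exact and sends $\cP(\phi)$ into $\cP(\phi)$, hence carries the HN filtration of $A$ to a filtration of $A\otimes C$ whose pieces $E_i\otimes C$ are semistable of the same strictly decreasing phases $\phi_i$ — i.e. the HN filtration of $A\otimes C$ — and condition (3) of \cref{defn: q stab respect C} (that $\cZ$ is a $K_0(\TLJ_n)$-module homomorphism into $\C_{\cC}$) gives $\cZ(E_i\otimes C) = PF\dim(C)\,\cZ(E_i)$ with $PF\dim(C)$ a positive real, whence $|\cZ(E_i\otimes C)| = PF\dim(C)\,|\cZ(E_i)|$; summing gives the third identity.

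The argument is essentially bookkeeping once the ``$q$-stability condition respecting $\TLJ_n$'' properties of $\tau_R$ are used, and I do not expect a serious obstacle. The one point to be careful about — the only step that is not purely formal — is verifying that $\Psi(C)$ applied to the HN filtration of $A$ really produces \emph{the} HN filtration of $A\otimes C$, rather than merely \emph{some} filtration by semistables: this requires the strict inequalities $\phi_i > \phi_{i+1}$ to survive, which they do precisely because $\Psi(C)$ preserves phases exactly (equivalently, one may invoke \cref{cC action preserve semistable} at the level of the linear heart and propagate it to the slicing). With that in hand, the lemma follows.
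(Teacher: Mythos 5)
Your proposal is correct and follows essentially the same route as the paper: establish the three scaling identities $m_{\tau_R,t}(A\<1\>)=e^{-t}m_{\tau_R,t}(A)$, $m_{\tau_R,t}(A[1])=e^{t}m_{\tau_R,t}(A)$, $m_{\tau_R,t}(A\otimes C)=PF\dim(C)\,m_{\tau_R,t}(A)$, then iterate and divide. The paper treats these identities as immediate ``by definition of $\tau_R$'' and writes $m_{\tau_R,t}(\cF^N(A))=\sum_i m_{\tau_R,t}(\cF^N(E_i))$ directly; you supply the underlying justification (that $\cF$ carries HN filtrations to HN filtrations, using the defining conditions of a $q$-stability condition respecting $\TLJ_n$), which is a useful elaboration but not a different argument.
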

\begin{proof}
By definition of the stability condition $\tau_R$, we have that
\begin{enumerate}[(i)]
\item $m_{\tau_R, t}(A[1]) = m_{\tau_R, t}(A)e^t$; 
\item $m_{\tau_R, t}(A\<1\>) = m_{\tau_R, t}(A)e^{-t}$; and
\item $m_{\tau_R, t}(A\otimes C) = PF\dim(C)m_{\tau_R, t}(A)$, for any $C \in \TLJ_n$.
\end{enumerate}
We shall abuse notation and denote:
\[
m_{\tau_R, t}(\cF) := 
\begin{cases}
e^t, &\text{for }\cF = [1];\\
e^{-t}, &\text{for }\cF = \<1\>;\\
PF\dim(C), &\text{for }\cF  = - \otimes C.
\end{cases}
\]
It follows that
\begin{align*}
m_{\tau_R,t}(\cF^N(A)) 
&= \sum_i m_{\tau_R,t}(\cF^N(E_i)) \\
&= \sum_i m_{\tau_R, t}(\cF)^N \cdot m_{\tau_R,t}(E_i) \\
&= m_{\tau_R, t}(\cF)^N \cdot m_{\tau_R,t}(A).
\end{align*}
In particular,
\[
\frac{m_{\tau_R, t}(\cF^N(A))}{m_{\tau_R, t}(A)} = m_{\tau_R, t}(\cF)^N = \frac{m_{\tau_R, t}(\cF^N(B))}{m_{\tau_R, t}(B)}
\]
for any objects $A$ and $B$ as required.
\end{proof}

\begin{proposition} \label{prop: entropy additive}
Suppose $\cF$ is either $\<k\>, [\ell]$ or $- \otimes \Pi_a$.
Then for any endofunctor $\cG: \cK \ra \cK$ commuting with $\cF$, we get that
\[
h_t(\cF\circ \cG) = h_t(\cF) + h_t(\cG).
\]
\end{proposition}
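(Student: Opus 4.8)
The plan is to reduce the statement to the \emph{multiplicativity of $\tau_R$-mass} under the functor $\cF$. As recorded in the proof of the previous lemma, the defining properties of $\tau_R$ imply that each of the functors $\<k\>$, $[\ell]$ and $-\otimes\Pi_a$ scales mass by a fixed positive scalar; writing $\lambda_t(\cF)$ for this scalar (so $\lambda_t(\cF) = e^{-kt}$, $e^{\ell t}$ or $PF\dim(\Pi_a)$ respectively), we have $m_{\tau_R,t}(\cF(A)) = \lambda_t(\cF)\,m_{\tau_R,t}(A)$ for every object $A$, and hence $m_{\tau_R,t}(\cF^N(A)) = \lambda_t(\cF)^N\,m_{\tau_R,t}(A)$ for all $N \in \mathbb{N}$.

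Next I would use that $\cF$ commutes with $\cG$, so $(\cF\circ\cG)^N = \cF^N\circ\cG^N$. Applying the scaling above with $A = \cG^N(B)$ for an arbitrary object $B$ gives $m_{\tau_R,t}\big((\cF\circ\cG)^N(B)\big) = \lambda_t(\cF)^N\,m_{\tau_R,t}\big(\cG^N(B)\big)$. Taking $\tfrac{1}{N}\log(-)$ and then $\limsup_{N\ra\infty}$, the constant $\log\lambda_t(\cF)$ separates off; taking next the supremum over all objects $B$ — recalling that by \cref{thm: entropy = mass growth} one has $h_t = h_{\tau_R,t}$, the $\tau_R$-mass growth, which is by definition exactly such a supremum — the constant separates off again, giving $h_t(\cF\circ\cG) = \log\lambda_t(\cF) + h_t(\cG)$. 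Specialising to $\cG = \id_\cK$ (which trivially commutes with $\cF$ and has $h_t(\id_\cK) = 0$) yields $h_t(\cF) = \log\lambda_t(\cF)$; combining the two identities gives $h_t(\cF\circ\cG) = h_t(\cF) + h_t(\cG)$ as claimed.

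There is no serious obstacle here: the argument is purely formal once the scaling identity is in hand. The only point requiring a little care is to run the computation against the supremum definition of $h_{\tau_R,t}$ directly, rather than against a fixed $\X$-split generator via \cref{prop: compute entropy from simpler object}; the latter route would also work, but it would force one to check that $\cF\circ\cG$ respects the $\TLJ_n$-module structure, whereas the statement only assumes that $\cG$ commutes with $\cF$, and the supremum argument sidesteps this entirely.
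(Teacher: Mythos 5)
Your proof is correct and takes essentially the same route as the paper: both use the fact that $\cF$ scales $\tau_R$-mass by a fixed factor (the preceding lemma), together with $(\cF\circ\cG)^N = \cF^N\circ\cG^N$ from commutativity, to pull the constant $\log\lambda_t(\cF)$ out of the limit. The one difference — working against the supremum definition of $h_{\tau_R,t}$ rather than a fixed $\X$-split generator — is cosmetic, and your stated motivation for it is a slight misreading: \cref{thm: entropy = mass growth} and \cref{mass growth split generator} only require $\cF\circ\cG$ to be an endofunctor of $(\cK,\X)$, i.e.\ to commute with $\<1\>$ (which it automatically does here), not to respect the full $\TLJ_n$-module structure; the latter hypothesis is needed only for \cref{prop: compute entropy from simpler object}, which the paper does not invoke in this proof.
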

\begin{proof}
Computing entropy with mass growth, we get
\[
h_t(\cF \circ \cG) = \lim_{N\ra \infty} \frac{1}{N} \log m_{\tau_R, t}(\cF^N \circ \cG^N (G))
\]
for some $\X$-split generator $G$.
Applying the lemma above, we get that
\[
h_t(\cF \circ \cG) = \lim_{N\ra \infty} \frac{1}{N} \log m_{\tau_R, t}(\cG^N (G)) \frac{m_{\tau_R, t}(\cF^N(G))}{m_{\tau_R, t}(G)}.
\]
The result now follows from a simple limit manipulation.
\end{proof}

For the following, recall that $\chi$ is the generator of the centre of $\B(I_2(n))$, namely $\chi = \gamma^n$ for $n$ odd and $\chi = \gamma^{\frac{n}{2}}$ for $n$ even.
We can compute the mass growths of reducible braids as follows:
\begin{proposition} \label{entropy reducible}
Let $\beta \in \B(I_2(n))$ be a reducible braid, i.e. $\beta = \sigma_i^k\chi^\ell$ for some $k \in \Z \setminus \{0\}, \ell \in \Z$.
Then the mass growth (equivalently level-entropy) of $\beta$ is given as follows:
\begin{enumerate}
\item for $n$ odd, $k \geq 1$
\[
h_t(\beta) = 
	\begin{cases}
	 - 2 \ell t, & t \geq 0;\\
	-kt - 2 \ell t, & t < 0,
	\end{cases}
\]
\item for $n$ odd, $k \leq -1$
\[
h_t(\beta) = 
	\begin{cases}
	 |k|t - 2 \ell t, & t \geq 0;\\
	 - 2 \ell t, & t < 0,
	\end{cases}
\]
\item for $n$ even, $k \geq 1$
\[
h_t(\beta) = 
	\begin{cases}
	 -  \ell t, & t \geq 0;\\
	-kt -  \ell t, & t < 0,
	\end{cases}
\]
\item for $n$ even, $k \leq -1$
\[
h_t(\beta) = 
	\begin{cases}
	 |k|t -  \ell t, & t \geq 0;\\
	 -  \ell t, & t < 0.
	\end{cases}
\]
\end{enumerate}
\end{proposition}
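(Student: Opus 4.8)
The plan is to reduce the computation of $h_t(\beta)$ for $\beta = \sigma_i^k \chi^\ell$ to the three ingredients already established: the mass growth of $\sigma_i^{\pm 1}$ (\cref{lemma: entropy of spherical twist}), the mass growth of the central/periodic element $\chi = \gamma^{\ell' n}$-type powers (computed in the course of \cref{prop: entropy of periodic}, in particular $h_t(\gamma^n) = -2t$), and the additivity of entropy along commuting functors where one factor is a grading shift or a tensor functor (\cref{prop: entropy additive}, together with \cref{prop: entropy properties}(1) for taking roots). First I would record the precise identities: for $n$ odd, $\chi = \gamma^n$ and $\gamma^n(P_i) = P_i\<2n\>[2n-2]$ by \cref{eqn: gamma and shift}, so $\chi$ acts on the split generator $P_1 \oplus P_2$ exactly as the functor $\<2n\>[2n-2]$; hence $\chi^\ell$ acts as $\<2n\ell\>[(2n-2)\ell]$ and $h_t(\chi^\ell) = -2\ell t$ for all $\ell \in \Z$ (using that the periodic computation already handled both signs of $\ell$). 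For $n$ even, $\chi = \gamma^{n/2}$, and one applies \cref{lemma for braid relation} $n/2$ times to get $\gamma^{n/2}(P_i) = P_i \otimes \Pi_{n-2}\<n\>[n-2]$ (up to relabelling the two $P$'s when $n/2$ is odd); since $PF\dim(\Pi_{n-2}) = 1$, the functor $\gamma^{n/2}$ has the same mass-multiplier behaviour as $\<n\>[n-2]$, giving $h_t(\chi^\ell) = -\ell t$.

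Next I would observe that $\chi$ is central, so $\chi^\ell$ commutes with $\sigma_i^k$, and that $\sigma_i^k \chi^\ell$ is — after tensoring the split generator with $A := \bigoplus_{a=0}^{n-2}\Pi_a$ as in \cref{prop: compute entropy from simpler object} — an honest composite of the spherical twist power with a pure shift (plus, in the even case, a tensor by $\Pi_{n-2}$, which is an invertible functor since the cap/cup are isomorphisms and so contributes $PF\dim(\Pi_{n-2}) = 1$). Since $\chi^\ell$ is, up to a relabelling autoequivalence, literally the functor $\<2n\ell\>[(2n-2)\ell]$ (odd) or $\<n\ell\>[(n-2)\ell]$ with a harmless $\otimes\Pi_{n-2}^{\otimes \ell}$ twist (even), \cref{prop: entropy additive} applies with $\cF = \chi^\ell$ of the allowed shift/tensor form and $\cG = \sigma_i^k$:
\[
h_t(\sigma_i^k \chi^\ell) = h_t(\sigma_i^k) + h_t(\chi^\ell).
\]
Then $h_t(\sigma_i^k) = k\, h_t(\sigma_i)$ for $k \geq 1$ and $h_t(\sigma_i^{-k}) = k\, h_t(\sigma_i^{-1})$ for $k \geq 1$ by \cref{prop: entropy properties}(1). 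Substituting the piecewise formulas from \cref{lemma: entropy of spherical twist} — $h_t(\sigma_i) = 0$ for $t\geq 0$ and $-t$ for $t<0$; $h_t(\sigma_i^{-1}) = t$ for $t\geq 0$ and $0$ for $t<0$ — together with the $h_t(\chi^\ell)$ value ($-2\ell t$ for $n$ odd, $-\ell t$ for $n$ even) yields exactly the four cases in the statement after a short bookkeeping of signs: e.g. for $n$ odd, $k\geq 1$, $t<0$ we get $k(-t) + (-2\ell t) = -kt - 2\ell t$, and for $t\geq 0$ we get $0 + (-2\ell t) = -2\ell t$; the $k\leq -1$ cases swap which interval is trivial, producing the $|k|t$ term for $t\geq 0$.

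The main obstacle I anticipate is not the entropy arithmetic but justifying cleanly that $\chi^\ell$ really does factor, as a functor acting on the relevant objects, as a shift functor (up to an autoequivalence one can absorb), so that \cref{prop: entropy additive} is genuinely applicable — in particular handling the $\otimes \Pi_{n-2}$ factor in the even case and the $P_1 \leftrightarrow P_2$ relabelling when $n/2$ is odd. The safest route is to avoid claiming $\cF = \chi^\ell$ is a shift functor on the nose and instead argue directly: since $\chi^\ell$ commutes with $\sigma_i^k$ and acts on the split generator $G := P_1 \oplus P_2$ (or $G \otimes A$) by $G \mapsto G \<c_1\>[c_2] \otimes \Pi_{n-2}^{\otimes |\ell \bmod 2|}$, one has $m_{\tau_R,t}((\sigma_i^k\chi^\ell)^N(G)) = m_{\tau_R,t}(\sigma_i^{kN}(G)\<c_1 N\>[c_2 N] \otimes (\cdots)) = e^{(c_2 - c_1)Nt}\, m_{\tau_R,t}(\sigma_i^{kN}(G))$ using properties (i)–(iii) in the proof of the lemma preceding \cref{prop: entropy additive}; taking $\frac1N\log$ and passing to the limit gives the additive splitting without any functor-identity subtlety. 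I would write the proof in that explicit form, then read off the four cases.
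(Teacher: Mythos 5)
Your argument is essentially the same as the paper's: express the action of $\chi^\ell$ on the split generator as a pure shift (composed, in the even case, with $\otimes\Pi_{n-2}$), use $PF\dim(\Pi_{n-2}) = 1$ to discard the tensor factor from the mass computation, apply \cref{prop: entropy additive} to split $h_t(\sigma_i^k\chi^\ell)$ into $h_t(\sigma_i^k) + h_t(\chi^\ell)$, and then read off the four cases from \cref{lemma: entropy of spherical twist}, \cref{prop: entropy properties}~(1), and the computed value of $h_t(\chi^\ell)$. This matches the paper's proof step for step, including the observation that the $\Pi_{n-2}$ twist is harmless.

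One concrete slip to fix in the even case: the correct identity is $\gamma^{n/2}(P_i) = P_i \otimes \Pi_{n-2}\<n\>[n-1]$, not $\<n\>[n-2]$. Your version is internally inconsistent with the conclusion you draw from it, since $\<n\>[n-2]$ has level shift $(n-2)-n = -2$ and would give $h_t(\chi^\ell) = -2\ell t$ rather than the $-\ell t$ you correctly assert; it also contradicts \cref{eqn: gamma and shift}, because squaring $\<n\>[n-2]$ yields $\<2n\>[2n-4]$ rather than $\<2n\>[2n-2]$. Replacing $[n-2]$ by $[n-1]$ repairs the arithmetic and the rest of the proof goes through.
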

\begin{proof}
Let $X := P_1 \oplus P_2$.
The only subtle difference here is that when $n$ is even, $\chi = \gamma^{\frac{n}{2}}$ is the generator of the center, where we have that
\[
\gamma^{\frac{n}{2}}(P_i) = P_i \otimes \Pi_{n-2} \<n\> [n-1].
\]
This tells us that
\[
\chi^\ell(X) =  \begin{cases}
X\<\ell 2n\>[\ell(2n-2)], &\text{for $n$ odd}; \\
X \otimes \Pi_{n-2} \<\ell n\>[\ell(n-1)], &\text{for $n$ even, $\ell$ odd}; \\
X\<\ell n\>[\ell(n-1)], &\text{for $n$ even, $\ell$ even}.
\end{cases}
\]
Nevertheless, with $\tau_R$ respecting $\TLJ_n$ and $PF\dim(\Pi_{n-2}) = 1$, we have $m_{\tau_R,t}(Y \otimes \Pi_{n-2}) = m_{\tau_R,t}(Y)$ for any object $Y$.
Thus we get that
\[
h_t(\cG \circ \chi^\ell) = \begin{cases}
h_t(\cG \circ \<\ell 2n\>[\ell(2n-2)]), &\text{for $n$ odd}; \\
h_t(\cG \circ \<\ell n\>[\ell(n-1)]), &\text{for $n$ even},
\end{cases}
\]
for any $\cG \in \B(I_2(n))$.
Now the result follows from applying \cref{prop: entropy additive} with $\cG = \sigma_i^k$, where the mass growths of $\sigma_i^{\text{sgn}(k)}$ and $\chi^\ell$ can be computed from \cref{lemma: entropy of spherical twist} and \cref{prop: entropy of periodic} respectively.
\end{proof}

Combining the results from \cref{entropy reducible} and \cref{prop: entropy of periodic}, we deduce that all periodic braids and reducible braids have zero mass growth at $t=0$:
\begin{corollary} \label{periodic reducible entropy 0 when t=0}
If $\beta \in \B(I_2(n))$ is either periodic or reducible, then $h_{\tau_R,0}(\beta) = 0 = h_0(\beta)$.
\end{corollary}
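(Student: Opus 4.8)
The plan is to obtain this as an immediate consequence of the explicit mass-growth formulas for periodic and reducible braids established in \cref{prop: entropy of periodic} and \cref{entropy reducible}. First I would recall that, since the heart of linear complexes $\cH$ is $\<1\>[1]$-algebraic (\cref{defn: X1-algebraic heart}) and $\tau_R$ lies in the connected component of $\Stab(\cK)$ containing the distinguished $q$-stability condition $\tau_0$, \cref{thm: entropy = mass growth} gives $h_{\tau_R,t}(\cF) = h_t(\cF)$ for every endofunctor $\cF$ of $\cK$ respecting the $\TLJ_n$-module structure. In particular this holds for the autoequivalence induced by any braid $\beta \in \B(I_2(n))$, so it suffices to evaluate the common function at $t = 0$.

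If $\beta$ is periodic, then $\beta^k = \gamma^{\ell n}$ for some $k \in \mathbb{N}$ and $\ell \in \Z$, and \cref{prop: entropy of periodic} gives $h_t(\beta) = -\frac{\ell}{k}2t$; setting $t = 0$ yields $h_{\tau_R,0}(\beta) = h_0(\beta) = 0$. If $\beta$ is reducible, then by definition $\beta$ is conjugate to some $\sigma_i^k\chi^\ell$ with $k \in \Z\setminus\{0\}$ and $\ell \in \Z$; since level-entropy (hence mass growth) is conjugation-invariant by \cref{prop: entropy properties}, I may replace $\beta$ by $\sigma_i^k\chi^\ell$ and apply \cref{entropy reducible}. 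In each of the four cases recorded there, $h_t(\beta)$ is a piecewise-linear function of $t$ whose two linear pieces (for $t \geq 0$ and for $t < 0$) both pass through the origin, so again $h_{\tau_R,0}(\beta) = h_0(\beta) = 0$.

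There is essentially no obstacle here: the content of the corollary is entirely carried by the preceding two propositions, and the proof amounts to the substitution $t = 0$ together with a single invocation of conjugation-invariance to bring a general reducible braid into the normal form $\sigma_i^k\chi^\ell$. The only point requiring a moment's care is to note that the formulas in \cref{entropy reducible}, although genuinely piecewise-linear (the slopes differ on the two sides of $t = 0$ when $k \neq 0$), are nonetheless continuous at $t = 0$ with value $0$, so that the two one-sided expressions agree there.
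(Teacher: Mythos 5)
Your proof is correct and follows essentially the same route as the paper, which likewise derives the corollary directly from \cref{prop: entropy of periodic} and \cref{entropy reducible} after noting the identification $h_{\tau_R,t} = h_t$ and conjugation-invariance. The only extra care you take — checking that all four piecewise-linear formulas in \cref{entropy reducible} vanish at $t=0$, and invoking conjugation-invariance to pass from a general reducible braid to the normal form $\sigma_i^k\chi^\ell$ — is implicit in the paper's one-line deduction.
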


\begin{remark}
As such, we see that mass growth at $t=0$ \emph{cannot} differentiate between periodic braids and reducible braids; however, mass growth $h_t$ as a whole function \emph{can}:  
$h_t(\beta)$ is a linear function (with respect to $t$) when $\beta$ is periodic (\cref{prop: entropy of periodic}) and $h_t(\beta)$ is \emph{strictly piecewise} linear when $\beta$ is reducible (\cref{entropy reducible}).
\end{remark}

\section{An algorithmic classification of braids and mass growth of arbitrary braids} \label{sect: dynamics}
This section is where we prove our main result, which is essentially an algorithm that does two things: given a braid $\beta \in \B(I_2(n))$,
\begin{enumerate}
\item the algorithm decides the type of $\beta$: periodic, reducible or pseudo-Anosov; and moreover
\item the algorithm provides a way to compute the mass growth of $\beta$: when $\beta$ is periodic (resp. reducible), it provides an expression of some conjugate of $\beta$ which fits definition of periodic (resp. reducible), where results from previous sections allow the computation of its mass growth. 
When $\beta$ is pseudo-Anosov, it provides a matrix whose (logarithm of its) Perron-Frobenius eigenvalue is the mass growth of $\beta$.  
\end{enumerate}
As such, by the end of this section, we will be able to compute the mass growth of any given braid in $\B(I_2(n))$.
This algorithm will rely on a mass automaton (see \cref{sect: mass automaton}) that we will be constructing for each $I_2(n)$.

As before, we shall fix $\tau_R$ to be the root stability condition associated to $I_2(n)$ and denote $\gamma := \sigma_2\sigma_1 \in \B(I_2(n))$ throughout this whole section. 

\subsection{Constructing $\tau_R$-mass automata} \label{sect: mass automaton construction}
We start by building the main tool that will be used in the algorithm, namely a $\tau_R$-mass automaton for $\B(I_2(n))$ for each $n \geq 3$.
The construction of the $\tau_R$-mass automaton for $\B(I_2(n))$ will depend on the parity of $n$ (odd or even).
Although they differ slightly from each other, the arguments and methods used to show that the construction satisfies the required properties of a mass automaton will be similar.
As such, we shall focus on the odd $n$ case, giving full proofs for the statements required; whereas in the even $n$ case we will provide the construction and comment on the proof whenever we feel is necessary.
The reader may wish to read both of the subsections simultaneously.

Recall from \cref{sect: root and semistable} the set of $\tau_R$-semistable corresponding to positive lift of positive roots 
\[
\mathfrak{R}^+_n = \{P_2, \sigma_2(P_1), \sigma_2\sigma_1\sigma_2(P_1), ..., \sigma_2\sigma_1...\sigma_j(P_k)\}.
\] 
For $\mathfrak{S}$ a set of $\tau_R$-semistable objects, we will use the notation $\<\mathfrak{S}\>^\oplus_{\tau_R}$ to denote the smallest additive subcategory of $\cK$ containing the objects in $\mathfrak{S}$ that is closed under the $\TLJ_n$-action, the triangulated shifts $[-]$ and the internal grading shifts $\<-\>$.

We will need the following notion of the support of an object:
\begin{definition}
Let $A$ be an object in $\cK$ with $\tau_R$-HN filtration consisting of $\tau_R$-semistable pieces $E_i$.
We define the \emph{${\tau_R}$-support of} $A$ as
\[
\supp_{\tau_R}(A) := \bigoplus_i E_i \in ob(\cK).
\]
\end{definition}
\begin{remark}
We remind the reader that the $\tau$-HN filtration of an object is only unique up to isomorphism, and so the $\tau$-support of an object is defined as an element in the set of isomorphism classes.
The objects we consider will always have their support in ob$\left( \bigoplus_{S \in \mathfrak{R}^+_n} \<S\>^\oplus_{ \tau_R} \right)$ (although we expect this to be true for all objects, see \cref{rmk: root semistable are all}).
\end{remark}
%
%
\noindent
The following properties of ${\tau_R}$-supports are immediate consequences of the definition of support and the fact that $\tau_R$ is a $q$-stability condition respecting $\TLJ_n$:
\begin{enumerate}[-]
\item (additivity) $\supp_{\tau_R}(A\oplus B) = \supp_{\tau_R}(A) \oplus \supp_\tau(B)$;
\item ($[1]$ equivariant) $\supp_{\tau_R}(A[m]) = \supp_{\tau_R}(A)[m]$ for all $m \in \Z$;
\item ($\<1\>$ equivariant) $\supp_{\tau_R}(A\<m\>) = \supp_{\tau_R}(A)\<m\>$ for all $m \in \Z$;
\item ($\TLJ_n$-action equivariant) $\supp_{\tau_R}(A \otimes C) = \supp_{\tau_R}(A) \otimes C$ for all $C \in \TLJ_n$; and
\item (support computes mass) $m_{{\tau_R},t}(A) = m_{{\tau_R},t}(\supp_{\tau_R}(A))$.
\end{enumerate} 

Clearly, it is not true in general that $\cF(\supp_{\tau_R}(A))$ agrees with $\supp_{\tau_R}(\cF(A))$ for endofunctors (or even autoequivalences) on $\cK$, but the braid functor $\gamma$ does satisfy this:
\begin{proposition}\label{gamma sends semistable pieces}
Suppose $A$ is an object in $\cK$ with $\supp_{\tau_R}(A) \in ob(\bigoplus_{S \in \mathfrak{R}^+_n} \<S\>^\oplus_{ \tau_R})$.
Applying $\gamma^{\pm 1}$ to the $\tau_R$-HN filtration of $A$ gives the $\tau_R$-HN filtration of $\gamma^{\pm 1}(A)$.
In particular,
\[
\supp_{\tau_R}(\gamma^{\pm 1}(A)) = \gamma^{\pm 1}(\supp_{\tau_R}(A)).
\]
\end{proposition}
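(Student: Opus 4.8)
The plan is to reduce everything to the uniform phase-rotation property of $\gamma^{\pm1}$ already recorded in \cref{gamma phase reducing}, together with exactness of $\gamma^{\pm1}$ and uniqueness of Harder--Narasimhan filtrations. I will carry out the argument for $\gamma$; the case of $\gamma^{-1}$ is symmetric, using the ``increases the phase by $\tfrac2n$'' half of \cref{gamma phase reducing}.

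First I would take the $\tau_R$-HN filtration of $A$, with semistable pieces $E_1,\dots,E_m$ satisfying $E_i\in\cP(\phi_i)$ and $\phi_1>\cdots>\phi_m$, so that $\supp_{\tau_R}(A)=\bigoplus_i E_i$. Since $\bigoplus_i E_i$ lies in $\bigoplus_{S\in\mathfrak{R}^+_n}\<S\>^\oplus_{\tau_R}$ by hypothesis, and each $E_i$ is a $\tau_R$-semistable direct summand of it (with $\cK$ Krull--Schmidt, \cref{prop: indecomposable Krull-Sch}), each individual piece $E_i$ again lies in $\bigoplus_{S\in\mathfrak{R}^+_n}\<S\>^\oplus_{\tau_R}$. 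Now \cref{gamma phase reducing} applies to each $E_i$ separately: $\gamma(E_i)$ is again $\tau_R$-semistable, of phase $\phi_i-\tfrac2n$.

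Next, since $\gamma$ is an exact autoequivalence, applying it to the iterated triangle defining the HN filtration of $A$ produces another iterated triangle, with terms $\gamma(A_i)$ and successive cones $\gamma(E_i)$. By the previous step these cones are $\tau_R$-semistable, and because $\gamma$ drops every phase by the \emph{same} constant $\tfrac2n$, the strict inequalities are preserved: $\phi(\gamma(E_1))=\phi_1-\tfrac2n>\cdots>\phi_m-\tfrac2n=\phi(\gamma(E_m))$. Thus this iterated triangle meets both defining requirements of an HN filtration, so by the uniqueness clause for HN filtrations it \emph{is} the $\tau_R$-HN filtration of $\gamma(A)$. Reading off its semistable pieces then gives $\supp_{\tau_R}(\gamma(A))=\bigoplus_i\gamma(E_i)=\gamma\big(\bigoplus_i E_i\big)=\gamma(\supp_{\tau_R}(A))$.

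The only genuinely delicate point, and the one I would take care to highlight, is the uniformity of the phase shift: it is precisely because $\gamma$ rotates \emph{every} root-semistable phase by the common amount $\tfrac2n$ (rather than by object-dependent amounts) that the ordering of the HN pieces is not disturbed — without this, applying $\gamma$ to an HN filtration would not in general return an HN filtration, and this is exactly what must fail for a generic autoequivalence. Everything else is routine bookkeeping: exactness of $\gamma$ preserving iterated triangles, closure of $\bigoplus_{S\in\mathfrak{R}^+_n}\<S\>^\oplus_{\tau_R}$ under direct summands, and the uniqueness statement built into the definition of HN filtrations.
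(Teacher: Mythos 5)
Your proof is correct and follows the same approach as the paper's: the key step in both is that \cref{gamma phase reducing} says $\gamma^{\pm1}$ shifts every root-semistable phase by the same constant $\mp\frac2n$, so applying the exact autoequivalence $\gamma^{\pm1}$ to the HN filtration preserves semistability of the graded pieces and the strict decrease of phases, whence it is the HN filtration of $\gamma^{\pm1}(A)$ by uniqueness. The paper's version is terser (it does not explicitly invoke exactness, Krull--Schmidt, or the uniqueness clause), but the substance is identical.
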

\begin{proof}
Let $E_i$ be the $\tau_R$-HN semistable pieces of $A$ (which are objects in $\bigoplus_{S \in \mathfrak{R}^+_n} \<S\>^\oplus_{ \tau_R}$ by the assumption on $A$).
\cref{gamma phase reducing} shows that every semistable pieces of $A$ has its phase reduced (resp. increased) by the same amount $\frac{2}{n}$ when $\gamma$ (resp. $\gamma^{-1}$) is applied.
Hence the $\tau_R$-HN semistable pieces of $\gamma^{\pm 1}(A)$ are given by $\gamma^{\pm 1}(E_i)$ as required.
\end{proof}

\begin{definition}\label{defn: bracket vertex automaton}
Let $\{ X_1, X_2\}$ be a pair of $\tau_R$-semistable objects in $\cK$ such that $X_i \in \cP(\phi_i)$ 
and $\phi_1 - \phi_2  \not\in \Z$.
We define $[X_1, X_2] \subset ob(\cK)$ to be the subset consisting of objects $A$ such that $\supp_{\tau_R}(A) \in ob \left(\<\{X_1,X_2\}\>^\oplus_{\tau_R} \right)$.
The assumption on $\phi_i$ implies that we can write $m_{\tau_R,t}(A)$ as a unique $\R[e^{\pm t}]$-linear sum of $m_{\tau_R,t}(X_1)$ and $m_{\tau_R,t}(X_2)$, namely:
\[
m_{\tau_R,t}(A) \in \R[e^{\pm t}] \cdot m_{\tau_R,t}(X_1) \oplus \R[e^{\pm t}] \cdot m_{\tau_R,t}(X_2) \subset \R^\R
\]
for all $A \in [X_1, X_2]$.
\end{definition}

Finally, the following lemma will also be needed in the construction of mass automata (in particular, to apply \cref{sufficient condition for geodesic}), which shows that our objects are close cousins of objects in a 2-Calabi-Yau category.
\begin{lemma}\label{lemma: 2CY variant}
Let $X$ and $Y$ be objects in $\cK$ which lies in the orbit of $\{P_i \otimes \Pi_a : i \in \{1,2\}, 0 \leq a \leq n-2\}$ under the action of $\Aut(\cK)$.
Then
\[
\bigoplus_{k \in \Z} \Hom_\cK (X,Y\<k\>[k+1]) \cong \bigoplus_{k \in \Z} \Hom_\cK (Y,X\<k\>[k+1]).
\]
\end{lemma}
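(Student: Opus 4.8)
The plan is to reduce the claimed $\Z$-graded duality to the analogous statement for the Khovanov--Seidel zigzag algebra $\mathscr{A}_{n-1}$, using the ``double $A_{n-1}$ configuration'' established in \cref{sect: double A configuration}. Recall that $\I$-prmod $\cong \I$-prmod${}^+ \oplus \I$-prmod${}^-$ with each summand equivalent (as additive, hence triangulated on the homotopy category) to $\mathscr{A}_{n-1}\text{-prgrmod}^{>0}$, and that the objects $P_i \otimes \Pi_a$ correspond (up to internal and cohomological shifts, which are accounted for since $X$ and $Y$ lie in the $\Aut(\cK)$-orbit of such objects) to the spherical generators $P_v$ of $\mathscr{A}_{n-1}\text{-prgrmod}^{>0}$. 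First I would check that the claimed isomorphism is invariant under applying any autoequivalence to both $X$ and $Y$, and under internal/cohomological shifts of either argument: shifting $X$ by $\<j\>[\ell]$ reindexes the left side, shifting $Y$ similarly reindexes it, and an autoequivalence $\Phi$ gives $\Hom_\cK(\Phi X, \Phi Y\<k\>[k+1]) \cong \Hom_\cK(X, Y\<k\>[k+1])$ since $\Phi$ commutes trivially with $\<1\>$ and $[1]$. Hence it suffices to prove the statement when $X = P_i \otimes \Pi_a$ and $Y = P_j \otimes \Pi_b$ are two of the standard generators.

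Next I would compute both sides directly. Using the biadjunction of \cref{biadjoint pair} (that $(P_i\otimes\Pi_a)\otimes -$ is left adjoint to $(\Pi_a \otimes {}_iP)\otimes_\I -$ and right adjoint to $(\Pi_a \otimes {}_iP\<-2\>)\otimes_\I -$), one gets
\[
\Hom_\cK(P_i\otimes\Pi_a, P_j\otimes\Pi_b\<k\>[k+1]) \cong \Hom_{\gTLJ_n}(\Pi_a, {}_iP_j \otimes \Pi_b\<k\>[k+1]),
\]
and since the objects are concentrated in a single cohomological degree, this $\Hom$ in $\cK$ can only be nonzero when $[k+1]$ is compensated; more precisely the morphism spaces of $\cK$ between objects in $\I$-prmod placed in cohomological degree $0$ are governed by \cref{graded hom space}. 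From \cref{graded hom space} the total graded $\Hom$ from $P_i\otimes\Pi_a$ to $P_j\otimes\Pi_b$ (allowing all $\<k\>$) is: $\C\{\id\} \oplus \C\{X_i\}$ in internal degrees $0, -2$ when $i=j, a=b$; $\C\{(i|j)\}$ in degree $-1$ when $|i-j|=|a-b|=1$; and $0$ otherwise. I would then observe that in each case the degree-shifted version appearing on the left-hand side of the lemma, namely $\bigoplus_k \Hom_\cK(X, Y\<k\>[k+1])$, picks out a ``complementary'' piece of this data (the $[1]$ shift forces the nonzero contributions to sit in the internal degrees $-2$ resp. $-1$ rather than $0$ resp. $0$, reflecting the $\<2\>$-spherical structure described in \cref{sect: double A configuration}), and that this complementary piece is manifestly symmetric in $X \leftrightarrow Y$: swapping $(i,a)$ and $(j,b)$ either preserves the diagonal case $i=j,a=b$, or swaps $(i|j)$ with $(j|i)$, both of which are one-dimensional in the same internal degree $-1$. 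In the diagonal case the $\<2\>$-sphericity gives $\C$ in degree $-2$ on the left side for both orderings. This yields the desired isomorphism in every case.

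The main obstacle, as I see it, is bookkeeping rather than conceptual: one must carefully track how the cohomological shift $[k+1]$ interacts with the internal grading $\<k\>$ to confirm exactly which one-dimensional graded pieces survive, and verify that when $X, Y$ are merely in the $\Aut(\cK)$-orbit of generators (so possibly honest complexes of several terms, not single modules), the computation still reduces — via \cref{isomorphic biadjoint}, \cref{spherical twist relation} and the invariance remarks above — to the generator case. I expect no genuine difficulty here because every autoequivalence in sight is built from the spherical twists $\sigma_{P_i}$ and shifts, all of which commute trivially with $\<1\>$ and $[1]$ and admit the adjunctions needed; alternatively one may simply quote that under the equivalence $\Kom^b(\I\text{-prmod}^\pm) \cong \Kom^b(\mathscr{A}_{n-1}\text{-prgrmod}^{>0})$ the statement becomes the well-known $2$-Calabi--Yau-type property of the zigzag algebra $\mathscr{A}_{n-1}$ (cf.\ \cite{khovanov_seidel_2001}), namely that $\mathscr{A}_{n-1}$ is a symmetric graded Frobenius algebra of Nakayama degree $2$, which gives precisely such a graded $\Hom$-duality with a shift by $2$ in the internal grading matching the $[1]$ here.
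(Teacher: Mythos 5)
Your reduction step contains a genuine gap that the final paragraph acknowledges but does not resolve. You claim that, by invariance of the statement under applying a common autoequivalence to both arguments, it suffices to prove the lemma when $X = P_i\otimes\Pi_a$ and $Y = P_j\otimes\Pi_b$ are \emph{both} standard generators. But $X$ and $Y$ are allowed to lie at \emph{different} points of the $\Aut(\cK)$-orbit: if $X = \Phi_1(P_{i}\otimes\Pi_{a})$ and $Y = \Phi_2(P_{j}\otimes\Pi_{b})$ with $\Phi_1\neq\Phi_2$, applying $\Phi_1^{-1}$ to both normalises $X$ but leaves $Y = \Phi_1^{-1}\Phi_2(P_{j}\otimes\Pi_{b})$, which is generically a genuine multi-term complex, not a generator. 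The subsequent case analysis via \cref{graded hom space} therefore does not apply to it. In other words, you may reduce \emph{one} of the two objects to generator form, not both, so the argument must handle one general argument — which is precisely where the content of the lemma lies.

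The paper's proof works around this in the way your reduction should: it normalises only $X$ (WLOG $X = P_i\otimes\Pi_a$), keeps $Y$ arbitrary, and then runs a chain of isomorphisms
\[
\Hom_\cK(P_i\otimes\Pi_a, Y[1]) \cong \Hom(\Pi_0,\, \Pi_a\otimes{}_iP\otimes_\I Y[1]) \cong \Hom(\Pi_a\otimes{}_iP\otimes_\I Y[1],\, \Pi_0) \cong \Hom_\cK(Y,\, P_i\otimes\Pi_a\<-2\>[-1]),
\]
using the two adjunctions of \cref{biadjoint pair} (one for each end of the chain) and the self-duality of objects in $\TLJ_n$ in the middle. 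Summing over $\<k\>[k]$-shifts then gives the statement for arbitrary $Y$. Your appeal to the symmetric Frobenius structure of $\mathscr{A}_{n-1}$ points at the right underlying phenomenon, but as stated it is not a proof: you would need to justify a Serre-duality-type isomorphism on $\Kom^b(\mathscr{A}_{n-1}\text{-prgrmod}^{>0})$ (noting that $\mathscr{A}_{n-1}$ has infinite global dimension, so this is not literally Serre duality on a derived category of modules) and then transport it across the equivalence with $\Kom^b(\I\text{-prmod}^{\pm})$, checking compatibility with the two gradings. The adjunction argument avoids all of this machinery and is the intended route; I would rework your proposal to reduce only one argument to a generator and then reproduce the biadjunction chain.
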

\begin{proof}
By applying the appropriate autoequivalence, we may assume without lost of generality that $X= P_i \otimes \Pi_a$ for some $i \in \{1,2\}$ and $0 \leq a \leq n-2$.
Then using the adjunctions between $P_i \otimes \Pi_a$ and shifts of ${}_iP \otimes \Pi_a$ (cf. \cref{biadjoint pair}), we get
\begin{align*}
\Hom_\cK (P_i \otimes \Pi_a, Y[1]) 
&\cong \Hom_{\TLJ_n} (\Pi_0, \Pi_a \otimes {}_iP \otimes_\I Y[1]) \\
&\cong \Hom_{\TLJ_n} (\Pi_a \otimes {}_iP \otimes_\I Y[1], \Pi_0)  \\
&\cong \Hom_\cK (Y, P_i \otimes \Pi_a\<-2\>[-1]). 
\end{align*}
Taking the total morphism space with respect to $\<1\>[1]$, we see that
\begin{align*}
\bigoplus_{k \in \Z}  \Hom_\cK (P_i \otimes \Pi_a, Y\<k\>[k][1]) 
&\cong \bigoplus_{k \in \Z} \Hom_\cK (Y\<k\>[k], P_i \otimes \Pi_a\<-2\>[-1]) \\
&\cong \bigoplus_{k \in \Z} \Hom_\cK (Y, P_i \otimes \Pi_a\<-k-2\>[-k-1]) \\
&\cong \bigoplus_{k \in \Z} \Hom_\cK (Y, P_i \otimes \Pi_a\<k\>[k+1]).
\end{align*}
\end{proof}

\subsubsection{Odd $n$ case} \label{odd n automaton}
In this subsection, fix $n$ to be odd.
We shall construct a $\tau_R$-mass automaton $\Lambda_{\tau_R}(n)$ for $\B(I_2(n))$ as follows.
The underlying $\B(I_2(n))$-automaton $\Theta$ is defined by the quiver with $n$ vertices $v_0, v_1, ..., v_{n-1}$.
For each $j$, all the incoming braid-labelled arrows into $v_j$ are defined as in \Cref{fig:HN automaton for odd n}.
One can refer to \cref{example n=5} for an example of the complete $\tau_R$-mass automaton for $n=5$.
\begin{figure}[H]
\[\begin{tikzcd}
	&& {v_j} &&& {v_{j+n-1}} \\
	{v_{j+1}} \\
	{v_{j+2}} &&&&& {v_{j+\frac{n+3}{2}}} \\
	&& {v_{j+\frac{n-3}{2}}} & {v_{j+\frac{n-1}{2}}} & {v_{j+\frac{n+1}{2}}}
	\arrow["{\gamma^{-1}}" description, from=2-1, to=1-3, curve={height=-20pt}]
	\arrow["{\gamma}" description, from=1-6, to=1-3, curve={height=20pt}]
	\arrow["\sigma_{\gamma^j(P_1)}" ', from=1-3, to=1-3, loop]
	\arrow["\sigma_{\gamma^j(P_1)}" description, from=2-1, to=1-3]
	\arrow["\sigma_{\gamma^j(P_1)}" description, from=3-1, to=1-3]
	\arrow["\sigma_{\gamma^j(P_1)}" description, from=4-3, to=1-3]
	\arrow["\sigma_{\gamma^j(P_1)}" description, from=4-5, to=1-3]
	\arrow["\sigma_{\gamma^j(P_1)}" description, from=1-6, to=1-3]
	\arrow["\sigma_{\gamma^j(P_1)}" description, from=3-6, to=1-3]
	\arrow["{\ddots}" description, from=3-1, to=4-3, phantom, no head]
	\arrow["{\vdots}" description, from=1-6, to=3-6, phantom, no head]
\end{tikzcd}\] 
\caption{The $\tau_R$-mass automaton $\Lambda_{\tau_R}(n)$ for odd $n$, showing only the incoming arrows of $v_j$. All the subscripts of the vertices are taken modulo $n$. Note that there is indeed no arrow with the label $\sigma_{\gamma^j(P_1)}$ from $v_{j+\frac{n-1}{2}}$ to $v_j$.}
\label{fig:HN automaton for odd n}
\end{figure}

The $\Theta$-subset $\cS: \Theta \ra $ Sets of $ob(\cK)\circ \Theta$ is defined by 
\[
\cS(v_j) := [\gamma^j P_1, \gamma^j P_2] \quad (\text{see \cref{defn: bracket vertex automaton}}).
\]
The $\Theta$-representation $\cM: \Theta \ra \R[e^{\pm t}]$-mod is defined as follows. 
Each $\cM(v_j)$ is a rank two free module over $ \R[e^{\pm t}]$ defined by
\[
\cM(v_j) 
:= \R[e^{\pm t}] \cdot m_{\tau_R,t}(X_1) \oplus \R[e^{\pm t}] \cdot m_{\tau_R,t}(X_2) \subset \R^\R \quad (\text{see \cref{defn: bracket vertex automaton}}).
\]
Note that elements in $\cM(v_j)$ will sometimes be written as column vectors, with the first entry representing the coefficient of $m_{\tau_R,t}(\gamma^j P_1)$ and the second representing the coefficient of $m_{\tau_R,t}(\gamma^j P_2)$.
For each arrow $a : v_j \ra v_k$ with label $\cS(a) \in \B(I_2(n))$, $\cM(a)$ is the $\R[e^{\pm t}]$-linear map uniquely defined on the basis element by
\[
m_{\tau_R,t}(\gamma^j P_i) ) \mapsto m_{\tau_R,t} \left( \cS(a)(\gamma^j P_i) \right).
\]
Similarly, $\cM(a)$ will sometimes be written as a two by two matrix with entries in $\R[e^{\pm 1}]$.
The natural transformation $\iota: \cS \ra \cM$ is given by 
\[
\iota_v (X) := m_{\tau_R,t}\left( X \right) \in \cM(v)
\]
for each vertex $v$ in $\Theta$ and each $X \in \cS(v)$.
The map $\mathfrak{m}_{v_j}: \cM(v_j) \hookrightarrow \R^\R$ is the natural inclusion for each vertex $v_j$.
\comment{
defined by
\[
\begin{bmatrix}
a_1 \\
a_2
\end{bmatrix} \mapsto 
a_1 m_{\tau_R, t} (\gamma^j P_1) + a_2 m_{\tau_R, t} (\gamma^j P_2)
\]
for each vertex $v_j$.
}

A priori these may not be well-defined and may not define a $\tau_R$-mass automaton; we shall make a check list of what is required here.
\begin{enumerate}[$\square$]
\item Firstly, it is easy to check that for each $j$, the pair $\{\gamma^j P_1, \gamma^j P_2\}$ is a pair of (indecomposable) semistable objects in $\bigoplus_{S \in \mathfrak{R}^+_n} \< S\>^\oplus_{\tau_R}$ with phases satisfying the required assumption in \cref{defn: bracket vertex automaton}. 
\item To show that $\cS$ is a $\Theta$-subset of $ob(\cK)\circ \Theta$, we need that for each arrow $a : v_j \ra v_k$ labelled by $\cS(a) \in \B(I_2(n))$, 
\begin{equation} \label{eqn: checklist}
\cS(a)(X) \in [\gamma^k P_1, \gamma^k P_2] 
\end{equation}
for all $X \in [\gamma^j P_1, \gamma^j P_2]$.
This follows from \cref{gamma loops V} for arrows labelled by $\gamma^{\pm 1}$ and \cref{cor: conditions of automaton odd} (ii) for all other arrows.
\item To show that $\cM$ is actually a well-defined functor, the only thing requiring a proof is that $m_{\tau_R,t} \left( \cS(a)(\gamma^j P_i) \right)$ actually lives in $\R[e^{\pm t}]^{ \{ [\gamma^k P_1], [\gamma^k P_2] \} }$ for each arrow $a : v_j \ra v_k$.
It suffices to show that
\[
\cS(a)(\gamma^j P_i) \in [\gamma^k P_1, \gamma^k P_2] 
\]
for both $i =1,2$, but this is a weaker statement than \cref{eqn: checklist} and is therefore also showed by \cref{gamma loops V} and \cref{cor: conditions of automaton odd} (ii).
\item To show that $\iota$ is a natural transformation between $\cS$ and $\cM$, we need to show that for each arrow $a : v_j \ra v_k$ labelled by $\cS(a)$, the following diagram commutes:
\[
\begin{tikzcd}
\cS(v_j) \ar[r, "\iota_{v_j}"] \ar[d, swap, "\cS(a)"] 
	& \cM(v_j) \ar[d, "\cM(a)"] \\
\cS(v_k) \ar[r, "\iota_{v_k}"] 
	&\cM(v_k).
\end{tikzcd}
\]
This follows from applying the properties of supports to the stronger statement \cref{cor: conditions of automaton odd} (i).
\item The fact that 
$\mathfrak{m}_{v_j} (\iota_{v_j}(X)) = m_{\tau_R,t}(X)$
is immediate from the definition.
\end{enumerate}
The series of (rather computational intensive) results to get to the required \cref{gamma loops V} and \cref{cor: conditions of automaton odd} shall occupy the rest of the subsection.
We shall start with the simple one:
\comment{
the underlying $H$-automaton $\Theta$ is generated by the following $H$-labelled quiver $\Gamma_n$:
\[
\begin{tikzcd}
v_0 
	\ar[r, "\gamma", yshift = 1ex] 
	\ar[rrrr, "\gamma^{-1}", bend right=60, yshift= -2ex, swap]&	 
v_1 
	\ar[l, "\gamma^{-1}", yshift = -1ex] 
	\ar[r, "\gamma", yshift = 1ex] & 
\cdots 
	\ar[l, "\gamma^{-1}", yshift = -1ex] 
	\ar[r, "\gamma", yshift = 1ex] &
v_{n-2} 
	\ar[l, "\gamma^{-1}", yshift = -1ex] 
	\ar[r, "\gamma", yshift = 1ex] & 
v_{n-1} 
	\ar[l, "\gamma^{-1}", yshift = -1ex] 
	\ar[llll, "\gamma", bend left=60, swap]
\end{tikzcd}
\]
The $\Theta$-subset $\cS$ of $ob(\cK) \circ \Theta$ is defined by by $\cS(v_j) = [\gamma^j P_1, \gamma^j P_2]$, where the face that it is a $\Theta$-subset follows from (2) and (3) above.
The $\Theta$-representation $\cM$ is given by $\cM(v_j) = \R[e^{\pm t}]^{\{\gamma^j P_1, \gamma^j P_2\}}$ and $\cM(a) = \id = I_2$ for all arrows $a$ that are not between $v_0$ and $v_{n-1}$, and
\[
\cM(v_{n-1}) \xra{\cM(\gamma) = e^{-2t}\id} \cM(v_0), \quad \cM(v_0) \xra{\cM(\gamma^{-1}) = e^{2t}\id} \cM(v_{n-1}).
\]
Finally, using the fact that
\[
\gamma^n P_i \cong P_i\<2n\>[2n-2]
\]
for both $i \in \{1,2\}$, we see that $\iota_{v_j} := i_{[\gamma^j P_1, \gamma^j P_2]}$ defines a natural transformation between $\cS$ and $\cM$ .
Moreover, we have $\widehat{m}_{v_j} := \begin{bmatrix}
m_{\tau_R, t}(\gamma^j P_1) & m_{\tau_R, t}(\gamma^j P_2)
\end{bmatrix}
$ satisfies
\[
m_{\tau_R, t}(A) = \widehat{m}_{v_j}( \iota_{v_j}(A))
\]
for each $A \in \cS(v_j)$.
}

\comment{
Recall the additive subcategory $\mathfrak{S}$ as defined in \cref{defn: semistable objects category} for $n$ odd.
Before we construct the automaton, we shall consider a nice set of additive subcategories of $\mathfrak{S}$ which are also $\tau_R$-semistable basis categories over $\gTLJ_n$.

Consider the subset $\cV_0 = \{P_1, P_2\}$ of ob($\mathfrak{S}$); note that ob$(\mathfrak{S})$ contains $\gamma^{\frac{n-1}{2}}(P_2) = P_1 \otimes \Pi_{n-2} \<n-2\>[n-2]$, hence it contains $P_1$.
For $0 \leq i \leq n-1$, take $\cV_i \subseteq $ ob$(\mathfrak{S})$ to be the image of $\gamma^i$ applied $\cV_0$.
It is easy to check that $\< [\cV_i] \>_{K_0(\gTLJ_n)[s,s^{-1}]} \subset K_0(\mathfrak{S})$ is a free $K_0(\gTLJ_n)[s,s^{-1}]$-module of rank two with basis $\{ [\gamma^i(P_1)], [\gamma^i(P_2)]\}$.
Hence $\cV_i$ are all finitely free over $K_0(\gTLJ_n)[s,s^{-1}]$.
}

\begin{proposition} \label{gamma loops V}
Suppose $A \in [\gamma^jP_1, \gamma^jP_2]$. Then 
\[
\gamma^{\pm 1}(A) \in [\gamma^{(j \pm 1 \mod n)} P_1, \gamma^{(j \pm 1 \mod n)} P_2].
\]
\end{proposition}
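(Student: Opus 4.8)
The plan is to reduce the statement to tracking what $\gamma$ does on the $\tau_R$-HN semistable pieces of $A$, rather than on $A$ itself. By definition, $A \in [\gamma^j P_1, \gamma^j P_2]$ means exactly that $\supp_{\tau_R}(A)$ lies in the additive subcategory $\langle\{\gamma^j P_1, \gamma^j P_2\}\rangle^\oplus_{\tau_R}$. First I would invoke \cref{gamma sends semistable pieces}: since $\supp_{\tau_R}(A)$ is (by the standing assumption in this setup) an object in $\bigoplus_{S \in \mathfrak{R}^+_n} \langle S \rangle^\oplus_{\tau_R}$, applying $\gamma^{\pm 1}$ to the $\tau_R$-HN filtration of $A$ yields the $\tau_R$-HN filtration of $\gamma^{\pm 1}(A)$, so that
\[
\supp_{\tau_R}(\gamma^{\pm 1}(A)) = \gamma^{\pm 1}\left(\supp_{\tau_R}(A)\right).
\]
Thus it suffices to show that $\gamma^{\pm 1}$ maps $\langle\{\gamma^j P_1, \gamma^j P_2\}\rangle^\oplus_{\tau_R}$ into $\langle\{\gamma^{j'} P_1, \gamma^{j'} P_2\}\rangle^\oplus_{\tau_R}$, where $j' = j \pm 1 \bmod n$.

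For the "generic" step, when $j' = j \pm 1$ without any wraparound, this is essentially immediate: $\gamma^{\pm 1}(\gamma^j P_i) = \gamma^{j\pm 1} P_i$, and since $\gamma^{\pm 1}$ commutes with the shift functors $\langle - \rangle$, $[-]$ and with $- \otimes \Pi_a$ for all $a$ (as established when showing $\gamma^{\pm 1} \otimes_\I -$ is a $\TLJ_n$-module endofunctor on $\cK$), it carries the whole subcategory generated by $\{\gamma^j P_1, \gamma^j P_2\}$ under these operations into the one generated by $\{\gamma^{j\pm1} P_1, \gamma^{j\pm 1} P_2\}$. The only subtlety is the wraparound, i.e. the cases $(j, \pm) = (n-1, +)$ and $(j, \pm) = (0, -)$, where $j' \in \{0, n-1\}$ is reached "around the cycle". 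Here I would use the relation $\gamma^n(P_i) \cong P_i\langle 2n\rangle[2n-2]$ (which follows from repeated application of \cref{lemma for braid relation}, as already recorded in the mass-growth computations). This gives $\gamma^n \cdot \gamma^0 P_i = \gamma^n P_i \cong P_i \langle 2n\rangle [2n-2]$, so $\langle\{\gamma^n P_1, \gamma^n P_2\}\rangle^\oplus_{\tau_R} = \langle\{P_1, P_2\}\rangle^\oplus_{\tau_R} = \langle\{\gamma^0 P_1, \gamma^0 P_2\}\rangle^\oplus_{\tau_R}$, because the defining closure includes $\langle - \rangle$ and $[-]$. Hence $\gamma(\gamma^{n-1} P_i) = \gamma^n P_i$ lands in $\langle\{\gamma^0 P_1, \gamma^0 P_2\}\rangle^\oplus_{\tau_R}$, matching $j' = 0 = n \bmod n$, and symmetrically for $\gamma^{-1}$ applied at $v_0$.

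Putting these together: given $A \in [\gamma^j P_1, \gamma^j P_2]$, we have $\supp_{\tau_R}(\gamma^{\pm 1}(A)) = \gamma^{\pm 1}(\supp_{\tau_R}(A)) \in \gamma^{\pm 1}\left(\langle\{\gamma^j P_1, \gamma^j P_2\}\rangle^\oplus_{\tau_R}\right) \subseteq \langle\{\gamma^{j'} P_1, \gamma^{j'} P_2\}\rangle^\oplus_{\tau_R}$ with $j' = j \pm 1 \bmod n$, which is precisely the assertion that $\gamma^{\pm 1}(A) \in [\gamma^{j'} P_1, \gamma^{j'} P_2]$. I do not anticipate a genuine obstacle here — the one place requiring care is bookkeeping the index modulo $n$ and confirming that $\gamma^n P_i$ generates the same $\langle - \rangle^\oplus_{\tau_R}$-category as $P_i$; everything else is formal consequence of \cref{gamma sends semistable pieces} and the fact that $\gamma$ commutes with the three structure functors defining $\langle - \rangle^\oplus_{\tau_R}$.
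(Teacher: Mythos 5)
Your proof is correct and follows the same route as the paper: the two key ingredients are \cref{gamma sends semistable pieces} (so $\gamma^{\pm 1}$ commutes with $\supp_{\tau_R}$) and the relation $\gamma^n(P_i) = P_i\langle 2n\rangle[2n-2]$ (handling the wraparound modulo $n$), which is exactly what the paper's proof cites. You have simply written out the bookkeeping in more detail than the paper's two-line argument.
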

\begin{proof}
Applying \cref{lemma for braid relation} repeatedly, we obtain
\[
\gamma^n(P_i) = P_i \<2n\>[2n-2]
\]
for both $i = 1,2$.
The result is now a direct consequence of \cref{gamma sends semistable pieces}.
\end{proof}

The following lemma contains most of the computational result that we will require:
\begin{lemma} \label{lemma: phases of object odd}
For $n$ odd, we have
\begin{align*}
\sigma_1(\gamma^j(P_1) \otimes \Pi_a) 
&\in
	\begin{cases}
	\cP(-1), &\text{ for } j = 0; \\
	\cP([-1, -\frac{1}{n}]), &\text{ for } 1 \leq j \leq \frac{n-3}{2}\\
	\cP(-\frac{1}{n}), &\text{ for } j = \frac{n-1}{2}; \\
	\cP([-2, -\frac{1}{n}-1]), &\text{ for } \frac{n+1}{2} \leq j \leq n-1.
	\end{cases}
\\
\sigma_1(\gamma^j(P_2) \otimes \Pi_a)
&\in 
	\begin{cases}
	\cP([0, \frac{n-1}{n}]), &\text{ for } 0 \leq j \leq \frac{n-3}{2}; \\
	\cP(-1), &\text{ for } j = \frac{n-1}{2}; \\
	\cP([-1, -\frac{1}{n}]), &\text{ for } \frac{n+1}{2} \leq j < n-1; \\
	\cP(-\frac{1}{n}), &\text{ for } j = n-1.
	\end{cases}
\end{align*}
\end{lemma}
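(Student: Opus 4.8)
The plan is to reduce to $a=0$, identify each $\gamma^j(P_i)$ explicitly, and then track what $\sigma_1=\sigma_{P_1}\otimes_\I-$ does to it, reading off the Harder--Narasimhan phases from the resulting minimal complex. First I would reduce to $a=0$: since $\sigma_1$ respects the $\TLJ_n$-module structure (in particular commutes with $-\otimes\Pi_a$), and $\tau_R$ respects $\TLJ_n$ so that each $\cP(\phi)$, hence each interval category $\cP([c,d])$, is stable under $-\otimes\Pi_a$, it suffices to prove the claims for $\sigma_1(\gamma^j(P_i))$ in place of $\sigma_1(\gamma^j(P_i)\otimes\Pi_a)$.

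Next I would pin down the inputs. Repeatedly applying \cref{lemma for braid relation}, exactly as in the proof of \cref{braid relation}, one writes each $\gamma^j(P_1)$ and $\gamma^j(P_2)$, for $0\le j\le n-1$, as an explicit internal/cohomological shift of one of the finitely many base objects in $\mathfrak{R}^+_n\cup\{P_1,\ P_1\otimes\Pi_{n-2}\}$. By \cref{gamma phase reducing} all of these lie in $\bigoplus_{S\in\mathfrak{R}^+_n}\<S\>^\oplus_{\tau_R}$ and are $\tau_R$-semistable, with $\phi_\pm(\gamma^j(P_1))=-\tfrac{2j}{n}$ and $\phi_\pm(\gamma^j(P_2))=\tfrac{n-1-2j}{n}$. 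In particular $\gamma^0(P_1)=P_1$ and $\gamma^{(n-1)/2}(P_2)\cong P_1\otimes\Pi_{n-2}\<n-2\>[n-2]$ are shifts of $P_1$.

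Then I would apply $\sigma_1$ in three families of cases. (i) When the input is a shift of $P_1$ — namely $(i,j)=(1,0)$ and $(i,j)=(2,\tfrac{n-1}{2})$ — one has $\sigma_{P_1}\otimes_\I P_1\cong P_1\<2\>[1]$ (from $\I\otimes_\I P_1\cong P_1$ together with a Gaussian elimination in $\sigma_{P_1}\otimes_\I P_1$, as in the proof of \cref{inverse relation}), so $\sigma_1$ sends the input to a single $\tau_R$-semistable object, whose phase one computes to be $-1$ in both cases. (ii) For the remaining single-phase entries, $(i,j)=(1,\tfrac{n-1}{2})$ and $(i,j)=(2,n-1)$, I would write $\sigma_1\gamma^j=(\sigma_1\sigma_2)^j\sigma_1$, a positive alternating braid word, and apply \cref{root lift semistable} (whose negative subsequence is empty) to the root sequence of this word applied to $P_i$; checking the sequence lies on the correct side of the line through the central charge of the output gives $\tau_R$-semistability, and the central charge then pins the phase to $-\tfrac1n$. (iii) For the bracket ranges, I would feed the explicit two-term linear form of $\gamma^j(P_i)$ into \cref{lemma for braid relation} once more — equivalently, use the twist triangle $\gamma^j(P_i)\to\sigma_1(\gamma^j(P_i))\to P_1\otimes({}_1P\otimes_\I\gamma^j(P_i))[1]\to$ — so that the output is an explicit minimal complex each of whose module summands is a shift of a base semistable object of known phase; applying the standard see-saw bounds for Harder--Narasimhan filtrations under a distinguished triangle, $\phi_-(\sigma_1(X))\ge\min(\phi_-(X),\phi_-(E[1]))$ and $\phi_+(\sigma_1(X))\le\max(\phi_+(X),\phi_+(E[1]))$ with $E=P_1\otimes({}_1P\otimes_\I\gamma^j(P_i))$, together with \cref{graded hom space} and \cref{biadjoint pair} to control which grading shifts of $P_1$ occur in $E$, confines all Harder--Narasimhan phases of $\sigma_1(\gamma^j(P_i))$ to the stated interval. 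Assembling the cases and matching index ranges yields the lemma.

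The main obstacle is case (iii): the see-saw and root-sequence bounds only control phases once the internal-grading versus cohomological-grading shifts have been pinned down precisely, which is exactly what forces the somewhat lengthy inductive bookkeeping with \cref{lemma for braid relation}; the delicate points are the two wrap-around cases at the ends of $\mathfrak{R}^+_n$, where $\gamma$ acquires an extra shift of the form $\Pi_{n-2}\<n\>[n-1]$ (cf. \cref{semi-stable forms odd} and the proof of \cref{gamma phase reducing}), and keeping track of whether the twist produces a genuinely two-step object or merely another semistable one. The even-$n$ analogue is proved by the same scheme with the two ends of $\mathfrak{R}^+_n$ interchanged.
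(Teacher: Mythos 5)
Your core plan—reduce to $a=0$ using that $\tau_R$ respects $\TLJ_n$, write $\gamma^j(P_i)$ explicitly via repeated application of \cref{lemma for braid relation}, hit it with $\sigma_1$ using the same lemma, and read off the HN phases from the resulting minimal (at most two-term) complex—is exactly what the paper does, so the essential content is right. The detours you add are superfluous, and one of them is not quite what you claim.

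For case (ii) the paper needs no root-sequence argument: the explicit computation produces a single object of the form $P_i\otimes\Pi_a\<k\>[\ell]$, which is manifestly $\tau_R$-semistable (it lies in $\<P_i\>^\oplus_{\tau_R}$), so invoking \cref{root lift semistable} (and first shifting back into the heart to make it applicable) is unnecessary.

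For case (iii) you assert that feeding the two-term form into \cref{lemma for braid relation} is ``equivalently'' the same as running the see-saw bound on the twist triangle $\gamma^j(P_i)\to\sigma_1(\gamma^j(P_i))\to P_1\otimes({}_1P\otimes_\I\gamma^j(P_i))[1]\to$, but it is not: the see-saw estimate requires $\phi_\pm\bigl(P_1\otimes({}_1P\otimes_\I\gamma^j(P_i))\bigr)$, i.e.\ the HN phases of a genuine complex. \cref{graded hom space} and \cref{biadjoint pair} only tell you the individual terms (various shifts of $P_1\otimes\Pi_b$), not whether the differentials vanish, collapse, or leave nontrivial cohomology, so you cannot directly read off $\phi_\pm$ of this cone without essentially redoing the computation \cref{lemma for braid relation} packages for you. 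Had you leaned only on the see-saw route, that would be a genuine gap; since you also offer the direct computation as the primary method, the proof goes through, but the ``equivalently'' should be dropped.
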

\begin{proof}
One can compute the following using \cref{lemma for braid relation}:
\begin{equation}\label{eqn: sigma1 on V_j generator 1}
\begin{split}
&\sigma_1(\gamma^j(P_1)) \\
&\cong 
	\begin{cases}
	P_1 \<2\>[1], & j = 0; \\
	P_1 \otimes \Pi_{2j} \<2j+2\>[2j+1] \ra P_2 \otimes \Pi_{2j-1} \<2j+1\>[2j], & 1 \leq j \leq \frac{n-3}{2}\\
	P_2\otimes \Pi_{n-2} \<n\>[n-1], &j = \frac{n-1}{2}; \\
	P_1 \otimes \Pi_{2n-2j-2} \<2j+2\>[2j] \ra P_2 \otimes \Pi_{2n-2j-1} \<2j+1\>[2j-1], & \frac{n+1}{2} \leq j \leq n-1.
	\end{cases};
\end{split}
\end{equation}
and
\begin{equation}\label{eqn: sigma1 on V_j generator 2}
\begin{split}
&\sigma_1(\gamma^j(P_2)) \\
&\cong 
	\begin{cases}
	P_1 \otimes \Pi_{2j+1} \<2j+1\>[2j+1] \ra P_2 \otimes \Pi_{2j} \<2j\>[2j], & 0 \leq j \leq \frac{n-3}{2}; \\
	P_1 \otimes \Pi_{n-2} \<n\>[n-1],  &j = \frac{n-1}{2}; \\
	P_1 \otimes \Pi_{2n-2j-3} \<2j+1\>[2j] \ra P_2 \otimes \Pi_{2n-2j-2} \<2j\>[2j-1], & \frac{n+1}{2} \leq j < n-1; \\
	P_2 \otimes \Pi_{2n-2j-2} \<2j\>[2j-1], & j = n-1.
	\end{cases}
\end{split}
\end{equation}
The result now follows from looking at the $\tau_R$-HN filtration of the objects, and the fact that tensoring $\cP(\phi) \otimes \Pi_a \subseteq \cP(\phi)$ for all $\phi \in \R$.
\end{proof}

The following technical lemma will be crucial in the proof of \cref{sigma1 HN preserving}.
\begin{lemma} \label{sigma1 phase non-overlapping}
Let $(E,E')$ be a pair of indecomposable objects in $\<\gamma^j P_1\>^\oplus_{\tau_R} \oplus \< \gamma^j P_2 \>^\oplus_{\tau_R}$.
Then one of the following must hold:
\begin{enumerate}[(1)]
\item $\phi(E) < \phi(E')$,
\item $\bigoplus_{k \in \Z} \Hom(\sigma_1(E'), \sigma_1(E)\<k\>[k][1]) \cong \bigoplus_{k \in \Z} \Hom(E', E\<k\>[k][1]) \cong 0$,
\item $\floor{\sigma_1(E)} \geq \ceil{\sigma_1(E')}$, or
\item $j = \frac{n-1}{2}$.
\end{enumerate}
\end{lemma}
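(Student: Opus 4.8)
\textbf{Proof proposal for \cref{sigma1 phase non-overlapping}.}

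The plan is to reduce the statement to an explicit case analysis driven by the computations already available in \cref{eqn: sigma1 on V_j generator 1} and \cref{eqn: sigma1 on V_j generator 2}, and to use \cref{lemma: 2CY variant} to handle the vanishing alternative $(2)$. Since $E$ and $E'$ are indecomposable objects in $\<\gamma^j P_1\>^\oplus_{\tau_R} \oplus \< \gamma^j P_2 \>^\oplus_{\tau_R}$, by \cref{defn: notation for generating category} each of them is of the form $\gamma^j(P_i) \otimes \Pi_a \<k\>[\ell]$ (possibly tensored with $\Pi_{n-2}$, which we absorb via \cref{fusion rule}) for $i \in \{1,2\}$, some $0 \leq a \leq n-2$ and $k, \ell \in \Z$. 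First I would fix $j$ (excluding the value $j = \frac{n-1}{2}$, which is alternative $(4)$ and needs no argument) and reduce to $k = \ell = 0$ modulo an overall shift, noting that the internal and cohomological shifts change all phases in $(1)$ and $(3)$ uniformly and commute with $\sigma_1$. This leaves finitely many pairs $(E, E')$ up to the shift that records the difference $\<k\>[k]$, i.e. pairs where $E = \gamma^j(P_{i})\otimes \Pi_a$ and $E' = \gamma^j(P_{i'})\otimes \Pi_{a'}\<k\>[k]$ for $k \in \Z$.

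Next I would split according to whether $i = i'$ or $i \neq i'$. When $i = i'$, both $E$ and $E'$ are $\tau_R$-semistable of the same phase (up to the $\<k\>[k]$ shift which preserves phase), so by \cref{gamma phase reducing} and the fact that $\tau_R$ respects $\TLJ_n$ we have $\phi(E) = \phi(E')$ exactly when $k = 0$; for $k \neq 0$ either $\phi(E) < \phi(E')$ (alternative $(1)$) or we are in the reverse situation, and for the reverse situation one uses that $\sigma_1$ applied to a $\tau_R$-semistable object yields an object whose HN pieces are computed in \cref{lemma: phases of object odd}, so that $\floor{\sigma_1(E)} \geq \ceil{\sigma_1(E')}$ follows by comparing the phase intervals listed there (these intervals have length $< 1$ and are forced into the stated ranges for each $j$). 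When $i \neq i'$, the key point is that $E$ and $E'$ have phases differing by a non-integer amount $\tfrac{1}{n}$ (mod $1$), so after the $\<k\>[k]$ twist either $\phi(E) < \phi(E')$ and we are done by $(1)$, or $\phi(E) > \phi(E')$ and I would invoke alternative $(2)$: apply \cref{lemma: 2CY variant} to get $\bigoplus_k \Hom(E', E\<k\>[k+1]) \cong \bigoplus_k \Hom(E, E'\<k\>[k+1])$, and then observe that $\sigma_1$, being an autoequivalence, preserves this total Hom space, so $\bigoplus_k \Hom(\sigma_1(E'), \sigma_1(E)\<k\>[k+1]) \cong \bigoplus_k \Hom(E', E\<k\>[k+1])$; the vanishing of the latter follows from \cref{graded hom space} together with the observation that two distinct indecomposables $\gamma^j(P_i)\otimes\Pi_a$ and $\gamma^j(P_{i'})\otimes\Pi_{a'}$ with $i\neq i'$ in the same generated category have Hom concentrated in the ``wrong'' degree to contribute to $\<k\>[k+1]$.

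The main obstacle I anticipate is the bookkeeping in the $i \neq i'$, $\phi(E) > \phi(E')$ subcase: one must check carefully that the total morphism space $\bigoplus_k \Hom(E', E\<k\>[k+1])$ genuinely vanishes, which requires combining \cref{lemma: 2CY variant} (to flip $E'$ and $E$), the explicit description of morphisms in $\I$-prmod from \cref{graded hom space}, and the way $\gamma^j$ and $-\otimes \Pi_a$ interact with these Hom spaces. A secondary subtlety is verifying that the phase intervals in \cref{lemma: phases of object odd} are actually disjoint in the right order for alternative $(3)$ — this amounts to checking a handful of numerical inequalities in $\tfrac{1}{n}$ for each range of $j$, which is routine but must be done for both $\sigma_1(\gamma^j(P_1)\otimes\Pi_a)$ and $\sigma_1(\gamma^j(P_2)\otimes\Pi_a)$. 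Everything else is a direct appeal to results already established: \cref{gamma phase reducing}, \cref{lemma for braid relation}, and the fact that $\tau_R$ is a $q$-stability condition respecting $\TLJ_n$.
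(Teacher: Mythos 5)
Your proposal has two genuine gaps that would cause the argument to fail.

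First, your normalization is incomplete. After shifting both $E$ and $E'$ by the same amount to put $E$ (say) in the unshifted position $\gamma^j(P_i)\otimes\Pi_a$, the residual shift on $E'$ is a general $\<k\>[\ell]$, not a level-zero shift $\<k\>[k]$. You can further reduce by level-zero shifts of $E'$ alone (since $\<m\>[m]$ preserves phase, preserves $\floor{\cdot}$/$\ceil{\cdot}$ after applying $\sigma_1$, and merely reindexes the total Hom $\bigoplus_k \Hom(-,-\<k\>[k][1])$), but this only kills \emph{one} of the two remaining parameters, leaving a genuine internal shift $\<m\>$ on one of the objects that affects the phase comparison. In fact your treatment of the $i=i'$ case is self-contradictory: you write that $\<k\>[k]$ preserves phase, yet then argue that $\phi(E) \ne \phi(E')$ for $k \ne 0$, which cannot both hold.

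Second, and more seriously, the dichotomy you propose in the $i \ne i'$ branch is false. You claim that $i \ne i'$ with $\phi(E) > \phi(E')$ forces alternative $(2)$ (vanishing of the total Hom). But take $E = \gamma^j(P_2)\otimes\Pi_{a\pm 1}$ and $E' = \gamma^j(P_1)\otimes\Pi_a$: then $i \ne i'$, $\phi(E) = \phi(P_2) = 1-\tfrac{1}{n} > 0 = \phi(P_1) = \phi(E')$, and by \cref{graded hom space} (applied after pulling back by the autoequivalence $\gamma^j$) the map $(1|2)$ gives a \emph{nonzero} element of $\Hom(E', E\<-1\>[-1][1])$, landing exactly in the range recorded by the total Hom. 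So this pair satisfies neither $(1)$ nor $(2)$, and the paper must — and does — check alternative $(3)$ for it using the phase intervals from \cref{lemma: phases of object odd} together with the hypothesis $j \ne \tfrac{n-1}{2}$. In other words, mixed-vertex pairs $i \ne i'$ are not automatically eliminated by Hom-vanishing; they can genuinely require the phase-ordering argument of $(3)$, and this is precisely where the exclusion $j \ne \tfrac{n-1}{2}$ becomes essential.

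The correct skeleton, as in the paper, is: normalize $E'$ to the unshifted form, use \cref{graded hom space} to list \emph{all} (finitely many, up to level-zero shift) pairs $(E,E')$ with nonvanishing total Hom — there are six, and three of them have $i\ne i'$ — then check $\phi(E) \ge \phi(E')$ for each, and for those pairs use \cref{lemma: phases of object odd} to verify $(3)$ provided $j \ne \tfrac{n-1}{2}$. Your appeal to \cref{lemma: 2CY variant} is also unnecessary here: the isomorphism $\bigoplus_k \Hom(\sigma_1(E'),\sigma_1(E)\<k\>[k][1]) \cong \bigoplus_k \Hom(E',E\<k\>[k][1])$ appearing in $(2)$ is immediate from $\sigma_1$ being an autoequivalence commuting with $\<1\>$ and $[1]$; the 2-Calabi--Yau-type symmetry is used later, in \cref{sigma1 HN preserving}, to apply \cref{sufficient condition for geodesic}, not in proving this lemma.
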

\begin{proof}
Note that the statements above are equivalent up to shifting both $E$ and $E'$ by the same amount $\<m\>[m']$, namely
\begin{enumerate}[-]
\item $\phi(E) \geq \phi(E') \iff \phi(E\<m\>[m']) \geq \phi(E' \<m\>[m'])$;
\item $\bigoplus_{k \in \Z} \Hom(E', E\<k\>[k][1]) \cong 0 \iff \bigoplus_{k \in \Z} \Hom(E'\<m\>[m'], E\<m+k\>[m'+k][1]) \cong 0$; and
\item $\floor{\sigma_1(E)} \geq \ceil{\sigma_1(E')} \iff \floor{\sigma_1(E)\<m\>[m']} \geq \ceil{\sigma_1(E')\<m\>[m'}$.
\end{enumerate}
As such, we may assume without loss of generality that $E'$ is always $\gamma^j(P_i)\otimes \Pi_a$ for some $i\in \{1,2\}$ and some $0 \leq a \leq n-2$, by shifting the two gradings whenever necessary.
Moreover, if we now consider linear shifts $\<\ell\>[\ell]$, we have
\begin{enumerate}[-]
\item $\phi(E) = \phi(E)\<\ell\>[\ell]$;
\item $\bigoplus_{k \in \Z} \Hom(E', E\<k\>[k][1]) = \bigoplus_{k \in \Z} \Hom(E', E\<k+\ell\>[k+\ell][1])$; and
\item $\floor{\sigma_1(E)} = \floor{\sigma_1(E)\<\ell\>[\ell]}$.
\end{enumerate}
Thus, if the statement is true for some pair $(E,E')$, then it must also be true for all pairs $(E\<\ell\>[\ell], E')$ with different linear shifts of $E$.

The proof shall proceed as follows:
taking $E' = \gamma^j(P_i)\otimes \Pi_a$ for each $i\in\{1,2\}$, we shall look for all of the corresponding choices of $E$ (up to linear shifts) such that (2) is not true.
Then, we show that for all such choices of $E$ that do not satisfy (1) nor (4) must satisfy (3).

Since $\gamma$ is an autoequivalence, we can apply \cref{graded hom space} to show that there are only six possible pairs of $(E,E')$ (up to linear shifts of $E$) with $\bigoplus_{k \in \Z} \Hom(E', E\<k\>[k][1])$ non-zero:
\begin{enumerate}
\item $(E,E') = (\gamma^j(P_1) \otimes \Pi_a\<-1\> , \gamma^j(P_1) \otimes \Pi_a)$;
\item $(E,E') = (\gamma^j(P_2) \otimes \Pi_a\<-1\> , \gamma^j(P_2) \otimes \Pi_a)$;
\item $(E,E') = (\gamma^j(P_2) \otimes \Pi_{a\pm 1}, \gamma^j(P_1) \otimes \Pi_a)$;
\item $(E,E') = (\gamma^j(P_1) \otimes \Pi_{a\pm 1}, \gamma^j(P_2) \otimes \Pi_a)$;
\item $(E,E') = (\gamma^j(P_1) \otimes \Pi_a\< 1\> , \gamma^j(P_1) \otimes \Pi_a)$; and
\item $(E,E') = (\gamma^j(P_2) \otimes \Pi_a\< 1\> , \gamma^j(P_2) \otimes \Pi_a)$.
\end{enumerate}
Restricting to the case $j=0$, one may check explicitly that $\phi(E) \geq \phi(E')$ only for the first three pairs;
the rest of the cases $j\geq 1$ follows from the fact that $\gamma$ reduces the phase of all semistable objects in $\bigoplus_{S \in \mathfrak{R}^+_n} \<S\>^\oplus_{ \tau_R}$ by the same amount (cf. \cref{gamma phase reducing}).

With $j \neq \frac{n-1}{2}$, we use \cref{lemma: phases of object odd} to compare the phases, which shows that $\floor{\sigma_1(E)} \geq \ceil{\sigma_1(E')}$ for the three pairs $(E,E')$ with $\phi(E) \geq \phi(E')$, as required.
\comment{
Let us start with $j=0, \cV_0$.
In this case, $X_0 = P_1$, $Y_0 = P_2$.
We have that
\[
\sigma_1(P_1) = P_1 \<2\>[1], \quad
\sigma_1(P_2) = P_1\otimes \Pi_1\<1\>[1] \ra P_2.
\]
In particular, since $-\otimes \Pi_a$ does not change the phase of the semistable pieces, we get
\[
\floor{\sigma_1(P_i \otimes \Pi_a)} =
	\begin{cases}
	-1, &\text{ if } i =1; \\
	0. &\text{ if } i=2
	\end{cases}, \quad
\ceil{\sigma_1(P_i \otimes \Pi_a)} =
	\begin{cases}
	-1, &\text{ if } i =1; \\
	\frac{n-1}{n}. &\text{ if } i=2
	\end{cases}
\]
for any choice of $a$.
By definition, $\bigoplus_{k \in \Z} \Hom(E', E\<k\>[k][1]) \subseteq \HOM_{\I\text{-mod}}(E' , E[1])$.
With $E' = P_1 \otimes \Pi_a$ or $P_2\otimes \Pi_a$, we shall look for the possible choices of $E$ such that
\[
\bigoplus_{k \in \Z} \Hom(E', E\<k\>[k][1]) \cong \bigoplus_{k \in \Z} \Hom(\sigma_1(E'), \sigma_1(E)\<k\>[k][1]) \not\cong 0,
\] 
where we proceed to show that $\floor{\sigma_1(E)} \geq \ceil{\sigma_1(E')}$ whenever the assumption $\phi(E)\geq\phi(E')$ holds.
Using \cref{graded hom space}, it follows that $\bigoplus_{k \in \Z} \Hom(E', E\<k\>[k][1])$ is non-zero only for the following cases:
\begin{enumerate}
\item $(E,E') = (P_1 \otimes \Pi_a\<-1\> , P_1 \otimes \Pi_a)$ \\
In this case, $\phi(E) > \phi(E')$, so we see that
\[
\floor{\sigma_1(P_1 \otimes \Pi_a\<-1\>)} = 0 \geq -1 = \ceil{\sigma_1(P_1 \otimes \Pi_{a\pm 1})}
\]
as required.
\item $(E,E') = (P_2 \otimes \Pi_{a\pm 1} , P_1 \otimes \Pi_a)$\\
In this case, $\phi(E) > \phi(E')$, so we see that
\[
\floor{\sigma_1(P_2 \otimes \Pi_{a\pm 1})} = \frac{n-1}{n} \geq -1 = \ceil{\sigma_1(P_1 \otimes \Pi_a)}
\]
as required.
\item $(E,E') = (P_2 \otimes \Pi_a \<-1\>, P_2 \otimes \Pi_a)$\\
In this case, $\phi(E) > \phi(E')$, so we see that
\[
\floor{\sigma_1(P_2 \otimes \Pi_a\<-1\>)} = 1 \geq \frac{n-1}{n}  = \ceil{\sigma_1(P_2 \otimes \Pi_a)}
\]
as required.
\item $(E,E') = (P_1 \otimes \Pi_{a\pm 1} , P_2 \otimes \Pi_a)$\\
In this case, $\phi(E) < \phi(E')$.
\end{enumerate}
This concludes the case where $j=0$.

Now consider the case where $1 \leq j \leq \frac{n-3}{2}$.
Using \cref{lemma for braid relation} we get that
\begin{align*}
X_j 
&= (\sigma_2\sigma_1)^j(P_1) \\
&= (\sigma_2\sigma_1)^{j-1}\sigma_2(P_1\<2\>[1]) \\
&= \underbrace{\sigma_2\sigma_1 ... \sigma_2}_{2j-1 \text{ times}}(P_1)\<2\>[1] \\
&= \left( P_2 \otimes \Pi_{2j-1} \<2j-1\>[2j-1] \ra P_1 \otimes \Pi_{2j-2} \<2j-2\>[2j-2] \right) \<2\>[1] \\
\sigma_1(X_j) 
&= \left( P_1 \otimes \Pi_{2j} \<2j\>[2j] \ra P_2 \otimes \Pi_{2j-1} \<2j-1\>[2j-1] \right) \<2\>[1] \\
Y_j
&= (\sigma_2\sigma_1)^j(P_2) \\
&= P_2 \otimes \Pi_{2j} \<2j\>[2j] \ra P_1 \otimes \Pi_{2j-1} \<2j-1\>[2j-1] \\
\sigma_1(Y_j)
&= P_1 \otimes \Pi_{2j+1} \<2j+1\>[2j+1] \ra P_2 \otimes \Pi_{2j} \<2j\>[2j]
\end{align*}
Thus, we get
\begin{align*}
\floor{\sigma_1(X_j \otimes \Pi_a)} = -1, \quad
\floor{\sigma_1(Y_j \otimes \Pi_a)} = 0
\end{align*}
and
\begin{align*}
\ceil{\sigma_1(X_j \otimes \Pi_a)} = \frac{n-1}{n}-1 = -\frac{1}{n}, \quad
\ceil{\sigma_1(Y_j \otimes \Pi_a)} = \frac{n-1}{n}.
\end{align*}
Since $\gamma$ is an autoequivalence, with $E'$ either $X_j \otimes \Pi_a$ or $Y_j \otimes \Pi_a$, there is again four pairs $(E,E')$ such that $\bigoplus_{k \in \Z} \Hom(E', E\<k\>[k][1])$ is non-zero as before:
\begin{enumerate}
\item $(E,E') = (X_j \otimes \Pi_a\<-1\>, X_j \otimes \Pi_a)$. \\
In this case, $\phi(E) > \phi(E')$, so we see that
\[
\floor{\sigma_1(X_j \otimes \Pi_a\<-1\>)} = 0 \geq -\frac{1}{n} = \ceil{\sigma_1(X_j \otimes \Pi_a)} 
\]
as required.
\item $(E,E') = (Y_j \otimes \Pi_{a\pm 1}, X_j \otimes \Pi_a)$. \\
In this case, $\phi(E) > \phi(E')$, so we see that
\[
\floor{\sigma_1(Y_j \otimes \Pi_{a\pm 1})} 
= 0 
\geq -\frac{1}{n} = \ceil{\sigma_1(X_j \otimes \Pi_a)} 
\]
as required.
\item $(E,E') = (Y_j \otimes \Pi_a\<-1\>, Y_j \otimes \Pi_a)$. \\
In this case, $\phi(E) > \phi(E')$, so we see that
\[
\floor{\sigma_1(Y_j \otimes \Pi_a\<-1\>)} 
= 1 
\geq \frac{n-1}{n} = \ceil{\sigma_1(Y_j \otimes \Pi_a)} 
\]
as required.
\item $(E,E') = (X_j \otimes \Pi_{a\pm 1}, Y_j \otimes \Pi_a)$. \\
In this case,
\[
\phi(X_j \otimes \Pi_{a\pm 1}) = \phi(P_1 \otimes \Pi_{a\pm 1}) - \frac{2j}{n} < \phi(P_2 \otimes \Pi_a) - \frac{2j}{n} = \phi(Y_j \otimes \Pi_a),
\]
so $\phi(E) < \phi(E')$.
\end{enumerate}
This concludes the case where $1 \leq j \leq \frac{n-3}{2}$.

Finally consider the case where $\frac{n+1}{2} \leq j \leq n-1$.
Using \cref{lemma for braid relation} we obtain
\begin{align*}
X_j 
&= (\sigma_2\sigma_1)^j(P_1) \\
&= (\sigma_2\sigma_1)^{j-\frac{n+1}{2}} (\sigma_2\sigma_1)^{\frac{n-1}{2}}\sigma_2(P_1\<2\>[1]) \\
&= (\sigma_2\sigma_1)^{j-\frac{n+1}{2}} \sigma_2(\sigma_1\sigma_2)^{\frac{n-1}{2}}(P_1)\<2\>[1] \\
&= (\sigma_2\sigma_1)^{j-\frac{n+1}{2}} \sigma_2(P_2\<n-2\>[n-2])\<2\>[1] \\
&= (\sigma_2\sigma_1)^{j-\frac{n+1}{2}} (P_2\<n\>[n-1])\<2\>[1] \\
&= \underbrace{\sigma_2\sigma_1 ... \sigma_1}_{2j-n-1 \text{ times}}(P_2)\<n+2\>[n] \qquad (\text{note that } 0 \leq 2j-n-1 \leq n-3)\\
&= P_2 \otimes \Pi_{2j-n-1} \<2j+1\>[2j-1] \ra P_1 \otimes \Pi_{2j-n-2} \<2j\>[2j-2] \\
\sigma_1(X_j) 
&= P_1 \otimes \Pi_{2j-n} \<2j+2\>[2j] \ra P_2 \otimes \Pi_{2j-n-1} \<2j+1\>[2j-1] \\
Y_j 
&= (\sigma_2\sigma_1)^j(P_2) \\
&= (\sigma_2\sigma_1)^{j-\frac{n+1}{2}} (\sigma_2\sigma_1)(\sigma_2\sigma_1)^{\frac{n-1}{2}}(P_2) \\
&= (\sigma_2\sigma_1)^{j-\frac{n+1}{2}} (\sigma_2\sigma_1)(P_1\<n-2\>[n-2]) \\
&= (\sigma_2\sigma_1)^{j-\frac{n+1}{2}} \sigma_2(P_1\<n\>[n-1]) \\
&= \underbrace{\sigma_2\sigma_1 ... \sigma_2}_{2j-n \text{ times}}(P_1)\<n\>[n-1] \qquad (\text{note that } 1 \leq 2j-n \leq n-2)\\
&= P_2 \otimes \Pi_{2j-n} \<2j\>[2j-1] \ra P_1 \otimes \Pi_{2j-n-1} \<2j-1\>[2j-2] \\
\sigma_1(Y_j) 
&= 
	\begin{cases}
	P_2 \otimes \Pi_{2j-n} \<2j\>[2j-1], &\text{ when } 2j-n = n-2; \\
	P_1 \otimes \Pi_{2j-n+1} \<2j+1\>[2j] \ra P_2 \otimes \Pi_{2j-n} \<2j\>[2j-1], &\text{ when } 2j-n < n-2.
	\end{cases}
\end{align*}
Thus, we get
\[
\floor{\sigma_1(X_j \otimes \Pi_a)} = -2, \quad
\floor{\sigma_1(Y_j \otimes \Pi_a)} =
	\begin{cases}
	\frac{n-1}{n}-1=-\frac{1}{n}, &\text{ when } 2j-1 = n-2; \\
	-1, &\text{ when } 2j-1 < n-2
	\end{cases}
\]
and
\[
\ceil{\sigma_1(X_j \otimes \Pi_a)} = \frac{n-1}{n}-2 = -\frac{n+1}{n}, \quad
\ceil{\sigma_1(Y_j \otimes \Pi_a)} = -\frac{1}{n}.
\]
As before, with $E'$ either $X_j \otimes \Pi_a$ or $Y_j \otimes \Pi_a$, there is again four pairs $(E,E')$ such that $\bigoplus_{k \in \Z} \Hom(E', E\<k\>[k][1])$ is non-zero:
\begin{enumerate}
\item $(E,E') = (X_j \otimes \Pi_a\<-1\>, X_j \otimes \Pi_a)$. \\
In this case, $\phi(E) > \phi(E')$, so we see that
\[
\floor{\sigma_1(X_j \otimes \Pi_a\<-1\>)} = -1 
\geq -\frac{n+1}{n} = \ceil{\sigma_1(X_j \otimes \Pi_a)} 
\]
as required.
\item $(E,E') = (Y_j \otimes \Pi_{a\pm 1}, X_j \otimes \Pi_a)$. \\
In this case, $\phi(E) > \phi(E')$, so we see that
\[
\floor{\sigma_1(Y_j \otimes \Pi_{a\pm 1})} 
\geq \min\left\{-1, -\frac{1}{n} \right\} = -1 
\geq -\frac{n+1}{n} = \ceil{\sigma_1(X_j \otimes \Pi_a)} 
\]
as required.
\item $(E,E') = (Y_j \otimes \Pi_a\<-1\>, Y_j \otimes \Pi_a)$. \\
In this case, $\phi(E) > \phi(E')$, so we see that
\[
\floor{\sigma_1(Y_j \otimes \Pi_a\<-1\>)} 
\geq \min\left\{0, \frac{n-1}{n} \right\} = 0 
\geq -\frac{1}{n} = \ceil{\sigma_1(Y_j \otimes \Pi_a)} 
\]
as required.
\item $(E,E') = (X_j \otimes \Pi_{a\pm 1}, Y_j \otimes \Pi_a)$. \\
In this case,
\[
\phi(X_j \otimes \Pi_{a\pm 1}) = \phi(P_1 \otimes \Pi_{a\pm 1}) - \frac{2j}{n} < \phi(P_2 \otimes \Pi_a) - \frac{2j}{n} = \phi(Y_j \otimes \Pi_a),
\]
so $\phi(E) < \phi(E')$.
\end{enumerate}
This concludes the case where $\frac{n+1}{2} \leq j \leq n-1$, and hence all cases of $j \neq \frac{n-1}{2}$.
}
\end{proof}

\begin{proposition}\label{sigma1 HN preserving}
Suppose $A \in [\gamma^j P_1, \gamma^j P_2]$ for $j \neq \frac{n-1}{2}$ with indecomposable $\tau_R$-semistable pieces in the HN filtration given by $E_i$, i.e. $\supp_{\tau_R}(A) = \bigoplus_{i=1}^m E_i$.
Then
\begin{enumerate}[(i)]
\item $\supp_{\tau_R}(\sigma_1(A)) = \bigoplus_{i=1}^m \supp_{\tau_R}(\sigma_1(E_i))$, and
\item $\sigma_1(A) \in [P_1, P_2]$.
\end{enumerate}
\end{proposition}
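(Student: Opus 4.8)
The plan is to prove (i) and (ii) together by an induction on the length $m$ of the Harder--Narasimhan filtration of $A$, using \cref{sigma1 phase non-overlapping} as the key input. For $m=1$ the object $A$ is already semistable, so $\supp_{\tau_R}(A)=E_1$ and both statements are immediate from the computation in \cref{lemma: phases of object odd}: that lemma shows $\sigma_1(\gamma^j(P_i)\otimes\Pi_a)$ has all HN phases confined to an interval of length $<1$ sitting inside $[-2,1)$, and each HN semistable piece lies in $\bigoplus_{S\in\mathfrak{R}^+_n}\<S\>^\oplus_{\tau_R}$, hence $\sigma_1(E_1)\in[P_1,P_2]$ (after noting $\gamma^j P_i$ and $\gamma^j P_i\otimes\Pi_a$ are related by the $\TLJ_n$-action and shifts, so the support lands in $\<\{P_1,P_2\}\>^\oplus_{\tau_R}$).

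For the inductive step, write the HN filtration of $A$ as a distinguished triangle $A' \to A \to E_m \to A'[1]$ where $A'$ has HN pieces $E_1,\dots,E_{m-1}$ (so $\floor{A'}=\phi(E_{m-1})>\phi(E_m)$), and apply $\sigma_1$ to get $\sigma_1(A')\to\sigma_1(A)\to\sigma_1(E_m)\to$. By the inductive hypothesis $\supp_{\tau_R}(\sigma_1(A'))=\bigoplus_{i=1}^{m-1}\supp_{\tau_R}(\sigma_1(E_i))$ and $\sigma_1(A')\in[P_1,P_2]$; I also know $\sigma_1(E_m)\in[P_1,P_2]$ from the base case. To conclude that concatenating the HN filtrations of $\sigma_1(A')$ and $\sigma_1(E_m)$ yields the HN filtration of $\sigma_1(A)$, it suffices to show $\floor{\sigma_1(A')}\geq\ceil{\sigma_1(E_m)}$; this is where \cref{sigma1 phase non-overlapping} enters, applied to each pair of indecomposable semistable pieces $(E_i,E_m)$ with $i<m$. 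In that lemma, option (1) $\phi(E_i)<\phi(E_m)$ is excluded since $\phi(E_i)\geq\phi(E_{m-1})>\phi(E_m)$, and option (4) $j=\tfrac{n-1}{2}$ is excluded by hypothesis; so either option (3) $\floor{\sigma_1(E_i)}\geq\ceil{\sigma_1(E_m)}$ holds directly, or option (2) holds, i.e. $\bigoplus_{k}\Hom(E_m,E_i\<k\>[k][1])\cong 0$ --- but in the latter case, combined with the $2$-Calabi--Yau-type symmetry \cref{lemma: 2CY variant} (which gives $\bigoplus_k\Hom(E_i,E_m\<k\>[k+1])\cong\bigoplus_k\Hom(E_m,E_i\<k\>[k+1])$), the extension $A'\to A\to E_m$ restricted to the relevant graded piece splits off $E_i$ as a summand, so one can reorder and reduce $m$; alternatively one argues directly that when all such $\Hom$'s vanish the object $\sigma_1(A)$ decomposes compatibly. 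Taking the minimum over $i$ of $\floor{\sigma_1(E_i)}$ then gives $\floor{\sigma_1(A')}\geq\ceil{\sigma_1(E_m)}$, establishing (i). Statement (ii) follows since every HN piece of $\sigma_1(A)$ is now a HN piece of some $\sigma_1(E_i)$, each of which lies in $\bigoplus_{S\in\mathfrak{R}^+_n}\<S\>^\oplus_{\tau_R}$, and the phases all fall in a single length-$<1$ window by \cref{lemma: phases of object odd}, so $\supp_{\tau_R}(\sigma_1(A))\in ob\left(\<\{P_1,P_2\}\>^\oplus_{\tau_R}\right)$, which is exactly $\sigma_1(A)\in[P_1,P_2]$.

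The main obstacle I anticipate is handling option (2) of \cref{sigma1 phase non-overlapping} cleanly: when the relevant $\Hom$-spaces vanish one wants to conclude the HN filtration still concatenates correctly even though $\floor{\sigma_1(E_i)}\geq\ceil{\sigma_1(E_m)}$ might fail. The resolution should be that vanishing of $\bigoplus_k\Hom(E_m,E_i\<k\>[k][1])$ together with \cref{lemma: 2CY variant} forces the piece $E_i$ to split off from $A$ (or more precisely, forces the corresponding sub-extension to be trivial), so those pieces can be pulled out of the filtration and treated separately without affecting the ordering argument among the remaining pieces; one then runs the induction on the genuinely ``linked'' pieces only. I would also double-check the edge behaviour near $j=0$ and $j=\tfrac{n+1}{2}$ where \cref{lemma: phases of object odd} gives single phases rather than intervals --- these are actually the easiest cases since $\sigma_1$ sends the generators to semistable objects, so the concatenation is automatic.
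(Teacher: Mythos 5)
Your proposal gestures at the right ingredients (the phase non-overlapping lemma and the Calabi--Yau-type symmetry), but the way you handle option (2) of \cref{sigma1 phase non-overlapping} is incorrect, and this is not a cosmetic issue --- it is the crux of the whole argument.

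The vanishing $\bigoplus_k\Hom(E_m,E_i\<k\>[k][1])\cong 0$ does \emph{not} force $E_i$ to split off from $A$ as a direct summand, nor does it let you "reorder and reduce $m$." The obstruction to splitting $E_i$ out of the filtration step $A_{i-1}\to A_i\to E_i\to$ lives in $\Hom(E_i,A_{i-1}[1])$, a completely different group from the one you are killing. What the vanishing actually gives you, after you pass the filtration through $\sigma_1$, is that a specific internal edge of the filtration polygon is \emph{rectifiable}; to turn that into a statement about $\supp_{\tau_R}(\sigma_1(A))$ you need to show the entire filtration polygon of $\sigma_1(A)$ (with pieces $\sigma_1(E_i)$) is \emph{geodesic}, then invoke the fact that geodesic polygons admit octahedral rearrangement into weak HN filtrations. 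This is exactly \cref{sufficient condition for geodesic} combined with \cref{cor: geodesic implies weak HN}; conditions (1) and (2) of that proposition are supplied by \cref{lemma: 2CY variant} and \cref{sigma1 phase non-overlapping} respectively. Without this machinery, the naive inductive bound $\floor{\sigma_1(A')}\geq\ceil{\sigma_1(E_m)}$ you are aiming for can simply fail: the phases of the $\sigma_1(E_i)$'s are allowed to interleave, and rectifiability is what controls the resulting filtration in the interleaved cases, not a clean separation of phases.

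There is also a second step you skip entirely. Even after you have a weak HN filtration of $\sigma_1(A)$ with pieces $X_1,\dots,X_r$ (indecomposable semistable, non-increasing phase), to conclude $\supp_{\tau_R}(\sigma_1(A))=\bigoplus_i\supp_{\tau_R}(\sigma_1(E_i))$ you must still show the strictification to the actual HN filtration only takes direct sums --- i.e.\ that whenever $\phi(X_i)=\phi(X_{i+1})$ one has $\Hom(X_{i+1},X_i[1])=0$, so the cone of the composed map is $X_i\oplus X_{i+1}$ rather than a nontrivial extension. This requires a separate argument using \cref{graded hom space} on the level gradings of the $X_i$'s (the pieces all lie in $\<P_1\>^\oplus_{\tau_R}\oplus\<P_2\>^\oplus_{\tau_R}$ and equal phase forces equal level, which is incompatible with the degrees where nonzero Hom exists). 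Your proposal does not address this at all.

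Your base case $m=1$ and the observation that statement (ii) follows from (i) via the explicit computations \cref{eqn: sigma1 on V_j generator 1} and \cref{eqn: sigma1 on V_j generator 2} are both fine and match the paper's opening moves. But the inductive step as written does not go through; you should replace the induction-plus-splitting argument with the geodesic filtration polygon argument from the appendix.
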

\begin{proof}
From \cref{eqn: sigma1 on V_j generator 1} and \cref{eqn: sigma1 on V_j generator 2}, we see that 
\[
\sigma_1(\gamma^j(P_1)), \sigma_1(\gamma^j(P_2))\in [P_1,P_2].
\]
Using the property of supports together with the fact that all braids commute with the functors $-\otimes \Pi_a, \<k\>$ and $[\ell]$, we get
\[
\sigma_1(\gamma^j(P_1 \otimes \Pi_a \<k\>[\ell])) , \sigma_1(\gamma^j(P_2 \otimes \Pi_a \<k\>[\ell]))\in [P_1,P_2].
\]
Since each $E_i$ is of the form $\gamma^j(P_1 \otimes \Pi_a \<k\>[\ell])$ or $\gamma^j(P_2 \otimes \Pi_a \<k\>[\ell])$, statement (ii) follows from statement (i).

We shall now prove (i).
Let $A$ be as given in the proposition.
Then we have a filtration $F$ of $A$
\[
F := 
\begin{tikzcd}[column sep = 3mm]
0
	\ar[rr] &
{}
	{} &
A_1
	\ar[rr] \ar[dl]&
{}
	{} &	
A_2
	\ar[rr] \ar[dl]&
{}
	{} &	
\cdots
	\ar[rr] &
{}
	{} &
A_{m-1}
	\ar[rr] &
{}
	{} &
A_m = A,
	\ar[dl] \\
{}
	{} &
E_1
	\ar[lu, dashed] &
{}
	{} &
E_2
	\ar[lu, dashed] &
{}
	{} &
{}
	{} &
{}
	{} &
{}
	{} &
{}
	{} &
E_m
	\ar[lu, dashed] &
\end{tikzcd}
\]
obtainable from the $\tau_R$-HN filtration of $A$ by splitting up each semistable pieces into indecomposables.
Note that this means the phases of the $E_i$'s are only non-increasing: $\phi(E_i) \geq \phi(E_{i+1})$ (instead of strictly decreasing).
Such a filtration $F$ is called a weak HN filtration of A (see \cref{defn: weak HN filtration} in the appendix; this is \emph{not} the HN filtration).
Applying $\sigma_1$ to $F$ results in the filtration
\[
\sigma_1(F)=
\begin{tikzcd}[column sep = 3mm]
0
	\ar[rr] &
{}
	{} &
\sigma_1(A_1)
	\ar[rr] \ar[dl]&
{}
	{} &	
\cdots
	\ar[rr] &
{}
	{} &
\sigma_1(A_{m-1})
	\ar[rr] &
{}
	{} &
\sigma_1(A).
	\ar[dl] \\
{}
	{} &
\sigma_1(E_1)
	\ar[lu, dashed] &
{}
	{} &
{}
	{} &
{}
	{} &
{}
	{} &
{}
	{} &
\sigma_1(E_m)
	\ar[lu, dashed] &
\end{tikzcd}
\]
Using \cref{lemma: 2CY variant} and \cref{sigma1 phase non-overlapping}, we see that $\sigma_1(F)$  satisfies the assumption of \cref{sufficient condition for geodesic} and is therefore a geodesic filtration.
As such, a weak HN filtration polygon of $\sigma_1(A)$ can be obtained from concatenating weak HN filtrations of $\sigma_1(E_i)$'s up to some rearrangements of the filtration pieces (cf. \cref{cor: geodesic implies weak HN}).
We shall use the weak HN filtrations of the $\sigma_1(E_i)$'s obtained via splitting up each of the semistable pieces in their HN filtrations into indecomposables as before.
Up to rearranging (and applying the appropriate octahedral flips), this results in a weak HN filtration $F'$ of $\sigma_1(A)$ given by:
\[
F' =
\begin{tikzcd}[column sep = 3mm]
0
	\ar[rr] &
{}
	{} &
A_1'
	\ar[rr] \ar[dl]&
{}
	{} &	
\cdots
	\ar[rr] &
{}
	{} &
A_{r-1}'
	\ar[rr] &
{}
	{} &
A_r' = \sigma_1(A)
	\ar[dl] \\
{}
	{} &
X_1
	\ar[lu, dashed] &
{}
	{} &
{}
	{} &
{}
	{} &
{}
	{} &
{}
	{} &
X_r
	\ar[lu, dashed] &
\end{tikzcd}
\]
where each $X_j$ is an indecomposable semistable object that appears as a summand of some semistable pieces in the HN filtration of some $E_i$.

We claim that taking the strictification (see discussion after \cref{defn: weak HN filtration}) of this weak HN filtration $F'$ into the HN filtration, if necessary, will only involve taking direct sums.
This will be achieved by showing that if $\phi(X_i) = \phi(X_{i+1})$, then $\Hom(X_{i+1}, X_{i}[1]) = 0$, which implies that 
\[
\cone(A_{i-1}' \ra A_{i}' \ra A_{i+1}') = X_i \oplus X_{i+1}.
\]
In particular, this implies that
\[
\supp_{\tau_R}(\sigma_1(A)) = \bigoplus_{i=1}^m \supp_{\tau_R}(\sigma_1(E_i)).
\]

We shall now prove the claim above.
Recall from the start of the proof that $\sigma_1(E_j) \in [P_1, P_2]$ for all $j$.
As such, each $X_i$ (which is indecomposable and semistable) must be an indecomposable object in $\<P_1\>^\oplus_{\tau_R} \oplus \<P_2\>^\oplus_{\tau_R}$, namely 
\[
X_i = P_{s_i} \otimes \Pi_{a_i} \<k_i\>[\ell_i]
\]
for some $s_i \in \{1,2\}$, $0 \leq a_i \leq n-2$ and $k_i,\ell_i \in \Z$.
In order for $X_i$ and $X_{i+1}$ to have the same phase, we need that $s_i = s_{i+1} \in \{1,2\}$; and
\begin{equation} \label{same level}
\ell_{i+1} -k_{i+1} = \ell_i - k_i.
\end{equation}
Now using \cref{graded hom space}, we see that $\Hom(X_{i+1}, X_{i}[1]) \neq 0$ only if
\[
\ell_{i+1} - \ell_i = 1 \quad \text{ and } \quad k_{i+1} - k_i = 0 \text{ or }2,
\]
which contradicts \cref{same level}.
Hence we have $\Hom(X_{i+1}, X_{i}[1]) = 0$ as required.
\end{proof}

\begin{corollary} \label{cor: conditions of automaton odd}
Suppose $A \in [\gamma^j P_1, \gamma^j P_2]$ for $j \neq k+ \frac{n-1}{2}$ with indecomposable $\tau_R$-semistable pieces in the HN filtration given by $E_i$, i.e. $\supp_{\tau_R}(A) = \bigoplus_{i=1}^m E_i$.
Then
\begin{enumerate}[(i)]
\item $\supp_{\tau_R}(\sigma_{\gamma^k(P_1)}(A)) = \bigoplus_{i=1}^m \supp_{\tau_R}(\sigma_{\gamma^k(P_1)}(E_i))$, and
\item $\sigma_{\gamma^k(P_1)}(A) \in [\gamma^k P_1, \gamma^k P_2]$.
\end{enumerate}
\end{corollary}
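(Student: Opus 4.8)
The plan is to deduce \cref{cor: conditions of automaton odd} from \cref{sigma1 HN preserving} by conjugating the spherical twist $\sigma_{\gamma^j(P_1)}$ back to $\sigma_{P_1}$ using the autoequivalence $\gamma = \sigma_{P_2}\sigma_{P_1}$. The key identity I would establish first is
\[
\sigma_{\gamma^k(P_1)} = \gamma^k \circ \sigma_{P_1} \circ \gamma^{-k}
\]
as endofunctors of $\cK$, which follows from \cref{spherical twist relation} (taking $\Sigma = \gamma^k = \underbrace{\cdots \otimes_\I \sigma_{P_2} \otimes_\I \sigma_{P_1}}_{2k}$ and $X = P_1$): indeed \cref{spherical twist relation} gives $\sigma_{\gamma^k(P_1)} \cong \gamma^k \otimes_\I \sigma_{P_1} \otimes_\I \gamma^{-k}$, where $\gamma^k(P_1)$ is shorthand for $\gamma^k \otimes_\I P_1$, and by \cref{isomorphic biadjoint} and \cref{biadjoint shifts} the twist only depends on $\gamma^k(P_1)$ up to isomorphism and grading shifts. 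So the computation reduces entirely to understanding $\gamma^{-k}(A)$, applying $\sigma_{P_1}$, and then applying $\gamma^k$.

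The first step is to observe that by \cref{gamma loops V}, since $A \in [\gamma^j P_1, \gamma^j P_2]$ we have $\gamma^{-k}(A) \in [\gamma^{j-k} P_1, \gamma^{j-k} P_2]$ (indices mod $n$); the hypothesis $j \neq k + \frac{n-1}{2}$ ensures $j - k \neq \frac{n-1}{2} \pmod n$, so \cref{sigma1 HN preserving} applies to $\gamma^{-k}(A)$. Next, \cref{gamma sends semistable pieces} tells us that applying $\gamma^{-k}$ to the $\tau_R$-HN filtration of $A$ produces the $\tau_R$-HN filtration of $\gamma^{-k}(A)$; splitting into indecomposables, the indecomposable $\tau_R$-semistable pieces of $\gamma^{-k}(A)$ are exactly $\gamma^{-k}(E_i)$. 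Then \cref{sigma1 HN preserving}(i) gives
\[
\supp_{\tau_R}\bigl(\sigma_{P_1}(\gamma^{-k}(A))\bigr) = \bigoplus_{i=1}^m \supp_{\tau_R}\bigl(\sigma_{P_1}(\gamma^{-k}(E_i))\bigr),
\]
and \cref{sigma1 HN preserving}(ii) gives $\sigma_{P_1}(\gamma^{-k}(A)) \in [P_1, P_2]$.

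Finally I would apply $\gamma^k$ to both sides. Since $\sigma_{P_1}(\gamma^{-k}(A)) \in [P_1, P_2]$ and (again using \cref{gamma sends semistable pieces}) $\gamma^k$ carries its $\tau_R$-HN filtration to the $\tau_R$-HN filtration of $\gamma^k\sigma_{P_1}\gamma^{-k}(A) = \sigma_{\gamma^k(P_1)}(A)$, the support commutes with $\gamma^k$: $\supp_{\tau_R}(\sigma_{\gamma^k(P_1)}(A)) = \gamma^k\bigl(\supp_{\tau_R}(\sigma_{P_1}(\gamma^{-k}(A)))\bigr) = \bigoplus_i \gamma^k\supp_{\tau_R}(\sigma_{P_1}(\gamma^{-k}(E_i)))$, and since $\gamma^k$ and the support operation commute once more this equals $\bigoplus_i \supp_{\tau_R}(\gamma^k\sigma_{P_1}\gamma^{-k}(E_i)) = \bigoplus_i \supp_{\tau_R}(\sigma_{\gamma^k(P_1)}(E_i))$, which is (i). For (ii), applying $\gamma^k$ to $[P_1, P_2]$ lands in $[\gamma^k P_1, \gamma^k P_2]$ by \cref{gamma loops V}. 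I expect the only mildly delicate point to be bookkeeping the grading shifts $\langle - \rangle$ and $[-]$ coming from $\gamma^n(P_i) \cong P_i\langle 2n\rangle[2n-2]$ and from the conjugation identity — but since both $\supp_{\tau_R}$ and the bracket-sets $[\,\cdot\,,\cdot\,]$ are by construction stable under $\langle - \rangle$, $[-]$, and $-\otimes\Pi_a$, these shifts are harmless. There is no real obstacle here: this corollary is a pure "transport of structure along $\gamma$" argument once \cref{sigma1 HN preserving} is in hand.
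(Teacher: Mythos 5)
Your proof is correct and follows essentially the same route as the paper: conjugate by $\gamma^{\pm k}$, use \cref{gamma sends semistable pieces} to transport HN filtrations, invoke \cref{sigma1 HN preserving} at the untwisted vertex, and transport back. Your extra paragraph pinning down $\sigma_{\gamma^k(P_1)} = \gamma^k\sigma_{P_1}\gamma^{-k}$ via \cref{spherical twist relation}, \cref{isomorphic biadjoint}, and \cref{biadjoint shifts} makes explicit an identity the paper uses silently, but it is the same argument.
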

\begin{proof}
Let $A$ and $E_i$ be as given.
Using \cref{gamma sends semistable pieces} we know that the indecomposable semistable pieces of the HN filtration of $\gamma^{-k}(A)$ are given by $\gamma^{-k}(E_i)$.
Hence $\gamma^{-k}(A) \in [\gamma^{j-k} P_1, \gamma^{j-k} P_2]$.
We can now apply \cref{sigma1 HN preserving} to $\gamma^{-k}(A)$, which gives us
\[
\supp_{\tau_R}(\sigma_{P_1}\gamma^{-k}(A)) = \bigoplus_i \supp_{\tau_R}(\sigma_{P_1}\gamma^{-k}(E_i) ) 
\]
and $\sigma_{P_1}\gamma^{-k}(A) \in [P_1,P_2]$.
The latter statement combined with \cref{gamma loops V} tells us that $\gamma^k \sigma_{P_1}\gamma^{-k}(A) \in [\gamma^k P_1, \gamma^k P_2]$, which shows (ii).
Applying \cref{gamma sends semistable pieces} to the first statement again gives us
\begin{align*}
\supp_{\tau_R}(\gamma^k\sigma_{P_1}\gamma^{-k}(A)) 
&= \gamma^k \left( \supp_{\tau_R}(\sigma_{P_1}\gamma^{-k}(A)) \right) \\
&= \gamma^k \left( \bigoplus_i   \supp_{\tau_R}(\sigma_{P_1}\gamma^{-k}(E_i)) \right) \\
&= \bigoplus_i \gamma^k \left(   \supp_{\tau_R}(\sigma_{P_1}\gamma^{-k}(E_i)) \right) \in \< \cV_k \>^\oplus_{\tau_R} \\
&= \bigoplus_i \supp_{\tau_R}(\gamma^k\sigma_{P_1}\gamma^{-k}(E_i)).
\end{align*}
as required for (i).
\end{proof}

\subsubsection{Even $n$ case}
In this subsection fix $n$ to be even instead.
We shall construct a $\tau_R$-mass automaton $\Lambda_{\tau_R}(n)$ for $\B(I_2(n))$ as follows:
the underlying $\B(I_2(n))$-automaton $\Theta$ is defined by the quiver with $n$ vertices where half of them are labelled by $v$ and another half are labelled by $u$: $v_0, v_1, ..., v_{\frac{n}{2}-1}$ and $u_0, u_1, ..., u_{\frac{n}{2}-1}$.
All the incoming braid-labelled arrows into each $v_j$ and $u_j$ are given in \Cref{fig:mass automaton for even n V} and \Cref{fig:mass automaton for even n U} respectively.
The complete $\tau_R$-mass automaton for $n=4$ can be found in \cref{sect: example n=4}.

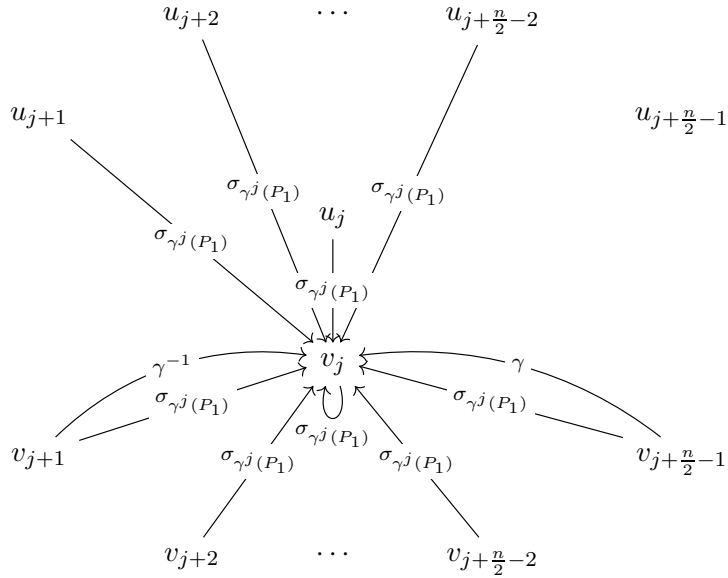
\begin{figure}[h]
\[
\begin{tikzcd}
	& {u_{j+2}} & {\cdots} & {u_{j+ \frac{n}{2}-2}} \\
	{u_{j+1}} &&&& {u_{j+\frac{n}{2}-1}} \\
	&& {u_j} \\
	\\
	&& {v_j} \\
	{v_{j+1}} &&&& {v_{j+\frac{n}{2}-1}} \\
	& {v_{j+2}} & {\cdots} & {v_{j+\frac{n}{2}-2}}
	\arrow["{\gamma^{-1}}" description, curve={height=-20pt}, from=6-1, to=5-3]
	\arrow["{\gamma}" description, curve={height=20pt}, from=6-5, to=5-3]
	\arrow["{\sigma_{\gamma^j(P_1)}}" description, from=6-1, to=5-3]
	\arrow["{\sigma_{\gamma^j(P_1)}}" description, from=6-5, to=5-3]
	\arrow["{\sigma_{\gamma^j(P_1)}}" description, from=7-2, to=5-3]
	\arrow["{\sigma_{\gamma^j(P_1)}}" description, from=7-4, to=5-3]
	\arrow["{\sigma_{\gamma^j(P_1)}}" description, from=2-1, to=5-3]
	\arrow["{\sigma_{\gamma^j(P_1)}}" description, from=1-2, to=5-3]
	\arrow["{\sigma_{\gamma^j(P_1)}}" description, from=1-4, to=5-3]
	\arrow["{\sigma_{\gamma^j(P_1)}}" description, from=3-3, to=5-3]
	\arrow["{\sigma_{\gamma^j(P_1)}}" , from=5-3, to=5-3, loop below]
\end{tikzcd}
\]
\caption{The $I_2(n)$ root automaton for even $n$, showing all of the incoming edges of $v_j$ only. All the subscripts of the vertices are taken modulo $n$. Note that there is indeed no arrow with the label $\sigma_{\gamma^j(P_1)}$ from $u_{j+\frac{n}{2}-1}$ to $v_j$.}
\label{fig:mass automaton for even n V}
\end{figure}

\begin{figure}[h]
\[
\begin{tikzcd}
	& {v_{j+2}} & {\cdots} & {v_{j+\frac{n}{2}-2}} \\
	{v_{j+1}} &&&& {v_{j+\frac{n}{2}-1}} \\
	&& {v_j} \\
	\\
	&& {u_j} \\
	{u_{j+1}} &&&& {u_{j+\frac{n}{2}-1}} \\
	& {u_{j+2}} & {\cdots} & {u_{j+\frac{n}{2}-2}}
	\arrow["{\gamma^{-1}}" description, curve={height=-20pt}, from=6-1, to=5-3]
	\arrow["{\gamma}" description, curve={height=20pt}, from=6-5, to=5-3]
	\arrow["{\sigma_{\gamma^j(P_2)}}" description, from=6-1, to=5-3]
	\arrow["{\sigma_{\gamma^j(P_2)}}" description, from=6-5, to=5-3]
	\arrow["{\sigma_{\gamma^j(P_2)}}" description, from=7-2, to=5-3]
	\arrow["{\sigma_{\gamma^j(P_2)}}" description, from=7-4, to=5-3]
	\arrow["{\sigma_{\gamma^j(P_2)}}" description, from=2-5, to=5-3]
	\arrow["{\sigma_{\gamma^j(P_2)}}" description, from=2-1, to=5-3]
	\arrow["{\sigma_{\gamma^j(P_2)}}" description, from=1-2, to=5-3]
	\arrow["{\sigma_{\gamma^j(P_2)}}" description, from=1-4, to=5-3]
	\arrow["{\sigma_{\gamma^j(P_2)}}" , from=5-3, to=5-3, loop below]
\end{tikzcd}
\]
\caption{The $I_2(n)$ root automaton for even $n$, showing all of the incoming edges of $u_j$ only. All the subscripts of the vertices are taken modulo $n$. Note that there is indeed no arrow with the label $\sigma_{\gamma^j(P_2)}$ from $v_j$ to $u_j$.}
\label{fig:mass automaton for even n U}
\end{figure}

The $\Theta$-subset $\cS: \Theta \ra $ Sets of $ob(\cK)\circ \Theta$ is defined by 
\[
\cS(v_j) := [\gamma^j P_1, \gamma^j P_2], \quad \cS(u_j) = [\gamma^j P_2, \gamma^j \sigma_2 P_1], \quad (\text{see \cref{defn: bracket vertex automaton}})
\]
The $\Theta$-representation $\cM: \Theta \ra \R[e^{\pm t}]$-mod is defined as follows.
Each $\cM(v_j)$ and $\cM(u_j)$ is a rank two free module over $ \R[e^{\pm t}]$ defined by
\[
\cM(v_j) 
:= \R[e^{\pm t}]\cdot m_{\tau_R,t} (\gamma^j P_1) \oplus  \R[e^{\pm t}]\cdot m_{\tau_R,t} (\gamma^j P_2)
\]
and
\[
\cM(u_j) 
:= \R[e^{\pm t}] \cdot m_{\tau_R,t} (\gamma^j P_2) \oplus \R[e^{\pm t}]\cdot m_{\tau_R,t} (\gamma^j \sigma_2 P_1).
\]
For each arrow $a : v_j \ra v_k$ labelled by $\cS(a)$, $\cM(a)$ is the linear map uniquely defined on the basis elements $m_{\tau_R,t} (B)$ by:
\[
m_{\tau_R,t} (B) \mapsto m_{\tau_R,t} \left(\cS(a)(B) \right).
\]
Once again elements in $\cM(v_j)$ and $\cM(u_j)$ are sometimes written as column vectors and $\cM(a)$ for each arrow $a$ is sometimes written as a two by two matrix.
The natural transformation $\iota: \cS \ra \cM$ is given by 
\[
\iota_w (X) := m_{\tau_R,t} (X)
\]
for all vertices $w$ in $\Theta$ and all $X \in \cS(v)$.
For each vertex $v_j$ the map $\mathfrak{m}_{v_j}: \cM(v_j) \hookrightarrow \R^\R$ is again the standard inclusion.
\comment{
defined by
\[
\begin{bmatrix}
a_1 \\
a_2
\end{bmatrix} \mapsto 
a_1 m_{\tau_R, t} (\gamma^j P_1) + a_2 m_{\tau_R, t} (\gamma^j P_2).
\]
and similarly for vertex $u_j$, $\mathfrak{m}_{u_j}: \cM(u_j) \ra \R^\R$ is defined by
\[
\begin{bmatrix}
a_1 \\
a_2
\end{bmatrix} \mapsto 
a_1 m_{\tau_R, t} (\gamma^j P_2) + a_2 m_{\tau_R, t} (\gamma^j \sigma_2 P_1).
\]
}
One can make a similar checklist as in the odd $n$ case to make sure that these define a $\tau_R$-mass automaton.
The required (similar) results occupy the rest of this subsection.

\begin{proposition}[compare w. \cref{gamma loops V}] \label{gamma loops V even}
Suppose $A \in \cS(v_j) = [\gamma^jP_1, \gamma^jP_2]$. Then 
\[
\gamma^{\pm 1}(A) \in \cS(v_{(j \pm 1 \mod \frac{n}{2})}).
\]
Suppose $A \in \cS(u_j) = [\gamma^jP_2, \gamma^j\sigma_2 P_1]$. Then 
\[
\gamma^{\pm 1}(A) \in \cS(u_{(j \pm 1 \mod \frac{n}{2})}).
\]
\end{proposition}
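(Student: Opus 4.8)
\textbf{Proof proposal for \cref{gamma loops V even}.}

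The plan is to mimic the strategy used for the odd case in \cref{gamma loops V}, namely to reduce everything to \cref{gamma sends semistable pieces} once we know how $\gamma$ permutes the relevant pairs of $\tau_R$-semistable generators. The key observation is that for even $n$ the centre is generated by $\chi = \gamma^{n/2}$, and by repeatedly applying \cref{lemma for braid relation} one computes
\[
\gamma^{\frac{n}{2}}(P_i) = P_i \otimes \Pi_{n-2}\<n\>[n-1]
\]
for both $i = 1,2$, from which it follows (using that $\gamma$ commutes with $-\otimes\Pi_a$, $\<-\>$ and $[-]$) that $\gamma^{n/2}$ also sends $\sigma_2(P_1)$ to $\sigma_2(P_1)\otimes\Pi_{n-2}\<n\>[n-1]$. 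In particular, tensoring by $\Pi_{n-2}$ does not change the phase of a semistable object (since $PF\dim(\Pi_{n-2})=1$ and $\tau_R$ respects $\TLJ_n$), so $\gamma^{n/2}$ acts on each $\tau_R$-semistable object in $\bigoplus_{S\in\mathfrak{R}^+_n}\<S\>^\oplus_{\tau_R}$ merely by a shift $\<n\>[n-1]$ and therefore preserves all the subcategories $\<S\>^\oplus_{\tau_R}$.

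First I would verify that the vertex-labelling subsets $\cS(v_j)$ and $\cS(u_j)$ are genuinely indexed by $\Z/\tfrac{n}{2}$, i.e.\ that $\cS(v_{j+\frac{n}{2}}) = \cS(v_j)$ and $\cS(u_{j+\frac{n}{2}}) = \cS(u_j)$ as subsets of $ob(\cK)$. This is immediate from the previous paragraph: $\gamma^{n/2}(P_i)$ and $\gamma^{n/2}\sigma_2(P_1)$ differ from $P_i$ and $\sigma_2(P_1)$ only by the autoequivalence $-\otimes\Pi_{n-2}\<n\>[n-1]$, and by the closure properties in \cref{defn: notation for generating category} the categories $\<\gamma^{j+\frac{n}{2}} P_i\>^\oplus_{\tau_R}$ and $\<\gamma^j P_i\>^\oplus_{\tau_R}$ coincide (likewise for $\sigma_2 P_1$); hence the conditions ``$\supp_{\tau_R}(A)\in ob(\<\{\cdot,\cdot\}\>^\oplus_{\tau_R})$'' defining $\cS(v_j)$ and $\cS(v_{j+n/2})$ are the same. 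So the subscripts-mod-$\tfrac{n}{2}$ bookkeeping in the statement makes sense.

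Next, given $A\in\cS(v_j)=[\gamma^jP_1,\gamma^jP_2]$, by definition $\supp_{\tau_R}(A)$ lies in $\<\{\gamma^jP_1,\gamma^jP_2\}\>^\oplus_{\tau_R}$, so its indecomposable $\tau_R$-HN semistable pieces $E_i$ are objects of $\bigoplus_{S\in\mathfrak{R}^+_n}\<S\>^\oplus_{\tau_R}$ of the form $\gamma^j(P_1\otimes\Pi_a\<k\>[\ell])$ or $\gamma^j(P_2\otimes\Pi_a\<k\>[\ell])$. Applying \cref{gamma sends semistable pieces} (whose hypothesis $\supp_{\tau_R}(A)\in ob(\bigoplus_{S}\<S\>^\oplus_{\tau_R})$ is exactly satisfied), the $\tau_R$-HN filtration of $\gamma^{\pm1}(A)$ has semistable pieces $\gamma^{\pm1}(E_i)$, which are of the form $\gamma^{j\pm1}(P_i\otimes\Pi_a\<k\>[\ell])$; hence $\supp_{\tau_R}(\gamma^{\pm1}(A))\in\<\{\gamma^{j\pm1}P_1,\gamma^{j\pm1}P_2\}\>^\oplus_{\tau_R}$, i.e.\ $\gamma^{\pm1}(A)\in\cS(v_{(j\pm1\bmod n/2)})$. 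The argument for $A\in\cS(u_j)=[\gamma^jP_2,\gamma^j\sigma_2P_1]$ is identical, using that the HN pieces of $A$ are now shifts-and-tensors of $\gamma^j(P_2)$ or $\gamma^j\sigma_2(P_1)$ — both of which are objects of $\bigoplus_{S}\<S\>^\oplus_{\tau_R}$ (they are among the generators $\mathfrak{R}^+_n$, up to the $\TLJ_n$-action and the two shifts), so \cref{gamma sends semistable pieces} applies verbatim. The only point requiring a little care — and the main (mild) obstacle — is confirming that the hypothesis of \cref{gamma sends semistable pieces}, namely that the support lands in $ob(\bigoplus_{S\in\mathfrak{R}^+_n}\<S\>^\oplus_{\tau_R})$ rather than merely being semistable, is stable under the indexing shift; but this is precisely what the first two paragraphs establish via the explicit formula for $\gamma^{n/2}(P_i)$.
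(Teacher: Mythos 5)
Your proof is correct and takes essentially the same approach as the paper's proof of the odd-case analogue \cref{gamma loops V}: compute the effect of the "return" power of $\gamma$ on $P_i$ (here $\gamma^{n/2}(P_i) = P_i \otimes \Pi_{n-2}\<n\>[n-1]$, which is recorded in the paper in the proof of \cref{entropy reducible}), note this preserves each $\<S\>^\oplus_{\tau_R}$ by the closure properties of \cref{defn: notation for generating category}, and then invoke \cref{gamma sends semistable pieces}. The paper leaves this case as "compare with \cref{gamma loops V}," and your argument is exactly the expected adaptation.
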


\begin{lemma}[compare w. \cref{lemma: phases of object odd}] \label{lemma: phases of object even sigma1}
For $n$ even, we have
\begin{align*}
\sigma_1(\gamma^j(P_1) \otimes \Pi_a) 
&\in
	\begin{cases}
	\cP(-1), &\text{ for } j = 0; \\
	\cP([-1, -\frac{1}{n}]), &\text{ for } 1 \leq j \leq \frac{n}{2}-1 \\
	\end{cases}
\\
\sigma_1(\gamma^j(P_2) \otimes \Pi_a)
&\in 
	\begin{cases}
	\cP([0, \frac{n-1}{n}]), &\text{ for } 0 \leq j \leq \frac{n}{2}-2; \\
	\cP(\frac{n-1}{n}), &\text{ for } j = \frac{n}{2}-1; \\
	\end{cases}
\\
\sigma_1(\gamma^j\sigma_2(P_1) \otimes \Pi_a)
&\in 
	\begin{cases}
	\cP([0, \frac{n-1}{n}]), &\text{ for } 0 \leq j \leq \frac{n}{2}-2; \\
	\cP(-1), &\text{ for } j = \frac{n}{2}-1; \\
	\end{cases}
\end{align*}
\end{lemma}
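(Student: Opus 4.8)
The plan is to follow the proof of \cref{lemma: phases of object odd} essentially verbatim, with the odd-$n$ bookkeeping replaced by the even-$n$ one. The first step is a reduction to $a=0$: since $\sigma_1$ is given by tensoring over $\I$ with a complex of $\I$-bimodules in $\mathbb{U}_n$, it commutes with $-\otimes\Pi_a$, so $\sigma_1(X\otimes\Pi_a)=\sigma_1(X)\otimes\Pi_a$; and since $\tau_R$ is a $q$-stability condition respecting $\TLJ_n$ we have $\cP(\phi)\otimes\Pi_a\subseteq\cP(\phi)$, so tensoring the $\tau_R$-HN filtration of $\sigma_1(X)$ with $\Pi_a$ yields the $\tau_R$-HN filtration of $\sigma_1(X)\otimes\Pi_a$ with all phases unchanged. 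Hence the three asserted containments are independent of $a$, and it suffices to treat $\sigma_1(\gamma^j(P_1))$, $\sigma_1(\gamma^j(P_2))$ and $\sigma_1(\gamma^j\sigma_2(P_1))$.

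The second step is to produce closed-form expressions for these three objects as minimal linear complexes, in direct analogy with \eqref{eqn: sigma1 on V_j generator 1}--\eqref{eqn: sigma1 on V_j generator 2}. Using $\sigma_1\gamma^j=(\sigma_1\sigma_2)^j\sigma_1$ (and $\sigma_1\gamma^j\sigma_2=(\sigma_1\sigma_2)^{j+1}$ for the third family) together with the base computations $\sigma_1(P_1)\cong P_1\langle2\rangle[1]$ and $\sigma_1(P_2)\cong (P_1\otimes\Pi_1\langle1\rangle[1]\to P_2)$ (immediate from the definitions; cf.\ \eqref{eqn: sigma1 on V_j generator 1}--\eqref{eqn: sigma1 on V_j generator 2} at $j=0$), one applies \cref{lemma for braid relation} repeatedly, alternating the index $i=1,2$; the third family's computation can be shortened using the symmetry of the construction that interchanges the roles of $1$ and $2$. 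In the generic ranges ($1\le j\le\frac{n}{2}-1$ for the first family, $0\le j\le\frac{n}{2}-2$ for the other two) each application keeps the object a two-term linear complex $(P_s\otimes\Pi_b\langle k\rangle[\ell]\to P_t\otimes\Pi_{b'}\langle k'\rangle[\ell'])$ whose two terms are cohomological/internal shifts and $\TLJ_n$-tensors of objects in $\mathfrak{R}^+_n$, hence $\tau_R$-semistable; their phases are then determined by \cref{gamma phase reducing} in its even-$n$ form (with $\mathfrak{R}^+_n$ as in \eqref{semi-stable forms even}) together with $\cP(\phi)\otimes\Pi_a\subseteq\cP(\phi)$. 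In the degenerate cases — $j=0$ for the first family, where $\sigma_1(P_1)=P_1\langle2\rangle[1]$ is a mere shift of the $\langle2\rangle$-sphere $P_1$, and $j=\frac{n}{2}-1$ for the other two, where the $a=n-2$ branch of \cref{lemma for braid relation} collapses the complex — the object is a single $\tau_R$-semistable object, whose phase is read off using the identification $\gamma^{n/2}(P_i)\cong P_i\otimes\Pi_{n-2}\langle n\rangle[n-1]$ (another iteration of \cref{lemma for braid relation}) and \cref{gamma phase reducing}; this is why those rows of the statement are single phases rather than intervals.

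The last step, exactly as in the proof of \cref{lemma: phases of object odd}, is to read off the $\tau_R$-HN filtration from each explicit form: a minimal two-term linear complex of $\tau_R$-semistable objects is either itself $\tau_R$-semistable (as with $\sigma_1(P_2)$, which is a positive lift of a positive root and so lies in $\mathfrak{R}^+_n$) or decomposes into precisely two HN pieces given by its two terms, and in either case the phases occurring are the ones already computed; comparing these with the stated intervals (phases in $[-1,-\frac{1}{n}]$ for the first family's generic range, in $[0,\frac{n-1}{n}]$ for the other two) completes the proof. I expect essentially all of the work — and the only realistic source of error — to be the iteration of \cref{lemma for braid relation} through the even-$n$ index ranges, in particular the transition at $j=\frac{n}{2}-1$; everything else is formal.
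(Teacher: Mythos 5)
Your proposal takes essentially the same route as the paper: iterate \cref{lemma for braid relation} to obtain explicit minimal linear complexes for $\sigma_1(\gamma^j(P_1))$, $\sigma_1(\gamma^j(P_2))$, and $\sigma_1(\gamma^j\sigma_2(P_1))$, reduce to $a=0$ using that $\sigma_1$ commutes with $-\otimes\Pi_a$ and that $\tau_R$ respects $\TLJ_n$, and then read the $\tau_R$-HN filtration off the two terms of each complex. This is exactly what the paper does.

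One slip worth flagging, though it does not affect the argument: your parenthetical claim that $\sigma_1(P_2)$ is $\tau_R$-semistable and lies in $\mathfrak{R}^+_n$ is false. You yourself identify $\sigma_1(P_2)\cong\bigl(P_1\otimes\Pi_1\langle1\rangle[1]\to P_2\bigr)$ earlier in the proposal; the subcomplex $P_2$ and the quotient $P_1\otimes\Pi_1\langle1\rangle[1]$ are its HN pieces, with distinct phases $\tfrac{n-1}{n}$ and $0$, so it is not semistable. (While $s_1(\alpha_2)$ is indeed a positive root, its positive lift in $\mathfrak{R}^+_n$ is $\gamma^{\frac{n}{2}-1}(P_2)$, not $\sigma_1(P_2)$; the two have the same class in $K_0$ but are not isomorphic.) In fact all two-term complexes arising in this lemma fall into your second alternative, with HN pieces given exactly by the two terms, so the conclusion you draw is still correct. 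The genuinely semistable outputs occur only in the degenerate cases $j=0$ (first family) and $j=\tfrac{n}{2}-1$ (second and third families), where the complex collapses to a single term, as you correctly note.
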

\begin{proof}
One can compute the following using \cref{lemma for braid relation}:
\begin{equation}
\begin{split}
&\sigma_1(\gamma^j(P_1)) \\
&\cong 
	\begin{cases}
	P_1 \<2\>[1], & j = 0; \\
	P_1 \otimes \Pi_{2j} \<2j+2\>[2j+1] \ra P_2 \otimes \Pi_{2j-1} \<2j+1\>[2j], & 1 \leq j \leq \frac{n}{2} -1
;	
	\end{cases}
\end{split}
\end{equation},
\begin{equation}
\begin{split}
&\sigma_1(\gamma^j(P_2)) \\
&\cong 
	\begin{cases}
	P_1 \otimes \Pi_{2j+1} \<2j+1\>[2j+1] \ra P_2 \otimes \Pi_{2j} \<2j\>[2j], & 0 \leq j \leq \frac{n}{2}-2; \\
	P_2 \otimes \Pi_{n-2} \<n-2\>[n-2],  &j = \frac{n}{2}-1.
	\end{cases}
\end{split}
\end{equation}
and
\begin{equation}
\begin{split}
&\sigma_1(\gamma^j\sigma_2(P_1)) \\
&\cong 
	\begin{cases}
	P_1 \otimes \Pi_{2j+2} \<2j+2\>[2j+2] \ra P_2 \otimes \Pi_{2j+1} \<2j+1\>[2j+1], & 0 \leq j \leq \frac{n}{2}-2; \\
	P_1 \otimes \Pi_{n-2} \<n\>[n-1],  &j = \frac{n}{2}-1.
	\end{cases}
\end{split}
\end{equation}
The result now follows as in  \cref{lemma: phases of object odd}.
\end{proof}

\begin{lemma}[compare w. \cref{lemma: phases of object odd}] \label{lemma: phases of object even sigma2}
For $n$ even, we have
\begin{align*}
\sigma_2(\gamma^j(P_1) \otimes \Pi_a) 
&\in
	\begin{cases}
	\cP(1-\frac{2}{n}), &\text{ for } j = 0; \\
	\cP([-1 - \frac{1}{n}, -\frac{2j}{n}]), &\text{ for } 1 \leq j \leq \frac{n}{2}-1 \\
	\end{cases}
\\
\sigma_2(\gamma^j(P_2) \otimes \Pi_a)
&\in 
	\begin{cases}
	\cP(-\frac{1}{n}), &\text{ for } j = 0; \\
	\cP([- \frac{1}{n}, 1-\frac{1+2j}{n}]), &\text{ for } 1 \leq j \leq \frac{n}{2}-1 \\
	\end{cases}
\\
\sigma_2(\gamma^j\sigma_2(P_1) \otimes \Pi_a)
&\in 
	\begin{cases}
	\cP([-\frac{1}{n}, 1- \frac{2j+2}{n}]), &\text{ for } 0 \leq j \leq \frac{n}{2}-2; \\
	\cP(1-\frac{2}{n}), &\text{ for } j = \frac{n}{2}-1; \\
	\end{cases}
\end{align*}
\end{lemma}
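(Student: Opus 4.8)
The plan is to prove this exactly as \cref{lemma: phases of object even sigma1} was proved: first produce explicit descriptions, up to internal and cohomological shifts, of the complexes
$\sigma_2(\gamma^j(P_1))$, $\sigma_2(\gamma^j(P_2))$ and $\sigma_2(\gamma^j\sigma_2(P_1))$ in $\cK$, and then read the phase intervals off those descriptions. Since $\gamma^j(P_1)$, $\gamma^j(P_2)$ and $\gamma^j\sigma_2(P_1)$ are already built from $P_1$ and $P_2$ by alternating spherical twists (this is how the lists \cref{semi-stable forms even} were obtained), each of these three objects has a known explicit form as either a single object $P_i\otimes\Pi_a\langle k\rangle[\ell]$ or a two‑term minimal complex $P_{i'}\otimes\Pi_{a'}\langle k'\rangle[\ell'] \to P_i\otimes\Pi_a\langle k\rangle[\ell]$. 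Applying the one further twist $\sigma_2=\sigma_{P_2}$ is then one more instance of \cref{lemma for braid relation} (after rewriting the input in the orientation the lemma requires, or equivalently using the evident dual statement for $\sigma_{P_2}^{-1}\simeq\sigma'_{P_2}$ together with $\sigma_2=\gamma\,\sigma_1^{-1}$ to reduce to the formulas already recorded in the proof of \cref{lemma: phases of object even sigma1}). The outputs, for each family and each $j$ in the stated range, are complexes of the same shape: a single semistable object at the boundary values $j=0$ and $j=\tfrac n2-1$, and a two‑term minimal complex $P_1\otimes\Pi_{a}\langle k\rangle[\ell] \to P_2\otimes\Pi_{b}\langle k'\rangle[\ell']$ for the generic $j$; the shifts are pure bookkeeping, identical in spirit to \cref{eqn: sigma1 on V_j generator 1}, \cref{eqn: sigma1 on V_j generator 2} and their even‑$n$ analogues in \cref{lemma: phases of object even sigma1}.

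Once these explicit forms are in hand, the phase computation is immediate. As $Z([P_1])=1$ and $Z([P_2])=e^{i(\pi-\pi/n)}$, the objects $P_1$ and $P_2$ are $\tau_R$‑semistable of phases $0$ and $1-\tfrac1n$; hence each summand $P_i\otimes\Pi_{\bullet}\langle k\rangle[\ell]$ appearing above is $\tau_R$‑semistable (using that $\tau_R$ is a $q$‑stability condition respecting $\TLJ_n$, so $\Psi(\Pi_\bullet)$ preserves semistability and phase, cf.\ \cref{cC action preserve semistable}) of an explicitly computable phase, namely $\ell-k$ (resp.\ $1-\tfrac1n+\ell-k$). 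For a two‑term complex $C=(A\to B)$ the standard subadditivity of HN phase ranges along the defining triangle gives $\floor*{C}\geq\min(\phi_A,\phi_B)$ and $\ceil*{C}\leq\max(\phi_A,\phi_B)$, where $\phi_A,\phi_B$ are the phases of the two terms as they sit inside $C$; this already yields the claimed intervals $\cP([\,\cdot\,,\,\cdot\,])$ — no tightness of the endpoints is needed, since $\cP([\phi_1,\phi_2])$ only requires the HN semistable pieces to have phase in $[\phi_1,\phi_2]$. For the boundary values of $j$ one simply reads off a single phase, e.g.\ $\sigma_2(P_1)\in\cP(1-\tfrac2n)$, and $\sigma_2(\gamma^{\frac n2-1}\sigma_2(P_1))$ is $\sigma_2(P_1)$ tensored by $\Pi_{n-2}$ up to shift, again of phase $1-\tfrac2n$ by \cref{gamma phase reducing}. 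Finally, to pass from the bare objects to their $\Pi_a$‑twists one uses that $\Psi(\Pi_a)\cP(\phi)\subseteq\cP(\phi)$, so tensoring with $\Pi_a$ sends the HN semistable pieces of a complex to semistable objects of the same phases and leaves the phase interval unchanged.

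The main obstacle — exactly as in the odd‑$n$ case and as reflected by the length of the (commented‑out) computation there — is the careful combinatorial bookkeeping of the two gradings through the repeated applications of \cref{lemma for braid relation}: tracking $\langle k\rangle[\ell]$ correctly for each of the three families, separating out the boundary values $j=0$ and $j=\tfrac n2-1$ where the two‑term complex degenerates to a single semistable object, and invoking the even‑$n$ wrap‑around relation $\gamma^{\frac n2}(P_i)\cong P_i\otimes\Pi_{n-2}\langle n\rangle[n-1]$ (rather than the odd‑$n$ relation $\gamma^n(P_i)\cong P_i\langle 2n\rangle[2n-2]$) whenever the index wraps. None of these steps is conceptually difficult once the input complexes $\gamma^j(P_1),\gamma^j(P_2),\gamma^j\sigma_2(P_1)$ are written down, but they must be carried out case by case, which is where essentially all of the work lies.
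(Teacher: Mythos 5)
Your high-level strategy matches the paper's — compute explicit complexes, then read off phases — but there is a concrete gap in the middle that undermines the phase computation.

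The key issue is the shape of $\sigma_2(\gamma^j(P_1))$, $\sigma_2(\gamma^j(P_2))$ and $\sigma_2(\gamma^j\sigma_2(P_1))$ for generic $j$. You assert these are two-term minimal complexes of the form $P_1\otimes\Pi_a\langle\cdot\rangle[\cdot]\to P_2\otimes\Pi_b\langle\cdot\rangle[\cdot]$, ``identical in spirit'' to the $\sigma_1$-twist formulas. That is not what happens. \cref{lemma for braid relation} only applies to $\sigma_{P_i}$ acting on a two-term complex whose cohomologically-last term involves $P_i$; the complexes $\gamma^j(P_1),\gamma^j(P_2),\gamma^j\sigma_2(P_1)$ for $j\geq 1$ all end at a $P_1$-term, so for $\sigma_1$ the lemma applies and one gets the two-term answers recorded in \cref{lemma: phases of object even sigma1}, but for $\sigma_2$ it does \emph{not} apply. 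The paper instead computes the twist cone directly and obtains \emph{three}-term complexes (see the paper's proof of this lemma), e.g.\ $\sigma_2(\gamma^j(P_1))\cong P_2\otimes\Pi_{2j-1}\langle\cdot\rangle[\cdot]\to P_2\otimes\Pi_{2j-1}\langle\cdot\rangle[\cdot]\to P_1\otimes\Pi_{2j-2}\langle\cdot\rangle[\cdot]$ for $1\leq j\leq\tfrac n2-1$.

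This matters for the phase argument. The correct proof observes that these three-term complexes have their $\tau_R$-HN filtration given by exactly two semistable pieces: the object $X\in\{\gamma^j(P_1),\gamma^j(P_2),\gamma^j\sigma_2(P_1)\}$ itself (the last two terms recombine to recover $X$ via the triangle $X\to\sigma_2(X)\to P_2\otimes{}_2P\otimes_\I X[1]\to$ coming from the stupid truncation of the twist bicomplex), followed by the first term of the complex. This is a genuine extra observation; without it, the subadditivity bound $\floor{C}\geq\min\phi_i$, $\ceil{C}\leq\max\phi_i$ applied to the three individual summands gives a strictly \emph{wider} interval than the one claimed. For instance for $\sigma_2(\gamma^j(P_1))$ the three summand phases are $-1-\tfrac1n,\,-\tfrac1n,\,-1$, so plain subadditivity gives $[\,-1-\tfrac1n,\,-\tfrac1n\,]$, whereas the lemma asserts the tighter range $[\,-1-\tfrac1n,\,-\tfrac{2j}n\,]$ — and that tighter range is what is actually needed in the subsequent non-overlapping lemmas for the automaton. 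So both the claimed shape and the phase-reading step need to be replaced: you must produce the three-term complexes via the twist cone (not via \cref{lemma for braid relation}), and then identify the precise HN decomposition rather than merely bound it.
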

\begin{proof}
One can compute the following using \cref{lemma for braid relation}:
\begin{equation}
\begin{split}
&\sigma_2(\gamma^j(P_1)) \\
&\cong 
	\begin{cases}
	P_2\otimes\Pi_1 \<1\>[1] \ra P_1, & j = 0; \\
	P_2\otimes \Pi_{2j-1} \<2j+1\>[2j] \\
		\quad \ra P_2 \otimes \Pi_{2j-1} \<2j-1\>[2j-1] 
			\ra P_1 \otimes \Pi_{2j-2} \<2j-2\>[2j-2], & 1 \leq j \leq \frac{n}{2} -1
;	
	\end{cases}
\end{split}
\end{equation},
\begin{equation}
\begin{split}
&\sigma_2(\gamma^j(P_2)) \\
&\cong 
	\begin{cases}
	P_2 \<2\>[1], & j = 0; \\
	P_2 \otimes \Pi_{2j} \<2j+2\>[2j+1] \\
		\quad \ra P_2 \otimes \Pi_{2j} \<2j\>[2j] 
			\ra P_1 \otimes \Pi_{2j-1} \<2j-1\>[2j-1], & 1 \leq j \leq \frac{n}{2} -1;	
	\end{cases}
\end{split}
\end{equation}
and
\begin{equation}
\begin{split}
&\sigma_2(\gamma^j\sigma_2(P_1)) \\
&\cong 
	\begin{cases}
	P_2 \otimes \Pi_{2j+1} \<2j+3\>[2j+2] \\
		\quad \ra P_2 \otimes \Pi_{2j+1} \<2j+1\>[2j+1] 
			\ra P_1 \otimes \Pi_{2j} \<2j\>[2j], & 0 \leq j \leq \frac{n}{2} -2;	\\
	\left( P_2\otimes\Pi_1 \<1\>[1] \ra P_1 \right) \otimes \Pi_{n-2}\<n-2\>[n-2],  &j = \frac{n}{2}-1.
	\end{cases}
\end{split}
\end{equation}
The result now follows from looking at the HN filtration of the objects, where all of the three term complexes in each $\sigma_2(X)$ above have $\tau_R$-HN semistable pieces given by $X$ itself followed by the first term in the complex.
\end{proof}

\begin{lemma}[compare w. \cref{sigma1 phase non-overlapping}]
Let $\{E,E'\}$ be a pair of indecomposable objects in $\< \gamma^j P_1\>^\oplus_{\tau_R} \oplus \< \gamma^j P_2 \>^\oplus_{\tau_R}$.
\begin{itemize}
\item With respect to $\sigma_1$, one of the following must hold:
\begin{enumerate}[(1)]
\item $\phi(E) < \phi(E')$,
\item $\bigoplus_{k \in \Z} \Hom(\sigma_1(E'), \sigma_1(E)\<k\>[k][1]) \cong \bigoplus_{k \in \Z} \Hom(E', E\<k\>[k][1]) \cong 0$, or
\item $\floor{\sigma_1(E)} \geq \ceil{\sigma_1(E')}$.
\end{enumerate}
\item With respect to $\sigma_2$, one of the following must hold:
\begin{enumerate}[(1)]
\item $\phi(E) < \phi(E')$,
\item $\bigoplus_{k \in \Z} \Hom(\sigma_2(E'), \sigma_2(E)\<k\>[k][1]) \cong \bigoplus_{k \in \Z} \Hom(E', E\<k\>[k][1]) \cong 0$,
\item $\floor{\sigma_2(E)} \geq \ceil{\sigma_2(E')}$, or
\item $j = 0$.
\end{enumerate}
\end{itemize}
\end{lemma}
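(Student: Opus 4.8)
The plan is to mirror the strategy used for the odd case in Lemma~\ref{sigma1 phase non-overlapping}, but now carried out twice: once for the twist $\sigma_1$ and once for the twist $\sigma_2$, using the explicit computations recorded in Lemma~\ref{lemma: phases of object even sigma1} and Lemma~\ref{lemma: phases of object even sigma2} respectively. First I would observe, exactly as in the odd case, that all four statements in each bullet are invariant under applying a common shift $\<m\>[m']$ to both $E$ and $E'$, and that $\phi(E)$, $\floor{\sigma_i(E)}$ and the total graded $\Hom$ with respect to $\<1\>[1]$ are all insensitive to a common linear shift $\<\ell\>[\ell]$ of $E$ alone. This lets me assume without loss of generality that $E' = \gamma^j(P_i)\otimes \Pi_a$ for some $i \in \{1,2\}$, $0 \le a \le n-2$, and that $E$ ranges only over a finite list of representatives up to linear shifts.

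Next, using the fact that $\gamma$ is an autoequivalence together with the morphism-space computation in Proposition~\ref{graded hom space}, I would enumerate the (at most six) pairs $(E,E')$ for which $\bigoplus_{k\in\Z}\Hom(E',E\<k\>[k][1])$ is nonzero; for the complementary pairs condition~(2) holds and there is nothing more to check. Among the six candidates, I would determine which satisfy $\phi(E) \ge \phi(E')$ — by the usual reduction to $j=0$ and the fact that $\gamma$ shifts the phase of every semistable object in $\bigoplus_{S\in\mathfrak{R}^+_n}\<S\>^\oplus_{\tau_R}$ by the same amount (Proposition~\ref{gamma phase reducing}), this reduces to an explicit finite check at $j=0$. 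For those pairs with $\phi(E)\ge\phi(E')$, I would then read off $\floor{\sigma_i(E)}$ and $\ceil{\sigma_i(E')}$ from Lemma~\ref{lemma: phases of object even sigma1} (for $\sigma_1$) or Lemma~\ref{lemma: phases of object even sigma2} (for $\sigma_2$) and verify the inequality $\floor{\sigma_i(E)} \ge \ceil{\sigma_i(E')}$ in each case, i.e.\ condition~(3). The extra exceptional case $j=0$ in the $\sigma_2$ bullet plays the role that $j=\frac{n-1}{2}$ played for $\sigma_1$ in the odd case: there the phase intervals of $\sigma_2(\gamma^j(P_i)\otimes\Pi_a)$ degenerate or overlap in a way that cannot be separated, so it is simply excluded (and will be handled separately, as the image of the $\sigma_2$-loop at $v_0$ lands in a $u$-vertex rather than back at $v_0$).

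The main obstacle, as in the odd case, is bookkeeping rather than conceptual: Lemma~\ref{lemma: phases of object even sigma2} involves three-term complexes whose $\tau_R$-HN semistable pieces must be correctly identified, and the phase intervals $\cP([\,\cdot\,,\cdot\,])$ are tighter and $j$-dependent, so I expect the delicate point to be checking that for the pair $(E,E') = (\gamma^j(P_1)\otimes\Pi_{a\pm1},\, \gamma^j(P_2)\otimes\Pi_a)$ — or its $\sigma_2$-analogue — one really does fall into case~(1), $\phi(E)<\phi(E')$, and that no boundary phase coincidence creates a pair escaping all four alternatives when $j \ne 0$. Once the finite case analysis is tabulated for both twists, the conclusion follows; I would present it as a short proof that says "the argument is identical to that of Lemma~\ref{sigma1 phase non-overlapping}, replacing Lemma~\ref{lemma: phases of object odd} by Lemma~\ref{lemma: phases of object even sigma1} (resp.\ Lemma~\ref{lemma: phases of object even sigma2}) and the exceptional vertex $j=\frac{n-1}{2}$ by $j=0$ in the $\sigma_2$ case" and leave the remaining finite checks to the reader.
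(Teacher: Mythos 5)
Your proposal is correct and follows essentially the same route as the paper: reduce via common and linear shifts, enumerate the finitely many pairs $(E,E')$ with nonzero graded $\Hom$, isolate the three that also fail $\phi(E)<\phi(E')$, and then verify $\floor{\sigma_i(E)}\geq\ceil{\sigma_i(E')}$ from Lemma~\ref{lemma: phases of object even sigma1} (resp.\ Lemma~\ref{lemma: phases of object even sigma2}), with $j=0$ playing the role of the exceptional vertex for $\sigma_2$. The paper's actual proof is even more compressed than yours, simply asserting that the three relevant pairs are the same as in the odd case and pointing to the two phase-computation lemmas, but the underlying argument is identical.
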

\begin{proof}
The proof method is the same as in \cref{sigma1 phase non-overlapping}, where the three possible pairs of $(E,E')$ (up to linear shifts $\<\ell\>[\ell]$ of $E$) that do not satisfy (1) and (2) are also the same:
\begin{enumerate}
\item $(E,E') = (\gamma^j(P_1) \otimes \Pi_a\<-1\> , \gamma^j(P_1) \otimes \Pi_a)$;
\item $(E,E') = (\gamma^j(P_2) \otimes \Pi_a\<-1\> , \gamma^j(P_2) \otimes \Pi_a)$; and
\item $(E,E') = (\gamma^j(P_2) \otimes \Pi_{a\pm 1}, \gamma^j(P_1) \otimes \Pi_a)$.
\end{enumerate}
We now use \cref{lemma: phases of object even sigma1} to check that $\floor{\sigma_1(E)} \geq \ceil{\sigma_1(E')}$ (no condition on $j$ is required), and we use \cref{lemma: phases of object even sigma2} to check that $\floor{\sigma_2(E)} \geq \ceil{\sigma_2(E')}$ whenever $j \neq 0$.
\end{proof}

\begin{lemma}[compare w. \cref{sigma1 phase non-overlapping}]
Let $\{E,E'\}$ be a pair of indecomposable objects in $\< \gamma^j P_2\>^\oplus_{\tau_R} \oplus \< \gamma^j \sigma_2 P_1 \>^\oplus_{\tau_R}$.
\begin{itemize}
\item With respect to $\sigma_1$, one of the following must hold:
\begin{enumerate}[(1)]
\item $\phi(E) < \phi(E')$,
\item $\bigoplus_{k \in \Z} \Hom(\sigma_1(E'), \sigma_1(E)\<k\>[k][1]) \cong \bigoplus_{k \in \Z} \Hom(E', E\<k\>[k][1]) \cong 0$,
\item $\floor{\sigma_1(E)} \geq \ceil{\sigma_1(E')}$, or
\item $j = \frac{n}{2}-1$.
\end{enumerate}
\item With respect to $\sigma_2$, one of the following must hold:
\begin{enumerate}[(1)]
\item $\phi(E) < \phi(E')$,
\item $\bigoplus_{k \in \Z} \Hom(\sigma_2(E'), \sigma_2(E)\<k\>[k][1]) \cong \bigoplus_{k \in \Z} \Hom(E', E\<k\>[k][1]) \cong 0$, or
\item $\floor{\sigma_2(E)} \geq \ceil{\sigma_2(E')}$.
\end{enumerate}
\end{itemize}
\end{lemma}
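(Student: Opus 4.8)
The plan is to follow, line for line, the proof of \cref{sigma1 phase non-overlapping}. First I would record the two reduction steps used there: each of the listed alternatives for a pair $(E,E')$ is invariant under simultaneously shifting $E$ and $E'$ by a common $\langle m\rangle[m']$, and each is also invariant under a linear shift $\langle\ell\rangle[\ell]$ applied to $E$ alone (the phase, the total $\Hom$-space over $\langle 1\rangle[1]$, and $\floor{\sigma_i(E)}$ are linear-shift invariant, while $\ceil{\sigma_i(E')}$ depends only on $E'$). Hence it suffices to treat the case in which $E'$ is one of the two generators $\gamma^j(P_2)\otimes\Pi_a$ or $\gamma^j(\sigma_2 P_1)\otimes\Pi_a$ and $E$ is an arbitrary indecomposable of $\langle\gamma^j P_2\rangle^\oplus_{\tau_R}\oplus\langle\gamma^j\sigma_2 P_1\rangle^\oplus_{\tau_R}$, considered up to linear shift.

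Next, since $\gamma^j$ is an autoequivalence, I would apply \cref{graded hom space} together with the biadjunctions of \cref{biadjoint pair} --- writing $\sigma_2 P_1$, via \cref{lemma for braid relation}, as the two-term complex $P_2\otimes\Pi_1\langle 1\rangle[1]\to P_1$ and computing the relevant morphism spaces summand by summand --- to obtain the finite list of pairs $(E,E')$, up to linear shift of $E$, for which $\bigoplus_{k\in\Z}\Hom(E',E\langle k\rangle[k][1])\not\cong 0$; this is the exact analogue of the ``six pairs'' in \cref{sigma1 phase non-overlapping}. By \cref{gamma phase reducing}, $\gamma$ lowers the phase of every $\tau_R$-semistable object in $\bigoplus_{S\in\mathfrak{R}^+_n}\langle S\rangle^\oplus_{\tau_R}$ by the uniform amount $\tfrac2n$, so the comparison $\phi(E)\geq\phi(E')$ only needs to be checked for a single value of $j$, cutting the list down to a short fixed sublist of pairs with $\phi(E)\geq\phi(E')$, valid for all $j$.

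For each surviving pair I would then verify the inequality $\floor{\sigma_i(E)}\geq\ceil{\sigma_i(E')}$ directly, using the explicit phase bands of \cref{lemma: phases of object even sigma1} when $i=1$ and of \cref{lemma: phases of object even sigma2} when $i=2$. The case analysis should show that for $\sigma_2$ the inequality holds for every $j\in\{0,\dots,\tfrac n2-1\}$, which is why alternative (4) is absent from the $\sigma_2$ clause, whereas for $\sigma_1$ it fails exactly when $j=\tfrac n2-1$: there the generator $\gamma^{\frac n2-1}\sigma_2(P_1)$ is sent by $\sigma_1$ into the low band $\cP(-1)$ instead of the high band $\cP([0,\tfrac{n-1}n])$ that it occupies for smaller $j$, and this lone exceptional pair is precisely what alternative (4) absorbs.

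I expect the main obstacle to be the grading bookkeeping in the $\sigma_2$ case. Unlike $\sigma_1(\gamma^j P_1)$, the objects $\sigma_2(\gamma^j P_2)$ and $\sigma_2(\gamma^j\sigma_2 P_1)$ are \emph{three}-term complexes (see the explicit expressions in the proof of \cref{lemma: phases of object even sigma2}), so care is needed to read off their $\tau_R$-HN semistable pieces --- hence $\floor{-}$ and $\ceil{-}$ --- correctly, and one must keep track of the interplay of internal, cohomological and level gradings near the boundary value $j=\tfrac n2-1$, where \cref{lemma for braid relation} introduces the $\Pi_{n-2}$ twist. Once these explicit phase computations are assembled, the logical skeleton of the argument is identical to that of \cref{sigma1 phase non-overlapping}.
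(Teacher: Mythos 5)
Your proposal is correct and follows essentially the same route as the paper: reduce modulo the two shift-invariances, use \cref{graded hom space} and the autoequivalence $\gamma^j$ to list the surviving pairs, and then check $\floor{\sigma_i(E)}\geq\ceil{\sigma_i(E')}$ pair by pair via \cref{lemma: phases of object even sigma1} and \cref{lemma: phases of object even sigma2}, isolating the unique failure at $j=\tfrac n2-1$ for $\sigma_1$ (where $\gamma^{n/2-1}\sigma_2(P_1)$ drops to $\cP(-1)$). One small streamlining: rather than expanding $\sigma_2P_1$ as an explicit two-term complex and computing $\Hom$'s summand by summand, one can apply the autoequivalence $(\gamma^j\sigma_2)^{-1}$ to carry the pair $\{\gamma^jP_2,\gamma^j\sigma_2P_1\}$ to $\{P_2\langle-2\rangle[-1],P_1\}$, after which \cref{graded hom space} gives the list directly.
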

\begin{proof}
Once again the proof method is the same, but the three possible pairs of $(E,E')$ (up to linear shifts $\<\ell\>[\ell]$ of $E$) that do not satisfy (1) and (2) are instead given by:
\begin{enumerate}
\item $(E,E') = (\gamma^j \sigma_2(P_1) \otimes \Pi_a\<-1\> , \gamma^j\sigma_2 (P_1) \otimes \Pi_a)$;
\item $(E,E') = (\gamma^j(P_2) \otimes \Pi_a\<-1\> , \gamma^j(P_2) \otimes \Pi_a)$; and
\item $(E,E') = (\gamma^j \sigma_2 (P_1) \otimes \Pi_{a\pm 1} \<-1\>, \gamma^j(P_2) \otimes \Pi_a)$.
\end{enumerate}
We again check that $\floor{\sigma_1(E)} \geq \ceil{\sigma_1(E')}$ whenever $j \neq \frac{n}{2}-1$ using \cref{lemma: phases of object even sigma1}, whereas we use \cref{lemma: phases of object even sigma2} to check that $\floor{\sigma_2(E)} \geq \ceil{\sigma_2(E')}$ (no condition on $j$ required).
\end{proof}

\begin{proposition}[compare w. \cref{sigma1 HN preserving}]
Suppose $A \in \cS(u_j)$ for $j \neq \frac{n}{2}-1$, or $A \in \cS(v_i)$ for $i$ arbitrary, with indecomposable $\tau_R$-semistable pieces in the HN filtration given by $E_i$, i.e. $\supp_{\tau_R}(A) = \bigoplus_{i=1}^m E_i$.
Then
\begin{enumerate}[(i)]
\item $\supp_{\tau_R}(\sigma_1(A)) = \bigoplus_{i=1}^m \supp_{\tau_R}(\sigma_1(E_i))$, and
\item $\sigma_1(A) \in \cS(v_0)=[P_1, P_2]$.
\end{enumerate}
Similarly, suppose $B \in \cS(v_j)$ for $j \neq 0$, or $B \in \cS(u_i)$ for $i$ arbitrary, with indecomposable $\tau_R$-semistable pieces in the HN filtration given by $F_i$, i.e. $\supp_{\tau_R}(B) = \bigoplus_{i=1}^m F_i$.
Then
\begin{enumerate}[(i)]
\item $\supp_{\tau_R}(\sigma_2(B)) = \bigoplus_{i=1}^m \supp_{\tau_R}(\sigma_1(F_i))$, and
\item $\sigma_2(B) \in \cS(u_0)=[P_2, \sigma_2 P_1]$.
\end{enumerate}
\end{proposition}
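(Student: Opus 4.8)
The plan is to adapt, essentially verbatim, the proof of \cref{sigma1 HN preserving} from the odd-$n$ case, running it four times: for $\sigma_1$ on objects over a $v$-vertex (any $i$), for $\sigma_1$ over a $u$-vertex with $j \neq \frac{n}{2}-1$, for $\sigma_2$ over a $u$-vertex (any $i$), and for $\sigma_2$ over a $v$-vertex with $j \neq 0$. First I would reduce (ii) to (i). From the explicit forms of $\sigma_1(\gamma^j P_1)$, $\sigma_1(\gamma^j P_2)$, $\sigma_1(\gamma^j\sigma_2 P_1)$ recorded in \cref{lemma: phases of object even sigma1} one sees that each lies in $[P_1,P_2] = \cS(v_0)$, and likewise \cref{lemma: phases of object even sigma2} shows $\sigma_2(\gamma^j P_1)$, $\sigma_2(\gamma^j P_2)$, $\sigma_2(\gamma^j\sigma_2 P_1)$ all lie in $[P_2,\sigma_2 P_1] = \cS(u_0)$. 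Since every indecomposable $\tau_R$-semistable piece of $A$ (resp. $B$) has the shape $\gamma^j(Q\otimes\Pi_a\<k\>[\ell])$ with $Q \in \{P_1, P_2, \sigma_2 P_1\}$, and each braid functor commutes with $-\otimes\Pi_a$, $\<k\>$ and $[\ell]$ while these three preserve the bracket sets, part (ii) is immediate from (i).

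For (i), the argument is the one in \cref{sigma1 HN preserving}. Refine the $\tau_R$-HN filtration of $A$ into a weak HN filtration $F$ with indecomposable semistable pieces $E_1,\dots,E_m$ of non-increasing phase (cf. \cref{defn: weak HN filtration}) and apply $\sigma_1$ (resp. $\sigma_2$) to it. The key step is to check that $\sigma_1(F)$ (resp. $\sigma_2(F)$) satisfies the hypotheses of \cref{sufficient condition for geodesic}, hence is geodesic: \cref{lemma: 2CY variant} supplies the required symmetry of the connecting $\Hom$ spaces (our objects lie in the $\Aut(\cK)$-orbit of $\{P_i\otimes\Pi_a\}$), and the two phase-non-overlapping lemmas stated just above guarantee that for each consecutive pair $(E,E')$ of indecomposable pieces either $\phi(E) < \phi(E')$, or the connecting $\Hom$ spaces vanish, or $\floor{\sigma_i(E)} \geq \ceil{\sigma_i(E')}$. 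The excluded indices $j = \frac{n}{2}-1$ (for $\sigma_1$ on $u$-vertices) and $j = 0$ (for $\sigma_2$ on $v$-vertices) are exactly the cases allowed to violate this trichotomy, which is why the statement omits them. Once $\sigma_i(F)$ is known to be geodesic, \cref{cor: geodesic implies weak HN} yields a weak HN filtration of $\sigma_i(A)$ by concatenating and rearranging weak HN filtrations of the $\sigma_i(E_k)$'s, whose semistable pieces all lie in $\cS(v_0)$ (resp. $\cS(u_0)$) by the reduction above.

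Finally I would check that strictifying this weak HN filtration into the genuine HN filtration involves only forming direct sums. If $X$ and $X'$ are consecutive indecomposable semistable pieces of equal phase, they are indecomposables in $\<P_1\>^\oplus_{\tau_R}\oplus\<P_2\>^\oplus_{\tau_R}$ (resp. $\<P_2\>^\oplus_{\tau_R}\oplus\<\sigma_2 P_1\>^\oplus_{\tau_R}$), so each is of the form $Q\otimes\Pi_a\<k\>[\ell]$; equality of phases forces the same $Q$ and $\ell' - k' = \ell - k$, whereas \cref{graded hom space} shows $\Hom(X', X[1]) \neq 0$ would require $\ell' - \ell = 1$ and $k' - k \in \{0,2\}$, contradicting equal levels. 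Hence $\Hom(X', X[1]) = 0$, the connecting cone is $X \oplus X'$, and $\supp_{\tau_R}(\sigma_i(A)) = \bigoplus_k \supp_{\tau_R}(\sigma_i(E_k))$, proving (i). The only genuinely new feature over the odd case is that the $\sigma_2$-image of a generator is a three-term complex rather than a two-term one, so its HN semistable pieces must be read off from \cref{lemma: phases of object even sigma2} rather than directly; this complicates the phase estimates but not the structure of the argument.

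The main obstacle is purely computational bookkeeping: enumerating the finitely many consecutive pairs $(E,E')$ with non-vanishing connecting $\Hom$ (via \cref{graded hom space}, using that $\gamma$ is an autoequivalence shifting all HN phases uniformly by $\frac{2}{n}$, cf. \cref{gamma phase reducing}) and verifying $\floor{\sigma_i(E)} \geq \ceil{\sigma_i(E')}$ for each from the explicit phase data, where the pitfalls are the extra term in the $\sigma_2$-images and the boundary wraparound $\gamma^{n/2}(P_i) \cong P_i\otimes\Pi_{n-2}\<n\>[n-1]$ at the top indices. Conceptually everything is forced once the lemmas already in place are invoked.
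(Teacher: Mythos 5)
Your proof is correct and follows exactly the route the paper takes: it is the proof of \cref{sigma1 HN preserving} run again with the odd-$n$ ingredients replaced by their even-$n$ counterparts (\cref{lemma: phases of object even sigma1} and \cref{lemma: phases of object even sigma2} in place of \cref{lemma: phases of object odd}, and the two even-$n$ phase-non-overlapping lemmas in place of \cref{sigma1 phase non-overlapping}), organised around \cref{lemma: 2CY variant} and \cref{sufficient condition for geodesic}, with the strictification-by-direct-sums check handled via \cref{graded hom space}. The paper's own proof is a two-sentence sketch that points back to \cref{sigma1 HN preserving} and flags the one genuine novelty — that for the $\sigma_2$-cases the indecomposable semistable pieces now live in $\<P_2\>^\oplus_{\tau_R}\oplus\<\sigma_2 P_1\>^\oplus_{\tau_R}$ — and you identify and handle exactly that same point.
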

\begin{proof}
The proof is essentially the same for the first part  (which involves $\sigma_1$) using the appropriate corresponding lemma used in the proof of \cref{sigma1 HN preserving}.

For the second part (which involves $\sigma_2$), the only difference is that the indecomposable semistable objects $X_j$'s in the acquired weak HN filtration of $\sigma_2(A)$ are now indecomposable objects in $\<P_2\>^\oplus_{\tau_R} \oplus \<\sigma_2 P_1\>^\oplus_{\tau_R}$ instead.
But checking the fact that they still satisfy
\[
\phi(X_{i}) = \phi(X_{i+1}) \implies \Hom(X_{i+1}, X_{i}[1]) = 0
\]
follows from a similar proof used in \cref{sigma1 HN preserving}.
\end{proof}

\begin{corollary}[compare w. \cref{cor: conditions of automaton odd}]
Suppose $A \in \cS(u_j)$ for $j \neq k+ \frac{n}{2}-1$ or $A\in \cS(v_i)$ for $i$ arbitrary, with indecomposable semistable pieces in the HN filtration given by $E_i$, i.e. $\supp_{\tau_R}(A) = \bigoplus_{i=1}^m E_i$.
Then
\begin{enumerate}[(i)]
\item $\supp_{\tau_R}(\sigma_{\gamma^k(P_1)}(A)) = \bigoplus_{i=1}^m \supp_{\tau_R}(\sigma_{\gamma^k(P_1)}(E_i))$, and
\item $\sigma_{\gamma^k(P_1)}(A) \in [\gamma^k P_1, \gamma^k P_2]$.
\end{enumerate}
Suppose $B \in \cS(v_j)$ for $j \neq k$ or $B\in \cS(u_i)$ for $i$ arbitrary, with indecomposable semistable pieces in the HN filtration given by $F_i$, i.e. $\supp_{\tau_R}(B) = \bigoplus_{i=1}^m F_i$.
Then
\begin{enumerate}[(i)]
\item $\supp_{\tau_R}(\sigma_{\gamma^k(P_2)}(B)) = \bigoplus_{i=1}^m \supp_{\tau_R}(\sigma_{\gamma^k(P_2)}(F_i))$, and
\item $\sigma_{\gamma^k(P_2)}(B) \in [\gamma^k P_2, \gamma^k \sigma_2 P_1]$.
\end{enumerate}
\end{corollary}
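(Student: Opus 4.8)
The plan is to mimic the proof of \cref{cor: conditions of automaton odd}: reduce both parts to the corresponding statements at the base vertices $v_0$ and $u_0$ by conjugating with $\gamma^k$. Recall first that $\sigma_{\gamma^k(P_i)}$, viewed as an endofunctor of $\cK$, is isomorphic to $\gamma^k \circ \sigma_i \circ \gamma^{-k}$: writing $\gamma$ also for the complex of bimodules $\sigma_{P_2} \otimes_\I \sigma_{P_1}$ that induces it, we have $\gamma^k(P_i) = \gamma^k \otimes_\I P_i$, so \cref{spherical twist relation} gives $\sigma_{\gamma^k \otimes_\I P_i} \cong \gamma^k \otimes_\I \sigma_{P_i} \otimes_\I \gamma^{-k}$.

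For part 1, let $A \in \cS(u_j)$ with $j \neq k + \frac{n}{2}-1$ (the case $A \in \cS(v_i)$ with $i$ arbitrary is handled identically) and write $\supp_{\tau_R}(A) = \bigoplus_{i=1}^m E_i$ with the $E_i$ the indecomposable $\tau_R$-semistable pieces. By \cref{gamma sends semistable pieces}, applying $\gamma^{-k}$ to the $\tau_R$-HN filtration of $A$ yields the $\tau_R$-HN filtration of $\gamma^{-k}(A)$, so its indecomposable semistable pieces are the $\gamma^{-k}(E_i)$; and by \cref{gamma loops V even} we have $\gamma^{-k}(A) \in \cS(u_{j-k})$ with $j - k \neq \frac{n}{2}-1$ (indices read modulo $\frac{n}{2}$). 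Hence the $\sigma_1$-part of the preceding even-$n$ proposition (comparing with \cref{sigma1 HN preserving}) applies to $\gamma^{-k}(A)$, giving
\[
\supp_{\tau_R}\bigl(\sigma_1 \gamma^{-k}(A)\bigr) = \bigoplus_{i=1}^m \supp_{\tau_R}\bigl(\sigma_1 \gamma^{-k}(E_i)\bigr), \qquad \sigma_1 \gamma^{-k}(A) \in \cS(v_0) = [P_1, P_2].
\]
Applying $\gamma^k$ and invoking \cref{gamma loops V even} shows $\sigma_{\gamma^k(P_1)}(A) \cong \gamma^k \sigma_1 \gamma^{-k}(A) \in \cS(v_k) = [\gamma^k P_1, \gamma^k P_2]$, which is (ii); and applying \cref{gamma sends semistable pieces} once more — so that $\supp_{\tau_R}(\gamma^k(-)) = \gamma^k(\supp_{\tau_R}(-))$ — together with additivity of $\supp_{\tau_R}$ turns the displayed support identity into (i).

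Part 2 is proved the same way, with $\sigma_1$ replaced by $\sigma_2$, the base vertex $v_0$ replaced by $u_0$, the base index restriction ``$\neq \frac{n}{2}-1$'' replaced by ``$\neq 0$'', and the $\sigma_1$-part of the preceding even-$n$ proposition replaced by its $\sigma_2$-part — which applies to $\gamma^{-k}(B) \in \cS(v_{j-k})$ precisely because $j \neq k$ forces $j-k \neq 0$. There is no genuine obstacle here: all the substantive work already lives in \cref{gamma sends semistable pieces}, \cref{gamma loops V even} and the even-$n$ HN-preservation propositions, and this corollary is just the ``conjugate back by $\gamma^k$'' bookkeeping. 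The only point needing a little care is tracking the vertex indices modulo $\frac{n}{2}$ through the $\gamma^{\mp k}$ twists so that the hypotheses of the base-case propositions are met — which is exactly what the conditions $j \neq k + \frac{n}{2}-1$ and $j \neq k$ are engineered to guarantee.
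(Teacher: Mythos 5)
Your proof is correct and is exactly the argument the paper has in mind: the paper's own proof is simply the one-line remark "The proof is essentially the same as in Corollary \ref{cor: conditions of automaton odd}," and unfolding that reference yields precisely your conjugate-by-$\gamma^{\pm k}$ bookkeeping, with \cref{gamma sends semistable pieces}, \cref{gamma loops V even} and the even-$n$ HN-preservation proposition doing the real work. Your index tracking (the conditions $j \neq k + \tfrac{n}{2}-1$ and $j \neq k$ translating under $\gamma^{-k}$ to $\neq \tfrac{n}{2}-1$ and $\neq 0$ at the base vertices) is exactly right.
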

\begin{proof}
The proof is essentially the same as in \cref{cor: conditions of automaton odd}.
\end{proof}

\comment{
\begin{definition}
For each $j \neq k + \frac{n-1}{2}$, define a linear transformation $M_{\sigma_{\gamma^k(P_1)}}^{v_j \ra v_k} : \R[e^{\pm t}]^{\{\gamma^j P_1, \gamma^j P_2\}} \ra \R[e^{\pm t}]^{\{\gamma^k P_1, \gamma^k P_2\}}$ by the matrix
\[
M_{\sigma_{\gamma^k(P_1)}}^{v_j \ra v_k} = 
	\begin{bmatrix}
	i_{[\gamma^k P_1, \gamma^k P_2]}(\sigma_{\gamma^k(P_1)}(\gamma^jP_1)) & i_{[\gamma^k P_1, \gamma^k P_2]}(\sigma_{\gamma^k(P_1)}(\gamma^jP_2))
	\end{bmatrix}
\]
\end{definition}

\begin{corollary}
For each $j \neq k + \frac{n-1}{2}$, we have that for all $A \in [\gamma^j P_1, \gamma^j P_2]$,
\[
M_{\sigma_{\gamma^k(P_1)}}^{v_j \ra v_k} i_{[\gamma^j P_1, \gamma^j P_2]} (A) = 
i_{[\gamma^k P_1, \gamma^k P_2]}(\sigma_{\gamma^k(P_1)}(A))
\]
\end{corollary}

We construct an automaton $\Lambda_n(\tau_R)$ (for $n$ odd) as follows.
\begin{enumerate}
\item There are a total of $n$ vertices, labelled by $\cV_j$ for $0 \leq j \leq n-1$, which are finitely free subsets of semistable objects over $K_0(\gTLJ_n)[q,q^{-1}, s,s^{-1}]$.
\item For each $j$, the labelled arrows ending in $\cV_j$ are as in \cref{fig:HN automaton for odd n}.
In particular, the set of letters $\Sigma$ in the whole automaton is
\[
\Sigma=
\{
\gamma, \gamma^{-1}, \sigma_{P_1}, \sigma_{\gamma(P_1)}, ..., \sigma_{\gamma^{n-1}(P_1)}
\}.
\]
\end{enumerate}

\begin{corollary} \label{odd HN automaton}
The $I_2(n)$ root automaton $\Lambda_n(\tau_R)$ for odd $n$ is a $\tau_R$-HN automaton.
\end{corollary}
\begin{proof}
By definition, all the vertices are labelled by finitely free subsets of semistable objects over $K_0(\gTLJ_n)[q,q^{-1}, s,s^{-1}]$.

The conditions (i) and (ii) for the arrows labelled by $\gamma$ and $\gamma^{-1}$ are easily checked.
For arrows labelled by $\sigma_{\gamma^k(P_1)}$, \cref{cor: conditions of HN automaton odd} shows directly that condition (i) is satisfied.
For condition (ii), just apply the definition of HN multiplicity vector to the equation
\[
\supp_{\tau_R}(\sigma_{\gamma^k(P_1)}(A)) = \bigoplus_i \supp_{\tau_R}(\sigma_{\gamma^k(P_1)}(S_i)).
\]

We have shown that all the edges labelled with $\gamma$, $\gamma^{-1}$ and $\sigma_{P_1} = \sigma_1$ satisfies the required properties of a $\tau_R$-HN automaton (cf. \cref{gamma loops V} and \cref{sigma1 HN preserving}).
So we are left with the edges labelled by the braids 
\[
\sigma_{\gamma^k(P_1)} = \gamma^k\sigma_{P_1}\gamma^{-k}
\]
for $0 \leq k \leq n-1$.

Let $A$ be an object with $\supp(A) \in \<\cV_j\>^\oplus_{\tau_R}$ for $j \neq k + \frac{n-1}{2} \mod n$ and let $E_i$ be the $\tau_R$-semistable pieces of $A$.
Using \cref{gamma sends semistable pieces} we know that the semistable pieces of the HN filtration of $\gamma^{-k}(A)$ are given by $\gamma^{-k}(E_i)$.
By \cref{sigma1 HN preserving}, we have that
\[
\supp_{\tau_R}(\sigma_{P_1}\gamma^{-k}(A)) = \bigoplus_i \supp_{\tau_R}(\sigma_{P_1}\gamma^{-k}(E_i) ) \in \< \cV_0 \>^\oplus_{\tau_R}.
\]
Applying \cref{gamma sends semistable pieces} again, we obtain
\begin{align*}
\supp_{\tau_R}(\gamma^k\sigma_{P_1}\gamma^{-k}(A)) 
&= \gamma^k \left( \supp_{\tau_R}(\sigma_{P_1}\gamma^{-k}(A)) \right) \\
&= \gamma^k \left( \bigoplus_i   \supp_{\tau_R}(\sigma_{P_1}\gamma^{-k}(E_i)) \right) \\
&= \bigoplus_i \gamma^k \left(   \supp_{\tau_R}(\sigma_{P_1}\gamma^{-k}(E_i)) \right) \in \< \cV_k \>^\oplus_{\tau_R}.
\end{align*}
This shows condition (i) of a HN automaton.

For condition (ii), we applying \cref{gamma sends semistable pieces} to the equation above again, which gives us
\[
\supp_{\tau_R}(\gamma^k\sigma_{P_1}\gamma^{-k}(A)) 
= \bigoplus_i \supp_{\tau_R}(\gamma^k\sigma_{P_1}\gamma^{-k}(E_i)).
\]
The required statement follows directly from the definition of $HN_{\tau_R}$.

\begin{enumerate}[(i)]
\item $
\supp(\sigma_{\gamma^k(P_1)}(A)) \in ob(\<\cV_k\>^\oplus_\tau),
$
and
\item $HN^{\cV_k}_\tau(\sigma_{\gamma^k(P_1)}(A)) = M^{\cV_j \ra \cV_k}_{\sigma_{\gamma^k(P_1)}} (HN^{\cV_j}_\tau(A))$.
\end{enumerate}
\[
HN_{\tau_R}(\gamma^j\sigma_1\gamma^{-j}(A)) = HN_{\tau_R}(\gamma^j\sigma_1\gamma^{-j}(\oplus_k E_k)).
\] 
Firstly, we know that $\gamma^{-j}(A)$ and $\gamma^{-j}(\oplus_k E_k)$ are both supported by $\cV_{i-j \mod n}$ (cf. \cref{gamma loops V}), and hence $\sigma_1\gamma^{-j}(A)$ and $\sigma_1\gamma^{-j}(\oplus_k E_k)$ are both supported by $\cV_0$ (cf. \cref{sigma1 HN preserving}).
Moreover, \cref{gamma sends semistable pieces} tells us that $\gamma^{-j}(\oplus_k E_k)$ is the direct sum of all the semistable pieces of $\gamma^{-j}(A)$.
Using \cref{sigma1 HN preserving} again, we know that $\sigma_1$ is HN preserving on both $\gamma^{-j}(A)$ and $\gamma^{-j}(\oplus_k E_k)$, which together imply
\[
HN_{\tau_R}(\sigma_1\gamma^{-j}(A)) = HN_{\tau_R}(\sigma_1\gamma^{-j}(\oplus_k E_k)).
\]
Let us denote the direct sum of the semistable pieces of $\sigma_1\gamma^{-j}(A)$ and $\sigma_1\gamma^{-j}(\oplus_k E_k)$ by $V$ and $V'$ respectively, so that
\[
[V] = HN_{\tau_R}(\sigma_1\gamma^{-j}(A)) = HN_{\tau_R}(\sigma_1\gamma^{-j}(\oplus_k E_k)) = [V'].
\]

Applying \cref{gamma sends semistable pieces} again, we know the direct sum of the $\tau_R$-semistable pieces of $\gamma^j\sigma_1\gamma^{-j}(A)$ and $\gamma^j\sigma_1\gamma^{-j}(\oplus_k E_k)$ are given by $\gamma^j(V)$ and $\gamma^j(V')$.
In particular,
\begin{equation} \label{gamma V and V'}
\begin{cases}
HN_{\tau_R}(\gamma^j\sigma_1\gamma^{-j}(A)) = [\gamma^j(V)] \\
HN_{\tau_R}(\gamma^j\sigma_1\gamma^{-j}(\oplus_k E_k)) = [\gamma^j(V')]
\end{cases}.
\end{equation}
Since $\gamma^j$ restricts to an autoequivalence on $\mathfrak{S}$ (cf. \cref{gamma phase reducing}), we have that $K_0(\gamma^j) : K_0(\mathfrak{S}) \ra K_0(\mathfrak{S})$.
With $V$ and $V'$ being objects in $\mathfrak{S}$, we know that
\[
[\gamma^j(V)] = K_0(\gamma^j)[V], \quad [\gamma^j(V')] = K_0(\gamma^j)[V'].
\]
But we have shown that $[V] = [V']$.
Together with \cref{gamma V and V'} we obtain
\[
HN_{\tau_R}(\gamma^j\sigma_1\gamma^{-j}(A)) = HN_{\tau_R}(\gamma^j\sigma_1\gamma^{-j}(\oplus_k E_k)),
\]
which shows $\gamma^j\sigma_1\gamma^{-j}$ is HN preserving on $A$ as required.
The fact that $\gamma^j\sigma_1\gamma^{-j}(A)$ is $\tau_R$-supported by $\cV_j$ just follows from $\sigma_1\gamma^{-j}(A)$ being supported by $\cV_0$ and that $\gamma^j$ sends objects supported by $\cV_0$ to objects supported by $\cV_j$.
\end{proof}
}

\subsection{Properties of $\Lambda_{\tau_R}(n)$ and $\Lambda_{\tau_R}(n)$-normal forms}
In this subsection, we study some extra properties of the mass automaton $\Lambda_{\tau_R}(n)$ that we will use in the algorithm.
Throughout, we shall view $\B(I_2(n))$ as a subgroup of autoequivalence of $\cK$ using its (faithful) action on $\cK$. 

Fixing a set of letters, we shall always assume that a word is reduced; namely all subwords of the form $ww^{-1}$ and $w^{-1}w$ will be necessarily deleted.
By convention, we allow the empty word, which represents the identity element.
As such, the \emph{length} of a (reduced) word is just given by the number of letters in the word (for which the empty word has length zero).

The set of letters that we will be using are $\left\{ \sigma_{\gamma^j P_1}^{\pm 1}, \sigma_{\gamma^j P_2}^{\pm 1}, \gamma, \gamma^{-1} \right\}$, where we do not differentiate between the letters $\sigma_{\gamma^j P_a}^{\pm 1}$ and $\sigma_{\gamma^k P_b}^{\pm 1}$ whenever $\gamma^j P_a$ is isomorphic to $\gamma^k P_b$ up to shifts by $\<-\>, [-]$ and up to tensoring with $\Pi_{n-2}$.
However, note that we do differentiate between $\gamma$ and $\sigma_{P_2} \sigma_{P_1}$ when viewing them as words, even though by definition $\gamma = \sigma_{P_2} \sigma_{P_1}$ in $\B(I_2(n))$.
For any braid element $\beta \in \B(I_2(n))$, we will use an over bar $\overline{\beta}$ when we want to specifically refer to its underlying word.
In particular, $\overline{\gamma} \neq \overline{\sigma_{P_2}\sigma_{P_1}}$ as words, but $\gamma = \sigma_{P_1} \sigma_{P_2} \in \B(I_2(n))$.

\vspace{5pt}
\begin{proposition} \label{prop: properties of tau_r mass automaton}
Let $b_1, b_2 \in \{ \sigma_{\gamma^j P_1}, \sigma_{\gamma^j P_2} \}$.
\begin{enumerate}[(1)]
\item Suppose that for any choice of arrows $a_1$ and $a_2$ labelled by $b_1$ and $b_2$ respectively, we have that $(a_1, a_2)$ is not a well-defined path in $\Lambda_{\tau_R}(n)$.
Then $b_1 b_2 = \gamma \in \B(I_2(n))$.
\item Suppose $b_1, b_2$ satisfies $
b_1 \gamma^{\pm 1} = \gamma^{\pm 1} b_2 \in \B(I_2(n))
$.
Then for any given path $p_1: v \ra v'$ in $\Lambda_{\tau_R}(n)$ recognising $\overline{b_1 \gamma^{\pm 1}}$, there exists a path $p_2: v \ra v'$ recognising and $\overline{\gamma^{\pm 1} b_2}$.
Vice versa, for any given path $p_2: v \ra v'$ recognising $\overline{\gamma^{\pm 1} b_2}$, there exists a path $p_1: v \ra v'$ in $\Lambda_{\tau_R}(n)$ recognising $\overline{b_1 \gamma^{\pm 1}}$.
\end{enumerate}
\end{proposition}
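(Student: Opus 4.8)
\textbf{Proof strategy for Proposition~\ref{prop: properties of tau_r mass automaton}.}

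The plan is to reduce both statements to the combinatorial structure of the automaton as displayed in Figures~\ref{fig:HN automaton for odd n}, \ref{fig:mass automaton for even n V} and \ref{fig:mass automaton for even n U}, together with the braid relations in $\B(I_2(n))$ that come from \cref{lemma for braid relation} (in particular the identities $\sigma_{\gamma^k(P_i)} = \gamma^k \sigma_{P_i} \gamma^{-k}$ and $\gamma^n$ being a shift). First I would observe that, by the construction in \cref{sect: mass automaton construction}, a letter $\sigma_{\gamma^j P_a}$ labels an arrow into the vertex $v_j$ (or $u_j$, in the even case) from \emph{every} vertex in $\cS^{-1}([\gamma^j P_1, \gamma^j P_2])$ except for the one ``antipodal'' vertex, which is $v_{j+\frac{n-1}{2}}$ for odd $n$ (resp. $u_{j+\frac{n}{2}-1}$, $v_j$ for even $n$) — this is exactly the exception noted in each figure caption. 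So the domain of an arrow labelled $b_1 = \sigma_{\gamma^j P_a}$ is ``all vertices but one''. For part (1), if no choice of $a_1, a_2$ gives a composable path, then the unique target vertex of every $b_1$-arrow must be the unique vertex \emph{not} in the domain of any $b_2$-arrow. I would read off from the automaton that this pins down $b_2 = \sigma_{\gamma^{j'} P_{a'}}$ with $(j', a')$ the antipodal data to $(j,a)$; then a direct computation using $\sigma_{\gamma^k(P_i)} = \gamma^k\sigma_{P_i}\gamma^{-k}$ and the identity $\sigma_{P_2}\sigma_{P_1}(\gamma^{\frac{n-1}{2}}\text{-type object}) \cong (\text{shift})$ from \cref{braid relation}/\cref{lemma for braid relation} shows $b_1 b_2 = \gamma$ in $\B(I_2(n))$. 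Concretely, $b_1$ has target $v_j$ and the only vertex outside all $b_2$-domains forces $b_2$ to have target $v_{j+1}$ and source $v_j$'s antipode, and chasing the conjugated-twist formulas gives the product $\gamma$.

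For part (2), the hypothesis $b_1 \gamma^{\pm 1} = \gamma^{\pm 1} b_2$ in $\B(I_2(n))$ forces, via $\sigma_{\gamma^k(P_i)} = \gamma^k\sigma_{P_i}\gamma^{-k}$, a relation between the indices: if $b_1 = \sigma_{\gamma^j P_a}$ then $b_2 = \sigma_{\gamma^{j\mp 1} P_a}$ (the conjugation by $\gamma^{\pm 1}$ just shifts the index of the twist by $\mp 1$). I would then take a path $p_1 = (a_1, a_2)$ recognising $\overline{b_1 \gamma^{\pm 1}}$: here $a_1$ is a $b_1$-arrow landing in some vertex $w$ and $a_2$ is the $\gamma^{\pm 1}$-arrow out of $w$, which by \cref{gamma loops V} (resp. \cref{gamma loops V even}) goes to the index-shifted vertex $w'$. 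I want to produce $p_2 = (a_1', a_2')$ with $a_1'$ a $\gamma^{\pm 1}$-arrow and $a_2'$ a $b_2$-arrow, with the same source and target. The source of $p_1$ is $v$; apply the $\gamma^{\pm 1}$-arrow out of $v$ to land at the index-shifted vertex $\tilde v$, and then check that $b_2 = \sigma_{\gamma^{j\mp 1}P_a}$ labels an arrow from $\tilde v$ to $v'$. This check is precisely the statement that $\tilde v$ lies in the domain of a $b_2$-arrow into $v'$, which holds because $\gamma^{\pm 1}$ conjugation is a bijection on vertices and preserves the ``all-but-one-vertex'' domain structure — the excluded vertex also gets shifted. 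The vice-versa direction is symmetric (replace $\gamma^{\pm 1}$ by $\gamma^{\mp 1}$ and swap the roles of $b_1, b_2$).

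The main obstacle I anticipate is bookkeeping the index arithmetic modulo $n$ (odd case) or modulo $\frac n2$ with the extra $v$ vs.\ $u$ vertex-type distinction (even case), and making sure the ``excluded antipodal vertex'' really does transport correctly under the $\gamma^{\pm 1}$-shift in every case — in particular that it never happens that the shifted source $\tilde v$ coincides with the excluded vertex for the $b_2$-arrows into $v'$. I would handle this by noting that the excluded vertex for $b_2 = \sigma_{\gamma^{j\mp 1}P_a}$-arrows into $v'$ is, by construction, the $\gamma^{\mp 1}$-image of the excluded vertex for $b_1$-arrows into the target of $a_1$, so composability of $p_1$ (which says $v$ is \emph{not} that excluded vertex) transports directly to composability of $p_2$. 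The even case splits into the two sub-figures but the argument is uniform once one tracks whether $P_a = P_1$ (land in a $v$-vertex) or $P_a = P_2$ (land in a $u$-vertex); I would state the odd case in full and remark that the even case follows by the same argument applied to Figures~\ref{fig:mass automaton for even n V} and \ref{fig:mass automaton for even n U}.
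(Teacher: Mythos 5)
Your overall strategy — pin down the relation between $b_1$ and $b_2$ from the ``all-sources-but-one'' structure of the automaton and then verify the braid identity algebraically, and for (2) transport the composability constraint along the $\gamma^{\pm 1}$-shift — is exactly the paper's approach. However, both parts of your argument contain the same orientation error, and it is not a harmless convention choice.

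The paper's recognition convention (stated in Section~\ref{sect: mass automaton}) is that a word $w_k w_{k-1}\cdots w_1$ is recognised by a path $(e_1,\dots,e_k)$ with $e_i$ labelled by $w_i$; that is, the \emph{first} arrow of the path is labelled by the \emph{rightmost} letter of the word. So for the two-letter word $b_1 b_2$, the recognising path has its first arrow labelled $b_2$ (terminating at the fixed target $w(b_2)$) and its second arrow labelled $b_1$. Non-recognisability therefore forces the fixed target of the $b_2$-arrows to be the unique vertex excluded as a source for $b_1$-arrows: $w(b_2) = E(b_1)$. You assert the reverse, $w(b_1) = E(b_2)$, which is the constraint you would get if the first arrow were labelled $b_1$. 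This is not a cosmetic difference: your constraint yields $b_2 b_1 = \gamma$, not $b_1 b_2 = \gamma$. Concretely, in the odd case $w(b_1)=E(b_2)$ forces $b_2 = \sigma_{\gamma^{j-\frac{n-1}{2}}P_1}$ when $b_1 = \sigma_{\gamma^j P_1}$, and then $b_1 b_2 = \gamma^j \sigma_{P_1}\sigma_{P_2}\gamma^{-j}$, which is a conjugate of $\gamma$ but not $\gamma$ itself; the correct constraint $w(b_2) = E(b_1)$ gives $b_2 = \sigma_{\gamma^{j+\frac{n-1}{2}}P_1}$ and then $b_1 b_2 = \gamma^j(\sigma_{P_1}\sigma_{\gamma^{(n-1)/2}P_1})\gamma^{-j} = \gamma$. (Relatedly, your claim that ``$b_2$ has target $v_{j+1}$'' does not follow from your own constraint in the odd case — the antipode arithmetic there gives index $j - \frac{n-1}{2}$, not $j+1$.)

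The same reversal breaks part (2): you take the path for $\overline{b_1\gamma^{\pm 1}}$ to run $b_1$-arrow first and $\gamma^{\pm 1}$-arrow second, so it terminates at $v_{j\pm 1}$ (the $\gamma^{\pm 1}$-shift of $w(b_1) = v_j$). But $\overline{\gamma^{\pm 1}b_2}$-paths, under the same convention, would terminate at the fixed target $w(b_2) = v_{j\mp 1}$ of the final $b_2$-arrow. These are different vertices, so the two paths cannot share source and target and your transport argument cannot even get started. Reading the words right-to-left (as the paper does) makes both paths terminate at $v_j$ and the transport of the single exclusion $i \ne j + \frac{n-1}{2}$ goes through cleanly, exactly as in the paper. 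Once this orientation is corrected, your proof would become essentially the paper's.
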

\begin{proof}
For statement (1), we shall prove it for the case where $n$ is even; the reader can check the case where $n$ is odd using a similar method.
So consider the case where $n$ is even and we are given $b_2 =\sigma_{\gamma^k(P_2)}$.
Any arrow $a_2$ that is labelled by $b_2$ must end at vertex $u_k$ by construction.
The only braid $g \in \{\sigma_{\gamma^j P_2}, \sigma_{\gamma^j P_1}, \gamma, \gamma^{\pm 1} \}$ which does not have an arrow labelled by $g$ coming out from $u_k$ is $\sigma_{\gamma^{k - \frac{n}{2}+1} P_1}$, and so it must be the case that $b_2 = \sigma_{\gamma^{k - \frac{n}{2} + 1} P_1}$.
It follows from a simple algebraic manipulation that $b_1b_2 = \sigma_{\gamma^{k - \frac{n}{2} + 1} P_1} \sigma_{\gamma^k P_2} = \gamma$.
Now consider the case where $b_2 =\sigma_{\gamma^k(P_1)}$, so any arrow $a_2$ labelled by $b_2$ must end at $v_k$.
Once again the only letter without an arrow coming out from $v_k$ must be $\sigma_{\gamma^k P_2}$, and so $b_1 = \sigma_{\gamma^k P_2}$.
It is easy to see that $b_1b_2 = \sigma_{\gamma^k P_2} \sigma_{\gamma^k(P_1)} = \gamma$.

For statement (2), we shall prove it for the case where $n$ is odd instead; once again the proof for $n$ even can be obtained from a similar argument.
Assuming $n$ is odd, we can let $b_2 = \sigma_{\gamma^j P_1}$ for some $j$, where by the relation given we must have $b_1 = \sigma_{\gamma^{j \pm 1} P_1}$.
Suppose we are given a path recognising $\overline{\gamma^{\pm 1} b_2}$ (subscript of vertices taken modulo $n$):
\[
v_i \xra{b_2} v_j \xra{\gamma^{\pm 1}} v_{j \pm 1}.
\]
Then by construction of $\Lambda_{\tau_R}(n)$, we must have $i \neq j + \frac{n-1}{2}$.
However,
\[
i \neq j + \frac{n-1}{2} \iff i \pm 1 \neq j \pm 1 + \frac{n-1}{2},
\]
so there must exist a path
\[
v_i \xra{\gamma^{\pm 1}} v_{i \pm 1}  \xra{b_2} v_{j \pm 1}
\]
as required.
The vice versa statement follows similarly.
\end{proof}

We can now prove that every braid has an expression which is recognised by $\Lambda_{\tau_R}(n)$:
\begin{proposition}\label{normal form}
Every braid $\beta \in \B(I_2(n))$ can be expressed as a word using the letters $\{\sigma_{\gamma^j P_2}, \sigma_{\gamma^j P_1}, \gamma, \gamma^{- 1} \}$ in the following form:
\[
\overline{\beta} = b^{m_k}_k b^{m_{k-1}}_{k-1} \cdots b^{m_1}_1\gamma^s,
\]
where $s \in \Z$, $k \in \Z_{\geq 0}$ and $m_i \in \Z_{> 0}$, such that
\begin{enumerate}
\item $b_i  \in \{\sigma_{\gamma^j P_1}, \sigma_{\gamma^j P_2}\}$ for all $i$, and
\item there exists arrows $e_1,e_2, ...,e_k$ in $\Lambda_{\tau_R}(n)$ labelled by $b_1,b_2,...,b_k$ respectively, which together form a well-defined path $(e_1,e_2, ...., e_k)$ in $\Lambda_{\tau_R}(n)$.
\end{enumerate}
In particular, every braid in $\B(I_2(n))$ can be written as a word recognised by $\Lambda_{\tau_R}(n)$.
\end{proposition}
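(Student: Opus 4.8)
The plan is to induct on the length of a reduced word representing $\beta$, building the required normal form one letter at a time while maintaining the invariant that the non-$\gamma$ prefix is recognised by a path in $\Lambda_{\tau_R}(n)$. First I would reduce to a canonical generating set: given any $\beta \in \B(I_2(n))$, start with an arbitrary expression in $\sigma_1^{\pm 1}, \sigma_2^{\pm 1}$ and rewrite it using the identities $\sigma_i = \sigma_{P_i}$, $\sigma_{\gamma^j P_a} = \gamma^j \sigma_{P_a} \gamma^{-j}$, and the defining relation $\gamma = \sigma_{P_2}\sigma_{P_1}$, so that $\beta$ becomes a word $b_k^{m_k}\cdots b_1^{m_1}\gamma^s$ with each $b_i \in \{\sigma_{\gamma^j P_1}, \sigma_{\gamma^j P_2}\}$ and $s \in \Z$ — this is purely formal, using only that $\gamma$ is central up to the power $\chi$ and that any word can be shuffled into ``twists times a power of $\gamma$'' form.

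Next, the heart of the argument: show that whenever we have a prefix $b_k^{m_k}\cdots b_1^{m_1}$ that is \emph{not} recognised by $\Lambda_{\tau_R}(n)$, we can shorten it or move a $\gamma$ to the right, strictly decreasing some well-founded complexity measure (e.g.\ the pair (total length of the twist part, then the number of ``obstructed'' adjacent pairs)). Concretely, suppose $(e_1,\dots,e_i)$ is a maximal recognised initial segment and the next letter $b_{i+1}$ cannot extend it — i.e.\ no arrow labelled $b_{i+1}$ starts at the head vertex $v$ of $e_i$. By \cref{prop: properties of tau_r mass automaton}(1), applied to the pair $(b_{i+1}, b_i)$ (reading in the composition order used by the automaton), we get $b_{i+1} b_i = \gamma$ in $\B(I_2(n))$. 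Then I would replace the subword $b_{i+1} b_i$ by $\gamma$ and commute this $\gamma$ rightwards past the remaining letters $b_{i-1}, \dots, b_1$; each such commutation $b\gamma^{\pm 1} = \gamma^{\pm 1} b'$ is handled by \cref{prop: properties of tau_r mass automaton}(2), which guarantees that the shortened twist part $b'_{i-1}\cdots b'_1$ is still recognised by a path (with the $\gamma$'s absorbed into the final $\gamma^s$). This strictly shortens the twist part, so the induction terminates, yielding a word of the claimed form whose twist prefix is recognised by $\Lambda_{\tau_R}(n)$.

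Finally, for the last sentence, observe that the full automaton has arrows labelled $\gamma$ and $\gamma^{-1}$ at every vertex (see \Cref{fig:HN automaton for odd n}, \Cref{fig:mass automaton for even n V}, \Cref{fig:mass automaton for even n U}), so once the twist prefix $b_k^{m_k}\cdots b_1^{m_1}$ is recognised by a path ending at some vertex $v$, we may append $|s|$ loops labelled $\gamma^{\pm 1}$ at $v$ to obtain a path recognising the whole word $\overline{\beta} = b_k^{m_k}\cdots b_1^{m_1}\gamma^s$; thus every braid is recognised by $\Lambda_{\tau_R}(n)$.

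The main obstacle I expect is making the termination argument genuinely well-founded: naively, replacing $b_{i+1}b_i$ by $\gamma$ and commuting it past the prefix could re-introduce obstructions further left, so the complexity measure must be chosen carefully (I would take the length of the twist part as the primary measure, which strictly drops at each such move regardless of what new adjacencies appear, with a secondary lexicographic bookkeeping only to locate where to apply \cref{prop: properties of tau_r mass automaton}). A secondary subtlety is keeping track of the identification of letters $\sigma_{\gamma^j P_a}^{\pm 1}$ up to shifts and tensoring with $\Pi_{n-2}$, and making sure the rewriting rules respect this identification and the parity-dependent indexing of vertices modulo $n$ (resp.\ $\frac n2$); these are routine but need to be stated precisely so the appeals to \cref{prop: properties of tau_r mass automaton} are legitimate.
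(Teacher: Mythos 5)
Your proposal follows the same three-step structure as the paper's proof: eliminate inverse generators via $\sigma_{P_2}^{-1} = \sigma_{P_1}\gamma^{-1}$ and $\sigma_{P_1}^{-1} = \gamma^{-1}\sigma_{P_2}$, push all powers of $\gamma$ to the right, and then iteratively collapse obstructed adjacent twist pairs to $\gamma$ using \cref{prop: properties of tau_r mass automaton}(1)--(2), with termination established exactly as you suggest by the strict decrease in the twist-part length. The only cosmetic slips are that the $\gamma^{\pm 1}$-labelled edges are not loops (they connect distinct vertices), and since $\gamma^{s}$ is rightmost in the word it is traversed \emph{first} in the path, so one prepends rather than appends those edges; neither affects the argument.
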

\begin{proof}
Let $\beta \in \B(I_2(n))$, where using the standard generators, can be expressed using the letters $\{\sigma_{P_1}, \sigma_{P_1}^{-1}, \sigma_{P_2}, \sigma_{P_2}^{-1} \}$.
By definition, we have that
\[
\gamma := \sigma_{P_2} \sigma_{P_1}  \in \B(I_2(n)),
\]
giving us the relations:
\[
\sigma_{P_2}^{-1} = \sigma_{P_1} \gamma^{-1}, \quad
\sigma_{P_1}^{-1} = \gamma^{-1} \sigma_{P_2} \in \B(I_2(n)).
\]
Using these relations, we can rewrite $\beta$ into a word using only the letters $\{\sigma_{P_1}, \sigma_{P_2}, \gamma, \gamma^{-1} \}$.
Now using the relation
\[
\gamma^{\pm 1} \sigma_{P_i} = \sigma_{\gamma^{\pm 1} P_i} \gamma^{\pm 1}  \in \B(I_2(n)),
\]
we can move all of the $\gamma^{\pm 1}$ to the right, so that the braid $\beta$ is then expressed as
\[
\overline{\beta} = b^{m_k}_k b^{m_{k-1}}_{k-1} \cdots b^{m_1}_1\gamma^s,
\]
for some $s \in \Z$, $k \in \Z_{\geq 0}$ and $m_i \in \Z_{> 0}$, such that each $b_i \in \{\sigma_{\gamma^j P_1}, \sigma_{\gamma^j P_2}\}$.
Now between each pair $b_i$ and $b_{i+1}$, whenever there do not exist arrows $a_i$ and $a_{i+1}$ labelled by $b_i$ and $b_{i+1}$ respectively that connect to form a well-defined path $(a_i, a_{i+1})$, \cref{prop: properties of tau_r mass automaton} (1) tells us that $b_ib_{i+1} = \gamma \in \B(I_2(n))$.
As such, we can rewrite $\overline{\beta}$ for each such occurrence of $b_ib_{i+1}$, and move all of the new $\gamma$ to the right as before.
This process strictly reduces the word length of $\beta$ with respect to the letters $\{\sigma_{\gamma^j P_2}, \sigma_{\gamma^j P_1}, \gamma, \gamma^{-1} \}$ and must therefore terminate.

It is easy to see that the resulting word representing $\beta$ is recognised by $\Lambda_{\tau_R}(n)$, where the loop arrows labelled by $b_i$ are used when $m_i >1$.
\end{proof}

\comment{
The following two propositions show that every braid $\beta \in \B(I_2(n))$ can be expressed using the letters $\{\sigma_{\gamma^j P_1}, \sigma_{\gamma^j P_2}, \gamma^{\pm 1} \}$, and moreover, a word recognised by $\Lambda_{\tau_R}(n)$:
\begin{proposition}\label{normal form odd}
Suppose that $n$ is odd.
Denote $Q_j = \gamma^j(P_1)$.
Every braid $\beta \in \B(I_2(n))$ is expressible as a word
\[
\overline{\beta} = \sigma^{m_1}_{Q_{a_1}}\sigma^{m_2}_{Q_{a_2}} \cdots \sigma^{m_k}_{Q_{a_k}}\gamma^s
\]
for some $s \in \Z$, $k \in \Z_{\geq 0}$, $m_i \in \Z_{> 0}$ and $0 \leq a_i \leq n-1$, such that $a_{i+1} \neq a_i + \frac{n+1}{2} \mod n$ for all $i$. 
In particular, every braid in $\B(I_2(n))$ can be written as a word recognised by $\Lambda_{\tau_R}(n)$.
\end{proposition}
\begin{proof}
By definition, we have that
\[
\gamma := \sigma_{P_2} \sigma_{P_1} = \sigma_{Q_{\frac{n+1}{2}}} \sigma_{Q_0}.
\]
Note that $\sigma_{Q_n} = \sigma_{P_1 \<2n\>[2n-2]} = \sigma_{P_1}$, and more generally $\sigma_{Q_m} = \sigma_{Q_{m'}}$ for $m = m' \mod n$.
Thus from now on, any subscript of $Q$ will be taken modulo $n$.
By applying the relation $\gamma \sigma_Q = \sigma_{\gamma(Q)} \gamma$ repeatedly, we get that
\begin{equation} \label{gamma relation}
\sigma_{Q_{\frac{n+1}{2}+j}} \sigma_{Q_j }= \gamma^j\gamma\gamma^{-j} = \gamma.
\end{equation}
Using this we obtain the relation
\begin{equation} \label{replace inverse}
\sigma_{Q_j}^{-1} = \gamma^{-1}\sigma_{Q_{\frac{n+1}{2}+j}},
\end{equation}
where $\sigma_{P_1}^{-1} = \sigma_{Q_0}^{-1}$ and $\sigma_{P_2}^{-1} = \sigma_{Q_{\frac{n+1}{2}}}^{-1}$.
With \cref{replace inverse}, we may rewrite any braid $\beta$ in $\B(I_2(n))$ as a word consists solely of the terms $\gamma$, $\gamma^{-1}$ and $\sigma_{Q_j}$.
Applying the relations
\[
\gamma \sigma_{Q_j} = \sigma_{Q_{j+1}} \gamma, \quad \gamma^{-1}\sigma_{Q_{j+1}} = \sigma_{Q_j} \gamma^{-1},
\]
we may move all the $\gamma$ and $\gamma^{-1}$ terms to the right, so that $\beta$ is now a word given by $w\gamma^i$, with $w$ a word involving only the terms $\sigma_{Q_j}$.

Replace any occurence of $\sigma_{Q_{\frac{n+1}{2}+j}} \sigma_{Q_j}$ in $w$ with $\gamma$ (cf. \cref{gamma relation}), and move all $\gamma$ to the right as before.
Note that this strictly reduces the word length with respect to the terms $\{ \gamma, \gamma^{-1}, \sigma_{Q_j} \}$, so the process must terminate.

It is easy to see that the resulting word representing $\beta$ is recognised by $\Lambda_{\tau_R}(n)$.
\end{proof}

The case where $n$ is even is similar, using the mass automaton for $n$ even instead.
The proof is omitted as it follows using a similar proof to the odd case:
\begin{proposition}[compare with \cref{normal form odd}] \label{normal form even}
Suppose that $n$ is even.
Denote 
\[
\mathfrak{T} = \{\gamma^j(P_1)\}_{j=0}^{\frac{n}{2}} \cup \{\gamma^j(P_2)\}_{j=0}^{\frac{n}{2}}.
\]
Every braid $\beta \in \B(I_2(n))$ is expressible as a word
\[
\overline{\beta} = \sigma_{T_1}^{m_1} \sigma_{T_2}^{m_2} \cdots \sigma_{T_k}^{m_k} \gamma^s
\] 
for some $s\in \Z$, $k \in \Z_{\geq 0}$, $m_i \in \Z_{> 0}$ and $T_i \in \mathfrak{T}$, such that for each $i$,
\begin{enumerate}
\item if $T_i = \gamma^k(P_1)$, then $T_{i-1} \neq \gamma^{k + \frac{n}{2} -1}P_2$;
\item if $T_i = \gamma^k(P_2)$, then $T_{i-1} \neq \gamma^{k}P_1$.
\end{enumerate}
In particular, every braid in $\B(I_2(n))$ can be written as a word recognised by $\Lambda^{\text{even}}_{\tau_R}(n)$.
\end{proposition}
}

We can then define the following:
\begin{definition}
The expression of $\beta \in \B(I_2(n))$ in \cref{normal form} is called a \emph{$\Lambda_{\tau_R}(n)$-normal form of $\beta$}.
\end{definition}
Note that if $\beta$ is not just some power of $\gamma$, then all the possible paths in the automaton recognising the obtained $\Lambda_{\tau_R}(n)$-normal form of $\beta$ ends at the same vertex; it is the unique vertex that has incoming arrows labelled by $b_k$.

\begin{remark}
The identity element of $\B(I_2(n))$ has $\Lambda_{\tau_R}(n)$-normal form given by the empty word.
\end{remark}

\subsection{The algorithm and the main theorem}
We are now ready to tackle the main theorem of this thesis.
Let us start with the following proposition, which will allow us to compute the mass growth (equivalently level-entropy) using the mass matrices in our mass automaton $\Lambda_{\tau_R}(n)$ for some braids in $\B(I_2(n))$ under certain assumptions.

\begin{proposition} \label{prop: mass growth computed by PF}
Suppose $p$ is a closed path (starts and ends at the same vertex) in $\Lambda_{\tau_R}(n)$ and suppose the corresponding mass matrix $\cM(p)$ is an irreducible matrix for all $t \in \R$.
Then the mass growth of $\beta := \cS(p)$ for each $t \in \R$ is given by
\[
h_{\tau,t}(\beta) = \log PF \left( \cM(p) \right) = h_t(\beta),
\]
with $PF(\cM(p))$ denoting the Perron-Frobenius eigenvalue of $\cM(p)$.
\end{proposition}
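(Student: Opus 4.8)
The plan is to combine three ingredients already available in the excerpt: the fact that the mass automaton $\Lambda_{\tau_R}(n)$ genuinely computes masses along closed paths (equation \eqref{eqn: closed path transform linearly}), the identification of mass growth with level-entropy from \cref{thm: entropy = mass growth}, and the classical Perron--Frobenius theory for irreducible non-negative matrices. I would open the proof by choosing a convenient test object. By \cref{prop: compute entropy from simpler object} we may compute $h_{\tau_R,t}(\beta)$ from any object $X$ such that $X\otimes A$ is an $\X$-split generator; since $p$ is a closed path based at some vertex $v$, I would take $X$ to be an object in the subset $\cS(v)$ whose $\tau_R$-support spans the rank-two free module $\cM(v)$ (for instance, one of the $\gamma^j P_i$ or a direct sum of the two basis objects at $v$, tensored with $A:=\bigoplus_{i=0}^{n-2}\Pi_i$ to make it a split generator). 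The key point is that $X$ lies in $\cS(v)$ and $\iota_v(X)\in\cM(v)$ is a vector with strictly positive entries (each $m_{\tau_R,t}(\gamma^j P_k)$ is a strictly positive real-valued function of $t$).

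Next I would invoke the automaton. Since $p$ is a closed path based at $v$ and $\cS$ is a $\Theta$-subset, $\beta^N(X)=\cS(p)^N(X)\in\cS(v)$ for all $N$, and the compatibility of $\iota$ with $\cM$ (the commuting square \eqref{eqn: S M compatible}) together with the mass-recovery property \eqref{eqn: recovers mass} gives, exactly as in \eqref{eqn: closed path transform linearly},
\[
m_{\tau_R,t}\!\left(\beta^N(X)\right)=\mathfrak{m}_v\!\left(\cM(p)^N\,\iota_v(X)\right).
\]
Because $\mathfrak{m}_v$ is (for the automata constructed here) the inclusion $\cM(v)\hookrightarrow\R^\R$ sending a coefficient vector to the corresponding linear combination of the positive functions $m_{\tau_R,t}(\gamma^j P_k)$, the right-hand side is, up to positive bounded factors independent of $N$, comparable to $\|\cM(p)^N\,\iota_v(X)\|$ for any norm on $\R^2$. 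Fixing $t\in\R$, $\cM(p)$ is by hypothesis an irreducible non-negative matrix, and $\iota_v(X)$ is a strictly positive vector, so standard Perron--Frobenius theory (e.g. the characterisation of the spectral radius via $\lim_N \|M^N w\|^{1/N}$ for $M$ non-negative irreducible and $w$ positive) yields
\[
\lim_{N\to\infty}\frac1N\log\!\left(\cM(p)^N\,\iota_v(X)\right)_i=\log PF(\cM(p))
\]
for each coordinate $i$. Feeding this back through $\mathfrak{m}_v$ and the bounded correction factors gives $\lim_{N\to\infty}\frac1N\log m_{\tau_R,t}(\beta^N(X))=\log PF(\cM(p))$, which by \cref{prop: compute entropy from simpler object} (and \cref{thm: entropy = mass growth}, since $\tau_R$ lies in the distinguished connected component) equals $h_{\tau_R,t}(\beta)=h_t(\beta)$.

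I expect the only genuinely delicate point to be the passage from "$\mathfrak{m}_v(\cM(p)^N\iota_v(X))$ grows like $PF(\cM(p))^N$" to the statement of the proposition, and in particular handling the case where $PF(\cM(p))$ might be small or the matrix entries involve $e^{\pm t}$ rather than constants. The entries of $\cM(p)$ are elements of $\R[e^{\pm t}]$ with non-negative coefficients, so for each fixed $t$ they are genuine non-negative reals and Perron--Frobenius applies verbatim; irreducibility is assumed for all $t$, so there is no parameter-dependent degeneration to worry about. The positivity of $\iota_v(X)$ and the fact that $\mathfrak{m}_v$ has a strictly positive "weight covector" $(m_{\tau_R,t}(\gamma^j P_1),m_{\tau_R,t}(\gamma^j P_2))$ ensure that no cancellation or vanishing occurs in the limit, so the $\frac1N\log$ limit is exactly $\log PF(\cM(p))$ with no lower-order obstruction. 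The remaining steps — that $\beta^N(X)$ stays in $\cS(v)$, that $X\otimes A$ is a split generator, that $h_{\tau_R,t}=h_t$ here — are all either immediate from the construction or quoted directly from earlier results, so the write-up should be short once the Perron--Frobenius estimate is stated carefully.
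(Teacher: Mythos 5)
Your proposal is correct and follows essentially the same route as the paper: reduce to $m_{\tau_R,t}(\beta^N(X)) = \mathfrak{m}_v\left(\cM(p)^N\,\iota_v(X)\right)$ via the automaton compatibility squares, then apply Perron--Frobenius for a fixed $t$ to the non-negative irreducible matrix $\cM(p)$ and transport the exponential growth rate through the (positive) weight covector $\mathfrak{m}_v$; the paper implements the same Perron--Frobenius step with explicit two-sided bounds using the positive PF eigenvector rather than citing the standard $\lim_N \lVert M^N w\rVert^{1/N}$ characterisation.

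One caveat: the parenthetical ``one of the $\gamma^j P_i$'' is not a legitimate choice of test object. A single $\gamma^j P_i\otimes A$ is not an $\X$-split generator of $\cK$ (you need both $P_1$ and $P_2$), nor is $\iota_v(\gamma^j P_i)$ a strictly positive vector, which your argument explicitly relies on. You must take $X$ to be the direct sum of the two basis objects at $v$, which is the other option you offer and is exactly what the paper uses.
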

\begin{proof}
Firstly, note all sets $[\gamma^j P_1, \gamma^j P_2]$ and $[\gamma^j P_2, \gamma^j \sigma_2 P_1]$ contain a $\X$-split generator $G$, namely $G := \left( \gamma^j P_1 \oplus \gamma^j P_2 \right) \otimes \left( \bigoplus_{i=0}^{n-2} \Pi_i \right)$ and $\left( \gamma^j P_2 \oplus \gamma^j \sigma_2 P_1 \right) \otimes \left( \bigoplus_{i=0}^{n-2} \Pi_i \right)$ respectively.
By \cref{prop: compute entropy from simpler object}, we can compute mass growth using
\[
h_{\tau,t}(\sigma_p) = \lim_{N \ra \infty} \frac{1}{N} \log( m_{\tau,t}(\sigma_p^N (X) )) = L,
\]
with $X = \gamma^j P_1 \oplus \gamma^j P_2 $ (resp. $X= \gamma^j P_2 \oplus \gamma^j \sigma_2 P_1$) and $A = \bigoplus_{i=0}^{n-2} \Pi_i $.
As such, regardless of the parity of $n$, if $p$ is a closed path starting and ending at $v$, we shall compute mass growth using $X := X_1 \oplus X_2$ where $\cS(v) = [X_1,X_2]$.
By construction of $\Lambda_{\tau_R}(n)$, we have that
\begin{align*}
m_{\tau,t}(\sigma_p^N (X)) &= \mathfrak{m}_{v}(\iota_{v}(\sigma_p^N (X)) ) \\
&= \mathfrak{m}_{v}( \cM(p)^N(\iota_{v}(X)) ) \\
&= w \cM(p)^N 
\begin{bmatrix}
1 \\ 1
\end{bmatrix},
\end{align*}
with $w$ the row vector $\begin{bmatrix}
m_{\tau_R,t}(X_1) & m_{\tau_R,t}(X_2)
\end{bmatrix}$.

Note that $\cM(p)$ is by definition a non-negative matrix for all $t \in \R$ (since for each arrow $a$, the mass matrix $\cM(a)$ is non-negative for all $t$).
By the irreducibility assumption on $\cM(p)$, we have a Perron-Frobenius eigenvalue $\lambda(t) := PF(\cM(p))$ for each $t$, with a corresponding positive eigenvector $u_t = \begin{bmatrix}
a_t \\ b_t
\end{bmatrix}$ with $a_t,b_t >0$ for all $t$.
For each $t$, denote $K_\text{max}(t) = \max\{a_t,b_t\}, K_\text{min}(t) = \min \{a_t,b_t\}$.
Then we see that 
\[
w \cM(p)^N 
\begin{bmatrix}
1 \\ 1
\end{bmatrix} \leq 
\frac{1}{K_\text{min}(t)} w \cM(p)^N 
u_t, 
	\quad
\frac{1}{K_\text{max}(t)} w \cM(p)^N 
u_t,
\leq w \cM(p)^N 
\begin{bmatrix}
1 \\ 1
\end{bmatrix} 
\]
Combining them gives us the following inequality:
\[
\frac{1}{K_\text{max}(t)} \lambda(t)^N w u_t 
	\leq  
w \cM(p)^N 
\begin{bmatrix}
1 \\ 1
\end{bmatrix}  
	\leq
\frac{1}{K_\text{min}(t)} \lambda(t)^N  w u_t.
\]
The result now follows by applying $\frac{1}{N}\log(-)$ to the inequality and taking the limit as $N \ra \infty$.
\end{proof}
Even though \cref{normal form} shows that every braid can be expressed as a word recognised by $\Lambda_{\tau_R}(n)$, it is clear that not all braids are expressible as words recognised by \emph{closed} paths.
Moreover, even if a braid is expressible as a word recognised by a closed path, we still require the corresponding mass matrix to be irreducible.

Luckily for us, our mass growth (being equal to level entropy) is invariant under conjugation, and with the exception of the periodic braids (which we already know how to compute, and are easy!), all other braids turn out to be \emph{conjugate} to braids which are expressible as words recognised by closed paths in our automaton.
Moreover, if the corresponding mass matrix of the closed path turns out to be reducible, the braid must be reducible too (of which, again, we already know how to compute its mass growth).
As such, we are able to compute the mass growth for any given braid!
All these will be achieved through an algorithm which we shall present later (see pg.\pageref{sec: algorithm}).

We shall first prove a few results that will come in handy:
\begin{proposition} \label{conjugate odd}
Let $\beta = b^{m_k}_k b^{m_{k-1}}_{k-1} \cdots b^{m_1}_1\gamma^s$ be in $\Lambda_{\tau_R}(n)$-normal form.
Suppose that $\sum_{i=1}^k m_i \geq 2$.
Then $\overline{\beta}$ is a word recognised by a closed path if and only if 
\[
\overline{b_k^{-1}} \cdot \overline{\beta} \cdot \overline{b_k} = b^{m_k-1}_k b^{m_{k-1}}_{k-1} \cdots b^{m_1}_1 \gamma^s b_k
\]
is a recognised word in $\Lambda_{\tau_R}(n)$.
\end{proposition}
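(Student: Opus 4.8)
The plan is to reduce both directions of the equivalence to a single combinatorial condition on a shorter word. First I would set $\overline{\beta'} := b_k^{m_k-1} b_{k-1}^{m_{k-1}} \cdots b_1^{m_1}\gamma^s$, so that after the obvious cancellation one has, as reduced words, $\overline{\beta} = b_k \cdot \overline{\beta'}$ and $\overline{b_k^{-1}}\cdot\overline{\beta}\cdot\overline{b_k} = \overline{\beta'}\cdot b_k$. Because $\sum_{i=1}^k m_i \geq 2$, the word $\overline{\beta'}$ is nonempty and its leading letter is still a twist $b \in \{\sigma_{\gamma^j P_1},\sigma_{\gamma^j P_2}\}$ (namely $b = b_k$ if $m_k \geq 2$, and $b = b_{k-1}$ if $m_k = 1$, which forces $k \geq 2$); moreover $\overline{\beta'}$ inherits conditions (1)--(2) of \cref{normal form} from $\overline{\beta}$ and is recognised by $\Lambda_{\tau_R}(n)$ since $\overline{\beta}$ is. The one-line argument behind the remark following \cref{normal form} then applies verbatim: the final arrow of any recognising path of $\overline{\beta'}$ carries the label $b$, and in $\Lambda_{\tau_R}(n)$ all $b$-labelled arrows share a single target, so every recognising path of $\overline{\beta'}$ terminates at one and the same vertex, which I will call $t$.

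Next I would record two facts read directly off \cref{fig:HN automaton for odd n} (and, in the even-$n$ case, \cref{fig:mass automaton for even n V}--\cref{fig:mass automaton for even n U}): all arrows carrying the label $b_k$ have a common target $w$, and $w$ carries a loop labelled $b_k$. I also note that a $b_k$-labelled arrow $t\to w$ must exist: taking any recognising path $(e_1,\dots,e_N)$ of $\overline{\beta}$, its truncation $(e_1,\dots,e_{N-1})$ recognises $\overline{\beta'}$ and hence ends at $t$, while $e_N$ is a $b_k$-arrow out of $t$, whose target is forced to be $w$.

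Finally I would compare the two conditions, each now phrased in terms of $\overline{\beta'}$ alone. Any recognising path of $\overline{\beta} = b_k\cdot\overline{\beta'}$ is a recognising path of $\overline{\beta'}$, from some start vertex $s$ to $t$, followed by one $b_k$-arrow; since every $b_k$-arrow lands at $w$ and a $b_k$-arrow $t\to w$ exists, this extended path always runs $s\to t\to w$, so $\overline{\beta}$ is recognised by a closed path if and only if $\overline{\beta'}$ is recognised by some path with initial vertex $w$. Symmetrically, any recognising path of $\overline{\beta'}\cdot b_k$ is one $b_k$-arrow, necessarily landing at $w$, followed by a recognising path of $\overline{\beta'}$ from $w$ to $t$; since $w$ carries a $b_k$-loop, such a path exists if and only if $\overline{\beta'}$ is recognised by some path with initial vertex $w$. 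Hence the two conditions coincide, which is the assertion. The part demanding care is mainly the bookkeeping: keeping straight the order-reversing convention that relates a word $w_N\cdots w_1$ to its recognising path $(e_1,\dots,e_N)$, and making sure $\overline{\beta'}$ is covered by the ``common terminal vertex'' observation --- which is precisely where the hypothesis $\sum m_i \geq 2$ is used, since it rules out the degenerate case $\overline{\beta'}=\gamma^s$ whose recognising paths need not share a terminal vertex.
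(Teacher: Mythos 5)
Your proof is correct and takes essentially the same approach as the paper: both directions hinge on the facts that all $b_k$-labelled arrows share a common target $w$, that $w$ carries a $b_k$-loop, and that the truncated word $\overline{\beta'}$ has a well-defined terminal vertex because its leading letter is a twist (which is exactly where $\sum m_i \geq 2$ is needed). Your packaging through the intermediate condition ``$\overline{\beta'}$ is recognised by some path starting at $w$'' is a clean reorganisation, but the underlying manipulation --- sliding a $b_k$-arrow from one end of the recognising path to the other --- is the same as in the paper.
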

\begin{proof}
Suppose $\overline{b_k^{-1}} \cdot \overline{\beta} \cdot \overline{b_k}$ is a recognised word, with the corresponding path recognising it given by a sequence of arrows $(e_0, e_1, ..., e_\ell)$ in the automaton.
Since the first letter in the expression is $b_k \neq \gamma^{\pm 1}$, $e_0$ must be an arrow with label $b_k$, which ends at some unique vertex $v$ by the construction of $\Lambda_{\tau_R}(n)$ (for both odd and even $n$).
Thus $e_1$ must be an arrow which starts at $v$.
Since $\sum_{i=1}^k m_i \geq 2$, the arrow $e_\ell$ is either labelled by $b_k$ (when $m_k = 1$), or $b_{k-1} \neq \gamma^{\pm 1}$.
Regardless, by the definition of the $\Lambda_{\tau_R}(n)$-normal form, $e_\ell$ must end at a vertex which has an outwards arrow $e$ labelled by $b_k$.
It follows that the path in the automaton given by the sequence of arrows $(e_1, ..., e_\ell, e)$ accepts the word $\overline{\beta}$, which is a loop that starts and ends at vertex $v$.

Conversely, suppose $\overline{\beta}$ is recognised by the path $(e_1, \cdots ,e_\ell)$ which starts and ends at the same vertex $v$.
Then let $e$ be the loop arrow on vertex $v$, which must be labelled by $b_k$ by the construction of $\Lambda_{\tau_R}(n)$.
The path $(e, e_1, \cdots, e_{\ell-1})$ must then be a path that accepts the word $\overline{b_k^{-1}} \cdot \overline{\beta} \cdot \overline{b_k}$.
\end{proof}

\begin{remark}
Note that if $\sum_{i=1}^k m_i = 1$, i.e. $\overline{\beta} = b_1 \gamma^s$, then 
\[
\overline{b^{-1}_1} \overline{\beta} \overline{b_1} = \gamma^s b_1
\]
is always a recognised word.
This case will be treated separately in \cref{beta^2 not recognised gives periodic}.
The case where $\sum_{i=1}^k m_i = 0$ is vacuous and $\overline{\beta} = \gamma^s$ is always a recognised word.
\end{remark}

\begin{proposition} \label{reduce word length odd}
Let $\beta = b^{m_k}_k b^{m_{k-1}}_{k-1} \cdots b^{m_1}_1 \gamma^s$ be in $\Lambda_{\tau_R}(n)$-normal form with $\sum_{i=1}^k m_i \geq 2$.
Suppose that
\[
\overline{b_k^{-1}} \cdot \overline{\beta} \cdot \overline{b_k} = b^{m_k-1}_k b^{m_{k-1}}_{k-1} \cdots b^{m_1}_1 \gamma^s b_k
\]
is not a recognised word.
Then $b^{-1}_k \beta b_k$ can be rewritten into the following $\Lambda_{\tau_R}(n)$-normal form:
\[
\overline{ b^{-1}_k \beta b_k } = b^{m_k-1}_k b^{m_{k-1}}_{k-1} \cdots b^{m_1-1}_1 \gamma^{s+1}.
\]
In particular, the word length of the obtained $\Lambda_{\tau_R}(n)$-normal form of $b^{-1}_k \beta b_k$ is strictly less than the word length of the original $\Lambda_{\tau_R}(n)$-normal form of $\beta$.
\end{proposition}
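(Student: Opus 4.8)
The plan is to rewrite $b_k^{-1}\beta b_k$ in $\B(I_2(n))$ using only the two families of relations available from the normal-form construction: the conjugation relation $\gamma^{\pm1}\sigma_{P_i}=\sigma_{\gamma^{\pm1}P_i}\gamma^{\pm1}$ and the ``collapsing'' relation $\sigma_{Q_{\frac{n+1}{2}+j}}\sigma_{Q_j}=\gamma$ (with $Q_j=\gamma^j(P_1)$) recorded in the proof of \cref{normal form}; the analogous relations hold for even $n$. Since $b_k^{-1}\beta b_k = b_k^{m_k-1}b_{k-1}^{m_{k-1}}\cdots b_1^{m_1}\gamma^s b_k$, it is enough to establish the single identity $b_1\,\gamma^s\,b_k=\gamma^{s+1}$ in $\B(I_2(n))$: substituting it and collecting the $\gamma$ gives $\overline{b_k^{-1}\beta b_k}=b_k^{m_k-1}b_{k-1}^{m_{k-1}}\cdots b_1^{m_1-1}\gamma^{s+1}$. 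Sliding $b_k$ leftwards past $\gamma^s$ by the conjugation relation, $b_1\gamma^s b_k=b_1\,\widetilde b_k\,\gamma^s$ where $\widetilde b_k:=\gamma^s b_k\gamma^{-s}$ is again a standard generator of the form $\sigma_{\gamma^{\ell}P_1}$, so the identity to prove is equivalently $b_1\widetilde b_k=\gamma$.

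The relation $b_1\widetilde b_k=\gamma$ is precisely the sort of conclusion produced by \cref{prop: properties of tau_r mass automaton}(1), and I would extract it from the non-recognition hypothesis as follows. Since $\beta$ is in $\Lambda_{\tau_R}(n)$-normal form, the prefix $b_k^{m_k-1}b_{k-1}^{m_{k-1}}\cdots b_1^{m_1}$ is recognised by the path obtained from the normal-form path of $\beta$ by deleting one arrow labelled $b_k$ from the front, and it reaches a \emph{uniquely determined} vertex $v'$ of $\Lambda_{\tau_R}(n)$: every $\sigma$-labelled arrow entering $v_j$ carries the label $\sigma_{\gamma^j P_1}$ (resp.\ is the unique such arrow into $v_j$ or $u_j$ when $n$ is even), while the $\gamma^{\pm1}$-arrows form a directed cycle, so the terminal vertex of a recognised word is a function of the word alone; in particular $v'$ is a target of an arrow labelled $b_1$. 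Because $\gamma^{\pm1}$-arrows are available at every vertex, $b_k^{m_k-1}\cdots b_1^{m_1}\widetilde b_k\gamma^s$ is recognised iff $b_k^{m_k-1}\cdots b_1^{m_1}\widetilde b_k$ is, and by an iterated application of \cref{prop: properties of tau_r mass automaton}(2) the former is recognised iff $b_k^{m_k-1}\cdots b_1^{m_1}\gamma^s b_k=\overline{b_k^{-1}\beta b_k}$ is; the hypothesis says the latter is \emph{not} recognised, so there is no arrow labelled $\widetilde b_k$ out of $v'$. As every arrow labelled $b_1$ ends at $v'$, no pair of arrows labelled $b_1$ and $\widetilde b_k$ forms a path, and \cref{prop: properties of tau_r mass automaton}(1) yields $b_1\widetilde b_k=\gamma$.

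Finally I would check that the resulting word $b_k^{m_k-1}b_{k-1}^{m_{k-1}}\cdots b_1^{m_1-1}\gamma^{s+1}$ is indeed a $\Lambda_{\tau_R}(n)$-normal form: its non-$\gamma$ letters are the generators $b_i$ already occurring in the normal form of $\beta$, and $b_k^{m_k-1}\cdots b_1^{m_1-1}$ is recognised by the sub-path of the normal-form path of $\beta$ obtained by deleting one $b_k$-arrow from the front and one $b_1$-arrow from the back (a well-defined path, empty when $m_1=1$ and $k=1$). The length count is then immediate: the new normal form has length $\big(\sum_i m_i-2\big)+|s+1|\le\big(\sum_i m_i+|s|\big)-1$, strictly less than the length $\sum_i m_i+|s|$ of the original. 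The two points I expect to need the most care in the write-up are (i) the lemma that the terminal vertex of a recognised word in $\Lambda_{\tau_R}(n)$ is well-defined, so that ``no $\widetilde b_k$-arrow out of $v'$'' genuinely follows from non-recognition rather than from a particular choice of path, and (ii) the index bookkeeping (sensitive to the left-to-right reading convention and the parity of $n$) that puts the hypotheses of \cref{prop: properties of tau_r mass automaton}(1) in force; both are purely combinatorial and mirror the arguments already given for \cref{conjugate odd} and \cref{prop: properties of tau_r mass automaton}.
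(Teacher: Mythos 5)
Your proof is correct and follows essentially the same route as the paper: slide $b_k$ left past $\gamma^s$ to get $\widetilde b_k = \sigma_{\gamma^s X}$, use part (2) of the automaton proposition to transport the non-recognition hypothesis, extract $b_1\widetilde b_k = \gamma$ from part (1), and substitute back. The paper's own argument is terser --- it simply observes that if the subword $b_1\sigma_{\gamma^s X}$ were recognised one could splice in a path for the whole word --- and thereby sidesteps the vertex bookkeeping around $v'$ that you (rightly) flag as the delicate point; note that your claim ``$v'$ is a target of a $b_1$-arrow'' uses a left-to-right path/word convention opposite to the one stated in the paper's definition of recognised words, though it is consistent with the way the paper's own proof of the automaton proposition treats the arrows, so the slip is inherited rather than yours.
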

\begin{proof}
Suppose $b_k = \sigma_X$ for some $X \in \cK$, so that
$
\gamma^s b_k = \sigma_{\gamma^s X} \gamma^s.
$
Then 
\[
b_k^{-1}\beta b_k = b^{m_k-1}_k b^{m_{k-1}}_{k-1} \cdots b^{m_1}_1 \sigma_{\gamma^s X} \gamma^s \in \B(I_2(n)).
\]
By \cref{prop: properties of tau_r mass automaton} (2), the assumption that the word $\overline{b_k^{-1}} \cdot \overline{\beta}\cdot \overline{b_k}$ is not recognised implies that the word $b^{m_k-1}_k b^{m_{k-1}}_{k-1} \cdots b^{m_1}_1 \sigma_{\gamma^s X} \gamma^s$ is also not recognised.
In particular, the subword $b_1\sigma_{\gamma^s X}$ cannot be recognised; otherwise we could have constructed a path that recognises the whole word.
Applying \cref{prop: properties of tau_r mass automaton} (1), we get that
$b_1\sigma_{\gamma^s X} = \gamma \in \B(I_2(n))$, giving us 
\[
\overline{ b^{-1}_k \beta b_k } = b^{m_k-1}_k b^{m_{k-1}}_{k-1} \cdots b^{m_1-1}_1 \gamma^{s+1}
\]
as required.
\end{proof}

\begin{proposition} \label{beta^2 not recognised gives periodic}
Let $\overline{\beta} = b_1 \gamma^s$ be in $\Lambda_{\tau_R}(n)$-normal form. Then
\begin{enumerate}[(1)]
\item If $\overline{\beta} \cdot \overline{\beta}$ is a recognised word, then $\overline{\beta}$ is recognised by a closed path.
\item If $\overline{\beta} \cdot \overline{\beta}$ is not a recognised word, then $\beta^2 = \gamma^{2s+1}$, i.e. $\beta$ is periodic.
\end{enumerate}
\end{proposition}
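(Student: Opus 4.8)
The plan is to analyze what happens when we try to concatenate the path recognising $\overline{\beta} = \overline{b_1 \gamma^s}$ with itself. Write $b_1 = \sigma_{\gamma^j(P_a)}$ for appropriate $j$ and $a \in \{1,2\}$. A path recognising $\overline{b_1\gamma^s}$ consists of an arrow $e_0$ labelled $b_1$, followed by $|s|$ arrows each labelled $\gamma$ or $\gamma^{-1}$ (according to the sign of $s$). By the construction of $\Lambda_{\tau_R}(n)$ (\Cref{fig:HN automaton for odd n} for odd $n$, \Cref{fig:mass automaton for even n V} and \Cref{fig:mass automaton for even n U} for even $n$), the arrow $e_0$ labelled $b_1$ ends at a unique vertex, call it $v$; the subsequent $\gamma^{\pm 1}$-arrows then trace out a path from $v$ ending at some vertex $v'$ (using \cref{gamma loops V} / \cref{gamma loops V even} that the $\gamma^{\pm 1}$ arrows always exist and shift the vertex index by $\pm 1 \bmod n$, resp. $\bmod \frac{n}{2}$). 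So $\overline{\beta}$ is recognised by a path $v_{\text{start}} \to v'$ where $e_0$ originates from some vertex $v_{\text{start}}$ having an outgoing arrow labelled $b_1$.

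First I would address (1): suppose $\overline{\beta}\cdot\overline{\beta} = \overline{b_1 \gamma^s b_1 \gamma^s}$ is a recognised word, witnessed by a path $(f_0, f_1, \ldots)$. The middle occurrence of $b_1$ must be recognised by an arrow $f_\ell$ labelled $b_1$ whose source vertex is exactly $v'$ (the terminal vertex of the first $\overline{b_1\gamma^s}$-block). Thus $v'$ has an outgoing arrow labelled $b_1$. But by the construction of $\Lambda_{\tau_R}(n)$, for each label $b_1 = \sigma_{\gamma^j(P_a)}$ there is a \emph{unique} vertex with an outgoing loop arrow labelled $b_1$ and, more to the point, the set of vertices admitting an outgoing $b_1$-arrow is determined; combined with the fact that $e_0$ (the first arrow of the path recognising $\overline{\beta}$) also emanates from such a vertex, one checks that $v'$ must coincide with $v_{\text{start}}$. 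Hence the path recognising $\overline{\beta}$ is closed. (Here I would invoke \cref{prop: properties of tau_r mass automaton} to pin down the relevant source/target vertices; the key combinatorial input is that a $b_1$-labelled arrow has a forced target, and that the only vertex from which $\overline{b_1 \gamma^s}$ can start and still allow $b_1$ again afterwards is the one making the path close up.)

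Next I would address (2): suppose $\overline{\beta}\cdot\overline{\beta}$ is \emph{not} recognised. As just analyzed, this means there is no $b_1$-labelled arrow out of $v'$. By \cref{prop: properties of tau_r mass automaton}~(1), applied to the pair $(b_1, b_1)$ — more precisely to the obstruction to following $\gamma^s$ from $v'$ by another $b_1$ — we are forced into an algebraic identity. The cleanest route: push the trailing $\gamma^s$ through, writing $\beta^2 = b_1 \gamma^s b_1 \gamma^s = b_1 (\gamma^s b_1 \gamma^{-s}) \gamma^{2s} = b_1 \sigma_{\gamma^s X} \gamma^{2s}$ where $b_1 = \sigma_X$. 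The non-recognisability forces the subword $b_1 \sigma_{\gamma^s X}$ to be non-recognisable, and \cref{prop: properties of tau_r mass automaton}~(1) then gives $b_1 \sigma_{\gamma^s X} = \gamma \in \B(I_2(n))$. Therefore $\beta^2 = \gamma \cdot \gamma^{2s} = \gamma^{2s+1}$. Since $\gamma^{2s+1}$ is a power of $\gamma$, $\beta$ is periodic by \cref{defn: braid types}.

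\textbf{Main obstacle.} The delicate point is (1): establishing that $v' = v_{\text{start}}$, i.e. that recognisability of $\overline{\beta}\cdot\overline{\beta}$ really forces the $\overline{\beta}$-path to be closed rather than merely to have some $b_1$-arrow available at its end. This requires using the precise combinatorial structure of $\Lambda_{\tau_R}(n)$ — specifically that each letter $b_1 = \sigma_{\gamma^j(P_a)}$ determines a unique "home" vertex (the one carrying the loop arrow $\sigma_{\gamma^j(P_1)}$ in \Cref{fig:HN automaton for odd n}, resp. the loop on $v_j$ or $u_j$ in the even case) through which every $b_1$-arrow's target and every legitimate $b_1$-arrow's source is constrained. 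One must rule out the alternative that $v'$ happens to admit an outgoing $b_1$-arrow while differing from $v_{\text{start}}$; this is where I'd expect to spend the most care, tracing the $|s|$ successive $\gamma^{\pm 1}$-shifts modulo $n$ (or $\frac{n}{2}$) and checking it lands exactly back at the unique vertex compatible with a further $b_1$. The parity split (odd vs. even $n$) means this vertex-chasing has to be done twice, but the argument is structurally identical in both cases.
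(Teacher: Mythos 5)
Your treatment of part (1) contains a genuine gap, and in fact the step you flag as the ``main obstacle'' cannot be closed as stated. You try to prove $v' = v_{\text{start}}$, i.e.\ that the \emph{first} copy of $\overline{\beta}$ in the doubled path is itself closed. That is false in general: in $\Lambda_{\tau_R}(n)$, almost every vertex admits an outgoing arrow labelled $b_1$ (for odd $n$ it is every $v_k$ except $v_{j+\frac{n-1}{2}}$ when $b_1 = \sigma_{\gamma^j(P_1)}$), so there is no ``unique vertex compatible with a further $b_1$'' to land on, and the source of the first $b_1$-arrow need not coincide with $v'$. The correct move is simpler and does not constrain $v_{\text{start}}$ at all: since \emph{every} arrow labelled $b_1$, regardless of its source, has the same \emph{target} vertex (the unique $v_j$ carrying the loop labelled $\sigma_{\gamma^j(P_1)}$), the two $b_1$-arrows in any path recognising $\overline{\beta}\cdot\overline{\beta}$ terminate at the same vertex $v$. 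The subpath consisting of the arrows strictly after the first $b_1$-arrow through the second $b_1$-arrow then starts at $v$ and ends at $v$; it recognises $\overline{\beta}$ and is closed. In other words, you should extract the \emph{second} copy's subpath, not argue that the first copy's subpath closes up.

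Separately, you have the arrow order reversed. Under the paper's convention a word $w_{s+1} w_s \cdots w_1$ is recognised by a path $(e_1,\ldots,e_{s+1})$ with $e_i$ labelled $w_i$, so for $\overline{\beta} = b_1\gamma^s$ the $b_1$-arrow is the \emph{last} arrow $e_{s+1}$, preceded by the $\gamma^{\pm 1}$-arrows $e_1,\ldots,e_s$ — not the other way around. This reversal likely fed the confusion in (1), since with the correct order the second-copy subpath begins at the common target $v$ of the $b_1$-arrows, making its closure immediate.

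Your part (2) is essentially the paper's argument (your $\sigma_{\gamma^s X}$ is the paper's $b_2$), but you pass over the justification that the rewriting $b_1\gamma^s b_1\gamma^s \rightsquigarrow b_1\sigma_{\gamma^s X}\gamma^{2s}$ preserves non-recognisability. This requires \cref{prop: properties of tau_r mass automaton}~(2), which guarantees that commuting $\gamma^{\pm 1}$ past the $b$-letters preserves the existence of a recognising path; only then does non-recognisability of $\overline{\beta}\cdot\overline{\beta}$ force non-recognisability of $b_1\sigma_{\gamma^s X}\gamma^{2s}$, so that \cref{prop: properties of tau_r mass automaton}~(1) applies to give $b_1\sigma_{\gamma^s X} = \gamma$.
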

\begin{proof}
If $\overline{\beta} \cdot \overline{\beta}$ is recognised, then there is a path $p = (e_1,...,e_s,f_1,e_{s+1},...,e_{2s}, f_2)$ such that $\cS(p) = b_1\gamma^s b_1 \gamma^s$.
In particular, $\cS(f_i) = b_1$ for both $f_i$'s, which implies that both $f_i$'s must end at the same vertex $v$.
Since $p$ is a well-defined path, this implies that $e_{s+1}$ must be an arrow that starts at $v$.
It now follows that the subpath $p' = (e_{s+1},...,e_{2k}, f_2)$ is a closed path starting and ending at the same vertex $v$ which recognises $\overline{\beta}$.

If $\overline{\beta} \cdot \overline{\beta} = b_1 \gamma^s b_1 \gamma^s$ is not a recognised word, we shall rewrite $\beta^2$ into a $\Lambda_{\tau_R}(n)$-normal form:
\[
\overline{\beta^2} = b_1 b_2 \gamma^{2s},
\] 
where if $b_1 = \sigma_{\gamma^j P_i}$, then $b_2 = \sigma_{\gamma^{j+s} P_i}$.
Note that the only relation required throughout the rewriting process is $\gamma^{\pm 1} \sigma_{\gamma^j P_i} = \sigma_{\gamma^{j\pm 1} P_i} \gamma^{\pm 1}$.
Using (2) of \cref{prop: properties of tau_r mass automaton}, it must be the case that $b_1 b_2 \gamma^{2s}$ is not a recognised word, or else it would contradict the fact that $\overline{\beta} \cdot \overline{\beta} = b_1 \gamma^s b_1 \gamma^s$ is not a recognised word.
In particular, no arrows $a_1$ and $a_2$ labelled by $b_1$ and $b_2$ respectively can form a well-defined path $(a_1, a_2)$, and so by (1) of \cref{prop: properties of tau_r mass automaton}, we have that $b_1b_2 = \gamma$.
Hence
$
\beta^2 = \gamma^{2s+1}
$ as required.
\end{proof}
\begin{remark}
Note that only (1) in \cref{beta^2 not recognised gives periodic} will be relevant when $n$ is even.
This is because $\beta \in \B(I_2(n))$ of the form $b_1 \gamma^s$ is always recognised by a closed path when $n$ is even and the converse of (1) is trivially true.
\end{remark}

\begin{proposition} \label{reducible matrix iff reducible or periodic}
Let $\overline{\beta}$ be in $\Lambda_{\tau_R}(n)$-normal form and is recognised by some closed path $p$ in $\Lambda_{\tau_R}(n)$.
With $*$ denoting a (real valued) function which is strictly positive for all $t \in \R$, we have that
\begin{enumerate}[(i)]
\item $\cM(p) = 
\begin{bmatrix}
* & 0 \\
0 & *
\end{bmatrix}$ implies $\beta$ is periodic;
\item $\cM(p) = 
\begin{bmatrix}
* & * \\
0 & *
\end{bmatrix}$ or 
$ \begin{bmatrix}
* & 0 \\
* & *
\end{bmatrix}$
implies $\beta$ is reducible; and
\item $\cM(p) = 
\begin{bmatrix}
* & * \\
* & *
\end{bmatrix}$ implies $\beta$ is pseudo-Anosov.
\end{enumerate}
In particular, the associated mass matrix $\cM(p)$ is irreducible if and only if $\beta$ is pseudo-Anosov;
or equivalently $\cM(p)$ is reducible if and only if $\beta$ is reducible or periodic.
\end{proposition}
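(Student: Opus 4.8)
The plan is to prove the three one-directional implications (i)--(iii) and then close up formally: the three matrix shapes (diagonal, genuinely triangular, fully positive) exhaust the non-negative $2\times 2$ matrices, while periodic, reducible and pseudo-Anosov exhaust $\B(I_2(n))$ and are mutually exclusive (\cref{defn: braid types}), so an implication from each shape to a distinct type matches the two partitions up and yields every converse --- in particular the equivalence of irreducibility of $\cM(p)$ with $\beta$ being pseudo-Anosov. Two standing facts are used throughout. First, since $p$ is a closed path, traversing it $N$ times recognises $\overline{\beta}^{N}$, so $\cM(p^{N})=\cM(p)^{N}$ and $\cS(p^{N})=\beta^{N}$. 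Second, $\cM(p)$ is non-negative for every $t$: an arrow labelled $\gamma^{\pm1}$ has mass matrix a strictly positive scalar multiple of $I_2$ --- using $\gamma^{\pm n}(P_i)\cong P_i\<\pm 2n\>[\pm(2n-2)]$ from iterating \cref{lemma for braid relation} together with \cref{gamma sends semistable pieces}, the scalar being $1$ at $t=0$ --- while a twist arrow has mass matrix assembled from the non-negative quantities $|\cZ(E_i)|e^{\phi_i t}$ over $\tau_R$-HN pieces (in the even-$n$ case the same applies to both label families $\sigma_{\gamma^{j}P_1},\sigma_{\gamma^{j}P_2}$).

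Implication (iii) is the model case and the one I am most confident about. If $\cM(p)$ is fully positive it is irreducible for every $t$, so \cref{prop: mass growth computed by PF} gives $h_0(\beta)=\log PF\bigl(\cM(p)|_{t=0}\bigr)$. A fully positive $2\times 2$ matrix has a strictly dominant Perron--Frobenius eigenvalue; granting the determinant lemma below that $|\det\cM(p)|=1$ at $t=0$, its second eigenvalue has modulus $<1$, so $PF\bigl(\cM(p)|_{t=0}\bigr)>1$ and hence $h_0(\beta)>0$. By \cref{periodic reducible entropy 0 when t=0} this forces $\beta$ to be neither periodic nor reducible, hence pseudo-Anosov.

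Implications (i) and (ii) are proved along a parallel but heavier line. If $\cM(p)$ is diagonal (resp. genuinely triangular), one first shows that $\beta$ fixes, up to shift, both (resp. exactly one) of the two spherical objects $X_1,X_2$ with $\cS(v)=[X_1,X_2]$; here the no-$\tau_R$-HN-cancellation results \cref{sigma1 HN preserving} and \cref{cor: conditions of automaton odd} are needed so that the mass matrix faithfully records the $\tau_R$-support of $\beta(X_i)$, and the sphericity of the $X_i$ rules out an auxiliary $\Pi_a$ with $a\neq 0,n-2$ tagging along (cf.\ \cref{P_i involution}). In the diagonal case, $\beta$ fixing both $X_1,X_2$ up to shift makes the Burau matrix $\rho(\beta)$ diagonal, and faithfulness of $\rho$ (\cite{lehrer_xi_2001}) together with the classification of periodic elements of $\B(I_2(n))$ then forces $\beta$ to be a power of the central element, hence periodic. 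In the triangular case $\beta$ fixes one of $X_1,X_2$ up to shift; cycling the closed path $p$ --- i.e.\ conjugating $\beta$ --- by means of \cref{normal form}, \cref{conjugate odd}, \cref{reduce word length odd} and \cref{beta^2 not recognised gives periodic} reduces $\beta$, up to conjugacy, to a braid $\sigma_i^{k}\chi^{\ell}$ with $k\neq 0$, exhibiting it as reducible (strict positivity of the surviving off-diagonal entry is automatic, since a vanishing one returns us to the diagonal case). As a consistency check, the converse of (i) is elementary: a periodic $\beta$ has $\beta^{N}$ central, acting as a pure shift $\<a\>[b]$ up to $-\otimes\Pi_{n-2}$ (invisible to mass), so $\cM(p)^{N}=e^{(b-a)t}I_2$, and a non-negative matrix whose $N$-th power is a positive scalar multiple of $I_2$ must itself be one --- by Perron--Frobenius if it had two positive off-diagonal entries, and by a positive geometric-series argument if it were genuinely triangular.

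The two places where real work is hidden are: the determinant lemma $|\det\cM(p)|=1$ at $t=0$ for every closed path $p$ (the only input to (iii) beyond formalities), and the step ``$\beta$ fixes a $\tau_R$-generating spherical object up to shift $\Longrightarrow$ $\beta$ has the expected Coxeter type'' (used in (i) and (ii)). For the former, $\det\cM(p)=\prod_i\det\cM(e_i)$ reduces it to a single arrow: $\gamma^{\pm1}$-arrows contribute $1$ at $t=0$, and for a twist arrow $\sigma_{\gamma^{k}P_i}$ one reads $\cM(e)$ off the explicit descriptions of $\sigma_1(\gamma^{j}P_a)$ and $\sigma_2(\gamma^{j}P_a)$ in \cref{eqn: sigma1 on V_j generator 1}--\cref{eqn: sigma1 on V_j generator 2} (and the even-$n$ analogues \cref{lemma: phases of object even sigma1}, \cref{lemma: phases of object even sigma2}), the point being that the absence of HN cancellation makes $\cM(e)|_{t=0}$ differ from the determinant-$\pm1$ action of the twist on $K_0(\cK)$ at $q=-1$ only by a sign pattern preserving $|\det|$. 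For the latter, the argument runs through the root-system geometry of \cref{fig: root system general} (a braid preserving a root direction corresponds to fixing a spherical object) together with the cycling machinery cited above; this is where the main obstacle lies, and everything else is bookkeeping around it.
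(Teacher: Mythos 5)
The paper's proof is substantially more direct than yours and works entirely at the level of the factor structure of $\cM(p)$. It classifies the mass matrix of every individual arrow into four forms --- diagonal, lower-triangular, upper-triangular, and fully positive, with the nonzero entries always \emph{strictly} positive --- and then observes that since strictly positive entries cannot cancel, a diagonal product $\cM(p)=\prod\cM(e_i)$ forces every factor diagonal, hence every arrow labelled $\gamma^{\pm1}$, hence $\beta=\gamma^s$ periodic by definition. Likewise a triangular product forces each factor to be diagonal or triangular of the same orientation, which pins the word down to the form $\sigma_{\gamma^{j+k}(P_1)}\cdots\sigma_{\gamma^{j}(P_1)}\gamma^{\ell n-k-1}$ (odd $n$), and a short manipulation using $\sigma_1\gamma^{-1}=\sigma_2^{-1}$ exhibits a conjugate of $\sigma_2^{-k-1}\chi^{\ell}$. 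For (iii) the paper simply uses $PF\dim(\Pi_a)\geq1$ to get that every entry of $\cM(p)|_{t=0}$ is $\geq1$, so $PF(\cM(p)|_{t=0})\geq\min_i\sum_j(\cM(p)|_{t=0})_{i,j}\geq2$, giving $h_0(\beta)>0$ and ruling out the periodic and reducible cases via \cref{periodic reducible entropy 0 when t=0}.

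Your route for (i)--(ii) --- deducing from the matrix shape that $\beta$ fixes one or both of $X_1,X_2$ up to shift and then passing through the Burau representation and root-system geometry --- has two concrete gaps. A diagonal mass matrix only says the $\tau_R$-support of $\beta(X_i)$ lies in $\<X_i\>^\oplus_{\tau_R}$; since this closure contains objects of many distinct phases, that does not by itself force $\beta(X_i)\cong X_i\otimes\Pi_a\<k\>[\ell]$, and the indecomposability/semistability argument you gesture at is not supplied. More seriously, the inference ``$\rho(\beta)$ diagonal plus faithfulness implies $\beta$ periodic'' is not a faithfulness statement: you would need to prove separately that the only braids with diagonal Burau matrix are powers of the central element, which is close to the very thing being proved. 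For (iii), your determinant lemma $|\det\cM(p)|=1$ at $t=0$ is left unproved; it is consistent with the $n=5$ examples (there $\delta^2-\delta=1$), and may well be true, but the paper's $PF\dim\geq1$ observation sidesteps it entirely and in fact yields the stronger bound $PF\geq 2$. The missing idea throughout is the one the paper hangs the whole proof on: strict positivity of each arrow's entries makes the \emph{shape} of the product immediately constrain the shapes of the factors, and hence the letters in the word.
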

\begin{proof}
Let us start with the case where $n$ is odd.
We shall look for the possible forms of the mass matrices that occur in our mass automaton.
By definition of $\cM$, it is easy to see that all arrows in the automaton labelled by $\gamma$ has corresponding mass matrix of the form
$
\begin{bmatrix}
* & 0 \\
0 & *
\end{bmatrix}.
$
Using \cref{eqn: sigma1 on V_j generator 1} and \cref{eqn: sigma1 on V_j generator 2}, we see that the arrow
\[
v_{n-1} \xra{\sigma_{P_1}} v_0
\]
has corresponding mass matrix of the form
\[
\cM( v_{n-1} \xra{\sigma_{P_1}} v_0 )= 
\begin{bmatrix}
* & 0 \\
* & *
\end{bmatrix}.
\]
Similarly, the loop arrow
\[
v_0 \xra{\sigma_{P_1}} v_0 
\]
has corresponding mass matrix of the form
\[
\cM( v_0 \xra{\sigma_{P_1}} v_0 )= 
\begin{bmatrix}
* & * \\
0 & *
\end{bmatrix},
\]
whereas all the other arrows labelled by $\sigma_{P_1}$ have corresponding mass matrices of the form 
\[
\cM( v_{j} \xra{\sigma_{P_1}} v_0 )= 
\begin{bmatrix}
* & * \\
* & *
\end{bmatrix}, \quad \text{for } j \neq \frac{n-1}{2}.
\]

Using the fact that $\gamma^j \sigma_{P_1} \gamma^{-j} = \sigma_{\gamma^j(P_1)}$ and (1) from \cref{prop: properties of tau_r mass automaton},
we can classify all of the mass matrices of the automaton into the following 4 possible forms:
\begin{enumerate}
\item form 1 -- 
$
\begin{bmatrix}
* & 0 \\
0 & *
\end{bmatrix}
$.
All the arrows labelled by $\gamma$ and $\gamma^{-1}$ have corresponding mass matrices of this form.
\item form 2 -- 
$
\begin{bmatrix}
* & 0 \\
* & *
\end{bmatrix}
$.
All the arrows 
\[
v_{j+n-1} \xra{\sigma_{\gamma^j(P_1)}} v_j
\]
have corresponding mass matrices of this form.
\item form 3 --
$
\begin{bmatrix}
* & * \\
0 & *
\end{bmatrix}
$.
All the arrows (these are the loops)
\[
v_{j} \xra{\sigma_{\gamma^j(P_1)}} v_j
\]
have corresponding mass matrices of this form.
\item form 4 -- 
$
\begin{bmatrix}
* & * \\
* & *
\end{bmatrix}
$.
All other arrows have corresponding mass matrices of this form.
\end{enumerate}

Now let $\overline{\beta}$ be the given $\Lambda_{\tau_R}(n)$-normal form recognised by a closed path $p = (e_1, e_2, ..., e_k)$, with corresponding mass matrix given by
\[
\cM(p) = \cM(e_k)\cM(e_{k-1})...\cM(e_1).
\]
Since each $\cM(e_i)$ is of one of the four forms above with entries always non-negative, it follows that $\cM(p)$ must also be one of the four forms above.
As such, if we can prove (i), (ii), and (iii) in the statement of the proposition, then the final statement ``$\cM(p)$ is irreducible if and only if $\beta$ is pseudo-Anosov'' follows directly from the definitions.

For part (i), suppose $\cM(p)$ is of the form $\begin{bmatrix}
* & 0 \\
0 & *
\end{bmatrix}$.
Then it follows that all the $\cM(e_i)$ are of the same form $\begin{bmatrix}
* & 0 \\
0 & *
\end{bmatrix}$.
In particular, each edge $e_i$ must be labelled by $\gamma^{\pm 1}$, and hence $\beta$ is a periodic braid by definition.

Now for part (ii), let us first suppose that $\cM(p)$ is of the form $\begin{bmatrix}
* & 0 \\
* & *
\end{bmatrix}$.
Then each $\cM(e_i)$ must either be of the form $\begin{bmatrix}
* & 0 \\
0 & *
\end{bmatrix}$ or $\begin{bmatrix}
* & 0 \\
* & *
\end{bmatrix}$, with at least one of them having the latter form.
Using the fact that $p$ is a closed path, we can deduce that $\overline{\beta}$ must be of the form:
\[
\overline{\beta} = \sigma_{\gamma^{j+k}(P_1)}\cdots\sigma_{\gamma^{j+1}(P_1)}\sigma_{\gamma^{j}(P_1)}\gamma^{\ell n - k - 1}
\]
for some $0 \leq j \leq n-1$, $k \in \mathbb{N}_0$ and $\ell \in \Z$.
A simple calculation using $\sigma_1\gamma^{-1} = \sigma_2^{-1}$ shows that this can be simplified to 
\[
\beta = \gamma^{j+k}\sigma_2^{-k}\sigma_1\gamma^{\ell n - k - 1 - j}.
\]
Conjugating by $\gamma^{-j-k-1}$, we see that
\[
\gamma^{-j-k-1}\beta \gamma^{j+k+1} = \sigma_1^{-1} \sigma_2^{-k-1} \sigma_1 \gamma^{\ell n}.
\]
Since $\gamma^{\ell n}$ is central, we see that $\beta$ is conjugate to $\sigma_2^{-k-1} \gamma^{\ell n}$, which is reducible.

Now suppose instead that the associated mass matrix $\cM(p)$ is of the form
$
\begin{bmatrix}
* & * \\
0 & *
\end{bmatrix}
$.
Then each $\cM(e_i)$ must either be of the form $\begin{bmatrix}
* & 0 \\
0 & *
\end{bmatrix}$ or $\begin{bmatrix}
* & * \\
0 & *
\end{bmatrix}$, with at least one of them having the latter form.
Once again by rewriting $\beta$ into $\Lambda_{\tau_R}(n)$-normal form if necessary, we may assume without loss of generality that
\[
\beta = \sigma^k_{\gamma^j(P_1)} \gamma^{\ell n}
\]
with $0 \leq j \leq n-1$, for some $k \in \mathbb{N}$ and $\ell \in \Z$, which again shows that $\beta$ is reducible.

Lastly, for part (iii), suppose that $\cM(p)$ is irreducible.
Then $\cM(p)$ satisfies the requirements of \cref{prop: mass growth computed by PF}, which allows us to compute the mass growth of $\beta$ using the Perron-Frobenius eigenvalue of $\cM(p)$:
\[
h_0(\beta) = \log PF(\cM(p)|_{t=0}).
\]
We have shown previously in \cref{periodic reducible entropy 0 when t=0} that the periodic braids and reducible braids have zero mass growth at $t=0$.
Therefore to show $\beta$ is pseudo-Anosov, it is sufficient to show that $\log PF(\cM(p)|_{t=0})>0$, or equivalently $PF(\cM(p)|_{t=0}) > 1$.

Since we are given that $\cM(p)$ is irreducible, so it must be of the form $\begin{bmatrix}
* & * \\
* & *
\end{bmatrix}$.
Using the fact that
\[
PF\dim(\Pi_a) = \Delta_a \left(2\cos \left( \frac{\pi}{n} \right) \right)  \geq 1,
\]
we see that $\cM(p)$ evaluated at $t=0$ is given by
\[
M_p|_{t=0} = 
\begin{bmatrix}
a & b \\
c & d
\end{bmatrix}, \quad \text{ where } a,b,c,d \geq 1.
\]
Now one can show that $PF(\cM(p)|_{t=0}) > 1$ by using the theorem of Perron-Frobenius:
\[
PF(\cM(p)|_{t=0}) \geq \min_i \sum_j (M_p|_{t=0})_{i,j} \geq 2,
\]
or on a direct analysis on the roots of the characteristic polynomial of $\cM(p)|_{t=0}$.
This completes the proof for the case where $n$ is odd.

The analysis for the case where $n$ is even is similar, where we can classify all of the mass matrices of the automaton into the following 4 possible forms:
\begin{enumerate}
\item form 1 -- 
$
\begin{bmatrix}
* & 0 \\
0 & *
\end{bmatrix}
$.
All the arrows labelled by $\gamma$ and $\gamma^{-1}$ have corresponding mass matrices of this form.
\item form 2 -- 
$
\begin{bmatrix}
* & 0 \\
* & *
\end{bmatrix}
$.
All the arrows 
\[
v_{j+\frac{n}{2}-1} \xra{\sigma_{\gamma^j(P_1)}} v_j, \quad u_{j+\frac{n}{2}-1} \xra{\sigma_{\gamma^j(P_2)}} u_j
\]
have corresponding mass matrices of this form.
\item form 3 --
$
\begin{bmatrix}
* & * \\
0 & *
\end{bmatrix}
$.
All the arrows (these are the loops)
\[
v_{j} \xra{\sigma_{\gamma^j(P_1)}} v_j, \quad u_{j} \xra{\sigma_{\gamma^j(P_2)}} u_j
\]
have corresponding mass matrices of this form.
\item form 4 -- 
$
\begin{bmatrix}
* & * \\
* & *
\end{bmatrix}
$.
All the other arrows have corresponding mass matrices of this form.
\end{enumerate}
The rest of the proof is now similar to the odd case.
\comment{
The characteristic polynomial is then $\lambda^2 - (a+d) \lambda + ad - bc$, and so the roots are
\[
\lambda^\pm = \frac{a+d}{2} \pm \frac{\sqrt{(a+d)^2 - 4(ad-bc)}}{2}.
\]
Using the fact that $a, b, c, d \geq 1$, one can show that $\lambda^+ = PF_0(M_\beta) > 1$, and by \cref{prop: mass growth computed by PF}, we get that
\[
h_0(\beta) = \log PF_0(M_\beta) > 0.
\]
Since the reducible braids and periodic braids have level entropy 0 at $t = 0$ (cf. \cref{periodic reducible entropy 0 when t=0}), $\beta$ must be pseudo-Anosov.
}
\end{proof}

We are now ready to present our algorithm:

\subsubsection*{The algorithm that decides whether a braid is periodic, reducible or pseudo-Anosov} \label{sec: algorithm}
\begin{enumerate}[$\bullet$]
\item Input: A braid $\beta$ in $\B(I_2(n))$.
\item Output: (i) return some conjugate of $\beta$, denoted as out($\beta$),
(ii) return the type of $\beta$: periodic, reducible or pseudo-Anosov, which is the same as the type of out($\beta$), and
(iii) return the mass growth $h_{\tau_R,t}(\beta)$, which again is the same as $h_{\tau_R,t}$(out$(\beta)$).
\end{enumerate}
Start of algorithm:
\begin{enumerate}
\item Write $\beta$ in $\Lambda_{\tau_R}(n)$-normal form, if necessary (using \cref{normal form}):
\[
\overline{\beta} = b^{m_k}_k b^{m_{k-1}}_{k-1} \cdots b^{m_1}_1 \gamma^s.
\]
\item If $\sum_{i=1}^k m_i = 0$, $\beta$ is periodic by definition.\\
Set out($\beta$) $:= \beta$ and compute its entropy $h_t(\beta)$ using \cref{prop: entropy of periodic}.
The algorithm halts.
\item If $\sum_{i=1}^k m_i = 1$, check if $\overline{\beta}\cdot \overline{\beta}$ is a recognised word.

If it is, then there is a closed path $p$ recognising the $\Lambda_{\tau_R}(n)$-normal form of $\beta$ in step (1) (cf. \cref{beta^2 not recognised gives periodic} (1)), and we jump to step (5).\\
Otherwise, $\beta$ must be periodic with $\beta^2 = \gamma^{2s+1}$ (cf. \cref{beta^2 not recognised gives periodic} (2)).\\
Set out($\beta$) $:= \beta$ and compute its mass growth $h_t(\beta) = \frac{1}{2}h_t(\beta^2) = \frac{1}{2}h_t(\gamma^{2s+1})$ using \cref{prop: entropy of periodic}.
The algorithm halts.
\item We are now left with the case where $\sum_{i=1}^k m_i \geq 2$. \\
If the word
\[
\overline{b_k^{-1}} \cdot \overline{\beta} \cdot \overline{b_k} = b^{m_k-1}_k b^{m_{k-1}}_{k-1} \cdots b^{m_1}_1 \gamma^s b_k
\]
is a recognised word, \cref{conjugate odd} tells us that $\overline{\beta}$ is recognised by some closed path $p$ and we jump to step (5). \\
Else, we rewrite $b_k^{-1} \beta b_k$ into a $\Lambda_{\tau_R}(n)$-normal form as in \cref{reduce word length odd} and set it as our new $\beta$.
Repeat from (1).
\item If we arrive at this step, then we must have a braid $\beta$ with $\overline{\beta}$ in $\Lambda_{\tau_R}(n)$-normal form that is recognised by a closed path $p$.
Moreover, $\sum_{i=1}^k m_i \geq 1$.
Using \cref{reducible matrix iff reducible or periodic}, the irreducibility of  the mass matrix $\cM(p)$ associated to $p$ decides the type of $\beta$ as follows:
\begin{enumerate}[-]
\item If $\cM(p)$ is reducible, then $\beta$ must be reducible. This is because $\sum_{i=1}^k m_i \geq 1$, which implies that $\cM(p) =
\begin{bmatrix}
* & * \\
0 & *
\end{bmatrix}$ or
$
\begin{bmatrix}
* & 0 \\
* & *
\end{bmatrix}$.
We proceed as in \cref{reducible matrix iff reducible or periodic} to arrive at a conjugate of $\beta$ of the form $\sigma_i^k \chi^{\ell}$ for some $i\in \{1,2\}, k \neq 0$ and $\ell \in \Z$, where $\chi$ is the generator of the centre of $\B(I_2(n))$.
Set out($\beta) := \sigma_i^k \chi^{\ell}$.
Compute the mass growth $h_t(\beta) = h_t($out$(\beta))$ using \cref{entropy reducible}.
The algorithm halts.
\item If $\cM(p)$ is irreducible, then $\beta$ must be pseudo-Anosov.
Set out($\beta) := \beta$, and we have that $h_t(\beta) = \log PF(\cM(p))$ (cf. \cref{prop: mass growth computed by PF}).
The algorithm halts.
\end{enumerate}
\end{enumerate}
Note that \cref{reduce word length odd} guarantees the termination of this algorithm, since the word length of the $\Lambda_{\tau_R}(n)$-normal forms used are strictly decreasing.

All in all, we obtain the following main theorem of this thesis:
\begin{theorem}\label{theorem: classification}
The algorithm in pg.\pageref{sec: algorithm} provides a way to decide the type of any braid element $\beta \in \B(I_2(n))$: either periodic, reducible or pseudo-Anosov.
Moreover, it allows us to compute the mass growth of any braid, where in particular for $\beta$ pseudo-Anosov, the mass growth for each $t\in \R$ can be computed from the Perron-Frobenius eigenvalue of $\cM(p)$:
\[
h_{\tau_R,t}(\beta) = \log PF(\cM(p)) = h_t(\beta),
\]
with $p$ some closed path in $\Lambda_{\tau_R}(n)$ recognising a word representing some conjugate of $\beta$, and $\cM(p)$ is the corresponding mass matrix of $p$.
\end{theorem}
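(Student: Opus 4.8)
The statement is essentially a summary of the entire algorithmic framework built up in this section, so the plan is to assemble the pieces rather than prove anything genuinely new. First I would verify that the algorithm terminates and produces a well-defined output. Termination follows from \cref{reduce word length odd}: each time step (4) fails to recognise $\overline{b_k^{-1}}\cdot\overline{\beta}\cdot\overline{b_k}$, the conjugate $b_k^{-1}\beta b_k$ has a strictly shorter $\Lambda_{\tau_R}(n)$-normal form, so after finitely many iterations we must either hit a recognised closed path or fall into the low-complexity cases handled directly by steps (2) and (3). Since conjugation preserves braid type and (because mass growth equals level-entropy in $\cK$, via \cref{thm: entropy = mass growth}) preserves mass growth, each conjugacy replacement along the way is harmless; this is where I would explicitly invoke \cref{prop: entropy properties}(2) together with the fact that $\tau_R$ lies in the connected component containing $\tau_0$.

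Next I would handle the three output cases in turn. The periodic case is immediate from \cref{prop: entropy of periodic} once step (2) or (3) identifies that some power of $\beta$ equals $\gamma^{\ell n}$ (or $\gamma^{2s+1}$, which by definition of $\chi$ is again a power of $\chi$). The reducible and pseudo-Anosov cases both route through step (5): by \cref{normal form} and \cref{conjugate odd} (resp.\ \cref{beta^2 not recognised gives periodic}) we arrive at a braid $\beta$ whose $\Lambda_{\tau_R}(n)$-normal form is recognised by a closed path $p$ with $\sum m_i \geq 1$. Then \cref{reducible matrix iff reducible or periodic} dichotomises: if $\cM(p)$ is reducible, the shape constraint $\sum m_i \geq 1$ forces $\cM(p)$ to be upper- or lower-triangular (not diagonal), and the conjugation computations inside that proposition's proof exhibit a conjugate of the form $\sigma_i^k\chi^\ell$, whose mass growth comes from \cref{entropy reducible}. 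If $\cM(p)$ is irreducible, the same proposition gives that $\beta$ is pseudo-Anosov, and \cref{prop: mass growth computed by PF} — applied to the closed path $p$, using that $\cM(p)$ is a non-negative irreducible matrix for all $t$ — yields $h_{\tau_R,t}(\beta)=\log PF(\cM(p))=h_t(\beta)$ for every $t\in\R$.

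The one point that deserves care, and which I expect to be the main obstacle in writing this cleanly, is the bookkeeping across the iterated conjugations in step (4): one must be sure that the final closed path $p$ produced by the algorithm recognises a word representing a braid conjugate to the \emph{original} input $\beta$, not merely to some intermediate braid, and that the mass growth (and type) reported at the end genuinely equals that of the input. This is a telescoping argument — each step conjugates by $b_k$, and composition of conjugations is conjugation — but it interacts with the subtlety that $\Lambda_{\tau_R}(n)$-normal forms are not unique as \emph{words} even though the underlying braid is determined. I would isolate this as a short lemma stating: if the algorithm halts with output $\mathrm{out}(\beta)$, then $\mathrm{out}(\beta)$ is conjugate to $\beta$ in $\B(I_2(n))$, of the same type, with $h_{\tau_R,t}(\mathrm{out}(\beta)) = h_{\tau_R,t}(\beta)$. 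With that lemma in hand the theorem is an immediate consequence of the step-by-step analysis above. Finally, I would remark that the whole argument is uniform in the parity of $n$, since every proposition it cites (\cref{normal form}, \cref{conjugate odd}, \cref{reduce word length odd}, \cref{beta^2 not recognised gives periodic}, \cref{reducible matrix iff reducible or periodic}, \cref{prop: mass growth computed by PF}) has been established for both odd and even $n$.
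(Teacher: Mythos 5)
Your proposal is correct and takes essentially the same approach as the paper: the theorem is an assembly of the algorithm's step-by-step correctness, where termination comes from \cref{reduce word length odd}, conjugation-invariance of type and mass growth comes from \cref{prop: entropy properties} together with \cref{thm: entropy = mass growth}, and the three output cases are justified by \cref{prop: entropy of periodic}, \cref{beta^2 not recognised gives periodic}, \cref{reducible matrix iff reducible or periodic}, and \cref{prop: mass growth computed by PF}. The telescoping-conjugation concern you flag is real but resolves exactly as you say, and the paper treats the theorem as an immediate consequence of the algorithm without isolating it as a separate lemma.
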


Recall that when specialised to $t=0$, the mass growth of reducible and periodic braids are known to be zero (cf. \cref{periodic reducible entropy 0 when t=0}).
From the proof of \cref{reducible matrix iff reducible or periodic}, we see that for $\beta$ pseudo-Anosov,
\[
h_t(\beta) \geq \log (2) >0.
\]
Combining with \cref{entropy reducible} and \cref{prop: entropy of periodic}, we obtain the following result:
\begin{corollary} \label{cor: categorical entropy t=0}
The mass growth at $t=0$ of $\beta \in \B(I_2(n))$ is given by:
\[
h_0(\beta) = \begin{cases}
	0, &\text{ if $\beta$ is periodic or reducible}; \\
	\log(PF(\cM(p)|_{t=0})) \geq \log(2), &\text{ if $\beta$ is pseudo-Anosov}.
	\end{cases}
\]
Moreover, $h_t(\beta)$, as a whole function, completely determines the type of $\beta$:
\begin{enumerate}[(i)]
\item $\beta$ is periodic if and only if $h_0(\beta) = 0$ and $h_t(\beta)$ is linear over $t$;
\item $\beta$ is reducible if and only if $h_0(\beta) = 0$ and $h_t(\beta)$ is strictly piece-wise linear over $t$; and
\item $\beta$ is pseudo-Anosov if and only if $h_0(\beta) > 0$.
\end{enumerate}
\end{corollary}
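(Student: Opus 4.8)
\textbf{Proof proposal for \cref{cor: categorical entropy t=0}.}

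The plan is to assemble the statement from the three computational results already established, together with the main classification theorem. First I would dispose of the value at $t=0$. If $\beta$ is periodic, then by \cref{prop: entropy of periodic} we have $h_0(\beta) = -\frac{\ell}{k}\cdot 2 \cdot 0 = 0$; if $\beta$ is reducible, then by \cref{entropy reducible} each of the four case formulas evaluates to $0$ at $t=0$ (every term has a factor of $t$). If $\beta$ is pseudo-Anosov, the algorithm on pg.\pageref{sec: algorithm} produces a closed path $p$ in $\Lambda_{\tau_R}(n)$ recognising a word representing a conjugate of $\beta$ whose mass matrix $\cM(p)$ is irreducible, and \cref{prop: mass growth computed by PF} gives $h_0(\beta) = \log PF(\cM(p)|_{t=0})$. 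The lower bound $\log(2)$ comes straight from the proof of \cref{reducible matrix iff reducible or periodic}: an irreducible $\cM(p)$ evaluated at $t=0$ has all entries $\geq 1$ (since $PF\dim(\Pi_a) = \Delta_a(2\cos(\pi/n)) \geq 1$), so the Perron--Frobenius bound $PF(\cM(p)|_{t=0}) \geq \min_i \sum_j (\cM(p)|_{t=0})_{i,j} \geq 2$ applies.

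Next I would prove the ``whole function'' characterisation, which needs both directions. The forward implications are immediate: periodic $\Rightarrow$ $h_t(\beta) = -\frac{\ell}{k}2t$ is linear with $h_0(\beta)=0$ (\cref{prop: entropy of periodic}); reducible $\Rightarrow$ $h_t(\beta)$ is given by one of the four piecewise formulas in \cref{entropy reducible}, and since $k \neq 0$ the two linear pieces on $t\geq 0$ and $t<0$ genuinely differ in slope (e.g.\ for $n$ odd, $k\geq 1$: slope $-2\ell$ for $t\geq 0$ versus $-k-2\ell$ for $t<0$), so it is strictly piecewise linear with $h_0(\beta)=0$; pseudo-Anosov $\Rightarrow$ $h_0(\beta) \geq \log(2) > 0$ by the first paragraph. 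For the converse directions I would use that the three Coxeter-theoretic types are mutually exclusive and exhaustive (\cref{defn: braid types} and the remark following it). So if $h_0(\beta) > 0$, then $\beta$ is neither periodic nor reducible (both would force $h_0(\beta)=0$), hence pseudo-Anosov. If $h_0(\beta) = 0$, then $\beta$ is periodic or reducible; if moreover $h_t(\beta)$ is linear, it cannot be reducible (reducible forces strict piecewise linearity), so it is periodic; and if $h_t(\beta)$ is strictly piecewise linear, it cannot be periodic, so it is reducible. This closes all the equivalences.

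The only genuine subtlety, and the step I would treat most carefully, is confirming that the reducible formulas in \cref{entropy reducible} are \emph{strictly} piecewise linear rather than accidentally linear --- i.e.\ that the conjugating identification out$(\beta) = \sigma_i^k\chi^\ell$ produced by the algorithm always has $k \neq 0$. This is guaranteed by step (5) of the algorithm: the reducible case is entered precisely when $\sum_i m_i \geq 1$ and $\cM(p)$ is reducible, and the manipulation in the proof of \cref{reducible matrix iff reducible or periodic} then yields a conjugate of the form $\sigma_i^{-k-1}\chi^{\ell}$ or $\sigma_i^k\chi^{\ell n}$ with the exponent of $\sigma_i$ strictly nonzero (a braid with $k=0$ would be purely a power of $\gamma$, hence periodic, contradicting exclusivity). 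With $k\neq 0$ secured, the slopes of the two linear pieces in \cref{entropy reducible} differ by exactly $|k|$, so strict piecewise linearity holds. Everything else is bookkeeping over the already-proven lemmas.
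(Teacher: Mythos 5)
Your proof is correct and takes essentially the same approach the paper uses: combine \cref{prop: entropy of periodic} and \cref{entropy reducible} for the periodic and reducible cases, extract the $\log(2)$ lower bound from the Perron--Frobenius argument inside the proof of \cref{reducible matrix iff reducible or periodic}, and then read off the converse implications from mutual exclusivity and exhaustivity of the three Coxeter-theoretic types. Your care in checking that $k \neq 0$ forces genuine slope mismatch in \cref{entropy reducible} is the right thing to verify, though one can get it directly from \cref{defn: braid types} (which requires $k \in \Z \setminus \{0\}$) without appealing to the algorithm's internal steps.
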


\subsection{Example: $n = 5$} \label{example n=5}
\comment{
The Temperley-Lieb-Jones category in place is $\TLJ_5$, which is a fusion category with four simple objects $\{ \Pi_0, \Pi_1, \Pi_2, \Pi_3 \}$.

The Burau representation $\rho: \B(I_2(5)) \ra \GL_2(\Z[2\cos\left( \frac{\pi}{5}\right)][q,q^{-1}])$ is a $q$-deformation of the standard geometric representation, and can be described explicitly by the following matrices on the generators:
\begin{align*}
\rho(\sigma_1) = 
	\begin{bmatrix}
	-q^2 & -\delta q \\
	0    & 1
	\end{bmatrix}, \quad
\rho(\sigma_2) = 
	\begin{bmatrix}
	-1        & 0 \\
	-\delta q & -q^2
	\end{bmatrix}
\end{align*}
where $\delta := 2\cos\left( \frac{\pi}{5} \right) \in \R$ is the golden ratio, satisfying $\delta^2 = 1 + \delta$.
}

\comment{
To simplify calculations, we shall fix $t=0$ throughout this example.
As such, the two $\Z$-gradings $\<-\>$ and $[-]$ will not matter, hence they will be left out in all of the upcoming calculations.
Furthermore, all HN matrices of the root automaton will have its entries in $\R_{\geq 0}$, and 
}

This subsection contains the case $n=5$ as a working example.
We remind the reader that
\[
PF\dim: K_0(\TLJ_5) \ra \R
\]
is defined by $[\Pi_1], [\Pi_2] \mapsto 2\cos\left( \frac{\pi}{5} \right)=: \delta$ (which is the golden ratio), and $[\Pi_0] , [\Pi_3] \mapsto 1$.

The complete $I_2(5)$ $\tau_R$-mass automaton is given in \Cref{fig: mass automaton 5}, where all arrows of the same colour are labelled by the same braid.
We have that $\cS(v_j) = [\gamma^j P_1, \gamma^j P_2]$ and we will describe the mass matrices associated to each arrow below.

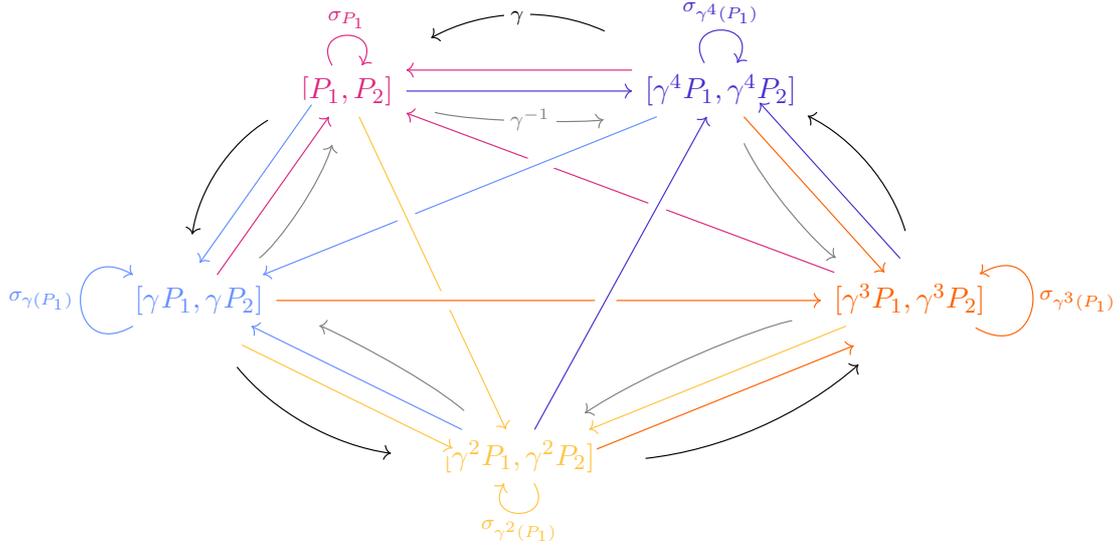
\begin{figure}[H]
\[\begin{tikzcd}[column sep = 1 ex]
	& {\color{CBF_pink} [P_1,P_2]} &&&& {\color{CBF_purple} [\gamma^{4}P_1, \gamma^{4}P_2]} \\
	\\
	\\
	{\color{CBF_blue} [\gamma^{}P_1, \gamma^{}P_2] } &&&&&& {\color{CBF_orange} [\gamma^{3}P_1, \gamma^{3}P_2]} \\
	\\
	&&& {\color{CBF_yellow} [\gamma^{2}P_1, \gamma^{2}P_2]}
	%
	%
	\arrow["\gamma" description, from=1-6, to=1-2, shift right=3, curve={height=15pt}, shorten >= 1em, shorten <= 1em]
	\arrow[from=4-1, to=1-2, curve={height=12pt}, shorten >= 1em, shorten <= 1em, gray]
	\arrow["\sigma_{P_1}", from=1-2, to=1-2, CBF_pink, loop, out=120, in=60, looseness=4]
	\arrow[from=1-6, to=1-2, crossing over, shift right=3, CBF_pink]
	\arrow[from=4-7, to=1-2, crossing over, CBF_pink]
	\arrow[from=4-1, to=1-2, crossing over, CBF_pink]
	%
	%
	\arrow[from=1-2, to=4-1, shift right=3, curve={height=12pt}, shorten >= 1em, shorten <= 1em]
	\arrow[from=6-4, to=4-1, curve={height=12pt}, shorten >= 1.5em, shorten <= 1em, gray]
	\arrow["\sigma_{\gamma(P_1)}", from=4-1, to=4-1, CBF_blue, loop, out=-150, in=150, looseness=4,  shift left=3]
	\arrow[from=1-2, to=4-1, crossing over, shift right=3 ,CBF_blue]
	\arrow[from=1-6, to=4-1, crossing over, CBF_blue]
	\arrow[from=6-4, to=4-1, crossing over, CBF_blue]
	%
	%
	\arrow[from=4-1, to=6-4, shift right=3, curve={height=12pt}, shorten >= 1em, shorten <= 1em]
	\arrow[from=4-7, to=6-4, curve={height=12pt}, shorten >= 1em, shorten <= 1em, gray]
	\arrow["\sigma_{\gamma^2(P_1)}", from=6-4, to=6-4, CBF_yellow, loop, out=-60, in=-120, looseness=4]
	\arrow[from=4-1, to=6-4, crossing over, shift right=3, CBF_yellow]
	\arrow[from=1-2, to=6-4, crossing over, CBF_yellow]
	\arrow[from=4-7, to=6-4, crossing over, CBF_yellow]
	%
	%
	\arrow[from=6-4, to=4-7, shift right=3, curve={height=12pt}, shorten >= 1em, shorten <= 1em]
	\arrow[from=1-6, to=4-7, curve={height=12pt}, shorten >= 1em, shorten <= 1em, gray]
	\arrow["\sigma_{\gamma^3(P_1)}", from=4-7, to=4-7, CBF_orange, loop, out=-30, in=30, looseness=4,  shift right=3, swap]
	\arrow[from=6-4, to=4-7, crossing over, shift right=3, CBF_orange]
	\arrow[from=1-6, to=4-7, crossing over, CBF_orange]
	\arrow[from=4-1, to=4-7, crossing over, CBF_orange]
	%
	%
	\arrow[from=4-7, to=1-6, shift right=3, curve={height=12pt}, shorten >= 1em, shorten <= 1em]
	\arrow["{\color{gray} \gamma^{-1}}" description, from=1-2, to=1-6, curve={height=12pt}, shorten >= 1em, shorten <= 1em, gray]
	\arrow["\sigma_{\gamma^4(P_1)}", from=1-6, to=1-6, CBF_purple, loop, out=120, in=60, looseness=4]
	\arrow[from=6-4, to=1-6, crossing over, CBF_purple]
	\arrow[from=4-7, to=1-6, crossing over, shift right=3, CBF_purple]
	\arrow[from=1-2, to=1-6, crossing over, CBF_purple]
\end{tikzcd}\]

\caption{The $\tau_R$-mass automaton $\Lambda_{\tau_R}(5)$ of $\B(I_2(5))$, with color-coded arrows.}
\label{fig: mass automaton 5}
\end{figure}

Any arrow labelled by $\gamma^{\pm 1}$ that is \emph{not} ${\color{gray}\gamma^{-1}}: {\color{CBF_pink}v_0} \leftrightarrows {\color{CBF_purple}v_4} : \gamma$ will have the same mass matrix given by the identity matrix:
\[
\cM(\gamma) = \begin{bmatrix}
1 & 0 \\
0 & 1
\end{bmatrix}
=
\cM({\gamma^{-1}}),
\]
whereas
\[
\cM( {\color{CBF_pink} v_0} {\color{gray} \xra{\gamma^{-1}}} {\color{CBF_purple}v_4}) = \begin{bmatrix}
e^{2t} & 0 \\
0 & e^{2t}
\end{bmatrix}
\]
and
\[
\cM( {\color{CBF_purple}v_4} \xra{\gamma} {\color{CBF_pink}v_0}) = \begin{bmatrix}
e^{-2t} & 0 \\
0 & e^{-2t}
\end{bmatrix}.
\]
A simple application of \cref{gamma sends semistable pieces} tells us that by conjugating with the appropriate $\cM(v_i \xra{\gamma^{\pm 1}} v_{i\pm 1})$, the mass matrices $\cM(a)$ for each arrow $a$ can be obtained from just the four mass matrices corresponding to the {\color{CBF_pink} pink} arrows (labelled by {\color{CBF_pink} $\sigma_{P_1}$}).
To illustrate, we start by using \cref{lemma for braid relation} to compute the following (or refer to \cref{eqn: sigma1 on V_j generator 1} and \cref{eqn: sigma1 on V_j generator 2}):
\begin{align*}
\sigma_{P_1}(\gamma(P_1)) &\cong 0 \ra P_1\otimes \Pi_2\<4\>[3] \ra P_2 \otimes \Pi_1\<3\>[2] \ra 0 \\
\sigma_{P_1}(\gamma(P_2)) &\cong 0 \ra P_1\otimes \Pi_3\<3\>[3] \ra P_2 \otimes \Pi_2\<2\>[2] \ra 0,
\end{align*}
which shows that the $\tau_R$-HN semistable pieces of $\sigma_{P_1}(\gamma(P_1))$ and $\sigma_{P_1}(\gamma(P_2))$ are given by $(P_2 \otimes \Pi_1\<3\>[2], P_1\otimes \Pi_2\<4\>[3])$ and $(P_2 \otimes \Pi_2\<2\>[2], P_1\otimes \Pi_3\<3\>[3])$ respectively.
Thus, the corresponding mass matrix of the arrow ${\color{CBF_blue} v_1} {\color{CBF_pink} \xra{\sigma_{P_1}} v_0}$ is then given by 
\[
\cM( {{\color{CBF_blue} v_1} {\color{CBF_pink} \xra{\sigma_{P_1}} v_0}})
=
\begin{bmatrix}
\delta e^{-t} & 1 \\
\delta e^{-t} & \delta
\end{bmatrix}.
\]
All mass matrices of the arrows $v_{i+1} \xra{\sigma_{\gamma^i(P_i)}} v_{i}$ can then be obtained from conjugating with the appropriate mass matrices of arrows labelled by $\gamma$; we remind the reader again that this does depend on the arrows and not just $\gamma^{\pm 1}$.
For example, $\cM( {\color{CBF_yellow} v_2} {\color{CBF_blue} \xra{\sigma_{\gamma(P_1)} } v_1})$ agrees with $\cM( {{\color{CBF_blue} v_1} {\color{CBF_pink} \xra{\sigma_{P_1}} v_0}})$ since the conjugation only involves the identity matrices:
\[
\cM( {\color{CBF_yellow} v_2} {\color{CBF_blue} \xra{\sigma_{\gamma(P_1)} } v_1}) = 
\cM( {\color{CBF_pink} v_0} \xra{\gamma} {\color{CBF_blue}v_1})
\cM( {{\color{CBF_blue} v_1} {\color{CBF_pink} \xra{\sigma_{P_1}} v_0}})
\cM( {\color{CBF_yellow} v_2} {\color{gray} \xra{\gamma^{-1}} } {\color{CBF_blue}v_1})
=\begin{bmatrix}
\delta e^{-t} & 1 \\
\delta e^{-t} & \delta
\end{bmatrix},
\]
whereas one of the mass matrix of $\gamma^{-1}$ (namely $\cM( {\color{CBF_pink} v_0} {\color{gray} \xra{\gamma^{-1}}} {\color{CBF_purple}v_4})$) in obtaining $\cM( {\color{CBF_pink} v_0} {\color{CBF_purple} \xra{\sigma_{\gamma^4(P_1)} } v_4})$ is not the identity matrix:
\begin{align*}
&\cM( {\color{CBF_pink} v_0} {\color{CBF_purple} \xra{\sigma_{\gamma^4(P_1)} } v_4}) \\
=
&\cM( {\color{CBF_orange} v_3} \xra{\gamma} {\color{CBF_purple}v_4}) \cdots
\cM( {\color{CBF_pink} v_0} \xra{\gamma} {\color{CBF_blue}v_1}) \\
&\cM( {\color{CBF_blue}v_1} {\color{CBF_pink} \xra{\sigma_{P_1}} v_0}) \\
&\cM( {\color{CBF_yellow}v_2} {\color{gray} \xra{\gamma^{-1}}} {\color{CBF_blue}v_1})
\cM( {\color{CBF_orange}v_3} {\color{gray} \xra{\gamma^{-1}}} {\color{CBF_yellow}v_2})
\cM( {\color{CBF_purple} v_4} {\color{gray} \xra{\gamma^{-1}}} {\color{CBF_orange}v_3})
\cM( {\color{CBF_pink} v_0} {\color{gray} \xra{\gamma^{-1}}} {\color{CBF_purple}v_4}) \\
= &\begin{bmatrix}
\delta e^{t} & e^{2t} \\
\delta e^{t} & \delta e^{2t}
\end{bmatrix}.
&
\end{align*}
Note however that all these matrices evaluated at $t=0$ are indeed the same, since all mass matrices of arrows labelled by $\gamma$ are the identity matrix when $t=0$.

We shall now illustrate the algorithm with some examples.
Consider the braid given by $\beta := \sigma_{P_1}^2 \sigma_{P_2}^2 \in \B(I_2(5))$.
Note that this is already in $\Lambda_{\tau_R}(5)$-normal form, and $\sum_{i=1}^k m_i = 4$.
We see that its conjugate
$
\overline{\sigma_{P_1}^{-1}} \cdot \overline{\sigma_{P_1}^2\sigma_{P_2}^2} \cdot \overline{\sigma_{P_1}} = \sigma_{P_1} \sigma_{P_2}^2 \sigma_{P_1}
$
is not a recognised word,  so we rewrite the conjugate into $\Lambda_{\tau_R}(n)$-normal form:
$
\sigma_{P_1} \sigma_{P_2} \gamma = \sigma_{P_1} \sigma_{\gamma^2P_1} \gamma.
$
We now have $\sum_{i=1}^k m_i = 2$, and taking conjugation by $\sigma_{P_1}^{-1}$ again gives us the expression
$
\overline{\sigma_{P_1}^{-1}} \cdot \overline{\sigma_{P_1} \sigma_{\gamma^2P_1} \gamma} \cdot \overline{\sigma_{P_1}} = \sigma_{\gamma^2P_1} \gamma \sigma_{P_1},
$
which is now a recognised word.
Hence $\overline{\beta} = \sigma_{P_1} \sigma_{\gamma^2P_1} \gamma$ is recognised in the automaton by a closed path $p$, which is given by
\[
{\color{CBF_pink} v_0} \xra{\gamma} {\color{CBF_blue} v_1} {\color{CBF_orange} \xra{\sigma_{\gamma^2P_1}} v_3} {\color{CBF_pink} \xra{\sigma_{P_1}} v_0},
\]
where the corresponding mass matrices are
\[
\cM({\color{CBF_pink}v_0}) \xra{
	\begin{bmatrix}
	1 & 0 \\
	0 & 1
	\end{bmatrix}		
	} 
\cM({\color{CBF_blue}v_1}) \xra{
	\begin{bmatrix}
	\delta   & \delta e^t \\
	1        & \delta e^t
	\end{bmatrix}
	} 
\cM({\color{CBF_orange}v_3}) \xra{
	\begin{bmatrix}
	\delta e^{-2t} & \delta e^{-t}\\
	e^{-2t}        & \delta e^{-t}
	\end{bmatrix}
	}
\cM({\color{CBF_pink}v_0}).
\]
Thus
\[
\cM(p) = \begin{bmatrix}
	\delta e^{-2t} & \delta e^{-t}\\
	e^{-2t}        & \delta e^{-t}
	\end{bmatrix}
	\begin{bmatrix}
	\delta   & \delta e^t \\
	1        & \delta e^t
	\end{bmatrix}
 = \delta
 	\begin{bmatrix}
	e^{-t} + \delta e^{-2t} & \delta(e^{-t} + 1) \\
	e^{-t} + e^{-2t}       & e^{-t} + \delta
	\end{bmatrix},
\]
which is irreducible.
This shows that $\sigma_{P_1}^2 \sigma_{P_2}^2$ is pseudo-Anosov and the mass growth can be calculated from the Perron-Frobenius eigenvalue of the matrix above (apply the quadratic formula and take the the root obtained from the positive sum).
In particular, the mass growth at $t=0$ is given by
\begin{align*}
h_{\tau_R, 0}(\sigma_{P_1}^2 \sigma_{P_2}^2) 
&= \log \left( PF \left( \begin{bmatrix}
	1 + 2\delta & 2(1+\delta) \\
	2\delta       & 1 + 2\delta
	\end{bmatrix} \right) \right)  \\
&= \log \left(2\sqrt{\sqrt{5}+2} + \sqrt{5} + 2 \right) >1.
\end{align*}

Now consider the (rather complicated) braid $\beta:= \sigma_{P_2} \sigma_{P_1} \sigma_{P_2} \sigma_{P_1}^{-1} \sigma_{P_2} \sigma_{P_1} \sigma_{P_2}^{-1} \sigma_{P_1} \sigma_{P_2}^3 \sigma_{P_1}$.
We start by rewriting it into $\Lambda_{\tau_R}(n)$-normal form as follows:
\begin{align*}
\beta &= \sigma_{P_2} \sigma_{P_1} \sigma_{P_2} \gamma^{-1} \sigma_{P_2}^2 \sigma_{P_1}^2 \gamma^{-1} \sigma_{P_1} \sigma_{P_2}^3 \sigma_{P_1} \quad (\text{removing } \sigma_i^{-1}) \\
&= \gamma \sigma_{P_2} \gamma^{-1} \sigma_{P_2} \gamma \sigma_{P_1} \gamma^{-1} \sigma_{P_1} \sigma_{P_2}^2 \gamma \quad (\text{removing non-viable paths with } \gamma) \\
&= \sigma_{\gamma(P_2)} \sigma_{P_2} \sigma_{\gamma(P_1)} \sigma_{P_1} \sigma_{P_2}^2 \gamma \quad (\text{pushing all } \gamma^{\pm1 } \text{ to the right}).
\end{align*}
Now that $\beta$ is in $\Lambda_{\tau_R}(n)$-normal form, we take its conjugate:
\[
\overline{\sigma_{\gamma(P_2)}^{-1}} \cdot \overline{\beta} \cdot \overline{\sigma_{\gamma(P_2)}} = \sigma_{P_2} \sigma_{\gamma(P_1)} \sigma_{P_1} \sigma_{P_2}^2 \gamma \sigma_{\gamma(P_2)},
\]
which is not a recognised word.
So we rewrite it into $\Lambda_{\tau_R}(n)$-normal form:
\begin{align*}
\sigma_{P_2} \sigma_{\gamma(P_1)} \sigma_{P_1} \sigma_{P_2}^2 \gamma \sigma_{\gamma(P_2)} 
&= \sigma_{P_2} \sigma_{\gamma(P_1)} \sigma_{P_1} \sigma_{P_2}^2 \sigma_{1} \gamma \\
&= \sigma_{P_2} \sigma_{\gamma(P_1)} \sigma_{P_1} \sigma_{P_2} \gamma^2.
\end{align*}
Conjugating again, we see that 
\[
\overline{\sigma_{P_2}^{-1}} \cdot \overline{\sigma_{P_2} \sigma_{\gamma(P_1)} \sigma_{P_1} \sigma_{P_2} \gamma^2} \cdot \overline{\sigma_{P_2}} = \sigma_{\gamma(P_1)} \sigma_{P_1} \sigma_{P_2} \gamma^2\sigma_{P_2}
\]
is again not a recognised word, and can be rewritten into $\Lambda_{\tau_R}(n)$-normal form given by
\begin{align*}
\sigma_{\gamma(P_1)} \sigma_{P_1} \sigma_{P_2} \gamma^2\sigma_{P_2} 
&= \sigma_{\gamma(P_1)} \sigma_{P_1} \sigma_{P_2} \sigma_{P_1} \gamma^2 \\
&=\sigma_{\gamma(P_1)} \sigma_{P_1} \gamma^3.
\end{align*}
Conjugating again shows that this is a recognised word with closed path $p$ given by
\[
{\color{CBF_blue} v_1} \xra{\gamma} {\color{CBF_yellow} v_2} \xra{\gamma} {\color{CBF_orange} v_3} \xra{\gamma} {\color{CBF_purple} v_4} {\color{CBF_pink} \xra{\sigma_{P_1}} v_0} {\color{CBF_blue} \xra{\sigma_{\gamma(P_1)}} v_1}.
\]
We claim that this braid is reducible, which can be sufficiently checked from the mass matrix of $p$ at $t=0$, given by
\[
\cM(p)|_{t=0} = 
	\begin{bmatrix}
	1 & 0 \\
	\delta & 1 
	\end{bmatrix}
	\begin{bmatrix}
	1 & 0 \\
	\delta & 1 
	\end{bmatrix}
	\begin{bmatrix}
	1 & 0 \\
	0 & 1 
	\end{bmatrix}^3
= 	\begin{bmatrix}
	1 & 0 \\
	2\delta & 1 
	\end{bmatrix}.
\]
We conclude that $\beta$ is reducible and the mass growth $h_{\tau_R, 0}(\beta)$ at $t=0$ is $0$.
The motivated reader should try and work out what $h_{\tau_R, t}(\beta)$ is, by rewriting $\beta$ into a conjugate of $\sigma_{i}^k \chi^\ell$ following the steps in the proof of \cref{reducible matrix iff reducible or periodic}.

\subsection{Example: $n = 4$} \label{sect: example n=4}
This subsection contains the working example for $n=4$.
We remind the reader that
$
PF\dim: K_0(\TLJ_4) \ra \R,
$
is defined by $[\Pi_1] \mapsto 2\cos\left( \frac{\pi}{4} \right)= \sqrt{2}$, and $[\Pi_0] , [\Pi_2] \mapsto 1$.

The complete $I_2(4)$ $\tau_R$-mass automaton is given in \cref{fig: mass automaton 4}, where all arrows of the same colour are labelled by the same braid.

\begin{figure}[H]
\[
\begin{tikzcd}[column sep = 20ex, row sep = 20ex]
{\color{CBF_pink} [P_1, P_2]}
	\ar[d, "\gamma^{\pm 1}" description, shift right =3]
	\ar[d, color = CBF_purple, curve={height=12pt}, shift right = 8] 
	\ar[out = 120, in = 60, loop,  "\sigma_{P_1}", color = CBF_pink] 
	\ar[rd, color = CBF_yellow, crossing over]
&  
{\color{CBF_blue} [P_2, \sigma_{P_2} P_1]} 
	\ar[d, "\gamma^{\pm 1}" description, shift right =3] 
	\ar[d, color = CBF_yellow, curve={height=-12pt}, shift left = 8]
	\ar[out = 120, in = 60, loop, "\sigma_{P_2}", color=CBF_blue] 
	\ar[l, color = CBF_pink]
\\	
{\color{CBF_purple} [\gamma P_1, \gamma P_2]} 
	\ar[u, "\gamma^{\pm 1}" description, shift right = 3] 
	\ar[u, curve={height= 12pt}, shift right = 8, crossing over, color = CBF_pink]
	\ar[out = 300, in = 240, loop, "\sigma_{\gamma P_1}", color = CBF_purple] 
	\ar[ru, color=CBF_blue, crossing over]
&
{\color{CBF_yellow} [\gamma P_2, \gamma \sigma_{P_2} P_1]}
 	\ar[u, "\gamma^{\pm 1}" description, shift right =3] 
	\ar[u, color=CBF_blue, curve={height=-12pt}, shift left = 8, crossing over]
	\ar[out = 300, in = 240, loop,  "\sigma_{\gamma P_2}", color = CBF_yellow] 
	\ar[l, color = CBF_purple]
\end{tikzcd}
\]
\caption{The $\tau_R$-mass automaton $\Lambda_{\tau_R}(4)$ of  $\B(I_2(4))$, with color-coded arrows. The left column consists of the vertices ${\color{CBF_pink} v_0}, {\color{CBF_purple} v_1}$ and the right column consists of the vertices ${\color{CBF_blue} u_0}, {\color{CBF_yellow} u_1}$.}
\label{fig: mass automaton 4}
\end{figure}
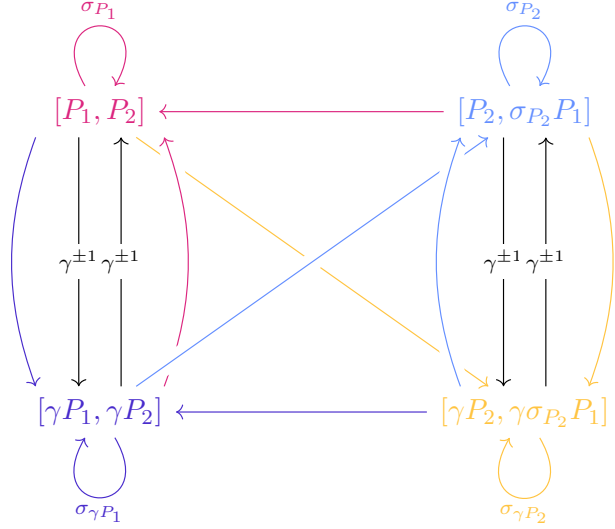
The mass matrices at $t=0$ are given by
\begin{enumerate}[$\bullet$]
\item $\cM({\color{CBF_blue} u_0 \xra{\sigma_{P_2}} u_0})|_{t=0} 
	= \begin{bmatrix}
	1 & \sqrt{2} \\
	0 & 1
	\end{bmatrix} = 
	\cM({\color{CBF_pink} v_0 \xra{\sigma_{P_1}} v_0})|_{t=0}$
	.
\item $\cM({\color{CBF_yellow} u_1} {\color{CBF_blue}  \xra{\sigma_{P_2}} u_0})|_{t=0} = 
	\begin{bmatrix}
	1        & 0 \\
	\sqrt{2} & 1
	\end{bmatrix} =
	\cM({\color{CBF_purple} v_1} {\color{CBF_pink}  \xra{\sigma_{P_1}} v_0})|_{t=0}$
	.
\item $\cM({\color{CBF_purple} v_1} {\color{CBF_blue}  \xra{\sigma_{P_2}} u_0})|_{t=0} = 
	\begin{bmatrix}
	\sqrt{2} & 1 \\
	1        & \sqrt{2}
	\end{bmatrix} =
	\cM({\color{CBF_blue} u_0} {\color{CBF_pink}  \xra{\sigma_{P_1}} v_0})|_{t=0}$
	.
\end{enumerate}
The rest of the mass matrices can be obtained through conjugation with the mass matrix of $\gamma$ (for $t=0$ they are all the identity matrix).
We leave it to the reader to work out what the mass matrices for generic $t$ are.

We shall list a few braids here, together with their corresponding types and mass growths, where the reader may use to compare with their own calculation.
\begin{enumerate}[$\bullet$]
\item $\beta:= \sigma_{P_2}^2 \sigma_{P_1}^3$, pseudo-Anosov, $h_0(\beta) = \log (5+2\sqrt{6})$.
\item $\beta:= \sigma_{P_2}\sigma_{P_1}\sigma_{P_2}\sigma_{P_1}^{-1}\sigma_{P_2}^{-1}\sigma_{P_1}$, periodic (conjugate to $\gamma$), $h_0(\beta) = 0$.
\item $\beta:= \sigma_{P_2}^{-2} \sigma_{P_1} \sigma_{P_2}$, reducible (conjugate to $\sigma_{P_1}^2\gamma^{-1}$), $h_0(\beta) = 0$.
\end{enumerate}

\chapter*{Afterword: categorical vs. mapping class representations}
\addcontentsline{toc}{chapter}{Afterword: categorical vs. mapping class representations}
Since the theory of triangulated categories is supposed to be a categorical analogue of the theory of surfaces, it is only natural to compare and contrast some of the known categorical representations with the known mapping class representations (homomorphisms into mapping class groups) of the generalised braid groups.

One main difference is that the well known mapping class representations -- the monodromy representations -- of generalised braid groups are not faithful in general \cite{labruere_1997}.
Even for type $E$: $E_6, E_7$ and $E_8$ (which are simply laced and spherical), the monodromy representations are not faithful.
To make things worse, it was actually shown that any homomorphism from a type $E$ generalised braid group into a mapping class group which sends standard generators to Dehn twists cannot be faithful \cite{wajnryb_1999}.
This has made it difficult to construct faithful mapping class representations for the type $E$ cases.
To the best of my knowledge, it is not known whether the type $E$ generalised braid groups have faithful mapping class representations.
On the other hand, we have seen that all the generalised braid groups of spherical types have faithful categorical actions on triangulated categories.
\begin{remark}
Attaining faithfulness is (one of) the surprising result from categorification -- the categorification of the type $A$ Burau representations, as done in \cite{khovanov_seidel_2001}, are faithful; whereas the (classical) type $A$ Burau representations of large ranks are known to be \emph{not} faithful (cf. \cite{Bige_99} and \cite{LONG1993439}).
\end{remark}

Nonetheless, there are indeed faithful mapping class representations in the rank two cases.
We have seen in \cref{sect: double A configuration} that the generalised braid group $\B(I_2(n))$ has an injection into the type $A_{n-1}$ (classical) braid group $\B(A_{n-1}$), where the latter is known to be isomorphic to the mapping class group $\mathscr{M}CG(\D_n)$ of the $n$-punctured disk $\D_n$.
Using this, one can view $\B(I_2(n))$ as a subgroup of $\mathscr{M}CG(\D_n)$, and obtain a dynamical classification of its elements through the classical Nielsen-Thurston classification of mapping class groups.

Since we now have two dynamical classification of similar flavour, it is only natural to ask if they coincide.
Indeed, one can show that the mass growth and topological entropy of $\beta$ agrees:
\begin{proposition}
Let $\beta \in \B(I_2(n))$.
Denote $R_\beta$ as its corresponding autoequivalence on $\Kom^b(\I$-prmod) and $f_\beta$ as its corresponding mapping class element of the $n$-punctured disk through the mapping class representation above.
Then the mass growth at $t=0$ agrees with the topological entropy:
\[
h_0(R_\beta) = h_\text{top}(f_\beta).
\]
\end{proposition}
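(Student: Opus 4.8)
The plan is to reduce the identity to the case of pseudo-Anosov braids and then to identify the mass growth of $R_\beta$ with the exponential growth rate of geometric intersection numbers in $\D_n$, via the curve model of Khovanov--Seidel.

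First I would dispose of the periodic and reducible braids, where both sides vanish. By \cref{cor: categorical entropy t=0}, $h_0(R_\beta) = 0$ whenever $\beta$ is periodic or reducible. On the topological side, recall from \cref{sect: double A configuration} that under the Crisp injection $\varphi \colon \B(I_2(n)) \hookrightarrow \B(A_{n-1}) \cong \mathscr{M}CG(\D_n)$ of \cite{crisp_1997} one has $\sigma_1 \mapsto d_1 d_3 \cdots$ and $\sigma_2 \mapsto d_2 d_4 \cdots$, so that $\gamma = \sigma_2\sigma_1$ maps to a finite-order (modulo the centre) mapping class and the generator $\chi$ of the centre of $\B(I_2(n))$ maps to a power of the full twist. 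Hence a periodic braid $\beta$ (some $\beta^k = \chi^\ell$) maps to a mapping class $f_\beta$ with $f_\beta^k$ central, so $f_\beta$ is periodic; and a reducible braid, conjugate to $\sigma_i^k\chi^\ell$, maps to the product of a multitwist with a central element, hence is reducible with finite-order pieces. In either case $h_{\text{top}}(f_\beta) = 0 = h_0(R_\beta)$. It therefore remains to show that if $\beta$ is pseudo-Anosov then $f_\beta$ is pseudo-Anosov and $h_{\text{top}}(f_\beta) = h_0(R_\beta)$.

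Next I would set up the geometric dictionary. The decomposition $\Kom^b(\I\text{-prmod}) = \Kom^b(\I\text{-prmod}^+) \oplus \Kom^b(\I\text{-prmod}^-)$ from \cref{sect: double A configuration}, together with the triangulated equivalence $\Kom^b(\I\text{-prmod}^\pm) \simeq \Kom^b(\mathscr{A}_{n-1}\text{-prgrmod}^{>0})$, intertwines the $\B(I_2(n))$-action with the Khovanov--Seidel $\B(A_{n-1})$-action of \cite{khovanov_seidel_2001} restricted along $\varphi$. By the curves-to-objects correspondence of \cite{khovanov_seidel_2001}, admissible curves in $\D_n$ up to isotopy biject with indecomposable objects of $\Kom^b(\mathscr{A}_{n-1}\text{-prgrmod}^{>0})$ up to shift, in a way realising the analogies of \cref{fig: analogy table}: geometric intersection number becomes total $\dim\Hom^\bullet$, and the mapping class action on curves becomes the autoequivalence action, with the objects $P_i\otimes \Pi_0$ corresponding to the standard arcs $c_i$ around the punctures. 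I would then check that the restriction of $\tau_R$ to $\Kom^b(\I\text{-prmod}^\pm)$ corresponds under this bijection to a fixed singular flat metric $\mu$ on $\D_n$, with $\tau_R$-semistable objects in $\bigoplus_{S\in\mathfrak{R}^+_n}\<S\>^\oplus_{\tau_R}$ matching $\mu$-geodesics and $m_{\tau_R,0}(X)$ being, up to a multiplicative constant depending only on $n$, the $\mu$-length of the curve $c_X$ (equivalently its geometric intersection number with a fixed filling reference multicurve $\ell$). Since the residual $\TLJ_n$-action only rescales mass by bounded $PF\dim$-factors and $\B(I_2(n))$ preserves each of the two summands, it is harmless that this model sees only the $\pm$-parts.

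Combining \cref{prop: compute entropy from simpler object}, which gives $h_0(R_\beta) = \lim_{N}\tfrac1N\log m_{\tau_R,0}(R_\beta^N(P_1\oplus P_2))$, with the comparison above, I obtain
\[
h_0(R_\beta) = \lim_{N\to\infty}\tfrac1N\log i\big(\ell,\, f_\beta^N(c_1\cup c_2)\big).
\]
By \cref{theorem: classification} and \cref{reducible matrix iff reducible or periodic} this limit equals $\log PF(\cM(p))|_{t=0} > 0$, so the intersection numbers grow exponentially; hence $f_\beta$ is neither periodic nor reducible, i.e.\ $f_\beta$ is pseudo-Anosov, and by Thurston's theory of pseudo-Anosov maps (see \cite{FLP12}) the right-hand side is exactly $\log$ of the dilatation of $f_\beta$, which equals $h_{\text{top}}(f_\beta)$. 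The main obstacle is the middle step: pinning down the correspondence $\tau_R \leftrightarrow \mu$ precisely within \cref{fig: analogy table}, and controlling the Harder--Narasimhan filtrations of $R_\beta^N(P_1\oplus P_2)$ geometrically and uniformly in $N$, so that ``mass'' really is comparable to ``length''. (Tracking the internal grading through the Khovanov--Seidel correspondence is needed for a refined statement at general $t$, but at $t=0$ the grading contributes only bounded factors and can be ignored.) Equivalently, one must verify that the mass automaton $\Lambda_{\tau_R}(n)$, restricted to the $+$-part, is a genuine train-track / curve automaton for the action of $\B(I_2(n)) \subset \mathscr{M}CG(\D_n)$, so that $\cM(p)|_{t=0}$ is a Perron--Frobenius transition matrix for $f_\beta$ in the sense of Bestvina--Handel; this is where the real work lies, the remaining steps being bookkeeping with results already established.
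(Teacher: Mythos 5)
Your proposal follows the same overall route as the paper---both go through the Crisp injection $\varphi\colon \B(I_2(n))\hookrightarrow \B(A_{n-1})$, the decomposition of \cref{sect: double A configuration}, and the Khovanov--Seidel ``objects $\leftrightarrow$ curves, $\dim\Hom^\bullet\leftrightarrow$ intersection number'' dictionary---but you place the weight on a different, and harder, intermediate step. Where you propose to identify $\tau_R$ explicitly with a singular flat metric $\mu$ on $\D_n$ and then verify that $\Lambda_{\tau_R}(n)$ is literally a Bestvina--Handel train-track automaton for $f_\beta$, the paper instead invokes \cite[Theorem 2.6]{DHKK13}, which computes the categorical entropy of a split generator from the growth rate of $\dim\Hom^\bullet(G,\cF^N(G))$, combined with \cref{thm: entropy = mass growth} (mass growth $=$ level entropy). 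That theorem, together with the curve model identifying $\dim\Hom^\bullet$ with intersection numbers and the classical fact that topological entropy of a mapping class equals the exponential growth rate of intersection numbers, gives the result directly. Crucially, it makes your middle step unnecessary: one does not need to know which (if any) flat metric $\tau_R$ corresponds to, nor that semistable objects are geodesics, because mass growth is invariant under moving within a connected component of $\Stab$ and agrees with level entropy, which is metric-free.

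This matters because the part you yourself flag as ``where the real work lies'' is genuinely a gap in your argument that cannot be closed cheaply. It is not established (nor needed) that $\tau_R$-semistable objects correspond to $\mu$-geodesics for some concrete flat metric on $\D_n$, nor that $m_{\tau_R,0}$ is comparable to $\mu$-length up to a constant depending only on $n$; the Khovanov--Seidel model does not come packaged with such a statement, and proving it would essentially amount to reproving the analogies of \cref{fig: analogy table} in this case rather than using them. Your initial case split (periodic/reducible $\Rightarrow$ both entropies vanish) is correct and consistent with \cref{cor: categorical entropy t=0} and the Crisp injection, but is also unnecessary once one uses the DHKK theorem, which applies uniformly. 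So: your approach could in principle be pushed through, and would in fact prove something stronger (a direct train-track interpretation of $\Lambda_{\tau_R}(n)$, which the paper only suggests as an interesting open question in the afterword), but as a proof of the stated proposition it is the long way around, and the missing step is not a routine piece of bookkeeping.
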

\begin{proof}[Proof sketch]
We will only provide a sketch of the proof.
Recall the (faithful) action of $\B(A_{n-1})$ on the derived category of $\mathscr{A}_m$ modules $D^b(\mathscr{A}_m\text{-mod})$ defined in \cite{khovanov_seidel_2001}.
Khovanov-Seidel showed that this category is of ``Fukaya type'', namely certain objects correspond to curves and the dimension of morphism space corresponds to the intersection number.
As shown in \cref{sect: double A configuration}, our category $\Kom^b(\I$-prmod) is equivalent (as triangulated categories only) to a triangulated subcategory $\Kom^b(\mathscr{A}_{n-1}\text{-prgrmod}^{>0})$ of $D^b(\mathscr{A}_{n-1}$-mod) (more precisely, two copies of them).
Moreover our action of $\B(I_2(n))$ on $\Kom^b(\I$-prmod) intertwines the action of $\B(I_2(n))$ on $\Kom^b(\mathscr{A}_{n-1}\text{-prgrmod}^{>0})$ when viewed as a subgroup of $\B(A_{n-1})$.
The result now follows from \cite[Theorem 2.6]{DHKK13}, together with the fact that topological entropy can be computed from intersection numbers.
\end{proof}
As such, we expect that our categorical Nielsen-Thurston classification of $\beta \in \B(I_2(n))$ coincides with the classical Nielsen-Thurston classification of $\beta$ viewed as mapping class elements in $\mathscr{M}CG(\D_n)$ under the mapping class representation above.

Note, however, that our classification theorem \cref{theorem: classification} actually proves that the mass growth of pseudo-Anosov elements in $\B(I_2(n))$ are given by degree two algebraic numbers over $\Z[2\cos(\frac{\pi}{n})]$ -- they are eigenvalues of rank two matrices with entries in $\Z[2\cos(\frac{\pi}{n})]$; whereas on the topological side, the matrices assigned through the Bestvina-Handel algorithm have entries in $\Z$ and are \emph{not always} of rank two.
In some sense, the injection of $\B(I_2(n))$ into $\B(A_{n-1})$ has forgotten the extra  ``$2\cos\left(\frac{\pi}{n} \right)$ symmetry'' preserved by $\B(I_2(n))$.
More precisely, the action of $\B(I_2(n))$ on $\Kom^b(\mathscr{A}_{n-1}\text{-prgrmod}^{>0})$ is only an action on a triangulated category; not on a triangulated module category over $\TLJ_n$.
It is an interesting problem to search for a possible topological interpretation of this $\TLJ_n$ action.

Finally, it is also worth noting that mass growth, as a whole function of $t$, provides a better invariant than topological entropy, which only captures the $t=0$ part.
In particular, we see that the mass growth at $t=0$ (hence topological entropy) cannot differentiate between periodic braids and reducible braids (both have topological entropy zero). 
To the best of my knowledge, such a notion of ``$t$-deformed'' entropy does not exist in low-dimensional topology.

\comment{
Recall the injection $\varphi: \B(I_2(n)) \hookrightarrow \B(A_{n-1})$ given in \cref{sect: double A configuration}.
\begin{theorem} \label{thm: mass growth and top entropy}
Let $\beta \in \B(I_2(n))$.
The mass growth (at $t=0$) of $\beta$ as an autoequivalence on $\Kom^b(\I$-prmod) agrees with the topological entropy of $\varphi(\beta) \in \B(A_{n-1}) \cong \mathscr{M}CG(\D_n)$ as a mapping class element of the $n$-punctured disk.
\end{theorem}
{
\color{red}
To be added
}
}

\comment{
We now give a classification in terms of categorical dynamics:
\begin{definition}
Let $\cF: \cD \ra \cD$ be an autoequivalence.
We say that $\cF$ is:
\begin{enumerate}
\item periodic, if $\cF$ permutes a finite set of proper triangulated subcategories $\{ \cX_i \}$;
\item reducible, if $\cF$ restricts to an autoequivalence of a proper triangulated subcategory;
\item pseudo-Anosov, otherwise. {\color{red}??}
\end{enumerate}
\end{definition}
}

\newpage

\comment{
Throughout, $\cC$ is a fusion category, $g\cC$ is its $\Z$-graded category and $\cD$ is a module category over $g\cC$.
\begin{definition}
Denote $\overline{X} := PF\dim(X)$ for $X \in \cC$.
Let $E, F \in \Ob(\cD)$.
The \emph{$g\cC$-complexity of $F$ relative to $E$} is the function $\delta_{(q,t)}^{g\cC}(E,F) : \R\times \R \ra \R_{\geq 0} \cup \{ \infty \}$ defined by
\[
	\begin{cases}
	0, &F \cong 0; \\
	\underset{F'}{\inf} \left\{ 
		\sum_{i=1}^{m} \overline{X_i}e^{k_i q + \ell_i t} : 
			\begin{tikzcd}[scale cd = .7, column sep = .5mm, row sep = 2mm]
			0
				\ar[rr] &
			{}
				{} &
			F_1
				\ar[dl]&
			\cdots
				{} &
			F_{m-1}
				\ar[rr] &
			{}
				{} &
			F \oplus F'
				\ar[dl] \\
			{}
				{} &
			\Psi(X_1)E\<k_1\>[\ell_1]
				\ar[lu, dashed] &
			{}
				{} &
			{}
				{} &
			{}
				{} &
			\Psi(X_m)E\<k_m\>[\ell_m]
				\ar[lu, dashed] &
			\end{tikzcd}
		\right\},
	&F \in \< E \>_{g\cC}; \\
	\infty, &F \not\in \< E \>_{g\cC}.		
	\end{cases}
\] 
\end{definition}

The $g\cC$-complexity function satisfies similar properties to the usual complexity function:
\begin{proposition} \label{complexity lemma app}
For $D, E_1,E_2,E_3 \in \cD$,
\begin{enumerate}
\item $\delta_{(q,t)}^{g\cC}(E_1, E_3) \leq \delta_{(q,t)}^{g\cC}(E_1,E_2)\delta_{(q,t)}^{g\cC}(E_2,E_3)$;
\item if $E_1 \ra E_2 \ra E_3 \ra$ is an distinguished triangle, then 
\[
\delta_{(q,t)}^{g\cC}(D, E_2) \leq \delta_{(q,t)}^{g\cC}(D,E_1) + \delta_{(q,t)}^{g\cC}(D, E_3);
\]
\item for any endofunctor $\cF: \cD \ra \cD$ respecting the $g\cC$-module structure on $\cD$,
\[
\delta_{(q,t)}^{g\cC}(\cF(E_1), \cF(E_2)) \leq \delta_{(q,t)}^{g\cC}(E_1,E_2).
\]
\end{enumerate}
\end{proposition}

\begin{definition}
Let $G$ be a split generator over $g\cC$ for $\cD$ and $\Psi: \cD \ra \cD$ an  endofunctor.
Then the \emph{$g\cC$-entropy} of $\Psi$ is the function $h_{(q,t)}(\Psi): \R \times \R \ra [-\infty, \infty)$ defined by
\[
h_{(q,t)}(\Psi) := \lim_{N \ra \infty} \frac{1}{N} \log \delta_{(q,t)}^{g\cC} \left(G, \Psi^N(G) \right).
\]
\end{definition}

\begin{proposition} \label{ikeda prop 3.4}
Let $\tau = (\cP, \cZ, s)$ be a $q$-stability condition over $g\cC$ on $\cD$.
Then
\[
m_{\tau,t}(A) \leq m_{\tau,t}(B)\delta_{(\Re(s)t,t)}^{g\cC}(B,A).
\]
\end{proposition}
\begin{proof}
The proof follows exactly as in Proposition 3.4 of \cite{ikeda_2020}, using our definition of $g\cC$-complexity instead, together with the assumption that $\tau$ is a $q$-stability condition over $g\cC$ to obtain:
\[
m_{\tau,t}(\Psi(X)A\<k\>[\ell]) = PF\dim(X)m_{\tau,t}(A) e^{(k\Re(s) + \ell)t}.
\]
\end{proof}

We now obtain our version of Theorem 3.5 in \cite{ikeda_2020}.
\begin{theorem} \label{mass growth split generator and entropy}
Let $\tau$ be a $q$-stability condition over $g\cC$ on $\cD$ and $\cF : \cD \ra \cD$ be an endofunctor which respects the $g\cC$-module structure on $\cD$.
If $G$ be a split generator over $g\cC$ of $\cD$, then
\begin{enumerate}
\item the mass growth is achieved by $G$:
\[
h_{\tau,t}(\cF) = \limsup_{N\ra \infty} \frac{1}{N} \log  m_{\tau,t}(\cF^N(G)).
\]
\item We have an inequality
\[
h_{\tau,t}(\cF) \leq h_{(\Re(s)t,t)}(\cF) < \infty,
\]
with $h_{(\Re(s)t,t)}$ the $g\cC$-entropy of $\cF$.
\end{enumerate}
\end{theorem}
\begin{proof}
This follows from the same proof in \cite{ikeda_2020}, replacing Proposition 3.4 by our \cref{ikeda prop 3.4}.
\end{proof}

{\color{red}
DOESNT WORK

The following follows easily from the first part of the theorem, together with \cref{complexity lemma} and \cref{ikeda prop 3.4}:
\begin{corollary}\label{mass growth invariant under conjugation}
Let $\tau$ be a $q$-stability condition over $g\cC$ on $\cD$ and $\cF : (\cD, \Psi) \ra (\cD, \Psi)$ be an endofunctor.
Suppose $\cG: (\cD, \Psi) \ra (\cD, \Psi)$ is an autoequivalence.
Then
$
h_{\tau,t}(\cF) = h_{\tau,t}(\cG\cF\cG^{-1}). 
$
\end{corollary}
}

Let $\cH$ be a finite length abelian heart of $\cD$ such that every simple object is isomorphic to $S_i\<k\>[k]$ for some finite set of objects $\{S_1, ..., S_m\}$ and some $k\in \Z$.
It follows that the Grothendieck group $H_0$ of $\cH$ is a free $\Z[t,t^{-1}]$-module of rank $m$ generated by $[S_i]$, with $t\cdot [X] = [X\<1\>[1]]$.

For an object $E$ in $\cH$, let $[E] = \sum_{i=1}^m p_i(t)[S_i]$ be the its class in $H_0$, with $p_i(t) \in \Z[t,t^{-1}]$ for each $i$.
We define the \emph{dimension} of $E$ by 
\[
\dim (E) := \sum_{i=1}^m p_i(1) \in \Z_{\geq 0}.
\]

\begin{remark} \label{module stability function}
If we endow $\C$ with a $\Z[t,t^{-1}]$-module structure by evaluating $t=1$, note that a stability function on $\cH$ which respects the $\Z[t,t^{-1}]$-module structure on $H_0$ and $\C$ defines a levelled stability condition on $\cD$.
\end{remark}

The following is true using the same proof as in Lemma 3.12 of \cite{ikeda_2020}.

\begin{lemma}
Let $\cH$ be as defined above.
Then $G = \bigoplus_{i=1}^m S_i$ is a split level-generator, and
\[
\delta_t(G, E) \leq \dim(E)
\]
for any $E \in \cH$.
\end{lemma}

Let $Z_0: H_0 \ra \C$ be the $\Z[t,t^{-1}]$-module homomorphism (see \cref{module stability function}) defined by
\[
Z_0(S_j) := e^{i\frac{\pi}{2}} = i.
\]
This defines a stability function on $\cH$, and we shall denote the levelled stability condition it defines by $\tau_0$.
Note that the $\tau_0$-mass of all objects $E$ in $\cH$ are given by
\[
m_{\tau_0,t}(E) = \dim(E)\cdot e^{\frac{1}{2}t}.
\]
Once again we have the following proposition using the same proof as in  Proposition 3.13 of \cite{ikeda_2020}.

\begin{proposition}
For $G = \bigoplus_{i=1}^m S_i$ the split level-generator and $\tau_0$ given as above, we have
\[
\delta_t(G,A) \leq e^{-\frac{1}{2}t}\cdot m_{\tau_0,t}(A)
\]
for all objects $A$ in $\cD$.
\end{proposition}

Following Theorem 3.14 in \cite{ikeda_2020} we may conclude the following:
\begin{theorem}\label{mass growth computes entropy}
Let $\cH$ be a finite length abelian heart of $\cD$ such that every simple object is isomorphic to $S_i\<k\>[k]$ for some finite set of objects $\{S_1, ..., S_m\}$ and some $k\in \Z$.
If $G$ is a split-level generator of $\cD$, then for any levelled stability condition $\tau$ which lies in the same connected component as $\tau_0$, we have that
\[
h_t(\cF) = h_{\tau,t}(\cF) = \limsup_{N\ra \infty} \frac{1}{N} \log  m_{\tau,t}(\cF^N(G))
\]
for any endofunctor $\cF: \cD \ra \cD$ and any split level generator $G$ of $\cD$.
\end{theorem}

}

\appendix
\chapter{A brief tour into categorification} \label{appen: categorification}
The term categorification, coined by Crane and Frenkel \cite{Crane_1994}, is the general idea of lifting mathematical structures living in the world of sets to the world of categories.
A prototypical example is the lift of natural numbers $\mathbb{N}$ to the category of (finite dimensional) real vector spaces $\cV ec_\R$.
Every natural number $n$ can be lifted to the $n$-dimensional vector space $\R^n$, and the inverse process can be obtained by taking the dimension of the vector space.
Moreover, addition corresponds to direct sums, and multiplication corresponds to tensor product.
If we wanted the ring of integers $\Z$, we can instead take the Grothendieck group of $\cV ec_\R$, hence we can also say that (the monoidal, abelian category) $\cV ec_\R$ categorifies (the ring) $\Z$.
The category of vector spaces ought to have more structure then the set of natural numbers, as we now have morphisms between two objects $\R^n$ and $\R^m$, whereas ``morphisms'' between two natural numbers do not exist.
Hence, one could think of mathematical structures in the realm of sets as shadows of much richer mathematical structures in the realm of categories.

Let us now look at how this philosophy plays out in representation theory.
Classically, a group action $G$ on a set $X$ can be defined by a group homomorphism from $G$ to the group $\Aut(X)$ of automorphisms on $X$ (if $X$ has more structure, such as a vector space, the automorphisms are required to preserve the extra structure as well).
In particular, we require that $g \in G \mapsto F_g \in \Aut(X)$ such that
\begin{align*}
F_e &= \id_X \\
F_g \circ F_h &= F_{g\cdot h}.
\end{align*}
Lifting this to the categorical world by replacing equalities with isomorphisms, we say that a (weak) \emph{categorical group action} of $G$ on a category $\cC$ is an assignment of $g \in G$ with autoequivalences $\cF_g$ of $\cC$ such that
\begin{align*}
\cF_e &\cong \id_\cC \\
\cF_g \circ \cF_h &\cong \cF_{g\cdot h}.
\end{align*}
It turns out that in many examples of categorifying linear representations, the categories will be triangulated and the autoequivalences are exact (respects the triangulated structure).
In particular, the exact Grothendieck group $K_0(\cD)$ of $\cD$ will be a free module, and exact autoequivalences $\cF_g$ descend into linear automorphisms $K_0(\cF_g)$ on $K_0(\cD)$ such that the following diagram commutes:
\[
\begin{tikzcd}[column sep = large]
\cD \ar[d, dashed, "K_0"] \ar[r, "\cF_g"] & \cD \ar[d, dashed, "K_0"] \\
K_0(\cD) \ar[r, "K_0(\cF_g)"]  & K_0(\cD).
\end{tikzcd}
\]
A \emph{categorification of a linear representation} $g \in G \mapsto F_g \in GL(V)$ is a categorical action of $G$ on a triangulated category $\cD$ with $g \mapsto \cF_g$ such that
\[
\begin{tikzcd}[column sep = large]
K_0(\cD) \ar[d, "\cong"] \ar[r, "K_0(\cF_g)"] & K_0(\cD) \ar[d, "\cong"] \\
V \ar[r, "F_g"]  & V
\end{tikzcd}
\]
is commutative for all $g \in G$.

\chapter{Rectifiable triangles and geodesic filtration polygons}
The main purpose of this appendix is to collect the results from the appendix of \cite{bapat2020thurston}, which provide certain sufficient condition that allows one to obtain (weak) HN filtrations through concatenation of other (weak) HN filtrations.
As such, we shall state the results without proofs (except for \cref{sufficient condition for geodesic}, which is a slight variant of \cite[Proposition A.14]{bapat2020thurston}).
The presentation of them, however, will be slightly different: the results and statements will be stated ``dually'' in terms of triangles and polygons introduced in \cite{dyckerhoff_kapranov_2018}, which captures a nice ``two-dimensional symmetry'' built into the foundations of triangulated categories.
Throughout we fix $\cD$ to be a triangulated category with some stability condition $\tau$ fixed.

Let us start by recalling this dual representation of distinguished triangles introduced in \cite{dyckerhoff_kapranov_2018}: 
\begin{center}
\begin{tikzpicture}[scale=1.4]
\tikzset{
  arrow/.pic={\path[tips,every arrow/.try,->,>=#1] (0.1,0) -- +(.1pt,0);},
  pics/arrow/.default={triangle 90}
}

\coordinate (X) at (0,0);
\coordinate (Y) at (2,0);
\coordinate (Z) at (1,1.732);

\begin{scope}[very thick,nodes={sloped,allow upside down}]
\draw[thick] (X) -- pic{arrow=latex} (Y) ;
\draw[thick] (X) -- pic{arrow=latex} (Z) ;
\draw[thick] (Z) -- pic{arrow=latex} (Y) ;
\end{scope}

\pic [draw, <-, swap, "$\alpha$", angle radius = 15, angle eccentricity=1.5] {angle=Y--X--Z};
\pic [draw, <-, swap, "$\beta$", angle radius = 15, angle eccentricity=1.5] {angle=Z--Y--X};
\pic [draw, <-, swap, dashed,"$\gamma$", angle radius = 15, angle eccentricity=1.5] {angle=X--Z--Y};

\filldraw[color=black!, fill=black!]  (0,0) circle [radius=1.5pt];
\filldraw[color=black!, fill=black!]  (2,0) circle [radius=1.5pt];
\filldraw[color=black!, fill=black!]  (1,1.732) circle [radius=1.5pt];

\node[left] at (0.5, 0.9) {$A$};
\node[below] at (1, 0) {$B$};
\node[right] at (1.5, 0.9) {$C$};

\node at (-3,1) {$A \xra{\alpha} B \xra{\beta} C \xra{\gamma} A[1] \qquad \leftrightsquigarrow$};
\end{tikzpicture}
\end{center}
The octahedral axiom applied to the composition $A_1 \xra{f} A_2 \xra{g} A_3$ is just a flip of the 4-gon in the dual representation, switching from one triangulation to the other:
\begin{center}
\begin{tikzpicture}[scale=1.4]
\tikzset{
  arrow/.pic={\path[tips,every arrow/.try,->,>=#1] (0.1,0) -- +(.1pt,0);},
  pics/arrow/.default={triangle 90}
}

\coordinate (X) at (0,0);
\coordinate (Y) at (2,0);
\coordinate (Z) at (1,1.732);
\coordinate (W) at (1,-1.732);

\begin{scope}[very thick,nodes={sloped,allow upside down}]
\draw[thick] (X) -- pic{arrow=latex} (Y) ;
\draw[thick] (X) -- pic{arrow=latex} (Z) ;
\draw[thick] (Z) -- pic{arrow=latex} (Y) ;
\draw[thick] (X) -- pic{arrow=latex} (W) ;
\draw[thick] (Y) -- pic{arrow=latex} (W) ;
\end{scope}

\pic [draw, <-, swap, angle radius = 15, angle eccentricity=1.5, "$f$"] {angle=Y--X--Z};
\pic [draw, <-, swap, angle radius = 15, angle eccentricity=1.5] {angle=Z--Y--X};
\pic [draw, <-, swap, dashed, angle radius = 15, angle eccentricity=1.5] {angle=X--Z--Y};
\pic [draw, <-, swap, angle radius = 15, angle eccentricity=1.5, "$g$"] {angle=W--X--Y};
\pic [draw, <-, swap, angle radius = 15, angle eccentricity=1.5] {angle=Y--W--X};
\pic [draw, <-, swap, dashed, angle radius = 15, angle eccentricity=1.5] {angle=X--Y--W};

\filldraw[color=black!, fill=black!]  (0,0) circle [radius=1.5pt];
\filldraw[color=black!, fill=black!]  (2,0) circle [radius=1.5pt];
\filldraw[color=black!, fill=black!]  (1,1.732) circle [radius=1.5pt];
\filldraw[color=black!, fill=black!]  (1,-1.732) circle [radius=1.5pt];

\node[left] at (0.5, 0.9) {$A_1$};
\node[below] at (1, 0) {$A_2$};
\node[right] at (1.5, 0.9) {$A_{12}$};
\node[left] at (0.5, -0.9) {$A_3$};
\node[right] at (1.5, -0.9) {$A_{23}$};

\node at (3,0) {$\overset{\text{octahedral flip}}{\leftrightsquigarrow}$};

\coordinate (X) at (4,0);
\coordinate (Y) at (6,0);
\coordinate (Z) at (5,1.732);
\coordinate (W) at (5,-1.732);

\begin{scope}[very thick,nodes={sloped,allow upside down}]
\draw[thick] (Z) -- pic{arrow=latex} (W) ;
\draw[thick] (X) -- pic{arrow=latex} (Z) ;
\draw[thick] (Z) -- pic{arrow=latex} (Y) ;
\draw[thick] (X) -- pic{arrow=latex} (W) ;
\draw[thick] (Y) -- pic{arrow=latex} (W) ;
\end{scope}

\pic [draw, <-, swap, angle radius = 10, angle eccentricity=1.5, "$gf$"] {angle=W--X--Z};
\pic [draw, <-, swap, angle radius = 30, angle eccentricity=1.5] {angle=W--Z--Y};
\pic [draw, <-, swap, dashed, angle radius = 30, angle eccentricity=1.5] {angle=X--Z--W};

\pic [draw, <-, swap, angle radius = 25, angle eccentricity=1.5] {angle=Z--W--X};
\pic [draw, <-, swap, angle radius = 25, angle eccentricity=1.5] {angle=Y--W--Z};
\pic [draw, <-, swap, dashed, angle radius = 10, angle eccentricity=1.5] {angle=Z--Y--W};

\filldraw[color=black!, fill=black!]  (X) circle [radius=1.5pt];
\filldraw[color=black!, fill=black!]  (Y) circle [radius=1.5pt];
\filldraw[color=black!, fill=black!]  (Z) circle [radius=1.5pt];
\filldraw[color=black!, fill=black!]  (W) circle [radius=1.5pt];

\node[left] at (4.5, 0.9) {$A_1$};
\node[right] at (5, 0) {$A_{13}$};
\node[right] at (5.5, 0.9) {$A_{12}$};
\node[left] at (4.5, -0.9) {$A_3$};
\node[right] at (5.5, -0.9) {$A_{23}$};
\end{tikzpicture}
\end{center}
We shall call this the \emph{octahedral flip}.

The following is a generalisation of filtrations in this dual setting:
\begin{definition}
A \emph{filtration polygon} of an object $A \in \cD$ with filtration pieces $E_i \in \cD$ is an oriented polygon with some fixed oriented triangulation, such that:
\begin{enumerate}
\item each triangle in the polygon is a (dual) distinguished triangle; and
\item the orientation of the outer edges (together with their labels) are given by
\begin{center}
\begin{tikzpicture}
\tikzset{
  arrow/.pic={\path[tips,every arrow/.try,->,>=#1] (0.1,0) -- +(.1pt,0);},
  pics/arrow/.default={triangle 90}
}

\coordinate (A) at (0,0);
\coordinate (B) at (2,0);
\coordinate (C) at (-1,1.732);
\coordinate (D) at (3,1.732);
\coordinate (E) at (0,3.5);
\coordinate (F) at (2,3.5);
\coordinate (Z) at (1,3.5);

\begin{scope}[very thick,nodes={sloped,allow upside down}]
\draw[thick] (A) -- pic{arrow=latex} (C) ;
\draw[thick] (A) -- pic{arrow=latex} (B) ;
\draw[thick] (D) -- pic{arrow=latex} (B) ;
\draw[thick] (F) -- pic{arrow=latex} (D) ;
\draw[thick] (C) -- pic{arrow=latex} (E) ;
\end{scope}

\filldraw[color=black!, fill=black!]  (A) circle [radius=1.5pt];
\filldraw[color=black!, fill=black!]  (B) circle [radius=1.5pt];
\filldraw[color=black!, fill=black!]  (C) circle [radius=1.5pt];
\filldraw[color=black!, fill=black!]  (D) circle [radius=1.5pt];
\filldraw[color=black!, fill=black!]  (E) circle [radius=1.5pt];
\filldraw[color=black!, fill=black!]  (F) circle [radius=1.5pt];

\node[left] at (-0.5, 0.9) {$E_1$};
\node[left] at (-0.5, 2.7) {$E_2$};
\node[below] at (1, 0) {$A$};
\node[right] at (2.5, 0.9) {$E_m$};
\node[right] at (2.5, 2.7) {$E_{m-1}$};

\node at (Z) {$\cdots$};
\end{tikzpicture}
\end{center}
\end{enumerate}
\end{definition}
As an example, the following filtration polygon (maps between edges omitted)
\begin{center}
\begin{tikzpicture}
\tikzset{
  arrow/.pic={\path[tips,every arrow/.try,->,>=#1] (0.1,0) -- +(.1pt,0);},
  pics/arrow/.default={triangle 90}
}

\coordinate (A) at (0,0);
\coordinate (B) at (2,0);
\coordinate (C) at (-1,1.732);
\coordinate (D) at (3,1.732);
\coordinate (E) at (0,3.5);
\coordinate (F) at (2,3.5);
\coordinate (Z) at (1,3.5);

\begin{scope}[very thick,nodes={sloped,allow upside down}]
\draw[thick] (A) -- pic{arrow=latex} (B) ;
\draw[thick] (A) -- pic{arrow=latex} (C) ;
\draw[thick] (A) -- pic{arrow=latex} (D) ;
\draw[thick] (A) -- pic{arrow=latex} (E) ;
\draw[thick] (A) -- pic{arrow=latex} (F) ;
\draw[thick] (D) -- pic{arrow=latex} (B) ;
\draw[thick] (F) -- pic{arrow=latex} (D) ;
\draw[thick] (C) -- pic{arrow=latex} (E) ;
\end{scope}

\filldraw[color=black!, fill=black!]  (A) circle [radius=1.5pt];
\filldraw[color=black!, fill=black!]  (B) circle [radius=1.5pt];
\filldraw[color=black!, fill=black!]  (C) circle [radius=1.5pt];
\filldraw[color=black!, fill=black!]  (D) circle [radius=1.5pt];
\filldraw[color=black!, fill=black!]  (E) circle [radius=1.5pt];
\filldraw[color=black!, fill=black!]  (F) circle [radius=1.5pt];

\node[left] at (-0.5, 0.9) {$E_1$};
\node[left] at (-0.5, 2.7) {$E_2$};
\node[below] at (1, 0) {$A_m=A$};
\node[right] at (2.5, 0.9) {$E_m$};
\node[right] at (2.5, 2.7) {$E_{m-1}$};

\node at (Z) {$\cdots$};
\end{tikzpicture}
\end{center}
is just the dual representation of a standard filtration of $A$:
\[
\begin{tikzcd}[column sep = 3mm]
0
	\ar[rr] &
{}
	{} &
A_1
	\ar[rr] \ar[dl]&
{}
	{} &	
A_2
	\ar[rr] \ar[dl]&
{}
	{} &	
\cdots
	\ar[rr] &
{}
	{} &
A_{m-1}
	\ar[rr] &
{}
	{} &
A_m = A
	\ar[dl] \\
{}
	{} &
E_1
	\ar[lu, dashed] &
{}
	{} &
E_2
	\ar[lu, dashed] &
{}
	{} &
{}
	{} &
{}
	{} &
{}
	{} &
{}
	{} &
E_m
	\ar[lu, dashed] &
\end{tikzcd}
\]
As such, every filtration polygon is just the dual representation of a standard filtration up to applying octahedral flips.

We will need the following weaker version of HN filtration later on:
\begin{definition} \label{defn: weak HN filtration}
Let $A \in \cD$ with a stability condition $\tau$.
A filtration $F$ of $A$ 
\[
F = 
\begin{tikzcd}[column sep = 3mm]
0
	\ar[rr] &
{}
	{} &
A_1
	\ar[rr] \ar[dl]&
{}
	{} &	
A_2
	\ar[rr] \ar[dl]&
{}
	{} &	
\cdots
	\ar[rr] &
{}
	{} &
A_{m-1}
	\ar[rr] &
{}
	{} &
A_m = A
	\ar[dl] \\
{}
	{} &
E_1
	\ar[lu, dashed] &
{}
	{} &
E_2
	\ar[lu, dashed] &
{}
	{} &
{}
	{} &
{}
	{} &
{}
	{} &
{}
	{} &
E_m
	\ar[lu, dashed] &
\end{tikzcd}
\]
is a \emph{weak $\tau$-HN filtration of $A$} if all $E_i$ are $\tau$-semistable with non-increasing phases: $\phi(E_i) \geq \phi(E_{i+1})$.
Similarly, we say that a filtration polygon of $A$ with filtration pieces $E_i$
is a \emph{weak $\tau$-HN filtration polygon}  if all the $E_i$'s are $\tau$-semistable with non-increasing phases as before.
In particular, this means that a weak HN filtration polygon is a (dual) weak HN filtration up to octahedral flips.
\end{definition}
Note that by definition a $\tau$-HN filtration is automatically a weak $\tau$-HN filtration, where the converse is clearly not true.
In particular, we see that weak $\tau$-HN filtrations need not be unique.
However, a $\tau$-HN filtration can always be obtained from a weak $\tau$-HN filtration by taking cones of the composition 
\[
A_{i-1} \xra{f_{i-1}} A_i \xra{f_i} A_{i+1}
\]
whenever $E_{i+1}$ and $E_i$ are semistable of the same phase $\phi$, where it is easy to deduce that the cone of $f_i f_{i-1}$ is also semistable of phase $\phi$ using the following distinguished triangle induced by the octahedral axiom:
\[
E_i \ra \cone(f_i f_{i-1}) \ra E_{i+1} \ra.
\]
We call this process \emph{strictification} of a weak $\tau$-HN filtration to a $\tau$-HN filtration.
Note that this also implies that if $F$ is a weak $\tau$-HN filtration polygon of $A$ with filtration pieces $E_i$'s, the mass of $A$ can be computed from summing up the mass of the $E_i$'s:
\[
m_{\tau,t}(A) = \sum_i m_{\tau,t}(E_i).
\]

\begin{definition}
Fix a stabiltiy condition $\tau$.
A distinguished triangle $A \ra B \ra C \xra{\varphi} A[1]$ is said to be \emph{rectifiable} (with respect to $\tau$) if for all $\alpha \in \R$, the compositions of maps:
\[
C_{> \alpha} \ra C \xra{\varphi} A[1] \ra A_{\leq \alpha}[1]
\]
is 0. Equivalently, we will also say that the map $\varphi$ is \emph{rectifiable}.
\end{definition}

\begin{proposition}
If $C \xra{\varphi} A[1]$ is rectifiable, then any pre-composition and post-composition is also rectifiable, i.e.
\[
X \xra{f} C \xra{\varphi} A[1] \xra{g[1]} Y[1]
\]
is rectifiable for all $X \xra{f} C$ and $ A \xra{g} Y$.
\end{proposition}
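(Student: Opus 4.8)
The plan is to unwind the definition of rectifiability for the composite $\psi := g[1]\circ\varphi\circ f\colon X\to Y[1]$ and reduce it to the hypothesis on $\varphi$, by inserting the Harder--Narasimhan truncations of $C$ and $A$. Fix $\alpha\in\R$, write $\iota\colon X_{>\alpha}\to X$ and $\pi\colon Y[1]\to Y_{\leq\alpha}[1]$ for the canonical truncation maps, and denote by $\cP(>\alpha)$ (resp. $\cP(\leq\alpha)$) the extension-closed subcategory generated by the semistable objects of phase $>\alpha$ (resp. $\leq\alpha$); note $X_{>\alpha}\in\cP(>\alpha)$ and $C_{\leq\alpha},Y_{\leq\alpha}\in\cP(\leq\alpha)$, etc. We must show that
\[
X_{>\alpha}\xra{\iota} X\xra{f} C\xra{\varphi} A[1]\xra{g[1]} Y[1]\xra{\pi} Y_{\leq\alpha}[1]
\]
vanishes, for every $\alpha$.

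First I would establish a source-side factorization: $f\circ\iota\colon X_{>\alpha}\to C$ factors through the canonical inclusion $j\colon C_{>\alpha}\to C$. Applying $\Hom(X_{>\alpha},-)$ to the distinguished triangle $C_{>\alpha}\xra{j} C\to C_{\leq\alpha}\to$ and using the slicing orthogonality $\Hom(\cP(>\alpha),\cP(\leq\alpha))=0$ gives $\Hom(X_{>\alpha},C_{\leq\alpha})=0$, so the long exact sequence makes $j_*\colon\Hom(X_{>\alpha},C_{>\alpha})\to\Hom(X_{>\alpha},C)$ surjective; hence $f\circ\iota=j\circ\tilde f$ for some $\tilde f\colon X_{>\alpha}\to C_{>\alpha}$.

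Dually, on the target side, $\pi\circ g[1]\colon A[1]\to Y_{\leq\alpha}[1]$ factors through the canonical projection $p[1]\colon A[1]\to A_{\leq\alpha}[1]$: applying $\Hom(-,Y_{\leq\alpha})$ to the triangle $A_{>\alpha}\to A\xra{p} A_{\leq\alpha}\to$ and using $\Hom(A_{>\alpha},Y_{\leq\alpha})=0$ shows $p^*\colon\Hom(A_{\leq\alpha},Y_{\leq\alpha})\to\Hom(A,Y_{\leq\alpha})$ is surjective, so $\pi\circ g[1]=\tilde g[1]\circ p[1]$ for some $\tilde g\colon A_{\leq\alpha}\to Y_{\leq\alpha}$. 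Substituting both factorizations, the displayed composite becomes
\[
X_{>\alpha}\xra{\tilde f} C_{>\alpha}\xra{j} C\xra{\varphi} A[1]\xra{p[1]} A_{\leq\alpha}[1]\xra{\tilde g[1]} Y_{\leq\alpha}[1],
\]
and the middle three maps $C_{>\alpha}\xra{j} C\xra{\varphi} A[1]\xra{p[1]} A_{\leq\alpha}[1]$ compose to $0$ by the hypothesis that $\varphi$ is rectifiable. Hence $\pi\circ\psi\circ\iota=0$ for all $\alpha$, so $\psi$ is rectifiable; the two stated special cases (take $g=\id$, or $f=\id$) are instances of this.

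The content is entirely in the two factorization lemmas, which are routine applications of the slicing axioms to the Harder--Narasimhan truncation triangles. I do not expect a genuine obstacle: the only point requiring care is the bookkeeping of the \emph{canonical} truncation maps $C_{>\alpha}\to C$ and $A\to A_{\leq\alpha}$ appearing in the definition of rectifiability, i.e. checking that the lifts produced above are compatible with precisely those maps rather than some other representative of the truncation. This is a formal corollary of the definition, so the write-up should be short.
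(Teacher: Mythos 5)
Your argument is correct. The paper itself omits the proof of this proposition (the appendix is stated to import results from \cite{bapat2020thurston} without proof), so there is no paper argument to compare against, but your route is the natural one: factor $f\circ\iota$ through $C_{>\alpha}\to C$ via the vanishing $\Hom(X_{>\alpha},C_{\leq\alpha})=0$, factor $\pi\circ g[1]$ through $A[1]\to A_{\leq\alpha}[1]$ via the vanishing $\Hom(A_{>\alpha},Y_{\leq\alpha})=0$, and then the middle composite $C_{>\alpha}\to C\xra{\varphi}A[1]\to A_{\leq\alpha}[1]$ dies by hypothesis. The two orthogonality vanishings are indeed the standard semi-orthogonality $\Hom(\cP(>\alpha),\cP(\leq\alpha))=0$, which follows by induction on HN length from axiom (2) of a slicing. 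Your own caveat about canonicity of the truncation maps is not a real issue: the lifts $\tilde f$ and $\tilde g$ need only exist, and they are composed with exactly the canonical $j\colon C_{>\alpha}\to C$ and $p[1]\colon A[1]\to A_{\leq\alpha}[1]$ appearing in the definition of rectifiability.
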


\begin{proposition}
Suppose $A\ra B \ra C \xra{\varphi} A[1]$ is a rectifiable triangle.
Then $A_{>\alpha} \ra B_{>\alpha} \ra C_{>\alpha} \ra A_{>\alpha}[1]$ and $A_{\leq \alpha} \ra B_{\leq \alpha} \ra C_{\leq \alpha} \ra A_{\leq \alpha}[1]$ are both distinguished triangles.
Moreover, they are rectifiable.
\end{proposition}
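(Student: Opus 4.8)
The plan is to work throughout with the $t$-structure $(\cP({>}\alpha),\cP({\leq}\alpha))$ underlying the slicing $\cP$ of $\tau$, whose truncation functors attach to an object $X$ a functorial distinguished triangle $X_{>\alpha}\to X\to X_{\leq\alpha}\to X_{>\alpha}[1]$ with $X_{>\alpha}\in\cP({>}\alpha)$, $X_{\leq\alpha}\in\cP({\leq}\alpha)$. I would use repeatedly: the orthogonality $\Hom(\cP({>}\alpha),\cP({\leq}\alpha))=0$, the fact that $\cP({>}\alpha)$ and $\cP({\leq}\alpha)$ are closed under extensions, and the uniqueness of the truncation triangle.

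First I would manufacture the connecting map. By the definition of rectifiability the composite $C_{>\alpha}\to C\xra{\varphi}A[1]\to A_{\leq\alpha}[1]$ vanishes, so $C_{>\alpha}\to C\xra{\varphi}A[1]$ factors through $A_{>\alpha}[1]\to A[1]$, and the factorisation $\varphi_{>\alpha}\colon C_{>\alpha}\to A_{>\alpha}[1]$ is unique because $\Hom(C_{>\alpha},A_{\leq\alpha})=0$. By construction the square formed by $\varphi_{>\alpha}$, $\varphi$ and the two truncation maps commutes; rotating and applying TR3 produces $g\colon B'\to B$, where $B'$ completes $\varphi_{>\alpha}$ to a distinguished triangle $A_{>\alpha}\to B'\to C_{>\alpha}\xra{\varphi_{>\alpha}}A_{>\alpha}[1]$, yielding a morphism from this triangle to $A\to B\to C\xra{\varphi}A[1]$ whose outer verticals are the canonical truncation maps. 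Completing $g$ to a triangle and invoking the $3\times 3$ (``nine'') lemma for a morphism of triangles, the cones form a distinguished triangle $A_{\leq\alpha}\to\cone(g)\to C_{\leq\alpha}\to A_{\leq\alpha}[1]$. Since $\cP({>}\alpha)$, $\cP({\leq}\alpha)$ are extension closed, $B'\in\cP({>}\alpha)$ and $\cone(g)\in\cP({\leq}\alpha)$, so $B'\xra{g}B\to\cone(g)\to B'[1]$ is \emph{the} truncation triangle of $B$; by uniqueness $B'\cong B_{>\alpha}$ and $\cone(g)\cong B_{\leq\alpha}$, compatibly with everything. This delivers both asserted distinguished triangles at once, with connecting maps $\varphi_{>\alpha}$ and its $\leq\alpha$-counterpart $\varphi_{\leq\alpha}$ coming out of the nine lemma.

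For the ``moreover'' clause I would check that $\varphi_{>\alpha}$ is rectifiable (the argument for $\varphi_{\leq\alpha}$ being symmetric, via the quotient truncation $C\to C_{\leq\alpha}$ together with the pre-/post-composition rectifiability statement proved above). Fix $\beta$; the only nontrivial case is $\beta>\alpha$, where $(C_{>\alpha})_{>\beta}=C_{>\beta}$ and $(A_{>\alpha})_{\leq\beta}=A_{(\alpha,\beta]}$, so I must show that $C_{>\beta}\xra{k}C_{>\alpha}\xra{\varphi_{>\alpha}}A_{>\alpha}[1]\xra{q}A_{(\alpha,\beta]}[1]$ vanishes, with $k,q$ the truncation maps. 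Rectifiability of $\varphi$ at level $\beta$ (equivalently, rectifiability of $\varphi\circ(\text{inclusion})$ by the pre-composition statement) makes $C_{>\beta}\to C\xra{\varphi}A[1]$ factor uniquely through $\widetilde\varphi\colon C_{>\beta}\to A_{>\beta}[1]$; comparing $(A_{>\beta}[1]\to A_{>\alpha}[1])\circ\widetilde\varphi$ with $\varphi_{>\alpha}\circ k$ after composing with $A_{>\alpha}[1]\to A[1]$, and killing the difference with $\Hom(C_{>\beta},A_{\leq\alpha})=0$, gives $\varphi_{>\alpha}\circ k=(A_{>\beta}[1]\to A_{>\alpha}[1])\circ\widetilde\varphi$; post-composing with $q$ and using that $A_{>\beta}[1]\to A_{>\alpha}[1]\xra{q}A_{(\alpha,\beta]}[1]$ are two consecutive maps in the truncation triangle of $A_{>\alpha}$ (hence compose to $0$) finishes it.

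The nine-lemma bookkeeping is routine. The part that needs care — and the main obstacle — is the web of compatibilities among iterated truncations: that $(X_{>\alpha})_{\leq\beta}$, $(X_{\leq\beta})_{>\alpha}$ and $X_{(\alpha,\beta]}$ all agree and that their structure maps sit inside the commuting squares used in the last chase. These follow from uniqueness of truncations together with $\cP({>}\alpha)\supseteq\cP({>}\beta)$ for $\beta\geq\alpha$, but they must be isolated cleanly before the rectifiability chase can be run; once they are in hand, everything else is orthogonality and TR3.
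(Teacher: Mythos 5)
The paper itself does not supply a proof of this proposition: the surrounding text of the appendix says explicitly that the results in it are collected from the appendix of \cite{bapat2020thurston} and are stated without proof (with the sole exception of \cref{sufficient condition for geodesic}). So there is no paper proof to compare against; what follows is an assessment of your argument on its own terms.

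Your proof is correct and is the natural argument. The factorisation of $\varphi\vert_{C_{>\alpha}}$ through $A_{>\alpha}[1]$, and its uniqueness from $\Hom(C_{>\alpha},A_{\leq\alpha})=0$, are exactly right; the identification of $B'$ and $\cone(g)$ with $B_{>\alpha}$ and $B_{\leq\alpha}$ via extension-closure of $\cP({>}\alpha)$ and $\cP({\leq}\alpha)$ together with uniqueness of truncations is the standard way to produce the two truncated triangles; and the rectifiability chase for $\beta>\alpha$, pinning down $\varphi_{>\alpha}\circ k$ against the lift $\widetilde\varphi$ by orthogonality and then killing the composite with two consecutive maps of the truncation triangle of $A_{>\alpha}$, is airtight (and you correctly dispose of $\beta\leq\alpha$ as vacuous since $(A_{>\alpha})_{\leq\beta}=0$ there).

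The one place you wave your hands is the ``nine lemma for a morphism of triangles.'' That lemma is not automatic for an arbitrary morphism of distinguished triangles (this is the well-known issue of good versus bad morphisms in triangulated categories), so you should either invoke the precise $3\times 3$ lemma — which produces a $3\times 3$ diagram starting from a commutative square, possibly after modifying the TR3-filler $g$ — or, more economically, bypass it entirely by a single application of the octahedral axiom. For instance, rotating to $C[-1]\to A\to B\to C$ and observing (rectifiability again) that $C_{>\alpha}[-1]\to C[-1]\to A\to A_{\leq\alpha}$ vanishes gives a unique factorisation $C_{>\alpha}[-1]\to A_{>\alpha}$; applying TR4 to the composite $C_{>\alpha}[-1]\to A_{>\alpha}\to A$ then produces both truncated triangles directly and identifies the middle object as $B_{>\alpha}$ (resp.\ $B_{\leq\alpha}$) by extension-closure, without any $3\times 3$ bookkeeping. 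This is a cosmetic point — your argument can be made to work as written — but the octahedral route avoids the caveat you yourself flagged.
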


\begin{proposition} \label{rectifiable and weak HN}
Let $A \ra B \ra C \xra{\varphi} A[1]$ be a rectifiable triangle.
Then a weak HN filtration polygon of $B$ can be obtained from concatenating weak HN filtration polygons of $A$ and $C$ to the given rectifiable triangle, rearranging the filtration pieces if necessary.
\end{proposition}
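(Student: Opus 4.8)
<br>

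The final statement to prove is \cref{rectifiable and weak HN}: given a rectifiable triangle $A \ra B \ra C \xra{\varphi} A[1]$, a weak HN filtration polygon of $B$ can be obtained by concatenating weak HN filtration polygons of $A$ and $C$ (after rearranging filtration pieces).

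\textbf{The plan.} The approach is to exploit the truncation functors $(-)_{>\alpha}$ and $(-)_{\leq\alpha}$ with respect to the slicing of $\tau$, together with the two preceding propositions which tell us that for a rectifiable triangle these truncations are again distinguished and rectifiable. First I would fix weak HN filtration polygons of $A$ and $C$; say $A$ has $\tau$-semistable pieces $E_1, \dots, E_p$ with $\phi(E_1) \geq \dots \geq \phi(E_p)$ and $C$ has pieces $G_1, \dots, G_q$ with $\phi(G_1) \geq \dots \geq \phi(G_q)$. The candidate weak HN filtration polygon of $B$ will have filtration pieces given by the multiset $\{E_i\} \cup \{G_j\}$ arranged in non-increasing order of phase. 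The content to verify is that these pieces genuinely fit together into a filtration polygon of $B$, i.e.\ that each intermediate triangle is distinguished.

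\textbf{Key steps.} The cleanest route is induction, peeling off one semistable piece at a time from the highest-phase end. Let $\alpha$ be the largest phase occurring among the $E_i$ and $G_j$. If that maximum is achieved among the $E_i$ only, I would use that $A_{>\alpha-\epsilon} \to B_{>\epsilon'} \to C_{>\epsilon'}$-type statements and the rectifiability of the truncated triangles to peel $E_1$ off the top of $B$ as the cone of a map into $B$, landing in a triangle $E_1 \to B \to B' \to$ where $B'$ sits in a shorter rectifiable triangle $A' \to B' \to C \to$ with $A'$ having filtration pieces $E_2,\dots,E_p$ (this uses that the truncation triangles are distinguished so $A'$ is a well-defined octahedral cone, and that the induced triangle is still rectifiable). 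If the maximum is achieved among the $G_j$ (or among both), a symmetric argument peels $G_1$ off, using the rectifiability to push the boundary map $G_1[{-}1]$-part through to $0$ so that $G_1$ splits off the appropriate truncation. Iterating until both filtrations are exhausted produces a filtration polygon of $B$ whose pieces are exactly the $E_i$ and $G_j$ in non-increasing phase order; by definition this is a weak $\tau$-HN filtration polygon. Throughout, one translates between the polygon picture and the ordinary filtration picture via octahedral flips, exactly as in the discussion following \cref{defn: weak HN filtration}, so "rearranging the filtration pieces if necessary" is precisely the sequence of octahedral flips needed to interleave the $E$'s and $G$'s by phase.

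\textbf{Main obstacle.} The delicate point is the interleaving step: when the highest remaining phase is attained by some $G_j$ that sits "below" pieces of $A$ inside the naive concatenation, one must genuinely use rectifiability to show the relevant connecting map vanishes and hence that $G_j$ can be commuted past those pieces and split off. Concretely, this is where the hypothesis that $C_{>\alpha} \to C \to A[1] \to A_{\leq\alpha}[1]$ composes to zero does the work — it guarantees the octahedral cone splits in the right way. Bookkeeping the induction so that the "shorter triangle" at each stage is still rectifiable (which follows from the proposition on truncations of rectifiable triangles, applied to the truncated pieces) is routine but must be stated carefully. I expect no serious surprises beyond this; the argument is essentially the dual/triangulated repackaging of the corresponding statement in \cite{bapat2020thurston}, and the truncation-compatibility propositions already quoted are exactly the tools needed.
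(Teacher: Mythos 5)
The paper gives no proof of this proposition: the appendix preamble says the results are collected from the appendix of \cite{bapat2020thurston} and stated without proof, the sole exception being \cref{sufficient condition for geodesic}. So there is nothing in-paper to compare against; what follows is an assessment of your sketch on its own terms.

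Your overall strategy — induct on the total number of filtration pieces, peeling off the highest-phase piece at each step via the octahedral axiom — is sound, and you correctly identify rectifiability as what permits a top-phase piece $G_1$ of $C$ to lift to $B$: choosing $\alpha$ just below $\phi(G_1)$ so that $A_{>\alpha} = 0$, the composite $G_1 \hookrightarrow C_{>\alpha} \to C \xrightarrow{\varphi} A[1] \to A_{\leq\alpha}[1]$ vanishes, and since $A_{\leq\alpha} = A$ this says $\varphi\vert_{G_1} = 0$. The gap is the sentence asserting that the shorter triangle at each stage remains rectifiable, ``which follows from the proposition on truncations.'' Peeling $E_1$ off $A$ gives the new connecting map $C \xrightarrow{\varphi} A[1] \to (A/E_1)[1]$, a genuine post-composition of $\varphi$, so the post-composition proposition applies directly. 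Peeling $G_1$ off $C$, by contrast, produces a connecting map $\psi : C/G_1 \to A[1]$ characterized by $\psi \circ p = \varphi$ for the quotient $p : C \to C/G_1$; this exhibits $\varphi$ as a composition \emph{of} $\psi$, not $\psi$ as a composition of $\varphi$, so neither the composition proposition nor the truncation proposition gives rectifiability of $\psi$ for free. (The truncation proposition would apply only if $B' = B_{\leq\alpha}$ exactly, which fails whenever another piece of $A$ or $C$ remains at phase $\phi(G_1)$ after $G_1$ is removed.) The missing verification is short but must be supplied: for $\alpha < \phi(G_1)$ apply $\Hom(-,\, A_{\leq\alpha}[1])$ to the triangle $G_1 \to C_{>\alpha} \to (C/G_1)_{>\alpha} \to$ and use $\Hom(G_1[1],\, A_{\leq\alpha}[1]) = \Hom(G_1,\, A_{\leq\alpha}) = 0$ to see that the restriction map $\Hom((C/G_1)_{>\alpha},\, A_{\leq\alpha}[1]) \to \Hom(C_{>\alpha},\, A_{\leq\alpha}[1])$ is injective, so rectifiability of $\psi$ reduces to that of $\varphi$; alternatively, batch the peel-offs so that all pieces at the current top phase come off together, making each intermediate triangle a genuine truncation. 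As written, the inductive step is not closed.
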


\begin{definition}
A filtration polygon is said to be \emph{geodesic} if every inner distinguished triangle is rectifiable. 
\end{definition}

The following proposition shows that a filtration polygon is geodesic if and only if any of its octahedral flipped variant is geodesic; namely being geodesic is a property invariant under octahedral flips.
\begin{proposition} \label{prop: geodesic invariant octa flip}
Let 
\begin{center}
\begin{tikzpicture}
\tikzset{
  arrow/.pic={\path[tips,every arrow/.try,->,>=#1] (0.1,0) -- +(.1pt,0);},
  pics/arrow/.default={triangle 90}
}

\coordinate (A) at (0,0);
\coordinate (B) at (2,0);
\coordinate (C) at (0,2);
\coordinate (D) at (2,2);

\begin{scope}[very thick,nodes={sloped,allow upside down}]
\draw[thick] (A) -- pic{arrow=latex} (B) ;
\draw[thick] (A) -- pic{arrow=latex} (C) ;
\draw[thick] (D) -- pic{arrow=latex} (B) ;
\draw[thick] (C) -- pic{arrow=latex} (D) ;
\draw[thick] (C) -- pic{arrow=latex} (B) ;
\end{scope}

\filldraw[color=black!, fill=black!]  (A) circle [radius=1.5pt];
\filldraw[color=black!, fill=black!]  (B) circle [radius=1.5pt];
\filldraw[color=black!, fill=black!]  (C) circle [radius=1.5pt];
\filldraw[color=black!, fill=black!]  (D) circle [radius=1.5pt];

\node[left] at (0, 1) {$A$};
\node[below] at (1, 0) {$C$};
\node[right] at (2, 1) {$E$};
\node[above] at (1, 2) {$D$};
\node[left] at (1, 1) {$F$};

\coordinate (A') at (4,0);
\coordinate (B') at (6,0);
\coordinate (C') at (4,2);
\coordinate (D') at (6,2);

\begin{scope}[very thick,nodes={sloped,allow upside down}]
\draw[thick] (A') -- pic{arrow=latex} (B') ;
\draw[thick] (A') -- pic{arrow=latex} (C') ;
\draw[thick] (D') -- pic{arrow=latex} (B') ;
\draw[thick] (C') -- pic{arrow=latex} (D') ;
\draw[thick] (A') -- pic{arrow=latex} (D') ;
\end{scope}

\filldraw[color=black!, fill=black!]  (A') circle [radius=1.5pt];
\filldraw[color=black!, fill=black!]  (B') circle [radius=1.5pt];
\filldraw[color=black!, fill=black!]  (C') circle [radius=1.5pt];
\filldraw[color=black!, fill=black!]  (D') circle [radius=1.5pt];

\node[left] at (4, 1) {$A$};
\node[below] at (5, 0) {$C$};
\node[right] at (6, 1) {$E$};
\node[above] at (5, 2) {$D$};
\node[left] at (5, 1) {$B$};

\node at (3, 1) {$\leftrightsquigarrow$};
\end{tikzpicture}
\end{center}
be two filtration polygons of $C$ related by octahedral flip.
Then one is rectifiable if and only if the other is; namely $F\ra A[1]$ and $E \ra D[1]$ are rectifiable if and only if $E \ra B[1]$ and $D \ra A[1]$ are rectifiable.
\end{proposition}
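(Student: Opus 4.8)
The plan is to realise the two filtration polygons as the two triangulations of a single octahedral diagram and then to transport rectifiability of connecting maps across that diagram. First I would fix the dictionary: by the octahedral axiom there is a composition $A \xra{f} B \xra{g} C$ with mapping cones $D \cong \cone(f)$, $E \cong \cone(g)$, $F \cong \cone(gf)$, fitting into the four distinguished triangles
\[
A \xra{f} B \xra{p} D \to A[1], \quad B \xra{g} C \to E \to B[1], \quad A \xra{gf} C \xra{q} F \to A[1], \quad D \xra{u} F \xra{v} E \to D[1],
\]
where the first two are the triangles of the second polygon and the last two those of the first; the maps appearing in the statement are exactly the four connecting maps above (and shifts of them), and the octahedral axiom moreover supplies the compatibilities $(D\to A[1]) = (F\to A[1])\circ u$ and $(E\to D[1]) = p[1]\circ(E\to B[1])$. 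Since rectifiability is stable under pre- and post-composition, and is a property of the underlying distinguished triangle (hence rotation-invariant), these two identities already yield the implications ``$F\to A[1]$ rectifiable $\Rightarrow$ $D\to A[1]$ rectifiable'' and ``$E\to B[1]$ rectifiable $\Rightarrow$ $E\to D[1]$ rectifiable''; a short further argument with the octahedral maps -- applying the octahedral axiom instead to the composition $D \xra{u} F \to A[1]$ (the second map being the connecting map $F\to A[1]$), whose ``fourth'' triangle is $E \to B[1] \to C[1] \to E[1]$ -- upgrades the first of these to ``$F\to A[1]$ rectifiable $\Rightarrow$ $E\to B[1]$ rectifiable''. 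Putting these together, the entire equivalence of the proposition collapses to the single implication $(\ast)$: \emph{if $D\to A[1]$ and $E\to B[1]$ are rectifiable, then $F\to A[1]$ is rectifiable}.

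To prove $(\ast)$ I would fix $\alpha \in \R$ and argue as follows. By the proposition that rectifiable triangles truncate, the hypotheses produce compatible truncated distinguished triangles $A_{>\alpha} \xra{f'} B_{>\alpha} \xra{p'} D_{>\alpha} \to A_{>\alpha}[1]$ and $B_{>\alpha} \xra{g'} C_{>\alpha} \to E_{>\alpha} \to B_{>\alpha}[1]$ (and the analogous ones with $\leq\alpha$). The composite $g'f' \colon A_{>\alpha} \to C_{>\alpha}$ lifts $gf$ along the truncation morphisms, so it induces a morphism of distinguished triangles from $\big( A_{>\alpha} \xra{g'f'} C_{>\alpha} \to \cone(g'f') \to A_{>\alpha}[1] \big)$ to $\big( A \xra{gf} C \xra{q} F \to A[1] \big)$; write $\theta \colon \cone(g'f') \to F$ for the middle vertical arrow. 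Applying the octahedral axiom to $A_{>\alpha} \xra{f'} B_{>\alpha} \xra{g'} C_{>\alpha}$ places $\cone(g'f')$ in a distinguished triangle $D_{>\alpha} \to \cone(g'f') \to E_{>\alpha} \to D_{>\alpha}[1]$, whence $\cone(g'f') \in \cP_{>\alpha}$ because $\cP_{>\alpha}$ is closed under extensions. The $3\times 3$ (nine) lemma applied to the morphism of triangles above shows $\cone(\theta)$ sits in a distinguished triangle $A_{\leq\alpha} \to C_{\leq\alpha} \to \cone(\theta) \to A_{\leq\alpha}[1]$, so $\cone(\theta) \in \cP_{\leq\alpha}$. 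Hence $\theta$ is, up to isomorphism, the truncation morphism $F_{>\alpha} \to F$ (uniqueness of truncation triangles), $\cone(g'f') \cong F_{>\alpha}$, and the morphism of triangles identifies the restriction of $F\to A[1]$ to $F_{>\alpha}$ with a map factoring through $A_{>\alpha}[1] \to A[1]$. Therefore the composite $F_{>\alpha} \to F \to A[1] \to A_{\leq\alpha}[1]$ factors through $A_{>\alpha}[1] \to A[1] \to A_{\leq\alpha}[1]$, which vanishes as it is the shift of $A_{>\alpha} \to A \to A_{\leq\alpha}$; since $\alpha$ was arbitrary, $F\to A[1]$ is rectifiable.

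The step I expect to be the main obstacle is not any single homological argument but the bookkeeping threading the octahedral diagram: checking that the maps of the two polygons really are the connecting maps of four triangles belonging to one octahedral diagram with the stated compatibilities, and -- in the proof of $(\ast)$ -- ensuring that the isomorphism $\cone(g'f') \cong F_{>\alpha}$ is compatible with $\theta$ and with the connecting map $F\to A[1]$, not merely an abstract isomorphism of objects. Because mapping cones are not functorial in a triangulated category, this requires carefully tracking the morphisms of triangles and repeatedly invoking the uniqueness of truncation triangles; once that is done, everything else is the routine interplay of the octahedral axiom, the $3\times 3$ lemma, and the closure of $\cP_{>\alpha}$ and $\cP_{\leq\alpha}$ under extensions.
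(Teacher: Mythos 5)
The paper itself does not supply a proof of this proposition; it is cited from \cite{bapat2020thurston} and the appendix explicitly states its results without proof, so the comparison is purely against what the statement demands. Your overall architecture -- using rotation-invariance of rectifiability and stability under pre/post-composition to reduce the full biconditional to the single implication $(\ast)$, and then proving $(\ast)$ by truncating the two hypothesis triangles and running an octahedral/nine-lemma argument -- is the right shape, and your reduction (i)--(iii) is correct. (Two small caveats: rotation-invariance is not immediate from the definition of rectifiable, which singles out the connecting map; it follows from the ``rectifiable triangles truncate'' proposition, and since you lean on it you should say so. And in (iii) the cleaner route is simply the octahedral compatibility $C\to E = (C\to F)\circ(F\to E)$ together with rotation-invariance, which is what your argument amounts to.)

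There is, however, a genuine gap in the proof of $(\ast)$, and it is not the bookkeeping concern you flag at the end. You conclude from the nine-lemma triangle
\[
A_{\leq\alpha} \to C_{\leq\alpha} \to \cone(\theta) \to A_{\leq\alpha}[1]
\]
that $\cone(\theta)\in\cP_{\leq\alpha}$, but this does not follow: rotating, $\cone(\theta)$ is an extension of $A_{\leq\alpha}[1]$ by $C_{\leq\alpha}$, and $A_{\leq\alpha}[1]\in\cP_{\leq\alpha+1}$, so the most this triangle gives is $\cone(\theta)\in\cP_{\leq\alpha+1}$. Without $\cone(\theta)\in\cP_{\leq\alpha}$ the identification $\theta\cong(F_{>\alpha}\to F)$ fails, and the whole argument collapses. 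The fix is to use the other half of ``rectifiable triangles truncate'': rectifiability of $D\to A[1]$ and $E\to B[1]$ also produces the distinguished truncated triangles $A_{\leq\alpha}\to B_{\leq\alpha}\to D_{\leq\alpha}$ and $B_{\leq\alpha}\to C_{\leq\alpha}\to E_{\leq\alpha}$, and applying the octahedral axiom to the composite $A_{\leq\alpha}\to B_{\leq\alpha}\to C_{\leq\alpha}$ yields a triangle $D_{\leq\alpha}\to \cone(\theta)\to E_{\leq\alpha}$ (after the same nine-lemma identification of the third row/column). Since $\cP_{\leq\alpha}$ is extension-closed, this places $\cone(\theta)\in\cP_{\leq\alpha}$, and the rest of your argument then goes through. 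In short: you must run the truncation/octahedral step above \emph{and} below $\alpha$; using only $\cP_{>\alpha}$ is not enough to pin down the HN truncation of $F$.
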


Combining \cref{rectifiable and weak HN} and \cref{prop: geodesic invariant octa flip} we obtain:
\begin{corollary}\label{cor: geodesic implies weak HN}
Let $F$ be a geodesic filtration polygon of $A$ with filtration pieces $E_i$.
Then a weak HN filtration polygon of $A$ can be obtained by concatenating weak HN filtration polygons of the $E_i$'s to $F$, rearranging the resulting filtration pieces if necessary.
\end{corollary}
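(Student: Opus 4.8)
The final statement to prove is Proposition A.14's corollary, \cref{cor: geodesic implies weak HN}: given a geodesic filtration polygon $F$ of $A$ with semistable-up-to-further-filtration pieces $E_i$, one can obtain a weak HN filtration polygon of $A$ by splicing in weak HN filtration polygons of the $E_i$'s and rearranging.

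The plan is to proceed by induction on the number $m$ of filtration pieces of the geodesic filtration polygon $F$. The base case $m=1$ is immediate: $A \cong E_1$, so a weak HN filtration polygon of $E_1$ is already a weak HN filtration polygon of $A$. For the inductive step, I would isolate the ``innermost'' distinguished triangle of $F$ sitting on the edge $A$, which expresses $A$ as built from some object $A_{m-1}$ (the partial filtration of the first $m-1$ pieces $E_1, \ldots, E_{m-1}$) and the last piece $E_m$, via a distinguished triangle $A_{m-1} \to A \to E_m \to A_{m-1}[1]$. By the definition of geodesic, this triangle is rectifiable, and the sub-polygon on $A_{m-1}$ with pieces $E_1, \ldots, E_{m-1}$ is again geodesic (its inner triangles are a subset of those of $F$).

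By the inductive hypothesis, $A_{m-1}$ admits a weak HN filtration polygon obtained by concatenating weak HN filtration polygons of $E_1, \ldots, E_{m-1}$; fix one such, call it $G_{m-1}$. Separately, fix a weak HN filtration polygon $G_m$ of $E_m$. Now I would invoke \cref{rectifiable and weak HN}: since the triangle $A_{m-1} \to A \to E_m \to$ is rectifiable, concatenating the weak HN filtration polygons $G_{m-1}$ of $A_{m-1}$ and $G_m$ of $E_m$ onto this triangle — and rearranging the resulting semistable filtration pieces so that their phases are non-increasing — produces a weak HN filtration polygon of $A$. The resulting filtration pieces are exactly the (rearranged) union of the semistable pieces coming from the weak HN polygons of all the $E_i$'s, so the conclusion holds. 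The invariance of geodesicity under octahedral flips (\cref{prop: geodesic invariant octa flip}) is what guarantees that the choice of how $F$ is triangulated — i.e., which inner edge we peel off first — does not matter, so the induction is well-posed regardless of the given triangulation of $F$.

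The main obstacle, and the point requiring the most care, is the bookkeeping in the rearrangement step: after splicing, the filtration pieces $E_i$ of $F$ each get replaced by their own weak-HN pieces, and one must verify that the full list of semistable pieces can be reordered into non-increasing phase order while keeping every intermediate triangle distinguished. This is precisely the content of \cref{rectifiable and weak HN} (the rearrangement there is justified via octahedral flips applied to adjacent rectifiable triangles), so the work is really in citing it correctly and tracking that rectifiability is preserved under the pre- and post-compositions that arise when the inner triangle's maps get composed with the maps internal to $G_{m-1}$ and $G_m$ — but this is exactly guaranteed by the proposition stating that pre- and post-compositions of rectifiable maps are rectifiable. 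Thus the proof is essentially an assembly of the cited lemmas, with the induction organizing the assembly.
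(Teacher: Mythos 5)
Your proof is correct and takes essentially the same approach the paper has in mind: the paper simply states the corollary as "combining \cref{rectifiable and weak HN} and \cref{prop: geodesic invariant octa flip}," and you have spelled out the obvious induction on the number of pieces that makes this rigorous, using octahedral flips (via \cref{prop: geodesic invariant octa flip}) to peel off the last piece and \cref{rectifiable and weak HN} to splice in the weak HN polygons. Your closing caveat about tracking rectifiability under pre- and post-compositions is unnecessary overcaution — once you are in the setting of \cref{rectifiable and weak HN}, that proposition already produces the weak HN polygon of $A$ from the two inputs, with no further rectifiability checks needed — but including it does no harm.
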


\begin{proposition} \label{0 map swap geodesic}
Let $F$ be a filtration polygon of $A$ with filtration pieces $E_i$.
Suppose that $F$ contains an inner triangle given by
\begin{center}
\begin{tikzpicture}
\tikzset{
  arrow/.pic={\path[tips,every arrow/.try,->,>=#1] (0.1,0) -- +(.1pt,0);},
  pics/arrow/.default={triangle 90}
}

\coordinate (X) at (0,0);
\coordinate (Y) at (2,0);
\coordinate (Z) at (1,1.732);

\begin{scope}[very thick,nodes={sloped,allow upside down}]
\draw[thick] (X) -- pic{arrow=latex} (Y) ;
\draw[thick] (X) -- pic{arrow=latex} (Z) ;
\draw[thick] (Z) -- pic{arrow=latex} (Y) ;
\end{scope}

\pic [draw, <-, swap, angle radius = 15, angle eccentricity=1.5] {angle=Y--X--Z};
\pic [draw, <-, swap, angle radius = 15, angle eccentricity=1.5] {angle=Z--Y--X};
\pic [draw, <-, swap, dashed,"$0$", angle radius = 15, angle eccentricity=1.5] {angle=X--Z--Y};

\filldraw[color=black!, fill=black!]  (0,0) circle [radius=1.5pt];
\filldraw[color=black!, fill=black!]  (2,0) circle [radius=1.5pt];
\filldraw[color=black!, fill=black!]  (1,1.732) circle [radius=1.5pt];

\node[left] at (0.5, 0.9) {$E_i$};
\node[below] at (1, 0) {$E_i \oplus E_{i+1}$};
\node[right] at (1.5, 0.9) {$E_{i+1}$};
\end{tikzpicture}.
\end{center}
Define the filtration polygon $F'$ of $A$ with all inner triangles the same as $F$, except the above inner triangle where we swap $E_i$ and $E_{i+1}$.
Then $F$ is geodesic if and only if $F'$ is geodesic.
\end{proposition}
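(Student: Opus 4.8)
The plan is to observe that passing from $F$ to $F'$ changes exactly one inner triangle of the polygon, that this triangle is trivially rectifiable in both its old and its new form because its connecting map is the zero map, and that consequently geodesicity of $F$ and of $F'$ reduce to one and the same condition on the remaining triangles.

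First I would check that $F'$ is genuinely a filtration polygon and that it differs from $F$ in only one inner triangle. The inner triangle in question has the two outer edges $E_i$, $E_{i+1}$ and a diagonal $d$, and the hypothesis that its connecting map is $0$ forces $d\cong E_i\oplus E_{i+1}$. Swapping the two outer edges leaves $d$ unchanged as an object, and the triangle $E_{i+1}\to E_i\oplus E_{i+1}\to E_i\xrightarrow{0} E_{i+1}[1]$ is again distinguished, being the direct sum of the split triangles $E_{i+1}\xrightarrow{\mathrm{id}}E_{i+1}\to 0\to E_{i+1}[1]$ and $0\to E_i\xrightarrow{\mathrm{id}}E_i\to 0$. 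Since $d$ is an internal diagonal of the chosen triangulation it is an edge of exactly one further triangle of the polygon, and that triangle — like every triangle not incident to $E_i$, $E_{i+1}$ or $d$ — is literally unchanged under the swap. Hence the inner triangles of $F'$ are precisely those of $F$ with the single triangle $E_i\to d\to E_{i+1}\xrightarrow{0}E_i[1]$ replaced by $E_{i+1}\to d\to E_i\xrightarrow{0}E_{i+1}[1]$.

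Second I would record the elementary fact that any distinguished triangle $X\to Y\to Z\xrightarrow{\psi} X[1]$ with $\psi=0$ is rectifiable: for every $\alpha\in\R$ the composite $Z_{>\alpha}\to Z\xrightarrow{0} X[1]\to X_{\leq\alpha}[1]$ factors through the zero map, hence is $0$. In particular both the original and the swapped version of the triangle $E_i$–$d$–$E_{i+1}$ are rectifiable, independently of the stability condition $\tau$.

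Finally I would conclude: letting $\mathcal{T}$ be the collection of inner triangles common to $F$ and $F'$, we have by definition that $F$ is geodesic iff every triangle of $\mathcal{T}$ is rectifiable and $E_i\to d\to E_{i+1}\xrightarrow{0}$ is rectifiable; by the previous step the last clause is automatic, so $F$ is geodesic iff every triangle of $\mathcal{T}$ is rectifiable. The identical reasoning applied to $F'$ shows $F'$ is geodesic iff every triangle of $\mathcal{T}$ is rectifiable, whence $F$ is geodesic iff $F'$ is. The only part of this argument that is not purely formal is the combinatorial bookkeeping in the second step — verifying that exactly one triangle of the polygon is affected by the swap — but once one uses that each internal diagonal of a triangulation lies in exactly two triangles (one of which is the triangle with edges $E_i$, $E_{i+1}$, $d$), this is immediate; alternatively the statement can be deduced from the octahedral-flip invariance of geodesicity (\cref{prop: geodesic invariant octa flip}), though the direct argument is shorter and does not require it.
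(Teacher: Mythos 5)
Your proof is correct and takes essentially the same approach as the paper's: both reduce the claim to the observations that the swap changes exactly one inner triangle, and that this triangle — being the one whose connecting map is zero — is automatically rectifiable regardless of orientation. The paper's proof is a two-sentence version of yours; your additional bookkeeping (verifying that the swapped triangle is genuinely distinguished as a direct sum of split triangles, and that the shared diagonal $d$ is incident to exactly one other triangle which remains untouched) fills in details the paper leaves implicit.
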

\comment{
\begin{proof}
This follows from the fact that all inner triangles of $F$ and $F'$ are the same except for the triangle $A_i \ra A_i \oplus A_{i+1} \ra A_{i+1} \xra{0} A_i[1]$.
But the $0$ map is always rectifiable.
\end{proof}
}

We now arrive at the following sufficient condition which allows one to obtain weak HN filtrations from concatenating weak HN filtrations:
\begin{proposition} \label{sufficient condition for geodesic}
Let $(\cD, \X)$ be a triangulated $\X$-category with a stability condition $\tau$.
Let $F$ be a filtration polygon of $A$ with filtration pieces $E_i$.
If $F$ satisfies the following properties: 
\begin{enumerate}
\item for all $j$ and $j'$, $\bigoplus_{k \in \Z} \Hom(E_j, E_{j'}\cdot \X^k[k][1]) \cong \bigoplus_{k \in \Z}\Hom(E_{j'}, E_j\cdot \X^k[k][1])$;
\item for all $i > j$, either $\bigoplus_{k \in \Z} \Hom(E_i, E_j \cdot \X^k[k][1]) \cong 0$ or $\ceil*{E_i} \leq \floor*{E_j}$,
\end{enumerate}
then $F$ must be geodesic.
In particular, a weak $\tau$-HN filtration of $A$ is obtainable from concatenating weak $\tau$-HN filtrations of $E_i$'s to $F$ and then applying octahedral flips.
\end{proposition}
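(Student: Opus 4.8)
The plan is to establish the first assertion — that $F$ is geodesic — by induction on the number $m$ of filtration pieces, and then to read off the ``in particular'' clause from \cref{cor: geodesic implies weak HN}. Throughout I would exploit the observation that in a \emph{filtration} polygon every inner triangle $X\ra Y\ra Z\xra{\varphi}X[1]$ has $X$ an iterated extension of a block of consecutive pieces $E_a,\dots,E_b$ and $Z$ an iterated extension of the next block $E_{b+1},\dots,E_c$, so that every index $i$ labelling a piece of $Z$ exceeds every index $j$ labelling a piece of $X$; this is precisely the constellation to which hypothesis (2), phrased with the order ``$i>j$'', applies.

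First the base case $m\le 2$: there is at most one inner triangle $E_1\ra A\ra E_2\xra{\varphi}E_1[1]$, and for each $\alpha\in\R$ the composite $E_{2,>\alpha}\ra E_2\xra{\varphi}E_1[1]\ra E_{1,\le\alpha}[1]$ must vanish — by hypothesis (2) for $i=2>j=1$, either $\bigoplus_{k}\Hom(E_2,E_1\cdot\X^k[k][1])=0$, forcing $\varphi=0$ (and the zero map is rectifiable), or $\ceil*{E_2}\le\floor*{E_1}$, which makes one of $E_{2,>\alpha}$, $E_{1,\le\alpha}$ zero for every $\alpha$. For the inductive step I would use that geodesity is invariant under a sequence of octahedral flips (iterating \cref{prop: geodesic invariant octa flip}), so it suffices to verify it for the triangulation of $F$ consisting of the sub-polygon $F'$ on $E_1,\dots,E_{m-1}$ (total object $A'$) glued to the single triangle $T\colon A'\ra A\ra E_m\xra{\varphi_m}A'[1]$. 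The sub-polygon $F'$ inherits hypotheses (1)--(2) (restriction to $\{1,\dots,m-1\}$ preserves the ordered form of (2)), so by induction $F'$ is geodesic and all of its inner triangles are rectifiable; it remains only to rectify $T$. For this, $A'$ is an iterated extension of $E_1,\dots,E_{m-1}$, and since the inner triangles of $F'$ are rectifiable, iterating the fact that truncations of rectifiable triangles are again distinguished (and rectifiable) realises $A'_{\le\alpha}$ as an iterated extension of the truncations $E_{j,\le\alpha}$, while dually $E_{m,>\alpha}$ is built from the HN-pieces of $E_m$ of phase $>\alpha$. Thus the composite $E_{m,>\alpha}\ra E_m\xra{\varphi_m}A'[1]\ra A'_{\le\alpha}[1]$, being a morphism in $\Hom(E_{m,>\alpha},A'_{\le\alpha}[1])$, is controlled by the groups $\Hom(E_{m,>\alpha},E_{j,\le\alpha}[1])$ for $j<m$, each of which hypothesis (2) for the pair $m>j$ forces to vanish (either $\ceil*{E_m}\le\floor*{E_j}$ makes one of $E_{m,>\alpha}$, $E_{j,\le\alpha}$ zero, or the relevant $\bigoplus_{k}\Hom$ vanishes); hence $\varphi_m$ is rectifiable, $T$ is geodesic, and so is $F$.

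For the ``in particular'' clause: once $F$ is geodesic, \cref{cor: geodesic implies weak HN} produces a weak $\tau$-HN filtration polygon of $A$ by concatenating weak $\tau$-HN filtration polygons of the individual pieces $E_i$ and then reordering the resulting pieces into non-increasing phase. Each transposition of two adjacent pieces $G,G'$ coming respectively from $E_i$ and $E_{i'}$ is legitimate by \cref{0 map swap geodesic}, because the connecting map between $G$ and $G'$ vanishes: if $i>i'$ this is immediate from hypothesis (2) (either the phases are already in the required order, or $\bigoplus_k\Hom(E_i,E_{i'}\cdot\X^k[k][1])=0$), while if $i<i'$ one first invokes hypothesis (1) to trade $\bigoplus_k\Hom(E_i,E_{i'}\cdot\X^k[k][1])$ for $\bigoplus_k\Hom(E_{i'},E_i\cdot\X^k[k][1])$ and then applies hypothesis (2) to the ordered pair $i'>i$ — this is the only place hypothesis (1) enters. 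Realising the concatenated, reordered polygon via octahedral flips completes the argument.

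The step I expect to be the main obstacle is the rectifiability of the peeled triangle $T$ — more precisely, showing that the vanishing $\bigoplus_{k\in\Z}\Hom(E_m,E_j\cdot\X^k[k][1])=0$ is inherited by the truncations, i.e. $\Hom(E_{m,>\alpha},E_{j,\le\alpha}[1])=0$. This is not a formal consequence of the single-degree statement $\Hom(E_m,E_j[1])=0$ (long exact sequences along the HN filtrations can, a priori, hide cancellation), and it is exactly here that the all-degrees $\bigoplus_k$ formulation — hence the passage to an $\X$-category rather than a bare triangulated category — is doing essential work. The remaining bookkeeping (the octahedral-flip reductions, the behaviour of truncations, the reshuffling) is the appendix argument of \cite{bapat2020thurston} transported to the graded setting, and I would present it by invoking \cref{rectifiable and weak HN} and the stated truncation properties of rectifiable triangles rather than re-deriving them.
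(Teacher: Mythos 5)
Your inductive strategy — peel off $E_m$, quote the induction hypothesis for the sub‑polygon on $E_1,\dots,E_{m-1}$, and rectify the remaining triangle $A'\ra A\ra E_m\xra{\varphi_m}A'[1]$ — hits exactly the wall you flag yourself, and the wall is real, not a bookkeeping inconvenience. The implication you need, namely
\[
\bigoplus_{k\in\Z}\Hom(E_m,\,E_j\cdot\X^k[k][1])=0
\ \Longrightarrow\
\Hom\bigl(E_{m,>\alpha},\,E_{j,\le\alpha}[1]\bigr)=0,
\]
is not a consequence of the hypotheses of the proposition. The HN truncations $E_{m,>\alpha}$ and $E_{j,\le\alpha}$ are genuinely new objects, and the long exact sequences relating $\Hom$ out of $E_m$ (into $E_j$) to $\Hom$ out of its HN pieces can hide cancellation; adding the $\bigoplus_k$ does not remove that cancellation, because the cancellation happens between different HN strata at the \emph{same} $k$, not between different $k$'s. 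So ``controlled by the groups $\Hom(E_{m,>\alpha},E_{j,\le\alpha}[1])$'' is doing more than the hypotheses license — you get to conclude $\varphi_m$ kills the whole of each $E_j[1]$ direction where the vanishing alternative of (2) holds, but you cannot pass from that to the truncated statement without extra input.

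The paper's induction sidesteps this entirely by peeling off $E_1$ rather than $E_m$, after first normalising (via hypothesis (1) and \cref{0 map swap geodesic}) so that $\ceil{E_1}\ge\ceil{E_2}\ge\cdots\ge\ceil{E_m}$. That normalisation makes the set of indices where the phase alternative of (2) holds a \emph{final segment} $\{k,\dots,m\}$, which lets one split $E_{1,k-1}\cong E_1\oplus E_{2,k-1}$ (because $\Hom(E_{2,k-1},E_1[1])=0$ outright), and then rectify the connecting map $E_{k,m}\ra E_{1,k-1}[1]$ componentwise: the $E_1$-component is rectifiable by the phase inequality $\floor{E_1}\ge\ceil{E_{k,m}}$, and the $E_{2,k-1}$-component is rectifiable by the induction hypothesis applied to the sub‑polygon on $E_2,\dots,E_m$. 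In neither case does a truncated $\Hom$ vanishing appear. Note also that hypothesis (1) is used in the paper's normalisation step — to guarantee the swapped filtration still satisfies (2) — whereas in your proposal it only shows up at the very end in the reordering of the concatenated filtration; that is another signal that your argument is missing the normalisation, which is precisely what makes the split of pieces into ``phase-dominated'' and ``Hom-vanishing'' blocks contiguous rather than interleaved.

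Your ``in particular'' paragraph and the base case are fine, and the formal scaffolding (octahedral-flip invariance, truncations of rectifiable triangles) is correctly marshalled. But as written, the inductive step is a hole: you would need either to prove the truncated-vanishing implication (which I do not think is true in this generality) or to switch to the $E_1$-first peeling with the $\ceil{}$-normalisation, which is what the paper does.
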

\begin{proof}
The final statement is a direct result of \cref{cor: geodesic implies weak HN}.

We shall show that $F$ is geodesic through an induction on $m$.
The base case where $m=2$ follows from \cref{rectifiable and weak HN}.
Now assume for the induction hypothesis that it is true for $m$. 
If $\ceil{E_{j+1}} > \ceil{E_j}$, it must be the case that $\ceil{E_{j+1}} > \floor{E_j}$, which by our assumption on $F$ implies $\Hom(E_{j+1}, E_j[1]) = 0$.
In this case, we shall swap $E_{j+1}$ and $E_j$.
By \cref{0 map swap geodesic}, this new filtration is geodesic if and only if the original filtration is.
Moreover, after swapping $E_j$ and $E_{j+1}$, the required assumptions on the new filtration still hold; assumption 1 comes for free and the first part of assumption 2 is always true for $i=j+1$:
\[
\bigoplus_{k\in \Z} \Hom(E_j, E_{j+1}\cdot \X^k[k][1])
\cong \bigoplus_{k\in \Z} \Hom(E_{j+1}, E_j\cdot \X^k[k][1])
\cong 0.
\]
Hence we may assume without loss of generality that our $E_j$'s satisfy
\[
\ceil{E_1} \geq \ceil{E_2} \geq \cdots \geq \ceil{E_{m+1}}. 
\]
Note that this guarantees that if $\floor{E_1} \geq \ceil{E_k}$ for some $k$, then $\floor{E_1} \geq \ceil{E_{k'}}$ for all $k' \geq k$.
We shall now proceed via a case analysis.
We remind the reader that we may apply octahedral flips freely whilst checking the property of being geodesic since it is invariant under octahedral flips (cf. \cref{prop: geodesic invariant octa flip}).
\begin{enumerate}[-]
\item Case 1: For all $j > 1$, $\floor{E_1} < \ceil{E_j}$. \\
By assumption 2, it must be that 
\begin{equation}\label{j to 1 0}
\bigoplus_{k\in \Z} \Hom(E_j, E_1\cdot \X^k[k][1]) \cong 0
\end{equation}
for all $j > 1$.
By applying octahedral flips, consider the following triangulation:
\begin{center}
\begin{tikzpicture}
\tikzset{
  arrow/.pic={\path[tips,every arrow/.try,->,>=#1] (0.1,0) -- +(.1pt,0);},
  pics/arrow/.default={triangle 90}
}

\coordinate (A) at (0,0);
\coordinate (B) at (2,0);
\coordinate (C) at (-1,1.732);
\coordinate (D) at (3,1.732);
\coordinate (E) at (0,3.5);
\coordinate (F) at (2,3.5);
\coordinate (Z) at (1,3.5);

\begin{scope}[very thick,nodes={sloped,allow upside down}]
\draw[thick] (A) -- pic{arrow=latex} (C) ;
\draw[thick] (A) -- pic{arrow=latex} (B) ;
\draw[thick] (D) -- pic{arrow=latex} (B) ;
\draw[thick] (F) -- pic{arrow=latex} (D) ;
\draw[thick] (C) -- pic{arrow=latex} (E) ;
\draw[thick] (C) -- pic{arrow=latex} (B) ;
\end{scope}

\filldraw[color=black!, fill=black!]  (A) circle [radius=1.5pt];
\filldraw[color=black!, fill=black!]  (B) circle [radius=1.5pt];
\filldraw[color=black!, fill=black!]  (C) circle [radius=1.5pt];
\filldraw[color=black!, fill=black!]  (D) circle [radius=1.5pt];
\filldraw[color=black!, fill=black!]  (E) circle [radius=1.5pt];
\filldraw[color=black!, fill=black!]  (F) circle [radius=1.5pt];

\node[left] at (-0.5, 0.9) {$E_1$};
\node[left] at (-0.5, 2.7) {$E_2$};
\node[below] at (1, 0) {$A$};
\node[right] at (2.5, 0.9) {$E_m$};
\node[right] at (2.5, 2.7) {$E_{m-1}$};
\node[right] at (0.7,1) {$E_{2,m}$};

\node at (Z) {$\cdots$};
\end{tikzpicture}
\end{center}
Using \cref{j to 1 0}, we get that $\Hom(E_{2,m}, E_1[1]) \cong 0$, and so $E_{2,m} \ra E_1[1]$ is rectifiable.
Since the sub-polygon
\begin{center}
\begin{tikzpicture}
\tikzset{
  arrow/.pic={\path[tips,every arrow/.try,->,>=#1] (0.1,0) -- +(.1pt,0);},
  pics/arrow/.default={triangle 90}
}

\coordinate (A) at (0,0);
\coordinate (B) at (2,0);
\coordinate (C) at (-1,1.732);
\coordinate (D) at (3,1.732);
\coordinate (E) at (0,3.5);
\coordinate (F) at (2,3.5);
\coordinate (Z) at (1,3.5);

\begin{scope}[very thick,nodes={sloped,allow upside down}]
\draw[thick] (D) -- pic{arrow=latex} (B) ;
\draw[thick] (F) -- pic{arrow=latex} (D) ;
\draw[thick] (C) -- pic{arrow=latex} (E) ;
\draw[thick] (C) -- pic{arrow=latex} (B) ;
\end{scope}

\filldraw[color=black!, fill=black!]  (B) circle [radius=1.5pt];
\filldraw[color=black!, fill=black!]  (C) circle [radius=1.5pt];
\filldraw[color=black!, fill=black!]  (D) circle [radius=1.5pt];
\filldraw[color=black!, fill=black!]  (E) circle [radius=1.5pt];
\filldraw[color=black!, fill=black!]  (F) circle [radius=1.5pt];

\node[left] at (-0.5, 2.7) {$E_2$};
\node[right] at (2.5, 0.9) {$E_m$};
\node[right] at (2.5, 2.7) {$E_{m-1}$};
\node[right] at (0.7,1) {$E_{2,m}$};

\node at (Z) {$\cdots$};
\end{tikzpicture}
\end{center}
is geodesic by our induction hypothesis, we conclude that our filtration must be geodesic.
\item Case 2: There is some $k > 1$ such that $\floor{E_1} \geq \ceil{E_k}$. \\
In this case, pick $k$ to be the smallest such that $\floor{E_1} \geq \ceil{E_k}$ and use octahedral flips to obtain the following triangulation:
\begin{center}
\begin{tikzpicture}
\tikzset{
  arrow/.pic={\path[tips,every arrow/.try,->,>=#1] (0.1,0) -- +(.1pt,0);},
  pics/arrow/.default={triangle 90}
}

\coordinate (A') at (4,0);
\coordinate (B') at (6,0);
\coordinate (C') at (4,2);
\coordinate (D') at (6,2);
\coordinate (E') at (4.5,3.5);
\coordinate (F') at (5.5,3.5);
\coordinate (G') at (7.5,1.5);
\coordinate (H') at (7.5,0.5);

\begin{scope}[very thick,nodes={sloped,allow upside down}]
\draw[thick] (A') -- pic{arrow=latex} (B') ;
\draw[thick] (A') -- pic{arrow=latex} (C') ;
\draw[thick] (D') -- pic{arrow=latex} (B') ;
\draw[thick] (C') -- pic{arrow=latex} (D') ;
\draw[thick] (A') -- pic{arrow=latex} (D') ;
\draw[thick] (C') -- pic{arrow=latex} (E') ;
\draw[thick] (F') -- pic{arrow=latex} (D') ;
\draw[thick] (D') -- pic{arrow=latex} (G') ;
\draw[thick] (H') -- pic{arrow=latex} (B') ;
\end{scope}

\filldraw[color=black!, fill=black!]  (A') circle [radius=1.5pt];
\filldraw[color=black!, fill=black!]  (B') circle [radius=1.5pt];
\filldraw[color=black!, fill=black!]  (C') circle [radius=1.5pt];
\filldraw[color=black!, fill=black!]  (D') circle [radius=1.5pt];
\filldraw[color=black!, fill=black!]  (E') circle [radius=1.5pt];
\filldraw[color=black!, fill=black!]  (F') circle [radius=1.5pt];
\filldraw[color=black!, fill=black!]  (G') circle [radius=1.5pt];
\filldraw[color=black!, fill=black!]  (H') circle [radius=1.5pt];

\node[left] at (4, 1) {$E_1$};
\node[left] at (4.2, 2.7) {$E_2$};
\node[right] at (4.62,3.5) {$\cdots$};
\node[right] at (5.7, 2.7) {$E_{k-1}$};
\node[above] at (5, 2) {$E_{2,k-1}$};
\node[below] at (5.3, .9) {$E_{1,k-1}$};
\node[below] at (5, 0) {$A$};
\node[right] at (6, 1) {$E_{k,m}$};

\node[above] at (7,1.7) {$E_k$};
\node[above] at (7.5,0.7) {$\vdots$};
\node[below] at (7.1,0.2) {$E_m$};
\end{tikzpicture}
\end{center}
Note that if $k=2$, we set $E_{2,k-1}=0$ and $E_{1,k-1} =E_1$.
By induction hypothesis, the sub-filtration polygons of $E_{1,k-1}$ and $E_{k,m}$ are both geodesic, hence we are only required to show that the triangle 
\begin{center}
\begin{tikzpicture}
\tikzset{
  arrow/.pic={\path[tips,every arrow/.try,->,>=#1] (0.1,0) -- +(.1pt,0);},
  pics/arrow/.default={triangle 90}
}

\coordinate (A') at (4,0);
\coordinate (B') at (6,0);
\coordinate (D') at (6,2);

\begin{scope}[very thick,nodes={sloped,allow upside down}]
\draw[thick] (A') -- pic{arrow=latex} (B') ;
\draw[thick] (D') -- pic{arrow=latex} (B') ;
\draw[thick] (A') -- pic{arrow=latex} (D') ;
\end{scope}

\filldraw[color=black!, fill=black!]  (A') circle [radius=1.5pt];
\filldraw[color=black!, fill=black!]  (B') circle [radius=1.5pt];
\filldraw[color=black!, fill=black!]  (D') circle [radius=1.5pt];

\node[below] at (5.3, .9) {$E_{1,k-1}$};
\node[below] at (5, 0) {$A$};
\node[right] at (6, 1) {$E_{k,m}$};
\end{tikzpicture}
\end{center}
is rectifiable.
To show this, note that the same argument in case 1 tells us that the map $E_{2,k-1} \ra E_1$ is 0, hence $E_{1,k-1} \cong E_1 \oplus E_{2,k-1}$.
As such, it is sufficient to show that the two induced maps $E_{k,m}\ra E_1[1]$ and $E_{k,m}\ra E_{2,k-1}[1]$ are rectifiable.
But since $\floor{E_1} \geq \ceil{E_{k'}}$ for all $k' \geq k$, we have that $\floor{E_1} \geq \ceil{E_{k,m}}$, which shows that any map $E_{k,m}\ra E_1[1]$ is always rectifiable.
On the other hand, the filtration polygon
\begin{center}
\begin{tikzpicture}
\tikzset{
  arrow/.pic={\path[tips,every arrow/.try,->,>=#1] (0.1,0) -- +(.1pt,0);},
  pics/arrow/.default={triangle 90}
}

\coordinate (B') at (6,0);
\coordinate (C') at (4,2);
\coordinate (D') at (6,2);
\coordinate (E') at (4.5,3.5);
\coordinate (F') at (5.5,3.5);
\coordinate (G') at (7.5,1.5);
\coordinate (H') at (7.5,0.5);

\begin{scope}[very thick,nodes={sloped,allow upside down}]
\draw[thick] (D') -- pic{arrow=latex} (B') ;
\draw[thick] (C') -- pic{arrow=latex} (D') ;
\draw[thick] (C') -- pic{arrow=latex} (E') ;
\draw[thick] (F') -- pic{arrow=latex} (D') ;
\draw[thick] (D') -- pic{arrow=latex} (G') ;
\draw[thick] (H') -- pic{arrow=latex} (B') ;
\draw[thick] (C') -- pic{arrow=latex} (B') ;
\end{scope}

\filldraw[color=black!, fill=black!]  (B') circle [radius=1.5pt];
\filldraw[color=black!, fill=black!]  (C') circle [radius=1.5pt];
\filldraw[color=black!, fill=black!]  (D') circle [radius=1.5pt];
\filldraw[color=black!, fill=black!]  (E') circle [radius=1.5pt];
\filldraw[color=black!, fill=black!]  (F') circle [radius=1.5pt];
\filldraw[color=black!, fill=black!]  (G') circle [radius=1.5pt];
\filldraw[color=black!, fill=black!]  (H') circle [radius=1.5pt];

\node[left] at (4.2, 2.7) {$E_2$};
\node[right] at (4.62,3.5) {$\cdots$};
\node[right] at (5.7, 2.7) {$E_{k-1}$};
\node[above] at (5, 2) {$E_{2,k-1}$};
\node[left] at (5.2, .9) {$E_{2,m}$};
\node[right] at (6, 1) {$E_{k,m}$};

\node[above] at (7,1.7) {$E_k$};
\node[above] at (7.5,0.7) {$\vdots$};
\node[below] at (7.1,0.2) {$E_m$};
\end{tikzpicture}
\end{center}
is geodesic by the induction hypothesis.
In particular, the induced map $E_{k,m} \ra E_{2,k-1}[1]$ is rectifiable.
This concludes the proof.
\end{enumerate}
\end{proof}

\chapter{Gaussian Elimination Lemma} 
\begin{lemma}[cf. Lemma 3.2 in \cite{bar-natan_burgos-soto_2014}]\label{gaussian elimination}
Let $\cA$ be an additive category and consider the complex in $\Com(\cA)$ given by
\[
\cdots 
	\ra 
C 
	\xra{\begin{bmatrix}
		\alpha \\
		\beta
		\end{bmatrix}
		} 
b_1 \oplus D 
	\xra{\begin{bmatrix}
		\phi & \delta \\
		\gamma & \epsilon
		\end{bmatrix}
		}
b_2 \oplus E 
	\xra{\begin{bmatrix}
		\mu & \nu
		\end{bmatrix}
		}
F 
	\ra 
\cdots
\]
with $\phi: b_1 \ra b_2$ an isomorphism in $\cA$.
Then the complex is homotopy equivalent to
\[
\cdots 
	\ra 
C 
	\xra{\beta
		} 
D 
	\xra{\epsilon - \gamma  \phi^{-1} \delta
		}
E 
	\xra{\nu
		}
F 
	\ra 
\cdots.
\]
\end{lemma}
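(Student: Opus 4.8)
Recall that the Gaussian Elimination Lemma (\cref{gaussian elimination}) asserts that a complex in $\Com(\cA)$ of the form
\[
\cdots \ra C \xra{\left[\begin{smallmatrix}\alpha\\ \beta\end{smallmatrix}\right]} b_1 \oplus D \xra{\left[\begin{smallmatrix}\phi & \delta\\ \gamma & \epsilon\end{smallmatrix}\right]} b_2 \oplus E \xra{\left[\begin{smallmatrix}\mu & \nu\end{smallmatrix}\right]} F \ra \cdots,
\]
with $\phi\colon b_1 \ra b_2$ an isomorphism in $\cA$, is homotopy equivalent to the complex obtained by deleting $b_1$ and $b_2$ and correcting the differential $D \ra E$ by $\epsilon - \gamma\phi^{-1}\delta$.

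\textbf{The approach.} The plan is to write down explicit chain maps in both directions and an explicit homotopy, then verify the four required identities (both compositions are chain maps, one composite is the identity on the nose, the other is chain-homotopic to the identity). This is the standard ``Gaussian elimination'' argument in the homotopy category of complexes, and the only subtlety is bookkeeping the block matrix entries correctly. First I would set up notation: let $X^\bullet$ denote the original complex and $Y^\bullet$ the reduced one, agreeing in all degrees except that $b_1 \oplus D$ is replaced by $D$ and $b_2 \oplus E$ by $E$. Define $f\colon X^\bullet \ra Y^\bullet$ to be the identity away from the affected degrees, $\left[\begin{smallmatrix} -\gamma\phi^{-1} & \id_D\end{smallmatrix}\right]\colon b_1\oplus D \ra D$ in the first affected degree, and $\left[\begin{smallmatrix} 0 & \id_E\end{smallmatrix}\right]\colon b_2\oplus E \ra E$ in the second. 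Define $g\colon Y^\bullet \ra X^\bullet$ to be the identity away from the affected degrees, $\left[\begin{smallmatrix} 0 \\ \id_D\end{smallmatrix}\right]\colon D \ra b_1 \oplus D$, and $\left[\begin{smallmatrix} -\phi^{-1}\delta \\ \id_E\end{smallmatrix}\right]\colon E \ra b_2 \oplus E$.

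\textbf{Key steps in order.} (1) Check that $f$ is a chain map: the only nontrivial squares are the three consecutive ones involving the affected degrees, and each reduces to a direct computation with the block matrices, using $\phi\phi^{-1}=\id$; the corrected differential $\epsilon - \gamma\phi^{-1}\delta$ emerges precisely from the compatibility square $D \to E$. (2) Check that $g$ is a chain map, symmetrically. (3) Observe $f\circ g = \id_{Y^\bullet}$ by direct block multiplication (e.g. $\left[\begin{smallmatrix} -\gamma\phi^{-1} & \id\end{smallmatrix}\right]\left[\begin{smallmatrix} 0\\ \id\end{smallmatrix}\right] = \id_D$). (4) Construct the homotopy $h\colon X^\bullet \ra X^{\bullet-1}$ which is zero everywhere except the single component $b_2 \oplus E \ra b_1 \oplus D$ given by $\left[\begin{smallmatrix} \phi^{-1} & 0 \\ 0 & 0\end{smallmatrix}\right]$, and verify $\id_{X^\bullet} - g\circ f = dh + hd$ degree by degree; again only the affected degrees contribute and each identity follows from block arithmetic. (5) Conclude $f$ and $g$ are mutually inverse homotopy equivalences, so $X^\bullet \simeq Y^\bullet$ in $\Com(\cA)$ (and hence in any homotopy category built from it).

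\textbf{Main obstacle.} There is no conceptual obstacle here—the result is classical (cf.\ \cite{bar-natan_burgos-soto_2014}) and I would likely just cite it. If a self-contained proof is wanted, the only real care needed is getting the signs and the placement of $\phi^{-1}\delta$ versus $\gamma\phi^{-1}$ consistent between $f$, $g$, and $h$; a sign error anywhere will make the homotopy identity fail, so I would double-check step (4) most carefully, treating the two affected degrees and the two adjacent degrees as four separate small verifications.
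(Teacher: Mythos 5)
The paper itself does not prove this lemma; it only states it with a citation to \cite{bar-natan_burgos-soto_2014}, so your instinct to ``just cite it'' matches what the paper does, and your overall plan for a self-contained proof is the standard and correct one.

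However, the explicit chain maps you write down contain a concrete error of exactly the kind you warned yourself about: the correction terms are placed in the wrong degrees, and as a result the formulas do not type-check. You put $-\gamma\phi^{-1}$ into the component $b_1 \oplus D \to D$ of $f$, but $\gamma\phi^{-1}$ is a map $b_2 \to E$, not $b_1 \to D$; and you put $-\phi^{-1}\delta$ into the component $E \to b_2 \oplus E$ of $g$, but $\phi^{-1}\delta$ is a map $D \to b_1$, not $E \to b_2$. The correction terms should be swapped between the two affected degrees in both maps. The correct choices are
\[
f = \bigl[\begin{smallmatrix} 0 & \id_D \end{smallmatrix}\bigr] : b_1 \oplus D \to D,
\qquad
f = \bigl[\begin{smallmatrix} -\gamma\phi^{-1} & \id_E \end{smallmatrix}\bigr] : b_2 \oplus E \to E,
\]
\[
g = \bigl[\begin{smallmatrix} -\phi^{-1}\delta \\ \id_D \end{smallmatrix}\bigr] : D \to b_1 \oplus D,
\qquad
g = \bigl[\begin{smallmatrix} 0 \\ \id_E \end{smallmatrix}\bigr] : E \to b_2 \oplus E.
\]
With these, the chain-map verification also needs both relations coming from $d^2=0$ (namely $\phi\alpha + \delta\beta = 0$ and $\mu\phi + \nu\gamma = 0$), not just $\phi\phi^{-1}=\id$ as your outline suggests. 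Your homotopy $h = \bigl[\begin{smallmatrix} \phi^{-1} & 0 \\ 0 & 0 \end{smallmatrix}\bigr] : b_2\oplus E \to b_1 \oplus D$ is correct and, together with the corrected $f$ and $g$, does satisfy $\id - gf = dh + hd$ and $fg = \id$. So the approach and the homotopy are right; the two chain maps just need the fix above.
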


\printbibliography[]

\end{document}